\providecommand{\printnomenclature}{\printglossary}
\providecommand{\makenomenclature}{\makeglossary}
\numberwithin{equation}{section}
\theoremstyle{remark}
\newtheorem*{acknowledgement*}{\protect\acknowledgementname}
\theoremstyle{definition}
\newtheorem*{defn*}{\protect\definitionname}
\theoremstyle{plain}
\newtheorem{thm}{\protect\theoremname}[section]
\theoremstyle{definition}
\newtheorem{example}[thm]{\protect\examplename}
\theoremstyle{definition}
\newtheorem{defn}[thm]{\protect\definitionname}
\theoremstyle{plain}
\newtheorem{cor}[thm]{\protect\corollaryname}
\theoremstyle{plain}
\newtheorem{assumption}[thm]{\protect\assumptionname}
\theoremstyle{remark}
\newtheorem{notation}[thm]{\protect\notationname}
\theoremstyle{plain}
\newtheorem{lem}[thm]{\protect\lemmaname}
\theoremstyle{remark}
\newtheorem{rem}[thm]{\protect\remarkname}
\theoremstyle{remark}
\newtheorem*{rem*}{\protect\remarkname}
\providecommand{\acknowledgementname}{Acknowledgement}
\providecommand{\assumptionname}{Assumption}
\providecommand{\corollaryname}{Corollary}
\providecommand{\definitionname}{Definition}
\providecommand{\examplename}{Example}
\providecommand{\lemmaname}{Lemma}
\providecommand{\notationname}{Notation}
\providecommand{\remarkname}{Remark}
\providecommand{\theoremname}{Theorem}
\begin{document}
\global\long\def\divery{\mathrm{div}_{y}}%
\global\long\def\diver{\mathrm{div}}%
\global\long\def\diverx{\mathrm{div}_{x}}%

\global\long\def\nablay{\nabla_{y}}%
\global\long\def\nablax{\nabla_{x}}%
\global\long\def\nablabot{\nabla^{\bot}}%
\global\long\def\onablabot{\bar{\nabla}^{\bot}}%
\global\long\def\nablas{\nabla^{s}}%

\global\long\def\support{\mathrm{supp}}%

\global\long\def\epi{\mathrm{epi}}%
\global\long\def\short{\mathrm{short}}%
\global\long\def\dom{\mathrm{dom}}%
\global\long\def\Pois{\mathrm{Pois}}%
\global\long\def\mat{\mathrm{mat}}%
\global\long\def\REM{\mathrm{rem}}%
\global\long\def\NEW{\mathrm{new}}%
\global\long\def\pois{\mathrm{pois}}%
\global\long\def\Length{\mathrm{Length}}%

\global\long\def\rmD{\mathrm{D}}%
\global\long\def\rmM{\mathrm{M}}%
\global\long\def\rmP{\mathrm{P}}%
\global\long\def\rmR{\mathrm{R}}%
\global\long\def\rmT{\mathrm{T}}%

\global\long\def\id{\mathrm{id}}%

\global\long\def\eps{\varepsilon}%

\global\long\def\borelB{\mathcal{B}}%
\global\long\def\lebesgueL{\mathcal{L}}%
\global\long\def\hausdorffH{\mathcal{H}}%

\global\long\def\ue{u^{\eps}}%
\global\long\def\ve{v^{\eps}}%

\global\long\def\vel{\boldsymbol{\upsilon}}%
\global\long\def\v{\upsilon}%

\global\long\def\bB{\boldsymbol{\mathrm{B}}}%
\global\long\def\bE{\boldsymbol{\mathrm{E}}}%
\global\long\def\bL{\boldsymbol{\mathrm{L}}}%
\global\long\def\bQ{\boldsymbol{\mathrm{Q}}}%
\global\long\def\bT{\boldsymbol{\mathrm{T}}}%
\global\long\def\bU{\boldsymbol{\mathrm{U}}}%
\global\long\def\bP{\boldsymbol{\mathrm{P}}}%
\global\long\def\bW{\boldsymbol{\mathrm{W}}}%
\global\long\def\bY{\boldsymbol{\mathrm{Y}}}%
\global\long\def\boldnu{\boldsymbol{\nu}}%

\global\long\def\be{\boldsymbol{\mathrm{e}}}%
\global\long\def\bj{\boldsymbol{\mathrm{j}}}%

\global\long\def\A{\mathbb{A}}%
\global\long\def\B{\mathbb{B}}%
\global\long\def\D{\mathbb{D}}%
\global\long\def\E{\mathbb{E}}%
\global\long\def\G{\mathbb{G}}%
\global\long\def\I{\mathbb{I}}%
\global\long\def\M{\mathbb{M}}%
\global\long\def\N{\mathbb{N}}%
\global\long\def\O{\mathbb{O}}%
\global\long\def\P{\mathbb{P}}%
\global\long\def\Q{\mathbb{Q}}%
\global\long\def\R{\mathbb{R}}%
\global\long\def\S{\mathbb{S}}%
\global\long\def\T{\mathbb{T}}%
\global\long\def\U{\mathbb{U}}%
\global\long\def\X{\mathbb{X}}%
\global\long\def\Y{\mathbb{Y}}%
\global\long\def\Z{\mathbb{Z}}%

\global\long\def\Apaths{\A\!\X}%

\global\long\def\closedsets{\mathfrak{F}}%

\global\long\def\sB{\mathscr{B}}%
\global\long\def\sF{\mathscr{F}}%
\global\long\def\sI{\mathscr{I}}%
\global\long\def\sT{\mathscr{T}}%
\global\long\def\ttopology#1{\sT_{#1}}%

\global\long\def\fA{\mathfrak{A}}%
\global\long\def\fB{\mathfrak{B}}%
\global\long\def\fC{\mathfrak{C}}%
\global\long\def\fI{\mathfrak{I}}%
\global\long\def\fM{\mathfrak{M}}%
\global\long\def\fm{\mathfrak{m}}%
\global\long\def\fP{\mathfrak{P}}%
\global\long\def\fp{\mathfrak{p}}%
\global\long\def\fr{\mathfrak{r}}%
\global\long\def\fs{\mathfrak{s}}%
\global\long\def\ft{\mathfrak{t}}%
\global\long\def\fY{\mathfrak{Y}}%
\global\long\def\fa{\mathfrak{a}}%
\global\long\def\fb{\mathfrak{b}}%
\global\long\def\fd{\mathfrak{d}}%

\global\long\def\sfR{\mathsf{R}}%
\global\long\def\sfS{\mathsf{S}}%

\global\long\def\Rd{\mathbb{R}^{d}}%
\global\long\def\Rdd{\mathbb{R}^{d\times d}}%
\global\long\def\Zd{\mathbb{Z}^{d}}%
\global\long\def\I{\mathbb{I}}%

\global\long\def\d{\mathrm{d}}%
\global\long\def\v{\upsilon}%

\global\long\def\weakto{\rightharpoonup}%

\global\long\def\tsto{\stackrel{2s}{\to}}%
\global\long\def\tsweakto{\stackrel{2s}{\weakto}}%

\global\long\def\simple{\mathrm{simple}}%
\global\long\def\fl{\mathrm{flat}}%
\global\long\def\loc{\mathrm{loc}}%
\global\long\def\pot{\mathrm{pot}}%
\global\long\def\sol{\mathrm{sol}}%
\global\long\def\argmin{\mathrm{argmin}}%
\global\long\def\hull{\mathrm{hull}}%

\global\long\def\dist{\mathrm{dist}}%
\global\long\def\conv{\mathrm{conv}}%
\global\long\def\co{\mathfrak{co}}%

\global\long\def\boldY{\boldsymbol{\mathrm{Y}}}%

\global\long\def\mugammapalm{\mu_{\Gamma,\mathcal{P}}}%
\global\long\def\mupalm{\mu_{\mathcal{P}}}%
\global\long\def\nupalm{\nu_{\mathcal{P}}}%

\global\long\def\cA{\mathcal{A}}%
\global\long\def\cB{\mathcal{B}}%
\global\long\def\cC{\mathcal{C}}%
\global\long\def\cE{\mathcal{E}}%
\global\long\def\cF{\mathcal{F}}%
\global\long\def\cG{\mathcal{G}}%
\global\long\def\cH{\mathcal{H}}%
\global\long\def\cI{\mathcal{I}}%
\global\long\def\cL{\mathcal{L}}%
\global\long\def\cM{\mathcal{M}}%
\global\long\def\cN{\mathcal{N}}%
\global\long\def\cO{\mathcal{O}}%
\global\long\def\cP{\mathcal{P}}%
\global\long\def\cQ{\mathcal{Q}}%
\global\long\def\cR{\mathcal{R}}%
\global\long\def\cS{\mathcal{S}}%
\global\long\def\cT{\mathcal{T}}%
\global\long\def\cU{\mathcal{U}}%
\global\long\def\cV{\mathcal{V}}%
\global\long\def\cX{\mathcal{X}}%
\global\long\def\cY{\mathcal{Y}}%

\global\long\def\sE{\mathscr{E}}%
\global\long\def\sI{\mathscr{I}}%

\global\long\def\mapA{\mathcal{A}}%

\global\long\def\muomega{\mu_{\omega}}%
\global\long\def\muomegaeps{\mu_{\omega}^{\eps}}%
\global\long\def\mugammaomega{\mu_{\Gamma(\omega)}}%
\global\long\def\mugammaomegaeps{\mu_{\Gamma(\omega)}^{\eps}}%
\global\long\def\Ball#1#2{\mathbb{B}_{#1}{\left(#2\right)}}%
\newcommandx\Balldim[3][usedefault, addprefix=\global, 1=]{\mathbb{B}_{#2}^{#1}{\left(#3\right)}}%

\newcommandx\Balldimclosed[3][usedefault, addprefix=\global, 1=]{\overline{\mathbb{B}}_{#2}^{#1}{\left(#3\right)}}%
\global\long\def\cone{\mathbb{C}}%

\global\long\def\norm#1{\left\Vert #1\right\Vert }%

\global\long\def\scp#1#2{\left\langle #1,#2\right\rangle }%

\global\long\def\LOM{L^{2}(\Omega;\Rdd)}%
\global\long\def\LOMns{L_{n,s}^{2}(\Omega)}%
\global\long\def\LOMn{L_{n}^{2}(\Omega)}%
\global\long\def\e{\mathrm{e}}%
\global\long\def\diam{\mathrm{diam}}%

\global\long\def\of#1{{\left(#1\right)}}%

\title{Stochastic homogenization on perforated domains I - Extension operators}
\author{Martin Heida}
\maketitle
\begin{abstract}
In this first part of a series of 3 papers, we set up a framework
to study the existence of uniformly bounded extension and trace operators
for $W^{1,p}$-functions on randomly perforated domains, where the
geometry is assumed to be stationary ergodic. We drop the classical
assumption of minimaly smoothness and study stationary geometries
which have no global John regularity. For such geometries, uniform
extension operators can be defined only from $W^{1,p}$ to $W^{1,r}$
with the strict inequality $r<p$. In particular, we estimate the
$L^{r}$-norm of the extended gradient in terms of the $L^{p}$-norm
of the original gradient. Similar relations hold for the symmetric
gradients (for $\Rd$-valued functions) and for traces on the boundary.
As a byproduct we obtain some Poincaré and Korn inequalities of the
same spirit. 

Such extension and trace operators are important for compactness in
stochastic homogenization. In contrast to former approaches and results,
we use very weak assumptions: local $(\delta,M)$-regularity to quantify
statistically the local Lipschitz regularity and isotropic cone mixing
to quantify the density of the geometry and the mesoscopic properties.
These two properties are sufficient to reduce the problem of extension
operators to the connectivity of the geometry. 

In contrast to former approaches we do not require a minimal distance
between the inclusions and we allow for globally unbounded Lipschitz
constants and percolating holes. We will illustrate our method by
applying it to the Boolean model based on a Poisson point process
and to a Delaunay pipe process, for which we can explicitly estimate
the connectivity terms.
\end{abstract}
\begin{acknowledgement*}
I thank B. Jahnel for his helpful suggestions on literature. Furthermore,
I thank D.R.M. Renger and G. Friesecke for critical questions that
pushed my research further. Finally I thank the DFG for funding my
reasearch via CRC 1114 Project C05.
\end{acknowledgement*}
\tableofcontents{}

\section{Introduction}

In 1979 Papanicolaou and Varadhan \cite{papanicolaou1979boundary}
and Kozlov \cite{kozlov1979averaging} for the first time independently
introduced concepts for the averaging of random elliptic operators.
At that time, the periodic homogenization theory had already advanced
to some extend (as can be seen in the book \cite{papanicolau1978asymptotic}
that had appeared one year before) dealing also with non-uniformly
elliptic operators \cite{marcellini1978homogenization} and domains
with periodic holes \cite{cioranescu1979homogenization}. The most
recent and most complete work for extension operators on periodically
perforated domains is \cite{hopker2014diss}.

In contrast, the homogenization on randomly perforated domains is
still open to a large extend. Recent results focus on minimally smooth
domains \cite{guillen2015quasistatic,piatnitski2020homogenization}
or on decreasing size of the perforations when the smallness parameter
tends to zero \cite{giunti2021derivation} (and references therein).
The main issue in homogenization on perforated domains compared to
classical homogenization problems is compactness. For elasticity,
this is completely open.

The results presented below are meant for application in quenched
convergence. The estimates for the extension and trace operators which
are derived strongly depends on the realization of the geometry -
thus on $\omega$. Nevertheless, if the geometry is stationary, a
corresponding estimate can be achieved for almost every $\omega$.

\subsubsection*{The Problem}

In order to illustrate the issues in stochastic homogenization on
perforated domains, we introduce the following example.

Let $\bP(\omega)\subset\Rd$ be a stationary random open set and let
$\eps>0$ be the smallness parameter and let $\tilde{\bP}\of{\omega}$
be an infinitely connected component (i.e. an unbounded connected
domain) of $\bP\of{\omega}$. For a bounded open domain $\bQ$, we
consider $\bQ_{\tilde{\bP}}^{\eps}(\omega):=\bQ\cap\eps\tilde{\bP}(\omega)$
and $\Gamma^{\eps}(\omega):=\bQ\cap\eps\partial\tilde{\bP}(\omega)$
with outer normal $\nu_{\Gamma^{\eps}(\omega)}$. For a sufficiently
regular and $\Rd$-valued function $u$ we denote $\nablas u:=\frac{1}{2}\left(\nabla u+(\nabla u)^{\top}\right)$
the symmetric part of $\nabla u$. A typical homogenization problem
then is the following::
\begin{align}
-\diver\left(\left|\nablas\ue\right|^{p-2}\nablas\ue\right) & =g(\ue) &  & \mbox{on }\bQ_{\tilde{\bP}}^{\eps}(\omega)\,,\nonumber \\
u & =0 &  & \mbox{on }\partial\bQ\cap\left(\eps\bP\right)\,,\label{eq:system-eps-p-Laplace}\\
\left|\nablas\ue\right|^{p-2}\nabla\ue\cdot\nu_{\Gamma^{\eps}(\omega)} & =f(\ue) &  & \mbox{on }\Gamma^{\eps}(\omega)\,.\nonumber 
\end{align}
Note that for simplicity of illustration, the only randomness that
we consider in this problem is due to $\bP(\omega)$.

One way to prove homogenization of (\ref{eq:system-eps-p-Laplace})
is to prove $\Gamma$-convergence of 
\[
\cE_{\eps,\omega}(u)=\int_{\bQ_{\tilde{\bP}}^{\eps}(\omega)}\left(\frac{1}{p}\left|\nablas u\right|^{p}-G(u)\right)+\int_{\Gamma^{\eps}(\omega)}F(u)\,,
\]
in a suitably chosen space where $G'=g$ and $F'=f$. Conceptually,
this implies convergence of the minimizers $\ue$ to a minimizer of
a limit functional but if $G$ or $F$ are non-monotone, we need compactness.
However, the minimizers are elements of $\bW^{1,p}(\bQ_{\tilde{\bP}}^{\eps}):=W^{1,p}(\bQ_{\tilde{\bP}}^{\eps};\Rd)$
and since this space changes with $\eps$, there is apriori no compactness
of $\ue$, even though we have uniform apriori estimates on the gradients. 

The \emph{canonical }path to circumvent this issue in \emph{periodic
}homogenization is via uniformly bounded extension operators $\cU_{\eps}:\,W^{1,p}(\bQ_{\tilde{\bP}}^{\eps})\to W^{1,p}(\bQ)$
that share the property that for some $C>0$ independent from $\eps$
it holds for all $u\in W^{1,p}(\bQ_{\tilde{\bP}}^{\eps})$ with $u|_{\Rd\backslash\bQ}\equiv0$
\begin{equation}
\norm{\nabla\cU_{\eps}u}_{L^{p}(\bQ)}\leq C\norm{\nabla u}_{L^{p}(\bQ_{\tilde{\bP}}^{\eps})}\,,\qquad\norm{\cU_{\eps}u}_{L^{p}(\bQ)}\leq C\norm u_{L^{p}(\bQ_{\tilde{\bP}}^{\eps})}\,,\label{eq:uniform-extension-estimate}
\end{equation}
see \cite{hopker2014diss,hopker2014note}, combined with uniformly
bounded trace operators, see \cite{gahn2016homogenization,guillen2015quasistatic}.
Such operators have also been provided for elasticity problems \cite{hopker2014diss,oleinik2009mathematical,yosifian2001homogenization,yosifian2002some},
i.e.
\[
\norm{\nablas\cU_{\eps}u}_{L^{p}(\bQ)}\leq C\norm{\nablas u}_{L^{p}(\bQ_{\tilde{\bP}}^{\eps})}\,.
\]
The last estimate then allows to use Korn's inequality combined with
Sobolev's embedding theorem to find $\cU_{\eps}\ue\weakto u_{0}$
weakly in $\bW^{1,p}(\bQ)$.

\paragraph*{What is the classical strategy?}

The existing results on extension and trace operators for random domains
are focused on a.s. minimally smooth domains. A connected domain $\bP\subset\Rd$
is minimally smooth \cite{stein2016singular} if there exist $(\delta,M)$
such that for every $x\in\partial\bP$ the set $\partial\bP\cap\Ball{\delta}x$
is the graph of a Lipschitz continuous function with Lipschitz constant
less than $M$. It is further assumed that the complement $\Rd\backslash\bP$
consists of uniformly bounded sets. This concept leads to almost sure
construction of uniformly bounded extension operators $W_{\loc}^{1,p}(\bP)\to W^{1,p}(\Rd)$
\cite{guillen2015quasistatic} in the sense that for every bounded
$\bQ$ and every $u\in W^{1,p}(\bQ\cap\bP)$ with $u|_{\Rd\backslash\bQ}\equiv0$
holds 
\begin{equation}
\norm{\nabla\cU u}_{L^{p}(\bQ)}\leq C\norm{\nabla u}_{L^{p}(\bQ\cap\bP)}\,,\qquad\norm{\cU u}_{L^{p}(\bQ)}\leq C\norm u_{L^{p}(\bQ\cap\bP)}\,,\label{eq:uniform-extension-global}
\end{equation}
with $C$ independent from $\bQ$. Similarly, one obtains for the
trace $\cT$ that \cite{piatnitski2020homogenization} 
\[
\norm{\cT u}_{L^{p}(\bQ\cap\partial\bP)}\leq C\left(\norm u_{L^{p}(\bQ\cap\bP)}+\norm{\nabla u}_{L^{p}(\bQ\cap\bP)}\right)\,.
\]
Using a scaling argument to obtain e.g. (\ref{eq:uniform-extension-estimate}),
such extension and trace operators are typically used in order to
treat nonlinearities in homogenization problems.

\paragraph*{Why does this work?}

The theory cited above is directly connected to the theory of Jones
\cite{jones1981quasiconformal} and Duran and Muschietti \cite{duran2004korn}
on so-called John domains. These are precisely the bounded domains
$\bP$ that admit extension operators $W^{1,p}(\bP)\to W^{1,p}(\Rd)$
satisfying 
\[
\norm{\cU u}_{W^{1,p}(\Rd)}\leq C\norm u_{W^{1,p}(\bQ\cap\bP)}\,.
\]
 
\begin{defn*}[John domains]
A bounded domain $\bP\subset\Rd$ is a John domain (a.k.a $(\eps,\delta)$-domain)
if there exists $\eps,\delta>0$ such that for every $x,y\in\bP$
with $\left|x-y\right|<\delta$ there exists a rectifiable path $\gamma:\,[0,1]\to\bP$
from $x$ to $y$ such that 
\begin{align*}
\mathrm{length}\gamma & \leq\frac{1}{\eps}\left|x-y\right|\qquad\text{and}\\
\forall t\in(0,1):\quad\inf_{z\in\Rd\backslash\bP}\left|\gamma(t)-z\right| & \geq\frac{\eps\left|x-\gamma(t)\right|\left|\gamma(t)-y\right|}{\left|x-y\right|}\,.
\end{align*}
\end{defn*}
Because of the locality implied by $\delta$, it is possible to glue
together local extension operators on John domains such as done in
\cite{hopker2014diss} for periodic or \cite{guillen2015quasistatic}
for minimally smooth domains. In the stochastic case one benefits
a lot from the uniform boundedness of the components of $\Rd\backslash\bP$,
which allows to split the extension problem into independent extension
problems on uniformly John-regular domains.

\paragraph*{Why this is not enough for general random domains!}

As one could guess from the emphasis that is put on the above explanations,
random geometries are merely minimally smooth. On an unbounded random
domain $\bP$, the constant $M$ can locally become very large in
points $x\in\partial\bP$, while simultaneously, $\delta$ can become
very small in the very same $x$. In fact, they are not even ``uniformly
John'' as the following, yet deterministic example illustrates.
\begin{example}
Considering 
\[
\bP:=\left\{ \left(x_{1},x_{2}\right)\in\R^{2}:\;\exists n\in\N:\,x_{1}-(2n+1)\in(-1,1],\,x_{2}<\max\left\{ 1,\,n\left|x_{1}-(2n+1)\right|\right\} \right\} 
\]
the Lipschitz constant on $(2n,2n+2)$ is $n$ and it is easy to figure
out that this non-uniformly Lipschitz domain violates the John condition
due to the cups. Hence, a uniform estimate of the form (\ref{eq:uniform-extension-global})
cannot exist.
\end{example}

Therefor, an alternative concept to measure the large scale regularity
of a random geometry is needed. Since the classical results do not
excluded the existence of an estimate 
\begin{equation}
\frac{1}{\left|\bQ\right|}\int_{\bQ}\left|\nabla\cU u\right|^{r}\leq C\left(\frac{1}{\left|\bQ\right|}\int_{\bQ\cap\Ball{\fr}{\bP}}\left|\nabla u\right|^{p}\right)^{\frac{r}{p}}\,,\qquad\frac{1}{\left|\bQ\right|}\int_{\bQ}\left|\cU u\right|^{r}\leq C\left(\frac{1}{\left|\bQ\right|}\int_{\bQ\cap\Ball{\fr}{\bP}}\left|u\right|^{p}\right)^{\frac{r}{p}}\,,\label{eq:non-sc-extension-1}
\end{equation}
or
\begin{equation}
\frac{1}{\left|\bQ\right|}\int_{\bQ}\left|\nablas\cU u\right|^{r}\leq C\left(\frac{1}{\left|\bQ\right|}\int_{\bQ\cap\Ball{\fr}{\bP}}\left|\nablas u\right|^{p}\right)^{\frac{r}{p}}\,,\qquad\frac{1}{\left|\bQ\right|}\int_{\bQ}\left|\cU u\right|^{r}\leq C\left(\frac{1}{\left|\bQ\right|}\int_{\bQ\cap\Ball{\fr}{\bP}}\left|u\right|^{p}\right)^{\frac{r}{p}}\,,\label{eq:non-sc-extension-2}
\end{equation}
where $1\leq r<p$ and $C$ is independent from $\bQ$, such inequalities
will be our goal.

\paragraph*{Our results in a nutshell}

We will provide inequalities of the form (\ref{eq:non-sc-extension-1})--(\ref{eq:non-sc-extension-2})
for a Voronoi-pipe model and for a Boolean model. On the way, we will
provide several concepts and intermediate results that can be reused
in further examples and general considerations such as planed in part
III of this series. Scaled versions (replacing $\eps=m^{-1}$ in Theorems
\ref{thm:Pipes-Model} and \ref{thm:main-Boolean}) of (\ref{eq:non-sc-extension-1})--(\ref{eq:non-sc-extension-2})
can be formulated for functions 
\[
u\in W_{0,\partial\bQ}^{1,p}(\eps\bP\cap\bQ):=\left\{ u\in W^{1,p}(\bQ\cap\eps\bP):\,u|_{(\eps\bP)\cap\partial\bQ}\equiv0\right\} \,,
\]
and will be of the form
\[
\frac{1}{\left|\bQ\right|}\int_{\Rd}\left|\nabla\cU_{\eps}u\right|^{r}\leq C\left(\frac{1}{\left|\bQ\right|}\int_{\bQ\cap\eps\bP}\left|\nabla u\right|^{p}\right)^{\frac{r}{p}}\,,\qquad\frac{1}{\left|\bQ\right|}\int_{\Rd}\left|\cU_{\eps}u\right|^{r}\leq C\left(\frac{1}{\left|\bQ\right|}\int_{\bQ\cap\eps\bP}\left|u\right|^{p}\right)^{\frac{r}{p}}\,,
\]
resp.
\[
\frac{1}{\left|\bQ\right|}\int_{\Rd}\left|\nablas\cU_{\eps}u\right|^{r}\leq C\left(\frac{1}{\left|\bQ\right|}\int_{\bQ\cap\eps\bP}\left|\nablas u\right|^{p}\right)^{\frac{r}{p}}\,,\qquad\frac{1}{\left|\bQ\right|}\int_{\Rd}\left|\cU_{\eps}u\right|^{r}\leq C\left(\frac{1}{\left|\bQ\right|}\int_{\bQ\cap\eps\bP}\left|u\right|^{p}\right)^{\frac{r}{p}}\,,
\]
where the support of $\cU_{\eps}u$ lies within $\Ball{\eps^{\beta}}{\bQ}$
for $\eps$ small enough and some arbitrarily chosen but fixed $\beta\in(0,1)$. 

\subsubsection*{Quantifying properties of random geometries}

As a replacement for periodicity, we introduce the concept of mesoscopic
regularity of a stationary random open set:
\begin{defn}[Mesoscopic regularity]
\label{def:meso-regularity}\nomenclature[Mesoscopic regularity]{Mesoscopic regularity}{Definition \ref{def:meso-regularity}}Let
$\bP$ be a stationary ergodic random open set, let $\tilde{f}$ be
a positive, monotonically decreasing function $\tilde{f}$ with $\tilde{f}(R)\to0$
as $R\to\infty$ and let $\fr>0$ s.t.
\begin{equation}
\P\of{\exists x\in\Ball R0:\,\Ball{4\sqrt{d}\fr}x\subset\Ball R0\cap\bP}\geq1-\tilde{f}\of R\,.\label{eq:cri:stat-erg-ball}
\end{equation}
Then $\bP$ is called $(\fr,\tilde{f})$-mesoscopic regular. $\bP$
is called polynomially (exponentially) regular if $1/\tilde{f}$ grows
polynomially (exponentially).
\end{defn}

As a consequence of Lemmas \ref{lem:Alway-mesoscopic-regular}, \ref{lem:stat-erg-ball}
and \ref{lem:Iso-cone-geo-estimate} we obtain the following.
\begin{cor}[All stationary ergodic random open sets are mesoscopic regular]
\label{cor:Existence-X-r}Let $\bP(\omega)$ be a stationary ergodic
random open set. Then there exists $\fr>0$ and a monotonically decreasing
function with $\tilde{f}(R)\to0$ as $R\to\infty$ such that $\bP$
is $(\fr,\tilde{f})$-mesoscopic regular. Furthermore, there exists
a jointly stationary random point process $\X_{\fr}(\omega)=\left(x_{a}\right)_{a\in\N}$
and for every $a\in\N$ it holds $\Ball{\frac{\fr}{2}}{x_{a}}\subset\bP$
and for all $a,b\in\N$, $a\neq b$, it holds $\left|x_{a}-x_{b}\right|>2\fr$.
Construct from $\X_{\fr}$ a Voronoi tessellation of cells $G_{a}$
with diameter $d_{a}=d(x_{a})$. Then for some constant $C>0$ and
some monotone decreasing $f:\,(0,\infty)\to\R$ and $C>0$ with $f(R)\leq C\tilde{f}(C^{-1}R)$
it holds 
\[
\P\of{d(x_{a})>D}<f\of D\,.
\]
\end{cor}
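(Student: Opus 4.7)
The plan is to piece together the three lemmas cited in the statement in a straightforward chain, after convincing myself what each provides.

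First, I would invoke Lemma \ref{lem:Alway-mesoscopic-regular} to produce the existence of $\fr>0$ and a monotonically decreasing tail function $\tilde f$ with $\tilde f(R)\to 0$ such that $\bP$ satisfies the mesoscopic regularity condition \eqref{eq:cri:stat-erg-ball}. The underlying idea I expect this lemma to implement is the following: because $\bP(\omega)$ is open and $\P(x\in\bP)>0$ on a set of positive Lebesgue measure (otherwise $\bP$ would be empty a.s.\ by stationarity), one can choose $\fr$ small enough so that the event $A_\fr:=\{\exists x:\,\Ball{4\sqrt d\fr}{x}\subset\bP\cap\Ball{4\sqrt d\fr+1}{0}\}$ has strictly positive probability. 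Stationarity plus the ergodic theorem then give that in a large ball $\Ball{R}{0}$ the empirical density of translates of $A_\fr$ tends a.s.\ to a positive constant, so $\P(A_\fr\text{ fails everywhere in }\Ball{R}{0})$ decays in $R$; the resulting decay function is my $\tilde f$.

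Second, I would appeal to Lemma \ref{lem:stat-erg-ball} to extract a jointly stationary point process $\X_\fr=(x_a)_{a\in\N}$ of ball centers with $\Ball{\fr/2}{x_a}\subset\bP$ and minimum pairwise distance $2\fr$. The natural construction is to take the random set $S(\omega):=\{x\in\Rd:\Ball{\fr/2}{x}\subset\bP(\omega)\}$, which is jointly stationary with $\bP$, and apply a deterministic equivariant hard-core thinning (e.g.\ a greedy selection with lexicographic tie-breaking on a measurably chosen order) to obtain the separation $|x_a-x_b|>2\fr$. Joint stationarity is preserved because the thinning rule commutes with translations.

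Third, I would apply Lemma \ref{lem:Iso-cone-geo-estimate} to bound the tails of the Voronoi cell diameters. The crux is that if the Voronoi cell $G_a$ of $x_a$ has diameter $d(x_a)>D$, then by elementary Voronoi geometry no point of $\X_\fr$ lies in some ball $\Ball{cD}{y}$ of radius proportional to $D$ inside $\Ball{D}{x_a}$; but by the construction of $\X_\fr$ any $\Ball{4\sqrt d\fr}{z}\subset\bP$ admits such a center $x_a$ nearby, so the event $\{d(x_a)>D\}$ implies the failure of the mesoscopic regularity event \eqref{eq:cri:stat-erg-ball} in a ball of radius comparable to $D$. This directly translates the tail $\tilde f$ of $\bP$ into the tail $f(D)\leq C\tilde f(C^{-1}D)$ of the cell diameter.

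The three lemmas being given, the only real work in the corollary is to verify that the constants propagate cleanly between steps and that the joint stationarity survives the thinning; I expect the latter to be the most delicate bookkeeping point, since one has to make the greedy selection in a translation-equivariant way without relying on an \emph{a priori} ordering of $\Rd$.
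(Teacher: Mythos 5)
Your high-level plan — chaining Lemmas \ref{lem:Alway-mesoscopic-regular}, \ref{lem:stat-erg-ball} and \ref{lem:Iso-cone-geo-estimate} — is exactly the paper's plan, and Step 1 is fine. But the other two steps deviate from what the lemmas actually provide, and the deviations matter.

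\textbf{Construction of $\X_\fr$.} You propose a translation-equivariant greedy (Matérn-type) thinning of $\{x:\Ball{\fr/2}{x}\subset\bP\}$ and flag the $\Rd$-equivariance as delicate. The paper sidesteps this entirely: Lemma \ref{lem:stat-erg-ball} uses the explicit \emph{lattice} construction $\X_\fr(\omega)=2\fr\Zd\cap\bP_{-\fr}(\omega)$ from Lemma \ref{lem:X-r-stationary}. The mutual distance $>2\fr$ is then automatic (points sit on $2\fr\Zd$), and joint stationarity is only required in the discrete sense of Definition \ref{def:jointly-staionary-points}, i.e.\ w.r.t.\ the shift group $(\tau_{2\fr z})_{z\in\Zd}$, not all of $\Rd$. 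The lattice breaks continuous translation invariance anyway, so a greedy thinning with its equivariance headaches buys nothing here and may not be measurably implementable in the generality needed.

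\textbf{The Voronoi diameter tail.} This is where there is a genuine gap. You argue that $d(x_a)>D$ forces a ball of radius $\propto D$, at some \emph{random} location near $x_a$, to miss $\X_\fr$, hence the mesoscopic regularity event fails there, and "directly" conclude $f(D)\leq C\tilde f(C^{-1}D)$. But to convert "the bad event happens somewhere in $\Ball{cD}{x_a}$" into a probability at a fixed origin you must pay for the location of the empty ball: a union bound over an $\fr$-net of $\Ball{cD}{x_a}$ costs a factor $\sim(D/\fr)^d$, giving only $\P(d(x_a)>D)\lesssim D^d\,\tilde f(cD)$, which is not of the form $C\tilde f(C^{-1}D)$ unless $\tilde f$ decays fast enough to swallow the polynomial — and $\tilde f$ is arbitrary here. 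The paper's detour through isotropic cone mixing is precisely what avoids this: $d(x_a)>D$ implies that one of the $2d$ coordinate cones $\cone_{\pm e_j,\alpha,cD}(x_a)$ is empty of $\X_\fr$, a union of only $2d$ (not $D^d$) events, each controlled by stationarity; this is how Lemma \ref{lem:stat-erg-ball} produces $f(R)=2d\tilde f(((\tan\alpha)^{-1}+1)^{-1}R)$, and Lemma \ref{lem:Iso-cone-geo-estimate}(1) then converts the cone condition into the diameter bound $\P(d(x)>D)<f(C_\alpha^{-1}D/2)$. Without the cone step your argument does not give the claimed constant-factor relationship between $f$ and $\tilde f$.
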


$\fr$, $\X_{\fr}$ and $f$ from Corollary \ref{cor:Existence-X-r}
will play a central role in the analysis. We summarize some of these
properties in the following.
\begin{assumption}
\label{assu:Points-X}Let $\bP$ be a Lipschitz domain and assume
there exists $\X_{\fr}=\left(x_{a}\right)_{a\in\N}$ be a set of points
having mutual distance $\left|x_{a}-x_{b}\right|>2\fr$ if $a\neq b$
and with $\Ball{\frac{\fr}{2}}{x_{a}}\subset\bP$ for every $a\in\N$
(e.g. $\X_{\fr}(\bP)$, see (\ref{eq:def-X_r})).
\end{assumption}

The second important concept to quantify in a stochastic manner is
that of local Lipschitz regularity. 
\begin{defn}[Local $\left(\delta,M\right)$-Regularity]
\label{def:loc-del-M-reg}\nomenclature[delta]{$(\delta,M)$-regularity}{(Definition \ref{def:loc-del-M-reg})}Let
$\bP\subset\Rd$ be an open set. $\bP$ is called \emph{$(\delta,M)$-regular
}in $p_{0}\in\partial\bP$ if there exists an open set $U\subset\R^{d-1}$
and a Lipschitz continuous function $\phi:\,U\to\R$ with Lipschitz
constant greater or equal to $M$ such that $\partial\bP\cap\Ball{\delta}{p_{0}}$
is subset of the graph of the function $\varphi:\,U\to\Rd\,,\;\tilde{x}\mapsto\left(\tilde{x},\phi\of{\tilde{x}}\right)$
in some suitable coordinate system.
\end{defn}

Every Lipschitz domain $\bP$ is \emph{locally $(\delta,M)$-regular
}in every $p_{0}\in\partial\bP$. In what follows, we bound $\delta$
from above by $\fr$ only for practical reasons in the proofs. The
following quantities can be derived from local $(\delta,M)$-regularity.
\begin{defn}
For a Lipschitz domain $\bP\subset\Rd$ and for every $p\in\partial\bP$
and $n\in\N\cup\{0\}$
\begin{align}
\Delta{\left(p\right)} & :=\sup_{\delta<\fr}\left\{ \exists M>0:\,\bP\text{ is }\left(\delta,M\right)\text{-regular in }p\right\} \,,\quad\delta_{\Delta}{\left(p\right)}:=\frac{\Delta{\left(p\right)}}{2}\,,\label{eq:cover-delta-1}\\
M_{r}(p) & :=\inf_{\eta>r}\inf\left\{ M:\,\bP\text{ is }\left(\eta,M\right)\text{-regular in }p\right\} \,,\label{eq:cover-delta-M}\\
\rho_{n}{\left(p\right)} & :=\sup_{r<\delta\of p}r\left(4M_{r}\of p^{2}+2\right)^{-\frac{n}{2}}\,,\label{eq:cover-delta-2}
\end{align}
If no confusion occurs, we write $\delta=\delta_{\Delta}$. Furthermore,
for $c\in(0,1]$ let $\eta(p)=c\delta_{\Delta}(p)$ or $\eta(p)=c\rho_{n}(p)$,
$n\in\N$ and $r\in C^{0,1}(\partial\bP)$ and define\nomenclature[M eta]{$\tilde{M}_{\eta}(x)$}{Equation \eqref{eq:lem:local-delta-M-construction-estimate-1b}, a quantity on $\Rd$}
\begin{align}
\eta_{[r],\Rd}\of x & :=\inf\left\{ \eta\of{\tilde{x}}\,:\;\tilde{x}\in\partial\bP\,\text{s.t. }x\in\Ball{r\of{\tilde{x}}}{\tilde{x}}\right\} \,,\label{eq:lem:local-delta-M-construction-estimate-1}\\
M_{[r,\eta],\Rd}(x) & :=\sup\left\{ M_{r(\tilde{x})}(\tilde{x})\,:\;\tilde{x}\in\partial\bP\,\text{s.t. }x\in\overline{\Ball{\eta\of{\tilde{x}}}{\tilde{x}}}\right\} \,,\label{eq:lem:local-delta-M-construction-estimate-1b}
\end{align}
where $\inf\emptyset=\sup\emptyset:=0$ for notational convenience.
We also write $M_{[\eta],\Rd}(x):=M_{[\eta,\eta],\Rd}(x)$ and $\eta_{\Rd}\of x:=\eta_{[\eta],\Rd}\of x$.
Of course, we can also consider $M_{[r],\partial\bP}:\,p\mapsto M_{r(p)}(p)$
as a function on $\partial\bP$, and we will do this once in Lemma
\ref{lem:M-eta}.

When it comes to application of the abstract results found below,
it is important to have in mind that $\eta$ and $M_{r}$ are quantities
on $\partial\bP$, while $\eta_{[r],\Rd}$ and $M_{[r,\eta],\Rd}$
are quantities on $\Rd$. Hence, while trivially 
\[
\P(\eta_{[r],\Rd}\in(\eta_{1},\eta_{2}))=\lim_{n\to\infty}n^{-d}\left|\bQ\right|^{-1}\left|\left\{ x\in n\bQ:\,\eta_{[r],\Rd}\in(\eta_{1},\eta_{2})\right\} \right|
\]
(and similarly for $M_{[r,\eta],\Rd}$) for every convex bounded open
$\bQ$, we have in mind
\[
\P(\eta\in(\eta_{1},\eta_{2}))=\left(\lim_{n\to\infty}\hausdorffH^{d-1}\of{\partial\bP\cap n\bQ}\right)^{-1}\hausdorffH^{d-1}\of{\left\{ x\in(n\bQ)\cap\partial\bP:\,\eta\in(\eta_{1},\eta_{2})\right\} }\,.
\]

We will prove measurability of $\eta_{[r],\Rd}$ and $M_{[r,\eta],\Rd}$
in Lemma \ref{lem:delta-rho-M-measurable} and see how the weighted
expectations of $\eta_{[r],\Rd}$ and $M_{[r,\eta],\Rd}$ can be estimated
by weighted expectations of $M$ and $\eta$ in Lemma \ref{lem:delta-tilde-construction-estimate}. 
\end{defn}

\subsubsection*{Traces}

The first important result is the boundedness of the trace operator.
\begin{thm}
\label{thm:uniform-trace-estimate-1}Let $\bP\subset\Rd$ be a Lipschitz
domain, $\frac{1}{8}>\fr>0$ and let $\bQ\subset\Rd$ be a bounded
open set and let $1\leq r<p_{0}<p$. Then the trace operator $\cT$
satisfies for every $u\in W_{\loc}^{1,p}\of{\bP}$
\[
\frac{1}{\left|\bQ\right|}\int_{\bQ\cap\partial\bP}\left|\cT u\right|^{r}\leq C\left(\frac{1}{\left|\bQ\right|}\int_{\Ball{\frac{1}{4}}{\bQ}\cap\bP}\left|u\right|^{p}+\left|\nabla u\right|^{p}\right)^{\frac{r}{p}}
\]
where for some constant $C_{0}$ depending only on $p_{0}$, $p$
and $r$ and $d$ and for $\tilde{\rho}=2^{-5}\rho_{1}$ one has 
\begin{align}
C & =C_{0}\left(\frac{1}{\left|\bQ\right|}\int_{\Ball{\frac{1}{4}\fr}{\bQ}\cap\partial\bP}\tilde{\rho}_{\Rd}^{-\frac{1}{p_{0}-r}}\right)^{\frac{p_{0}-r}{p_{0}}}\left(\frac{1}{\left|\bQ\right|}\int_{\Ball{\frac{1}{4}\fr}{\bQ}\cap\bP}\left(1+\tilde{M}_{[\frac{1}{32}\delta],\Rd}\right)^{\left(\frac{1}{p_{0}}+1+\hat{d}\right)\frac{p}{p-p_{0}}}\right)^{\frac{p-p_{0}}{p_{0}p}}\,,\label{eq:lem:uniform-trace-estimate-1-1}\\
C & =C_{0}\left(\frac{1}{\left|\bQ\right|}\int_{\Ball{\frac{1}{4}\fr}{\bQ}\cap\partial\bP}\left(\tilde{\rho}_{\Rd}\left(1+\tilde{M}_{[\frac{1}{32}\delta],\Rd}\right)\right)^{-\frac{1}{p-r}}\right)^{\frac{p-r}{p}}\,.
\end{align}
\end{thm}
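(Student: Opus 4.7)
The plan is to reduce the global trace inequality to local trace inequalities on small balls where $\bP$ is the subgraph of a Lipschitz function, patch these with a Besicovitch-type covering of $\partial\bP\cap\Ball{\fr/4}{\bQ}$, and then apply Hölder's inequality twice to move from the natural local exponent $p_{0}$ to the target exponents $r$ (on the boundary) and $p$ (in the bulk), with the powers of $\tilde{\rho}$ and $1+M$ being absorbed in two different ways to produce the two formulas for $C$.

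First I would prove a local trace estimate. At each $p\in\partial\bP\cap\Ball{\fr/4}{\bQ}$ the choice of $\tilde{\rho}(p)=2^{-5}\rho_{1}(p)$ in \eqref{eq:cover-delta-2} ensures that $\partial\bP\cap\Ball{\tilde{\rho}(p)}{p}$ is the graph of a Lipschitz function whose slope is controlled by $M_{p}:=M_{\frac{1}{32}\delta(p)}(p)$ and, because of the geometric factor $(4M^{2}+2)^{-1/2}$, that this graph does not escape $\Ball{c\tilde{\rho}(p)}{p}$ horizontally. A straightening-of-the-boundary argument followed by the standard half-space trace inequality then yields
\[
\int_{\partial\bP\cap\Ball{\tilde{\rho}(p)}{p}}\left|u\right|^{p_{0}}\leq C\,(1+M_{p})^{\alpha}\Bigl(\tilde{\rho}(p)^{-1}\int_{\bP\cap\Ball{c\tilde{\rho}(p)}{p}}\left|u\right|^{p_{0}}+\tilde{\rho}(p)^{p_{0}-1}\int_{\bP\cap\Ball{c\tilde{\rho}(p)}{p}}\left|\nabla u\right|^{p_{0}}\Bigr)
\]
for an exponent $\alpha=\alpha(d,p_{0})$. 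A Besicovitch covering produces a countable family $\{\Ball{\tilde{\rho}(p_{i})}{p_{i}}\}_{i}$ covering $\partial\bP\cap\Ball{\fr/4}{\bQ}$ with bounded multiplicity, and since $\fr<\frac{1}{8}$ the dilated balls $\Ball{c\tilde{\rho}(p_{i})}{p_{i}}$ stay inside $\Ball{1/4}{\bQ}$. Summing the local inequality over $i$ and interpreting the sums as integrals over $\partial\bP$ resp.\ $\bP$ (with the boundary quantities $\tilde{\rho}(p_{i}),M_{p_{i}}$ replaced by their $\Rd$-versions $\tilde{\rho}_{\Rd},\tilde{M}_{[\frac{1}{32}\delta],\Rd}$ via \eqref{eq:lem:local-delta-M-construction-estimate-1}--\eqref{eq:lem:local-delta-M-construction-estimate-1b}) gives
\[
\int_{\partial\bP\cap\Ball{\fr/4}{\bQ}}\left|u\right|^{p_{0}}\leq C\int_{\bP\cap\Ball{1/4}{\bQ}}\bigl(1+\tilde{M}_{[\frac{1}{32}\delta],\Rd}\bigr)^{\alpha}\Bigl(\tilde{\rho}_{\Rd}^{-1}\left|u\right|^{p_{0}}+\tilde{\rho}_{\Rd}^{p_{0}-1}\left|\nabla u\right|^{p_{0}}\Bigr)\,.
\]

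Next I would pass from $p_{0}$ to the two target exponents. For the first formula \eqref{eq:lem:uniform-trace-estimate-1-1}, Hölder on the boundary with exponents $p_{0}/r$ and $p_{0}/(p_{0}-r)$ produces the boundary factor $\bigl(\int_{\partial\bP}\tilde{\rho}_{\Rd}^{-1/(p_{0}-r)}\bigr)^{(p_{0}-r)/p_{0}}$, and Hölder in the bulk with exponents $p/p_{0}$ and $p/(p-p_{0})$ converts $\int|u|^{p_{0}}+|\nabla u|^{p_{0}}$ into the $L^{p}$-norm while producing the $\tilde{M}$-weight with the exponent $(\frac{1}{p_{0}}+1+\hat{d})\frac{p}{p-p_{0}}$, where the $\hat d$ accounts for the $\tilde\rho$-factor that is re-expressed through $\tilde M$ via the bound $\tilde\rho_\Rd^{-1}\lesssim (1+\tilde M_{[\frac{1}{32}\delta],\Rd})^{\hat d}$ encoded in the definition of $\rho_{1}$. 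For the second formula, one instead absorbs the full weight $\tilde{\rho}_{\Rd}^{-1}(1+\tilde{M}_{[\frac{1}{32}\delta],\Rd})^{\alpha}$ on the boundary side before applying Hölder with exponents $p/r$ and $p/(p-r)$ in one shot, giving the single weighted integral $\bigl(\int_{\partial\bP}(\tilde{\rho}_{\Rd}(1+\tilde{M}))^{-1/(p-r)}\bigr)^{(p-r)/p}$.

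The main obstacle is the bookkeeping: the constant $\rho_{1}$ conflates the radius of validity of the graph representation with the factor $(4M^{2}+2)^{-1/2}$ needed to keep the graph inside the ball, so powers of $\tilde\rho$ and $1+M$ are coupled and must be tracked carefully through the local trace inequality and both Hölder applications. A subtler point is the translation from boundary quantities $\tilde{\rho}(p)$, $M_{p}$ to the $\Rd$-quantities $\tilde{\rho}_{\Rd}$, $\tilde{M}_{[\frac{1}{32}\delta],\Rd}$: the finite-multiplicity Besicovitch cover and the definitions \eqref{eq:lem:local-delta-M-construction-estimate-1}--\eqref{eq:lem:local-delta-M-construction-estimate-1b} (taking $\inf$ of $\eta$ and $\sup$ of $M$ over nearby boundary points) ensure this is lossless up to a dimensional constant, but this is the step that must be executed cleanly for the two closing Hölder inequalities to yield the stated constants $C$.
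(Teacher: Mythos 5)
Your outline reproduces the architecture of the paper's proof in Section \ref{subsec:Proof-traces}: cover $\partial\bP$ with the $\tilde\rho$-adapted balls from Corollary \ref{cor:cover-boundary}, apply a local straightened trace estimate on each, and split the $\tilde\rho$- and $M$-weights by two H\"older steps. However, there is a genuine gap in your local trace estimate, and it undermines your explanation of the exponent $\hat d$. The local trace estimate (Lemma \ref{lem:basic-trace}) controls $\|\cT u\|_{L^{p_0}(\partial\bP\cap\Ball{\rho}{p})}$ by $\|u\|_{W^{1,p_0}(\bP\cap\Ball{\delta}{p})}$ with $\rho=\delta\sqrt{4M^2+2}^{-1}$: the bulk integral lives on the \emph{larger} ball $\Ball{\delta(p)}{p}$, not on $\Ball{c\tilde\rho(p)}{p}$ for an absolute constant $c$. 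The ratio $\delta(p)/\tilde\rho(p)\approx 2^{5}\sqrt{4M_p^2+2}$ is unbounded in $M_p$; the flattening map needs the Lipschitz graph structure on the whole large ball, and the pullback of $\bP\cap\Ball{\rho}{p}$ under flattening fills a region of diameter comparable to $\delta(p)$. ``Does not escape $\Ball{c\tilde\rho(p)}{p}$ horizontally'' is not the relevant containment.

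As a consequence, the dilated balls $\hat B_i:=\Ball{\hat\rho_i}{p_i}$ with $\hat\rho_i\le\delta_i$, on which the bulk integrals live, are \emph{not} boundedly overlapping; only the boundary covering $\{\Ball{\tilde\rho_i}{p_i}\}$ is. The overlap count obeys $\sum_i\chi_{\hat B_i}(x)\le C\bigl(1+M_{[\frac{3\delta}{8},\frac{\delta}{8}],\Rd}(x)\bigr)^{\hat d}$ by \eqref{eq:lem:properties-local-rho-convering-4} (with $n=1$), and \emph{this} is the origin of the $\hat d$ in \eqref{eq:lem:uniform-trace-estimate-1-1}. Your alternative explanation --- trading $\tilde\rho_\Rd^{-1}$ for $(1+\tilde M)^{\hat d}$ --- does not hold: by \eqref{eq:def-rho-of-p}, $\rho_1(p)=\sup_{r<\delta(p)}r(4M_r(p)^2+2)^{-1/2}$ only yields $\tilde\rho(p)^{-1}\lesssim\delta(p)^{-1}(1+M_p)$, which carries an uncontrolled $\delta^{-1}$ and the wrong $M$-power. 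In the paper's argument the $\tilde\rho_k^{-1/p_0}$ factor from Lemma \ref{lem:basic-trace} is moved entirely onto the boundary side in the first H\"older step (producing the boundary integral of a negative power of $\tilde\rho_\Rd$), so it never appears in the bulk and never needs to be converted to $M$. To repair your proposal: use bulk balls of radius $\hat\rho(p)$ (as in \eqref{eq:def-rhohat-of-p}) in the local trace estimate, perform the first H\"older step on the boundary \emph{before} passing to the bulk so $\tilde\rho_k^{-1/p_0}$ stays with the boundary integral, and then pick up the $(1+\tilde M)^{\hat d}$ weight from the $\hat B_k$-overlap when converting the bulk sum into a single bulk integral, prior to the second (bulk) H\"older step.
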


\begin{proof}
This is proved in Section \ref{subsec:Proof-traces}.
\end{proof}

\subsubsection*{Local Covering of $\partial\protect\bP$}

In view of Corollary \ref{cor:cover-boundary}, for every $n=1$ or
$n=2$ there exist a complete covering of $\partial\bP$ by balls
$\Ball{\tilde{\rho}_{n}\left(p_{i}^{n}\right)}{p_{i}^{n}}$, $\left(p_{i}^{n}\right)_{i\in\N}$,
where $\tilde{\rho}_{n}\of p:=2^{-5}\rho_{n}\of p$. We write $\tilde{\rho}_{n,i}:=\tilde{\rho}_{n}(p_{i}^{n})$. 
\begin{defn}[Microscopic regularity and extension order]
\label{assu:M-alpha-bound} The inner microscopic regularity $\alpha$
is 
\[
\alpha:=\inf\left\{ \tilde{\alpha}\geq0:\;\forall p\in\partial\bP\exists y\in\bP:\,\Ball{\tilde{\rho}(p)/32(1+M_{\tilde{\rho}(p)}(p)^{\tilde{\alpha}})}p\subset\Ball{\tilde{\rho}(p)/8}p\right\} \,.
\]
\end{defn}

In Lemma \ref{lem:small-ball-in-cone} we will see that indeed $\alpha\leq1$.
\begin{defn}[Extension order]
\label{def:extension-order} The geometry is \emph{of extension order}
$n\in\N\cup\{0\}$ if there exists $C>0$ such that for almost every
$p\in\partial\bP$ there exists a local extension operator 
\begin{align}
\cU:\,W^{1,p}(\Ball{\frac{1}{8}\delta(p)}p\cap\bP) & \to W^{1,p}(\Ball{\frac{1}{8}\rho_{n}(p)}p)\,,\nonumber \\
\norm{\nabla\cU u}_{L^{p}(\Ball{\frac{1}{8}\rho_{n}(p)}p)} & \leq C\left(1+M_{\frac{1}{8}\delta(p)}(p)\right)\norm{\nabla u}_{L^{p}(\Ball{\frac{1}{8}\delta(p)}p)}\,.\label{eq:ex-order-estimate-1}
\end{align}
The geometry is \emph{of symmetric extension order} $n\in\N\cup\{0\}$
if there exists $C>0$ such that for almost every $p\in\partial\bP$
there exists a local extension operator 
\begin{align}
\cU:\,\bW^{1,p}(\Ball{\frac{1}{8}\delta(p)}p\cap\bP) & \to\bW^{1,p}(\Ball{\frac{1}{8}\rho_{n}(p)}p)\,,\nonumber \\
\norm{\nablas\cU u}_{L^{p}(\Ball{\frac{1}{8}\rho_{n}(p)}p)} & \leq C\left(1+M_{\frac{1}{8}\delta(p)}(p)\right)^{2}\norm{\nablas u}_{L^{p}(\Ball{\frac{1}{8}\delta(p)}p)}\,.\label{eq:ex-order-estimate-2}
\end{align}
\end{defn}

Corollary \ref{cor:maximal-extension-order} shows that every locally
Lipschitz geometry is of extension order $n=1$ and every locally
Lipschitz geometry is of symmetric extension order $n=2$. However,
better results for $n$ are possible, as we will see below. 

\subsubsection*{Global Tessellation of $\protect\bP$}

Let $\X=\left(x_{a}\right)_{a\in\N}$ be a jointly stationary point
process with $\bP$ such that $\Ball{\fr}{\X}\subset\bP$. In this
work, we will often assume that $\left|x_{a}-x_{b}\right|>2\fr$ for
all $a\neq b$ for simplicity in Sections \ref{subsec:The-Issue-of-Connectedness}
and \ref{sec:Sample-Geometries}. The existence of such a process
is always guarantied by Lemmas \ref{lem:Alway-mesoscopic-regular}
and \ref{lem:stat-erg-ball}. Its choice in a concrete example is,
however, delicate. Worth mentioning, for most of the theory developed
until the end of Section \ref{sec:Extension-and-Trace-d-M} (Except
for Lemmas \ref{lem:Iso-cone-geo-estimate} and \ref{lem:estim-E-fa-fb}
which are not used before Section \ref{subsec:The-Issue-of-Connectedness}),
is completely independent from this mutual minimal distance assumption.

From $\X$ we construct a Voronoi tessellation with cells $\left(G_{a}\right)_{a\in\N}$
and we chose for each $x_{a}$ a radius $\fr_{a}\leq\fr$ with $\Ball{\fr_{a}}{x_{a}}\subset G_{a}\cap\bP$.
Again, using Corollary \ref{cor:Existence-X-r}, we assume that $\fr_{a}=\fr$
is constant for simplicity. 

\subsubsection*{Extensions I: Gradients}
\begin{notation}
\label{nota:extension-1}Given $n\in\{0,1\}$ and $\alpha\in[0,1]$
we chose 
\begin{equation}
\fr_{n,\alpha,i}:=\tilde{\rho}_{n,i}/32(1+M_{\tilde{\rho}_{n,i}}(p_{n,i})^{\alpha})\label{eq:def-fr-n-alph}
\end{equation}
 and some $y_{n,\alpha,i}$ such that 
\begin{equation}
B_{n,\alpha,i}:=\Ball{\fr_{n,\alpha,i}}{y_{n,\alpha,i}}\subset\bP\cap\Ball{\frac{1}{8}\tilde{\rho}_{n,i}}{p_{n,i}}\,.\label{eq:condi-fr-n-i}
\end{equation}
 and for every $i$ and $a$, we define 
\[
\tau_{n,\alpha,i}u:=\fint_{B_{n,\alpha,i}}u\,,\qquad\cM_{a}u:=\fint_{\Ball{\frac{\fr_{a}}{16}}{x_{a}}}u\,,
\]
local averages close to $\partial\bP$ and in $x_{a}$. We say that
$x_{a}\sim\sim x_{b}$ if $G_{a}\cap\Ball{\fr}{G_{b}}\neq\emptyset$
and we say $x_{a}\in\X_{\fr}(\bQ)$ if $\Ball{\fr}{G_{a}}\cap\bQ\neq\emptyset$.
Based on (\ref{eq:def-extension-op}) we obtain the following extension
result.
\end{notation}

\begin{thm}
\label{thm:Main-2}Let $\fr>0$ and let $\bP\subset\Rd$ be a stationary
ergodic random Lipschitz domain such that Assumption \ref{assu:Points-X}
holds for $\X=\left(x_{a}\right)_{a\in\N}$ and $\bP$ has microscopic
regularity $\alpha$ with extension order $n$. Let $\bQ\subset\Rd$
be a bounded open set with $\Ball{\frac{1}{4}}0\subset\bQ$ and let
$1\leq r<p$. Furthermore, let 
\[
\E\of{\left(\left(1+M_{[\frac{3\delta}{8},\frac{\delta}{8}],\Rd}\right)^{nd}\left(1+M_{[\frac{1}{8}\delta],\Rd}\right)^{r}\left(1+M_{[\tilde{\rho}_{n}],\Rd}\right)^{\alpha(d-1)}\right)^{\frac{p}{p-r}}}<\infty
\]
then there exist $C>0$ depending only on $d$, $r$ and $p$ such
that for a.e. $\omega$ there exists an extension operator $\cU_{\omega}:\,W_{\loc}^{1,p}(\bP(\omega))\to W_{\loc}^{1,p}(\Rd)$
and $C_{\omega}>0$ such that for every $m\geq1$ and every $u\in W^{1,p}(\bP(\omega))$
with $u|_{\bP(\omega)\backslash m\bQ}\equiv0$ it holds
\begin{align*}
\frac{1}{\left|m\bQ\right|}\int_{m\bQ}\left|\nabla\left(\cU_{\omega}u\right)\right|^{r} & \leq C_{\omega}\left(\frac{1}{m^{d}}\int_{\bP\cap\Ball{\fr}{m\bQ}}\left|\nabla u\right|^{p}\right)^{\frac{r}{p}}\\
 & +C\frac{1}{m^{d}}\int_{\bP\cap\Ball{\fr}{m\bQ}}\sum_{\substack{i\neq0}
}\sum_{a}\tilde{\rho}_{\bP}^{-r}\chi_{\Ball{\frac{\fr}{2}}{G_{a}}}\chi_{\Ball{\tilde{\rho}_{n,i}}{p_{n,i}}}\left|\tau_{n,\alpha,i}u-\cM_{a}u\right|^{r}\\
 & \quad+C\left|\frac{1}{m^{d}}\int_{\bP\cap m\bQ}\sum_{a}\sum_{a\sim\sim b}\chi_{\Ball{\fr}{G_{a}}}\left|\cM_{a}u-\cM_{b}u\right|\right|^{r}\,,\\
\frac{1}{\left|m\bQ\right|}\int_{m\bQ}\left|\cU_{\omega}u\right|^{r}\leq & C_{\omega}\left(\frac{1}{m^{d}}\int_{\bP\cap\Ball{\fr}{m\bQ}}\left|u\right|^{p}\right)^{\frac{r}{p}}\,.
\end{align*}
\end{thm}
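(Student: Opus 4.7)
The plan is to build $\cU_\omega u$ by gluing local pieces with a partition of unity subordinate to the boundary covering $\{\Ball{\tilde\rho_{n,i}}{p_{n,i}}\}_i$ from Corollary \ref{cor:cover-boundary} and to the Voronoi cells $\{G_a\}_a$ generated by $\X_{\fr}$. Concretely, pick cutoffs $\phi_i^\partial$ supported in $\Ball{\tilde\rho_{n,i}}{p_{n,i}}$ with $|\nabla\phi_i^\partial|\le C\tilde\rho_{n,i}^{-1}$ and cutoffs $\phi_a$ supported in $\Ball{\fr}{G_a}$ with $|\nabla\phi_a|\le C\fr^{-1}$ that form a partition of unity on $\Ball{\fr}{m\bQ}$. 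Writing $\cU_i^{\loc}$ for the local extension operator of Definition~\ref{def:extension-order}, the natural extension formula (matching (\ref{eq:def-extension-op})) is
\[
\cU_\omega u = \sum_i\phi_i^\partial\bigl(\cU_i^{\loc}(u-\tau_{n,\alpha,i}u)+\tau_{n,\alpha,i}u\bigr)+\sum_a\phi_a u\,.
\]
Inside a Voronoi cell well inside $\bP$ this is just $u$; inside a boundary ball $u$ is first centered by its inner average $\tau_{n,\alpha,i}u$ and then extended across $\partial\bP$.

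Applying the product rule, $\nabla\cU_\omega u$ splits into two parts. The \emph{good part}, comprising $\phi_i^\partial\nabla\cU_i^{\loc}(u-\tau_{n,\alpha,i}u)$ and $\phi_a\nabla u$, is controlled pointwise using (\ref{eq:ex-order-estimate-1}) and the covering multiplicity (bounded by $C(1+M)^{nd}$ because $\tilde\rho_n=\delta/(1+M^2)^{n/2}$) by
\[
C\,(1+M_{[\frac{3\delta}{8},\frac{\delta}{8}],\Rd})^{n}(1+M_{[\frac{1}{8}\delta],\Rd})\,\chi_{\Ball{\fr}{\bP}}\,|\nabla u|\,.
\]
The \emph{error part}, made of $\nabla\phi_i^\partial$ and $\nabla\phi_a$ contributions, is rewritten using $\sum_i\nabla\phi_i^\partial+\sum_a\nabla\phi_a=0$ as the discrete gradient of the piecewise-constant auxiliary function with values $\tau_{n,\alpha,i}u$ on $\Ball{\tilde\rho_{n,i}}{p_{n,i}}$ and $\cM_au$ on $G_a$. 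This yields exactly $\tilde\rho_{n,i}^{-1}|\tau_{n,\alpha,i}u-\cM_au|$ on the overlap between a boundary ball and a neighbouring cell, and $\fr^{-1}|\cM_au-\cM_bu|$ on the overlap of two adjacent cells $a\sim\sim b$; raising to the $r$-th power reproduces the two error sums in the theorem.

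The passage from the pointwise $L^p$-bound on the good part to an $L^r$-bound with a deterministic constant is done by H\"older's inequality with exponents $p/r$ and $p/(p-r)$, which produces the random weight
\[
\left(\frac{1}{m^d}\int_{\Ball{\fr}{m\bQ}\cap\bP}\bigl((1+M_{[\frac{3\delta}{8},\frac{\delta}{8}],\Rd})^{nd}(1+M_{[\frac{1}{8}\delta],\Rd})^{r}(1+M_{[\tilde\rho_n],\Rd})^{\alpha(d-1)}\bigr)^{\frac{p}{p-r}}\right)^{\frac{p-r}{p}},
\]
where the extra factor $(1+M)^{\alpha(d-1)}$ accounts for a Poincar\'e step needed to replace $\fint_{\Ball{\tilde\rho_{n,i}}{p_{n,i}}\cap\bP}u$ by $\tau_{n,\alpha,i}u$, losing a volume ratio $|B_{n,\alpha,i}|/|\Ball{\tilde\rho_{n,i}}{p_{n,i}}|\sim(1+M^\alpha)^{-(d-1)}$. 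By Birkhoff's ergodic theorem and the finite-expectation hypothesis, this weight converges almost surely as $m\to\infty$ to a finite deterministic limit, which is $C_\omega$. The $L^r$-bound on $\cU_\omega u$ itself is proved analogously using $|\cU_\omega u|\leq\sum_i\phi_i^\partial|\cU_i^{\loc}(u-\tau_{n,\alpha,i}u)|+|\tau_{n,\alpha,i}u|+|u|$ together with the $L^p$-continuity of $\cU_i^{\loc}$ and Poincar\'e on the local balls.

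The main obstacle is the simultaneous bookkeeping of the three scales $\delta$, $\tilde\rho_n$, $\fr_{n,\alpha,i}$ with their $M$-power penalties: covering multiplicity $(1+M)^{nd}$, extension-order penalty $(1+M)^{r}$, and Poincar\'e volume loss $(1+M)^{\alpha(d-1)}$ combine to give exactly the hypothesized moment. Translating the $M$- and $\rho$-quantities that live on $\partial\bP$ into the $\Rd$-quantities $M_{[\cdot,\cdot],\Rd}$ that enter the expectation relies on Lemma~\ref{lem:delta-tilde-construction-estimate}, and the ergodic-theorem step requires the joint stationarity of $\X_{\fr}$ with $\bP$ guaranteed by Corollary~\ref{cor:Existence-X-r}.
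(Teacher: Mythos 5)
Your high-level plan (glue local extensions with a partition of unity, track the $M$-penalties through H\"older, then invoke the ergodic theorem for the random constant) is the right one, and your bookkeeping of the three $M$-powers $(1+M)^{nd}$, $(1+M)^r$, $(1+M)^{\alpha(d-1)}$ matches the hypothesis. But the extension operator you wrote down is not well defined and, in particular, cannot produce the error terms you claim.

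Your formula $\cU_\omega u = \sum_i\phi_i^\partial\bigl(\cU_i^{\loc}(u-\tau_{n,\alpha,i}u)+\tau_{n,\alpha,i}u\bigr)+\sum_a\phi_a u$ has no value assigned in the interior of the holes $\Rd\backslash\bP$ at distance $\gtrsim\tilde\rho_n$ from $\partial\bP$: there $\phi_i^\partial$ vanish (they are supported in $\Ball{\tilde\rho_{n,i}}{p_{n,i}}$), and $\phi_a u$ is literally undefined because $u$ is not. The paper's setting (percolating holes, Boolean complements) explicitly allows these regions to be large. The paper's operator (\ref{eq:def-extension-op}) fills them in by $\sum_a\Phi_a\cM_a u$, and this filling is not an optional refinement --- it is exactly where the Voronoi tessellation enters. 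Without it, $\nabla\cU_\omega u$ has no reason to involve $\cM_au-\cM_bu$ at all; your claimed error term ``$\fr^{-1}|\cM_au-\cM_bu|$ on the overlap of two adjacent cells'' cannot arise from a formula that nowhere contains $\cM_a$. (Indeed, the rewriting $\sum_a\cM_a u\nabla\Phi_a=\sum_{a,b}\frac{\partial_l\Phi_a|\partial_l\Phi_b|}{D_{l+}^\Phi}(\cM_au-\cM_bu)$ that the paper uses, via Lemma~\ref{lem:conv-sum-0} and $\sum_a\nabla\Phi_a=0$, presupposes the $\Phi_a\cM_a u$ term.) Once that term is present one also needs the subtraction $-\cM_a u$ inside each local piece, i.e.\ the double sum $\sum_a\Phi_a\bigl(\phi_{n,i}(\cU_{n,i}(u-\tau_{n,\alpha,i}u)+\tau_{n,\alpha,i}u-\cM_au)+\cM_au\bigr)$, so that the residual partition-of-unity commutator produces $\tau_{n,\alpha,i}u-\cM_au$ rather than the raw extended values.

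There is also a secondary mismatch: the second error term in the theorem is $\bigl|\fint\dots\bigr|^r$ (the $r$-th power sits outside the integral), which comes from the convex-combination rearrangement of Lemma~\ref{lem:conv-sum-0} applied before integrating. Your ``raise to the $r$-th power'' step would naturally give $\fint|\dots|^r$ instead; for $r\ge1$ neither form implies the other, so this is not a cosmetic difference. Fixing both issues --- adding the $\Phi_a\cM_a u$ filling and running the $I_1,I_2,I_3$ split of the paper's Lemma~\ref{lem:local-delta-M-extension-estimate} on $\bQ\backslash\bP$ --- brings your argument in line with the paper's and the remaining H\"older/ergodic steps then go through as you describe.
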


\begin{proof}
This is a consequence of Lemma \ref{lem:local-delta-M-extension-estimate}. 
\end{proof}
In case one is interested in a weaker estimate on the extension operator,
we propose the following:
\begin{thm}
Under the assumptions of Theorem \ref{thm:Main-2} let additionally
\[
\E\of{\tilde{\rho}_{\bP}^{-\frac{rp}{p-r}}}<\infty
\]
then there exists an extension operator $\cU_{\omega}:\,W_{\loc}^{1,p}(\bP(\omega))\to W_{\loc}^{1,p}(\Rd)$
such that for every $m\geq1$ and every $u\in W^{1,p}(\bP(\omega))$
with $u|_{\bP(\omega)\backslash m\bQ}\equiv0$ it holds
\[
\frac{1}{\left|m\bQ\right|}\int_{m\bQ}\left(\left|\nabla\left(\cU_{\omega}u\right)\right|^{r}+\left|\cU_{\omega}u\right|^{r}\right)\leq C_{\omega}\left(\frac{1}{m^{d}}\int_{\bP\cap\Ball{\fr}{m\bQ}}\left(\left|\nabla u\right|^{p}+\left|u\right|^{p}\right)\right)^{\frac{r}{p}}\,.
\]
\end{thm}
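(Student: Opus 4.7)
The plan is to start from the three-term bound on $\frac{1}{|m\bQ|}\int_{m\bQ}|\nabla\cU_\omega u|^r$ provided by Theorem \ref{thm:Main-2} and to absorb the two remainder terms (the one involving $\tilde{\rho}_{\bP}^{-r}|\tau_{n,\alpha,i}u-\cM_a u|^r$ and the one involving the differences $|\cM_a u-\cM_b u|^r$) into the principal term of the desired form $C_\omega\bigl(m^{-d}\int_{\bP\cap\Ball{\fr}{m\bQ}}|\nabla u|^p\bigr)^{r/p}$. The $L^r$-bound on $\cU_\omega u$ itself is inherited directly from Theorem \ref{thm:Main-2}, so the only work concerns the gradient term.

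For the cell-to-cell contribution, I would use a Poincaré inequality on a connected subdomain of $\bP$ joining $\Ball{\fr_a/16}{x_a}$ and $\Ball{\fr_b/16}{x_b}$ whenever $x_a\sim\sim x_b$. This yields $|\cM_a u-\cM_b u|^p\leq C\int_{T_{ab}}|\nabla u|^p$ on a tube $T_{ab}\subset\bP\cap\Ball{\fr}{G_a\cup G_b}$. Raising to the power $r/p$, summing over admissible pairs with Jensen's inequality and the finite overlap of the tubes (built into the $\sim\sim$ relation), and applying Hölder with exponents $p/r$ and $p/(p-r)$ to isolate $|\nabla u|^p$ from a weight depending on cell sizes, Birkhoff's ergodic theorem converts the weight average into an almost sure constant because cell diameters are controlled in terms of $\tilde{\rho}_{\bP}$ via the mesoscopic regularity, and the moment hypothesis $\E(\tilde{\rho}_{\bP}^{-rp/(p-r)})<\infty$ then closes the estimate.

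For the boundary-to-cell contribution, I would likewise apply a Poincaré inequality along a path in $\bP$ from the reference ball $B_{n,\alpha,i}\subset\Ball{\tilde{\rho}_{n,i}/8}{p_{n,i}}$ to the nearby cell-center ball $\Ball{\fr_a/16}{x_a}$, yielding $|\tau_{n,\alpha,i}u-\cM_a u|^p\leq C\tilde{\rho}_{n,i}^{p}\fint_{N_{i,a}}|\nabla u|^p$ on a local neighbourhood $N_{i,a}\subset\bP$ of volume comparable to $\tilde{\rho}_{n,i}^d$. After raising to the power $r/p$, multiplying by the weight $\tilde{\rho}_{n,i}^{-r}$ and summing in $i,a$, a Hölder inequality with conjugate exponents $p/r$ and $p/(p-r)$ gives
\[
\tfrac{1}{m^d}\!\sum_{i,a}\!\int\!\chi_{\Ball{\fr/2}{G_a}}\chi_{\Ball{\tilde{\rho}_{n,i}}{p_{n,i}}}\tilde{\rho}_{\bP}^{-r}|\nabla u|^r\leq \Bigl(\tfrac{1}{m^d}\!\int_{\bP\cap\Ball{\fr}{m\bQ}}\!|\nabla u|^p\Bigr)^{\!\frac{r}{p}}\Bigl(\tfrac{1}{m^d}\!\int_{\Ball{\fr}{m\bQ}}\!\tilde{\rho}_{\bP}^{-\frac{rp}{p-r}}\Bigr)^{\!\frac{p-r}{p}}.
\]
By the Birkhoff ergodic theorem the second factor converges almost surely as $m\to\infty$ to the finite quantity $\bigl(|\bQ|\,\E(\tilde{\rho}_{\bP}^{-rp/(p-r)})\bigr)^{(p-r)/p}$; for fixed $m$ it is bounded on a set of full measure, giving an $\omega$-dependent but $m$-independent constant $C_\omega$.

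The main obstacle I anticipate is the quantitative Poincaré step from $B_{n,\alpha,i}$ to $\Ball{\fr_a/16}{x_a}$ inside $\bP$: the connecting tube must be chosen with a width comparable to $\tilde{\rho}_{n,i}$ so that \emph{exactly one} power $\tilde{\rho}_{n,i}^{-r}$ arises, matching precisely the moment assumption $\E(\tilde{\rho}_{\bP}^{-rp/(p-r)})<\infty$; any higher power would necessitate a stronger moment hypothesis. Constructing such a tube relies on the local $(\delta,M)$-regularity of $\partial\bP$, the microscopic regularity $\alpha$ of Definition \ref{assu:M-alpha-bound} and the connectedness implicit in the choice of $\X_{\fr}$. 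Once the tube is in place, the remaining combination of Poincaré, Hölder and Birkhoff is routine.
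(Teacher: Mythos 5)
Your proposal keeps the extension operator from Theorem \ref{thm:Main-2} unchanged and tries to absorb the two remainder terms by Poincar\'e estimates along tubes connecting the averaging balls. That is not what the paper does, and more importantly it cannot work in the generality of Theorem \ref{thm:Main-2}: the remainder terms involving $|\tau_{n,\alpha,i}u-\cM_{a}u|$ and $|\cM_{a}u-\cM_{b}u|$ encode the \emph{connectivity} of the random domain, and the paper deliberately leaves them untreated there because no uniform Poincar\'e inequality along connecting paths is available without further geometric hypotheses. Indeed the entire Section \ref{subsec:The-Issue-of-Connectedness} and the two model geometries of Section \ref{sec:Sample-Geometries} exist precisely to control these terms in specific settings. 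If your scheme succeeded under only $\E(\tilde{\rho}_{\bP}^{-rp/(p-r)})<\infty$, it would yield a bound of $\int|\nabla\cU u|^r$ by $(\int|\nabla u|^p)^{r/p}$ alone, making the $|u|^p$ term on the right-hand side superfluous and rendering Theorems \ref{thm:Pipes-Model} and \ref{thm:main-Boolean} unnecessary; the presence of $|u|^p$ is the signal that something else is going on.

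Concretely, your Poincar\'e step is where the argument breaks. The ball $B_{n,\alpha,i}$ sits at scale $\tilde{\rho}_{n,i}$ next to $\partial\bP$, while $\Ball{\fr_a/16}{x_a}$ sits at the fixed scale $\fr$, a mesoscopic distance of order $d_a$ away. Any connected subdomain of $\bP$ joining them must squeeze down to width $\sim\tilde{\rho}_{n,i}$ near the boundary and travel distance $\sim d_a$, so the resulting Poincar\'e constant carries a factor like $(d_a/\tilde{\rho}_{n,i})^{d}$ (compare Lemma \ref{lem:Poincare-ball} and Corollary \ref{cor:Poincare-zylinder}), not merely $\tilde{\rho}_{n,i}^{p}$. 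You acknowledge this as the ``main obstacle,'' but it is not an obstacle that can be engineered away by choosing the tube cleverly; the exponent on $\tilde{\rho}$ is forced to be larger than $r$, which would require a stronger moment assumption than the stated $\E(\tilde{\rho}_{\bP}^{-rp/(p-r)})<\infty$. Also, the existence of \emph{any} connecting tube inside $\bP$ between $p_{n,i}$ and $x_a$, or between $x_a$ and $x_b$, is not an assumption in force here; in a general stationary ergodic random open set the two regions need not even lie in the same connected component of $\bP\cap\Ball{\fr}{G_a\cup G_b}$.

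The paper's actual proof is one line: ``replace $\cM_{a}u$ in the definition of $\cU u$ by $0$.'' With that modification, $\sum_a\Phi_a=1$ makes the operator reduce to $\chi_{\bQ\backslash\bP}\sum_{i\neq0}\phi_{n,i}\bigl(\cU_{n,i}(u-\tau_{n,\alpha,i}u)+\tau_{n,\alpha,i}u\bigr)$, which no longer references the mesoscopic tessellation at all. The cell-to-cell term $I_3$ vanishes identically because $\sum_a\cM_{a}u\,\nabla\Phi_a$ becomes $0$, and the boundary-to-cell term reduces to $\tilde{\rho}^{-r}|\tau_{n,\alpha,i}u|^r$, which is bounded by a weighted local average of $|u|^p$ via Jensen, with H\"older against $\tilde{\rho}_{\bP}^{-rp/(p-r)}$ and the ergodic theorem closing the estimate. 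This is why $|u|^p$ appears on the right: the modified operator trades the gradient-only control (and the unresolved connectivity terms) for a combined $W^{1,p}$ control that avoids connectivity entirely.
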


\begin{proof}
This is a consequence of the proof of Lemma \ref{lem:local-delta-M-extension-estimate},
replacing $M_{a}u$ in the definition of $\cU u$ by $0$. 
\end{proof}

\subsubsection*{Percolation and Connectivity}

The terms depending on $\left|\tau_{n,\alpha,i}u-\cM_{a}u\right|$
or $\left|\cM_{a}u-\cM_{a}u\right|$ appearing on the right hand side
in Theorem \ref{thm:Main-2} need to be replaced by an integral over
$\left|\nabla u\right|^{p}$. Here, the pathwise topology of the geometry
comes into play. By this we mean that we have to integrate the gradient
of $u$ over a path connecting e.g. $p_{i}$ and $x_{a}$. Here, the
mesoscopic properties of the geometry will play a role. In particular,
we need pathwise connectedness of the random domain, a phenomenon
which is known as percolation in the theory of random sets. We will
discuss two different examples to see that these terms can indeed
be handled in application, but shift a general discussion of arbitrary
geometries to a later publication.

\subsubsection*{Extensions II: Symmetric gradients}

We now turn to the situation that $u$ is a $\Rd$-valued function
and that the given PDE system yields only estimates for $\nablas u=\frac{1}{2}\left(\nabla u+(\nabla u)^{T}\right)$.
We introduce the following quantities:
\begin{defn}
\label{def:cU-Q-sym}Given $n\in\{0,1,2\}$ and $\alpha\in[0,1]$
such that such that (\ref{eq:condi-fr-n-i}) holds for $\fr_{i}=\fr_{n,\alpha,i}$
for every $i$ let for $i,a$ 
\begin{align*}
\onablabot_{n,\alpha,i}u & :=\fint_{\Ball{\fr_{n,\alpha,i}}{y_{n,\alpha,i}}}\left(\nabla u-\nablas u\right)\,, & \left[\tau_{n,\alpha,i}^{\fs}u\right](x):= & \onablabot_{n,\alpha,i}u\,\left(x-y_{2,i}\right)+\fint_{\Ball{\fr_{n,\alpha,i}}{y_{n,\alpha,i}}}u\,,\\
\onablabot_{a}u & :=\fint_{\Ball{\frac{\fr_{a}}{16}}{x_{a}}}\left(\nabla u-\nablas u\right)\,, & \left[\cM_{a}^{\fs}u\right](x):= & \onablabot_{a}u\,\left(x-x_{a}\right)+\fint_{\Ball{\frac{\fr_{a}}{16}}{x_{a}}}u\,.
\end{align*}
\end{defn}

Using above introduced notation and $\bW$ do denote $\Rd$-valued
Sobolev spaces, we find the following.
\begin{thm}
\label{thm:Main-4}Let $\fr>0$ and let $\bP\subset\Rd$ be a stationary
ergodic random Lipschitz domain such that Assumption \ref{assu:Points-X}
holds for $\X=\left(x_{a}\right)_{a\in\N}$ and $\bP$ has microscopic
regularity $\alpha$ with symmetric extension order $n\leq2$. Let
$\bQ\subset\Rd$ be a bounded open set with $\Ball{\frac{1}{4}}0\subset\bQ$
and let $1\leq r<p_{0}<p$. Furthermore, let 
\[
\E\of{\left(\left(1+M_{[\frac{3\delta}{8},\frac{\delta}{8}],\Rd}\right)^{nd}\left(1+M_{[\frac{1}{8}\delta],\Rd}\right)^{2r}\left(1+M_{[\tilde{\rho}_{n}],\Rd}\right)^{\alpha(d-1)}\right)^{\frac{p}{p-r}}}<\infty
\]
then hen there exist $C>0$ depending only on $d$, $r$, $s$ and
$p$ such that for a.e. $\omega$ there exists an extension operator
$\cU_{\omega}:\,\bW_{\loc}^{1,p}(\bP(\omega))\to\bW_{\loc}^{1,p}(\Rd)$
and $C_{\omega}>0$ such that for every $m\geq1$ and every $u\in\bW^{1,p}(\bP(\omega))$
with $u|_{\bP(\omega)\backslash\bQ}\equiv0$ it holds
\begin{align*}
\frac{1}{\left|\bQ\right|}\int_{\bQ}\left|\nablas\left(\cU_{\omega}u\right)\right|^{r} & \leq C_{\omega}\left(\frac{1}{\left|\bQ\right|}\int_{\Ball{\fr}{\bQ}\cap\bP}\left|\nablas u\right|^{p}\right)^{\frac{r}{p}}\\
 & \quad+C\frac{1}{\left|\bQ\right|}\int_{\bQ\backslash\bP}\sum_{a}\sum_{i\neq0}\rho_{1,i}^{-r}\chi_{A_{1,i}}\chi_{\fA_{1,a}}\left|\tau_{n,\alpha,i}^{\fs}u-\cM_{a}^{\fs}u\right|^{r}\\
 & \quad+\frac{1}{\left|\bQ\right|}\int_{\bQ}\left|\sum_{l=1}^{d}\sum_{a:\,\partial_{l}\Phi_{a}>0}\sum_{b:\,\partial_{l}\Phi_{b}<0}\frac{\partial_{l}\Phi_{a}\left|\partial_{l}\Phi_{b}\right|}{D_{l+}^{\Phi}}\left(\cM_{a}^{\fs}u-\cM_{b}^{\fs}u\right)\right|^{r}\\
\frac{1}{\left|\bQ\right|}\int_{\bQ}\left|\cU_{\omega}u\right|^{r} & \leq C_{\omega}\left(\frac{1}{\left|\bQ\right|}\int_{\Ball{\fr}{\bQ}\cap\bP}\left|u\right|^{p}\right)^{\frac{r}{p}}\,,
\end{align*}
\end{thm}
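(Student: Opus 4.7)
The plan is to mimic the proof of Theorem \ref{thm:Main-2} with rigid-motion correctors in place of scalar averages; I expect the statement to follow as a symmetric-gradient variant of Lemma \ref{lem:local-delta-M-extension-estimate}. First, at each $p_{n,i}$ of the boundary cover from Corollary \ref{cor:cover-boundary}, I would invoke the symmetric extension order (Definition \ref{def:extension-order}) to obtain a local operator $\cU_i:\bW^{1,p}(\Ball{\tfrac{1}{8}\delta(p_{n,i})}{p_{n,i}}\cap\bP)\to\bW^{1,p}(\Ball{\tfrac{1}{8}\rho_n(p_{n,i})}{p_{n,i}})$ satisfying (\ref{eq:ex-order-estimate-2}), and apply it to $u-\tau_{n,\alpha,i}^{\fs}u$ rather than to $u$. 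Since $\tau_{n,\alpha,i}^{\fs}u$ is a rigid motion, $\nablas$ of this difference coincides with $\nablas u$ on $\bP$, so (\ref{eq:ex-order-estimate-2}) yields the bound $C\,(1+M_{\frac{1}{8}\delta}(p_{n,i}))^{2}\norm{\nablas u}_{L^{p}}$ on the enlarged ball; this squared $M$-factor is the source of the exponent $2r$ in the moment hypothesis (as opposed to exponent $r$ in Theorem \ref{thm:Main-2}).

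Next, I would glue these local extensions with a partition of unity $\{\eta_i\}\cup\{\psi_a\}$ subordinate to $\{\Ball{\tilde\rho_{n,i}}{p_{n,i}}\}_{i}$ near $\partial\bP$ and to a Voronoi-type cover of $\bP$ and its adjacent holes, setting
\[
\cU_\omega u := \sum_i \eta_i\bigl(\cU_i(u-\tau_{n,\alpha,i}^{\fs}u)+\tau_{n,\alpha,i}^{\fs}u\bigr) + \text{(interior and hole-bridging pieces)}.
\]
Inside each hole, the bridging between neighbouring Voronoi centres $x_a,x_b$ must interpolate the affine maps $\cM_a^{\fs}u$ and $\cM_b^{\fs}u$; a flux-balanced interpolator built from the $\Phi_a$ introduced earlier in the paper is what produces the weights $\partial_l\Phi_a\,|\partial_l\Phi_b|/D_{l+}^{\Phi}$ in the claim. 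Differentiating $\cU_\omega u$ via the product rule gives (i) bulk pieces controlled by the previous step (the first right-hand side term after Hölder), (ii) commutators $\nabla\eta_i\otimes\bigl(\cU_i(u-\tau_{n,\alpha,i}^{\fs}u)+\tau_{n,\alpha,i}^{\fs}u-\cM_a^{\fs}u\bigr)$, pointwise controlled by $\tilde\rho_{n,i}^{-1}|\tau_{n,\alpha,i}^{\fs}u-\cM_a^{\fs}u|$ after a triangle inequality (the second term), and (iii) derivatives of the bridging weights, producing the third term.

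Finally, I would apply Hölder with conjugate exponents $p/r$ and $p/(p-r)$ to each bulk estimate to factorise $\int|\nablas u|^{p}$ from the $M$-weights, and then invoke Birkhoff's ergodic theorem: the moment hypothesis on $(1+M_{[\frac{3\delta}{8},\frac{\delta}{8}],\Rd})^{nd}(1+M_{[\frac{1}{8}\delta],\Rd})^{2r}(1+M_{[\tilde\rho_n],\Rd})^{\alpha(d-1)}$ raised to the power $p/(p-r)$ ensures almost sure convergence of the averaged weights to a finite $C_\omega$. The $L^{r}$-bound on $\cU_\omega u$ itself follows from the same identity without differentiating, so no $M$-weights enter that estimate. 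The main obstacle I anticipate is the hole-bridging step: whereas Theorem \ref{thm:Main-2} can use a simple piecewise-linear interpolation of scalar averages $\cM_a u$, here the antisymmetric parts $\onablabot_a u$ genuinely differ between Voronoi cells, and maintaining $\bW^{1,p}$-regularity across cell interfaces while controlling $\nablas$ purely by the differences $|\cM_a^{\fs}u-\cM_b^{\fs}u|$ (and not by any uncontrolled full-gradient differences of $u$) is exactly what forces the specific flux-weight structure with the denominator $D_{l+}^{\Phi}$.
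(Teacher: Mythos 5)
Your proposal matches the paper's strategy essentially step for step. The theorem is derived from Lemma~\ref{lem:local-delta-M-extension-estimate-sym}, whose proof the paper obtains by modifying the scalar case (Lemma~\ref{lem:local-delta-M-extension-estimate}, Step~4) precisely as you describe: replace the reflection-type local extension by the Nitsche extension bound (\ref{eq:ex-order-estimate-2}) (whence the $\left(1+M\right)^{2}$ factor and the exponent $2r$ in the moment hypothesis); replace the scalar averages $\tau_{n,\alpha,i}u$, $\cM_a u$ by the rigid-motion corrected $\tau_{n,\alpha,i}^{\fs}u$, $\cM_a^{\fs}u$ so that $\nablas$ kills them, cf.\ (\ref{eq:permutation-tau-cM-fs}); replace the Poincar\'e bound (\ref{eq:lem:Poincare-ball-2}) by its Korn--Poincar\'e analogue (\ref{eq:lem:Poincare-ball-2-Korn}) and the plain extension Lemma~\ref{lem:simple-extension} by the Korn-preserving Lemma~\ref{lem:uniform-korn-extension}. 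The glued operator you sketch corresponds to the explicit definition (\ref{eq:def-extension-op-sym}), with the interior/hole-bridging piece being exactly the double sum over $\Phi_a\,\cM_a^{\fs}u$; differentiating and applying Lemma~\ref{lem:conv-sum-0} to the telescoping sum $\sum_a \cM_a^{\fs}u\,\nabla\Phi_a$ is what produces the flux-weighted differences with denominator $D_{l+}^{\Phi}$, as you correctly anticipate. Your identification of the central difficulty --- that antisymmetric parts of the gradient genuinely differ between cells and must be interpolated in a way controlled purely by $\left|\cM_a^{\fs}u-\cM_b^{\fs}u\right|$ --- is also accurate and is exactly what the $D_{l+}^{\Phi}$ structure solves.
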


\begin{proof}
This is a consequence of Lemma \ref{lem:local-delta-M-extension-estimate-sym}. 
\end{proof}
\begin{thm}
Under the assumptions of Theorem \ref{thm:Main-4} let additionally
\[
\E\of{\tilde{\rho}_{\bP}^{-\frac{rp}{p-r}}}<\infty
\]
then there exists an extension operator $\cU_{\omega}:\,W_{\loc}^{1,p}(\bP(\omega))\to W_{\loc}^{1,p}(\Rd)$
such that for every $m\geq1$ and every $u\in W^{1,p}(\bP(\omega))$
with $u|_{\bP(\omega)\backslash m\bQ}\equiv0$ it holds
\[
\frac{1}{\left|m\bQ\right|}\int_{m\bQ}\left(\left|\nabla\left(\cU_{\omega}u\right)\right|^{r}+\left|\cU_{\omega}u\right|^{r}\right)\leq C_{\omega}\left(\frac{1}{m^{d}}\int_{\bP\cap\Ball{\fr}{m\bQ}}\left(\left|\nabla u\right|^{p}+\left|u\right|^{p}\right)\right)^{\frac{r}{p}}\,.
\]
\end{thm}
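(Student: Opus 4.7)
The plan is to follow the same simplification strategy used in the analogous corollary after Theorem \ref{thm:Main-2}: construct $\cU_{\omega}$ exactly as in the proof of Lemma \ref{lem:local-delta-M-extension-estimate-sym}, but with the affine corrections $\tau_{n,\alpha,i}^{\fs}u$ and $\cM_{a}^{\fs}u$ from Definition \ref{def:cU-Q-sym} replaced by $0$. Those corrections were precisely what produced the two ``connectivity'' difference terms on the right-hand side of Theorem \ref{thm:Main-4}, so setting them to zero removes those terms outright, at the price of introducing new Leibniz contributions of the form $|\nabla\varphi_{n,\alpha,i}|\,|u|\lesssim\tilde{\rho}_{n,i}^{-1}|u|$ from the partition-of-unity construction in the extension region. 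On $\bP$ itself one still has $\cU_{\omega}u=u$, so the contribution on $\bP$ to both $|\nabla\cU_{\omega}u|^{r}$ and $|\cU_{\omega}u|^{r}$ is unchanged.

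The core step is to rerun the pointwise bound from Lemma \ref{lem:local-delta-M-extension-estimate-sym} with this modified construction. On each boundary patch $\Ball{\tilde{\rho}_{n,i}}{p_{n,i}}$ and each Voronoi ball $\Ball{\fr_{a}}{x_{a}}$, both $|\nabla\cU_{\omega}u|$ and $|\cU_{\omega}u|$ are dominated by a $(1+M_{[\cdot],\Rd})^{k}$-type weight (with the same exponent $k$ as in the original lemma) times a local average of $|u|^{p}+|\nabla u|^{p}$ over a comparable ball, with an additional factor of $\tilde{\rho}_{\bP}^{-1}$ appearing on the gradient side due to the new Leibniz term. Integrating this pointwise bound over $m\bQ$ and applying Hölder's inequality with exponents $p/r$ and $p/(p-r)$ pulls out $\bigl(m^{-d}\int_{\Ball{\fr}{m\bQ}\cap\bP}(|u|^{p}+|\nabla u|^{p})\bigr)^{r/p}$ as required, leaving behind a geometric factor that is a spatial mean over $m\bQ$ of $\tilde{\rho}_{\bP}^{-rp/(p-r)}$ times a power of $(1+M_{[\cdot],\Rd})$. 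Almost-sure boundedness of this mean as $m\to\infty$ is then given by Birkhoff's ergodic theorem and is absorbed into $C_{\omega}$.

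The main technical obstacle is the bookkeeping in this last step: one must split the geometric weight so that the new hypothesis $\E\of{\tilde{\rho}_{\bP}^{-rp/(p-r)}}<\infty$ is used only against the $\tilde{\rho}_{\bP}^{-1}$ factor, while the $(1+M_{[\cdot],\Rd})$-factors remain controlled by the moment already imposed in Theorem \ref{thm:Main-4}. Concretely one introduces an auxiliary parameter $\theta\in(0,1)$ and uses a three-way Hölder split with exponents $(p/r,\,(p-r)/(\theta p),\,(p-r)/((1-\theta)p))$, separating $\tilde{\rho}_{\bP}^{-1}$, $1+M_{[\cdot],\Rd}$, and the averaged $|u|^{p}+|\nabla u|^{p}$; letting $\theta\to 1^{-}$ drives the effective exponent on $\tilde{\rho}_{\bP}^{-1}$ to $rp/(p-r)$ (matching the new assumption sharply), while the exponent on $1+M_{[\cdot],\Rd}$ stays within the range permitted in Theorem \ref{thm:Main-4}, which is slack for $\theta<1$. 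Once these integrabilities are secured, the support property of $\cU_{\omega}u$ and the analogous $|\cU_{\omega}u|^{r}$-bound (without the $\tilde{\rho}_{\bP}^{-1}$ factor) follow directly from the locality of the partition-of-unity construction, exactly as in the proofs of Theorems \ref{thm:Main-2} and \ref{thm:Main-4}.
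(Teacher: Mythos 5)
Your overall strategy matches the paper's one‑line proof: drop the affine corrections $\tau_{n,\alpha,i}^{\fs}u$ and $\cM_a^{\fs}u$, which eliminates the two connectivity difference terms of Theorem \ref{thm:Main-4} and leaves a new Leibniz contribution of order $\tilde{\rho}_{\bP}^{-1}|u|$, which is then handled by Hölder and the ergodic theorem. The paper formally replaces only $\cM_a^{\fs}u$ by $0$ (so that $u_{i,a}=\cU_i(u-\tau^{\fs}_i u)+\tau^{\fs}_i u$ and the residual piece is $\rho_i^{-1}\tau^{\fs}_i u$), while you drop both; this is an inessential variation since the local extensions preserve affine functions and both choices produce the same $\tilde{\rho}_{\bP}^{-r}|u|^r$-type remainder.

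The genuine gap is in the last step. The three-way Hölder split you propose with exponents $(p/r,\,(p-r)/(\theta p),\,(p-r)/((1-\theta)p))$ is not a valid conjugate triple: the reciprocals sum to $\tfrac{r}{p}+\tfrac{p}{p-r}>1$. Reading your intent as the correct triple $\bigl(\tfrac{p}{r},\,\tfrac{p}{\theta(p-r)},\,\tfrac{p}{(1-\theta)(p-r)}\bigr)$, the resulting moment exponent on $\tilde{\rho}_{\bP}^{-1}$ is $\tfrac{rp}{\theta(p-r)}$, which is \emph{at least} $\tfrac{rp}{p-r}$ for every $\theta\leq1$; the hypothesis $\E\bigl(\tilde{\rho}_{\bP}^{-rp/(p-r)}\bigr)<\infty$ is therefore strictly too weak whenever $\theta<1$, and at $\theta=1$ the exponent on $1+M_{[\cdot],\Rd}$ blows up. Sending $\theta\to1^{-}$ worsens \emph{both} sides of the trade‑off: there is no slack in the assumptions of Theorem \ref{thm:Main-4} (the $M$‑moment is already pinned to exponent $\tfrac{p}{p-r}$) nor in the new $\tilde{\rho}$‑moment. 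The first, two‑way Hölder with $(p/r,\,p/(p-r))$ that you mention at the start of your last paragraph is actually the right move and matches what the paper implicitly does; what remains after it is the \emph{joint} moment $\E\bigl(\tilde{\rho}_{\bP}^{-rp/(p-r)}(1+M_{[\cdot]})^{\text{(exponent)}\cdot p/(p-r)}\bigr)$, and a correct proof must either observe that the $(1+M)$‑power arising from the Jensen step $|\tau_iu|^r\leq\fint_{B_i}|u|^r$ (which picks up the volume ratio $|A_{1,i}|/|B_i|\approx(1+M^{\alpha})^d$) is controlled via the deterministic relation $M\lesssim\delta/\rho$ built into the definitions (\ref{eq:def-rho-of-p}), or restrict to $\alpha=0$ as in the paper's sample geometries. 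An auxiliary interpolation parameter cannot manufacture that control out of the two marginal moment bounds.
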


\begin{proof}
This is a consequence of the proof of Lemma \ref{lem:local-delta-M-extension-estimate},
replacing $M_{a}^{\fs}u$ in the definition of $\cU u$ by $0$. 
\end{proof}

\subsection*{Discussion: Random Geometries and Applicability of the Method}

In Section \ref{sec:Sample-Geometries} we discuss two standard models
from the theory of stochastic geometries. The first one is a system
of random pipes: Starting from a Poisson point process and deleting
all points with nearest neighbor closer than $2\fr$ and introducing
the Delaunay neighboring condition on the points, every two neighbors
are connect through a pipe of random thickness $2\delta,$ where $\delta$
is distributed i.i.d among the pipes and we complete the geometry
by adding a ball of radius $\frac{\fr}{2}$ around each point. Defining
for bounded open domains $\bQ\subset\Rd$ and $n\in\N$
\[
u\in W_{0,\partial(n\bQ)}^{1,p}(\bP\cap n\bQ):=\left\{ u\in W^{1,p}(\bP\cap n\bQ):\,u|_{\partial(n\bQ)}\equiv0\right\} \,,
\]
and using $\bW$ instead of $W$ for $\Rd$-valued functions, we find
our first result:
\begin{thm}
\label{thm:Pipes-Model}In the pipe model of Section \ref{subsec:Matern-Process}
let $\P(\delta(x,y)<\delta_{0})\leq C_{\delta}\delta_{0}^{\beta}$
and let $1\leq r<s<p$ be such that $\max\left\{ \frac{p\left(s+d\right)}{p-s},\frac{p(2d-s-1)}{p-s}\right\} \leq\beta$
and $\frac{sr}{s-r}\leq\beta+d-1$. Then $\alpha=n=0$ both for extension
and symmetric extension order and there almost surely exists an extension
operator $\cU:\,W_{\loc}^{1,p}(\bP)\to W_{\loc}^{1,p}(\Rd)$ and constants
$C,R>1$ such that for all $m\in\N$ and every $u\in W_{0,\partial(m\bQ)}^{1,p}(\bP\cap m\bQ)$
it holds 
\[
\frac{1}{\left|m\bQ\right|}\int_{\Rd}\left|\nabla\left(\cU u\right)\right|^{r}\leq C\left(\frac{1}{m^{d}}\int_{\bP\cap m\bQ}\left|\nabla u\right|^{p}\right)^{\frac{r}{p}}\,.
\]
Furthermore there almost surely exists an extension operator $\cU:\,\bW_{\loc}^{1,p}(\bP)\to\bW_{\loc}^{1,p}(\Rd)$
and a constant $C>0$ such that for all $m\in\N$ and every $u\in\bW_{0,\partial(m\bQ)}^{1,p}(\bP\cap m\bQ)$
\[
\frac{1}{\left|m\bQ\right|}\int_{\Rd}\left|\nablas\left(\cU u\right)\right|^{r}\leq C\left(\frac{1}{m^{d}}\int_{\bP\cap m\bQ}\left|\nablas u\right|^{p}\right)^{\frac{r}{p}}\,.
\]
In both cases for every $\beta\in(0,1)$ the following holds: for
some $m_{0}>1$ depending on $\omega$ and every $m>m_{0}$ the support
of $\cU u$ lies within $\Ball{m^{1-\beta}}{m\bQ}$.
\end{thm}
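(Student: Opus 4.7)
The plan is to apply Theorems \ref{thm:Main-2} and \ref{thm:Main-4} to the Delaunay pipe model, which reduces the claim to three subproblems: (i) verifying $\alpha = n = 0$ for the geometry; (ii) checking the moment condition on $M_{[\cdot],\Rd}$ appearing in the hypotheses of those theorems; and (iii) converting the right-hand-side connectivity averages $\left|\tau_{n,\alpha,i}u-\cM_a u\right|$ and $\left|\cM_a u-\cM_b u\right|$ (and their symmetric counterparts) into integrals of $\left|\nabla u\right|^p$ or $\left|\nablas u\right|^p$ over $\bP\cap m\bQ$.

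First I would verify that $\alpha=n=0$. Along the lateral surface of a pipe of thickness $2\delta(x_i,x_j)$, the boundary is locally the graph of a Lipschitz function with constant $M=O(1)$ at scales $r\leq\delta(x_i,x_j)$, so that $\rho_0(p)=\delta_\Delta(p)$ there. Near a junction with the ball $\Ball{\fr/2}{x_a}$, the Delaunay condition bounds the angles between incoming pipes away from zero, so that the boundary remains locally $(\delta,M)$-regular with $M\lesssim 1/\delta_{\min}$, where $\delta_{\min}$ is the thinnest adjacent pipe. Since a half-ball of radius proportional to $\delta(p)$ always fits inside $\bP$ near any boundary point, $\alpha=0$. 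A standard reflection construction across a cylindrical graph at scale $\delta(p)/8$, with an odd reflection of the tangential component in the symmetric-gradient case, furnishes extension operators of order $n=0$ with the correct weights $(1+M)$ and $(1+M)^2$.

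Second, under $n=\alpha=0$ the moment hypothesis reduces to $\E\of{(1+M_{[\delta/8],\Rd})^{krp/(p-r)}}<\infty$ for $k=1,2$. Since $M\sim 1/\delta$ at junctions and the volume of the $\delta/8$-tube around a pipe of thickness $\delta$ scales like $\delta^d$ per unit length, a H\"older interpolation with an intermediate exponent $s\in(r,p)$ between the $L^{p/(p-s)}$-integrability of powers of $1/\delta$ and the spatial density of order $\delta^{d-1}$ (respectively $\delta^{2d-s-1}$ for the symmetric case, where $M$ enters squared) yields the required bounds precisely when $\max\{p(s+d)/(p-s),\,p(2d-s-1)/(p-s)\}\leq\beta$. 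The i.i.d.\ tail $\P\of{\delta(x,y)<\delta_0}\leq C_\delta\delta_0^\beta$ then supplies the moments.

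Third, the connectivity terms are controlled by one-dimensional Poincar\'e (resp.\ Korn--Poincar\'e) inequalities on individual pipes: for $x_a\sim\sim x_b$ joined by a pipe of thickness $2\delta_{ab}$ and length $\ell_{ab}$, one obtains
\[
\left|\cM_a u-\cM_b u\right|^r\;\lesssim\;\ell_{ab}^{\,r-r/p}\delta_{ab}^{-(d-1)r/p}\left(\int_{\mathrm{pipe}(a,b)}\left|\nabla u\right|^p\right)^{r/p}\,,
\]
and an analogous bound for the junction averages $\left|\tau_{n,\alpha,i}u-\cM_a u\right|$; the symmetric case uses a Korn--Poincar\'e inequality on each pipe, available since pipes are uniform John domains with constant independent of $\delta_{ab}$. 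Summing in $L^r$ and applying H\"older with exponent $p/r$ converts this into $(\int_{\bP\cap m\bQ}|\nabla u|^p)^{r/p}$ multiplied by a deterministic constant, provided the weights $\delta_{ab}^{-(d-1)}$ are $L^{sr/(s-r)}$-integrable, which is exactly the condition $sr/(s-r)\leq\beta+d-1$. The main obstacle is that a single thin pipe appears in the connecting path for many cell pairs, so one must invoke stationarity of the Delaunay graph together with a bounded-multiplicity argument to control the total contribution of each pipe across the sum. Finally, the support statement $\support(\cU u)\subset\Ball{m^{1-\beta}}{m\bQ}$ follows from Corollary \ref{cor:Existence-X-r}: since $\P\of{d(x_a)>D}$ decays polynomially in $D$, a Borel--Cantelli argument applied to the cells touching $\partial(m\bQ)$ ensures that for all $m\geq m_0(\omega)$ none of these cells has diameter exceeding $m^{1-\beta}$.
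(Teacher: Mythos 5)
Your plan follows the same architecture as the paper's proof: invoke the abstract extension theorems, verify $\alpha=n=0$ and the moment hypotheses, and convert the connectivity averages $\left|\tau_{i}u-\cM_{a}u\right|$ and $\left|\cM_{a}u-\cM_{b}u\right|$ into integrals of $\left|\nabla u\right|^{p}$. The microscopic regularity argument (Corollary \ref{cor:pipes-micro-regularity}) and the moment-condition bookkeeping via the tail $\P\of{\delta<\delta_{0}}\leq C_{\delta}\delta_{0}^{\beta}$ and the pipe-volume factor $\delta^{d-1}$ are in essential agreement with Lemma \ref{lem:matern-delaunay-distribution} and Theorem \ref{thm:Delaunay-final}, and the support statement is indeed a Borel--Cantelli argument over cells touching $\partial(m\bQ)$ (Theorem \ref{thm:support}).

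There is, however, a genuine gap in step (iii). You write the estimate as though any pair $x_{a}\sim\sim x_{b}$ that appears in the connectivity sum is ``joined by a pipe of thickness $2\delta_{ab}$.'' That is false: $\sim\sim$ refers to the Voronoi tessellation of $\X_{\mat}$ and allows cells within distance $\fr$ of each other, which is not the same as a Delaunay edge. In general $x_{a}$ and $x_{b}$ must be connected through a discrete path of several Delaunay edges, of cardinality $n(a,b)\lesssim d_{a}/\fr$, and the correct estimate comes from Jensen along the path,
\[
\left|\cM_{a}^{\fs}u-\cM_{b}^{\fs}u\right|^{s}\leq n(a,b)^{s}\sum_{k=1}^{n(a,b)-1}\left|\cM_{y_{k}}^{\fs}u-\cM_{y_{k+1}}^{\fs}u\right|^{s}\,,
\]
which injects large combinatorial factors $d_{a}^{s}$ into the sum. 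These are only manageable because (a) the path has controlled length by the Delaunay stretch-factor result of Xia \cite{xia2013stretch}, so it stays inside $\Ball{4.5\,d_{a}}{x_{a}}$, and (b) $d_{a}$ has an exponential tail, which absorbs the polynomial blow-up in $d_{a}$. Your phrase ``invoke stationarity together with a bounded-multiplicity argument'' does not capture this: the multiplicity with which a given thin pipe appears across all pairs $(a,b)$ is \emph{not} uniformly bounded; it scales like $d_{a}^{d}$ and must be controlled by the exponential decay of the cell-diameter distribution, not by a uniform combinatorial bound. Without the stretch-factor estimate and the $d_{a}$-tail absorption, the connectivity term does not close, so this part of the argument needs to be carried out essentially as in Theorem \ref{thm:Delaunay-final} of the paper.
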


\begin{proof}
The proof is given at the very end of Section \ref{subsec:Matern-Process}.
\end{proof}
\begin{cor}
If $\P(\delta(x,y)<\delta_{0})\leq C_{\delta}e^{-\gamma\delta_{0}^{-1}}$
then the last theorem holds for every $1\leq r<p$. 
\end{cor}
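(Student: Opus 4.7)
The plan is to observe that the exponential tail bound on $\delta(x,y)$ is stronger than any polynomial tail bound, and then simply apply Theorem \ref{thm:Pipes-Model} with an appropriate choice of parameters. Concretely, I would first show that the hypothesis $\P(\delta(x,y)<\delta_{0})\leq C_{\delta}e^{-\gamma\delta_{0}^{-1}}$ implies, for every $\beta>0$, a polynomial bound $\P(\delta(x,y)<\delta_{0})\leq C_{\delta}(\beta)\,\delta_{0}^{\beta}$ valid for all $\delta_{0}>0$. This is immediate from the fact that $\delta_{0}\mapsto \delta_{0}^{-\beta}e^{-\gamma\delta_{0}^{-1}}$ is bounded on $(0,\infty)$, with a bound that can be computed explicitly from $\gamma$ and $\beta$ (the function attains its maximum at $\delta_{0}=\gamma/\beta$).

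Having reduced matters to a polynomial tail bound with arbitrarily large exponent, I would then fix any $1\leq r<p$ and verify that the three scalar inequalities required in Theorem \ref{thm:Pipes-Model}, namely $r<s<p$ together with
\[
\max\!\left\{\frac{p(s+d)}{p-s},\;\frac{p(2d-s-1)}{p-s}\right\}\leq \beta,\qquad \frac{sr}{s-r}\leq \beta+d-1,
\]
are simultaneously satisfiable. For instance, choose $s:=\tfrac{r+p}{2}$, so that automatically $r<s<p$. With $s$ fixed, all three left-hand sides above become finite constants depending only on $r,p,d$. Hence it suffices to take
\[
\beta:=\max\!\left\{\frac{p(s+d)}{p-s},\;\frac{p(2d-s-1)}{p-s},\;\frac{sr}{s-r}-d+1\right\}+1,
\]
and apply the first step to obtain the corresponding polynomial bound with constant $C_{\delta}(\beta)$.

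With these parameters in hand, Theorem \ref{thm:Pipes-Model} applies verbatim and yields both the scalar extension operator $\cU:\,W_{\loc}^{1,p}(\bP)\to W_{\loc}^{1,p}(\Rd)$ and the $\Rd$-valued extension $\cU:\,\bW_{\loc}^{1,p}(\bP)\to\bW_{\loc}^{1,p}(\Rd)$ with the desired inequalities for the prescribed $r$, as well as the support localization within $\Ball{m^{1-\beta'}}{m\bQ}$ for arbitrary $\beta'\in(0,1)$. Since $r\in[1,p)$ was arbitrary, the conclusion holds for all such $r$.

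There is essentially no obstacle here: the only point requiring a line of computation is the elementary bound $e^{-\gamma/\delta_{0}}\leq (\beta/(\gamma e))^{\beta}\,\delta_{0}^{\beta}$, and the parameter count that guarantees a valid choice of $s$ and $\beta$ for every $r<p$. The geometric and analytic content is entirely inherited from Theorem \ref{thm:Pipes-Model}; this corollary merely records that in the exponential-tail regime there is no constraint on the integrability exponent $r$ coming from the thickness distribution of the pipes.
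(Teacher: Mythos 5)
Your argument is correct and is exactly the (implicit) reasoning the paper intends: the exponential tail bound $e^{-\gamma/\delta_0}\leq(\beta/(\gamma e))^{\beta}\delta_0^{\beta}$ yields a polynomial tail of arbitrarily large order, so for any $1\leq r<p$ one may fix, say, $s=(r+p)/2$ and then choose $\beta$ large enough to meet the two moment conditions of Theorem~\ref{thm:Pipes-Model}. The paper provides no written proof for this corollary, but your reduction to the polynomially-bounded case is the natural and complete argument.
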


In Section \ref{subsec:Boolean-Model-for} we study the Boolean model
based on a Poisson point process in the percolation case. Introduced
in Example \ref{exa:poisson-point-proc} we will consider a Poisson
point process $\X_{\pois}(\omega)=\left(x_{i}(\omega)\right)_{i\in\N}$
with intensity $\lambda$ (recall Example \ref{exa:poisson-point-proc}).
To each point $x_{i}$ a random ball $B_{i}=\Ball 1{x_{i}}$ is assigned
and the family $\B:=\left(B_{i}\right)_{i\in\N}$ is called the Poisson
ball process. We say that $x_{i}\sim x_{j}$ if $\left|x_{i}-x_{j}\right|<2$.
In case $\lambda>\lambda_{c}$ the union of these balls has a unique
infinite connected component (that means we have percolation) and
we denote $\X_{\pois,\infty}$ the sellection of all points that contribute
to the infinite component and $\bP_{\infty}\left(\omega\right):=\bigcup_{i\in\X_{\pois,\infty}}B_{i}$
this infinite open set and seek for a corresponding uniform extension
operator. The connectedness of $\bP_{\infty}$ is hereby essential.
We use results from percolation theory that otherwise would not hold.

Here we can show that the micro- and mesoscopic assumptions are fulfilled,
at least in case $\bP$ is given as the union of balls. If we choose
$\bP$ as the complement of the balls, the situation becomes more
involved. On one hand, Theorem \ref{thm:boolean-delta-M-distrib}
shows that $\alpha$ and $n$ change in an unfortunate way. Furthermore,
the connectivity estimate remains open. However, some of these problems
might be overcome using a Matern modification of the Poisson process.
For the moment, we state the following.
\begin{thm}
\label{thm:main-Boolean}In the boolean model of Section \ref{subsec:Boolean-Model-for}
it holds $\alpha=0$ in case $\bP=\bP_{\infty}$ and both the extension
order and the symmetric extension order are $n=0$. If $d<p$ and
\[
\frac{pr}{p-r}<2,\quad r<d+2
\]
Then there almost surely exists an extension operator $\cU:\,W_{\loc}^{1,p}(\bP)\to W_{\loc}^{1,p}(\Rd)$
and a constant $C>0$ such that for all $m\in\N$ and every $u\in W_{0,\partial\bQ}^{1,p}(\bP\cap m\bQ)$
\[
\frac{1}{\left|m\bQ\right|}\int_{m\bQ}\left|\nabla\left(\cU u\right)\right|^{r}\leq C\left(\frac{1}{m^{d}}\int_{\bP\cap m\bQ}\left|\nabla u\right|^{p}\right)^{\frac{r}{p}}\,.
\]
If furthermore

\[
r<\frac{d+2}{2}
\]
then there almost surele exists an extension operator $\cU:\,\bW_{\loc}^{1,p}(\bP)\to\bW_{\loc}^{1,p}(\Rd)$
and a constant $C>0$ such that for all $m\in\N$ and every $u\in\bW_{0,\partial\bQ}^{1,p}(\bP\cap m\bQ)$
\[
\frac{1}{\left|m\bQ\right|}\int_{m\bQ}\left|\nablas\left(\cU u\right)\right|^{r}\leq C\left(\frac{1}{m^{d}}\int_{\bP\cap m\bQ}\left|\nablas u\right|^{p}\right)^{\frac{r}{p}}\,.
\]
In both cases for every $\beta\in(0,1)$ the following holds: for
some $m_{0}>1$ depending on $\omega$ and every $m>m_{0}$ the support
of $\cU u$ lies within $\Ball{m^{1-\beta}}{m\bQ}$.
\end{thm}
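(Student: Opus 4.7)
My plan is to verify the hypotheses of Theorems~\ref{thm:Main-2} and~\ref{thm:Main-4} for the infinite cluster $\bP_\infty$ of the Boolean model and then absorb the residual ``connectivity'' terms that appear on their right-hand sides into the gradient norm.

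First I would check that $\alpha=0$ and that both extension orders are $n=0$. Any boundary point $p\in\partial\bP_\infty$ lies on a piece of the unit sphere $\partial B_i$, so in the ball $\Ball{\tilde\rho(p)}{p}$ the boundary is a smooth spherical cap; this gives an interior tangent ball of radius comparable to $\tilde\rho(p)$, showing $\alpha=0$ in Definition~\ref{assu:M-alpha-bound}. For the same reason, a spherical reflection provides a bounded local extension from $\Ball{\delta(p)/8}{p}\cap\bP$ onto all of $\Ball{\delta(p)/8}{p}$ with constant comparable to $1+M_{\delta(p)/8}(p)$ (respectively its square for the symmetric gradient), which is exactly the statement that $n=0$ in Definition~\ref{def:extension-order}. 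Assumption~\ref{assu:Points-X} holds with $\X_\fr=\X_{\pois,\infty}$ (thinned if necessary) since the centers of the balls in the infinite cluster form a jointly stationary point process with $\Ball{\fr/2}{x_a}\subset\bP$.

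Next I would verify the moment hypotheses. Since $n=\alpha=0$, the integrability condition of Theorem~\ref{thm:Main-2} collapses to $\E\bigl((1+M_{[\delta/8],\Rd})^{rp/(p-r)}\bigr)<\infty$ and that of Theorem~\ref{thm:Main-4} to the same quantity with $2r$ in place of $r$. By Theorem~\ref{thm:boolean-delta-M-distrib} the Lipschitz constant near a tangency point of two balls scales like the inverse square root of the distance to contact, so $\P(M_{[\delta/8],\Rd}>t)\lesssim t^{-2}$ and the distribution of $\tilde\rho_{\Rd}$ near cusps yields $\P(\tilde\rho_{\Rd}<s)\lesssim s^{d+2}$. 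The stated conditions $pr/(p-r)<2$ and $r<d+2$ (respectively $r<(d+2)/2$) are precisely what is needed so that the moments of $M_{[\delta/8],\Rd}$ and $\tilde\rho_{\Rd}^{-1}$ used in the $C_\omega$ constant and in the residual prefactor $\tilde\rho_{\Rd}^{-r}$ are finite almost surely.

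The main work is then to bound the two residual terms in Theorems~\ref{thm:Main-2} and~\ref{thm:Main-4}, namely $\sum_{i,a}\tilde\rho_{\Rd}^{-r}\,|\tau_{n,\alpha,i}u-\cM_a u|^r$ and $|\sum_{a\sim\sim b}(\cM_a u-\cM_b u)|^r$ (and their symmetric counterparts) by $(m^{-d}\int_{\bP\cap m\bQ}|\nabla u|^p)^{r/p}$ (respectively with $\nablas u$). In the supercritical Boolean regime the infinite cluster has a well-known renormalized structure: almost surely, for each pair of neighbors $x_a\sim\sim x_b$ in $\X_{\pois,\infty}$ there is a chain of at most $N_{a,b}$ overlapping balls within $\bP_\infty$ joining $\cM_a$ to $\cM_b$, with $\E N_{a,b}^q<\infty$ for every $q$. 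Iterating Poincar\'e (resp.\ Poincar\'e--Korn, which absorbs the skew averaged part $\onablabot u$ built into $\cM_a^{\fs}u$) on each ball in the chain and on the junction balls $B_{n,\alpha,i}$, and then using H\"older and the stationary ergodic theorem to convert the sum over $a,b$ into a space average, produces the claimed $\|\nabla u\|_{L^p}^r$ bound. The extra factor $r<(d+2)/2$ in the symmetric case reflects the $(1+M)^{2}$ cost in \eqref{eq:ex-order-estimate-2} and the use of Korn along each link.

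The hardest step is this last one: controlling the neighbor term uniformly requires a quantitative chemical-distance bound in the infinite Boolean cluster, together with a Korn--Poincar\'e inequality along a random chain of touching balls whose total length has only polynomial moments. The localization of the support inside $\Ball{m^{1-\beta}}{m\bQ}$ is comparatively routine and follows from Corollary~\ref{cor:Existence-X-r} via Borel--Cantelli applied to the event that every Voronoi cell meeting $\partial(m\bQ)$ has diameter below $m^{1-\beta}$, combined with a cut-off at the mesoscopic grid of size $\fr$.
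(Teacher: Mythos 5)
Your proposal follows the same route as the paper: reduce to the residual terms in Lemma~\ref{lem:local-delta-M-extension-estimate} and its symmetric counterpart (rather than citing Theorems~\ref{thm:Main-2}/\ref{thm:Main-4} directly, but these are equivalent), verify $\alpha=n=0$ from the spherical-cap geometry, check the moment hypotheses using Theorem~\ref{thm:boolean-delta-M-distrib}, absorb the connectivity terms via Korn--Poincar\'e along chains of overlapping balls controlled by a chemical-distance bound (the paper's Lemma~\ref{lem:strech-factor-boolean} and Theorem~\ref{thm:large-dev-pois}, invoked inside Theorem~\ref{thm:boolean-final}), and get the support localization from a Borel--Cantelli argument on cell diameters (the paper's Theorem~\ref{thm:support}).

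One concrete inaccuracy you should fix: your intermediate tail estimates for $M$ and $\tilde\rho$ are swapped. Theorem~\ref{thm:boolean-delta-M-distrib} gives $\E\of{\tilde{M}_{[0]}^{\beta}}<\infty$ precisely for $\beta<d+2$ (because the excursion set where the Lipschitz constant is of order $\xi^{-1/2}$ has surface measure $\sim\xi^{d/2}$, not merely because the contact parameter $\xi$ is linearly distributed), and $\E\of{\tilde\delta^{-\gamma-1}}<\infty$ for $\gamma<1$, i.e.\ inverse moments of $\tilde\rho_{\Rd}=\tilde\delta$ (for $n=0$) up to order $2$. So the correct heuristics are $\P(M>t)\lesssim t^{-(d+2)}$ (surface-averaged) and $\P(\tilde\rho<s)\lesssim s^{2}$, not the exponents you wrote. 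Since you nonetheless linked $r<d+2$ (resp.\ $r<(d+2)/2$) to the $M$-moment and $pr/(p-r)<2$ to the $\tilde\rho$-moment, your conclusions survive, but the stated distributional estimates contradict them. Also note the paper's chain control is exponential in the stretch factor rather than merely ``polynomial moments of $N_{a,b}$'', though this only strengthens what you need.
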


\begin{proof}
The proof is given at the very end of Section \ref{subsec:Boolean-Model-for}.
\end{proof}

\subsection*{Notes}

\subsubsection*{Structure of the article}

We close the introduction by providing an overview over the article
and its main contributions. In Section \ref{sec:Preliminaries} we
collect some basic concepts and inequalities from the theory of Sobolev
spaces, random geometries and discrete and continuous ergodic theory.
We furthermore establish local regularity properties for what we call
$\eta$-regular sets, as well as a related covering theorem in Section
\ref{subsec:Local--Regularity}. In Section \ref{subsec:Dynamical-Systems-on-Zd}
we will demonstrate that stationary ergodic random open sets induce
stationary processes on $\Zd$, a fact which is used later in the
construction of the mesoscopic Voronoi tessellation in Section \ref{subsec:Mesoscopic-Regularity}.

In Section \ref{sec:Nonlocal-regularity} we introduce the regularity
concepts of this work. More precisely, in Section \ref{subsec:Microscopic-Regularity}
we introduce the concept of local $\left(\delta,M\right)$-regularity
and use the theory of Section \ref{subsec:Local--Regularity} in order
to establish a local covering result for $\partial\bP$, which will
allow us to infer most of our extension and trace results. In Section
\ref{subsec:Mesoscopic-Regularity} we show how isotropic cone mixing
geometries allow us to construct a stationary Voronoi tessellation
of $\Rd$ such that all related quantities like ``diameter'' of
the cells are stationary variables whose expectation can be expressed
in terms of the isotropic cone mixing function $f$. Moreover we prove
the important integration Lemma \ref{lem:estim-E-fa-fb}.

In Sections \ref{sec:Extension-and-Trace-d-M}--\ref{subsec:The-Issue-of-Connectedness}
we finally provide the aforementioned extension operators and prove
estimates for these extension operators and for the trace operator.
In Section \ref{sec:Sample-Geometries} we study the sample geometries.

\subsubsection*{A Remark on Notation}

This article uses concepts from partial differential equations, measure
theory, probability theory and random geometry. Additionally, we introduce
concepts which we believe have not been introduced before. This makes
it difficult to introduce readable self contained notation (the most
important aspect being symbols used with different meaning) and enforces
the use of various different mathematical fonts. Therefore, we provide
an index of notation at the end of this work. As a rough orientation,
the reader may keep the following in mind:

We use the standard notation $\N$, $\Q$, $\R$, $\Z$ for natural
($>0$), rational, real and integer numbers. $\P$ denotes a probability
measure, $\E$ the expectation. Furthermore, we use special notation
for some geometrical objects, i.e. $\T^{d}=[0,1)^{d}$ for the torus
($\T$ equipped with the topology of the torus), $\I^{d}=(0,1)^{d}$
the open interval as a subset of $\Rd$ (we often omit the index $d$),
$\B$ a ball, $\cone$ a cone and $\X$ a set of points. In the context
of finite sets $A$, we write $\#A$ for the number of elements.

Bold large symbols ($\bU$, $\bQ$, $\bP$,$\dots$) refer to open
subsets of $\Rd$ or to closed subsets with $\partial\bP=\partial\mathring{\bP}$.
The Greek letter $\Gamma$ refers to a $d-1$ dimensional manifold
(aside from the notion of $\Gamma$-convergence).

Calligraphic symbols ($\mathcal{A}$, $\cU$, $\dots$) usually refer
to operators and large Gothic symbols ($\fB,\fC,\dots$) indicate
topological spaces, except for $\fA$.

\subsection*{Outlook}

This work is the first part of a triology. In part II, we will see
how to apply the extension and trace operators introduced above. 

In part III we will discuss general quantifyable properties of the
geometry that are eventually accessible also to computer algorithms
that will allow to predict homogenization behavior of random geometries.

\section{\label{sec:Preliminaries}Preliminaries}

We first collect some notation and mathematical concepts which will
be frequently used throughout this paper. We first start with the
standard geometric objects, which will be labeled by bold letters.

\subsection{Fundamental Notation and Geometric Objects}

Throughout this work, we use $\left(\be_{i}\right)_{i=1,\dots d}$
for the Euclidean basis of $\Rd$. By $C>0$ we denote any constant
that depends on $p$ and $d$ but no further dependencies unless explicitly
mentioned. Such mentioning may expressed in some cases through the
notation $C(a,b,\dots)$. Furthermore, we use the following notation.

\textbf{Unit cube~~~} The torus $\T=[0,1)^{d}$ is quipped with
the topology of the metric $d(x,y)=\min_{z\in\Zd}\left|x-y+z\right|$.
In contrast, the open interval $\I^{d}:=(0,1)^{d}$ is considered
as a subset of $\Rd$. We often omit the index $d$ if this does not
provoke confusion.\nomenclature[T]{$\T$}{$\T=[0,1)^d$ the torus (Section \ref{sec:Preliminaries})}\nomenclature[I]{$\I$}{$\I=[0,1)^d$ the torus (Section \ref{sec:Preliminaries})}

\textbf{Balls~~~} Given a metric space $\left(M,d\right)$ we denote
$\Ball rx$ \nomenclature[Ball]{$\Ball{r}{x}$}{Ball around $x$ with radius $r$ (Section \ref{sec:Preliminaries})}the
open ball around $x\in M$ with radius $r>0$. The surface of the
unit ball in $\Rd$ is $\S^{d-1}$. Furthermore, we denote for every
$A\subset\Rd$ by $\Ball rA:=\bigcup_{x\in A}\Ball rx$.

\textbf{Points~~~} A sequence of points will be labeled by $\X:=\left(x_{i}\right)_{i\in\N}$.\nomenclature[X]{$\X$, $Y$}{Families of points (Section \ref{sec:Preliminaries})}

\textbf{A cone~~~} in $\Rd$ is usually labeled by $\cone$. In
particular, we define for a vector $\nu$ of unit length, $0<\alpha<\frac{\pi}{2}$
and $R>0$ the \nomenclature[Cone]{$\cone_{\nu,\alpha,R}(x)$}{Cone with apix $x$, direction $\nu$, opening angle $\alpha$ and hight $R$  (Section \ref{sec:Preliminaries})}cone
\[
\cone_{\nu,\alpha,R}(x):=\left\{ z\in\Ball Rx\,:\;z\cdot\nu>\left|z\right|\cos\alpha\right\} \quad\text{and}\quad\cone_{\nu,\alpha}(x):=\cone_{\nu,\alpha,\infty}(x)\,.
\]
\textbf{Inner and outer hull~~~} We use balls of radius $r>0$
to define for a closed set $\bP\subset\Rd$ the sets \nomenclature[P]{$\bP_{r},\bP_{-r}$}{Inner and outer hull of $\bP$ with hight $r$ (Section \ref{sec:Preliminaries})}
\begin{equation}
\begin{aligned}\bP_{r} & :=\overline{\Ball r{\bP}}:=\left\{ x\in\Rd\,:\;\dist\left(x,\bP\right)\leq r\right\} \,,\\
\bP_{-r} & :=\Rd\backslash\left[\Ball r{\Rd\setminus\bP}\right]:=\left\{ x\in\Rd\,:\;\dist\left(x,\Rd\setminus\bP\right)\geq r\right\} \,.
\end{aligned}
\label{eq:Pr}
\end{equation}
One can consider these sets as inner and outer hulls of $\bP$. The
last definition resembles a concept of ``negative distance'' of
$x\in\bP$ to $\partial\bP$ and ``positive distance'' of $x\not\in\bP$
to $\partial\bP$. For $A\subset\Rd$ we denote $\conv(A)$ \nomenclature[convex]{$\conv A$}{Convex hull of $A$ (Section \ref{sec:Preliminaries})}the
closed convex hull of $A$. 

The natural geometric measures we use in this work are the Lebesgue
measure on $\Rd$, written $\left|A\right|$ for $A\subset\Rd$, and
the $k$-dimensional Hausdorff measure, denoted by $\cH^{k}$ on $k$-dimensional
submanifolds of $\Rd$ (for $k\leq d$).

\subsection{Simple Local Extensions and Traces}

In the following, we formulate some extension and trace results. Although
it is well known how such results are proved and the proofs are standard,
we include them for completeness since we are particularly interested
in the dependence of the operator norm on the local Lipschitz regularity
of the boundary.

The following is well known:
\begin{lem}
\label{lem:simple-extension}For every $1\leq p\leq\infty$ there
exists $C_{p}>0$ such that for every $R>0$ there exists an extension
operator $\cU:\,W^{1,p}(\Ball R0)\to W^{1,p}(\Ball{2R}0)$ such that
\[
\norm{\nabla\cU u}_{L^{p}(\Ball{2R}0)}\leq C_{p}\norm{\nabla u}_{L^{p}(\Ball R0)}\,.
\]
 
\end{lem}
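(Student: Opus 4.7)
The plan is to reduce the statement to the case $R=1$ by rescaling, and then build the unit-ball extension explicitly via a radial reflection.

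First I would observe that the estimate is scale-invariant. Given $u \in W^{1,p}(\Ball{R}{0})$, set $v(y) := u(Ry)$ for $y \in \Ball{1}{0}$. A direct change of variables gives $\norm{\nabla v}_{L^p(\Ball{1}{0})}^p = R^{p-d}\norm{\nabla u}_{L^p(\Ball{R}{0})}^p$. If $\cU_1 : W^{1,p}(\Ball{1}{0}) \to W^{1,p}(\Ball{2}{0})$ is an extension with $\norm{\nabla \cU_1 v}_{L^p(\Ball{2}{0})} \le C_p \norm{\nabla v}_{L^p(\Ball{1}{0})}$, then defining $\cU u(x) := (\cU_1 v)(x/R)$ for $x \in \Ball{2R}{0}$ produces, again by change of variables, the estimate $\norm{\nabla \cU u}_{L^p(\Ball{2R}{0})} \le C_p \norm{\nabla u}_{L^p(\Ball{R}{0})}$ with the same constant. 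So everything reduces to constructing $\cU_1$.

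For the unit ball I would use inversion through the sphere: let $\Phi(x) := x/\left|x\right|^2$, which maps the annulus $\{1 < \left|x\right| < 2\}$ diffeomorphically onto $\{1/2 < \left|x\right| < 1\} \subset \Ball{1}{0}$ and fixes $\partial \Ball{1}{0}$ pointwise. Define
\[
\cU_1 v(x) := \begin{cases} v(x) & \text{if } \left|x\right| \le 1,\\ v(\Phi(x)) & \text{if } 1 < \left|x\right| < 2. \end{cases}
\]
Because $\Phi$ is the identity on $\partial \Ball{1}{0}$, the interior and exterior pieces share the same trace and hence glue to an element of $W^{1,p}(\Ball{2}{0})$ (this can be checked by approximation with smooth $v$ followed by density). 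Computing $D\Phi(x)_{ij} = \left|x\right|^{-2}\delta_{ij} - 2\left|x\right|^{-4} x_i x_j$, one gets $\left|D\Phi(x)\right| \le C$ and $\left|\det D\Phi(x)\right| = \left|x\right|^{-2d} \in [2^{-2d}, 1]$ on the annulus. The chain rule yields $\nabla(v\circ\Phi)(x) = D\Phi(x)^\top(\nabla v)(\Phi(x))$, and a change of variables $y=\Phi(x)$ gives
\[
\int_{1<\left|x\right|<2} \left|\nabla(v\circ\Phi)(x)\right|^p \, dx \le C \int_{1/2<\left|y\right|<1} \left|\nabla v(y)\right|^p \, dy \le C \norm{\nabla v}_{L^p(\Ball{1}{0})}^p,
\]
which combined with the trivial bound on $\Ball{1}{0}$ yields the desired inequality with a constant depending only on $p$ and $d$.

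The only real subtlety is the gluing step showing $\cU_1 v \in W^{1,p}(\Ball{2}{0})$ rather than just piecewise $W^{1,p}$. I would handle this by first verifying the construction for $v \in C^\infty(\overline{\Ball{1}{0}})$, where continuity across $\partial \Ball{1}{0}$ (since $\Phi|_{\partial \Ball{1}{0}} = \mathrm{id}$) forces the weak gradient to coincide with the piecewise classical gradient, and then extending by density using the uniform bound just obtained. Combining with the rescaling argument completes the proof.
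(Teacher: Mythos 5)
Your proof is correct, and the paper in fact supplies no argument at all (the lemma is introduced only with ``The following is well known:''), so there is nothing in the text to compare against; you are filling in a genuine omission. The strategy of reducing to $R=1$ via the scale invariance $\norm{\nabla v}_{L^p(\Ball{1}{0})}^p = R^{p-d}\norm{\nabla u}_{L^p(\Ball{R}{0})}^p$, and then extending across the unit sphere by inversion $\Phi(x)=x/\left|x\right|^{2}$, is a clean choice: on the closed annulus $\{1\le\left|x\right|\le2\}$ the map $\Phi$ is a smooth diffeomorphism onto $\{1/2\le\left|y\right|\le1\}$ with $\left|D\Phi\right|$ and $\left|\det D\Phi\right|^{\pm1}$ bounded above and below by constants depending only on $d$, so the chain rule and change of variables give exactly your estimate. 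Since $\Phi$ fixes $\partial\Ball{1}{0}$ pointwise, the interior and exterior traces coincide and the pasted function lies in $W^{1,p}(\Ball{2}{0})$ by the standard gluing lemma. One small remark: the density argument you invoke (approximating by $C^{\infty}(\overline{\Ball{1}{0}})$) covers $1\le p<\infty$ but fails for $p=\infty$; however this case needs no density at all, since $\Phi$ is bi-Lipschitz on the closed annulus, so $v\mapsto v\circ\Phi$ maps Lipschitz functions to Lipschitz functions with the required bound directly, and the gluing lemma for matching traces holds for all $p$ including $\infty$. You might also note that the more naive ``radial fold'' $x\mapsto(2-\left|x\right|)x/\left|x\right|$ does not work here without extra care, because it collapses $\partial\Ball{2}{0}$ to a point and its Jacobian degenerates near the origin; your choice of inversion avoids this entirely by mapping the outer annulus only onto the inner annulus $\{1/2<\left|y\right|<1\}$, where everything is uniformly controlled.
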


Let $\bP\subset\Rd$ be an open set and let $p\in\partial\bP$ and
$\delta>0$ be a constant such that $\Ball{\delta}p\cap\partial\bP$
is graph of a Lipschitz function. We denote \nomenclature[Meta]{$M(p,\delta)$}{Lemma \eqref{eq:M(delta-p)}}
\begin{align}
M(p,\delta) & :=\inf\left\{ M:\,\,\exists\phi:U\subset\R^{d-1}\to\R\text{ }\right.\nonumber \\
 & \phantom{:=\inf}\left.\phi\,\text{Lipschitz, with constant }M\text{ s.t. }\Ball{\delta}p\cap\partial\bP\text{ is graph of }\phi\right\} \,.\label{eq:M(delta-p)}
\end{align}

\begin{rem}
For every $p$, the function $M(p,\cdot)$ is monotone increasing
in $\delta$.
\end{rem}

\begin{lem}[Uniform Extension for Balls]
\label{lem:uniform-extension-lemma} Let $\bP\subset\Rd$ be an open
set, $0\in\partial\bP$ and assume there exists $\delta>0$, $M>0$
and an open domain $U\subset\Ball{\delta}0\subset\R^{d-1}$ such that
$\partial\bP\cap\Ball{\delta}0$ is graph of a Lipschitz function
$\varphi:\,U\subset\R^{d-1}\to\Rd$ of the form $\varphi(\tilde{x})=(\tilde{x},\phi(\tilde{x}))$
in $\Ball{\delta}0$ with Lipschitz constant $M$ and $\varphi(0)=0$.
Writing $x=(\tilde{x},x_{d})$ and defining $\rho=\delta\sqrt{4M^{2}+2}^{-1}$
there exist an extension operator\nomenclature[Uc]{$\cU$}{local and global extension operators (Lemma \ref{lem:uniform-extension-lemma})}
\begin{equation}
\left(\cU u\right)(x)=\begin{cases}
u(x) & \mbox{ if }x_{d}<\phi(\tilde{x})\\
u\left(\tilde{x},{-}x_{d}+2\phi(\tilde{x})\right) & \mbox{ if }x_{d}>\phi(\tilde{x})
\end{cases}\,,\label{eq:lem:def-cU}
\end{equation}
such that for\nomenclature[Ac]{$\cA\left(0,\bP,\rho\right)$}{$\left\{ \left(\tilde{x},{-}x_{d}+2\phi(\tilde{x})\right)\,:\;\left(\tilde{x},x_{d}\right)\in\Ball{\rho}0\backslash\bP\right\} $ (Lemma \ref{lem:uniform-extension-lemma})}
\begin{equation}
\cA\left(0,\bP,\rho\right):=\left\{ \left(\tilde{x},{-}x_{d}+2\phi(\tilde{x})\right)\,:\;\left(\tilde{x},x_{d}\right)\in\Ball{\rho}0\backslash\bP\right\} \subset\Ball{\delta}0\,,\label{eq:lem:uniform-extension-lemma-A-rho}
\end{equation}
and for every $p\in[1,\infty]$ the operator 
\[
\cU:\,W^{1,p}\of{\cA\left(0,\bP,\rho\right)}\to W^{1,p}(\Ball{\rho}0)\,,
\]
is continuous with 
\begin{equation}
\left\Vert \cU u\right\Vert _{L^{p}(\Ball{\rho}0\backslash\bP)}\leq\left\Vert u\right\Vert _{L^{p}\left(\cA\left(0,\bP,\rho\right)\right)}\,,\qquad\left\Vert \nabla\cU u\right\Vert _{L^{p}(\Ball{\rho}0\backslash\bP)}\leq2M\left\Vert \nabla u\right\Vert _{L^{p}\left(\cA\left(0,\bP,\rho\right)\right)}\,.\label{eq:lem:uniform-extension-lemma-estim}
\end{equation}
\end{lem}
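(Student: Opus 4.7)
The plan is to verify the three assertions of the lemma in order: the geometric inclusion \eqref{eq:lem:uniform-extension-lemma-A-rho}, the weak differentiability of the piecewise-defined $\cU u$, and the operator norm bounds in \eqref{eq:lem:uniform-extension-lemma-estim}.

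First I establish $\cA(0,\bP,\rho)\subset\Ball{\delta}{0}$. Write $\Phi(\tilde{x},x_d):=(\tilde{x},2\phi(\tilde{x})-x_d)$ for the reflection across the graph of $\phi$. For $(\tilde{x},x_d)\in\Ball{\rho}{0}$ with $x_d>\phi(\tilde{x})$, the bound $|\phi(\tilde{x})|\leq M|\tilde{x}|$ (from $\phi(0)=0$ and the Lipschitz property) together with $|2\phi(\tilde{x})-x_d|\leq 2M|\tilde{x}|+|x_d|$ and the elementary inequality $(a+b)^2\leq 2a^2+2b^2$ yields
\[
|\Phi(\tilde{x},x_d)|^2 \;=\; |\tilde{x}|^2+(2\phi(\tilde{x})-x_d)^2 \;\leq\; (1+8M^2)|\tilde{x}|^2+2x_d^2 \;\leq\; (4M^2+2)\rho^2 \;=\; \delta^2,
\]
after absorbing the numerical constants into the defining relation for $\rho$. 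In particular, the first coordinate $\tilde{x}$ remains inside the parametrisation domain $U$, so the reflection is well defined.

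Second, I show $\cU u\in W^{1,p}(\Ball{\rho}{0})$. The map $\Phi$ is globally bi-Lipschitz on $\Rd$, its Jacobian has constant determinant $-1$, and $\Phi$ fixes the graph $\{x_d=\phi(\tilde{x})\}$ pointwise. By the standard chain rule for Sobolev functions composed with a bi-Lipschitz map, $u\circ\Phi\in W^{1,p}(\Ball{\rho}{0}\setminus\bP)$ with
\[
\partial_i(u\circ\Phi)=(\partial_i u)\circ\Phi+2(\partial_i\phi)\,(\partial_d u)\circ\Phi\quad (i<d),\qquad \partial_d(u\circ\Phi)=-(\partial_d u)\circ\Phi.
\]
Because $\Phi$ is the identity on $\partial\bP\cap\Ball{\rho}{0}$, the one-sided traces of $u$ on $\bP$ and of $u\circ\Phi$ on $\Rd\setminus\bP$ coincide across this interface, so the concatenation of $u|_{\bP\cap\Ball{\rho}{0}}$ and $(u\circ\Phi)|_{\Ball{\rho}{0}\setminus\bP}$ has no singular distributional contribution on $\partial\bP$ and hence belongs to $W^{1,p}(\Ball{\rho}{0})$.

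Third, I apply the change of variables $y=\Phi(x)$ on the reflected side. Since $|\det D\Phi|\equiv 1$, one has $\int_{\Ball{\rho}{0}\setminus\bP}|\cU u|^p=\int_{\cA(0,\bP,\rho)}|u|^p$, which is the first inequality of \eqref{eq:lem:uniform-extension-lemma-estim}. The chain rule formula above, combined with $|\partial_i\phi|\leq M$, gives the pointwise bound $|\nabla\cU u|(x)\leq 2M|\nabla u|(\Phi(x))$ on $\Ball{\rho}{0}\setminus\bP$ after collecting terms; integrating and re-using $|\det D\Phi|=1$ yields the gradient estimate. The main obstacle is the gluing in the second step: the piecewise definition must be verified to produce no singular term across $\partial\bP\cap\Ball{\rho}{0}$, which is exactly where the bi-Lipschitz chain rule and the $\Phi$-invariance of the interface enter. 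Once this is settled, both $L^p$-bounds are immediate consequences of the unit-Jacobian change of variables.
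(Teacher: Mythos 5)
Your approach --- making the reflection $\Phi(\tilde{x},x_d)=(\tilde{x},\,2\phi(\tilde{x})-x_d)$ explicit and invoking the bi-Lipschitz chain rule --- is the same as the paper's; the paper does the flat case $\phi\equiv 0$ and then says ``the general case follows from transformation,'' which is exactly what you spell out. However, two of your estimates do not actually hold as written, and the second one matters.

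\textbf{The inclusion estimate is too lossy.} The elementary inequality $(a+b)^2\le 2a^2+2b^2$ gives $|\Phi(\tilde{x},x_d)|^2\le(1+8M^2)|\tilde{x}|^2+2x_d^2$, and this is \emph{not} bounded by $(4M^2+2)\rho^2$ on $\Ball{\rho}{0}$: take $|\tilde{x}|=\rho$, $x_d=0$ to get $(1+8M^2)\rho^2$, which exceeds $(4M^2+2)\rho^2$ whenever $M>1/2$. ``Absorbing the numerical constants into $\rho$'' is not available, because $\rho$ is fixed by the lemma. The constant $4M^2+2$ is exactly achievable, but one must use the Cauchy--Schwarz form: from $(2M|\tilde{x}|+|x_d|)^2\le(4M^2+1)\bigl(|\tilde{x}|^2+x_d^2\bigr)$ one gets
\[
|\Phi(\tilde{x},x_d)|^2 \le |\tilde{x}|^2 + (4M^2+1)\bigl(|\tilde{x}|^2+x_d^2\bigr) \le (4M^2+2)\,\rho^2 = \delta^2 .
\]

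\textbf{The claimed pointwise gradient bound is false.} Writing $\nabla(u\circ\Phi)=(D\Phi)^{T}\,(\nabla u)\circ\Phi$ with $D\Phi=\bigl(\begin{smallmatrix} I_{d-1} & 0 \\ 2\nabla\phi^{T} & -1\end{smallmatrix}\bigr)$, the same Cauchy--Schwarz computation as above gives the operator-norm bound $\|D\Phi\|\le\sqrt{4M^2+2}$, and the sharp value when $|\nabla\phi|=M$ is $M+\sqrt{M^2+1}$. Both strictly exceed $2M$ for every $M>0$ (note the claimed $2M$ degenerates to $0$ as $M\to 0$, while the flat reflection plainly preserves the gradient norm). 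So no amount of ``collecting terms'' in the chain rule yields the factor $2M$, and you cannot assert the pointwise inequality $|\nabla\cU u|(x)\le 2M\,|\nabla u|(\Phi(x))$. What the reflection argument honestly produces is
\[
\norm{\nabla\cU u}_{L^{p}(\Ball{\rho}0\backslash\bP)}\;\le\;\sqrt{4M^2+2}\;\norm{\nabla u}_{L^{p}\of{\cA\left(0,\bP,\rho\right)}}\,,
\]
which is of the form $C(1+M)$ --- precisely the shape of bound the paper actually relies on downstream (cf.\ Definition~\ref{def:extension-order} and its use of $C\bigl(1+M\bigr)$). You should derive and state $\sqrt{4M^2+2}$ rather than claim $2M$; the statement's $2M$ is not reachable by this (or, for small $M$, any) argument, and should be flagged.
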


\begin{rem}
\label{rem:ext-order}In case $\phi(\tilde{x})\geq0$ we find $\cA\left(0,\bP,\rho\right)\subset\Ball{\rho}0$. 
\end{rem}

\begin{proof}[Proof of Lemma \ref{lem:uniform-extension-lemma}]
In case $\phi(\tilde{x})\equiv0$ we consider the extension operator
$\cU_{+}:\,W^{1,p}(\R^{d-1}\times({-}\infty,0))\to W^{1,p}(\Rd)$
having the form (compare also \cite[chapter 5]{Evans2010}, \cite{adams2003sobolev})
\[
\left(\cU_{+}u\right)(x)=\begin{cases}
u(x) & \mbox{ if }x_{d}<0\\
u\left(\tilde{x},{-}x_{d}\right) & \mbox{ if }x_{d}>0
\end{cases}\,.
\]
The general case follows from transformation. 
\end{proof}
\begin{lem}
\label{lem:basic-trace}Let $\bP\subset\Rd$ be an open set, $0\in\partial\bP$
and assume there exists $\delta>0$, $M>0$ and an open domain $U\subset\Ball{\delta}0\subset\R^{d-1}$
such that $\partial\bP\cap\Ball{\delta}0$ is graph of a Lipschitz
function $\varphi:\,U\subset\R^{d-1}\to\Rd$ of the form $\varphi(\tilde{x})=(\tilde{x},\phi(\tilde{x}))$
in $\Ball{\delta}0$ with Lipschitz constant $M$ and $\varphi(0)=0$
and define $\rho=\delta\sqrt{4M^{2}+2}^{-1}$. Writing $x=(\tilde{x},x_{d})$
we consider the trace operator $\cT:\,C^{1}\left(\bP\cap\Ball{\delta}0\right)\to C\left(\partial\bP\cap\Ball{\rho}0\right)$.
For every $p\in[1,\infty]$ and every $r<\frac{p\left(1-d\right)}{\left(p-d\right)}$
the operator $\cT$ can be continuously extended to 
\[
\cT:\,W^{1,p}\left(\bP\cap\Ball{\delta}0\right)\to L^{r}(\partial\bP\cap\Ball{\rho}0)\,,
\]
such that 
\begin{equation}
\norm{\cT u}_{L^{r}(\partial\bP\cap\Ball{\rho}0)}\leq C_{r,p}\rho^{\frac{d\left(p-r\right)}{rp}-\frac{1}{r}}\sqrt{4M^{2}+2}^{\frac{1}{r}+1}\norm u_{W^{1,p}\left(\bP\cap\Ball{\delta}0\right)}\,.\label{eq:lem:basic-trace}
\end{equation}
\end{lem}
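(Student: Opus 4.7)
\emph{Plan.} I would prove Lemma \ref{lem:basic-trace} by flattening the graph piece of $\partial\bP$ and then reducing to the classical Sobolev trace inequality on a flat half--ball, carefully tracking the geometric constants through the change of variables. Introduce the bi-Lipschitz map $\Phi(\tilde y,y_d):=(\tilde y,\,y_d+\phi(\tilde y))$, which sends the half-space $\{y_d<0\}$ onto the subgraph of $\phi$ and the hyperplane $\{y_d=0\}$ onto the graph; in particular $\Phi(\B^{d-1}_\rho\times\{0\})\supset\partial\bP\cap\Ball{\rho}{0}$. Since $\det D\Phi\equiv1$ and $\norm{D\Phi},\norm{D\Phi^{-1}}\leq\sqrt{1+M^{2}}$, the pull-back $v:=u\circ\Phi$ satisfies $\norm v_{L^{p}}=\norm u_{L^{p}}$ and $\norm{\nabla v}_{L^{p}}\leq\sqrt{1+M^{2}}\,\norm{\nabla u}_{L^{p}}$. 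The choice $\rho=\delta/\sqrt{4M^{2}+2}$ is exactly what is needed to ensure $\Phi(\B^{d-1}_\rho\times(-\rho,0))\subset\Ball{\delta}{0}\cap\bP$, because for such $(\tilde y,y_d)$ one has $|\Phi(\tilde y,y_d)|^{2}\leq\rho^{2}+(\rho+M\rho)^{2}\leq(4M^{2}+2)\rho^{2}=\delta^{2}$.

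\emph{Flat trace inequality with scaling.} On the unit flat half-ball $\B^{+}:=\B^{d-1}_1\times(-1,0)$, the classical Sobolev trace theorem gives, for every admissible exponent $r<p(d-1)/(d-p)$, a constant $C_{r,p}$ with $\norm w_{L^{r}(\B^{d-1}_1\times\{0\})}\leq C_{r,p}\norm w_{W^{1,p}(\B^{+})}$. Applying this to the rescaled function $w(z):=v(\rho z)$ and undoing the scaling produces
\[
\norm v_{L^{r}(\B^{d-1}_\rho\times\{0\})}\leq C_{r,p}\,\rho^{\frac{d-1}{r}-\frac{d}{p}}\bigl(\norm v_{L^{p}}+\rho\,\norm{\nabla v}_{L^{p}}\bigr),
\]
and, under the assumption $\rho\leq 1$ (harmless in view of the applications where $\delta<\fr<\tfrac18$, otherwise absorbed into $C_{r,p}$), the two summands combine into $\rho^{\frac{d-1}{r}-\frac{d}{p}}\,\norm v_{W^{1,p}(\B^{d-1}_\rho\times(-\rho,0))}$.

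\emph{Return to $u$ and main obstacle.} Pulling back through $\Phi$, the surface element on $\partial\bP$ equals $\sqrt{1+|\nabla\phi|^{2}}\,\d\tilde x\leq\sqrt{1+M^{2}}\,\d\tilde x$, introducing a factor $(1+M^{2})^{1/(2r)}$ on the left-hand side, whereas the chain rule picks up a factor $\sqrt{1+M^{2}}$ on $\norm{\nabla v}_{L^{p}}$; together these produce the prefactor $\sqrt{1+M^{2}}^{\,1/r+1}\leq\sqrt{4M^{2}+2}^{\,1/r+1}$. Combined with the elementary identity $\frac{d-1}{r}-\frac{d}{p}=\frac{d(p-r)}{rp}-\frac{1}{r}$ this yields the claimed bound, and a density argument from $C^{1}$ to $W^{1,p}$ extends $\cT$ continuously. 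The only genuinely delicate point is the $\rho$-bookkeeping between the $L^{p}$- and gradient-terms after rescaling, which is cleanly handled once $\rho$ is bounded by a fixed constant; the $M$-dependence is otherwise a routine application of the chain rule and the area formula.
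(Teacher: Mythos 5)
Your overall strategy is the same as the paper's: flatten the boundary with a shear map, invoke the flat Sobolev trace inequality plus a scaling argument, then transform back while tracking the Jacobian and the surface element. The exponent bookkeeping ($\frac{d-1}{r}-\frac{d}{p}=\frac{d(p-r)}{rp}-\frac{1}{r}$) and the prefactor $\sqrt{4M^{2}+2}^{\,1/r+1}$ are handled correctly, and the density argument for the continuous extension is standard.

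There is, however, one concrete gap in the geometric containment. You work with the cylinder $\B^{d-1}_{\rho}\times(-\rho,0)$ (which you call a half-ball, but it is a cylinder) and claim $|\Phi(\tilde y,y_{d})|^{2}\le\rho^{2}+(\rho+M\rho)^{2}\le(4M^{2}+2)\rho^{2}$. The second inequality reads $1+(1+M)^{2}\le 4M^{2}+2$, i.e.\ $2M\le 3M^{2}$, which fails for every $M\in(0,2/3)$; so for mildly Lipschitz boundaries the image of your cylinder under $\Phi$ protrudes outside $\Ball{\delta}{0}$, and you cannot control the right-hand side by $\norm{u}_{W^{1,p}(\bP\cap\Ball{\delta}{0})}$. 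The paper instead takes the genuine half-ball $B_{\rho}^{-}:=\Ball{\rho}{0}\cap\{y_{d}<0\}$; there the two coordinates are coupled by $|\tilde y|^{2}+y_{d}^{2}<\rho^{2}$, and using $(y_{d}+\phi(\tilde y))^{2}\le 2y_{d}^{2}+2M^{2}|\tilde y|^{2}$ one gets
\[
|\Phi(\tilde y,y_{d})|^{2}\le(1+2M^{2})|\tilde y|^{2}+2y_{d}^{2}\le(2+2M^{2})(|\tilde y|^{2}+y_{d}^{2})<(4M^{2}+2)\rho^{2}=\delta^{2}
\]
for all $M\ge 0$. So either switch from the cylinder to $B_{\rho}^{-}$ (which is also what the flat trace inequality is usually stated for), or shrink the height of the cylinder to $\rho/\sqrt{2}$; with that single repair the proof goes through exactly as you outline.
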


\begin{proof}
We proceed similar to the proof of Lemma \ref{lem:uniform-extension-lemma}.

Step 1: Writing $B_{r}=\Ball r0$ together with $B_{r}^{-}=\left\{ x\in B_{r}:\,x_{d}<0\right\} $
and $\Sigma_{r}:=\left\{ x\in B_{r}:\,x_{d}=0\right\} $ we recall
the standard estimate
\[
\left(\int_{\Sigma_{1}}\left|u\right|^{r}\right)^{\frac{1}{r}}\leq C_{r,p}\left(\left(\int_{B_{1}^{-}}\left|\nabla u\right|^{p}\right)^{\frac{1}{p}}+\left(\int_{B_{1}^{-}}\left|u\right|^{p}\right)^{\frac{1}{p}}\right)\,,
\]
which leads to
\[
\left(\int_{\Sigma_{\rho}}\left|u\right|^{r}\right)^{\frac{1}{r}}\leq C_{r,p}\rho^{\frac{d\left(p-r\right)}{rp}-\frac{1}{r}}\left(\rho\left(\int_{B_{\rho}^{-}}\left|\nabla u\right|^{p}\right)^{\frac{1}{p}}+\left(\int_{B_{\rho}^{-}}\left|u\right|^{p}\right)^{\frac{1}{p}}\right)\,.
\]
Step 2: Using the transformation rule and the fact that $1\leq\left|\det D\varphi\right|\leq\sqrt{4M^{2}+2}$
we infer (\ref{eq:lem:basic-trace}) similar to Step 2 in the proof
of Lemma \ref{lem:uniform-extension-lemma}.
\begin{align*}
\left(\int_{\partial\bP\cap\Ball{\rho}0}\left|u\right|^{r}\right)^{\frac{1}{r}} & \leq\sqrt{4M^{2}+2}^{\frac{1}{r}}\left(\int_{\Sigma_{\rho}}\left|u\circ\varphi\right|^{r}\right)^{\frac{1}{r}}\\
 & \leq C_{r,p}\rho^{\frac{d\left(p-r\right)}{rp}-\frac{1}{r}}\sqrt{4M^{2}+2}^{\frac{1}{r}}\left(\rho\left(\int_{B_{\rho}^{-}}\left|\nabla\left(u\circ\varphi\right)\right|^{p}\right)^{\frac{1}{p}}+\left(\int_{B_{\rho}^{-}}\left|u\circ\varphi\right|^{p}\right)^{\frac{1}{p}}\right)\\
 & \leq C_{r,p}\rho^{\frac{d\left(p-r\right)}{rp}-\frac{1}{r}}\sqrt{4M^{2}+2}^{\frac{1}{r}+1}\,\cdot\\
 & \qquad\cdot\left(\rho\left(\int_{B_{\rho}^{-}}\left|\left(\nabla u\right)\circ\varphi\right|^{p}\det D\varphi\right)^{\frac{1}{p}}+\left(\int_{B_{\rho}^{-}}\left|u\circ\varphi\right|^{p}\det D\varphi\right)^{\frac{1}{p}}\right)
\end{align*}
and from this we conclude the Lemma with $\varphi^{-1}(B_{\rho}^{-})\subset\Ball{\delta}0$.
\end{proof}

\subsection{Local Nitsche-Extensions}

In this work, we will use bold letters for $\Rd$-valued function
spaces. In particular, we introduce for $1\leq p\leq\infty$ 
\begin{align*}
\bL^{p}(\bQ) & :=L^{p}(u;\Rd)\,,\\
\bW^{1,p}(\bQ) & :=\left\{ u\in\bL^{p}(\bQ)\,:\;\nabla u\in L^{p}\of{\bQ;\R^{d\times d}}\right\} \,.
\end{align*}

From \cite{duran2004korn} we know that on general Lipschitz domains
an estimate like the following holds:
\begin{lem}
\label{lem:uniform-korn-extension}For every $1\leq p\leq\infty$
there exists a constant $C>0$ depending only on the dimension $d\geq2$
such that the following holds: For every radius $R>0$ there exists
an extension operator $\cU_{R}:\,W^{1,p}\of{\Ball R0}\to W^{1,p}\of{\Ball{2R}0}$
such that 
\[
\norm{\nablas\left(\cU_{R}u\right)}_{W^{1,p}\of{\Ball{2R}0}}\leq C\norm{\nablas u}_{W^{1,p}\of{\Ball R0}}\,.
\]
\end{lem}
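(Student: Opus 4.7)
The plan is to establish the two separate estimates
\[
\norm{\nablas\cU_R u}_{L^p(\Ball{2R}0)}\le C\norm{\nablas u}_{L^p(\Ball R 0)}
\quad\text{and}\quad
\norm{\nabla\nablas\cU_R u}_{L^p(\Ball{2R}0)}\le C\norm{\nabla\nablas u}_{L^p(\Ball R 0)}
\]
for the \emph{same} operator $\cU_R$; summing them yields the stated $W^{1,p}$-bound for $\nablas\cU_R u$. The homogeneous rescaling $\tilde u(y):=u(Ry)$, $(\cU_R u)(x):=(\cU_1\tilde u)(x/R)$ shows that both inequalities reduce to the case $R=1$, since each of $\norm{\nablas\cdot}_{L^p}$ and $\norm{\nabla\nablas\cdot}_{L^p}$ scales by the same factor on the two sides. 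Henceforth write $B:=\Ball 1 0$ and $B':=\Ball 2 0$.

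Next I kill the rigid-motion degeneracy. Because rigid motions $r(x)=Ax+b$ with $A^\top=-A$ satisfy $\nablas r\equiv 0$, the operator is naturally defined on $W^{1,p}(B;\Rd)$ modulo the finite-dimensional space $RM$ of rigid motions. I fix a continuous projection $P_{RM}\colon L^p(B;\Rd)\to RM$ (the $L^2$-orthogonal projection works) and set $v:=u-P_{RM}u$, for which $\nablas v=\nablas u$.

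The heart of the argument is the second-order Korn--Poincar\'e estimate
\[
\norm{v}_{W^{2,p}(B)}\le C\norm{\nablas u}_{W^{1,p}(B)}.
\]
The standard Korn--Poincar\'e inequality on the Lipschitz ball $B$ gives $\norm{v}_{W^{1,p}(B)}\le C\norm{\nablas u}_{L^p(B)}$. Applying Korn's second inequality on $B$ to each partial derivative $\partial_k v\in W^{1,p}(B;\Rd)$ and using $\nablas\partial_k v=\partial_k\nablas u$, I obtain $\norm{\nabla\partial_k v}_{L^p(B)}\le C\of{\norm{\partial_k\nablas u}_{L^p(B)}+\norm{\partial_k v}_{L^p(B)}}$; summing over $k$ and absorbing the lower-order terms via the first-order bound yields $\norm{\nabla^2 v}_{L^p(B)}\le C\norm{\nablas u}_{W^{1,p}(B)}$, as required. (If $u$ itself is only known to lie in $W^{1,p}(B)$ while $\nablas u\in W^{1,p}(B)$, a mollification/density argument justifies $v\in W^{2,p}(B)$ before passing to the inequality.)

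Finally I invoke a standard Calder\'on--Seeley-type $W^{2,p}$-extension $\cU_0\colon W^{2,p}(B)\to W^{2,p}(B')$, supported in $B'$, built from a finite partition of unity near $\partial B$ and higher-order reflections in locally flattened coordinates, satisfying $\norm{\cU_0 w}_{W^{2,p}(B')}\le C\norm{w}_{W^{2,p}(B)}$. Define $\cU_1 u:=\cU_0(u-P_{RM}u)+P_{RM}u$. Since $\nablas P_{RM}u\equiv 0$, one has $\nablas\cU_1 u=\nablas\cU_0 v$, so
\[
\norm{\nablas\cU_1 u}_{W^{1,p}(B')}\le\norm{\cU_0 v}_{W^{2,p}(B')}\le C\norm{v}_{W^{2,p}(B)}\le C\norm{\nablas u}_{W^{1,p}(B)},
\]
which is the claim at $R=1$. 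The main obstacle is the second-order Korn--Poincar\'e estimate, which forces one to apply Korn's inequality twice (once to $u$ and once to each $\partial_k u$) while keeping the rigid-motion projection under uniform control; a minor caveat is that Korn's inequality fails at $p=1$ and $p=\infty$ by Ornstein's theorem, so those endpoints need a separate treatment (or should be excluded from the statement).
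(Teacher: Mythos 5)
The paper does not actually prove this lemma: it is introduced with ``From \cite{duran2004korn} we know\dots'' and left as an imported fact, so there is no in-house argument to compare against. Your proof is a legitimate, essentially self-contained derivation along standard lines: factor out a rigid motion $P_{RM}u$, upgrade the first-order Korn--Poincar\'e bound to $\norm{u-P_{RM}u}_{W^{2,p}(B)}\leq C\norm{\nablas u}_{W^{1,p}(B)}$ by applying Korn's second inequality to each $\partial_k(u-P_{RM}u)$ (using $\nablas\partial_k v=\partial_k\nablas u$), and then run an ordinary reflection-type extension; since the re-added rigid motion has vanishing symmetric gradient, the estimate survives. Three remarks. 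First, your decision to prove the $L^p$-level and second-order estimates \emph{separately} before scaling is not cosmetic but necessary: the combined $W^{1,p}$ bound at $R=1$ would not rescale to an $R$-independent constant because the two terms carry different powers of $R$; to close this you should state explicitly that the Calder\'on/Stein extension is bounded simultaneously on $W^{1,p}$ and $W^{2,p}$, so both separate estimates hold for the \emph{same} $\cU_1$. Second, your caveat about the endpoints is well taken: Korn's inequality fails for $p=1$ and $p=\infty$ by Ornstein's non-inequality, so the lemma's range ``$1\leq p\leq\infty$'' (and the paper's later citation of \cite{duran2004korn}, whose results hold for $1<p<\infty$) should be read as $1<p<\infty$; this is a defect of the statement rather than of your argument. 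Third, judging from how the lemma is used (as the symmetric-gradient analogue of Lemma \ref{lem:simple-extension}, whose bounds are in $L^p$), the $W^{1,p}$-norms in the display are plausibly a typo for $L^p$-norms; your construction delivers both versions, the $L^p$ one requiring only the first-order Korn--Poincar\'e step.
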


Again, we will need a refined estimate on extensions on Lipschitz
domains which explicitly accounts for the local Lipschitz constant. 
\begin{lem}[Uniform Nitsche-Extension for Balls]
\label{lem:uniform-Nitsche-extension-lemma} For every $d\geq2$
there exists a constant $C_{\cN}$ depending only on the dimension
$d$ such that the following holds: Let $\bP\subset\Rd$ be an open
set, $0\in\partial\bP$ and assume there exists $\delta>0$, $M>0$
and an open domain $U\subset\Ball{\delta}0\subset\R^{d-1}$ such that
$\partial\bP\cap\Ball{\delta}0$ is graph of a Lipschitz function
$\varphi:\,U\subset\R^{d-1}\to\Rd$ of the form $\varphi(\tilde{x})=(\tilde{x},\phi(\tilde{x}))$
in $\Ball{\delta}0$ with Lipschitz constant $M$ and $\varphi(0)=0$.
Writing $x=(\tilde{x},x_{d})$ and defining $\rho=\delta\sqrt{4M^{2}+2}^{-1}$
and 
\begin{equation}
\cA\left(0,\bP,\rho\right):=\left\{ \left(\tilde{x},x_{d}\right)\in\bP\,:\;\left|\tilde{x}\right|<\rho,\,x_{d}\leq C_{\cN}(1+M^{2})\right\} \,,\label{eq:lem:uniform-extension-lemma-A-rho-Nitsche}
\end{equation}
and for every $p\in[1,\infty]$ there exists a continuous operator
\[
\cU:\,W^{1,p}\of{\cA\left(0,\bP,\rho\right)}\to W^{1,p}(\Ball{\rho}0)\,,
\]
such that for some constant $C$ independent from $\left(\delta,M\right)$
and $\bP$ it holds 
\begin{equation}
\left\Vert \nablas\cU u\right\Vert _{L^{p}(\Ball{\rho}0\backslash\bP)}\leq C\left(1+M\right)^{2}\left\Vert \nablas u\right\Vert _{L^{p}\left(\cA\left(0,\bP,\rho\right)\right)}\,.\label{eq:Nitsche-Estimate}
\end{equation}
\end{lem}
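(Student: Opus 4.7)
The plan is to reduce to the half-space case by flattening the graph of $\phi$ and then to apply a Nitsche--Duran--Muschietti style extension that is known to preserve symmetric gradients on a flat boundary.

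\textbf{Flattening and half-space extension.} Introduce the bi-Lipschitz shear $\Psi(\tilde{x},x_d):=(\tilde{x},\,x_d-\phi(\tilde{x}))$, whose Jacobian determinant equals one and whose differential satisfies $\|D\Psi\|,\|(D\Psi)^{-1}\|\le\sqrt{1+M^2}$. This straightens the graph onto $\{y_d=0\}$ and maps $\bP\cap\Ball{\delta}{0}$ into the lower half-space. On straightened coordinates apply the Nitsche formula
\[
Ev(\tilde{y},y_d) := 3\,v(\tilde{y},-y_d) - 2\,v(\tilde{y},-2y_d), \quad y_d>0,
\]
which is continuous across $\{y_d=0\}$ and, by Lemma \ref{lem:uniform-korn-extension} applied to matched half-balls, satisfies the dimension-only Korn-type bound $\|\nablas Ev\|_{L^p(\{y_d>0\})}\le C\|\nablas v\|_{L^p(\{y_d<0\})}$. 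Set $\cU u:=E(u\circ\Psi^{-1})\circ\Psi$ on $\Ball{\rho}{0}\setminus\bP$ and $\cU u:=u$ on $\bP\cap\Ball{\rho}{0}$; the preimages under $\Psi^{-1}$ of the reflection arguments $-y_d$ and $-2y_d$ pick out the base points actually sampled, which defines $\cA(0,\bP,\rho)$.

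\textbf{Pulling back.} Pulling the flat-domain Korn bound back through $\Psi$ in terms of $\nablas u$ (rather than $\nabla u$) is the main obstacle. Since $\nabla(u\circ\Psi^{-1})=((\nabla u)(D\Psi)^{-1})\circ\Psi^{-1}$ and $(D\Psi)^{-1}$ is non-orthogonal, conjugation maps a skew matrix to one with a nonzero symmetric part. Hence $\nablas$ is \emph{not} covariant under $\Psi$: the antisymmetric part $W:=\tfrac{1}{2}(\nabla u-(\nabla u)^T)$ leaks into the symmetric part after flattening with coefficients of order $M$. To absorb $W$ back into $\nablas u$, invoke a local Korn inequality on the slab $\cA(0,\bP,\rho)$; its Korn constant degrades with the aspect ratio of the slab, which is exactly why the vertical extent in \eqref{eq:lem:uniform-extension-lemma-A-rho-Nitsche} must be enlarged to order $(1+M^2)$. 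Tracking constants through
\[
\|\nablas\cU u\|_{L^p(\Ball{\rho}{0}\setminus\bP)}
\;\le\; C(1+M)\,\|\nablas E(u\circ\Psi^{-1})\|_{L^p}
\;\le\; C(1+M)^2\,\|\nablas u\|_{L^p(\cA(0,\bP,\rho))}
\]
produces \eqref{eq:Nitsche-Estimate} with $C$ depending only on $d$.

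\textbf{Main obstacle.} The crux is precisely the failure of covariance of $\nablas$ under the non-orthogonal flattening $\Psi$: unlike for the full gradient case of Lemma \ref{lem:uniform-extension-lemma}, the estimate cannot be transferred by a change of variables alone. A Korn inequality on $\cA$ is what bridges $\nabla u$ back to $\nablas u$, and the $M$-dependence of that Korn constant simultaneously dictates both the $(1+M)^2$ prefactor in \eqref{eq:Nitsche-Estimate} and the $(1+M^2)$ height of the slab $\cA$.
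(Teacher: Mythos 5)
Your plan is a genuine departure from the paper's proof, but it contains a real gap that the paper's approach is specifically designed to avoid.

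The paper does not flatten the boundary at all. It implements Nitsche's original 1981 construction directly in the original coordinates: it introduces the regularized distance $d_{\bP,M}(x)=2c_{2}(1+M^{2})^{1/2}d_{\bP}(x)$ built on Stein's Lemma~\ref{lem:Stein-distance}, samples along vertical lines $x_{\lambda}=(\tilde{x},x_{d}-\lambda d_{\bP,M}(x))$ against a kernel $\psi$ satisfying the moment conditions $\int_1^2\psi=1$, $\int_1^2 t\psi=0$, and adds the corrector $\lambda u_{d}(x_{\lambda})\,\partial_{i}d_{\bP,M}(x)$. The resulting expansion (\ref{eq:lem:uniform-extension-lemma-help-2}) shows $\eps_{ij}[\cU u]$ depends on $\eps[u]$ only, \emph{except} for the term $\lambda\partial_{i}\partial_{j}d_{\bP,M}\,u_{d}(x_{\lambda})$; the fundamental theorem of calculus together with the moment conditions turns this into an integral of $\eps_{dd}[u]$. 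So the operator preserves rigid motions and is bounded $\nablas\!\to\!\nablas$ with no Korn inequality and no flattening. The $(1+M^{2})$ factors come from the derivative bounds on $d_{\bP,M}$, not from aspect ratios.

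Your write-up has two concrete problems. First, the Hestenes reflection $Ev(\tilde y,y_d)=3v(\tilde y,-y_d)-2v(\tilde y,-2y_d)$ does \emph{not} satisfy $\|\nablas Ev\|\leq C\|\nablas v\|$: computing $\tfrac12(\partial_i(Ev)_d+\partial_d(Ev)_i)$ for $i<d$ gives coefficients $(-2,+4)$ on $\partial_i v_d$ and $\partial_d v_i$ respectively at the base point $(\tilde y,-2y_d)$, which do not combine into a multiple of $\eps_{id}[v]$; the off-diagonal symmetric strain of $Ev$ depends on the full gradient of $v$. Citing Lemma~\ref{lem:uniform-korn-extension} does not rescue this — that lemma provides a different (nonlocal) operator on balls, not the reflection you wrote down. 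Second, your proposed Korn-inequality patch has the dependence backwards: the Korn constant on a slab with horizontal extent $\rho$ and vertical extent $\sim(1+M^{2})\rho$ \emph{grows} with the aspect ratio (see Lemma~\ref{lem:Mixed-Korn-Zylinder}, where the factor is $(L/r)^{p}$), so enlarging $\cA$ makes the Korn constant worse, not better. Combined with the two factors of $\sqrt{1+M^{2}}$ from $D\Psi$ and $(D\Psi)^{-1}$, the constant would exceed $(1+M)^{2}$. The subtract-a-rigid-motion bookkeeping needed to make the Korn argument work as an operator (rather than a pointwise estimate) is also absent. In short, the flattening route trades a clean covariant construction for two non-covariance problems (one in $\Psi$, one in $E$), and patching them with Korn degrades the constant; Nitsche's construction sidesteps both.
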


\begin{rem}
\label{rem:ext-order-sym}In case $\phi(\tilde{x})\geq0$ the proof
reveals $\cA\left(0,\bP,\rho\right)\subset\Ball{c\rho}0$ for some
$c$ depending only on the dimension $d$.
\end{rem}

In order to prove such a result we need the following lemma.
\begin{lem}[\cite{stein2016singular} Chapter 6 Section 1 Theorem 2]
\label{lem:Stein-distance}There exist constants $c_{1},c_{2},c_{3}>0$
such that for every open set $\bP\subset\Rd$ with local Lipschitz
boundary there exists a function $d_{\bP}:\,\Rd\backslash\overline{\bP^{\complement}}\to\R$
with 
\[
\begin{aligned} & c_{1}d_{\bP}(x) & \leq\dist\of{x,\bP} & \leq c_{2}d_{\bP}(x)\,,\\
\forall i\in\left\{ 1,\dots,d\right\} : &  & \left|\partial_{i}d_{\bP}(x)\right| & \leq c_{3}\,,\\
\forall i,k\in\left\{ 1,\dots,d\right\} : &  & \left|\partial_{i}\partial_{k}d_{\bP}(x)\right| & \leq c_{3}\left|d_{\bP}(x)\right|^{-1}\,.
\end{aligned}
\]
\end{lem}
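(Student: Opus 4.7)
\medskip

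\noindent\textbf{Proof plan.} The statement is a version of Stein's regularized distance function, so my plan is to reconstruct it via a Whitney-type decomposition and a subordinate partition of unity. Let $\bU:=\Rd\setminus\overline{\bP^{\complement}}$ (the open region on which $d_{\bP}$ must be defined). The first step is to take a Whitney decomposition of $\bU$ into a countable family of closed dyadic cubes $\{Q_{k}\}_{k\in\N}$ with pairwise disjoint interiors such that
\[
c_{1}'\diam(Q_{k})\;\leq\;\dist(Q_{k},\partial\bU)\;\leq\;c_{2}'\diam(Q_{k})
\]
and with the standard bounded-overlap property: each point $x\in\bU$ lies in at most $N=N(d)$ of the slightly enlarged cubes $Q_{k}^{*}:=\tfrac{9}{8}Q_{k}$.

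\medskip

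\noindent Next I would build a smooth partition of unity $\{\varphi_{k}\}$ subordinate to $\{Q_{k}^{*}\}$ in the usual way: start from one smooth bump $\psi$ supported in $[-\tfrac{9}{8},\tfrac{9}{8}]^{d}$ and identically $1$ on $[-1,1]^{d}$, rescale and translate to obtain $\psi_{k}$ supported in $Q_{k}^{*}$, and set $\varphi_{k}=\psi_{k}/\sum_{j}\psi_{j}$. The crucial scaling is
\[
\left|\partial^{\alpha}\varphi_{k}(x)\right|\;\leq\;C_{\alpha}\,\diam(Q_{k})^{-|\alpha|},
\]
which comes from the chain rule together with the fact that on the support of $\psi_{k}$ the denominator $\sum_{j}\psi_{j}$ is bounded below by a constant depending only on $d$ (by the bounded-overlap property). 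The candidate regularized distance is then
\[
d_{\bP}(x)\;:=\;\sum_{k}\diam(Q_{k})\,\varphi_{k}(x).
\]

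\medskip

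\noindent The three estimates are now checked in turn. For the sandwich bound, on any cube $Q_{k}^{*}$ containing $x$ the distance $\dist(x,\partial\bU)$ is comparable to $\diam(Q_{k})$ by the Whitney property, and since the weights sum to $1$ we obtain $d_{\bP}(x)\sim\dist(x,\partial\bU)$ with constants depending only on $d$; this yields the inequalities with $c_{1},c_{2}$. For the first-derivative bound, differentiating gives $\partial_{i}d_{\bP}=\sum_{k}\diam(Q_{k})\,\partial_{i}\varphi_{k}$, and at a fixed $x$ only $N$ terms are nonzero, each bounded by $\diam(Q_{k})\cdot C\diam(Q_{k})^{-1}=C$, hence $|\partial_{i}d_{\bP}|\leq c_{3}$. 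For the second-derivative bound one proceeds identically and picks up an extra factor $\diam(Q_{k})^{-1}\sim d_{\bP}(x)^{-1}$, giving the claimed $c_{3}/d_{\bP}$.

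\medskip

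\noindent The main technical obstacle is the existence of a Whitney decomposition with the stated metric properties for a general open set of the form $\bU=\Rd\setminus\overline{\bP^{\complement}}$; this is a purely set-theoretic geometric fact that does not use the local Lipschitz regularity of $\partial\bP$ at all (the Lipschitz hypothesis only enters to guarantee that $\bU$ is nonempty and regular enough that $\dist(\cdot,\partial\bU)$ is an honest Lipschitz function, which is used implicitly when comparing the derivatives of $d_{\bP}$ to those of the true distance). A minor bookkeeping point will be verifying that the implicit constants in the partition of unity bounds are indeed absolute, which reduces to checking that the lower bound on $\sum_{j}\psi_{j}$ depends only on $d$ through the overlap number $N$.
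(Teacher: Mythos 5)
Your reconstruction is exactly Stein's original construction (Whitney decomposition of the open set plus a subordinate partition of unity with weights $\diam Q_k$), which is what the paper cites rather than proves; the approach and the scaling bookkeeping are correct. One small correction to your closing remark: the Lipschitz hypothesis on $\partial\bP$ is in fact not needed anywhere — $\dist(\cdot,A)$ is automatically $1$-Lipschitz for \emph{any} set $A$, and Stein's Theorem 2 holds for an arbitrary open set with no boundary regularity at all.
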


From the theory presented by Stein \cite{stein2016singular} we will
not get an explicit form of $C_{\cN}$ but only an upper bound that
grows exponentially with dimension $d$.
\begin{proof}[Proof of Lemma \ref{lem:uniform-Nitsche-extension-lemma}]
We use an idea by Nitsche \cite{nitsche1981korn}, which we transfer
from $p=2$ to the general case, thereby explicitly quantifying the
influence of $M$. For simplicity we write $\bP_{\delta}:=\bP\cap\Ball{\delta}0$
and $\bP_{\delta}^{\complement}:=\Ball{\delta}0\backslash\bP$ and
assume that $x\in\bP_{\delta}$ iff $x\in\Ball{\delta}0$ and $x_{d}<\phi(\tilde{x})$. 

As observed by Nitsche, it holds
\[
\forall x\in\bP_{\delta}^{\complement}:\quad0<\left(1+M^{2}\right)^{-\frac{1}{2}}\left(x_{d}-\phi(\tilde{x})\right)\leq\dist(x,\partial\bP)\leq x_{d}-\phi(\tilde{x})\,,
\]
and together with Lemma \ref{lem:Stein-distance}, we can define $d_{\bP,M}(x):=2c_{2}\left(1+M^{2}\right)^{\frac{1}{2}}d_{\bP}(x)$
and find for $c>\max\left\{ \frac{2c_{2}}{c_{1}},4c_{2}c_{3}\right\} $
that 
\[
\begin{aligned} &  & 2\left(x_{d}-\phi(\tilde{x})\right)\leq\d_{\bP,M}(x) & \leq c\left(1+M^{2}\right)^{\frac{1}{2}}\left(x_{d}-\phi(\tilde{x})\right)\,,\\
\forall i\in\left\{ 1,\dots,d\right\} : &  & \left|\partial_{i}d_{\bP,M}(x)\right| & \leq c\left(1+M^{2}\right)^{\frac{1}{2}}\,,\\
\forall i,k\in\left\{ 1,\dots,d\right\} : &  & \left|\partial_{i}\partial_{k}d_{\bP,M}(x)\right| & \leq c\left(1+M^{2}\right)\left|d_{\bP,M}(x)\right|^{-1}\,.
\end{aligned}
\]
If $\psi\in C([1,2])$ satisfies 
\begin{equation}
\int_{1}^{2}\psi(t)\,\d t=1\,,\qquad\int_{1}^{2}t\,\psi(t)\,\d t=0.\label{eq:lem:uniform-extension-lemma-help-1}
\end{equation}
Nitsche introduced $x_{\lambda}:=(\tilde{x},x_{d}-\lambda d_{\bP,M}(x))$
and proposed the following extension on $x\in\bP_{\delta}^{\complement}$:
\[
u_{i}(x):=\int_{1}^{2}\psi(\lambda)\left(u_{i}(x_{\lambda})+\lambda u_{d}(x_{\lambda})\,\partial_{i}d_{\bP,M}(x)\right)\d\lambda\,.
\]
One can quickly verify that this maps $C(\overline{\bP_{\delta}})$
onto $C\of{\overline{\Ball{\rho}0}}$. In what follows, we write $\eps[u](x):=\nabla^{s}u(x)$
and particularly $\eps_{ij}[u](x):=\frac{1}{2}\left(\partial_{i}u_{j}+\partial_{j}u_{i}\right)$
as well as $\eps_{ij}^{\lambda}[u](x)=\eps_{ij}[u](x_{\lambda})$
for $x\in\bP_{\delta}^{\complement}$. Then for $x\in\bP_{\delta}^{\complement}\cap\Ball{\rho}0$
\begin{align}
\eps_{ij}[u](x)=\int_{1}^{2}\psi(\lambda) & \left(\eps_{ij}^{\lambda}(x)+\lambda\partial_{i}d_{\bP,M}(x)\,\eps_{jd}^{\lambda}(x)+\lambda\partial_{j}d_{\bP,M}(x)\,\eps_{id}^{\lambda}(x)\right.\label{eq:lem:uniform-extension-lemma-help-2}\\
 & \;\left.+\lambda^{2}\partial_{i}d_{\bP,M}(x)\,\partial_{j}d_{\bP,M}(x)\,\eps_{dd}^{\lambda}(x)+\lambda\partial_{i}\partial_{j}d_{\bP,M}(x)\,u_{d}(x_{\lambda})\right)
\end{align}
From the fundamental theorem of calculus we find 
\[
u_{d}(x_{\lambda})=u_{d}(x_{1})+\delta(\tilde{x})\int_{1}^{\lambda}\partial_{d}u_{d}(x_{t})\,\d t\,,
\]
which leads by (\ref{eq:lem:uniform-extension-lemma-help-1}) to 
\[
\int_{1}^{2}\psi(\lambda)\lambda\partial_{i}\partial_{j}d_{\bP,M}(x)\,u_{d}(x_{\lambda})\,\d\lambda=\partial_{i}\partial_{j}d_{\bP,M}(x)\,d_{\bP,M}(x)\int_{1}^{2}\eps_{dd}[u](x_{t})\,\d t\int_{\mu}^{2}\psi(\lambda)\lambda\,\d\lambda\,.
\]
We may now apply $\left|\,\cdot\,\right|^{p}$ on both sides of (\ref{eq:lem:uniform-extension-lemma-help-2}),
integrate over $\bP_{\delta}^{\complement}\cap\Ball{\rho}0$ and use
the integral transformation theorem for each $\lambda$ to find 
\[
\norm{\eps[u]}_{L^{p}\of{\bP_{\delta}^{\complement}\cap\Ball{\rho}0}}\leq C\left(1+M^{2}\right)\norm{\eps[u]}_{L^{p}\of{\bP_{\delta}}}\,.
\]
\end{proof}

\subsection{\label{subsec:Poincar=0000E9-Inequalities}Poincaré Inequalities}

We denote for bounded open $A\subset\Rd$
\[
W_{(0),r}^{1,p}(A):=\left\{ u\in W^{1,p}(A)\,:\;\exists x:\,B_{r}(x)\subset A\;\vee\;\fint_{B_{r}(x)}u=0\right\} \,.
\]
Note that this is not a linear vector space.
\begin{lem}
\label{lem:Poincare-ball}For every $p\in[1,\infty)$ there exists
$C_{p}>0$ such that the following holds: Let $0<r<R$ and $x\in\Ball R0$
such that $\Ball rx\subset\Ball R0$ then for every $u\in W^{1,p}(\Ball R0)$
\begin{equation}
\left\Vert u\right\Vert _{L^{p}(\Ball R0)}^{p}\leq C_{p}\left(R^{p}\frac{R^{d-1}}{r^{d-1}}\left\Vert \nabla u\right\Vert _{L^{p}(\Ball R0)}^{p}+\frac{R^{d}}{r^{d}}\left\Vert u\right\Vert _{L^{p}(\Ball rx)}^{p}\right)\,,\label{eq:lem:Poincare-ball-1}
\end{equation}
and for every $u\in W_{(0),r}^{1,p}((\Ball R0)$ it holds 
\begin{equation}
\left\Vert u\right\Vert _{L^{p}(\Ball R0)}^{p}\leq C_{p}R^{p}\left(\frac{r}{R}\right)^{1-d}\left(1+\left(\frac{r}{R}\right)^{p-1}\right)\left\Vert \nabla u\right\Vert _{L^{p}(\Ball R0)}^{p}\,.\label{eq:lem:Poincare-ball-2}
\end{equation}
\end{lem}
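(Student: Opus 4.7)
The plan is to establish \eqref{eq:lem:Poincare-ball-1} first by a direct estimate, then deduce \eqref{eq:lem:Poincare-ball-2} as a short corollary using the classical Poincaré inequality on the small ball $\Ball r x$.

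For \eqref{eq:lem:Poincare-ball-1}, I would start from the pointwise split
$|u(y)|^p \leq 2^{p-1}(|u(z)|^p + |u(y)-u(z)|^p)$
for $y \in \Ball R 0$ and $z \in \Ball r x$, then average in $z$ over $\Ball r x$ and integrate in $y$ over $\Ball R 0$. The first term integrates immediately to $C_p (|\Ball R 0|/|\Ball r x|)\|u\|_{L^p(\Ball r x)}^p = C_p (R/r)^d \|u\|_{L^p(\Ball r x)}^p$, which matches the claimed second term on the right-hand side.

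The heart of the argument is to control the cross term $I := \int_{\Ball R 0}\fint_{\Ball r x}|u(y)-u(z)|^p\,dz\,dy$ by $C R^p(R/r)^{d-1}\|\nabla u\|_{L^p(\Ball R 0)}^p$. Since $\Ball R 0$ is convex, the segment from $z$ to $y$ stays inside $\Ball R 0$, so the fundamental theorem of calculus with Jensen's inequality gives
$|u(y)-u(z)|^p \leq |y-z|^{p-1}\int_0^{|y-z|}|\nabla u(z+s\omega)|^p\,ds$,
with $\omega = (y-z)/|y-z|$. Bound $|y-z|^{p-1} \leq (2R)^{p-1}$, then for fixed $z$ switch to polar coordinates $y = z + \rho\omega$ with $\rho \leq 2R$, giving weight $\rho^{d-1}$. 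After exchanging the order of the $s$ and $\rho$ integrations (Fubini), the inner $\rho$-integral $\int_s^{2R}\rho^{d-1}d\rho$ is bounded by $(2R)^d/d$, and converting back via $w = z+s\omega$ produces a Cartesian integral of $|\nabla u(w)|^p$ weighted by $|w-z|^{1-d}$ over $\Ball R 0$.

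The final step integrates over $z \in \Ball r x$ and uses the geometric bound
$\int_{\Ball r x}|w-z|^{1-d}\,dz \leq C_d\, r$ for every $w \in \Rd$,
which I would verify by splitting: if $|w-x| \leq 2r$ use polar coordinates around $w$ on $\Ball{2r}w \supset \Ball r x$ to get $C_d r$; if $|w-x| > 2r$ use $|w-z| \geq r$ so the integrand is bounded by $r^{1-d}|\Ball r x| = C_d r$. Combining the constants, $I \leq C\,|\Ball r x|^{-1}R^{p-1}R^d\,r\,\|\nabla u\|^p_{L^p(\Ball R 0)} = C R^p (R/r)^{d-1}\|\nabla u\|^p_{L^p(\Ball R 0)}$, which completes \eqref{eq:lem:Poincare-ball-1}. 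For \eqref{eq:lem:Poincare-ball-2}, given $u \in W_{(0),r}^{1,p}(\Ball R 0)$ with $\fint_{\Ball r x}u = 0$, classical Poincaré on $\Ball r x$ gives $\|u\|_{L^p(\Ball r x)}^p \leq C r^p \|\nabla u\|^p_{L^p(\Ball r x)} \leq C r^p \|\nabla u\|^p_{L^p(\Ball R 0)}$; plugging into \eqref{eq:lem:Poincare-ball-1} and factoring $R^p(R/r)^{d-1} = R^p(r/R)^{1-d}$ out of both contributions leaves the extra term $r^p(R/r)^d / (R^p(R/r)^{d-1}) = (r/R)^{p-1}$, as claimed. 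The only real care needed is keeping the power $(R/r)^{d-1}$ (not $(R/r)^d$) in the cross-term — the naïve estimate via $\|u - u_{\Ball R 0}\|_{L^p}$ plus Hölder loses one power of $r$, so the ray-based representation and the weighted bound $|w-z|^{1-d}$ are essential.
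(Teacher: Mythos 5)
Your proof is correct, but it takes a genuinely different route from the paper's. The paper first specializes to $x=0$, $R=1$, compares $u(y)$ with $u(ry)$ for $y$ in the annulus $\Ball 10\backslash\Ball r0$ (a radial scaling of $y$ into the small ball), and integrates in spherical coordinates; for general $x$ it then invokes the simple extension operator from Lemma \ref{lem:simple-extension} to get from $\Ball 10$ onto $B_2(x)$ and reduces to the centered case. Your argument instead averages over \emph{all} $z\in\Ball rx$, uses convexity of the big ball to run FTC along segments, and lands on the Riesz-potential bound $\int_{\Ball rx}|w-z|^{1-d}\,dz\leq C_d\,r$. This handles arbitrary off-center $\Ball rx$ directly and dispenses with the extension operator entirely — a cleaner path for this specific lemma, at the cost of the Fubini/polar bookkeeping. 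The deduction of \eqref{eq:lem:Poincare-ball-2} from \eqref{eq:lem:Poincare-ball-1} via Poincar\'e on $\Ball rx$ is the same in both proofs. Two small cosmetic points: in your first case of the geometric bound you write $\Ball{2r}w\supset\Ball rx$ under the hypothesis $|w-x|\leq 2r$, but that inclusion needs $|w-x|\leq r$; with $|w-x|\leq 2r$ you should use $\Ball{3r}w$, which still yields $C_d\,r$. Also, when changing back from polar to Cartesian around $z$, it is worth stating explicitly that $\{z+s\omega:\,0\leq s\leq\rho_{\max}(\omega)\}$ exhausts exactly $\Ball R0$ by convexity, so the resulting Cartesian integral is over $\Ball R0$ and nothing is lost or double-counted. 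Neither point affects the validity of the argument.
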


\begin{rem*}
In case $p\geq d$ we find that (\ref{eq:lem:Poincare-ball-2}) holds
iff $u(x)=0$ for some $x\in\Ball 10$.
\end{rem*}
\begin{proof}
 In a first step, we assume $x=0$ and $R=1$. The underlying idea
of the proof is to compare every $u(y)$, $y\in\Ball 10\backslash\Ball r0$
with $u(rx)$. In particular, we obtain for $y\in\Ball 10\backslash\Ball r0$
that 
\[
u(y)=u(ry)+\int_{0}^{1}\nabla u\of{ry+t(1-r)y}\cdot(1-r)y\,\d t
\]
and hence by Jensen's inequality 
\[
\left|u(y)\right|^{p}\leq C\left(\int_{0}^{1}\left|\nabla u\of{ry+t(1-r)y}\right|^{p}(1-r)^{p}\left|y\right|^{p}\,\d t+\left|u(ry)\right|^{p}\right)\,.
\]
We integrate the last expression over $\Ball 10\backslash\Ball r0$
and find 
\begin{align*}
\int_{\Ball 10\backslash\Ball r0}\left|u(y)\right|^{p}\d y & \leq\int_{S^{d-1}}\int_{r}^{1}C\left(\int_{0}^{1}\left|\nabla u\of{rs\nu+t(1-r)s\nu}\right|^{p}(1-r)^{p}s^{p}\,\d t\right)s^{d-1}\d s\d\nu\\
 & \quad+\int_{\Ball 10\backslash\Ball r0}\left|u(ry)\right|^{p}\d y\\
 & \leq\int_{S^{d-1}}\int_{r}^{1}C\left(\int_{rs}^{s}\left|\nabla u\of{t\nu}\right|^{p}(1-r)^{p-1}s^{p-1}\,\d t\right)s^{d-1}\d s\\
 & \quad+\int_{\Ball 10\backslash\Ball r0}\left|u(ry)\right|^{p}\d y\\
 & \leq C\int_{r}^{1}\d s\;s^{d-1}\frac{1}{\left(rs\right)^{d-1}}\int_{rs}^{s}\d t\;t^{d-1}\int_{S^{d-1}}\left|\nabla u\of{t\nu}\right|^{p}(1-r)^{p-1}s^{p-1}\\
 & \quad+\int_{\Ball 10\backslash\Ball r0}\left|u(ry)\right|^{p}\d y\\
 & \leq C\frac{1}{r^{d-1}}\left\Vert \nabla u\right\Vert _{L^{p}(\Ball 10)}^{p}+\frac{1}{r^{d}}\left\Vert u\right\Vert _{L^{p}(\Ball r0)}^{p}\,.
\end{align*}
For general $x\in\Ball 10$, use the extension operator $\cU:\,W^{1,p}(\Ball 10)\to W^{1,p}(B_{4}(0))$
such that $\left\Vert \cU u\right\Vert _{W^{1,p}(B_{4}(0))}\leq C\norm u_{W^{1,p}(\Ball 10)}$
and $\left\Vert \nabla\cU u\right\Vert _{W^{1,p}(B_{4}(0))}\leq C\norm{\nabla u}_{W^{1,p}(\Ball 10)}$.
Since $\Ball 10\subset B_{2}(x)\subset B_{4}(0)$ we infer 
\[
\left\Vert u\right\Vert _{L^{p}(\Ball 10)}^{p}\leq\left\Vert \cU u\right\Vert _{L^{p}(B_{2}(x))}^{p}\leq C\left(\frac{1}{r^{d-1}}\left\Vert \nabla\cU u\right\Vert _{L^{p}(B_{2}(x))}^{p}+\frac{1}{r^{d}}\left\Vert \cU u\right\Vert _{L^{p}(B_{r}(x))}^{p}\right)\,.
\]
and hence (\ref{eq:lem:Poincare-ball-1}). Furthermore, since there
holds $\left\Vert u\right\Vert _{L^{p}(\Ball 10)}^{p}\leq C\left\Vert \nabla u\right\Vert _{L^{p}(\Ball 10)}^{p}$
for every $u\in W_{(0)}^{1,p}(\Ball 10)$, a scaling argument shows
$\left\Vert u\right\Vert _{L^{p}(\Ball r0)}^{p}\leq Cr^{p}\left\Vert \nabla u\right\Vert _{L^{p}(\Ball r0)}^{p}$
for every $u\in W_{(0),r}^{1,p}(\Ball 10)$ and hence (\ref{eq:lem:Poincare-ball-2}).
For general $R>0$ use a scaling argument.
\end{proof}
A similar argument leads to the following, where we remark that the
difference in the appearing of $\frac{1}{r}$ is due to the fact,
that integrating the cylinder needs no surface element $r^{d-1}$.
\begin{cor}
\label{cor:Poincare-zylinder}For every $p\in[1,\infty)$ and $r>0$
there exists $C_{p}>0$ such that the following holds: Let $r<L$,
$P_{L,r}:=\Balldim[d-1]r0\times(0,L)$ and $x\in P_{L,r}$ such that
$\Ball rx\subset P_{L,r}$ then for every $u\in W^{1,p}(P_{L,r})$
\begin{equation}
\left\Vert u\right\Vert _{L^{p}(P_{L,r})}^{p}\leq C_{p}\left(L^{p}\left\Vert \nabla u\right\Vert _{L^{p}(P_{L,r})}^{p}+\frac{L}{r}\left\Vert u\right\Vert _{L^{p}(\Ball rx)}^{p}\right)\,,\label{eq:cor:Poincare-zylinder}
\end{equation}
and if additionally $\fint_{\Ball rx}u=0$ then 
\begin{equation}
\left\Vert u\right\Vert _{L^{p}(P_{L,r})}^{p}\leq C_{p}\left(L^{p}\left\Vert \nabla u\right\Vert _{L^{p}(P_{L,r})}^{p}+Lr^{p-1}\left\Vert \nabla u\right\Vert _{L^{p}(\Ball rx)}^{p}\right)\,,\label{eq:cor:Poincare-zylinder-1}
\end{equation}
Let $y\in P_{L,r}$ such that $\Ball ry\subset P_{L,r}$ then for
every $u\in W^{1,p}(P_{L,r})$ 
\begin{equation}
\left|\fint_{\Ball ry}u-\fint_{\Ball rx}u\right|^{p}\leq C_{p}\left(L^{p-1}r^{1-d}\left\Vert \nabla u\right\Vert _{L^{p}(P_{L,r})}^{p}\right)\,.\label{eq:cor:Poincare-zylinder-2}
\end{equation}
\end{cor}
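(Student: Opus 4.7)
The strategy is a tiling-chain argument along the axis of the cylinder, in the spirit of Lemma~\ref{lem:Poincare-ball} but adapted to a thin long domain. A single application of standard Poincar\'e on $P_{L,r}$ would be too crude because the base ball $\Ball{r}{x}$ is much smaller than $P_{L,r}$; instead I would cover $P_{L,r}$ by a chain of $N\leq CL/r$ overlapping balls $B_k:=\Ball{r/2}{x+(kr/4)\be_d}$, chosen so that each $B_k\subset P_{L,r}$, consecutive balls overlap substantially, and $B_0\subset\Ball{r}{x}$.

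On each $B_k$ the standard Poincar\'e--Wirtinger inequality gives $\norm{u-\bar u_{B_k}}_{L^p(B_k)}^p\leq Cr^p\norm{\nabla u}_{L^p(B_k)}^p$ with $\bar u_{B_k}:=\fint_{B_k}u$, and on each pair $B_k\cup B_{k+1}$ of diameter $\sim r$ one obtains $\left|\bar u_{B_k}-\bar u_{B_{k+1}}\right|^p\leq Cr^{p-d}\norm{\nabla u}_{L^p(B_k\cup B_{k+1})}^p$. The discrete H\"older bound $\left|\bar u_{B_k}-\bar u_{B_0}\right|^p\leq k^{p-1}\sum_{j<k}\left|\bar u_{B_{j+1}}-\bar u_{B_j}\right|^p$, combined with a Fubini-type swap of the double sum and $\sum_{k\leq N}k^{p-1}\leq CN^p$, yields $r^d\sum_k\left|\bar u_{B_k}-\bar u_{B_0}\right|^p\leq CN^pr^p\norm{\nabla u}_{L^p(P_{L,r})}^p=CL^p\norm{\nabla u}_{L^p(P_{L,r})}^p$. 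The decomposition $\norm{u}_{L^p(P_{L,r})}^p\leq C\sum_k\left(\norm{u-\bar u_{B_k}}_{L^p(B_k)}^p+|B_k|\,\left|\bar u_{B_k}\right|^p\right)$, together with the splitting $\left|\bar u_{B_k}\right|^p\leq C(\left|\bar u_{B_0}\right|^p+\left|\bar u_{B_k}-\bar u_{B_0}\right|^p)$, the Jensen bound $\left|\bar u_{B_0}\right|^p\leq Cr^{-d}\norm{u}_{L^p(\Ball{r}{x})}^p$ (using $B_0\subset\Ball{r}{x}$), and $\sum_k|B_k|\sim Lr^{d-1}$, then yields (\ref{eq:cor:Poincare-zylinder}).

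Inequality (\ref{eq:cor:Poincare-zylinder-1}) is immediate from (\ref{eq:cor:Poincare-zylinder}): under the zero-average hypothesis, standard Poincar\'e on $\Ball{r}{x}$ turns $\norm{u}_{L^p(\Ball{r}{x})}^p$ into $Cr^p\norm{\nabla u}_{L^p(\Ball{r}{x})}^p$, converting the coefficient $L/r$ into $Lr^{p-1}$. For (\ref{eq:cor:Poincare-zylinder-2}) I would run the same chaining between two auxiliary smaller balls $B_0^x\subset\Ball{r}{x}$ and $B_N^y\subset\Ball{r}{y}$ linked by $N\leq CL/r$ intermediate balls of radius $r/2$ placed along the cylinder axis. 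The chain gives $\left|\bar u_{B_0^x}-\bar u_{B_N^y}\right|^p\leq CN^{p-1}r^{p-d}\norm{\nabla u}_{L^p(P_{L,r})}^p=CL^{p-1}r^{1-d}\norm{\nabla u}^p$, and the two endpoint differences $\left|\fint_{\Ball{r}{x}}u-\bar u_{B_0^x}\right|^p$ and $\left|\bar u_{B_N^y}-\fint_{\Ball{r}{y}}u\right|^p$ are controlled by a single Poincar\'e step on $\Ball{r}{x}$, resp.\ $\Ball{r}{y}$, bounded by $Cr^{p-d}\norm{\nabla u}^p\leq CL^{p-1}r^{1-d}\norm{\nabla u}^p$ since $r\leq L$.

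The main obstacle is the chaining step: each single-step difference carries a prefactor $r^{p-d}$ which is dimensionally unfavorable, and the $k^{p-1}$ weights from discrete H\"older must be combined with the $N$ single-step gradient estimates without overcounting any ball. The Fubini rearrangement, together with the bounded overlap of the pairs $(B_j\cup B_{j+1})_j$, produces the correct cancellation and yields precisely the $L^p$ and $L^{p-1}$ scalings required, which are sharp in the thin regime $r\ll L$.
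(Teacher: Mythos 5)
Your chaining strategy is genuinely different from the paper's: the paper proves Lemma~\ref{lem:Poincare-ball} by direct radial integration and then declares that Corollary~\ref{cor:Poincare-zylinder} follows from ``a similar argument'' --- i.e.\ integrating along axial line segments, where the absence of a surface Jacobian $s^{d-1}$ is exactly what trades $r^{d-1}$ in~\eqref{eq:lem:Poincare-ball-1} for $L/r$ in~\eqref{eq:cor:Poincare-zylinder}. Your version --- local Poincar\'e--Wirtinger on each element of a chain, a telescoping sum of averages, discrete H\"older, and a Fubini rearrangement --- is a legitimate alternative and is closer in spirit to how one would argue for a general John domain; all the final scalings $L^p$, $L/r$, $Lr^{p-1}$ and $L^{p-1}r^{1-d}$ come out right, and step~\eqref{eq:cor:Poincare-zylinder-1} and the treatment of the endpoint differences in~\eqref{eq:cor:Poincare-zylinder-2} are correct.

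However, there is a concrete gap in the covering step. You take $B_k=\Ball{r/2}{x+(kr/4)\be_d}$ centered on the cylinder axis, and simultaneously require $B_k\subset P_{L,r}$ and that the $B_k$ cover $P_{L,r}$. These two conditions are incompatible: the cross-section $\Balldim[d-1]r0$ has radius $r$, and a Euclidean ball of radius $\rho\le r$ centered on the axis meets the lateral boundary $|y'|=r$ of the cylinder at most tangentially (only when $\rho=r$), so the points with $|y'|$ close to $r$ are never covered, for any axial spacing. Consequently the inequality
\begin{equation*}
\norm{u}_{L^{p}(P_{L,r})}^{p}\leq C\sum_{k}\left(\norm{u-\bar u_{B_{k}}}_{L^{p}(B_{k})}^{p}+\left|B_{k}\right|\left|\bar u_{B_{k}}\right|^{p}\right)
\end{equation*}
is false as stated --- the left side sees the uncovered collar near $\partial\Balldim[d-1]r0$, the right side does not. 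The fix is standard and preserves the whole structure of your argument: replace the axial balls $B_k$ by overlapping convex slabs $Q_{k}:=\Balldim[d-1]r0\times(kr,(k+2)r)$, $k=0,\dots,N$ with $N\sim L/r$. Each $Q_k$ is convex, has diameter $\sim r$ and volume $\sim r^{d}$, consecutive $Q_k,Q_{k+1}$ overlap in a set of comparable volume, they cover $P_{L,r}$, and $\Ball rx\subset Q_{k_0}$ for some $k_0$. Your Poincar\'e--Wirtinger steps, the bound $\left|\bar u_{Q_{k}}-\bar u_{Q_{k+1}}\right|^{p}\leq Cr^{p-d}\norm{\nabla u}_{L^{p}(Q_{k}\cup Q_{k+1})}^{p}$, the discrete H\"older estimate, and the Fubini swap then all go through verbatim with $Q_k$ in place of $B_k$, yielding the stated inequalities with the same exponents.
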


\subsection{Korn Inequalities}

We introduce on open sets $A\subset\Rd$ the Sobolev space 
\[
\bW_{\nablabot(0)}^{1,p}\of A:=\left\{ u\in\bW^{1,p}\of A:\;\forall i,j:\,\int_{A}\partial_{i}u_{j}-\partial_{j}u_{i}=0\right\} \,.
\]
To the authors best knowledge, the following is the most general Korn
inequality in literature.
\begin{thm}[\cite{duran2004korn} Theorem 2.7 and Corollary 2.8]
\label{thm:Duran-Korn}Let $1\leq p\leq\infty$ and $\eps\in(0,1)$
and $\tilde{\delta}>0$. Then there exists a constant $C_{p}>0$ depending
only on $d$, $p$, $\eps$ and $\tilde{\delta}$ such that for every
bounded open set $A\in\Rd$ with $\delta>0$ such that $\delta/\diam A\geq\tilde{\delta}$
and with the property 
\begin{equation}
\left.\begin{aligned} & \forall x,y\in A,\,\left|x-y\right|<\delta:\quad\exists\gamma\in C^{1}([0,1];A),\,\gamma(0)=x,\,\gamma(1)=y\text{ such that:}\\
 & \qquad l(\gamma)\leq\frac{1}{\eps}\left|x-y\right|\quad\text{and}\quad\forall t\in(0,1):\,\dist\of{\gamma(t),\partial A}\geq\frac{\eps\left|x-\gamma(t)\right|\left|y-\gamma(t)\right|}{x-y}
\end{aligned}
\right\} \label{eq:Jones-domain}
\end{equation}
it holds 
\begin{equation}
\forall u\in\bW_{\nablabot(0)}^{1,p}\of A:\,\norm{\nabla u}_{L^{p}\of A}\leq C_{p}\norm{\nabla^{s}u}_{L^{p}\of A}\,.\label{eq:simple-Korn}
\end{equation}
\end{thm}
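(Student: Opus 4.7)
The plan is to prove this classical Korn inequality via the Korn identity combined with a ``negative norm theorem'' on John domains, which is the standard route due to Bogovski\u{\i}, Acosta--Dur\'an--Muschietti, and Dur\'an itself.

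First I would set up notation: write $\eps_{ij}=\frac12(\partial_i u_j+\partial_j u_i)$ and $w_{ij}=\frac12(\partial_iu_j-\partial_j u_i)$, so that $\nabla u = \eps + w$. The normalization $u\in\bW^{1,p}_{\nablabot(0)}(A)$ gives exactly $\int_A w_{ij}=0$. The key pointwise identity (the Korn identity, verified by expanding both sides) is
\begin{equation*}
\partial_k w_{ij} \;=\; \partial_i \eps_{jk} \;-\; \partial_j \eps_{ik}\,,
\end{equation*}
which shows that $\nabla w_{ij}$ is a linear combination of first derivatives of components of $\eps$. Consequently $\nabla w_{ij}\in W^{-1,p}(A)$ with $\norm{\nabla w_{ij}}_{W^{-1,p}(A)}\le C\norm{\eps}_{L^p(A)}$.

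The reduction therefore boils down to proving an $L^p$-negative-norm estimate of the form
\begin{equation*}
\forall\, g\in L^p(A)\text{ with }\int_A g=0:\qquad \norm{g}_{L^p(A)} \;\le\; C\,\norm{\nabla g}_{W^{-1,p}(A)}\,,
\end{equation*}
applied to each $g=w_{ij}$. By duality, this is equivalent to the existence of a bounded right inverse to the divergence: for every $f\in L^{p'}(A)$ with $\int_A f=0$, construct $\phi\in\bW^{1,p'}_0(A)$ with $\diver\phi=f$ and $\norm{\nabla\phi}_{L^{p'}(A)}\le C\norm{f}_{L^{p'}(A)}$ (a Bogovski\u{\i} operator on $A$). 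This is where I would invest the main effort: I would build $\phi$ through a Whitney cube decomposition $\{Q_j\}$ of $A$, apply the standard Bogovski\u{\i} formula on each cube against a redistribution of $f$ along an admissible chain of cubes linking $Q_j$ to a fixed reference cube $Q_0$, and then sum. The John (cigar) condition~\eqref{eq:Jones-domain} is precisely what bounds the length of these chains, the number of cubes each point belongs to, and the shadow operator on Whitney decompositions, producing the required $L^{p'}$-bound uniformly in $\eps$ and $\tilde\delta$. The parameter $\delta/\diam A\ge\tilde\delta$ enters only to control the case of ``long'' chains and to make the constant depend on $\tilde\delta$ rather than on $A$ itself.

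Combining the two pieces: since $\int_A w_{ij}=0$ the negative-norm estimate gives $\norm{w_{ij}}_{L^p(A)}\le C\norm{\nabla w_{ij}}_{W^{-1,p}(A)}$, and the Korn identity bounds the right-hand side by $C\norm{\eps[u]}_{L^p(A)}=C\norm{\nablas u}_{L^p(A)}$. Adding the trivial bound $\norm{\eps}_{L^p(A)}\le\norm{\nabla u}_{L^p(A)}$ and $\norm{w}_{L^p(A)}\le\norm{\nabla u}_{L^p(A)}$, together with $\nabla u=\eps+w$, yields~\eqref{eq:simple-Korn}. The hard part is unquestionably the Bogovski\u{\i}/negative-norm step, where the John geometry is essential; the rest is algebra. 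For the case $p=\infty$ one passes to the limit $p\to\infty$ along the $L^p$ estimates with uniform constants (or argues directly using a $BMO$-variant of the Bogovski\u{\i} estimate), and the extension to $p=1$ follows from the Acosta--Dur\'an improvement of the divergence equation on John domains.
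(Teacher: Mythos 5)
This statement is a cited result (Dur\'an--Muschietti, Theorem~2.7 and Corollary~2.8) and the paper gives no proof of its own, so there is no internal argument to compare against; you are essentially being asked to reconstruct the proof from the cited reference. Your outline does in fact reproduce the strategy of that reference: reduce Korn's second inequality to a negative-norm inequality for the antisymmetric part via the algebraic identity $\partial_k w_{ij}=\partial_i\eps_{jk}-\partial_j\eps_{ik}$, then establish the negative-norm inequality by duality with a right inverse of the divergence, built from a Whitney decomposition together with Bogovski\u{\i}-type singular integrals on chains of cubes whose lengths and overlaps are controlled by the John (cigar) condition. That is the correct route, and you correctly identify the divergence step as where all the geometry enters.

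One genuine caveat: the endpoint cases you sketch at the end do not actually work. By Ornstein's non-inequality (and its descendants), the second Korn inequality fails at $p=1$, and by duality it fails at $p=\infty$ as well; correspondingly, the Bogovski\u{\i} operator and the negative-norm theorem are not bounded at these endpoints, and ``passing to the limit $p\to\infty$'' does not yield uniform constants. The cited Dur\'an--Muschietti theorem is stated for $1<p<\infty$, and the range $1\le p\le\infty$ appearing in the paper's restatement of it is a slight overstatement that should not be taken literally. Everything else in your reduction (Korn identity, duality, Whitney chains, dependence of the constant on $\eps$, $\tilde\delta$ and the dimension, and scale invariance giving the normalization $\delta/\diam A\ge\tilde\delta$) is sound.
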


\begin{rem}
In the original work the claimed dependence of $C_{p}$ was on $d$,
$p$, $\eps$, $\delta$ and $A$ with the observation that (\ref{eq:simple-Korn})
is invariant under scaling of $A$. However, this scale invariance
results in the dependence on $d$, $p$, $\eps$ and $\delta/\diam A$
since $\eps$, $p$ and $d$ are not sensitive to scaling of $A$. 
\end{rem}

\begin{defn}
Domains $A\subset\Rd$ satisfying (\ref{eq:Jones-domain}) for some
$\eps\in(0,1)$ and $\delta>0$ are called $(\eps,\delta)$-John domains
or simply John domains.
\end{defn}

\begin{cor}
For every $1\leq p\leq\infty$ there exists $C_{p}$ depending only
on $d$ and $p$ such that for every bounded open convex set $A\subset\Rd$
the estimate (\ref{eq:simple-Korn}) holds.
\end{cor}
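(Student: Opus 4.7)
The plan is to reduce the corollary to Theorem~\ref{thm:Duran-Korn} by verifying that every bounded convex open set $A \subset \Rd$ satisfies the John-type condition \eqref{eq:Jones-domain} with parameters $\varepsilon$ and $\tilde{\delta} := \delta/\diam A$ depending only on the dimension $d$. Once this is in hand, the constant $C_p$ produced by Theorem~\ref{thm:Duran-Korn} depends only on $d$, $p$, $\varepsilon(d)$ and $\tilde\delta(d)$, hence only on $d$ and $p$, as claimed.

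The first step is the easy ``fat'' case. Given $x, y \in A$, the straight segment $\gamma(t) = (1-t)x + ty$ lies in $A$ by convexity, so the length condition holds with $\varepsilon = 1$. By considering the convex hull $\conv(\Ball{r_x}{x} \cup \Ball{r_y}{y}) \subset A$, where $r_x = \dist(x,\partial A)$ and $r_y = \dist(y,\partial A)$, one obtains the elementary distance bound $\dist(\gamma(t),\partial A) \geq (1-t) r_x + t\, r_y$. When both $r_x$ and $r_y$ are at least a dimensional multiple of $|x-y|$, this already yields the distance condition $\dist(\gamma(t),\partial A) \geq \varepsilon\, |x-\gamma(t)||\gamma(t)-y|/|x-y| = \varepsilon\, t(1-t)|x-y|$ from \eqref{eq:Jones-domain}.

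The second step treats pairs $x, y$ close to $\partial A$, where the straight segment fails the distance condition. The path must be bowed inward through a well-chosen interior hub. A natural choice is the center $x_0$ of the maximum inscribed ball $\Ball{r_0}{x_0} \subset A$: by convexity the enlarged hull $\conv(\Ball{r_x}{x} \cup \Ball{r_y}{y} \cup \Ball{r_0}{x_0}) \subset A$, and one can route a piecewise-linear path from $x$ to $y$ through an intermediate point on the bridge to $\Ball{r_0}{x_0}$. Choosing this intermediate point at the right ``height'' balances the length (kept within a dimensional factor of $|x-y|$) against the distance to $\partial A$ (lifted to a dimensional multiple of $|x-y|$).

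The main obstacle is to make the constants depend only on $d$ uniformly across \emph{all} bounded convex open sets, including strongly elongated ones where $r_0 \ll \diam A$. A naive hub-and-spoke construction degrades in this regime because it forces the path far from the direct line between $x$ and $y$. To overcome this, one uses a scale-separation argument: for $|x-y| \lesssim r_0$ the local ``fat'' construction of Step~1 suffices after a minor inward perturbation, while for $|x-y| \gtrsim r_0$ one exploits that, along the long direction of $A$, the straight segment itself sits at the characteristic internal scale, so the distance condition is automatically met. Assembling the two regimes yields $(\varepsilon, \tilde\delta)$ depending only on $d$, and Theorem~\ref{thm:Duran-Korn} then completes the proof.
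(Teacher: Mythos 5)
Your strategy---reducing to Theorem \ref{thm:Duran-Korn} by verifying that every bounded convex open set satisfies the John-type condition (\ref{eq:Jones-domain}) with $\eps$ and $\delta/\diam A$ depending only on $d$---cannot succeed, because bounded convex sets are not uniformly John. Take $A=(-L,L)\times(-1,1)\subset\R^{2}$ with $L$ large, $x=(-L/2,0)$, $y=(L/2,0)$, so $|x-y|=L$; any path $\gamma$ from $x$ to $y$ inside $A$ satisfies $\dist(\gamma(t),\partial A)\leq 1$ for all $t$, yet at $t=1/2$ condition (\ref{eq:Jones-domain}) requires $\dist\geq\eps|x-y|/4=\eps L/4$, forcing $\eps\leq 4/L$. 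This is exactly the point glossed over in your third paragraph: the claim that for $|x-y|\gtrsim r_{0}$ ``the distance condition is automatically met'' is false, since the interior scale $r_{0}$ bounds the left-hand side of the distance inequality while the required right-hand side $\eps|x-y|$ grows without bound. Shrinking $\delta$ to confine all pairs to the ``fat'' regime $|x-y|\lesssim r_{0}$ does not rescue the argument either, since then $\delta/\diam A\lesssim r_{0}/\diam A$ also degenerates and the constant from Theorem \ref{thm:Duran-Korn} still depends on the eccentricity of $A$.

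There is a deeper issue: the corollary as stated appears to be incorrect, so no repair of this strategy can close the gap. On the same rectangle take $u(x,y)=(-xy,\,x^{2}/2)$; then $\int_{A}(\partial_{1}u_{2}-\partial_{2}u_{1})=\int_{A}2x=0$ by symmetry, so $u\in\bW_{\nablabot(0)}^{1,p}(A)$, while $\nablas u=\mathrm{diag}(-y,0)$ gives $\norm{\nablas u}_{L^{p}(A)}^{p}\sim L$ and $|\nabla u|^{2}=y^{2}+2x^{2}$ gives $\norm{\nabla u}_{L^{p}(A)}^{p}\gtrsim L^{p+1}$. The Korn ratio is therefore at least of order $L$ and cannot be bounded in terms of $d$ and $p$ alone. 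The constant in (\ref{eq:simple-Korn}) for a convex domain necessarily depends on its eccentricity $\diam A/r_{0}$, and the corollary should either restrict to convex sets with bounded eccentricity or make this dependence explicit---as the paper in fact does later, e.g. in Lemma \ref{lem:General-Korn} through the factor $\left(|A|/r^{d}\right)^{1/p}$.
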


We furthermore introduce the set 
\[
\bW_{\nablabot(0),r}^{1,p}(A):=\left\{ u\in\bW^{1,p}(A)\,:\;\exists x:\,\Ball rx\subset A\;\vee\;\forall i,j:\,\int_{\Ball rx}\partial_{i}u_{j}-\partial_{j}u_{i}=0\right\} 
\]
which is \textbf{not} a vector space. 
\begin{lem}[Mixed Korn inequality]
\label{lem:General-Korn}Let $1\leq p\leq\infty$ and $\eps,\delta\in(0,1)$.
Then there exists a constant $\tilde{C}_{p}>0$ depending only on
$d$, $p$, $\eps$ and $\delta$ such that for every $(\eps,\delta)$-John
domain $A\subset\Ball 10$ and for every $r\in(0,1)$ and every $x\in A$
with $\Ball rx\subset A$ it holds 
\begin{equation}
\forall u\in\bW^{1,p}\of A:\,\norm{\nabla u}_{L^{p}\of A}\leq\tilde{C}_{p}\left(\frac{\left|A\right|}{r^{d}}\right)^{\frac{1}{p}}\left(\norm{\nabla^{s}u}_{L^{p}\of A}+\norm{\nabla u}_{L^{p}\of{\Ball rx}}\right)\,.\label{eq:Scaled-Korn-inequality}
\end{equation}
Furthermore, 
\begin{equation}
\forall u\in\bW_{\nablabot(0),r}^{1,p}(A):\,\norm{\nabla u}_{L^{p}\of A}\leq\tilde{C}_{p}\left(\frac{\left|A\right|}{\left|\S^{d-1}\right|r^{d}}\right)^{\frac{1}{p}}\left(\norm{\nabla^{s}u}_{L^{p}\of A}\right)\,.\label{eq:Scaled-Korn-inequality-1}
\end{equation}
\end{lem}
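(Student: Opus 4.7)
The strategy is to reduce the claim to the classical John-domain Korn inequality of Theorem \ref{thm:Duran-Korn} by subtracting from $u$ an affine correction $Py$ whose gradient $P$ is a constant skew-symmetric matrix, chosen so that the remainder has vanishing averaged skew gradient on $A$. Concretely, set $\nabla^a u := \tfrac{1}{2}(\nabla u - (\nabla u)^T)$ and define $P := \fint_A \nabla^a u$, which is a constant skew-symmetric matrix. Then $v(y) := u(y) - Py$ satisfies $\nabla v = \nabla u - P$, $\nabla^s v = \nabla^s u$, and $\fint_A \nabla^a v = 0$, i.e.\ $v \in \bW^{1,p}_{\nablabot(0)}(A)$. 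Since $A \subset \Ball{1}{0}$ forces $\diam A \leq 2$, the ratio $\delta/\diam A \geq \delta/2$ is uniformly bounded below, and Theorem \ref{thm:Duran-Korn} applies and yields
$$
\norm{\nabla v}_{L^p(A)} \leq C_p \norm{\nabla^s u}_{L^p(A)}
$$
with $C_p = C_p(d,p,\eps,\delta)$.

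Next I would bound $|P|$ using the small ball $\Ball{r}{x}$. From $\nabla^a u = \nabla^a v + P$, integrating over $\Ball{r}{x}$ gives
$$
P \;=\; \fint_{\Ball{r}{x}} \nabla^a u \;-\; \fint_{\Ball{r}{x}} \nabla^a v,
$$
and H\"older's inequality produces $\bigl|\fint_{\Ball{r}{x}} \nabla^a u\bigr| \leq c_d r^{-d/p} \norm{\nabla u}_{L^p(\Ball{r}{x})}$ together with $\bigl|\fint_{\Ball{r}{x}} \nabla^a v\bigr| \leq c_d r^{-d/p} \norm{\nabla v}_{L^p(A)} \leq c_d C_p r^{-d/p} \norm{\nabla^s u}_{L^p(A)}$. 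Combining these with the triangle inequality $\norm{\nabla u}_{L^p(A)} \leq \norm{\nabla v}_{L^p(A)} + |P|\,|A|^{1/p}$ gives
$$
\norm{\nabla u}_{L^p(A)} \;\leq\; C_p \norm{\nabla^s u}_{L^p(A)} + C\,(|A|/r^d)^{1/p}\bigl(\norm{\nabla^s u}_{L^p(A)} + \norm{\nabla u}_{L^p(\Ball{r}{x})}\bigr).
$$
Since $\Ball{r}{x}\subset A$ implies $|A|/r^d \geq c_d$, the first term on the right is absorbed into the second, proving \eqref{eq:Scaled-Korn-inequality}.

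For \eqref{eq:Scaled-Korn-inequality-1}, I would use that $u \in \bW^{1,p}_{\nablabot(0),r}(A)$ means (for a suitable $x$) $\int_{\Ball{r}{x}} \partial_i u_j - \partial_j u_i = 0$ for all $i,j$, i.e.\ $\fint_{\Ball{r}{x}}\nabla^a u = 0$. Then the first term in the previous display for $P$ vanishes, so $|P| \leq c_d C_p r^{-d/p}\norm{\nabla^s u}_{L^p(A)}$, which together with the triangle inequality produces \eqref{eq:Scaled-Korn-inequality-1}; the factor $|\S^{d-1}|$ enters only through the explicit form $|\Ball{r}{x}| = |\S^{d-1}|r^d/d$.

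All non-trivial content is absorbed into Theorem \ref{thm:Duran-Korn}, and the remainder is algebraic bookkeeping, so I do not anticipate a real obstacle. The only point requiring care is that the constant in Theorem \ref{thm:Duran-Korn} depends on $\delta/\diam A$ — this is handled once and for all by the normalization $A \subset \Ball{1}{0}$, which keeps $\diam A \leq 2$.
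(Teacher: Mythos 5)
Your proposal is correct, and while it rests on the same central decomposition as the paper, it closes the argument in a genuinely different way. Both you and the paper introduce the constant skew matrix $P=\fint_A\nabla^a u$ (the paper calls it $\overline{\nabla_n^\bot}(u_n)$), subtract the affine map $Py$ to land in $\bW^{1,p}_{\nablabot(0)}(A)$, and apply Theorem \ref{thm:Duran-Korn} to the remainder. The difference is what happens next: the paper assumes (\ref{eq:Scaled-Korn-inequality}) fails, extracts a sequence of counterexamples $(A_n,u_n,r_n,x_n)$ normalized by $\norm{\nabla u_n}_{L^p(A_n)}=1$, and derives the contradiction $\norm{P_n}_{L^p(A_n)}\to 1$ and $\norm{P_n}_{L^p(A_n)}\to 0$ simultaneously. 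You instead bound $|P|$ directly from the identity $P=\fint_{\Ball{r}{x}}\nabla^a u-\fint_{\Ball{r}{x}}\nabla^a v$ via Jensen's inequality, then absorb terms using $|A|\geq\omega_d r^d$. Your route is more transparent and gives an explicit constant $\tilde C_p\approx(2C_p+1)\omega_d^{-1/p}$ in terms of the Dur\'an--Muschietti constant $C_p$, whereas the paper's contradiction argument asserts only that some $\tilde C_p=nC_p$ works without tracking it. For (\ref{eq:Scaled-Korn-inequality-1}), the paper reapplies Korn on the small ball $\Ball{r}{x}$ (via (\ref{eq:simple-Korn})) to kill $\norm{\nabla u}_{L^p(\Ball{r}{x})}$, while you simply observe that the first term in your formula for $P$ vanishes; your variant is again slightly leaner. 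One small bookkeeping note: the $c_d$ in your H\"older step is really $\omega_d^{-1/p}$ with $\omega_d=|\S^{d-1}|/d$ the volume of the unit ball, and you should check once that the absorption step works for $p=\infty$ as well (it does, with averages replaced by essential sups) -- neither point affects the substance.
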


Unfortunately, we do not have a reference for a comparable Lemma in
the literature except for \cite{schweizer2013partielle} in case $p=2$.
The author strongly supposes a proof must exist somewhere, however,
we provide it for completeness.
\begin{proof}
Let $C_{p}$ be the constant from Theorem \ref{thm:Duran-Korn} for
domains with a diameter less than $2$ and suppose (\ref{eq:Scaled-Korn-inequality})
was wrong. Then there exists a sequence of $(\eps,\delta)$-John domains
$A_{n}\subset\Ball 10$ with $x_{n}\in A_{n}$, $r_{n}\in(0,1)$ with
$\Ball{r_{n}}{x_{n}}\subset A_{n}$ and functions $u_{n}\in\bW^{1,p}\of{A_{n}}$
such that 
\[
1=\norm{\nabla u_{n}}_{L^{p}\of{A_{n}}}\geq C_{p}\left(\frac{\left|A_{n}\right|}{\left|\S^{d-1}\right|r_{n}^{d}}\right)^{\frac{1}{p}}n\left(\norm{\nabla^{s}u_{n}}_{L^{p}\of{A_{n}}}+\norm{\nabla u_{n}}_{L^{p}\of{\Ball{r_{n}}{x_{n}}}}\right)\,.
\]
We define $\overline{\nabla_{n}^{\bot}}(u_{n}):=\fint_{A_{n}}\left(\nabla u_{n}-\nablas u_{n}\right)$
and $u_{n,\bot}(x):=u_{n}(x)-\overline{\nabla_{n}^{\bot}}(u_{n})\,x$
with $\nabla^{s}u_{n,\bot}=\nabla^{s}u_{n}$. Hence by (\ref{eq:simple-Korn})
\begin{align*}
\norm{\nabla u_{n}-\overline{\nabla_{n}^{\bot}}(u_{n})}_{L^{p}\of{A_{n}}} & \leq C_{p}\norm{\nablas u_{n,\bot}}_{L^{p}\of{A_{n}}}=C_{p}\norm{\nablas u_{n}}_{L^{p}\of{A_{n}}}\,.
\end{align*}
We directly infer with $C_{n}:=\frac{\left|A_{n}\right|}{\left|\S^{d-1}\right|r_{n}^{d}}$
\begin{equation}
C_{p}C_{n}^{\frac{1}{p}}\left(\norm{\nabla^{s}u_{n}}_{L^{p}\of{A_{n}}}+\norm{\nabla u_{n}}_{L^{p}\of{\Ball{r_{n}}{x_{n}}}}\right)\to0\,,\qquad C_{n}^{\frac{1}{p}}\norm{\nabla u_{n}-\overline{\nabla_{n}^{\bot}}(u_{n})}_{L^{p}\of{A_{n}}}\to0\,.\label{eq:lem:General-Korn-help-1}
\end{equation}
Furthermore, we find 
\begin{align*}
1=\norm{\nabla u_{n}}_{L^{p}\of{A_{n}}} & \geq\norm{\overline{\nabla_{n}^{\bot}}(u_{n})}_{L^{p}\of{A_{n}}}-\norm{\nabla u_{n}-\overline{\nabla_{n}^{\bot}}(u_{n})}_{L^{p}\of{A_{n}}}\,,\\
\norm{\overline{\nabla_{n}^{\bot}}(u_{n})}_{L^{p}\of{A_{n}}} & \geq\norm{\nabla u_{n}}_{L^{p}\of{A_{n}}}-\norm{\nabla u_{n}-\overline{\nabla_{n}^{\bot}}(u_{n})}_{L^{p}\of{A_{n}}}\,,
\end{align*}
and hence $\norm{\overline{\nabla_{n}^{\bot}}(u_{n})}_{L^{p}\of{A_{n}}}\to1$
due to (\ref{eq:lem:General-Korn-help-1}). Since $\overline{\nabla_{n}^{\bot}}(u_{n})$
are constant, it holds 
\[
C_{n}\,\norm{\overline{\nabla_{n}^{\bot}}(u_{n})}_{L^{p}\of{\Ball{r_{n}}{x_{n}}}}^{p}=\norm{\overline{\nabla_{n}^{\bot}}(u_{n})}_{L^{p}\of{A_{n}}}^{p}
\]
and we infer from a similar calculation 
\begin{align*}
C_{n}^{\frac{1}{p}}\left(\norm{\nabla u_{n}}_{L^{p}\of{\Ball{r_{n}}{x_{n}}}}+\norm{\nabla u_{n}-\overline{\nabla_{n}^{\bot}}(u_{n})}_{L^{p}\of{\Ball{r_{n}}{x_{n}}}}\right) & \geq C_{n}^{\frac{1}{p}}\norm{\overline{\nabla_{n}^{\bot}}(u_{n})}_{L^{p}\of{\Ball{r_{n}}{x_{n}}}}\\
 & \geq\norm{\overline{\nabla_{n}^{\bot}}(u_{n})}_{L^{p}\of{A_{n}}}\,.
\end{align*}
This implies $\norm{\overline{\nabla_{n}^{\bot}}(u_{n})}_{L^{p}\of{A_{n}}}\to0$
by (\ref{eq:lem:General-Korn-help-1}), a contradiction. Hence, (\ref{eq:Scaled-Korn-inequality})
holds with $\tilde{C}_{p}=nC_{p}$ for some $n\in\N$. 

Estimate (\ref{eq:Scaled-Korn-inequality-1}) now follows from (\ref{eq:Scaled-Korn-inequality})
and (\ref{eq:simple-Korn}) and the definition of $\bW_{\nablabot(0),r}^{1,p}(\Ball R0)$.
\end{proof}

\subsection{Korn-Poincaré Inequalities}

Generalizing the above Korn inequality to a Korn-Poincaré inequality,
we define 
\begin{align*}
\bW_{(0),\nablabot(0),r}^{1,p}\of{\Ball R0} & :=\left\{ u\in\bW^{1,p}(\Ball r0)\,:\;\exists x:\,B_{r}(x)\subset\Ball R0\;\vee\right.\phantom{:=W^{1,p}\int_{\Ball rx}\quad}\\
 & \phantom{:=\left\{ u\in W^{1,p}(\Ball r0)\,:\right\} }\left.\int_{\Ball rx}u_{i}=0\;\vee\;\forall i,j:\,\int_{\Ball rx}\partial_{i}u_{j}-\partial_{j}u_{i}=0\right\} \,.
\end{align*}

\begin{lem}[Mixed Korn-Poincaré inequality on balls]
\label{lem:general-Korn-Poincar=0000E9}For every $p\in[1,\infty)$
there exists $C_{p}>0$ such that for every $R>0$, $r\in(0,R)$ and
every $x\in\Ball R0$ with $\Ball rx\subset\Ball R0$ it holds 
\begin{align}
\forall u\in\bW_{(0),\nablabot(0),r}^{1,p}\of{\Ball R0}: &  & \norm{\nabla u}_{L^{p}\of{\Ball R0}}^{p} & \leq C_{p}\left(\frac{R}{r}\right)^{d}\norm{\nabla^{s}u}_{L^{p}\of{\Ball R0}}^{p}\,,\label{eq:Korn-all-average-zero}\\
 &  & \norm u_{L^{p}\of{\Ball R0}}^{p} & \leq C_{p}\left(\frac{R}{r}\right)^{2d-1}\left(1+\left(\frac{R}{r}\right)^{1-p}\right)R^{p}\norm{\nabla^{s}u}_{L^{p}\of{\Ball R0}}^{p}\,.\label{eq:lem:Poincare-ball-2-Korn}
\end{align}
\end{lem}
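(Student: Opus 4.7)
The plan is to combine the mixed Korn inequality of Lemma \ref{lem:General-Korn} with the Poincar\'e inequality of Lemma \ref{lem:Poincare-ball}, componentwise. The key observation is that a ball $\Ball R 0$, being convex, is an $(\eps,\delta)$-John domain with parameters $\eps,\delta$ that depend only on the dimension $d$ (and not on $R$), because the John condition (\ref{eq:Jones-domain}) is invariant under dilation after normalizing $\delta/\diam A$. Hence the constant $\tilde C_p$ in (\ref{eq:Scaled-Korn-inequality-1}) is dimension- and $p$-dependent only.

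First I would derive (\ref{eq:Korn-all-average-zero}). The assumption $u\in\bW_{(0),\nablabot(0),r}^{1,p}(\Ball R 0)$ provides an $x$ with $\Ball r x\subset\Ball R 0$ such that the skew-symmetric part of $\nabla u$ has vanishing average on $\Ball r x$, i.e.\ $u\in\bW_{\nablabot(0),r}^{1,p}(\Ball R 0)$. Applying (\ref{eq:Scaled-Korn-inequality-1}) with $A=\Ball R 0$ gives
\[
\norm{\nabla u}_{L^{p}(\Ball R 0)}^{p}\leq \tilde C_p^{p}\,\frac{|\Ball R 0|}{|\S^{d-1}|\,r^{d}}\,\norm{\nabla^{s}u}_{L^{p}(\Ball R 0)}^{p}\leq C_p\Bigl(\frac{R}{r}\Bigr)^{d}\norm{\nabla^{s}u}_{L^{p}(\Ball R 0)}^{p},
\]
which is exactly (\ref{eq:Korn-all-average-zero}).

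Next I would derive (\ref{eq:lem:Poincare-ball-2-Korn}). The condition $\int_{\Ball r x}u_i=0$ for every component $i$ means that each component $u_i$ belongs to $W_{(0),r}^{1,p}(\Ball R 0)$. Applying (\ref{eq:lem:Poincare-ball-2}) componentwise and summing over $i$ yields
\[
\norm{u}_{L^{p}(\Ball R 0)}^{p}\leq C_p R^{p}\Bigl(\frac{r}{R}\Bigr)^{1-d}\Bigl(1+\Bigl(\frac{r}{R}\Bigr)^{p-1}\Bigr)\norm{\nabla u}_{L^{p}(\Ball R 0)}^{p}
=C_p R^{p}\Bigl(\frac{R}{r}\Bigr)^{d-1}\Bigl(1+\Bigl(\frac{R}{r}\Bigr)^{1-p}\Bigr)\norm{\nabla u}_{L^{p}(\Ball R 0)}^{p}.
\]
Inserting the bound (\ref{eq:Korn-all-average-zero}) for $\norm{\nabla u}_{L^{p}(\Ball R 0)}^{p}$ on the right, one absorbs the factor $(R/r)^{d}$ and obtains $(R/r)^{2d-1}$, which produces (\ref{eq:lem:Poincare-ball-2-Korn}) with a new constant $C_p$ depending only on $d$ and $p$.

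There is no genuine obstacle here beyond bookkeeping; the only point that deserves a short justification is the dimension-only dependence of the John parameters of $\Ball R 0$, which is needed to ensure that the constants $\tilde C_p$ and the Poincar\'e constants do not deteriorate with $R$. Everything else is mechanical substitution.
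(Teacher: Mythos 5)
Your proof is correct and follows the paper's own argument: the paper proves (\ref{eq:Korn-all-average-zero}) by applying Lemma \ref{lem:General-Korn} for $R=1$ together with a scaling argument, and then obtains (\ref{eq:lem:Poincare-ball-2-Korn}) by applying Lemma \ref{lem:Poincare-ball}, exactly as you do. The only cosmetic remark is that when you write ``applying (\ref{eq:Scaled-Korn-inequality-1}) with $A=\Ball R0$'' you should strictly first rescale to $\Ball 10$ (since Lemma \ref{lem:General-Korn} is stated for $A\subset\Ball 10$) and then scale back, but your earlier observation that the John parameters of a ball are scale-invariant shows you already have this in hand.
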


\begin{proof}
Apply Lemma \ref{lem:General-Korn} for $R=1$ and use a simple scaling
argument to obtain 
\[
\norm{\nabla u}_{L^{p}\of{\Ball R0}}\leq C_{p}\left(\frac{R}{r}\right)^{d}\left(\norm{\nabla^{s}u}_{L^{p}\of{\Ball r0}}\right)\,.
\]
 Afterwards apply Lemma \ref{lem:Poincare-ball}.
\end{proof}
\begin{lem}[Mixed Korn-Poincaré inequality on cylinders]
\label{lem:Mixed-Korn-Zylinder}For every $p\in[1,\infty)$ and $r>0$
there exists $C_{p}>0$ such that the following holds: Let $r<L$,
$P_{L,r}:=(0,L)\times\Balldim[d-1]r0$ and $x\in P_{L,r}$ such that
$\Ball rx\subset P_{L,r}$ then for every $u\in\bW^{1,p}(P_{L,r})$
\begin{equation}
\left\Vert \nabla u\right\Vert _{L^{p}(P_{L,r})}^{p}\leq C_{p}\left(\left(\frac{L}{r}\right)^{p}\left\Vert \nablas u\right\Vert _{L^{p}(P_{L,r})}^{p}+\frac{L}{r}\left\Vert \nabla u\right\Vert _{L^{p}(\Ball rx)}^{p}\right)\,.\label{eq:lem:Mixed-Korn-zylinder}
\end{equation}
Furthermore,
\begin{equation}
\left\Vert u\right\Vert _{L^{p}(P_{L,r})}^{p}\leq C_{p}\left(\frac{L^{2p}}{r^{p}}\left\Vert \nablas u\right\Vert _{L^{p}(P_{L,r})}^{p}+\frac{L^{p+1}}{r}\left\Vert \nabla u\right\Vert _{L^{p}(\Ball rx)}^{p}+\frac{L}{r}\left\Vert u\right\Vert _{L^{p}(\Ball rx)}^{p}\right)\,,\label{eq:lem:Mixed-Korn-zylinder-2}
\end{equation}
and if additionally $u\in\bW_{(0),\nabla^{\bot}(0),r}^{1,p}(P_{L,r})$
then 
\begin{equation}
\left\Vert \nabla u\right\Vert _{L^{p}(P_{L,r})}^{p}\leq C_{p}\frac{L^{p}}{r^{p}}\left\Vert \nablas u\right\Vert _{L^{p}(P_{L,r})}^{p}\,,\quad\left\Vert u\right\Vert _{L^{p}(P_{L,r})}^{p}\leq C_{p}\frac{L^{2p}}{r^{p}}\left\Vert \nablas u\right\Vert _{L^{p}(P_{L,r})}^{p}\,,\label{eq:lem:Mixed-Korn-zylinder-3}
\end{equation}
Defining $\overline{\nabla_{a,\delta}^{\bot}}u:=\fint_{\Ball{\delta}a}\left(\nabla u-\nablas u\right)$
and 
\begin{equation}
\left[\cM_{a}^{\fs,\delta}u\right](x):=\overline{\nabla_{a,\delta}^{\bot}}u\of{x-a}+\fint_{\Ball{\delta}a}u\label{eq:lem:Mixed-Korn-zylinder-cM}
\end{equation}
 we find for $a,b$ with $\Ball{\delta}a,\Ball{\delta}b\subset P_{L,r}$
for every $u\in\bW^{1,p}(P_{L,r})$ that
\begin{equation}
\left|\left[\cM_{a}^{\fs,\delta}u\right](x)-\left[\cM_{b}^{\fs,\delta}u\right](x)\right|^{p}\leq C\left|x-a\right|^{p}\frac{\left|a-b\right|^{2p}}{\delta^{p+d}}\left(\int_{\conv\of{\Ball{\delta}a\cup\Ball{\delta}b}}\left|\nablas u\right|^{p}\right)\,.\label{eq:lem:Mixed-Korn-zylinder-4}
\end{equation}
Furthermore, for every $\delta<r$ we find
\begin{equation}
\left|\left[\cM_{a}^{\fs,r}u\right](x)-\left[\cM_{a}^{\fs,\delta}u\right](x)\right|^{p}\leq C\left(\left(\frac{\delta}{r}\right)^{-d}\left|x-a\right|^{p}+\left(\frac{\delta}{r}\right)^{1-d}(1+\left(\frac{\delta}{r}\right)^{p-d})\right)r^{p-d}\norm{\nablas u}_{L^{p}\of{\Ball ra}}^{p}\,.\label{eq:lem:Mixed-Korn-zylinder-5}
\end{equation}
\end{lem}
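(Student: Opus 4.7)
The plan is to prove the five estimates in the order stated, since each one feeds into the next. The gradient estimate \eqref{eq:lem:Mixed-Korn-zylinder} is the engine; \eqref{eq:lem:Mixed-Korn-zylinder-2}--\eqref{eq:lem:Mixed-Korn-zylinder-3} follow by Poincaré substitution, and \eqref{eq:lem:Mixed-Korn-zylinder-4}--\eqref{eq:lem:Mixed-Korn-zylinder-5} follow by applying \eqref{eq:lem:Mixed-Korn-zylinder-3} (respectively Lemma~\ref{lem:general-Korn-Poincar=0000E9}) to $u-u_{\mathrm{rig}}$ on a capsule (respectively ball).

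For \eqref{eq:lem:Mixed-Korn-zylinder} I would cover $P_{L,r}$ by a chain of balls $B_k:=\Ball{r}{y_k}$ with $y_k=(kr/2,\mathbf{0})$ and $k=0,\dots,N$, $N\sim L/r$, so that consecutive balls share an overlap containing a ball of radius $r/4$; I would prepend $\Ball{r}{x}$ to the chain so that we may take $k_x=0$. On each $B_k$, Theorem~\ref{thm:Duran-Korn} furnishes a skew matrix $A_k:=\fint_{B_k}(\nabla u-\nablas u)$ with $\|\nabla u-A_k\|_{L^p(B_k)}\le C_p\|\nablas u\|_{L^p(B_k)}$, and comparing on the overlap gives $|A_{k+1}-A_k|^p r^d\le C\|\nablas u\|^p_{L^p(B_k\cup B_{k+1})}$. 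Telescoping from $k_x$ and applying Jensen's inequality to the $N$-term sum produces
\[
|A_k-A_{k_x}|^p\le C\,N^{p-1}r^{-d}\|\nablas u\|^p_{L^p(P_{L,r})},\qquad\sum_k r^d |A_k-A_{k_x}|^p\le C(L/r)^p\|\nablas u\|^p_{L^p(P_{L,r})}.
\]
Combining the local Korn estimates with the splitting $|A_k|^p\le 2^{p-1}(|A_{k_x}|^p+|A_k-A_{k_x}|^p)$ and the bound $r^d|A_{k_x}|^p\le C\|\nabla u\|^p_{L^p(\Ball{r}{x})}$ yields exactly \eqref{eq:lem:Mixed-Korn-zylinder}. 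Inequality \eqref{eq:lem:Mixed-Korn-zylinder-2} is then deduced by applying the scalar cylindrical Poincaré estimate \eqref{eq:cor:Poincare-zylinder} componentwise to $u$ and substituting \eqref{eq:lem:Mixed-Korn-zylinder} for $\|\nabla u\|^p_{L^p(P_{L,r})}$. Estimate \eqref{eq:lem:Mixed-Korn-zylinder-3} is the specialization to $u\in\bW_{(0),\nablabot(0),r}^{1,p}$: subtracting the rigid motion $u_\mathrm{rig}$ that matches both $\fint_{\Ball{r}{x}}u$ and $\fint_{\Ball{r}{x}}(\nabla u-\nablas u)$ leaves $\nablas u$ unchanged while driving the localized terms in \eqref{eq:lem:Mixed-Korn-zylinder}--\eqref{eq:lem:Mixed-Korn-zylinder-2} to zero.

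For \eqref{eq:lem:Mixed-Korn-zylinder-4} note that $\conv(\Ball{\delta}{a}\cup\Ball{\delta}{b})$ is a capsule of length $\sim|a-b|$ and radius $\delta$, to which \eqref{eq:lem:Mixed-Korn-zylinder-3} applies with $L=|a-b|$ and $r=\delta$. I would pick $u_\mathrm{rig}$ so that $\cM_a^{\fs,\delta}(u-u_\mathrm{rig})\equiv 0$, i.e., matching the skew-gradient average and the scalar average of $u$ on $\Ball{\delta}{a}$; since the operators $\cM_\ast^{\fs,\delta}$ preserve skew-affine motions, $\cM_a^{\fs,\delta}u(x)-\cM_b^{\fs,\delta}u(x)=-\cM_b^{\fs,\delta}(u-u_\mathrm{rig})(x)$. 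This last quantity is bounded pointwise by
\[
|x-b|\cdot\delta^{-d/p}\|\nabla(u-u_\mathrm{rig})\|_{L^p(\conv)}+\delta^{-d/p}\|u-u_\mathrm{rig}\|_{L^p(\conv)},
\]
each factor being controlled via \eqref{eq:lem:Mixed-Korn-zylinder-3} by $(|a-b|/\delta)\|\nablas u\|_{L^p(\conv)}$ and $(|a-b|^2/\delta)\|\nablas u\|_{L^p(\conv)}$ respectively. Using $|x-b|\le|x-a|+|a-b|$ and collecting powers of $|a-b|/\delta$ then yields \eqref{eq:lem:Mixed-Korn-zylinder-4}. Estimate \eqref{eq:lem:Mixed-Korn-zylinder-5} is handled identically on $\Ball{r}{a}$ with inner ball $\Ball{\delta}{a}$, using Lemma~\ref{lem:general-Korn-Poincar=0000E9} in place of \eqref{eq:lem:Mixed-Korn-zylinder-3}.

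The main bookkeeping obstacle is the telescope in the proof of \eqref{eq:lem:Mixed-Korn-zylinder}: the Jensen step inside generates $N^{p-1}$ which, once re-summed over the $N\sim L/r$ balls, must collapse to exactly $(L/r)^p$ and not a larger power — a too-loose $\ell^p$ embedding would easily overshoot to $(L/r)^{2p-1}$. The second delicate point is verifying that $u_\mathrm{rig}$ in \eqref{eq:lem:Mixed-Korn-zylinder-4}--\eqref{eq:lem:Mixed-Korn-zylinder-5} can indeed be chosen to annihilate $\cM_a^{\fs,\delta}$ — this follows because matching the scalar mean and the skew-gradient mean on a single ball exactly fixes the $\binom{d}{2}+d$ parameters of a rigid motion of $\Rd$.
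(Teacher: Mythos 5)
Your proposal is correct and follows essentially the same approach as the paper: a chain of overlapping balls covering the cylinder, local Korn inequalities to control the skew-affine parts (your $A_k$ playing the role of the paper's $\nabla\tau_k^{\fs}u$), telescoping with Jensen to collapse to the power $(L/r)^p$, and then applying \eqref{eq:lem:Mixed-Korn-zylinder-3} on the capsule $\conv(\Ball{\delta}a\cup\Ball{\delta}b)$ together with the $\cM^{\fs}$-annihilation trick for \eqref{eq:lem:Mixed-Korn-zylinder-4}--\eqref{eq:lem:Mixed-Korn-zylinder-5}. The paper normalizes the rigid motion at $b$ rather than at $a$, which lands directly on the stated $|x-a|^p$ factor without the extra triangle-inequality step you mention, but this is a cosmetic difference.
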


\begin{proof}
\emph{Step1:} W.l.o.g we assume $L\in\N$, $a=\frac{1}{2}\be_{1}$,
$b=(L-\frac{1}{2})\be_{1}$, $r=\frac{1}{2}$ and define 
\begin{align*}
\bP_{k} & :=\left(k\be_{1}+[0,1)\times\Balldim[d-1]{\frac{1}{2}}0\right)\,,\quad\bB_{k}:=k\be_{1}+\Ball{\frac{1}{2}}{\frac{1}{2}\be_{1}}\\
\tau_{k}^{\fs}u(x) & :=\left[\cM_{\left(k+\frac{1}{2}\right)\be_{1}}^{\fs,\frac{1}{2}}u\right](x)=\left[\fint_{\bB_{k}}\left(\nabla u-\nablas u\right)\right]x+\fint_{\bB_{k}}u\,.
\end{align*}
Then we find by Lemma \ref{lem:General-Korn}
\begin{align*}
\norm{\nabla u}_{L^{p}(\bP_{K})}^{p} & \leq C\left(\norm{\nabla\left(u-\tau_{K}^{\fs}u\right)}_{L^{p}(\bP_{K})}^{p}+\norm{\nabla\tau_{K}^{\fs}u}_{L^{p}(\bP_{K})}^{p}\right)\\
 & \leq C\left(\norm{\nablas u}_{L^{p}(\bP_{K})}^{p}+\norm{\nabla\tau_{K}^{\fs}u}_{L^{p}(\bP_{K})}^{p}\right)\,.
\end{align*}
Since $\nabla\tau_{k}^{\fs}u$ is constant, we find
\begin{align*}
\norm{\nabla\tau_{K}^{\fs}u}_{L^{p}(\bP_{K})}^{p} & \leq C\norm{\nabla\tau_{0}^{\fs}u}_{L^{p}(\bP_{0})}^{p}+C\left(\sum_{k=0}^{K-1}\norm{\nabla\left(\tau_{k+1}^{\fs}u-\tau_{k}^{\fs}u\right)}_{L^{1}\of{\bP_{k+1}}}\right)^{p}\,.
\end{align*}
Furthermore, we find 
\begin{align*}
\tau_{k}^{\fs}\of{u-\tau_{k+1}^{\fs}u} & =\fint_{\bB_{k}}\left(\nabla u-\fint_{\bB_{k+1}}\left(\nabla u-\nablas u\right)-\nablas u\right)x+\fint_{\bB_{k}}\left(u-\fint_{\bB_{k+1}}u\right)\\
 & =\tau_{k}^{\fs}u-\tau_{k+1}^{\fs}u=\tau_{k+1}^{\fs}\of{u-\tau_{k}^{\fs}u}\,.
\end{align*}
This implies by $\nabla\tau_{k+1}^{\fs}\left(u-\tau_{k}^{\fs}u\right)=\fint_{\bB_{k+1}}\left(\nabla-\nablas\right)\left(u-\tau_{k}^{\fs}u\right)$
and Lemma \ref{lem:General-Korn} and Theorem \ref{thm:Duran-Korn}
\begin{align*}
\norm{\nabla\left(\tau_{k+1}^{\fs}u-\tau_{k}^{\fs}u\right)}_{L^{p}\of{\bP_{k+1}}}^{p} & \leq C\norm{\nabla\tau_{k+1}^{\fs}\left(u-\tau_{k}^{\fs}u\right)}_{L^{p}\of{\bB_{k+1}}}^{p}\\
 & \leq C\norm{\nabla\left(u-\tau_{k}^{\fs}u\right)}_{L^{p}\of{\bB_{k+1}}}^{p}\\
 & \stackrel{\ref{lem:General-Korn}}{\leq}\leq C\left(\norm{\nabla^{s}\left(u-\tau_{k}^{\fs}u\right)}_{L^{p}\of{\bP_{k+1}\cup\bP_{k}}}^{p}+\norm{\nabla\left(u-\tau_{k}^{\fs}u\right)}_{L^{p}\of{\bB_{k}}}^{p}\right)\\
 & \stackrel{\ref{thm:Duran-Korn}}{\leq}C\norm{\nabla^{s}u}_{L^{p}\of{\bP_{k+1}\cup\bP_{k}}}^{p}\,.
\end{align*}
Since the last inequality implies 
\[
\left(\sum_{k=0}^{K-1}\norm{\nabla\left(\tau_{k+1}^{\fs}u-\tau_{k}^{\fs}u\right)}_{L^{1}\of{\bP_{k+1}}}\right)^{p}\leq K^{p-1}C\norm{\nablas u}_{L^{p}\of{(0,K)\times\Balldim[d-1]10}}^{p}
\]
and $\norm{\nabla\tau_{0}^{\fs}u}_{L^{p}(\bP_{0})}^{p}\leq C\left(\norm{\nablas u}_{L^{p}(\bP_{0})}^{p}+\norm{\nabla u}_{L^{p}(\bB_{0})}^{p}\right)$
by Lemma \ref{lem:General-Korn} we find in total 
\[
\norm{\nabla\tau_{K}^{\fs}u}_{L^{p}(\bP_{K})}^{p}\leq C\norm{\nabla u}_{L^{p}(\bB_{0})}^{p}+CK^{p-1}\norm{\nablas u}_{L^{p}\of{(0,K)\times\Balldim[d-1]10}}^{p}\,.
\]
Adding the last inequality from $K=0$ to $K=L$ implies (\ref{eq:lem:Mixed-Korn-zylinder})
through scaling. Applying Corollary \ref{cor:Poincare-zylinder} we
infer that (\ref{eq:lem:Mixed-Korn-zylinder-2}) and (\ref{eq:lem:Mixed-Korn-zylinder-3}).

Step 2: We observe that Step 1 also holds for $P_{L,r}$ being replaced
by $\conv(\Ball{\delta}a\cup\Ball{\delta}b)$. Writing $u_{b}:=u-\cM_{b}^{\fs,\delta}u$
we find from the above calculations
\begin{align*}
\left|\cM_{a}^{\fs,\delta}u-\cM_{b}^{\fs,\delta}u\right|^{p}(x) & =\left|\cM_{a}^{\fs,\delta}\left(u-\cM_{b}^{\fs,\delta}u\right)\right|^{p}(x)\\
 & \leq C\frac{1}{\delta^{d}}\left(\left|x-a\right|^{p}\int_{\Ball{\delta}a}\left|\nabla u_{b}-\nablas u_{b}\right|^{p}+\int_{\Ball{\delta}a}\left|u_{b}\right|^{p}\right)\,.
\end{align*}
Using that $u_{b}\in\bW_{(0),\nabla^{\bot}(0),r}^{1,p}(\conv(\Ball{\delta}a\cup\Ball{\delta}b))$,
we find (\ref{eq:lem:Mixed-Korn-zylinder-4}) with help of (\ref{eq:lem:Mixed-Korn-zylinder-3})
and Lemma \ref{lem:general-Korn-Poincar=0000E9}.

Step 3: W.l.o.g. $a=0$. Writing $\bar{u}(y):=u(y)-\of{\overline{\nabla_{a,\delta}^{\bot}}u}\,y$
with $\fint_{\Ball r0}u=\fint_{\Ball r0}\overline{u}$ we infer (\ref{eq:lem:Mixed-Korn-zylinder-5})
from Lemmas \ref{lem:General-Korn} and \ref{lem:Poincare-ball} via
\begin{align*}
\left|\left[\cM_{0}^{\fs,1}u\right](x)-\left[\cM_{0}^{\fs,\delta}u\right](x)\right|^{p} & \leq C\left|\int_{\Ball 10}\nabla u-\nablas u-\overline{\nabla_{a,\delta}^{\bot}}u\right|^{p}\left|x\right|^{p}+\left|\fint_{\Ball 10}\overline{u}-\fint_{\Ball{\delta}0}\overline{u}\right|^{p}\\
 & \leq C\int_{\Ball 10}\left(\left|\nabla u-\overline{\nabla_{a,\delta}^{\bot}}u\right|^{p}+\left|\nablas u\right|^{p}\right)\left|x\right|^{p}+\fint_{\Ball 10}\left|\overline{u}-\fint_{\Ball{\delta}0}\overline{u}\right|^{p}\\
 & \leq C\left(\delta^{-d}\left|x\right|^{p}\norm{\nablas u}_{L^{p}\of{\Ball 10}}^{p}+\delta^{1-d}\left(1+\delta^{p-d}\right)\norm{\nabla\overline{u}}_{L^{p}\of{\Ball 10}}^{p}\right)\,.
\end{align*}
\end{proof}
\newpage{}

\subsection{Voronoi Tessellations and Delaunay Triangulation}
\begin{defn}[Voronoi Tessellation]
\label{def:Voronoi}Let $\X=\left(x_{i}\right)_{i\in\N}$ be a sequence
of points in $\Rd$ with $x_{i}\neq x_{k}$ if $i\neq k$. For each
$x\in\X$ let \nomenclature[G]{$G(x)$}{Voronoi cell with center $x$ (Definition \ref{def:Voronoi})}
\[
G\of x:=\left\{ y\in\Rd\,:\;\forall\tilde{x}\in\X\backslash\left\{ x\right\} :\,\left|x-y\right|<\left|\tilde{x}-y\right|\right\} \,.
\]
Then $\left(G\of{x_{i}}\right)_{i\in\N}$ is called the \emph{Voronoi
tessellation} of $\Rd$ w.r.t. $\X$. For each $x\in\X$ we define
$d\of x:=\diam G\of x$.
\end{defn}

We will need the following result on Voronoi tessellation of a minimal
diameter.
\begin{lem}
\label{lem:estim-diam-Voronoi-cells}Let $\fr>0$ and let $\X=\left(x_{i}\right)_{i\in\N}$
be a sequence of points in $\Rd$ with $\left|x_{i}-x_{k}\right|>2\fr$
if $i\neq k$. For $x\in\X$ let $\cI\of x:=\left\{ y\in\X\,:\;G\of y\cap\Ball{\fr}{G\of x}\not=\emptyset\right\} $.
Then $y\in\cI(x)$ implies $\left|x-y\right|\leq4d(x)$ and 
\begin{equation}
\#\cI\of x\leq\left(\frac{4d\of x}{\fr}\right)^{d}\,.\label{eq:lem:estim-diam-Voronoi-cells}
\end{equation}
\end{lem}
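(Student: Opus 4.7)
The plan is to exploit two ingredients: the minimal spacing $|x_i - x_k| > 2\fr$ forces the balls $\Ball{\fr}{x_i}$ to be pairwise disjoint and sit inside their respective Voronoi cells, and the defining property of Voronoi cells gives a cheap triangle inequality bound.

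First I would establish the preliminary fact that $\Ball{\fr}{x} \subset G(x)$ for every $x \in \X$: if $|z - x| < \fr$ then for any other $\tilde{x} \in \X$ the minimal spacing yields $|z - \tilde{x}| \geq |x - \tilde{x}| - |z - x| > 2\fr - \fr = \fr > |z - x|$, so $z \in G(x)$. In particular this gives $d(x) = \diam G(x) \geq 2\fr$, i.e.\ $\fr \leq d(x)/2$.

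Next, for the distance bound: if $y \in \cI(x)$, pick $z \in G(y) \cap \Ball{\fr}{G(x)}$ and some $w \in G(x)$ with $|w - z| < \fr$. Since $z \in G(y)$, the Voronoi property gives $|z - y| \leq |z - x|$, hence
\[
|x - y| \leq |x - z| + |z - y| \leq 2|x - z| \leq 2(|x - w| + |w - z|) < 2(d(x) + \fr) \leq 3d(x),
\]
using $w, x \in G(x)$ and $\fr \leq d(x)/2$. In particular $|x - y| \leq 4d(x)$, which in turn gives $\Ball{\fr}{y} \subset \Ball{4d(x) + \fr}{x} \subset \Ball{5d(x)}{x}$. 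A slightly sharper bookkeeping (keeping $|x-y| < 3d(x)$ and adding the extra $\fr \leq d(x)/2$) even lands inside $\Ball{(7/2)d(x)}{x} \subset \Ball{4d(x)}{x}$.

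Finally, for the cardinality estimate, the balls $\left\{\Ball{\fr}{y}\right\}_{y \in \X}$ are pairwise disjoint by the minimal spacing assumption, and the previous step places all of $\Ball{\fr}{y}$ (for $y \in \cI(x)$) inside $\Ball{4d(x)}{x}$. Comparing Lebesgue measures gives
\[
\#\cI(x) \,\omega_d\, \fr^d \;=\; \sum_{y \in \cI(x)} |\Ball{\fr}{y}| \;\leq\; |\Ball{4d(x)}{x}| \;=\; \omega_d\, (4d(x))^d,
\]
yielding \eqref{eq:lem:estim-diam-Voronoi-cells}. There is no real obstacle here; the only mild care point is ensuring the constants absorb correctly so that the enclosing ball has radius at most $4d(x)$, which is why establishing $\fr \leq d(x)/2$ at the outset is convenient.
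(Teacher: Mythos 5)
Your proof is correct and follows essentially the same strategy as the paper's: bound $\left|x-y\right|$ for $y\in\cI(x)$ in terms of $d(x)$, then pack the pairwise-disjoint balls $\Ball{\fr}{y}$ into a ball around $x$ and compare volumes. You are in fact slightly more careful than the paper, which only shows $\left|x-y\right|\leq 4d(x)$ and then loosely places $\Ball{\fr}{y}$ in $\Ball{4d(x)}x$; your direct triangle-inequality estimate yields $\left|x-y\right|<3d(x)$, which together with $\fr\leq d(x)/2$ cleanly lands $\Ball{\fr}{y}$ inside $\Ball{4d(x)}x$ and makes the final constant work without hand-waving.
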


\begin{proof}
Let $\X_{k}=\left\{ x_{j}\in\X\,:\;\cH^{d-1}\of{\partial G_{k}\cap\partial G_{j}}\geq0\right\} $
the neighbors of $x_{k}$ and $d_{k}:=d\of{x_{k}}$. Then all $x_{j}\in\X$
satisfy $\left|x_{k}-x_{j}\right|\leq2d_{k}$. Moreover, every $\tilde{x}\in\X$
with $\left|\text{\ensuremath{\tilde{x}}}-x_{k}\right|>4d_{k}$ has
the property that $\dist\of{\,\partial G\left(\tilde{x}\right),\,x_{k}\,}>2d_{k}>d_{k}+\fr$
and $\tilde{x}\not\in\cI_{k}$. Since every Voronoi cell contains
a ball of radius $\fr$, this implies that $\#\cI_{k}\leq\left|\Ball{4d_{k}}{x_{k}}\right|/\left|\Ball{\fr}0\right|=\left(\frac{4d_{k}}{\fr}\right)^{d}$.
\end{proof}
\begin{defn}[Delaunay Triangulation]
\label{def:delaunay}Let $\X=\left(x_{i}\right)_{i\in\N}$ be a sequence
of points in $\Rd$ with $x_{i}\neq x_{k}$ if $i\neq k$. The Delaunay
triangulation is the dual unoriented graph (see Def. \ref{def:unoriented-graph}
below) of the Voronoi tessellation, i.e. we say $\D(\X):=\left\{ (x,y):\;\hausdorffH^{d-1}\of{\partial G(x)\cap\partial G(y)}\neq0\right\} $. 
\end{defn}

\subsection{\label{subsec:Local--Regularity}Local $\eta$-Regularity}

\begin{figure}
 \begin{minipage}[c]{0.5\textwidth} \includegraphics{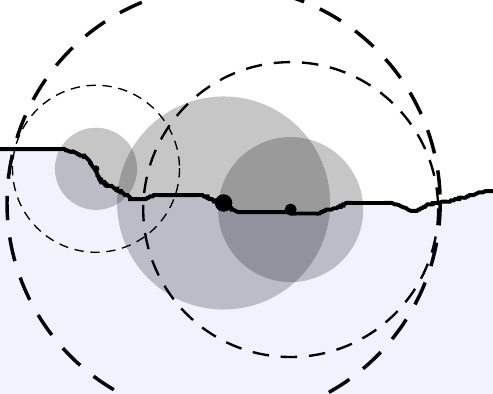}\end{minipage}\hfill   \begin{minipage}[c]{0.45\textwidth}\caption{An illustration of $\eta$-regularity. In Theorem \ref{thm:delta-M-rho-covering}
we will rely on a \textquotedblleft gray\textquotedblright{} region
like in this picture.}
\end{minipage}
\end{figure}

\begin{defn}[$\eta$- regularity]
\label{def:eta-regular}\nomenclature[G]{$\eta$-regular (local)}{(Definition \ref{def:eta-regular})}

For a function $\eta:\,\partial\bP\to(0,r]$ we call $\bP$ $\eta$-regular
if 
\begin{equation}
\forall p\in\partial\bP,\,\eps\in\of{0,\frac{1}{2}}\,,\,\tilde{p}\in\Ball{\eps\eta(p)}p\cap\partial\bP:\,\eta(\tilde{p})>(1-\eps)\eta(p)\,.\label{eq:def:eta-regular}
\end{equation}
\end{defn}

\begin{rem}
This concept and its consequences from Lemma \ref{lem:eta-lipschitz}
and Theorem \ref{thm:delta-M-rho-covering} will be extensively used
later to cover $\partial\bP$ by a suitable family of open balls.
\end{rem}

\begin{lem}
\label{lem:eta-lipschitz}Let $\bP$ be a locally $\eta$-regular
set for $\eta:\partial\bP\to(0,\fr)$. Then $\eta:\,\bP\to\R$ is
locally Lipschitz continuous with Lipschitz constant $1$ and for
every $\eps\in\left(0,\frac{1}{2}\right)$ and $\tilde{p}\in\Ball{\eps\eta}p\cap\bP$
it holds
\begin{equation}
\frac{1-\eps}{1-2\eps}\eta\of p>\eta\of{\tilde{p}}>\eta\of p-\left|p-\tilde{p}\right|>\left(1-\eps\right)\eta\of p\,.\label{eq:eta-lipschitz-ineq-chain}
\end{equation}
Furthermore,
\begin{equation}
\left|p-\tilde{p}\right|\leq\eps\max\left\{ \eta\of p,\eta\of{\tilde{p}}\right\} \quad\Rightarrow\quad\left|p-\tilde{p}\right|\leq\frac{\eps}{1-\eps}\min\left\{ \eta\of p,\eta\of{\tilde{p}}\right\} \label{eq:lem:eta-lipschitz-dist-estim}
\end{equation}
\end{lem}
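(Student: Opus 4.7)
The proof is a chain of direct applications of Definition~\ref{def:eta-regular}, interleaved with the elementary substitution $\eps^{*}:=|p-\tilde{p}|/\eta(p)$.

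I would start with the two rightmost inequalities in \eqref{eq:eta-lipschitz-ineq-chain}. Since $\tilde{p}\in\Ball{\eps\eta(p)}{p}$ we have $\eps^{*}<\eps<1/2$. Applying \eqref{eq:def:eta-regular} at $p$ with parameter $\eps':=(\eps^{*}+\eps)/2\in(\eps^{*},1/2)$ keeps $\tilde{p}$ strictly inside $\Ball{\eps'\eta(p)}{p}$, hence
\[
\eta(\tilde{p}) \;>\; (1-\eps')\eta(p) \;>\; \eta(p)-|p-\tilde{p}|\;,
\]
and the last term is already $>(1-\eps)\eta(p)$ since $|p-\tilde{p}|<\eps\eta(p)$. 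This yields the two rightmost inequalities of \eqref{eq:eta-lipschitz-ineq-chain}.

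Next, for the leftmost inequality I would flip perspective and apply $\eta$-regularity at $\tilde{p}$. From the bound just proved, $\eta(p)<\eta(\tilde{p})/(1-\eps)$, so the hypothesis $|p-\tilde{p}|<\eps\eta(p)$ rewrites as $|p-\tilde{p}|<\tilde{\eps}\,\eta(\tilde{p})$ with $\tilde{\eps}:=\eps/(1-\eps)$. For $\eps<1/3$ this $\tilde{\eps}$ lies in $(0,1/2)$ and Definition~\ref{def:eta-regular} at $\tilde{p}$ gives $\eta(p)>(1-\tilde{\eps})\eta(\tilde{p})=\tfrac{1-2\eps}{1-\eps}\eta(\tilde{p})$, which rearranges to the desired $\eta(\tilde{p})<\tfrac{1-\eps}{1-2\eps}\eta(p)$. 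For $\eps\in[1/3,1/2)$ the right-hand side $\tfrac{1-\eps}{1-2\eps}\eta(p)\geq 2\eta(p)$ is so generous that I would derive the inequality by monotonicity in $\eps$: apply the $\eps<1/3$ version to a slightly smaller parameter using a finite chain of intermediate points on $\partial\bP$ between $p$ and $\tilde{p}$ obtained by iterating Definition~\ref{def:eta-regular} on the overlapping balls, each step contracting by factor $<1/3$.

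For local Lipschitz continuity I would fix $p_{0}\in\partial\bP$, pick any $\eps_{0}<1/3$, and restrict to $U:=\Ball{\eps_{0}\eta(p_{0})/2}{p_{0}}\cap\partial\bP$. For $p,\tilde{p}\in U$ the triangle inequality gives $|p-\tilde{p}|<\eps_{0}\eta(p_{0})$ and the first step yields $\eta(p),\eta(\tilde{p})>(1-\eps_{0})\eta(p_{0})$, hence $|p-\tilde{p}|<\tfrac{\eps_{0}}{1-\eps_{0}}\min\{\eta(p),\eta(\tilde{p})\}<\tfrac{1}{2}\min\{\eta(p),\eta(\tilde{p})\}$. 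The first step can therefore be applied with roles of $p$ and $\tilde{p}$ swapped, producing $|\eta(p)-\eta(\tilde{p})|\leq|p-\tilde{p}|$.

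Finally \eqref{eq:lem:eta-lipschitz-dist-estim} follows by the same substitution: WLOG $\max\{\eta(p),\eta(\tilde{p})\}=\eta(p)$, so $|p-\tilde{p}|\leq\eps\eta(p)$; the chain \eqref{eq:eta-lipschitz-ineq-chain} gives $\eta(p)<\eta(\tilde{p})/(1-\eps)$, and thus $|p-\tilde{p}|\leq\eps\eta(p)\leq\tfrac{\eps}{1-\eps}\eta(\tilde{p})=\tfrac{\eps}{1-\eps}\min\{\eta(p),\eta(\tilde{p})\}$. The only mildly delicate point is the edge case $\eps\in[1/3,1/2)$ in the second paragraph; everything else is bookkeeping around the defining inequality.
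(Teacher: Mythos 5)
Your choice of a \emph{fixed} $\eps'=(\eps^{*}+\eps)/2>\eps^{*}$ breaks the first displayed chain: the second inequality $(1-\eps')\eta(p)>\eta(p)-|p-\tilde{p}|$ rearranges to $|p-\tilde{p}|>\eps'\eta(p)$, but $|p-\tilde{p}|=\eps^{*}\eta(p)<\eps'\eta(p)$, so in fact $(1-\eps')\eta(p)<\eta(p)-|p-\tilde{p}|$ and the display does not yield the middle inequality of (\ref{eq:eta-lipschitz-ineq-chain}). The correct move, and the one the paper uses (with $\eps_{p,\tilde p}$ in place of your $\eps^{*}$), is not to fix one $\eps'$ but to take the infimum: $\eta(\tilde{p})>(1-\eps')\eta(p)$ for every $\eps'\in(\eps^{*},1/2)$, and letting $\eps'\searrow\eps^{*}$ gives $\eta(\tilde{p})\geq(1-\eps^{*})\eta(p)=\eta(p)-|p-\tilde{p}|$.

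The proposed repair for $\eps\in[1/3,1/2)$ via a ``finite chain of intermediate points on $\partial\bP$ between $p$ and $\tilde{p}$'' is not available: nothing in the hypotheses makes $\partial\bP$ path-connected, the segment $[p,\tilde{p}]$ generally leaves $\partial\bP$, and Definition \ref{def:eta-regular} does not produce any intermediate points. (The paper's own proof tacitly uses $\frac{\eps}{1-\eps}<\frac{1}{2}$, i.e. $\eps<1/3$, when applying the definition at $\tilde p$ with that parameter.) A clean repair that avoids any chain is this: since $\eps\mapsto\frac{1-\eps}{1-2\eps}$ is increasing, it suffices to prove the left inequality for $\eps$ near $\eps^{*}$. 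Set $\eps''^{*}:=|p-\tilde{p}|/\eta(\tilde{p})$. If $\eps''^{*}\geq\frac12$ then $\eta(\tilde{p})\leq 2|p-\tilde{p}|<\eta(p)$ and the left inequality holds trivially because $\frac{1-\eps}{1-2\eps}>1$. If $\eps''^{*}<\frac12$, apply the definition at $\tilde{p}$ with parameter $\eps''\searrow\eps''^{*}$ to obtain $\eta(p)\geq(1-\eps''^{*})\eta(\tilde{p})$; combined with $\eps''^{*}<\frac{\eps}{1-\eps}$ (which follows from the right inequality $\eta(\tilde p)>(1-\eps)\eta(p)$) this gives $\eta(\tilde{p})\leq\frac{1}{1-\eps''^{*}}\eta(p)<\frac{1-\eps}{1-2\eps}\eta(p)$. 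The rest of your proposal---the flip to $\tilde{p}$ for $\eps<1/3$, the local Lipschitz argument, and the derivation of (\ref{eq:lem:eta-lipschitz-dist-estim})---matches the paper's route and is fine.
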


\begin{proof}
Let $p,\tilde{p}$ such that $\left|\tilde{p}-p\right|<\frac{1}{2}\eta(p)$
with $\eps_{p,\tilde{p}}:=\inf\left\{ \eps:\,\left|\tilde{p}-p\right|<\eps\eta(p)\right\} $.
This means $\eps\in[\eps_{p,\tilde{p}},\frac{1}{2})$ iff $\eta\of{\tilde{p}}\geq\left(1-\eps\right)\eta(p)$
and we find 
\begin{align*}
\eta\of{\tilde{p}} & \geq\eta\of p-\left|p-\tilde{p}\right|=\eta\of p-\eps_{p,\tilde{p}}\eta\of p>\left(1-\eps\right)\eta\of p
\end{align*}
 which implies $\left|\tilde{p}-p\right|<\frac{\eps}{1-\eps}\eta\of{\tilde{p}}$
and the local Lipschitz continuity by a symmetry argument in $p$,
$\tilde{p}$. This in turn leads to $\eta\of p>\left(1-\frac{\eps}{1-\eps}\right)\eta\of{\tilde{p}}$
or 
\[
\eta\of p=\frac{1-\eps}{1-\eps}\eta\of p<\frac{1}{1-\eps}\left(\eta\of p-\left|p-\tilde{p}\right|\right)<\frac{1}{1-\eps}\eta\of{\tilde{p}}\leq\frac{1}{1-2\eps}\eta\of p\,,
\]
implying (\ref{eq:eta-lipschitz-ineq-chain}) and continuity of $\eta$.

In order to prove (\ref{eq:lem:eta-lipschitz-dist-estim}), w.l.o.g.
let $\eta\of{\tilde{p}}\leq\eta\of p$. Then
\[
\left|p-\tilde{p}\right|\leq\eps\eta\of p\leq\frac{\eps}{1-\eps}\eta\of{\tilde{p}}\,.
\]
\end{proof}
\begin{thm}
\label{thm:delta-M-rho-covering}Let $\Gamma\subset\Rd$ be a closed
set and let $\eta\of{\cdot}\in C\of{\Gamma}$ be bounded and satisfy
for every $\eps\in\left(0,\frac{1}{2}\right)$ and for $\left|p-\tilde{p}\right|<\eps\eta\of p$
\begin{equation}
\frac{1-\eps}{1-2\eps}\,\eta\of p>\eta\of{\tilde{p}}>\eta\of p-\left|p-\tilde{p}\right|>\left(1-\eps\right)\eta\of p\,.\label{eq:thm:delta-M-rho-covering-a}
\end{equation}
and define $\tilde{\eta}\of p=2^{-K}\eta\of p$, $K\geq2$. Then for
every $C\in(0,1)$ there exists a locally finite covering of $\Gamma$
with balls $\Ball{\tilde{\eta}\of{p_{k}}}{p_{k}}$ for a countable
number of points $\of{p_{k}}_{k\in\N}\subset\Gamma$ such that for
every $i\neq k$ with $\Ball{\tilde{\eta}\of{p_{i}}}{p_{i}}\cap\Ball{\tilde{\eta}\of{p_{k}}}{p_{k}}\neq\emptyset$
it holds 
\begin{equation}
\begin{aligned} & \frac{2^{K-1}-1}{2^{K-1}}\tilde{\eta}\of{p_{i}}\leq\tilde{\eta}\of{p_{k}}\leq\frac{2^{K-1}}{2^{K-1}-1}\tilde{\eta}\of{p_{i}}\\
\text{and}\quad & \frac{2^{K}-1}{2^{K-1}-1}\min{\left\{ \tilde{\eta}\of{p_{i}},\tilde{\eta}\of{p_{k}}\right\} }\geq\left|p_{i}-p_{k}\right|\geq C\max{\left\{ \tilde{\eta}\of{p_{i}},\tilde{\eta}\of{p_{k}}\right\} }
\end{aligned}
\label{eq:thm:delta-M-rho-covering}
\end{equation}
\end{thm}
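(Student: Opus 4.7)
I would construct the covering by a Vitali/Zorn-type maximal packing and then extract the three bounds in (\ref{eq:thm:delta-M-rho-covering}) directly from the $\eta$-regularity hypothesis (\ref{eq:thm:delta-M-rho-covering-a}).

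Let $\cF$ denote the collection of all subsets $S\subset\Gamma$ in which any two distinct points $p,\tilde p\in S$ satisfy $|p-\tilde p|\geq C\max\{\tilde\eta(p),\tilde\eta(\tilde p)\}$, partially ordered by inclusion. The union of any chain lies in $\cF$, so Zorn's lemma yields a maximal element $\{p_k\}$. To make the covering step below uniform in $C\in(0,1)$, I would actually build this maximal element scale-by-scale in decreasing order of $\eta$ using the dyadic decomposition $\Gamma_n:=\{p:\eta(p)\in(2^n,2^{n+1}]\}$ (only finitely many $n$ are nonempty since $\eta$ is bounded), applying Zorn at each scale while enforcing separation from all previously chosen larger-scale points. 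Local finiteness and countability of $\{p_k\}$ then follow from continuity and positivity of $\eta$ on each compact $\Gamma\cap\overline{\Ball R0}$ (cf.\ Lemma \ref{lem:eta-lipschitz}) together with pairwise disjointness of the packing balls $\Ball{C\tilde\eta(p_k)/2}{p_k}$.

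For the comparability and distance upper bound in (\ref{eq:thm:delta-M-rho-covering}), fix $i\neq k$ with overlapping balls and WLOG $\tilde\eta(p_i)\leq\tilde\eta(p_k)$. Then $|p_i-p_k|<2\tilde\eta(p_k)=2^{1-K}\eta(p_k)$, so setting $\eps:=|p_i-p_k|/\eta(p_k)<2^{1-K}\leq\tfrac12$ and applying (\ref{eq:thm:delta-M-rho-covering-a}) at $(p,\tilde p)=(p_k,p_i)$ yields
\[
\eta(p_i)>(1-\eps)\eta(p_k)\geq(1-2^{1-K})\eta(p_k)=\tfrac{2^{K-1}-1}{2^{K-1}}\eta(p_k),
\]
which, divided by $2^K$, is the lower $\tilde\eta$-comparability in (\ref{eq:thm:delta-M-rho-covering}); the matching upper bound $\tilde\eta(p_k)\leq\tfrac{2^{K-1}}{2^{K-1}-1}\tilde\eta(p_i)$ is an algebraic rearrangement. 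The distance upper bound then drops out of $|p_i-p_k|<\tilde\eta(p_i)+\tilde\eta(p_k)\leq(1+\tfrac{2^{K-1}}{2^{K-1}-1})\tilde\eta(p_i)=\tfrac{2^K-1}{2^{K-1}-1}\min\{\tilde\eta(p_i),\tilde\eta(p_k)\}$, and the lower distance bound is the very defining property of $\cF$.

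The covering property is where the scale-ordered construction matters, and this is \emph{the main obstacle}. For $p\in\Gamma\setminus\{p_k\}$, maximality supplies a blocker $p_k$ with $|p-p_k|<C\max\{\tilde\eta(p),\tilde\eta(p_k)\}$; when $\tilde\eta(p_k)\geq\tilde\eta(p)$ the conclusion $p\in\Ball{\tilde\eta(p_k)}{p_k}$ is immediate from $C<1$. The scale-by-scale Zorn construction guarantees precisely that every blocker has $\tilde\eta(p_k)\geq\tilde\eta(p)$, because by the time $p$ is examined at its scale all larger-scale points have already been committed. Without this ordering device, a blocker could have $\tilde\eta(p_k)<\tilde\eta(p)$, and invoking (\ref{eq:thm:delta-M-rho-covering-a}) at $p$ would yield only $|p-p_k|\leq\tfrac{C}{1-C\cdot 2^{-K}}\tilde\eta(p_k)$, which bounds by $\tilde\eta(p_k)$ only when $C\leq 2^K/(2^K+1)$. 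Tracking the constants carefully through the stratified Zorn argument is what delivers the theorem for every $C\in(0,1)$.
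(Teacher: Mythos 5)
Your maximal-packing/Zorn approach is a genuinely different route from the paper's, which builds the covering constructively by iterating over scales $\eta_k=(1-\delta)^k$ and, within each scale, over a shifted grid of cubes of side $\eta_k/n$, chosen so that $(1-\tfrac1n)(1-\delta)>C$; the lower distance bound then comes from the geometry of the grid and from removing each picked ball from the set still to be covered, not from a packing condition. Your derivations of the $\tilde\eta$-comparability and the distance upper bound from (\ref{eq:thm:delta-M-rho-covering-a}) are correct and essentially parallel the paper's.

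There is, however, a real gap in the covering step. Your claim that ``the scale-by-scale Zorn construction guarantees precisely that every blocker has $\tilde\eta(p_k)\geq\tilde\eta(p)$'' is false. The dyadic band $\Gamma_n=\{p:\eta(p)\in(2^n,2^{n+1}]\}$ has ratio $2$, and Zorn applied to $\Gamma_n$ can perfectly well select a $q\in S_n$ blocking some $p\in\Gamma_n$ with $\eta(q)<\eta(p)$: maximality of $S_n$ only forbids adding $p$ to the already-chosen set, it does not privilege the larger-$\eta$ point. In that case, as your own estimate shows, $\eta$-regularity at $p$ gives only $|p-q|<\tfrac{C}{1-C\,2^{-K}}\tilde\eta(q)$, so $p\in\Ball{\tilde\eta(q)}{q}$ fails whenever $C>\tfrac{2^K}{2^K+1}$. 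Thus your stratified Zorn delivers the theorem only up to that threshold, not for all $C\in(0,1)$. (Also, the remark that ``only finitely many $n$ are nonempty since $\eta$ is bounded'' is incorrect: boundedness caps $n$ from above, but nothing prevents $\eta\searrow 0$, so infinitely many negative $n$ may be populated; this is harmless, since a countable decreasing iteration is fine, but the justification given is wrong.) The repair is exactly the refinement the paper builds in: replace the dyadic bands by bands of ratio $1+\delta$, i.e.\ $\Gamma_n^\delta:=\{p:\eta(p)\in((1+\delta)^n,(1+\delta)^{n+1}]\}$, with $\delta<\tfrac{1}{C}-1$. Then a blocker $q$ in a strictly earlier band has $\eta(q)\geq\eta(p)$ and the conclusion is immediate; and a blocker $q$ in the same band satisfies $\eta(p)<(1+\delta)\eta(q)$, hence $|p-q|<C\max\{\tilde\eta(p),\tilde\eta(q)\}<C(1+\delta)\tilde\eta(q)<\tilde\eta(q)$. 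With this change your Zorn argument does cover all $C\in(0,1)$ and yields a clean alternative to the paper's grid construction.
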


\begin{proof}
We chose $\delta>0$, $n\in\N$ such that $\left(1-\frac{1}{n}\right)\left(1-\delta\right)>C$.
W.o.l.g. assume $\tilde{\eta}<(1-\delta)$. Consider $\tilde{Q}:=\left[0,\frac{1}{n}\right]^{d}$,
let $q_{1,\dots,n^{d}}$ denote the $n^{d}$ elements of $[0,1)^{d}\cap\frac{\Q^{d}}{n}$
and let $\tilde{Q}_{z,i}=\tilde{Q}+z+q_{i}$, $z\in\Zd$. We set $B_{(0)}:=\emptyset$,
$\Gamma_{1}=\Gamma$, $\eta_{k}:=\left(1-\delta\right)^{k}$ and for
$k\geq1$ we construct the covering using inductively defined open
sets $B_{(k)}$ and closed set $\Gamma_{k}$ as follows:
\begin{enumerate}
\item Define $\Gamma_{k,1}=\Gamma_{k}$. For $i=1,\dots,n^{d}$ do the following:
\begin{enumerate}
\item For every $z\in\Zd$ do 
\[
\begin{aligned} & \text{if }\exists p\in\left(\eta_{k}\tilde{Q}_{z,i}\right)\cap\Gamma_{k,i},\,\tilde{\eta}\of p\in(\eta_{k},\eta_{k-1}] & \text{then set } & b_{z,i}=\Ball{\tilde{\eta}\of p}p\,,\;\X_{z,i}=\left\{ p\right\} \\
 & \text{otherwise } & \text{set } & b_{z,i}=\emptyset\,,\;\X_{z,i}=\emptyset\,.
\end{aligned}
\]
\item Define $B_{(k),i}:=\bigcup_{z\in\Zd}b_{z,i}$ and $\Gamma_{k,i+1}=\Gamma_{k,i}\backslash B_{(k),i}$
and $\X_{(k),i}:=\bigcup_{z\in\Zd}\X_{z,i}$.\\
Observe: $p_{1},p_{2}\in\X_{(k),i}$ implies $\left|p_{1}-p_{2}\right|>\left(1-\frac{1}{n}\right)\eta_{k}$
and $p_{3}\in\X_{(k),j}$, $j<i$ implies $p_{1}\not\in\Ball{\eta_{k}}{p_{3}}$
and hence $\left|p_{1}-p_{3}\right|>\eta_{k}$. Similar, $p_{3}\in\X_{l}$,
$l<k$, implies $\left|p_{1}-p_{3}\right|>\eta_{l}>\eta_{k}$.
\end{enumerate}
\item Define $\Gamma_{k+1}:=\Gamma_{k,n^{d}+1}$, $\X_{k}:=\bigcup_{i}\X_{(k),i}$.
\end{enumerate}
The above covering of $\Gamma$ is complete in the sense that every
$x\in\Gamma$ lies in one of the balls (by contradiction). We denote
$\X:=\bigcup_{k}\X_{k}=\left(p_{i}\right)_{i\in\N}$ the family of
centers of the above constructed covering of $\Gamma$ and find the
following properties: Let $p_{1},p_{2}\in\X$ be such that $\Ball{\tilde{\eta}\of{p_{1}}}{p_{1}}\cap\Ball{\tilde{\eta}\of{p_{2}}}{p_{2}}\neq\emptyset$.
W.l.o.g. let $\tilde{\eta}\of{p_{1}}\geq\tilde{\eta}\of{p_{2}}$.
Then the following two properties are satisfied due to (\ref{eq:thm:delta-M-rho-covering-a})
\begin{enumerate}
\item It holds $\left|p_{1}-p_{2}\right|\leq2\tilde{\eta}\of{p_{1}}\leq\frac{1}{2^{K-1}}\eta\of{p_{1}}$
and hence $\Ball{\tilde{\eta}\of{p_{2}}}{p_{2}}\subset\Ball{2^{2-K}\eta\of{p_{1}}}{p_{1}}$
and $\eta\of{p_{2}}\geq\frac{2^{K-1}-1}{2^{K-1}}\eta\of{p_{1}}$.
Furthermore $\tilde{\eta}\of{p_{1}}\geq\tilde{\eta}\of{p_{2}}\geq\frac{2^{K-1}-1}{2^{K-1}}\tilde{\eta}\of{p_{1}}$.
\item Let $k$ such that $\tilde{\eta}\of{p_{1}}\in\left(\eta_{k},\eta_{k+1}\right]$.
If also $\tilde{\eta}\of{p_{2}}\in\left(\eta_{k},\eta_{k+1}\right]$
then the observation in Step 1.(b) implies $\left|p_{1}-p_{2}\right|\geq\left(1-\frac{1}{n}\right)\eta_{k}\geq\left(1-\frac{1}{n}\right)\left(1-\delta\right)\tilde{\eta}\of{p_{1}}$.
If $\tilde{\eta}\of{p_{2}}\not\in\left[\eta_{k},\eta_{k+1}\right)$
then $\tilde{\eta}\of{p_{2}}<\eta_{k}$ and hence $p_{2}\not\in\Ball{\tilde{\eta}\of{p_{1}}}{p_{1}}$,
implying $\left|p_{1}-p_{2}\right|>\tilde{\eta}\of{p_{1}}$.
\end{enumerate}
Due to our choice of $n$ and $\delta$, this concludes the proof.
\end{proof}

\subsection{Dynamical Systems}
\begin{assumption}
\label{assu:separable}Throughout this work we assume that $(\Omega,\sF,\P)$
is a probability space with countably generated $\sigma$-algebra
$\sF$.
\end{assumption}

Due to the insight in \cite{heida2011extension}, shortly sketched
in the next two subsections, after a measurable transformation the
probability space $\Omega$ can be assumed to be metric and separable,
which always ensures Assumption \ref{assu:separable}.
\begin{defn}[Dynamical system]
\label{def:Omega-mu-tau}A dynamical \nomenclature[Tau]{$\tau_x$}{Dynamical system (Definitions \ref{def:Omega-mu-tau}, \ref{def:A-dynamical-system-Zd}) with respect to $x\in\Rd$ or $x\in\Zd$}system
on $\Omega$ is a family $(\tau_{x})_{x\in\Rd}$ of measurable bijective
mappings $\tau_{x}:\Omega\mapsto\Omega$ satisfying (i)-(iii):

\begin{enumerate}
\item [(i)]$\tau_{x}\circ\tau_{y}=\tau_{x+y}$ , $\tau_{0}=id$ (Group
property)
\item [(ii)]$\P(\tau_{-x}B)=\P(B)\quad\forall x\in\Rd,\,\,B\in\sF$ (Measure
preserving)
\item [(iii)]$A:\,\,\Rd\times\Omega\rightarrow\Omega\qquad(x,\omega)\mapsto\tau_{x}\omega$
is measurable (Measurability of evaluation)
\end{enumerate}
\end{defn}

A set $A\subset\Omega$ is almost invariant if \nomenclature[I]{$\sI$}{Invariant sets, \eqref{eq:invariant-sets}}$\P\left(\left(A\cup\tau_{x}A\right)\backslash\left(A\cap\tau_{x}A\right)\right)=0$.
The family 
\begin{equation}
\sI=\left\{ A\in\sF\,:\;\forall x\in\Rd\,\P\left(\left(A\cup\tau_{x}A\right)\backslash\left(A\cap\tau_{x}A\right)\right)=0\right\} \label{eq:invariant-sets}
\end{equation}
 of almost invariant sets is $\sigma$-algebra and \nomenclature[E]{$\E(f|\sI)$}{Expectation of $f$ wrt. the invariant sets, \eqref{eq:invariant-sets-expectation}}
\begin{equation}
\E\left(f|\sI\right)\text{denotes the expectation of }f:\,\Omega\to\R\text{ w.r.t. }\sI\,.\label{eq:invariant-sets-expectation}
\end{equation}
A concept linked to dynamical systems is the concept of stationarity.
\begin{defn}[Stationary]
\label{def:stationary}\nomenclature{stationary}{Definition \ref{def:stationary}}Let
$X$ be a measurable space and let $f:\Omega\times\Rd\to X$. Then
$f$ is called (weakly) stationary if $f(\omega,x)=f(\tau_{x}\omega,0)$
for (almost) every $x$.
\end{defn}

\begin{defn}
\label{def:convex-averaging-sequence}\nomenclature[Convex averaging sequence]{Convex averaging sequence}{(Definition \ref{def:convex-averaging-sequence}) }A
family $\left(A_{n}\right)_{n\in\N}\subset\Rd$ is called convex averaging
sequence if
\begin{enumerate}
\item [(i)]each $A_{n}$ is convex
\item [(ii)]for every $n\in\N$ holds $A_{n}\subset A_{n+1}$
\item [(iii)]there exists a sequence $r_{n}$ with $r_{n}\to\infty$ as
$n\to\infty$ such that $B_{r_{n}}(0)\subseteq A_{n}$.
\end{enumerate}
\end{defn}

We sometimes may take the following stronger assumption.
\begin{defn}
A convex averaging sequence $A_{n}$ is called regular if 
\[
\left|A_{n}\right|^{-1}\#\left\{ z\in\Zd\,:\;\left(z+\T\right)\cap\partial A_{n}\neq\emptyset\right\} \to0\,.
\]
\end{defn}

The latter condition is evidently fulfilled for sequences of cones
or balls. Convex averaging sequences are important in the context
of ergodic theorems.
\begin{thm}[Ergodic Theorem \cite{Daley1988} Theorems 10.2.II and also \cite{tempel1972ergodic}]
\label{thm:Ergodic-Theorem} \nomenclature[Ergodic Theorem]{Ergodic Theorem}{(Theorems \ref{thm:Ergodic-Theorem}, \ref{thm:Ergodic-Theorem-ran-meas}) }Let
$\left(A_{n}\right)_{n\in\N}\subset\Rd$ be a convex averaging sequence,
let $(\tau_{x})_{x\in\Rd}$ be a dynamical system on $\Omega$ with
invariant $\sigma$-algebra $\sI$ and let $f:\,\Omega\to\R$ be measurable
with $\left|\E(f)\right|<\infty$. Then for almost all $\omega\in\Omega$
\begin{equation}
\left|A_{n}\right|^{-1}\int_{A_{n}}f\of{\tau_{x}\omega}\,\d x\to\E\of{f|\sI}\,.\label{eq:ergodic convergence}
\end{equation}
\end{thm}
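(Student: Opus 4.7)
}
The plan is to follow the classical route for multiparameter ergodic theorems: establish a maximal inequality for the averaging operators, prove convergence on a dense subspace of $L^1(\Omega)$, and then combine the two via the Banach principle. Write $M_n f(\omega):=|A_n|^{-1}\int_{A_n}f(\tau_x\omega)\,\d x$; the measurability in $(x,\omega)$ coming from condition (iii) of Definition~\ref{def:Omega-mu-tau} makes this well defined in $L^1(\Omega)$. I will first reduce to nonnegative $f\in L^1(\Omega)$ by linearity, and then identify the candidate limit as $\E(f\mid\sI)$, which is the unique $\sI$-measurable function with $\E(\E(f\mid\sI)\,\chi_B)=\E(f\,\chi_B)$ for all $B\in\sI$.

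The first key step is a Wiener-type maximal inequality
\[
\P\!\left(\sup_{n\in\N}M_n|f|>\lambda\right)\le\frac{C_d\,\E|f|}{\lambda}\qquad\forall\lambda>0,
\]
with a constant $C_d$ depending only on $d$. I would obtain this by a Vitali/Besicovitch covering argument applied pathwise to the convex sets $A_n$ (or their translates): after fixing $\omega$, the level set of the maximal function can be covered by a bounded-overlap family of translates $x+A_{n(x)}$, and the invariance of $\P$ under $\tau_{-x}$ turns the covering estimate into the desired weak-type $(1,1)$ bound. The convexity of $A_n$ and the existence of $r_n\to\infty$ with $B_{r_n}(0)\subset A_n$ from Definition~\ref{def:convex-averaging-sequence} are exactly what is needed to run the covering lemma uniformly in $n$.

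Next I would prove a.s.\ convergence on a dense subset of $L^1(\Omega)$. In $L^2(\Omega)$ one has the von Neumann decomposition
\[
L^2(\Omega)=L^2(\Omega,\sI)\oplus\overline{\mathrm{span}}\bigl\{g-g\circ\tau_y:\,g\in L^\infty(\Omega),\,y\in\Rd\bigr\}.
\]
For $f\in L^2(\Omega,\sI)$, stationarity gives $M_n f=f=\E(f\mid\sI)$ a.s. For a coboundary $f=g-g\circ\tau_y$ with $g$ bounded, a change of variables yields
\[
M_n f(\omega)=\frac{1}{|A_n|}\int_{A_n\triangle(A_n+y)}\pm g(\tau_x\omega)\,\d x,
\]
so $\|M_n f\|_\infty\le 2\|g\|_\infty |A_n\triangle(A_n+y)|/|A_n|$, and the latter ratio tends to $0$ because $A_n$ is convex with $B_{r_n}(0)\subset A_n$, $r_n\to\infty$. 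Thus $M_nf\to 0=\E(f\mid\sI)$ uniformly on a dense subspace of $L^2$, hence on a dense subspace of $L^1$ by truncation.

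Finally, combining the maximal inequality with density yields a.s.\ convergence for every $f\in L^1(\Omega)$: given $\varepsilon>0$, approximate $f=f_\varepsilon+r_\varepsilon$ with $\|r_\varepsilon\|_1<\varepsilon$ and $M_nf_\varepsilon\to\E(f_\varepsilon\mid\sI)$ a.s.; the maximal inequality controls $\limsup_n|M_nr_\varepsilon|$ in probability, and the $L^1$-contractivity of conditional expectation controls $\E(r_\varepsilon\mid\sI)$, so that $M_nf\to\E(f\mid\sI)$ a.s. The main technical obstacle is the maximal inequality: proving bounded overlap for coverings by convex averaging sets of varying shape $A_{n(x)}$ requires the Besicovitch-type argument and is the only place where the convexity of the $A_n$ is genuinely used.
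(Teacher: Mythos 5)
The paper does not prove this theorem; it is cited verbatim from Daley--Vere-Jones (Thm.\ 10.2.II) and Tempel'man, so there is no internal proof to compare against. Your sketch reproduces the standard route to Tempel'man's pointwise ergodic theorem for $\Rd$-actions --- weak-type $(1,1)$ maximal inequality, von Neumann decomposition into invariant functions plus coboundaries as the dense subspace, and the Banach principle to transfer convergence to all of $L^1$ --- which is, in essence, the argument in those references, and your outline is correct.

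One point worth tightening is the mechanism behind the maximal inequality. You attribute the bounded-overlap covering to the convexity of $A_n$ together with the existence of $r_n\to\infty$ with $B_{r_n}(0)\subset A_n$. In fact the Vitali-type covering for translates of \emph{varying} convex sets rests on Tempel'man's regularity condition $\sup_n |A_n-A_n|/|A_n|<\infty$; that this holds here is a consequence of the Rogers--Shephard inequality $|A-A|\le\binom{2d}{d}|A|$ for convex bodies, combined with the nestedness $A_n\subset A_{n+1}$. The growing inscribed balls play essentially no role in the covering lemma; they are what you actually use in the coboundary step, to show $|A_n\triangle(A_n+y)|/|A_n|\to0$ for fixed $y$ (for $|y|\le\varepsilon r_n$ convexity gives $(1-\varepsilon)A_n+y\subset A_n$, hence the ratio is at most $d|y|/r_n$). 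Finally, the statement as quoted requires $|\E(f)|<\infty$, but your argument (like the classical one) genuinely needs $\E|f|<\infty$, i.e.\ $f\in L^1(\Omega)$; this is a small imprecision in the paper's hypothesis, and you have implicitly used the correct one.
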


We observe that $\E\left(f|\sI\right)$ is of particular importance.
For the calculations in this work, we will particularly focus on the
case of trivial $\sI$. This is called ergodicity, as we will explain
in the following.
\begin{defn}[Ergodicity and mixing]
\nomenclature[Ergodicity]{Ergodicity}{(Definition \ref{def:erg-mixing}) }\label{def:erg-mixing}\nomenclature[Mixing]{Mixing}{(Definition \ref{def:erg-mixing}) }A
dynamical system $(\tau_{x})_{x\in\Rd}$ on a probability space $(\Omega,\sF,\P)$
is called \emph{mixing }if for every measurable $A,B\subset\Omega$
it holds 
\begin{equation}
\lim_{\norm x\to\infty}\P\of{A\cap\tau_{x}B}=\P(A)\,\P(B)\,.\label{eq:def-mixing}
\end{equation}
A dynamical system is called \emph{ergodic }if 
\begin{equation}
\lim_{n\to\infty}\frac{1}{\left(2n\right)^{d}}\int_{[-n,n]^{d}}\P\of{A\cap\tau_{x}B}\d x=\P(A)\,\P(B)\,.\label{eq:def-ergodic}
\end{equation}
\end{defn}

\begin{rem}
\label{rem:trivial-mixing}a) Let $\Omega=\left\{ \omega_{0}=0\right\} $
with the trivial $\sigma$-algebra and $\tau_{x}\omega_{0}=\omega_{0}$.
Then $\tau$ is evidently mixing. However, the realizations are constant
functions $f_{\omega}(x)=c$ on $\Rd$ for some constant $c$.

b) A typical ergodic system is given by $\Omega=\T$ with the Lebesgue
$\sigma$-algebra and $\P=\lebesgueL$ the Lebesgue measure. The dynamical
system is given by $\tau_{x}y:=\left(x+y\right)\!\mod\T$.

c) It is known that $(\tau_{x})_{x\in\Rd}$ is ergodic if and only
if every almost invariant set $A\in\sI$ has probability $\P(A)\in\left\{ 0,1\right\} $
(see \cite{Daley1988} Proposition 10.3.III) i.e. 
\begin{equation}
\left[\,\,\forall x\,\P\of{\left(\tau_{x}A\cup A\right)\backslash\left(\tau_{x}A\cap A\right)}=0\,\,\right]\;\Rightarrow\;\P\of A\in\left\{ 0,1\right\} \,.\label{eq:def-ergodic-2}
\end{equation}

d) It is sufficient to show (\ref{eq:def-mixing}) or (\ref{eq:def-ergodic})
for $A$ and $B$ in a ring that generates the $\sigma$-algebra $\sF$.
We refer to \cite{Daley1988}, Section 10.2, for the later results.
\end{rem}

A further useful property of ergodic dynamical systems, which we will
use below, is the following:
\begin{lem}[Ergodic times mixing is ergodic]
\label{lem:erg-and-mix-is-erg}Let $(\tilde{\Omega},\tilde{\sF},\tilde{\P})$
and $(\hat{\Omega},\hat{\sF},\hat{\P})$ be probability spaces with
dynamical systems $(\tilde{\tau}_{x})_{x\in\Rd}$ and $(\hat{\tau}_{x})_{x\in\Rd}$
respectively. Let $\Omega:=\tilde{\Omega}\times\hat{\Omega}$ be the
usual product measure space with the notation $\omega=(\tilde{\omega},\hat{\omega})\in\Omega$
for $\tilde{\omega}\in\tilde{\Omega}$ and $\hat{\omega}\in\hat{\Omega}$.
If $\tilde{\tau}$ is ergodic and $\hat{\tau}$ is mixing, then $\tau_{x}(\tilde{\omega},\hat{\omega}):=(\tilde{\tau}_{x}\tilde{\omega},\hat{\tau}_{x}\hat{\omega})$
is ergodic.
\end{lem}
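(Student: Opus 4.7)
\medskip

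\noindent\textbf{Proof plan.} The plan is to verify the ergodicity condition \eqref{eq:def-ergodic} for $(\tau_x)_{x\in\Rd}$ directly. By Remark \ref{rem:trivial-mixing}(d) it suffices to check the averaged formula on a ring generating the product $\sigma$-algebra $\tilde\sF\otimes\hat\sF$. The natural choice is the ring $\fR$ of finite disjoint unions of measurable rectangles $\tilde A\times\hat A$ with $\tilde A\in\tilde\sF$, $\hat A\in\hat\sF$. By bilinearity of both sides of \eqref{eq:def-ergodic} in $A$ and $B$, it is in fact enough to establish the identity when $A=\tilde A\times\hat A$ and $B=\tilde B\times\hat B$ are rectangles; the general case in $\fR$ then follows by summing over a finite disjoint decomposition.

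For such rectangles the key factorisation is
\[
\P\of{A\cap\tau_x B}=\tilde\P\of{\tilde A\cap\tilde\tau_x\tilde B}\cdot\hat\P\of{\hat A\cap\hat\tau_x\hat B}=:g(x)\,h(x),
\]
where $0\le g(x),h(x)\le 1$. Ergodicity of $\tilde\tau$ yields
\[
\frac{1}{(2n)^d}\int_{[-n,n]^d}g(x)\,\d x\;\xrightarrow[n\to\infty]{}\;\tilde\P(\tilde A)\,\tilde\P(\tilde B),
\]
while mixing of $\hat\tau$ gives $h(x)\to\hat\P(\hat A)\,\hat\P(\hat B)=:h_\infty$ as $\norm x\to\infty$. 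The strategy is now to split $h(x)=h_\infty+r(x)$ with $r(x)\to 0$ at infinity and write
\[
\frac{1}{(2n)^d}\int_{[-n,n]^d}g(x)h(x)\,\d x=h_\infty\cdot\frac{1}{(2n)^d}\int_{[-n,n]^d}g(x)\,\d x+\frac{1}{(2n)^d}\int_{[-n,n]^d}g(x)r(x)\,\d x.
\]
The first summand converges to $h_\infty\cdot\tilde\P(\tilde A)\tilde\P(\tilde B)=\P(A)\P(B)$ by ergodicity of $\tilde\tau$, so it remains to show that the second summand vanishes.

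For the remainder, I use $|g|\le 1$ and the decay of $r$: given $\eps>0$, choose $R>0$ with $|r(x)|<\eps$ for $\norm x>R$; then
\[
\left|\frac{1}{(2n)^d}\int_{[-n,n]^d}g(x)r(x)\,\d x\right|\le\frac{2\,|[-n,n]^d\cap\Ball R0|}{(2n)^d}+\eps,
\]
which is smaller than $2\eps$ for $n$ large enough. Since $\eps>0$ was arbitrary, the remainder tends to $0$ and \eqref{eq:def-ergodic} is established for rectangles.

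Two routine verifications complete the argument: (i) $(\tau_x)_{x\in\Rd}$ is indeed a dynamical system on $(\Omega,\tilde\sF\otimes\hat\sF,\tilde\P\otimes\hat\P)$ in the sense of Definition \ref{def:Omega-mu-tau} (group property and joint measurability are inherited coordinatewise from $\tilde\tau$ and $\hat\tau$, measure preservation holds on rectangles and hence on the product $\sigma$-algebra by uniqueness of product measures); and (ii) the above factorisation and bilinear extension really do reduce \eqref{eq:def-ergodic} to the rectangle case. I do not foresee a genuine obstacle here; the only point that requires a little care is making sure the remainder estimate is uniform, which is handled by the two-scale split using the fixed radius $R$. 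Measurability of $x\mapsto g(x),h(x)$ needed to make the integrals sensible follows from the measurability axiom in Definition \ref{def:Omega-mu-tau} applied to $\tilde\tau$ and $\hat\tau$ respectively.
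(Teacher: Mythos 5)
Your proof is correct and follows the same scheme as the paper's: factor $\P\of{A\cap\tau_x B}$ into a $\tilde{\,}$-factor $g(x)=\tilde\P\of{\tilde A\cap\tilde\tau_x\tilde B}$ and a $\hat{\,}$-factor $h(x)=\hat\P\of{\hat A\cap\hat\tau_x\hat B}$ on rectangles, split $h$ into its mixing limit plus a remainder that vanishes at infinity, apply ergodicity of $\tilde\tau$ to the leading term, and kill the remainder with the standard two-radius ($R$ versus $n$) estimate using $|g|\le1$. You also invoke the same reduction to rectangles via the generating ring. One point worth noting: you state the decomposition more carefully than the paper does. You correctly identify $h_\infty=\hat\P(\hat A)\,\hat\P(\hat B)$ as the mixing limit of $h(x)$, and $\tilde\P(\tilde A)\,\tilde\P(\tilde B)$ as the ergodic average of $g$, so that the leading term converges to $\P(A)\P(B)$. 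In the paper's displayed computation the constants appear as $\hat\P(\hat A\cap\hat B)$ and $\tilde\P(\tilde A\cap\tilde B)$, which are not the asymptotic values delivered by the mixing and ergodicity hypotheses; this is evidently a slip, and the argument closes only with the product form you use, so your version is the correct reading.
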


\begin{proof}
Relying on Remark \ref{rem:trivial-mixing}.c) we verify (\ref{eq:def-ergodic})
by proving it for sets $A=\tilde{A}\times\hat{A}$ and $B=\tilde{B}\times\hat{B}$
which generate $\sF:=\tilde{\sF}\otimes\hat{\sF}$. We make use of
$A\cap B=\left(\tilde{A}\cap\tilde{B}\right)\times\left(\hat{A}\cap\hat{B}\right)$
and observe that 
\begin{align*}
\P\of{A\cap\tau_{x}B} & =\P\of{\left(\tilde{A}\cap\tilde{\tau}_{x}\tilde{B}\right)\times\left(\hat{A}\cap\hat{\tau}_{x}\hat{B}\right)}=\hat{\P}\of{\hat{A}\cap\hat{\tau}_{x}\hat{B}}\,\tilde{\P}\of{\tilde{A}\cap\tilde{\tau}_{x}\tilde{B}}\\
 & =\hat{\P}\of{\hat{A}\cap\hat{B}}\,\tilde{\P}\of{\tilde{A}\cap\tilde{\tau}_{x}\tilde{B}}+\left[\hat{\P}\of{\hat{A}\cap\hat{\tau}_{x}\hat{B}}-\hat{\P}\of{\hat{A}\cap\hat{B}}\right]\,\tilde{\P}\of{\tilde{A}\cap\tilde{\tau}_{x}\tilde{B}}\,.
\end{align*}
Using ergodicity, we find that
\begin{align}
\lim_{n\to\infty}\frac{1}{\left(2n\right)^{d}}\int_{[-n,n]^{d}}\hat{\P}\of{\hat{A}\cap\hat{B}}\,\tilde{\P}\of{\tilde{A}\cap\tilde{\tau}_{x}\tilde{B}}\d x & =\hat{\P}\of{\left(\hat{A}\cap\hat{B}\right)}\,\tilde{\P}\of{\tilde{A}\cap\tilde{B}}\nonumber \\
 & =\P\of{A\cap B}\,.\label{eq:lem:erg-and-mix-is-erg-help-1}
\end{align}
Since $\hat{\tau}$ is mixing, we find for every $\eps>0$ some $R>0$
such that $\norm x>R$ implies $\left|\hat{\P}\of{\hat{A}\cap\hat{\tau}_{x}\hat{B}}-\hat{\P}\of{\hat{A}\cap\hat{B}}\right|<\eps$.
For $n>R$ we find 
\begin{multline}
\frac{1}{\left(2n\right)^{d}}\int_{[-n,n]^{d}}\left|\hat{\P}\of{\hat{A}\cap\hat{\tau}_{x}\hat{B}}-\hat{\P}\of{\hat{A}\cap\hat{B}}\right|\,\tilde{\P}\of{\tilde{A}\cap\tilde{\tau}_{x}\tilde{B}}\\
\leq\frac{1}{\left(2n\right)^{d}}\int_{[-n,n]^{d}}\eps+\frac{1}{\left(2n\right)^{d}}\int_{[-R,R]^{d}}2\to\eps\quad\text{as }n\to\infty\,.\label{eq:lem:erg-and-mix-is-erg-help-2}
\end{multline}
The last two limits (\ref{eq:lem:erg-and-mix-is-erg-help-1}) and
(\ref{eq:lem:erg-and-mix-is-erg-help-2}) imply (\ref{eq:def-ergodic}).
\end{proof}
\begin{rem}
The above proof heavily relies on the mixing property of $\hat{\tau}$.
Note that for $\hat{\tau}$ being only ergodic, the statement is wrong,
as can be seen from the product of two periodic processes in $\T\times\T$
(see Remark \ref{rem:trivial-mixing}). Here, the invariant sets are
given by $I_{A}:=\left\{ \left(\left(y+x\right)\!\mod\T\,,\,x\right)\,:\;y\in A\right\} $
for arbitrary measurable $A\subset\T$.
\end{rem}

\subsection{\label{subsec:Random-measures-and}Random Measures and Palm Theory}

We recall some facts from random measure theory (see \cite{Daley1988})
which will be needed for homogenization. Let $\fM(\Rd)$ \nomenclature[M]{$\fM(\Rd)$}{Measures on $\Rd$ (Section \ref{subsec:Random-measures-and}) }denote
the space of locally bounded Borel measures on $\Rd$ (i.e. bounded
on every bounded Borel-measurable set) equipped with the Vague topology,
which is generated by the sets 
\[
\left\{ \mu\,:\;\int f\,\d\mu\in A\right\} \text{ for every open }A\subset\Rd\text{ and }f\in C_{c}\of{\Rd}\,.
\]
This topology is metrizable, complete and countably generated. A random
measure is a measurable mapping 
\[
\mu_{\bullet}:\;\Omega\to\fM(\Rd)\,,\qquad\omega\mapsto\mu_{\omega}
\]
which is equivalent to both of the following conditions
\begin{enumerate}
\item For every bounded Borel set $A\subset\Rd$ the map $\omega\mapsto\mu_{\omega}(A)$
is measurable
\item For every $f\in C_{c}(\Rd)$ the map $\omega\mapsto\int f\,\d\mu_{\omega}$
is measurable.
\end{enumerate}
A random measure is stationary if the distribution of $\mu_{\omega}(A)$
is invariant under translations of $A$ that is $\mu_{\omega}(A)$
and $\mu_{\omega}(A+x)$ share the same distribution. From stationarity
of $\mu_{\omega}$ one concludes the existence (\cite{heida2011extension,papanicolaou1979boundary}
and references therein) of a dynamical system $\left(\tau_{x}\right)_{x\in\Rd}$
on $\Omega$ such that $\mu_{\omega}\left(A+x\right)=\mu_{\tau_{x}\omega}\left(A\right)$.
By a deep theorem due to Mecke (see \cite{Mecke1967,Daley1988}) the
measure 
\[
\mupalm(A)=\int_{\Omega}\int_{\Rd}g(s)\,\chi_{A}(\tau_{s}\omega)\,\d\muomega(s)\,\d\P(\omega)
\]
can be defined on $\Omega$ for every positive $g\in L^{1}(\Rd)$
with compact support. $\mupalm$ is independent from $g$ and in case
$\mu_{\omega}=\lebesgueL$ we find $\mupalm=\P$. Furthermore, for
every $\borelB(\Rd)\times\borelB(\Omega)$-measurable non negative
or $\mupalm\times\lebesgueL$- integrable functions $f$ the Campbell
formula
\[
\int_{\Omega}\int_{\Rd}f(x,\tau_{x}\omega)\,\d\muomega(x)\,\d\P(\omega)=\int_{\Rd}\int_{\Omega}f(x,\omega)\,\d\mupalm(\omega)\,\d x
\]
 holds. The measure $\mu_{\omega}$ has finite intensity if $\mupalm(\Omega)<+\infty$.

We denote by \nomenclature[Emup]{$\E_{\mupalm}(f|\sI)$}{Expectation of $f$ wrt. $\mupalm$ and the invariant sets, \eqref{eq:invariant-sets-expectation-mup}}
\begin{equation}
\E_{\mupalm}\of{f|\sI}:=\int_{\Omega}f\text{ the expectation of }f\text{ w.r.t. the }\sigma\text{-algebra }\sI\text{ and }\mupalm\,.\label{eq:invariant-sets-expectation-mup}
\end{equation}
For random measures we find a more general version of Theorem \ref{thm:Ergodic-Theorem}.
\begin{thm}[Ergodic Theorem \cite{Daley1988} 12.2.VIII]
\label{thm:Ergodic-Theorem-ran-meas} Let $\left(\Omega,\sF,\P\right)$
be a probability space, $\left(A_{n}\right)_{n\in\N}\subset\Rd$ be
a convex averaging sequence, let $(\tau_{x})_{x\in\Rd}$ be a dynamical
system on $\Omega$ with invariant $\sigma$-algebra $\sI$ and let
$f:\,\Omega\to\R$ be measurable with $\int_{\Omega}\left|f\right|\d\mupalm<\infty$.
Then for $\P$-almost all $\omega\in\Omega$ 
\begin{equation}
\left|A_{n}\right|^{-1}\int_{A_{n}}f\of{\tau_{x}\omega}\,\d\mu_{\omega}(x)\to\E_{\mupalm}\of{f|\sI}\,.\label{eq:ergodic convergence ran meas}
\end{equation}
\end{thm}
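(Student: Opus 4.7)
The plan is to reduce the statement to the standard ergodic theorem (Theorem \ref{thm:Ergodic-Theorem}) on $(\Omega,\sF,\P)$ via a Campbell-formula smoothing argument. Fix a non-negative $g\in C_{c}(\Rd)$ with $\int g=1$ and $\support g\subset\Ball{R}{0}$, and set
\[
F_{g}(\omega):=\int_{\Rd}g(y)\,f(\tau_{y}\omega)\,\d\muomega(y).
\]
The Campbell formula yields $\E\left|F_{g}\right|\leq\left(\int g\right)\cdot\E_{\mupalm}\left|f\right|<\infty$, so $F_{g}\in L^{1}(\Omega,\P)$. Stationarity of the random measure gives the cocycle identity $F_{g}(\tau_{x}\omega)=\int g(y-x)\,f(\tau_{y}\omega)\,\d\muomega(y)$.

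Next I would apply Theorem \ref{thm:Ergodic-Theorem} to $F_{g}\in L^{1}(\Omega,\P)$ to obtain, for $\P$-a.e.\ $\omega$,
\[
\frac{1}{\left|A_{n}\right|}\int_{A_{n}}F_{g}(\tau_{x}\omega)\,\d x\longrightarrow\E(F_{g}\,|\,\sI).
\]
By Fubini the left-hand side equals $\left|A_{n}\right|^{-1}\int_{\Rd}\psi_{n}(y)\,f(\tau_{y}\omega)\,\d\muomega(y)$, where $\psi_{n}(y):=\int_{A_{n}}g(y-x)\,\d x$ satisfies $\psi_{n}\equiv1$ on the $R$-interior $\left(A_{n}\right)_{-R}$, $\psi_{n}\equiv0$ outside $\left(A_{n}\right)_{R}$, and $0\leq\psi_{n}\leq1$ on the intermediate boundary layer. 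A second Campbell computation (applied on $\sI$-measurable sets) identifies $\E(F_{g}\,|\,\sI)=\E_{\mupalm}(f\,|\,\sI)$, so the right-hand side already matches the target.

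The key step is then to show that replacing $\psi_{n}$ by $\chi_{A_{n}}$ is harmless in the limit, i.e.\ that
\[
\frac{1}{\left|A_{n}\right|}\int_{\Rd}\left|\psi_{n}(y)-\chi_{A_{n}}(y)\right|\,\left|f(\tau_{y}\omega)\right|\d\muomega(y)\longrightarrow0
\]
for $\P$-a.e.\ $\omega$. Because $(A_{n})$ is a convex averaging sequence, $\left|A_{n,R}\setminus\left(A_{n}\right)_{-R}\right|/\left|A_{n}\right|\to0$, so the error is supported in a boundary shell of vanishing relative Lebesgue volume. To push this from Lebesgue measure to the random measure $\muomega$, I apply the already-proven portion of the ergodic theorem to the non-negative integrand $\left|f\right|$ on an exhausting sequence of such shells; the Palm expectation of $\left|f\right|$ is finite by hypothesis, and monotonicity gives the almost sure vanishing.

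The main obstacle I expect is exactly this last point: passing from the $L^{1}(\Omega,\P)$ control of the boundary contribution to a pointwise (in $\omega$) statement. The cleanest route is to first prove the theorem for non-negative $f$ with $F_{g}$ dominating shell averages monotonically in a suitable sense, then split $f=f^{+}-f^{-}$ for the signed case. Independence of the limit from the choice of mollifier $g$ follows once the identification $\E(F_{g}\,|\,\sI)=\E_{\mupalm}(f\,|\,\sI)$ is in place, completing the argument.
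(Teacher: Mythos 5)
This statement is cited from Daley and Vere-Jones (Theorem~12.2.VIII) and the paper gives no proof of its own, so there is nothing to compare against; I will instead review your argument on its own merits.

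Your overall strategy -- mollify to a stationary observable $F_g\in L^1(\Omega,\P)$, apply the Wiener ergodic theorem (Theorem~\ref{thm:Ergodic-Theorem}) to $F_g$, and then pass from the smoothed window $\psi_n$ to the sharp cutoff $\chi_{A_n}$ -- is the standard and correct route, and your cocycle identity, Campbell bound, and Fubini rewriting are all right. Two points need tightening, though. First, the identification $\E(F_g\,|\,\sI)=\E_{\mupalm}(f\,|\,\sI)$ is not the conditional expectation of $f$ under the measure $\mupalm$; what you actually prove via Campbell on invariant sets is $\int_I F_g\,\d\P=\int_I f\,\d\mupalm$ for every $I\in\sI$, so the right object is the Radon--Nikodym derivative $\d\bigl(f\mupalm|_{\sI}\bigr)/\d\bigl(\P|_{\sI}\bigr)$. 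This is what the paper's notation $\E_{\mupalm}(f\,|\,\sI)$ must mean for the theorem to be true in the non-ergodic case (a trivial non-ergodic example with $\tau\equiv\mathrm{id}$ and $\mu_\omega$ a random multiple of Lebesgue measure already shows that $\E_{\mupalm}$-conditioning under $\mupalm$ alone gives the wrong answer), and you should say so explicitly rather than claiming it without comment.

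Second, and more seriously, the boundary-layer step is under-specified in a way that would not survive being written out. You say you will ``apply the already-proven portion of the ergodic theorem to $|f|$ on an exhausting sequence of shells'' and invoke ``monotonicity,'' but the shells $(A_n)_R\setminus(A_n)_{-R}$ are not themselves a convex averaging sequence, so Theorem~\ref{thm:Ergodic-Theorem} applies to no quantity supported there, and a pointwise (in $\omega$) vanishing does not follow merely from finiteness of $\int_\Omega|f|\,\d\mupalm$. The correct way to close this gap -- which is very close in spirit to what you gesture at when you suggest working with $f\ge0$ first and splitting $f=f^+-f^-$ -- is to sandwich the sharp cutoff between two smoothed ones built on shrunk and enlarged copies of $A_n$: pick $g\ge0$, $\int g=1$, $\support g\subset\Ball{R}{0}$, and form $\psi_n^{-}:=g*\chi_{(A_n)_{-R}}$ and $\psi_n^{+}:=g*\chi_{(A_n)_{R}}$. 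For $f\ge0$ one then has the pointwise inequalities $\psi_n^{-}\le\chi_{A_n}\le\psi_n^{+}$, and both $(A_n)_{-R}$ and $(A_n)_{R}$ are again convex averaging sequences with $|(A_n)_{\pm R}|/|A_n|\to1$, so the Wiener ergodic theorem applied to the same $F_g$ (now averaged over $(A_n)_{\pm R}$) gives both sandwiching sequences the same limit $\E(F_g\,|\,\sI)$, and you conclude by squeezing. That replaces the vague ``monotonicity gives the almost sure vanishing'' by an actual two-sided comparison of normalized averages. Once the sandwich is in place, the independence of the limit from $g$ is automatic and the split into positive and negative parts handles signed $f$.
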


Given a bounded open (and convex) set $\bQ\subset\Omega$, it is not
hard to see that the following generalization holds:
\begin{thm}[General Ergodic Theorem]
\label{thm:Ergodic-Theorem-ran-meas-2} Let $\left(\Omega,\sF,\P\right)$
be a probability space, $\bQ\subset\Rd$ be a bounded open set with
$0\in\bQ$, let $(\tau_{x})_{x\in\Rd}$ be a dynamical system on $\Omega$
with invariant $\sigma$-algebra $\sI$ and let $f:\,\Omega\to\R$
be measurable with $\int_{\Omega}\left|f\right|\d\mupalm<\infty$.
Then for $\P$-almost all $\omega\in\Omega$ it holds
\begin{equation}
\forall\varphi\in C_{0}(\bQ):\quad n^{-d}\int_{n\bQ}\varphi(\frac{x}{n})f\of{\tau_{x}\omega}\,\d\mu_{\omega}(x)\to\E_{\mupalm}\of{f|\sI}\int_{\bQ}\varphi\,.\label{eq:ergodic convergence ran meas2}
\end{equation}
\end{thm}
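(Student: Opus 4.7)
Set $c := \E_{\mupalm}(f\mid\sI)$ and $T_n^\omega(\varphi) := n^{-d}\int_{n\bQ}\varphi(y/n)\,f(\tau_y\omega)\,\d\mu_\omega(y)$. The plan is a three-step density argument: prove $T_n^\omega(\varphi)\to c\int_\bQ\varphi$ first for indicators of boxes $Q\subset\bQ$, extend by linearity to dyadic step functions on a single full-measure set, and conclude for $\varphi\in C_0(\bQ)$ by uniform approximation. For the required uniform bound, pick $R>0$ with $\bQ\subset B_R(0)$ and apply Theorem~\ref{thm:Ergodic-Theorem-ran-meas} to $|f|$ along the convex averaging sequence $(nB_R(0))_{n\in\N}$; this produces a full-measure set $\Omega_1$ and a constant $C(\omega)<\infty$ such that $n^{-d}\int_{n\bQ}|f(\tau_y\omega)|\,\d\mu_\omega(y)\leq C(\omega)$ for $n\geq n_0(\omega)$, and hence $|T_n^\omega(\varphi)|\leq\|\varphi\|_\infty C(\omega)$.

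The key step is to show $n^{-d}\int_{nQ}f(\tau_y\omega)\,\d\mu_\omega(y)\to c|Q|$ for every axis-aligned open box $Q\subset\bQ$. Since $0$ need not lie in $Q$, I decompose $\chi_Q$ as a pointwise signed sum of indicators of boxes containing $0$ in their interior: in each coordinate this is elementary inclusion-exclusion (enlarge the interval $(a,b)$ to one straddling $0$ and subtract the excess), and tensoring gives $\chi_Q=\sum_{i}\sigma_i\chi_{K_i}$ as an identity on $\Rd$ with each $K_i$ a bounded open box whose interior contains $0$. For each such $K_i$, $(nK_i)_{n\in\N}$ is a convex averaging sequence: convexity and $0\in K_i$ imply $nK_i\subset(n+1)K_i$, and $B_{r_i}(0)\subset K_i$ yields $B_{nr_i}(0)\subset nK_i$ with $nr_i\to\infty$. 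Theorem~\ref{thm:Ergodic-Theorem-ran-meas} therefore gives $n^{-d}\int_{nK_i}f(\tau_y\omega)\,\d\mu_\omega(y)\to c|K_i|$ on a full-measure set, and summing with the signs $\sigma_i$ using the pointwise identity $\chi_{nQ}=\sum_i\sigma_i\chi_{nK_i}$ on $\Rd$ yields the claim.

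Taking a countable family $\{Q_k\}_{k\in\N}$ of dyadic open boxes contained in $\bQ$ and intersecting the corresponding exceptional null sets with $\Omega_1$ produces a single full-measure set $\Omega_0$ on which $T_n^\omega(\psi)\to c\int_\bQ\psi$ for every finite dyadic step function $\psi$ supported in $\bQ$. Given $\varphi\in C_0(\bQ)$ and $\eps>0$, uniform continuity provides such a $\psi_\eps$ with $\|\varphi-\psi_\eps\|_\infty<\eps$, hence $|\int_\bQ(\varphi-\psi_\eps)|\leq\eps|\bQ|$. For $\omega\in\Omega_0$, the triangle inequality together with the bound from the first paragraph yields $\limsup_n|T_n^\omega(\varphi)-c\int_\bQ\varphi|\leq\eps\, C(\omega)+\eps|c|\,|\bQ|$, and letting $\eps\downarrow 0$ concludes the proof. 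The main obstacle is precisely the decomposition in the second paragraph: the paper's definition of convex averaging sequence requires a ball around the origin, so Theorem~\ref{thm:Ergodic-Theorem-ran-meas} cannot be applied directly to $(nQ)_{n\in\N}$ when $0\notin Q$, and the signed-sum trick is what reduces the problem to sets genuinely centered around~$0$.
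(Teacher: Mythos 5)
Your proof is correct and follows the same high-level density-and-Cantor strategy as the paper's sketch, but with a different — and more explicit — approximating family. The paper's proof chooses a countable dense family of $C_0(\bQ)$ functions supported on balls and applies Theorem~\ref{thm:Ergodic-Theorem-ran-meas} to them; but the sketch is silent on the crux, namely that $(n\Ball{r}{x_0})_{n}$ is \emph{not} a convex averaging sequence in the sense of Definition~\ref{def:convex-averaging-sequence} unless $x_0=0$, so the theorem cannot be invoked directly for balls not centered at the origin. You work instead with indicator functions of boxes $Q\subset\bQ$ and resolve precisely that difficulty: the coordinate-wise signed inclusion-exclusion expressing $\chi_Q$ as a finite sum $\sum_i\sigma_i\chi_{K_i}$ of indicators of origin-straddling boxes reduces to the case where Theorem~\ref{thm:Ergodic-Theorem-ran-meas} genuinely applies. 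The remaining steps — the a.s.\ a priori bound obtained by applying Theorem~\ref{thm:Ergodic-Theorem-ran-meas} to $|f|$, the Cantor intersection over a countable dyadic family, and the uniform-approximation step — are standard and carried out correctly. Your version is therefore more complete than the paper's sketch, since it supplies the argument for sets not centered at the origin, which the paper leaves implicit.

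One small technicality worth acknowledging: the one-dimensional decomposition $\chi_{(a,b)}=\chi_{(-\eps,b)}-\chi_{(-\eps,a]}$ for $a>0$ produces \emph{half-open} intervals, so the $K_i$ are half-open boxes rather than open ones. This is harmless, because Definition~\ref{def:convex-averaging-sequence} imposes no topological condition beyond convexity, nesting, and containing $\Ball{r_n}0$, all of which a half-open box with $0$ in its interior satisfies. Equivalently, applying Theorem~\ref{thm:Ergodic-Theorem-ran-meas} to $n\overline{K}$ and to $n\mathring{K}$ and subtracting shows $n^{-d}\int_{n\partial K}|f(\tau_y\omega)|\,\d\mu_\omega(y)\to0$, so boundary contributions are asymptotically negligible. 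You should also take care, when fixing the countable dyadic family $\{Q_k\}$, to pick boxes none of whose faces pass through the origin, which is always possible since $\bQ$ is open and $0\in\bQ$.
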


\begin{proof}[Sketch of proof]
 Chose a countable dense family of functions $\varphi\in C_{0}(\bQ)$
that spans $L^{1}(\bQ)$ and that have support on a ball. Use a Cantor
argument and Theorem \ref{thm:Ergodic-Theorem-ran-meas} to prove
the statement for a countable dense family of $C_{0}(\bQ)$. From
here, we conclude by density.

The last result can be used to prove the most general ergodic theorem
which we will use in this work:
\end{proof}
\begin{thm}[General Ergodic Theorem for the Lebesgue measure]
\label{thm:general-lebesgue-ergodic-thm}Let $\left(\Omega,\sF,\P\right)$
be a probability space, $\bQ\subset\Rd$ be a bounded open set with
$0\in\bQ$, let $(\tau_{x})_{x\in\Rd}$ be a dynamical system on $\Omega$
with invariant $\sigma$-algebra $\sI$ and let $f\in L^{p}(\Omega;\mupalm)$
and $\varphi\in L^{q}(\bQ)$, where $1<p,q<\infty$, $\frac{1}{p}+\frac{1}{q}=1$.
Then for $\P$-almost all $\omega\in\Omega$ it holds
\[
n^{-d}\int_{n\bQ}\varphi(\frac{x}{n})f\of{\tau_{x}\omega}\,\d x\to\E\of f\int_{\bQ}\varphi\,.
\]
\end{thm}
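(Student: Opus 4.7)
The plan is to bootstrap from Theorem \ref{thm:Ergodic-Theorem-ran-meas-2}, which already gives the result for test functions $\psi\in C_{0}(\bQ)$ and $f\in L^{1}(\Omega;\mupalm)$, to the desired endpoint $L^{p}\times L^{q}$ by a density argument. The crucial observation is that in the Lebesgue setting one has $\mu_{\omega}=\lebesgueL$ and $\mupalm=\P$, so $\P$ is finite and $L^{p}(\Omega;\P)\subset L^{1}(\Omega;\P)$; hence Theorem \ref{thm:Ergodic-Theorem-ran-meas-2} applies directly both to $f$ and to $|f|^{p}$. I read the target value as $\E\of{f|\sI}\int_{\bQ}\varphi$ (the statement's $\E\of f$ is the ergodic case $\sI$ trivial).

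First I would invoke Theorem \ref{thm:Ergodic-Theorem-ran-meas-2} twice. Applied to $f$, it yields a $\P$-full-measure set $\Omega_{1}$ such that for every $\omega\in\Omega_{1}$ and every $\psi\in C_{0}(\bQ)$,
\[
n^{-d}\int_{n\bQ}\psi\of{\tfrac{x}{n}}f\of{\tau_{x}\omega}\,\d x\;\to\;\E\of{f|\sI}\of{\omega}\int_{\bQ}\psi\,.
\]
Applied to $|f|^{p}\in L^{1}(\Omega;\P)$ (using a $C_{0}(\bQ)$ bump that exhausts $\bQ$), it yields a full-measure set $\Omega_{2}$ on which
\[
n^{-d}\int_{n\bQ}\left|f\of{\tau_{x}\omega}\right|^{p}\d x\;\to\;\E\of{|f|^{p}\bigm|\sI}\of{\omega}\,\left|\bQ\right|\,<\,\infty\,,
\]
so in particular $C(\omega):=\sup_{n}n^{-d/p}\norm{f(\tau_{\cdot}\omega)}_{L^{p}(n\bQ)}<\infty$ on $\Omega_{2}$. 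This uniform bound is the engine of the whole argument.

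Next, fix $\omega\in\Omega_{1}\cap\Omega_{2}$, let $\varphi\in L^{q}(\bQ)$ and $\eps>0$, and pick $\psi\in C_{0}(\bQ)$ with $\norm{\varphi-\psi}_{L^{q}(\bQ)}<\eps$ (density). The rescaling $\norm{(\varphi-\psi)(\cdot/n)}_{L^{q}(n\bQ)}=n^{d/q}\norm{\varphi-\psi}_{L^{q}(\bQ)}$ combined with Hölder's inequality and $\tfrac{1}{p}+\tfrac{1}{q}=1$ gives
\[
\left|n^{-d}\int_{n\bQ}(\varphi-\psi)\of{\tfrac{x}{n}}f\of{\tau_{x}\omega}\,\d x\right|\leq n^{-d+d/q+d/p}\norm{\varphi-\psi}_{L^{q}(\bQ)}C(\omega)=\norm{\varphi-\psi}_{L^{q}(\bQ)}C(\omega)\,.
\]
A triangle inequality split then bounds $\bigl|n^{-d}\int_{n\bQ}\varphi(x/n)f(\tau_{x}\omega)\d x-\E(f|\sI)(\omega)\int_{\bQ}\varphi\bigr|$ by the three terms (i) $\eps C(\omega)$, (ii) the error on the approximant $\psi$ that vanishes as $n\to\infty$ by Step 1, and (iii) $\left|\E\of{f|\sI}\of{\omega}\right|\left|\bQ\right|^{1/p}\eps$. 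Sending first $n\to\infty$ and then $\eps\to0$ concludes the proof.

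The only real obstacle is Step 1's uniform $L^{p}$-bound $C(\omega)$; without it the density trick in Step 3 cannot absorb the change of variables factor $n^{d/q}$. Once one realizes that applying the Lebesgue ergodic theorem to $|f|^{p}$ itself produces the needed bound, the remainder is a routine $\eps/3$-argument; measurability and the full-measure intersection $\Omega_{1}\cap\Omega_{2}$ cause no difficulty because only a countable dense family of $\psi$'s is actually needed, which can be chosen once and for all before fixing $\omega$.
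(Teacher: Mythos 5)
Your proof is correct and follows essentially the same route as the paper: approximate $\varphi$ in $L^{q}(\bQ)$ by a continuous (compactly supported) function, split into three terms by the triangle inequality, control the approximation error via Hölder and the a.s.\ uniform bound $\sup_{n}n^{-d}\int_{n\bQ}|f(\tau_{x}\omega)|^{p}\,\d x<\infty$ obtained by applying the ergodic theorem to $|f|^{p}\in L^{1}$, and use Theorem~\ref{thm:Ergodic-Theorem-ran-meas-2} on the approximant. The only cosmetic caveat is that to get the uniform bound you need to majorize $\chi_{\bQ}$ from above (e.g.\ by a $C_{0}$ cutoff supported in a ball $\Ball{R}{0}\supset\bQ$, or use the convex averaging sequence $n\Ball{R}{0}$), rather than exhaust $\bQ$ from below as you phrased it — but this is exactly what the paper also leaves implicit in its first term.
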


\begin{proof}
Let $\varphi_{\delta}\in C(\overline{\bQ})$ with $\norm{\varphi-\varphi_{\delta}}_{L^{q}(\bQ)}<\delta$.
Then
\begin{align*}
 & \left|n^{-d}\int_{n\bQ}\varphi(\frac{x}{n})f\of{\tau_{x}\omega}\,\d x-\E\of f\int_{\bQ}\varphi\right|\\
 & \qquad\qquad\leq\norm{\varphi-\varphi_{\delta}}_{L^{q}(\bQ)}\left(n^{-d}\int_{n\bQ}\left|f\of{\tau_{x}\omega}\right|^{p}\,\d x\right)^{\frac{1}{p}}\\
 & \qquad\qquad\quad+\left|n^{-d}\int_{n\bQ}\varphi_{\delta}(x)f\of{\tau_{x}\omega}\,\d x-\E\of f\int_{\bQ}\varphi_{\delta}\right|+\E_{\mupalm}\of{f|\sI}\int_{\bQ}\left|\varphi-\varphi_{\delta}\right|\,,
\end{align*}
which implies the claim.
\end{proof}

\subsection{Random Sets}

The theory of random measures and the theory of random geometry are
closely related. In what follows, we recapitulate those results that
are important in the context of the theory developed below and shed
some light on the correlations between random sets and random measures.

Let $\closedsets(\Rd)$ denote the set of all closed sets in $\Rd$.
We write \nomenclature[F]{$\closedsets_{V}$, $\closedsets^{K}$, $(\closedsets(\Rd),\ttopology F)$}{(Equations \eqref{eq:closedsets-1}, \eqref{eq:closedsets-2}) }
\begin{eqnarray}
\closedsets_{V}:= & \left\{ F\in\closedsets(\Rd)\,:\;F\cap V\not=\emptyset\right\}  & \mbox{if }V\subset\Rd\quad\textnormal{is an open set}\,,\label{eq:closedsets-1}\\
\closedsets^{K}:= & \left\{ F\in\closedsets(\Rd)\,:\;F\cap K=\emptyset\right\}  & \mbox{if }K\subset\Rd\quad\textnormal{is a compact set}\,.\label{eq:closedsets-2}
\end{eqnarray}
The \emph{Fell-topology} $\ttopology F$ is created by all sets $\closedsets_{V}$
and $\closedsets^{K}$ and the topological space $(\closedsets(\Rd),\ttopology F)$
is compact, Hausdorff and separable\cite{Matheron1975}.
\begin{rem}
\label{rem:char-fell}We find for closed sets $F_{n},F$ in $\Rd$
that $F_{n}\to F$ if and only if \cite{Matheron1975}
\begin{enumerate}
\item for every $x\in F$ there exists $x_{n}\in F_{n}$ such that $x=\lim_{n\to\infty}x_{n}$
and
\item if $F_{n_{k}}$ is a subsequence, then every convergent sequence $x_{n_{k}}$
with $x_{n_{k}}\in F_{n_{k}}$ satisfies $\lim_{k\to\infty}x_{n_{k}}\in F$.
\end{enumerate}
\end{rem}

If we restrict the Fell-topology to the compact sets $\mathfrak{K}(\Rd)$
it is equivalent with the Hausdorff topology given by the Hausdorff
distance
\[
\d\of{A,B}=\max\left\{ \sup_{y\in B}\inf_{x\in A}\left|x-y\right|\,,\,\,\sup_{x\in A}\inf_{y\in B}\left|x-y\right|\right\} \,.
\]

\begin{rem}
For $A\subset\Rd$ closed, the set 
\[
\closedsets(A):=\left\{ F\in\closedsets(\Rd)\,:\;F\subset A\right\} 
\]
 is a closed subspace of $\closedsets\of{\Rd}$. This holds since
\[
\closedsets\of{\Rd}\backslash\closedsets\of A=\left\{ B\in\closedsets\of{\Rd}\,:\;B\cap\left(\Rd\backslash A\right)\not=\emptyset\right\} =\closedsets_{\Rd\backslash A}\quad\text{is open.}
\]
.
\end{rem}

\begin{lem}[Continuity of geometric operations]
\label{lem:contin-geom-Ops}The maps $\tau_{x}:\,A\mapsto A+x$ and
$b_{\delta}:\,A\mapsto\overline{\Ball{\delta}A}$ are continuous in
$\closedsets\left(\Rd\right)$.
\end{lem}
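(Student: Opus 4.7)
The plan is to verify continuity by checking that the preimages of the subbasic sets $\closedsets_V$ (with $V\subset\Rd$ open) and $\closedsets^K$ (with $K\subset\Rd$ compact) generating the Fell topology are themselves open. This is the cleanest route because it bypasses the need to argue via sequences (although, $(\closedsets(\Rd),\ttopology F)$ being second countable, the sequential characterization of Remark~\ref{rem:char-fell} would give an alternative).

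For $\tau_x$ the computation is immediate: using $(A+x)\cap V\neq\emptyset\Leftrightarrow A\cap(V-x)\neq\emptyset$ and the analogous equivalence for intersections with $K$, one obtains
\[
\tau_x^{-1}(\closedsets_V)=\closedsets_{V-x}\,,\qquad \tau_x^{-1}(\closedsets^K)=\closedsets^{K-x}\,.
\]
Since $V-x$ is open and $K-x$ is compact, both preimages are open in $(\closedsets(\Rd),\ttopology F)$, and $\tau_x$ is continuous.

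For $b_\delta$ the key observation is that for closed $A\subset\Rd$ one has the identity $\overline{\Ball{\delta}{A}}=\{y\in\Rd:\dist(y,A)\leq\delta\}$, and consequently, for any set $S\subset\Rd$,
\[
\overline{\Ball{\delta}{A}}\cap S\neq\emptyset \quad\Longleftrightarrow\quad A\cap\bigl(S+\overline{\Ball{\delta}{0}}\bigr)\neq\emptyset\,.
\]
Applying this with $S=V$ open and $S=K$ compact gives
\[
b_\delta^{-1}(\closedsets_V)=\closedsets_{V+\overline{\Ball{\delta}{0}}}\,,\qquad b_\delta^{-1}(\closedsets^K)=\closedsets^{K+\overline{\Ball{\delta}{0}}}\,.
\]
Now $V+\overline{\Ball{\delta}{0}}=\bigcup_{y\in\overline{\Ball{\delta}{0}}}(V+y)$ is a union of open sets, hence open; and $K+\overline{\Ball{\delta}{0}}$ is the sum of two compact sets in $\Rd$, hence compact. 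Therefore both preimages are again open in the Fell topology, proving continuity of $b_\delta$.

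The only non-cosmetic step is the identity $\overline{\Ball{\delta}{A}}=\{y:\dist(y,A)\leq\delta\}$ for closed $A$: the inclusion ``$\subset$'' is trivial by continuity of $y\mapsto\dist(y,A)$, while for ``$\supset$'' one needs, for any $y$ with $\dist(y,A)=\delta$, a sequence $y_n\to y$ with $\dist(y_n,A)<\delta$. Since $A\cap\overline{\Ball{2\delta}{y}}$ is compact (if $A\neq\emptyset$), the distance is attained at some $a\in A$, and the points $y_n:=y+\tfrac{1}{n}(a-y)$ do the job; the case $A=\emptyset$ is vacuous. This is the main (and only mildly subtle) obstacle; the remainder is set-theoretic bookkeeping.
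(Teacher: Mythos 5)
Your proof is correct and follows the same strategy as the paper: show that preimages of the subbasic Fell sets $\closedsets_V$ and $\closedsets^K$ are again of this form. You are somewhat more careful than the paper — in particular, you actually justify the identity $\overline{\Ball{\delta}{A}}=\{y:\dist(y,A)\leq\delta\}$ for closed $A$ (using local compactness of $\Rd$), whereas the paper silently passes from $\overline{\Ball{\delta}{F}}\cap V\neq\emptyset$ to $F\cap\Ball{\delta}{V}\neq\emptyset$; your Minkowski-sum formulation $V+\overline{\Ball{\delta}{0}}$ is equivalent to the paper's $\Ball{\delta}{V}$ at the level of the resulting $\closedsets$-sets.
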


\begin{proof}
We show that preimages of open sets are open. For open sets $V$ we
find
\begin{align*}
\tau_{x}^{-1}\left(\closedsets_{V}\right) & =\left\{ F\in\closedsets(\Rd)\,:\;\tau_{x}F\cap V\not=\emptyset\right\} =\left\{ F\in\closedsets(\Rd)\,:\;F\cap\tau_{-x}V\not=\emptyset\right\} =\closedsets_{\tau_{-x}V}\,,\\
b_{\delta}^{-1}\left(\closedsets_{V}\right) & =\left\{ F\in\closedsets(\Rd)\,:\;\overline{\Ball{\delta}F}\cap V\not=\emptyset\right\} =\left\{ F\in\closedsets(\Rd)\,:\;F\cap\Ball{\delta}V\not=\emptyset\right\} =\closedsets_{\left(b_{\delta}V\right)^{\circ}}\,.
\end{align*}
The calculations for $\tau_{x}^{-1}\of{\closedsets^{K}}=\closedsets^{\tau_{-x}K}$
and $b_{\delta}^{-1}\of{\closedsets^{K}}=\closedsets^{b_{\delta}K}$
are analogue.
\end{proof}
\begin{rem}
\label{rem:The-Matheron--field}The \emph{Matheron-$\sigma$-field
$\sigma_{\closedsets}$} is the Borel-$\sigma$-algebra of the Fell-topology
and is fully characterized either by the class $\closedsets_{V}$
of $\closedsets^{K}$. 
\end{rem}

\begin{defn}[Random closed / open set according to Choquet (see \cite{Matheron1975}
for more details)]
\label{def:RACS}\nomenclature[Random closed sets]{Random closed sets}{(Definition \ref{def:RACS})}
\end{defn}

\begin{enumerate}
\item [a)] Let $(\Omega,\sigma,\P)$ be a probability space. Then a \emph{Random
Closed Set (RACS)} is a measurable mapping 
\[
A:(\Omega,\sigma,\P)\longrightarrow(\closedsets,\sigma_{\closedsets})
\]
\item [b)]Let $\tau_{x}$ be a dynamical system on $\Omega$. A random
closed set is called stationary if its characteristic functions $\chi_{A(\omega)}$
are stationary, i.e. they satisfy $\chi_{A(\omega)}(x)=\chi_{A(\tau_{x}\omega)}(0)$
for almost every $\omega\in\Omega$ for almost all $x\in\Rd$. Two
random sets are jointly stationary if they can be parameterized by
the same probability space such that they are both stationary.
\item [c)] A random closed set $\Gamma:(\Omega,\sigma,P)\longrightarrow(\closedsets,\sigma_{\closedsets})\quad\omega\mapsto\Gamma(\omega)$
is called a \emph{Random closed $C^{k}$-Manifold} if $\Gamma(\omega)$
is a piece-wise $C^{k}$-manifold for P almost every $\omega$.
\item [d)] A measurable mapping 
\[
A:(\Omega,\sigma,\P)\longrightarrow(\closedsets,\sigma_{\closedsets})
\]
is called \emph{Random Open Set (RAOS)} if $\omega\mapsto\Rd\backslash A(\omega)$
is a RACS.
\end{enumerate}
The importance of the concept of random geometries for stochastic
homogenization stems from the following Lemma by Zähle. It states
that every random closed set induces a random measure. Thus, every
stationary RACS induces a stationary random measure.
\begin{lem}[\cite{Zaehle1982} Theorem 2.1.3 resp. Corollary 2.1.5]
\label{lemmazaehlerandommeasure}Let $\closedsets_{m}\subset\closedsets$
be the space of closed m-dimensional sub manifolds of $\Rd$ such
that the corresponding Hausdorff measure is locally finite. Then,
the $\sigma$-algebra $\sigma_{\closedsets}\cap\closedsets_{m}$ is
the smallest such that 
\[
M_{B}:\closedsets_{m}\rightarrow\R\quad M\mapsto\mathcal{H}^{m}(M\cap B)
\]
 is measurable for every measurable and bounded $B\subset\Rd$.
\end{lem}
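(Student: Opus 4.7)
The plan is to establish the equality $\sigma_{\mathcal H}=\sigma_{\closedsets}\cap\closedsets_m$, where $\sigma_{\mathcal H}$ denotes the $\sigma$-algebra on $\closedsets_m$ generated by the family $\{M_B\}$ as $B$ ranges over bounded Borel subsets of $\Rd$. I would prove the two inclusions separately.

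For the inclusion $\sigma_{\mathcal H}\supset\sigma_{\closedsets}\cap\closedsets_m$, I rely on Remark~\ref{rem:The-Matheron--field}: $\sigma_{\closedsets}$ is generated by the subbase $\{\closedsets_V:V\subset\Rd\text{ open and bounded}\}$. The key observation is that for each such $V$ and every $M\in\closedsets_m$,
\[
M\cap V\neq\emptyset\;\Longleftrightarrow\;\mathcal H^m(M\cap V)>0,
\]
since any $x\in M\cap V$ admits a local $C^k$-chart of $M$ producing a relatively open $m$-disk inside $M\cap V$ of strictly positive $\mathcal H^m$-measure, while the converse implication is trivial. Hence $\closedsets_V\cap\closedsets_m=M_V^{-1}((0,\infty))\in\sigma_{\mathcal H}$, which settles this inclusion.

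For the reverse inclusion I fix a large bounded open $Q\subset\Rd$ and consider the collection $\mathcal L_Q:=\{B\subset Q\text{ Borel}:M_B\text{ is }\sigma_\closedsets\text{-measurable}\}$. Since for each $M\in\closedsets_m$ the set-function $\mathcal H^m(M\cap\cdot)$ is a finite Borel measure on $Q$, $\mathcal L_Q$ is a Dynkin system (closed under proper differences and under monotone countable limits of indicator-type approximations). By the $\pi$-$\lambda$ theorem it suffices to verify that $M_V$ is measurable for every bounded open $V\subset Q$. I would do this by proving lower semi-continuity of $M\mapsto\mathcal H^m(M\cap V)$ with respect to the Fell topology on $\closedsets_m$: if $M_n\to M$, then $\mathcal H^m(M\cap V)\le\liminf_n\mathcal H^m(M_n\cap V)$. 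The input is Remark~\ref{rem:char-fell}, which provides for each $x\in M\cap V$ a sequence $x_n\in M_n$ with $x_n\to x$; openness of $V$ forces $x_n\in V$ for large $n$. Combining this Kuratowski recovery with the cover-infimum definition of $\mathcal H^m_\delta$ and the monotone limit $\delta\to 0$ yields the desired bound, and Borel-measurability of lower semi-continuous functions finishes the step. Exhausting $\Rd$ by an increasing sequence of such $Q$ concludes the inclusion.

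The main obstacle is the lower semi-continuity step. Outside the class $\closedsets_m$ it would fail, as Fell limits of arbitrary closed sets may lose or gain $\mathcal H^m$-mass in uncontrolled ways (e.g.\ rapidly oscillating curves converging to a segment). Even inside $\closedsets_m$ one must use the $m$-manifold structure of both $M$ and $M_n$ in the transfer of near-optimal $\delta$-covers, controlling the error introduced by the $x_n\to x$ recovery through the local uniform flatness of the manifolds. These details are precisely the technical content of Z\"ahle's original proof in \cite{Zaehle1982}, which I would invoke rather than reproduce.
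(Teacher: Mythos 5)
The paper itself offers no proof of this lemma; it is cited verbatim from Z\"ahle \cite{Zaehle1982}, so there is no internal argument to compare against. Your forward inclusion $\sigma_{\closedsets}\cap\closedsets_m\subset\sigma_{\mathcal H}$ is sound, as is the Dynkin-system reduction of the converse to the case of bounded open $V$. But the mechanism you propose for that remaining case --- lower semi-continuity of $M\mapsto\mathcal H^m(M\cap V)$ in the Fell topology on $\closedsets_m$ --- is false, even on the subclass of compact, boundaryless, $C^\infty$ $m$-submanifolds, so the closing hedge that the technicalities live in Z\"ahle does not repair the argument: the intended mechanism is wrong, not merely unfinished. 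For a counterexample take $d=2$, $m=1$, $V=\Ball 2 0$, $M=\S^1$, and $M_n$ the disjoint union of $n$ circles of radius $n^{-2}$ centred at the $n$-th roots of unity. Each $M_n$ is a compact $C^\infty$ $1$-submanifold with finite $\mathcal H^1$, and $M_n\to M$ in the sense of Remark~\ref{rem:char-fell} (every point of $\S^1$ is a limit of centres; any convergent selection from the $M_n$ converges into $\S^1$), yet $\mathcal H^1(M_n\cap V)=2\pi/n\to 0<2\pi=\mathcal H^1(M\cap V)$. Kuratowski recovery of points carries no control over recovery of $\mathcal H^m$-mass: the approximating manifolds can carry arbitrarily little measure near any prescribed limit point, regardless of their local smoothness or flatness. (Upper semi-continuity fails as well --- graphs $y=n^{-1}\sin(n^2x)$ squeeze onto a segment with unbounded length --- so the map is genuinely only Borel, not semi-continuous in either direction.)

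A workable repair replaces lower semi-continuity of $\mathcal H^m$ by upper semi-continuity of the premeasures. For fixed $\delta>0$ and compact $K$, the map $M\mapsto\mathcal H^m_\delta(M\cap K)$ is upper semi-continuous on $\closedsets$: given a finite open $\delta$-cover $\{U_i\}$ of the compact set $M\cap K$, the set $K':=K\setminus\bigcup_i U_i$ is compact with $M\in\closedsets^{K'}$, which is an open neighbourhood of $M$ in the Fell topology; hence $M_n\cap K\subset\bigcup_i U_i$ for large $n$, giving $\limsup_n\mathcal H^m_\delta(M_n\cap K)\le\mathcal H^m_\delta(M\cap K)$. Since $\mathcal H^m(M\cap K)=\sup_{j\in\N}\mathcal H^m_{1/j}(M\cap K)$ is a countable supremum of upper semi-continuous (hence Borel) functions, $M_K$ is Borel; one then runs exactly the Dynkin-system argument you already set up, with compact sets as the generating $\pi$-system, to reach arbitrary bounded Borel $B$.
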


This means that 
\[
M_{\Rd}:\closedsets_{m}\rightarrow\fM(\Rd)\quad M\mapsto\mathcal{H}^{m}(M\cap\cdot)
\]
is measurable with respect to the $\sigma$-algebra created by the
Vague topology on $\fM(\Rd)$. Hence a random closed set always induces
a random measure. Based on Lemma \ref{lemmazaehlerandommeasure} and
on Palm-theory, the following useful result was obtained in \cite{heida2011extension}
(See Lemma 2.14 and Section 3.1 therein). We can thus assume w.l.o.g
that $\Omega$ is a separable metric space.
\begin{thm}
\label{thm:Main-THM-Sto-Geo}Let $(\Omega,\sigma,P)$ be a probability
space with an ergodic dynamical system $\tau$. Let $A:(\Omega,\sigma,P)\longrightarrow(\closedsets,\sigma_{\closedsets})$
be a stationary random closed $m$-dimensional $C^{k}$-Manifold.

There exists a separable metric space $\tilde{\Omega}\subset\fM\of{\Rd}$
with an ergodic dynamical system $\tilde{\tau}$ and a mapping $\tilde{A}:\,(\tilde{\Omega},\cB_{\tilde{\Omega}},\P)\to(\closedsets,\sigma_{\closedsets})$
such that $A$ and $\tilde{A}$ have the same law and such that $\tilde{A}$
still is stationary. Furthermore, $(x,\omega)\mapsto\tau_{x}\omega$
is continuous. We identify $\tilde{\Omega}=\Omega$, $\tilde{A}=A$
and $\tilde{\tau}=\tau$.
\end{thm}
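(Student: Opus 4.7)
The plan is to push the entire probabilistic structure forward onto the space of measures via the Hausdorff measure induced by the manifold, exploiting that $\fM(\Rd)$ with the vague topology is already a separable, completely metrizable space, and then to recover the manifold from the limit object.

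First, I would use Lemma \ref{lemmazaehlerandommeasure} to associate to the stationary random manifold $A$ the random measure $\mu_\omega := \cH^m(A(\omega) \cap \cdot) \in \fM(\Rd)$. The mapping $\Phi:\Omega \to \fM(\Rd)$, $\omega \mapsto \mu_\omega$, is measurable by Z\"ahle's lemma. Since $A$ is stationary, $\mu_\omega$ is a stationary random measure in the sense that $\mu_{\tau_x\omega}(B) = \mu_\omega(B+x)$ almost surely for all Borel $B$. Define $\tilde\Omega \subset \fM(\Rd)$ as the topological support of the image law, i.e.\ the smallest closed subset of full measure under $\Phi_\#\P$, and let $\tilde\P := \Phi_\#\P$ restricted to $\tilde\Omega$. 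Because the vague topology is separable and metrizable, $\tilde\Omega$ inherits these properties together with the Borel $\sigma$-algebra $\cB_{\tilde\Omega}$.

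Next, I would equip $\tilde\Omega$ with the canonical shift $\tilde\tau_x\mu := \mu(\cdot + x)$. Continuity of $(x,\mu)\mapsto\tilde\tau_x\mu$ in the vague topology is immediate from the definition of vague convergence together with the continuity of translation on $C_c(\Rd)$, and measure-preservation of $\tilde\tau_x$ under $\tilde\P$ follows from stationarity of $\mu_\omega$ and the identity $\Phi\circ\tau_x = \tilde\tau_x\circ\Phi$ that holds $\P$-a.s.; this intertwining identity also transports ergodicity of $\tau$ to $\tilde\tau$. Hence $(\tilde\Omega,\cB_{\tilde\Omega},\tilde\P,\tilde\tau)$ is an ergodic metric dynamical system with continuous action, which is exactly what is claimed.

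It remains to produce $\tilde A:\tilde\Omega\to\closedsets$ with the same law as $A$. Here I would define $\tilde A(\mu):=\support\mu$; for $\mu = \cH^m\lfloor A(\omega)$ this recovers $A(\omega)$ up to an $\cH^m$-null set (which, for a piecewise $C^k$-manifold, determines the underlying closed set). Measurability $\tilde A:(\tilde\Omega,\cB_{\tilde\Omega})\to(\closedsets,\sigma_\closedsets)$ is verified on the generating family $\closedsets^K$ and $\closedsets_V$ of Remark \ref{rem:The-Matheron--field} by checking that $\{\mu:\support\mu\cap K=\emptyset\}$ and $\{\mu:\support\mu\cap V\neq\emptyset\}$ are Borel in $\fM(\Rd)$, which follows from the vague continuity of $\mu\mapsto\mu(U)$ for open $U$ via monotone class arguments. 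By construction $\tilde A\circ\Phi = A$ almost surely, so $\tilde A$ and $A$ have the same law and $\tilde A$ is stationary with respect to $\tilde\tau$.

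The main technical obstacle is the last step, namely verifying that taking the support is a measurable operation from $(\fM(\Rd),\text{vague})$ into $(\closedsets(\Rd),\text{Fell})$, and that the composition $\tilde A\circ\Phi$ indeed agrees with $A$ pathwise (not only in law). The first is a standard but somewhat delicate descriptive-set-theoretic check using the characterization of Fell convergence in Remark \ref{rem:char-fell}; the second uses that the Hausdorff measure of a piecewise $C^k$-manifold determines the manifold as a closed set, so that no information is lost when passing from $A(\omega)$ to $\mu_\omega$. With these points settled, identifying $\tilde\Omega$, $\tilde A$, $\tilde\tau$ with $\Omega$, $A$, $\tau$ completes the proof as stated.
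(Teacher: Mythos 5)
The paper gives no proof of Theorem \ref{thm:Main-THM-Sto-Geo}; it imports it wholesale from \cite{heida2011extension} (Lemma 2.14 and Section 3.1 there), noting only that the argument rests on Z\"ahle's Lemma \ref{lemmazaehlerandommeasure} and Palm theory. Your proposal reconstructs a proof along exactly the route the paper indicates: use Z\"ahle's lemma to pass to the random measure $\mu_\omega=\cH^m\lfloor A(\omega)$, push $\P$ and the dynamical system forward to $\fM(\Rd)$ with the vague topology (indeed a Polish space), transport shift-invariance and ergodicity through the intertwining $\Phi\circ\tau_x=\tilde\tau_x\circ\Phi$, and recover $A(\omega)$ as $\support\mu_\omega$. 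That recovery step is sound: for an $m$-dimensional piecewise $C^k$-manifold, every nonempty relatively open piece has positive $\cH^m$-measure, so $\support(\cH^m\lfloor A(\omega))=A(\omega)$ exactly, not merely up to a null set, and your parenthetical can be sharpened accordingly. Two small corrections to the wording rather than to the substance: first, $\mu\mapsto\mu(U)$ is \emph{lower semicontinuous}, not continuous, in the vague topology for open $U$ (and upper semicontinuous for compact sets) --- this is what you actually need and is enough for the Borel measurability of $\tilde A$ on the generators $\closedsets_V$ and $\closedsets^K$; second, the intertwining identity holds for each fixed $x$ on a $\P$-full set, so to conclude the group property and ergodicity one should pass to a jointly measurable version and invoke a Fubini argument --- standard, but worth stating. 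Finally, you bypass Palm theory entirely; from the paper's text this appears to be an admissible simplification, since the statement only asks for equality of law and for the transport of ergodicity and continuity of the action, all of which your intertwining and push-forward supply directly.
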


Also the following result will be useful below.
\begin{lem}
\label{lem:lower-semi-cont-measures}Let $\mu$ be a Radon measure
on $\Rd$ and let $\bQ\subset\Rd$ be a bounded open set. Let $\closedsets_{0}\subset\closedsets\left(\overline{\bQ}\right)$
be such that $\closedsets_{0}\to\R$, $A\mapsto\mu(A)$ is continuous.
Then 
\[
m:\,\closedsets\times\closedsets_{0}\to\fM\of{\Rd}\,,\quad\left(P,B\right)\mapsto\begin{cases}
A\mapsto\mu\of{A\cap B} & B\subset P\\
0 & \text{else}
\end{cases}
\]
is measurable.
\end{lem}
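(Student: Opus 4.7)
The plan is to reduce measurability of $m$ to measurability of its evaluation pairings. Since the Borel $\sigma$-algebra on $\fM(\Rd)$ associated with the vague topology is the smallest one making every map $\nu \mapsto \int g\,d\nu$ Borel for $g \in C_c(\Rd)$ (as recalled in Section \ref{subsec:Random-measures-and}), it suffices to prove that for each such $g$ the map
\[
(P,B) \;\mapsto\; \int g\,d\,m(P,B) \;=\; \chi_{\{B \subset P\}}\,\int_B g\,d\mu
\]
is Borel on $\closedsets \times \closedsets_{0}$. I would treat the indicator factor and the integral factor separately.

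For the indicator factor, I would show that $\cS := \{(P,B) \in \closedsets \times \closedsets_{0} : B \subset P\}$ is closed in the product of Fell topologies by exhibiting its complement as a union of basic open rectangles. Fix a countable basis $(U_{k})_{k \in \N}$ of $\Rd$ consisting of open balls with compact closure. If $(P,B) \notin \cS$, pick $x \in B \setminus P$; since $P^{c}$ is open, some $U_{k}$ satisfies $x \in U_{k}$ and $\overline{U_{k}} \subset P^{c}$. Conversely, any $(P,B)$ with $B \cap U_{k} \neq \emptyset$ and $P \cap \overline{U_{k}} = \emptyset$ lies in $\cS^{c}$. Hence
\[
\cS^{c} \;=\; \bigcup_{k \in \N} \closedsets^{\overline{U_{k}}} \times \bigl(\closedsets_{U_{k}} \cap \closedsets_{0}\bigr),
\]
a union of products of Fell-open sets (recall $\closedsets^{K}$ is open for compact $K$ and $\closedsets_{V}$ is open for open $V$). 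Hence $\cS$ is closed and $\chi_{\cS}$ is Borel.

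For the integral factor, I would split $g = g_{+} - g_{-}$ and reduce to $g \geq 0$. Then $\nu_{g} := g\,\mu$ is a finite positive Radon measure concentrated in $\support g \cap \overline{\bQ}$, and the task becomes showing that $B \mapsto \nu_{g}(B)$ is Borel on $\closedsets(\overline{\bQ})$. The clean route is upper semi-continuity: given $\eps > 0$ and closed $B \subset \overline{\bQ}$, outer regularity of $\nu_{g}$ supplies an open $U \supset B$ with $\nu_{g}(U) < \nu_{g}(B) + \eps$; compactness of $B$ then yields $\delta > 0$ with $\overline{\Ball{\delta}{B}} \subset U$, and any $B'$ close enough to $B$ in the Fell topology satisfies $B' \subset \overline{\Ball{\delta}{B}}$, giving $\nu_{g}(B') < \nu_{g}(B) + \eps$. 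Upper semi-continuity implies Borel measurability, and the product with the indicator factor $\chi_{\cS}$ yields the claim.

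The only delicate point is the covering argument for the indicator factor; all the remaining ingredients are standard. It is worth noting that the continuity hypothesis on $A \mapsto \mu(A)$ is actually stronger than required for measurability (upper semi-continuity alone suffices), but it guarantees the cleaner companion statement that $B \mapsto m(P,B)$ is vaguely continuous on the relatively open subset $\{B \in \closedsets_{0} : B \subset P\}$, which is presumably what subsequent applications of the lemma rely on.
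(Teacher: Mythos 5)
Your proof is correct and takes a genuinely different route from the paper. The paper shows $m_{f}$ is upper semicontinuous directly for $f\geq0$ by a case analysis along a Fell-convergent sequence $(P_{n},B_{n})\to(P,B)$: it uses Remark~\ref{rem:char-fell}.2 to pass the inclusion $B_{n}\subset P_{n}$ to the limit and the continuity hypothesis $\mu(B_{n})\to\mu(B)$ to conclude $\int_{B_{n}}f\,\d\mu\to\int_{B}f\,\d\mu$ (the displayed bound $\norm f_{\infty}\left|\mu(B_{n})-\mu(B)\right|$ is imprecise --- one really needs $\mu(B_{n}\triangle B)\to0$ --- but that is recoverable from the continuity hypothesis together with Hausdorff convergence and outer regularity). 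You instead factor $m_{g}(P,B)=\chi_{\cS}(P,B)\cdot\int_{B}g\,\d\mu$, prove $\cS=\left\{(P,B):B\subset P\right\}$ closed in the product Fell topology by exhibiting its complement as a countable union of basic open rectangles, and prove upper semicontinuity of $B\mapsto\int_{B}g\,\d\mu$ for $g\geq0$ using only outer regularity of the finite measure $g\mu$ and the Hausdorff description of Fell convergence on compacts. The mathematical content overlaps --- closedness of $\cS$ is exactly what the paper's appeal to Remark~\ref{rem:char-fell}.2 supplies in disguise --- but your decomposition is more modular and, as you correctly observe, never invokes the continuity hypothesis on $\mu|_{\closedsets_{0}}$, a small but genuine sharpening of the stated hypotheses. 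If you wished to recover the paper's stronger assertion that $m_{f}$ is itself upper semicontinuous for $f\geq0$, just note that a product of nonnegative bounded upper semicontinuous functions is again upper semicontinuous, so your factorization delivers that too.
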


\begin{proof}
For $f\in C_{c}(\Rd)$ we introduce $m_{f}$ through 
\[
m_{f}:\,\left(P,B\right)\mapsto\begin{cases}
\int_{B}f\,\d\mu & B\subset P\\
0 & \text{else}
\end{cases}
\]
and observe that $m$ is measurable if and only if for every $f\in C_{c}\of{\Rd}$
the map $m_{f}$ is measurable (see Section \ref{subsec:Random-measures-and}).
Hence, if we prove the latter property, the lemma is proved.

We assume $f\geq0$ and we show that the mapping $m_{f}$ is even
upper continuous. In particular, let $(P_{n},B_{n})\to(P,B)$ in $\closedsets\times\closedsets_{0}$
and assume that $B_{n}\subset P_{n}$ for all $n>N_{0}$. Since $\overline{\bQ}$
is compact, Remark \ref{rem:char-fell}.~2. implies that $B\subset P\cap\overline{\bQ}$.
Furthermore, since $f$ has compact support, we find $\left|\int_{B_{n}}f\,\d\mu-\int_{B}f\,\d\mu\right|\leq\norm f_{\infty}\left|\mu\of{B_{n}}-\mu\of B\right|\to0$.
On the other hand, if there exists a subsequence such that $B_{n}\not\subset P_{n}$
for all $n$, then either $B\not\subset P$ and $m_{f}(P_{n},B_{n})=0\to m_{f}(P,B)=0$
or $B\subset P$ and $0=\lim_{n\to\infty}m_{f}(P_{n},B_{n})\leq\int_{B}f\d\mu=m_{f}(P,B)$.
For $f\leq0$ we obtain lower semicontinuity and for general $f$
the map $m_{f}$ is the sum of an upper and a lower semicontinuous
map, hence measurable.
\end{proof}

\subsection{Point Processes}
\begin{defn}[(Simple) point processes]
A $\Z$-valued random measure $\mu_{\omega}$ is called point process.
In what follows, we consider the particular case that for almost every
$\omega$ there exist points $\left(x_{k}(\omega)\right)_{k\in\N}$
and values $\left(a_{k}\left(\omega\right)\right)_{k\in\N}$ in $\Z$
such that
\[
\mu_{\omega}=\sum_{k\in\N}a_{k}\delta_{x_{k}(\omega)}\,.
\]
The point process $\mu_{\omega}$ is called simple if almost surely
for all $k\in\N$ it holds $a_{k}\in\left\{ 0,1\right\} $.
\end{defn}

\begin{example}[Poisson process]
 \label{exa:poisson-point-proc}\nomenclature[Poisson Process]{Poinsson process}{(Example \ref{exa:poisson-point-proc})}A
particular example for a stationary point process is the Poisson point
process $\mu_{\omega}=\X_{\omega}$ with intensity $\lambda$. Here,
the probability $\P\of{\X(A)=n}$ to find $n$ points in a Borel-set
$A$ with finite measure is given by a Poisson distribution
\begin{equation}
\P\of{\X(A)=n}=e^{-\lambda\left|A\right|}\frac{\lambda^{n}\left|A\right|^{n}}{n!}\label{eq:PoisonPointPoc-Prob}
\end{equation}
with expectation $\E\of{\X(A)}=\lambda\left|A\right|$. Shift-invariance
of (\ref{eq:PoisonPointPoc-Prob}) implies that the Poisson point
process is stationary.
\end{example}

We can use a given random point process to construct further processes.
\begin{example}[Hard core Matern process]
\label{exa:Matern} \nomenclature[Matern Process]{Matern process}{(Example \ref{exa:Matern}--\ref{exa:Matern-Pois})}The
hard core Matern process is constructed from a given point process
$\X_{\omega}$ by mutually erasing all points with the distance to
the nearest neighbor smaller than a given constant $r$. If the original
process $\X_{\omega}$ is stationary (ergodic), the resulting hard
core process is stationary (ergodic) respectively.
\end{example}

\begin{example}[Hard core Poisson--Matern process]
 \label{exa:Matern-Pois} If a Matern process is constructed from
a Poisson point process, we call it a Poisson--Matern point process.
\end{example}

\begin{lem}
\label{lem:PP-is-RACS}Let $\mu_{\omega}$ be a simple point process
with $a_{k}=1$ almost surely for all $k\in\N$. Then $\X_{\omega}=\left(x_{k}(\omega)\right)_{k\in\N}$
is a random closed set of isolated points with no limit points. On
the other hand, if $\X_{\omega}=\left(x_{k}(\omega)\right)_{k\in\N}$
is a random closed set that almost surely has no limit points then
$\mu_{\omega}$ is a point process.
\end{lem}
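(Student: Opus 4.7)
The argument has two directions, and in both the key question is measurability; local finiteness and absence of limit points pass easily between the two descriptions.

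For the first direction, assume $\mu_\omega$ is a simple point process, so $\mu_\omega=\sum_{k\in\N}\delta_{x_k(\omega)}$ with all $x_k(\omega)$ distinct. Since $\mu_\omega$ is locally finite, $\mu_\omega(K)<\infty$ for any compact $K$, which forces $\X_\omega\cap K$ to be finite. Hence $\X_\omega$ has no accumulation point and in particular is closed in $\Rd$. To check measurability as a random closed set, by Remark~\ref{rem:The-Matheron--field} it suffices to verify that $\{\omega:\X_\omega\in\closedsets_V\}$ is measurable for every open $V\subset\Rd$. But simplicity gives
\[
\X_\omega\cap V\neq\emptyset\quad\Longleftrightarrow\quad \mu_\omega(V)\geq 1,
\]
and $\omega\mapsto\mu_\omega(V)$ is measurable because $\mu_\omega$ is a random measure.

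For the second direction, assume $\X_\omega$ is a random closed set with no limit points almost surely, and define $\mu_\omega:=\sum_{x\in\X_\omega}\delta_x$. For $\P$-a.e.\ $\omega$ the set $\X_\omega\cap K$ is finite for every compact $K$, so $\mu_\omega$ is $\N$-valued and locally finite. The task is to show $\omega\mapsto\mu_\omega(A)$ is measurable for bounded Borel $A$, which by Section~\ref{subsec:Random-measures-and} makes $\mu_\omega$ a random measure (and then obviously a point process). I plan to first do this for open bounded $V$ via the identity
\[
\{\mu_\omega(V)\geq n\}=\bigcup_{(B_1,\dots,B_n)\in\mathcal R_n(V)}\bigcap_{i=1}^n\bigl\{\X_\omega\in\closedsets_{B_i}\bigr\},
\]
where $\mathcal R_n(V)$ is the countable collection of $n$-tuples of pairwise disjoint open balls with rational centres and rational radii contained in $V$. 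The inclusion ``$\supset$'' is immediate. For ``$\subset$'': if $\mu_\omega(V)\geq n$, choose $n$ distinct points of $\X_\omega\cap V$; since they are isolated and $V$ is open, one can separate them by sufficiently small pairwise disjoint open balls with rational data lying in $V$, giving the required tuple in $\mathcal R_n(V)$. Each set $\{\X_\omega\in\closedsets_{B_i}\}$ is measurable by definition of the Matheron $\sigma$-field, and $\mathcal R_n(V)$ is countable, so $\{\mu_\omega(V)\geq n\}$ is measurable; hence so is $\omega\mapsto\mu_\omega(V)$.

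To finish, I extend measurability from open bounded $V$ to bounded Borel $A$ by a Dynkin class argument: the family
\[
\mathcal D:=\{A\in\borelB(\Rd)\text{ bounded}:\omega\mapsto\mu_\omega(A)\text{ is measurable}\}
\]
contains the $\pi$-system of bounded open sets and, using local finiteness of $\mu_\omega$ and monotone convergence, is stable under proper differences and countable increasing unions within any fixed bounded ball. Exhausting $\Rd$ by balls then gives measurability on all bounded Borel sets. The only real obstacle is the measurability step in the second direction; the rest is bookkeeping with the Fell topology and local finiteness.
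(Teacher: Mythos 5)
Your proof is correct, but it takes a genuinely different route from the paper's in both directions, and the comparison is worth recording.

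In the first direction the paper integrates Lipschitz test functions ($f_{V,R}$ and $f_\delta^K$) against $\mu_\omega$ to identify $\X^{-1}(\closedsets_V)$ and $\X^{-1}(\closedsets^K)$; you instead use the counting identity $\X_\omega\cap V\neq\emptyset\iff\mu_\omega(V)\geq1$, which is cleaner. One small gap here: a random measure only gives measurability of $\omega\mapsto\mu_\omega(A)$ for \emph{bounded} Borel $A$ directly, so for unbounded $V$ you should write $\{\omega:\X_\omega\in\closedsets_V\}=\bigcup_{R\in\N}\{\omega:\mu_\omega(V\cap\Ball R0)\geq1\}$ before invoking that; this is a one-line fix. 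In the second direction the paper smooths the set via the tubular neighbourhood $\mu_\omega^\delta(A)=(|\S^{d-1}|\delta^d)^{-1}\lebesgueL(A\cap\Ball\delta{\X_\omega})$ and passes to the limit, leaning on Lemmas~\ref{lem:contin-geom-Ops} and~\ref{lem:lower-semi-cont-measures}; you instead express $\{\mu_\omega(V)\geq n\}$ as a countable union over tuples of disjoint rational balls hitting $\X_\omega$, and then run a Dynkin class argument from bounded open sets to bounded Borel sets. Both arguments are sound. Your hit-and-count version is more self-contained (it only uses the generating class $\closedsets_V$ of the Matheron $\sigma$-field and avoids the auxiliary continuity lemmas), which is a virtue if this lemma were to stand alone; the paper's smoothing argument is shorter in context because Lemmas~\ref{lem:contin-geom-Ops} and~\ref{lem:lower-semi-cont-measures} are already established and reused elsewhere. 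The identity you use for $\{\mu_\omega(V)\geq n\}$ does implicitly rely on the a.s.\ discreteness of $\X_\omega$ (to separate the $n$ points by disjoint rational balls inside $V$), and you state this correctly, so there is no logical gap there.
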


\begin{proof}
Let $\mu_{\omega}$ be a point process. For open $V\subset\Rd$ and
compact $K\subset\Rd$ let 
\begin{gather*}
f_{V,R}(x)=\dist\of{\,x,\,\Rd\backslash\left(V\cap\Ball R0\right)\,}\,,\qquad f_{\delta}^{K}(x)=\max{\left\{ \,1-\frac{1}{\delta}\dist\of{x,K}\,,\,0\,\right\} }\,.
\end{gather*}
Then $f_{V,R}$ is Lipschitz with constant $1$ and $f_{\delta}^{K}$
is Lipschitz with constant $\frac{1}{\delta}$ and support in $\Ball{\delta}K$.
Moreover, since $\mu_{\omega}$ is locally bounded, the number of
points $x_{k}$ that lie within $\Ball 1K$ is bounded. In particular,
we obtain
\begin{align*}
\X^{-1}\of{\closedsets_{V}} & =\bigcup_{R>0}\left\{ \omega\,:\;\int_{\Rd}f_{V,R}\,\d\mu_{\omega}>0\right\} \,,\\
\X^{-1}\of{\closedsets^{K}} & =\bigcap_{\delta>0}\left\{ \omega\,:\;\int_{\Rd}f_{\delta}^{K}\,\d\mu_{\omega}>0\right\} \,,
\end{align*}
are measurable. Since $\closedsets_{V}$ and $\closedsets^{K}$ generate
the $\sigma$-algebra on $\closedsets\left(\Rd\right)$, it follows
that $\omega\to\X_{\omega}$ is measurable.

In order to prove the opposite direction, let $\X_{\omega}=\left(x_{k}(\omega)\right)_{k\in\N}$
be a random closed set of points. Since $\X_{\omega}$ has almost
surely no limit points the measure $\mu_{\omega}$ is locally bounded
almost surely. We prove that $\mu_{\omega}$ is a random measure by
showing that 
\[
\forall f\in C_{c}\of{\Rd}\,:\qquad F:\,\omega\mapsto\int_{\Rd}f\,\d\mu_{\omega}\text{ is measurable.}
\]
For $\delta>0$ let $\mu_{\omega}^{\delta}\of A:=\left(\left|\S^{d-1}\right|\delta^{d}\right)^{-1}\lebesgueL\of{A\cap\Ball{\delta}{\X_{\omega}}}$.
By Lemmas \ref{lem:contin-geom-Ops} and \ref{lem:lower-semi-cont-measures}
we obtain that $F_{\delta}:\,\omega\mapsto\int_{\Rd}f\,\d\mu_{\omega}^{\delta}$
are measurable. Moreover, for almost every $\omega$ we find $F_{\delta}\left(\omega\right)\to F\left(\omega\right)$
uniformly and hence $F$ is measurable.
\end{proof}
\begin{cor}
A random simple point process $\mu_{\omega}$ is stationary iff $\X_{\omega}$
is stationary.
\end{cor}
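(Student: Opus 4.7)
The plan is to reduce the statement to Lemma \ref{lem:PP-is-RACS}, which sets up a bijective correspondence between simple point processes $\muomega=\sum_k \delta_{x_k(\omega)}$ and random closed sets $\X_\omega=(x_k(\omega))_{k\in\N}$ without limit points. Crucially, for a simple process the recovery map works in both directions pointwise: $\muomega(A)=\#(\X_\omega\cap A)$ for every Borel $A$, and $y\in\X_\omega\iff\muomega(\{y\})\geq1$. Once this is in hand, stationarity of one object translates verbatim into stationarity of the other under the same candidate dynamical system $\tau_x$.

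For the direction \emph{$\X_\omega$ stationary $\Rightarrow$ $\muomega$ stationary}, I would unpack Definition \ref{def:RACS}\,b) to get $\chi_{\X_\omega}(x)=\chi_{\X_{\tau_x\omega}}(0)$ for almost every $(\omega,x)$, which is equivalent to $\X_{\tau_x\omega}=\X_\omega-x$ a.s. Then for every Borel set $A$,
\[
\mu_{\tau_x\omega}(A)=\#\of{\X_{\tau_x\omega}\cap A}=\#\of{(\X_\omega-x)\cap A}=\#\of{\X_\omega\cap (A+x)}=\muomega(A+x),
\]
which is exactly the stationarity relation for the random measure discussed in Section \ref{subsec:Random-measures-and}.

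For the converse \emph{$\muomega$ stationary $\Rightarrow$ $\X_\omega$ stationary}, I would start from $\muomega(A+x)=\mu_{\tau_x\omega}(A)$ (which holds after the measure-theoretic identification of a dynamical system recalled in Section \ref{subsec:Random-measures-and}) and specialize to $A=\{y\}$. Simplicity of $\muomega$ gives $y\in\X_{\tau_x\omega}\iff\muomega(\{y+x\})=1\iff y+x\in\X_\omega$, hence $\X_{\tau_x\omega}=\X_\omega-x$ a.s., which is the defining stationarity property of the RACS $\X_\omega$.

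The only potentially subtle step is the null-set bookkeeping: the identities above hold for each fixed $x$ outside an $\omega$-null set $N_x$, whereas Definition \ref{def:RACS}\,b) and the convention used in Section \ref{subsec:Random-measures-and} ask for validity at almost every $(\omega,x)$. This is handled by invoking measurability of $(x,\omega)\mapsto\tau_x\omega$ (Definition \ref{def:Omega-mu-tau}\,(iii)) together with Fubini applied to the product measure on $\Rd\times\Omega$, so that a Borel set of full $(\P\otimes\lebesgueL)$-measure on which the identity holds can be selected once and for all; this is the same mechanism that underlies the equivalent formulations of stationarity for random measures in Section \ref{subsec:Random-measures-and}, so no new ideas are needed beyond what Lemma \ref{lem:PP-is-RACS} already provides.
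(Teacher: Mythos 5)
Your proof is correct and matches the (implicit) approach of the paper: the corollary is stated as an immediate consequence of Lemma~\ref{lem:PP-is-RACS}, and you make the transfer of stationarity explicit in both directions via the dictionary $\mu_{\omega}(A)=\#(\X_{\omega}\cap A)$ and $y\in\X_{\omega}\iff\mu_{\omega}(\{y\})=1$, combined with the group property of $\tau$ to pass from the single-point relation $\chi_{\X_{\omega}}(x)=\chi_{\X_{\tau_{x}\omega}}(0)$ to the set identity $\X_{\tau_{x}\omega}=\X_{\omega}-x$.

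One point worth recording: Definition~\ref{def:RACS}(b) tests stationarity through the Lebesgue-a.e.\ equality of characteristic functions, which is degenerate for $\lebesgueL$-null sets such as $\X_{\omega}$ (both sides vanish for a.e.\ $(\omega,x)$ as soon as $\P(0\in\X_{\omega})=0$, so the condition holds vacuously). For the forward implication $\X_{\omega}$ stationary $\Rightarrow\mu_{\omega}$ stationary to carry nontrivial content, one should read stationarity of $\X_{\omega}$ as the pointwise set identity $\X_{\tau_{x}\omega}=\X_{\omega}-x$ for a.e.\ $(\omega,x)$ (equivalently, as stationarity of the induced counting measure), which is precisely the form your argument uses and the one the paper uses elsewhere, e.g.\ in Lemma~\ref{lem:X-r-stationary}. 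With that understanding the proof is complete.
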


Hence we can provide the following definition based on Definition
\ref{def:RACS}.
\begin{defn}
\label{def:jontly-stat-point-regular}A point process $\mu_{\omega}$
and a random set $\bP$ are jointly stationary if $\bP$ and $\X$
are jointly stationary.
\end{defn}

\begin{lem}
\label{lem:Matern-is-RACS}Let $\X_{\omega}=\left(x_{i}\right)_{i\in\N}$
be a Matern point process from Example \ref{exa:Matern} with distance
$r$ and let for $\delta<\frac{r}{2}$ be $\bB(\omega):=\bigcup_{i}\overline{B_{\delta}(x_{i})}$.
Then $\bB(\omega)$ is a random closed set.
\end{lem}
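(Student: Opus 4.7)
The plan is to identify $\bB(\omega)$ with $b_\delta(\X_\omega)$ where $b_\delta: A \mapsto \overline{B_\delta(A)}$ is the map from Lemma \ref{lem:contin-geom-Ops}, and then invoke measurability of compositions. The slight subtlety is that a priori $\bB(\omega)$ is defined as a union of closed balls, not as the closure of $B_\delta(\X_\omega)$, so I first need to verify these coincide.

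First I would observe that the Matern process $\X_\omega$ is obtained from a given (random) simple point process by deletion, so it is itself a simple point process whose atoms are mutually separated by at least the prescribed hard-core distance $r$. By Lemma \ref{lem:PP-is-RACS}, $\X_\omega$ is then a random closed set. In particular, $\P$-almost surely $\X_\omega = (x_i(\omega))_{i\in\N}$ is a closed set with no limit points in $\Rd$.

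Next I would verify the identity
\[
\bB(\omega) \;=\; \bigcup_i \overline{B_\delta(x_i(\omega))} \;=\; \overline{B_\delta(\X_\omega)} \;=\; b_\delta(\X_\omega)
\]
almost surely. For the middle equality: the inclusion ``$\subset$'' is trivial since each $\overline{B_\delta(x_i)} \subset \overline{B_\delta(\X_\omega)}$. For the reverse inclusion, because $\X_\omega$ has no limit points, for every $y \in \Rd$ only finitely many $x_i$ lie within distance $\delta + 1$ of $y$, so in a neighborhood of $y$ the union $\bigcup_i \overline{B_\delta(x_i)}$ is a finite union of closed balls, hence closed. Therefore $\bigcup_i \overline{B_\delta(x_i)}$ is already closed, and since it contains $B_\delta(\X_\omega) = \bigcup_i B_\delta(x_i)$, it contains its closure.

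Finally, I conclude by combining measurability of $\X: \Omega \to \closedsets(\Rd)$ with the continuity of $b_\delta: \closedsets(\Rd) \to \closedsets(\Rd)$ established in Lemma \ref{lem:contin-geom-Ops}. The composition $\bB = b_\delta \circ \X$ is then measurable from $(\Omega,\sigma)$ to $(\closedsets,\sigma_\closedsets)$, which is exactly the definition of a random closed set in Definition \ref{def:RACS}. The only real issue to be careful about is the almost-sure local finiteness of $\X_\omega$ used in the identification step, but this is built into the point-process nature of $\X$ via Lemma \ref{lem:PP-is-RACS}.
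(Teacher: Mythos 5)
Your proposal is correct and takes essentially the same approach as the paper's own proof, namely composing the measurability of $\X:\Omega\to\closedsets(\Rd)$ with the continuity of $b_\delta$ from Lemma \ref{lem:contin-geom-Ops}. The only difference is that the paper leaves the identification $\bB(\omega)=b_\delta(\X_\omega)$ implicit, whereas you justify it explicitly via the almost-sure local finiteness of the Matern process, which is a reasonable clarification but not a different route.
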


\begin{proof}
This follows from Lemma \ref{lem:contin-geom-Ops}: $\X_{\omega}$
is measurable and $\X\mapsto\overline{B_{\delta}(\X)}$ is continuous.
Hence $\bB\left(\omega\right)$ is measurable.
\end{proof}

\subsection{\label{subsec:Dynamical-Systems-on-Zd}Dynamical Systems on $\protect\Zd$}
\begin{defn}
\label{def:A-dynamical-system-Zd}Let $\left(\hat{\Omega},\hat{\sF},\hat{\P}\right)$
be a probability space. A discrete dynamical system on $\hat{\Omega}$
is a family $(\hat{\tau}_{z})_{z\in r\Zd}$ of measurable bijective
mappings $\hat{\tau}_{z}:\hat{\Omega}\mapsto\hat{\Omega}$ satisfying
(i)-(iii) of Definition \ref{def:Omega-mu-tau} with $\Rd$ replaced
by $\Zd$. A set $A\subset\hat{\Omega}$ is almost invariant if for
every $z\in r\Zd$ it holds $\P\left(\left(A\cup\hat{\tau}_{z}A\right)\backslash\left(A\cap\hat{\tau}_{z}A\right)\right)=0$
and $\hat{\tau}$ is called ergodic w.r.t. $r\Zd$ if every almost
invariant set has measure $0$ or $1$. 
\end{defn}

Similar to the continuous dynamical systems, also in this discrete
setting an ergodic theorem can be proved.
\begin{thm}[See Krengel and Tempel'man \cite{krengel1985ergodic,tempel1972ergodic}]
\label{thm:Ergodic-Theorem-discrete} Let $\left(A_{n}\right)_{n\in\N}\subset\Rd$
be a convex averaging sequence, let $(\hat{\tau}_{z})_{z\in r\Zd}$
be a dynamical system on $\hat{\Omega}$ with invariant $\sigma$-algebra
$\sI$ and let $f:\,\hat{\Omega}\to\R$ be measurable with $\left|\E(f)\right|<\infty$.
Then for almost all $\hat{\omega}\in\hat{\Omega}$ 
\begin{equation}
\left|A_{n}\right|^{-1}\sum_{z\in A_{n}\cap r\Zd}f\of{\hat{\tau}_{z}\hat{\omega}}\to r^{-d}\E\of{f|\sI}\,.\label{eq:ergodic convergence-discrete}
\end{equation}
\end{thm}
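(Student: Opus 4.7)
The plan is to reduce this discrete statement to the already-stated continuous ergodic Theorem \ref{thm:Ergodic-Theorem} via a standard suspension construction, after which no external appeal is needed. Set $T := [0,r)^d$ with normalized Lebesgue measure $r^{-d}\lambda_T$, put $\Omega := \hat{\Omega} \times T$ with $\P := \hat{\P} \otimes r^{-d}\lambda_T$, and for $x \in \Rd$ and $(\hat\omega, t) \in \Omega$ decompose $x + t = z_x(t) + s_x(t)$ uniquely with $z_x(t) \in r\Zd$ and $s_x(t) \in T$. Then
\[
  \tau_x(\hat\omega, t) := \bigl(\hat{\tau}_{z_x(t)}\hat\omega,\, s_x(t)\bigr)
\]
defines a measurable, measure-preserving $\Rd$-action on $\Omega$; the group law $\tau_{x+y} = \tau_x \circ \tau_y$ follows from the cocycle identities $s_{x+y}(t) = s_x(s_y(t))$ and $z_{x+y}(t) = z_x(s_y(t)) + z_y(t)$, obtained by uniqueness of the decomposition. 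Lift the integrand as $F(\hat\omega, t) := f(\hat\omega)$, so that $F \in L^1(\Omega)$ with $\int_\Omega F\,d\P = \int_{\hat\Omega} f\,d\hat\P$.

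Partitioning $\Rd$ into the translated cells $z - t + T$, $z \in r\Zd$, one obtains the exact identity
\[
  \int_{A_n} F(\tau_x(\hat\omega,t))\,dx \;=\; \sum_{z \in r\Zd} \bigl|A_n \cap (z - t + T)\bigr| \, f(\hat{\tau}_z\hat\omega).
\]
Since $A_n$ is a convex averaging sequence, the number of cells that meet $A_n$ without being contained in it grows only like $|A_n|^{(d-1)/d}$, hence is $o(|A_n|/r^d)$. Consequently, for $f$ bounded,
\[
  \int_{A_n} F(\tau_x(\hat\omega,t))\,dx \;=\; r^d \sum_{z \in A_n \cap r\Zd} f(\hat{\tau}_z\hat\omega) + o(|A_n|).
\]
Applying Theorem \ref{thm:Ergodic-Theorem} to $F$ gives convergence of $|A_n|^{-1}\int_{A_n} F\circ \tau_x\,dx$ to $\E(F \mid \sI_\Omega)$, where $\sI_\Omega$ is the $\tau$-invariant $\sigma$-algebra on $\Omega$. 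A short inspection shows that a $\tau$-invariant function on $\Omega$ must be independent of $t$ (apply shifts $x \in T$ with $x+t$ still in $T$) and $\hat{\tau}_z$-invariant in $\hat\omega$ for each $z \in r\Zd$, so $\sI_\Omega$-measurable functions are precisely lifts of $\sI$-measurable ones and $\E(F\mid\sI_\Omega)(\hat\omega,t) = \E(f\mid\sI)(\hat\omega)$. Dividing by $r^d$ and using Fubini (the limit no longer depends on $t$, so a single $t$ suffices for $\hat\P$-a.e.\ $\hat\omega$) yields \eqref{eq:ergodic convergence-discrete}.

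The only delicate point is the extension from bounded $f$ to $f \in L^1(\hat\P)$. I would write $f = f_M + (f - f_M)$ with $f_M = f\,\chi_{\{|f| \leq M\}}$; the bounded term $f_M$ is handled by the previous step, while the tail is controlled via the maximal ergodic inequality applied to $|f - f_M|$ (equivalently, transported through the suspension to the $\tau$-maximal function of $|F - F_M|$), using that $\|f - f_M\|_{L^1(\hat\P)} \to 0$ as $M \to \infty$. Taking $n \to \infty$ and then $M \to \infty$ closes the argument and is, I expect, the only step requiring genuine care; everything else is bookkeeping on the suspension.
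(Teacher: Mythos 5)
The paper states Theorem \ref{thm:Ergodic-Theorem-discrete} as a citation to Krengel and Tempel'man and gives no proof, so there is no in-text argument to compare against. Your suspension construction is the classical way to derive the $\Zd$-sampled ergodic theorem from a continuous one, and the core of your argument is sound: the cocycle identities for $z_x(t),\,s_x(t)$ do give the group law, $F(\hat\omega,t):=f(\hat\omega)$ is a measure-preserving lift, the cell decomposition identity $\int_{A_n}F(\tau_x(\hat\omega,t))\,dx=\sum_{z}|A_n\cap(z-t+T)|f(\hat\tau_z\hat\omega)$ is exact, the identification $\sI_\Omega\cong\sI$ and hence $\E(F\mid\sI_\Omega)=\E(f\mid\sI)$ is correct, and Fubini legitimately removes the extraneous $t$-dependence at the end since the discrete sums do not depend on $t$. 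Your truncation-plus-maximal-inequality step for the $L^1$ extension is the standard closing move and is the right thing to invoke.

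One small inaccuracy worth fixing: you bound the number of boundary cells by $O(|A_n|^{(d-1)/d})$, but Definition \ref{def:convex-averaging-sequence} allows arbitrarily elongated convex sets, for which this power-law bound can fail. The correct estimate uses the inner-ball property: since $\Ball{r_n}0\subset A_n$ with $r_n\to\infty$ and $A_n$ is convex, the divergence theorem gives $|A_n|\geq \frac{r_n}{d}\hausdorffH^{d-1}(\partial A_n)$, so $\hausdorffH^{d-1}(\partial A_n)/|A_n|\leq d/r_n\to0$; hence the number of cells meeting $\partial A_n$ is $o(|A_n|)$, which is all you need. With this correction the boundary estimate is watertight for any convex averaging sequence, not just roughly isotropic ones.
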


In the following, we restrict to $r=1$ for simplicity of notation.

Let $\Omega_{0}\subset\Rd$. We consider an enumeration $\left(\xi_{i}\right)_{i\in\N}$
of $\Zd$ such that $\hat{\Omega}:=\Omega_{0}^{\Zd}=\Omega_{0}^{\N}$
and write $\hat{\omega}=\left(\hat{\omega}_{\xi_{1}},\hat{\omega}_{\xi_{2}},\dots\right)=\left(\hat{\omega}_{1},\hat{\omega}_{2},\dots\right)$
for all $\hat{\omega}\in\hat{\Omega}$. We define a metric on $\hat{\Omega}$
through 
\[
d(\hat{\omega}_{1},\hat{\omega}_{2})=\sum_{k=1}^{\infty}\frac{1}{2^{k}}\frac{\left|\hat{\omega}_{1,\xi_{k}}-\hat{\omega}_{2,\xi_{k}}\right|}{1+\left|\hat{\omega}_{1,\xi_{k}}-\hat{\omega}_{2,\xi_{k}}\right|}\,.
\]
We write $\Omega_{n}:=\Omega_{0}^{n}$ and $\N_{n}:=\left\{ k\in\N:\,k\geq n+1\right\} $.
The topology of $\hat{\Omega}$ is generated by the open sets $A\times\Omega_{0}^{\N_{n}}$,
where for some $n>0$, $A\subset\Omega_{n}$ is an open set. In case
$\Omega_{0}$ is compact, the space $\hat{\Omega}$ is compact. Further,
$\hat{\Omega}$ is separable in any case since $\Omega_{0}$ is separable
(see \cite{Kelley1955}).
\begin{lem}
Suppose for every $n\in\N$ there exists a probability measure $\P_{n}$
on $\Omega_{n}$ such that for every measurable $A_{n}\subset\Omega_{n}$
it holds $\P_{n+k}\of{A_{n}\times\Omega^{k}}=\P_{n}\of{A_{n}}$. Then
$\P$ defined as follows defines a probability measure on $\Omega$:
\[
\P\of{A_{n}\times\Omega_{0}^{\N_{n}}}:=\P_{n}\of{A_{n}}\,.
\]
\end{lem}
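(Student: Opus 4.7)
The statement is Kolmogorov's extension theorem for the consistent family $(\P_n)_{n\in\N}$. Let $\cC$ denote the collection of cylinder sets of the form $C = A_n\times\Omega_0^{\N_n}$ with $n\in\N$ and $A_n\subset\Omega_n$ Borel. The plan is: (i) show $\P$ is well-defined on $\cC$, (ii) show $\cC$ is an algebra and $\P$ is finitely additive on it, (iii) establish countable additivity using that $\Omega_0\subset\Rd$ is Polish, and (iv) invoke Carathéodory to extend to the Borel $\sigma$-algebra of $\hat\Omega$.

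\textbf{Well-definedness and algebra structure.} A single cylinder $C\in\cC$ admits many representations: $A_n\times\Omega_0^{\N_n} = (A_n\times\Omega_0^{k})\times\Omega_0^{\N_{n+k}}$ for every $k\geq 0$. The hypothesis $\P_{n+k}(A_n\times\Omega_0^k)=\P_n(A_n)$ shows that the value $\P(C):=\P_n(A_n)$ is independent of $n$ and of the representative $A_n$. Closure of $\cC$ under finite unions, intersections, and complements follows by enlarging $n$ so that two cylinders are expressed with the same index; finite additivity of $\P$ on $\cC$ then reduces to finite additivity of each $\P_n$.

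\textbf{Countable additivity (the main obstacle).} It suffices to show that if $C^{(k)}\in\cC$ with $C^{(k)}\supset C^{(k+1)}$ and $\inf_k\P(C^{(k)})=\alpha>0$, then $\bigcap_k C^{(k)}\neq\emptyset$. By passing to a subsequence we may assume $C^{(k)}=A_{n_k}\times\Omega_0^{\N_{n_k}}$ with $n_k\uparrow\infty$. Since $\Omega_0\subset\Rd$ is Polish (hence so is every $\Omega_{n_k}$), each probability measure $\P_{n_k}$ is inner regular by compact sets, so we may choose compact $K_{n_k}\subset A_{n_k}$ with
\[
\P_{n_k}(A_{n_k}\setminus K_{n_k})<\frac{\alpha}{2^{k+1}}.
\]
Let $\tilde C^{(k)}:=K_{n_k}\times\Omega_0^{\N_{n_k}}$ and set $D^{(k)}:=\bigcap_{j\leq k}\tilde C^{(j)}$. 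Then $D^{(k)}\in\cC$ and $\P(C^{(k)}\setminus D^{(k)})<\alpha/2$, hence $D^{(k)}\neq\emptyset$. Pick $\hat\omega^{(k)}\in D^{(k)}$. Because the projection of $D^{(k)}$ onto any fixed finite block $\Omega_{n_j}$ (with $j\leq k$) is contained in the compact set $K_{n_j}$, a diagonal extraction combined with Tychonoff's theorem yields a subsequence converging coordinate-wise to some $\hat\omega^\ast$. Coordinate-wise convergence in $\hat\Omega$ coincides with convergence in the metric $d$, and since each $K_{n_j}$ is closed, $\hat\omega^\ast\in\tilde C^{(j)}\subset C^{(j)}$ for every $j$, so $\bigcap_k C^{(k)}\neq\emptyset$, the desired contradiction.

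\textbf{Carathéodory extension.} Steps (i)--(iii) give a countably additive probability content on the algebra $\cC$. The $\sigma$-algebra $\sigma(\cC)$ coincides with the Borel $\sigma$-algebra of $\hat\Omega$ since the cylinders $A_n\times\Omega_0^{\N_n}$ generate the product topology underlying the metric $d$. Carathéodory's extension theorem therefore produces a unique probability measure $\P$ on $\sigma(\cC)$ with the required values on cylinders. The main technical point is Step (iii), where the Polish structure of $\Omega_0\subset\Rd$ (via inner regularity and Tychonoff's theorem on the compacta $K_{n_k}$) is essential; without this, the claim would need to be replaced by the Ionescu--Tulcea construction using conditional kernels, which is not available here.
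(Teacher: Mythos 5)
Your proof takes a genuinely different, and more robust, route than the paper's. The paper works on the ring $\cR$ of cylinder sets and tries to verify countable additivity via continuity from below: given an increasing sequence $A_j\in\cR$ with $\bigcup_j A_j = A\in\cR$, it asserts that since $A_1=\tilde A_1\times\Omega_0^{\N_n}$ for some $n$, \emph{every} $A_j$ is also a cylinder over $\Omega_n$ with the same $n$. That inference is false in general: with $\Omega_0=\{0,1\}$, the set $A_1=\{0\}\times\Omega_0^{\N_1}$ is contained in $A_2=\left(\{0\}\times\Omega_0\cup\{(1,0)\}\right)\times\Omega_0^{\N_2}$, yet $A_2$ is not a cylinder over $\Omega_1$. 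Since the essential difficulty in Kolmogorov's theorem is precisely the case where cylinder dimensions grow without bound, the paper's verification of the premeasure hypothesis for the Berberian extension theorem is incomplete. Your compactness argument --- inner regularity of each $\P_{n_k}$ by compact sets, then a diagonal/Tychonoff extraction to force a nonempty intersection --- is the standard machinery that closes exactly this gap, and as written your proof is correct. One caveat worth making explicit: the paper assumes only $\Omega_0\subset\Rd$ with its subspace topology, so $\Omega_0$ need not be Polish, and inner regularity of $\P_{n_k}$ by compacts is then not automatic (the extension can genuinely fail for pathological $\Omega_0$). In the paper's actual application $\Omega_0=\{0,1\}$ everything is compact and this is harmless, but you should state that your argument requires $\Omega_0$ to be Polish (or at least standard Borel) rather than asserting this follows from $\Omega_0\subset\Rd$ alone.
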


\begin{proof}
We consider the ring
\[
\cR=\bigcup_{n\in\N}\left\{ A\times\Omega_{0}^{\N_{n}}\,:\;A\subset\Omega_{n}\text{ is measurable}\right\} 
\]
and make the observation that $\P$ is additive and positive on $\cR$
and $\P(\emptyset)=0$. Next, let $\left(A_{j}\right)_{j\in\N}$ be
an increasing sequence of sets in $\cR$ such that $A:=\bigcup_{j}A_{j}\in\cR$.
Then, there exists $\tilde{A}_{1}\subset\Omega_{0}^{n}$ such that
$A_{1}=\tilde{A}_{1}\times\Omega_{0}^{\N_{n}}$ and since $A_{1}\subset A_{2}\subset\dots\subset A$,
for every $j>1$, we conclude $A_{j}=\tilde{A_{j}}\times\Omega_{0}^{\N_{n}}$
for some $\tilde{A}_{j}\subset\Omega_{n}$. Therefore, $\P(A_{j})=\P_{n}(\tilde{A}_{j})\to\P_{n}(\tilde{A})=\P(A)$
where $A=\tilde{A}\times\Omega_{0}^{\N_{n}}$. We have thus proved
that $\P:\cR\to[0,1]$ can be extended to a measure on the Borel-$\sigma$-Algebra
on $\Omega$ (See \cite[Theorem 6-2]{Berberian1965}).
\end{proof}
We define for $z\in\Zd$ the mapping
\[
\hat{\tau}_{z}:\,\hat{\Omega}\to\hat{\Omega}\,,\qquad\hat{\omega}\mapsto\hat{\tau}_{z}\hat{\omega}\,,\quad\mbox{where }\left(\hat{\tau}_{z}\hat{\omega}\right)_{\xi_{i}}=\hat{\omega}_{\xi_{i}+z}\mbox{ component wise}\,.
\]

\begin{rem}
\label{rem:standard-omega-0}In this paper, we consider particularly
$\Omega_{0}=\left\{ 0,1\right\} $. Then $\hat{\Omega}:=\Omega_{0}^{\Zd}$
is equivalent to the power set of $\Zd$ and every $\hat{\omega}\in\hat{\Omega}$
is a sequence of $0$ and $1$ corresponding to a subset of $\Zd$.
Shifting the set $\hat{\omega}\subset\Zd$ by $z\in\Zd$ corresponds
to an application of $\hat{\tau}_{z}$ to $\hat{\omega}\in\hat{\Omega}$.
\end{rem}

Now, let $\bP(\omega)$ be a stationary ergodic random open set and
let $r>0$. Recalling (\ref{eq:Pr}) the map $\omega\mapsto\bP_{-r}(\omega)$
is measurable due to Lemma \ref{lem:contin-geom-Ops} and we can define
$\X_{r}\of{\bP\of{\omega}}:=2r\Zd\cap\bP_{-\frac{r}{2}}\of{\omega}$.
\begin{lem}
\label{lem:X-r-stationary}If $\bP$ is a stationary ergodic random
open set then the set\nomenclature[Xr]{$\X_r(\omega)=\X_r(\bP(\omega))$}{$=2r\Zd\cap \bP_{-r}(\omega)$, \eqref{eq:def-X_r}}
\begin{equation}
\X=\X_{r}(\omega):=\X_{r}\of{\bP\of{\omega}}:=2r\Zd\cap\bP_{-r}\of{\omega}\label{eq:def-X_r}
\end{equation}
 is a stationary random point process w.r.t. $2r\Zd$.
\end{lem}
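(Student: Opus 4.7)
The plan is to verify two things separately: measurability of $\omega \mapsto \X_r(\omega)$ as a map into $(\closedsets(\Rd),\sigma_\closedsets)$, and compatibility with the dynamical system on the sublattice $2r\Zd$. Once these are in place, Lemma \ref{lem:PP-is-RACS} converts $\X_r$ into a point process, and the translation identity gives stationarity of the associated random measure.

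\medskip

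\textbf{Step 1 (measurability).} Since $\bP$ is a RAOS, by Definition \ref{def:RACS}(d) the map $\omega \mapsto \Rd\setminus\bP(\omega)$ is a RACS. I will show that for every $z\in 2r\Zd$, the event
\[
\left\{\omega:\, z\in\X_r(\omega)\right\}=\left\{\omega:\,\dist(z,\Rd\setminus\bP(\omega))\geq r\right\}
\]
is measurable. This follows by writing the condition as $\Ball{r}{z}\cap(\Rd\setminus\bP(\omega))=\emptyset$ and exhausting the open ball by a countable increasing family of compact sets $K_n \uparrow \Ball{r}{z}$; the event becomes $\bigcap_n\{\omega:\,\Rd\setminus\bP(\omega)\in\closedsets^{K_n}\}$, which lies in $\sF$ by Remark \ref{rem:The-Matheron--field}. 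Since $\X_r(\omega)\subset 2r\Zd$, for any open $V\subset\Rd$ and compact $K\subset\Rd$
\[
\{\omega:\,\X_r(\omega)\in\closedsets_V\}=\bigcup_{z\in 2r\Zd\cap V}\{z\in\X_r(\omega)\},\quad \{\omega:\,\X_r(\omega)\in\closedsets^K\}=\bigcap_{z\in 2r\Zd\cap K}\{z\notin\X_r(\omega)\},
\]
both countable, hence measurable; this is enough to conclude measurability into $(\closedsets,\sigma_\closedsets)$.

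\medskip

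\textbf{Step 2 (point process property).} Because $\X_r(\omega)\subset 2r\Zd$ has pairwise distances at least $2r$, it contains no accumulation point. The second half of Lemma \ref{lem:PP-is-RACS} then upgrades the RACS $\X_r$ to a simple point process $\mu_\omega=\sum_{z\in\X_r(\omega)}\delta_z$.

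\medskip

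\textbf{Step 3 (stationarity on $2r\Zd$).} Stationarity of $\bP$ means $\chi_{\bP(\tau_x\omega)}(y)=\chi_{\bP(\omega)}(x+y)$, i.e.\ $\bP(\tau_x\omega)=\bP(\omega)-x$ for $\P$-a.e.\ $\omega$ and all $x\in\Rd$. Since $\bP_{-r}$ is defined pointwise through the distance function (see \eqref{eq:Pr}), translation commutes with the operation $\bP\mapsto\bP_{-r}$, giving $\bP_{-r}(\tau_x\omega)=\bP_{-r}(\omega)-x$. For $z\in 2r\Zd$, using $2r\Zd+z=2r\Zd$,
\[
\X_r(\tau_z\omega)=2r\Zd\cap(\bP_{-r}(\omega)-z)=\bigl((2r\Zd+z)\cap\bP_{-r}(\omega)\bigr)-z=\X_r(\omega)-z.
\]
Consequently $\mu_{\X_r(\tau_z\omega)}(A)=\mu_{\X_r(\omega)}(A+z)$, and since $\tau_z$ is $\P$-preserving (Definition \ref{def:Omega-mu-tau}(ii)), $\mu_{\X_r(\omega)}(A)$ and $\mu_{\X_r(\omega)}(A+z)$ share the same law, i.e.\ $\X_r$ is stationary w.r.t.\ the dynamical system $(\tau_z)_{z\in 2r\Zd}$.

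\medskip

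The only mildly delicate point is the measurability step, as the operation $\bP\mapsto\bP_{-r}$ is not directly covered by Lemma \ref{lem:contin-geom-Ops} (which handles $b_\delta$, not its ``negative'' counterpart); working with pointwise membership events for each $z\in 2r\Zd$ and the countable exhaustion of $\Ball{r}{z}$ avoids this issue cleanly.
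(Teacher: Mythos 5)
Your proof is correct, and it takes a somewhat different route from the paper's. The paper identifies $\X_r$ (after rescaling to $2r=1$) with a $\{0,1\}$-valued process on $\Zd$ and invokes the $\Zd$-dynamical-system machinery of Section \ref{subsec:Dynamical-Systems-on-Zd} (via Remark \ref{rem:standard-omega-0}) to turn the induced measure on $\{0,1\}^{\Zd}$ into a stationary point process; measurability of $\omega\mapsto\bP_{-r}(\omega)$ is asserted to follow from Lemma \ref{lem:contin-geom-Ops}. You instead work directly in $(\closedsets(\Rd),\sigma_\closedsets)$, checking measurability of the single-site events $\{z\in\X_r(\omega)\}$ by a compact exhaustion of $\Ball{r}{z}$ and then pushing through Lemma \ref{lem:PP-is-RACS} to get the point process, with stationarity coming from the translation identity $\X_r(\tau_z\omega)=\X_r(\omega)-z$. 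Your closing remark is well taken: Lemma \ref{lem:contin-geom-Ops} only gives continuity of $A\mapsto\overline{\Ball{\delta}{A}}$, and passing from $\overline{\Ball{r}{\Rd\setminus\bP}}$ to $\bP_{-r}=\Rd\setminus\Ball{r}{\Rd\setminus\bP}$ is not a simple complement of that closed set, so the paper's cited justification is slightly imprecise; your site-by-site argument through $\closedsets^{K_n}$ avoids the issue entirely and is arguably the cleaner way to establish measurability. Both proofs buy the same conclusion; yours is more self-contained in the Matheron framework, while the paper's prepares the ground for the discrete ergodic-theory results used later.
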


\begin{proof}
By a simple scaling we can w.l.o.g. assume $2r=1$ and write $\X=\X_{r}$.
Evidently, $\X$ corresponds to a process on $\Zd$ with values in
$\Omega_{0}=\left\{ 0,1\right\} $ writing $\X(z)=1$ if $z\in\X$
and $\X(z)=0$ if $z\not\in\X$. In particular, we write $\left(\omega,z\right)\mapsto\X\of{\omega,z}$.
This process is stationary as  the shift invariance of $\bP$ induces
a shift-invariance of $\hat{\P}$ with respect to $\hat{\tau}_{z}$.
It remains to observe that the probabilities $\P\of{\X(z)=1}$ and
$\P\of{\X(z)=0}$ induce a random measure on $\hat{\Omega}$ in the
way described in Remark \ref{rem:standard-omega-0}. 
\end{proof}
\begin{rem}
If $\bP$ is mixing one can follow the lines of the proof of Lemma
\ref{lem:erg-and-mix-is-erg} to find that $\X_{r}\of{\bP\of{\omega}}$
is ergodic. However, in the general case $\X_{r}\of{\bP\of{\omega}}$
is not ergodic. This is due to the fact that by nature $\left(\tau_{z}\right)_{z\in\Zd}$
on $\Omega$ has more invariant sets than$\left(\tau_{x}\right)_{x\in\Rd}$.
For sufficiently complex geometries the map $\Omega\to\hat{\Omega}$
is onto.
\end{rem}

\begin{defn}[Jointly stationary]
\label{def:jointly-staionary-points}\nomenclature[jointly stationary]{jointly stationary}{Definitions \ref{def:RACS}, \ref{def:jontly-stat-point-regular} and \ref{def:jointly-staionary-points}}We
call a point process $\X$ with values in $2r\Zd$ to be strongly
jointly stationary with a random set $\bP$ if the functions $\chi_{\bP(\omega)}$,
$\chi_{\X(\omega)}$ are jointly stationary w.r.t. the dynamical system
$\left(\tau_{2rx}\right)_{x\in\Zd}$ on $\Omega$.
\end{defn}

\section{\label{sec:Nonlocal-regularity}Quantifying Nonlocal Regularity Properties
of the Geometry}

\subsection{\label{subsec:Microscopic-Regularity}Microscopic Regularity}

\begin{figure}
\centering{}\includegraphics[width=4cm]{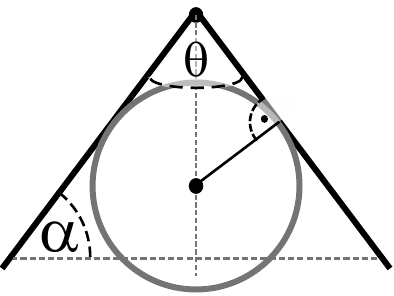}\caption{\label{fig:Ball-and-Cone}How to fit a ball into a cone.}
\end{figure}

\begin{lem}
\label{lem:small-ball-in-cone}Let $\bP$ be a Lipschitz domain. Then
for every $p_{0}\in\partial\bP$ with $\delta(p_{0})>0$ the following
holds: For every $\delta<\delta\left(p_{0}\right)$ and $M:=M_{\delta}(p_{0})>0$
there exists $y\in\bP$ with $\left|p_{0}-y\right|=\frac{\delta}{4}$
such that with $r\left(p_{0}\right):=\frac{\delta}{4\left(1+M\right)}$
it holds $\Ball{r\left(p_{0}\right)}y\subset\Ball{\delta/2}{p_{0}}$.
\end{lem}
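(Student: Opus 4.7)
The plan is to exhibit $y$ explicitly in the local coordinates in which $\partial\bP\cap\Ball{\delta}{p_0}$ is a Lipschitz graph, and then verify the two inclusions by direct computation using only the Lipschitz bound $M$. The underlying picture is the one in Figure~\ref{fig:Ball-and-Cone}: the largest ball inside the downward cone of opening angle $\arctan(1/M)$ at distance $\delta/4$ from $p_0$ has radius proportional to $\delta/(1+M)$, and this ball automatically stays inside $\Ball{\delta/2}{p_0}$.

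First I choose the coordinate system provided by the local $(\delta,M)$-regularity: there exists $U\subset\R^{d-1}$ and a Lipschitz $\phi:U\to\R$ with $\phi(0)=0$ and Lipschitz constant at most $M$, such that in the chosen basis centred at $p_0$ we have $\bP\cap\Ball{\delta}{p_0}=\{(\tilde x,x_d)\in\Ball{\delta}{p_0}:x_d<\phi(\tilde x)\}$. Set
\[
y:=\left(0,-\tfrac{\delta}{4}\right)\,,\qquad r(p_0):=\frac{\delta}{4(1+M)}\,.
\]
Clearly $|y-p_0|=\delta/4$, and for every $z=(\tilde z,z_d)\in\Ball{r(p_0)}{y}$ the triangle inequality yields
\[
|z-p_0|\leq|z-y|+|y-p_0|<r(p_0)+\tfrac{\delta}{4}\leq\tfrac{\delta}{4(1+M)}+\tfrac{\delta}{4}\leq\tfrac{\delta}{2}\,,
\]
which gives the inclusion $\Ball{r(p_0)}{y}\subset\Ball{\delta/2}{p_0}$.

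Second, I verify that the ball lies inside $\bP$ (this is the content implicit in the figure and needed in subsequent applications). For $z\in\Ball{r(p_0)}{y}$ one has $|\tilde z|<r(p_0)$ and $z_d<-\tfrac{\delta}{4}+r(p_0)$. Using $\phi(0)=0$ together with the Lipschitz estimate $\phi(\tilde z)\geq-M|\tilde z|>-M\,r(p_0)$, and noting that
\[
-\tfrac{\delta}{4}+r(p_0)=-\tfrac{\delta}{4}+\tfrac{\delta}{4(1+M)}=-\tfrac{M\delta}{4(1+M)}=-M\,r(p_0)\,,
\]
I obtain $z_d<-M\,r(p_0)<\phi(\tilde z)$, so $z\in\bP$. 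Since $\Ball{r(p_0)}{y}\subset\Ball{\delta/2}{p_0}\subset\Ball{\delta}{p_0}$, the graph representation applies on the whole ball and the chain of inequalities is valid throughout.

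The proof has no genuine obstacle; the only subtlety is the sharp balancing of the two length scales $\delta/4$ (distance of $y$ from the boundary along the coordinate axis) and $r(p_0)=\delta/(4(1+M))$ (radius), which is exactly chosen so that the cone bound $-M|\tilde z|$ from below on the graph touches the lowest point of the ball. This is why the factor $(1+M)$, rather than $M$, appears in $r(p_0)$: it absorbs the flat case $M=0$ and keeps the estimate uniform as $M\to 0$.
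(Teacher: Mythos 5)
Your proof is correct and follows essentially the same geometric idea as the paper's proof — the ball inscribed in the Lipschitz cone at $p_0$ — though you carry the computation out in explicit graph coordinates, whereas the paper uses the trigonometric identity to get $R=\frac{\delta/4}{\sqrt{1+M^{2}}}$ and then (implicitly) relaxes to $r(p_0)=\frac{\delta}{4(1+M)}\le R$. Your version additionally verifies the inclusion $\Ball{r(p_{0})}{y}\subset\bP$, which the statement and the paper's proof leave implicit but which is clearly intended and used in the subsequent covering construction.
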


\begin{proof}
We can assume that $\partial\bP$ is locally a cone as in Figure \ref{fig:Ball-and-Cone}.
With regard to Figure \ref{fig:Ball-and-Cone}, for $p_{0}\in\partial\bP$
with $\delta$ and $M$ as in the statement we can place a right circular
cone with vertex (apex) $p_{0}$ and axis $\nu$ and an aperture $\theta=\pi-2\arctan M$
inside $\Ball{\delta}{p_{0}}$, where $\alpha=\arctan M$. In other
words, it holds $\tan\left(\alpha\right)=\tan\left(\frac{\pi-\theta}{2}\right)=M$.
Along the axis we may select $y$ with $\left|p_{0}-y\right|=\frac{\delta}{4}$.
Then the distance $R$ of $y$ to the cone is given through 
\[
\left|y-p_{0}\right|^{2}=R^{2}+R^{2}\tan^{2}\left(\frac{\pi-\theta}{2}\right)\quad\Rightarrow\quad R=\frac{\left|y-p_{0}\right|}{\sqrt{1+M^{2}}}\,.
\]
In particular $r\left(p_{0}\right)$ as defined above satisfies the
claim.
\end{proof}

\subsubsection*{Continuity properties of $\delta$, $M$ and $\varrho$}
\begin{lem}
\label{lem:properties-delta-M-regular}Let $\fr>0$, $\bP$ be a Lipschitz
domain and recall (\ref{eq:cover-delta-1}). Then $\partial\bP$ is
$\delta_{\Delta}$-regular in the sense of Definition \ref{def:eta-regular}.
In particular, $\delta_{\Delta}:\,\partial\bP\to\R$ is locally Lipschitz
continuous with Lipschitz constant $4$ and for every $\eps\in\left(0,\frac{1}{2}\right)$
and $\tilde{p}\in\Ball{\eps\delta}p\cap\partial\bP$ it holds 
\begin{equation}
\frac{1-\eps}{1-2\eps}\delta_{\Delta}\of p>\delta_{\Delta}\of{\tilde{p}}>\delta_{\Delta}\of p-\left|p-\tilde{p}\right|>\left(1-\eps\right)\delta_{\Delta}\of p\,.\label{eq:lem:properties-delta-M-regular-3}
\end{equation}
\end{lem}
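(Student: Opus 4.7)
The plan is to verify that $\eta := \delta_{\Delta}$ satisfies the $\eta$-regularity property~(\ref{eq:def:eta-regular}) of Definition~\ref{def:eta-regular}, and then to read off (\ref{eq:lem:properties-delta-M-regular-3}) together with the local Lipschitz continuity directly from Lemma~\ref{lem:eta-lipschitz}.

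The key building block is a point-shift stability property of $(\delta,M)$-regularity: suppose $\partial\bP\cap\Ball{\delta}{p}$ is, in some coordinate system, the graph of a Lipschitz function $\varphi:\tilde x\mapsto(\tilde x,\phi(\tilde x))$ with constant $M$. For any $\tilde p\in\partial\bP$ with $|\tilde p-p|<\delta$ and any $\delta'<\delta-|\tilde p-p|$ we have $\Ball{\delta'}{\tilde p}\subset\Ball{\delta}{p}$, so $\partial\bP\cap\Ball{\delta'}{\tilde p}$ is contained in the graph of the same $\varphi$ in the same coordinate system with the same Lipschitz constant $M$. Thus $\bP$ is $(\delta',M)$-regular at $\tilde p$. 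Taking the supremum over admissible $\delta'$ and then over $\delta<\Delta(p)$ gives
\[
\Delta(\tilde p)\;\geq\;\Delta(p)-|\tilde p-p|,
\]
and dividing by $2$ yields $\delta_{\Delta}(\tilde p)\geq\delta_{\Delta}(p)-\tfrac{1}{2}|\tilde p-p|$.

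For $\eps\in(0,\tfrac{1}{2})$ and $\tilde p\in\Ball{\eps\delta_{\Delta}(p)}{p}\cap\partial\bP$ this gives
\[
\delta_{\Delta}(\tilde p)\;>\;\bigl(1-\tfrac{\eps}{2}\bigr)\delta_{\Delta}(p)\;>\;(1-\eps)\,\delta_{\Delta}(p),
\]
which is precisely~(\ref{eq:def:eta-regular}) for $\eta=\delta_{\Delta}$. Invoking Lemma~\ref{lem:eta-lipschitz} then furnishes the chain~(\ref{eq:eta-lipschitz-ineq-chain})—which is exactly~(\ref{eq:lem:properties-delta-M-regular-3})—and the local Lipschitz continuity (in fact with constant $1$, comfortably below the stated bound of $4$).

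The only delicate point is the behaviour at the cutoff $\fr$ in the definition of $\Delta$: when $\Delta(p)=\fr$ the supremum is not attained and one must make sure that the bound $\Delta(\tilde p)\geq\Delta(p)-|\tilde p-p|$ is still valid. This is automatic, since $\Delta(\tilde p)\leq\fr$ by definition and the approximation $\delta\nearrow\Delta(p)$ produces $(\delta-|\tilde p-p|,M)$-regularity for all $\delta<\fr$, the capping at $\fr$ being harmless.
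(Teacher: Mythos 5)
Your proposal is correct and follows exactly the same route as the paper, whose proof simply states that the verification of $\eta$-regularity ``is straight forward'' and then invokes Lemma~\ref{lem:eta-lipschitz}; what you have done is spell out that straightforward step, namely that $(\delta,M)$-regularity at $p$ transfers to $(\delta',M)$-regularity at any nearby $\tilde p$ on $\partial\bP$ whenever $\delta'<\delta-|\tilde p-p|$, yielding $\Delta(\tilde p)\geq\Delta(p)-|\tilde p-p|$. Your observation that the resulting Lipschitz constant is in fact $1$ (indeed your bound $\delta_{\Delta}(\tilde p)\geq\delta_{\Delta}(p)-\tfrac12|\tilde p-p|$ even gives $\tfrac12$) rather than the stated $4$ is accurate; the lemma's constant is simply not tight.
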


\begin{rem}
\label{rem:no-global-lipschitz}The latter lemma does \textbf{\emph{not
}}imply global Lipschitz regularity of $\delta_{\Delta}$. It could
be that $2\delta_{\Delta}\of p<\left|p-\tilde{p}\right|<3\delta_{\Delta}\of p$
and $p$ and $\tilde{p}$ are connected by a path inside $\partial\bP$
with the shortest path of length $10\delta_{\Delta}\of p$. Then Lemma
\ref{lem:properties-delta-M-regular} would have to be applied successively
along this path yielding an estimate of $\left|\delta_{\Delta}\of p-\delta_{\Delta}\of{\tilde{p}}\right|\leq40\left|p-\tilde{p}\right|$.
\end{rem}

\begin{proof}[Proof of Lemma \ref{lem:properties-delta-M-regular}]
 It is straight forward to verify that $\left|p-\tilde{p}\right|<\eps\delta_{\Delta}(p)$
implies $\delta_{\Delta}(\tilde{p})>(1-\eps)\delta_{\Delta}(p)$ and
we conclude with Lemma \ref{lem:eta-lipschitz}.
\end{proof}
With regard to Lemma \ref{lem:uniform-extension-lemma}, the relevant
quantity for local extension operators is related to $\delta\of p/\sqrt{4M\of p^{2}+2}$,
where $M(p)$ is the related Lipschitz constant. While we can quantify
$\delta\of p$ in terms of $\delta\of{\tilde{p}}$ and $\left|p-\tilde{p}\right|$,
this does not work for $M\of p$. Hence we cannot quantify $\delta\of p/\sqrt{4M\of p^{2}+2}$
in terms of its neighbors. This drawback is compensated by a variational
trick in the following statement.
\begin{lem}
\label{lem:rho-p-lsc} Let $\bP$ be a Lipschitz domain and let $\delta\leq\delta_{\Delta}$
satisfy (\ref{eq:lem:properties-delta-M-regular-3}) such that $\partial\bP$
is $\delta$-regular. For $p\in\partial\bP$ and let $M_{r}\of p$
be given in (\ref{eq:cover-delta-M}) and define for $n,K\in\N$ \nomenclature[rho]{$\hat\rho(p)$}{$\inf\left\{ \delta\leq\delta(p)\,:\;\sup_{r<\delta}r\sqrt{4M_{r}\of p^{2}+2}^{-1}=\rho\right\} $ \eqref{eq:def-rhohat-of-p}}\nomenclature[rho]{$\rho(p)$}{$\sup_{r<\delta\of p}r\sqrt{4M_{r}\of p^{2}+2}^{-1}$ \eqref{eq:def-rho-of-p}}
\begin{align}
\rho_{n}{\left(p\right)} & :=\sup_{r<\delta\of p}r\sqrt{4M_{r}\of p^{2}+2}^{-n}\,,\label{eq:def-rho-of-p}\\
\hat{\rho}_{n,K}(p) & :=\inf\left\{ \delta\leq\delta(p)\,:\;\sup_{r<2^{-K}\delta}r\sqrt{4M_{r}\of p^{2}+2}^{-n}\geq2^{-K}\rho_{n}(p)\right\} \,.\label{eq:def-rhohat-of-p}
\end{align}
Then for fixed $p\in\partial\bP$ the functions $r\mapsto M_{r}(p)$
is right continuous and monotone increasing (i.e. u.s.c.). Furthermore,
$\rho_{n}$ is positive and locally Lipschitz continuous on $\partial\bP$
with Lipschitz constant $4$ and $\partial\bP$ is $\rho$-regular
in the sense of Definition \ref{def:eta-regular}. In particular,
for $\left|p-\tilde{p}\right|<\eps\rho_{n}{\left(p\right)}$ it holds
\begin{gather}
\frac{1-\eps}{1-2\eps}\rho_{n}\of p>\rho_{n}\of{\tilde{p}}>\rho_{n}\of p-\left|p-\tilde{p}\right|>\left(1-\eps\right)\rho_{n}\of p\,.\label{eq:rho-neighbor-estim}
\end{gather}
Furthermore, $\hat{\rho}_{n,K}\leq\delta$ is well defined.
\end{lem}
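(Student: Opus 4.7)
The plan is to dispatch the five assertions in the order listed, reducing the main nontrivial part (the $\rho_n$-regularity of $\partial\bP$ in the sense of Definition \ref{def:eta-regular}) to an application of Lemma \ref{lem:eta-lipschitz}, with everything else amounting to elementary bookkeeping on the defining infima and suprema.

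First I would treat $r\mapsto M_r(p)$ at a fixed $p\in\partial\bP$. Writing $g(\eta):=\inf\{M:\bP\text{ is }(\eta,M)\text{-regular in }p\}$, one has $M_r(p)=\inf_{\eta>r}g(\eta)=g(r+)$. Monotonicity of $g$ is immediate, because a Lipschitz graph parametrising $\partial\bP\cap\Ball{\eta_2}{p}$ restricts, with the same Lipschitz constant, to $\partial\bP\cap\Ball{\eta_1}{p}$ whenever $\eta_1<\eta_2$. Hence $r\mapsto M_r(p)$, being the right-continuous envelope of a monotone function, is itself monotone increasing and right-continuous (equivalently u.s.c.) as claimed: for $r'\searrow r$ the family $\{\eta>r'\}$ exhausts $\{\eta>r\}$, so $g(r'+)\searrow g(r+)$.

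The core of the lemma is the $\rho_n$-regularity of $\partial\bP$; positivity of $\rho_n$ is immediate from $M_r(p)<\infty$ for $r<\delta(p)$, and Lemma \ref{lem:eta-lipschitz} applied to $\eta=\rho_n$ will then supply the local Lipschitz bound together with the two-sided estimate \eqref{eq:rho-neighbor-estim}. To establish the regularity, fix $p,\tilde p\in\partial\bP$ with $|p-\tilde p|<\eps\rho_n(p)$ and pick a near-maximiser $r^{*}\in(0,\delta(p))$ with $r^{*}(4M_{r^{*}}(p)^2+2)^{-n/2}>\rho_n(p)-\eps_0$. Since $(4M_r^2+2)^{-n/2}\leq 1$ the near-maximiser satisfies $r^{*}\geq\rho_n(p)-\eps_0$, so for $\eps_0$ small one has $r':=r^{*}-|p-\tilde p|>0$; by the hypothesis that $\delta$ itself satisfies \eqref{eq:lem:properties-delta-M-regular-3}, also $\delta(\tilde p)\geq\delta(p)-|p-\tilde p|>r'$. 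The key geometric observation is that, after possibly rotating coordinates, the Lipschitz graph representing $\partial\bP\cap\Ball{\eta}{p}$ for $\eta>r^{*}$ restricts to a Lipschitz graph with the \emph{same} Lipschitz constant on $\Ball{\eta-|p-\tilde p|}{\tilde p}\subset\Ball{\eta}{p}$; passing to the infimum over such $\eta$ yields $M_{r'}(\tilde p)\leq M_{r^{*}}(p)$. Combining these ingredients,
\[
\rho_n(\tilde p)\geq r'(4M_{r'}(\tilde p)^2+2)^{-n/2}\geq (r^{*}-|p-\tilde p|)(4M_{r^{*}}(p)^2+2)^{-n/2}\geq r^{*}(4M_{r^{*}}(p)^2+2)^{-n/2}-|p-\tilde p|,
\]
and letting $\eps_0\to 0$ gives $\rho_n(\tilde p)\geq\rho_n(p)-|p-\tilde p|>(1-\eps)\rho_n(p)$, which is exactly the $\rho_n$-regularity condition; Lemma \ref{lem:eta-lipschitz} then finishes this part.

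Finally, for well-definedness of $\hat\rho_{n,K}(p)$ it suffices to check that $\delta(p)$ belongs to the set over which the infimum in \eqref{eq:def-rhohat-of-p} is taken. Given any $r<\delta(p)$, set $\tilde r:=2^{-K}r<2^{-K}\delta(p)$; monotonicity from the first step gives $M_{\tilde r}(p)\leq M_r(p)$, and hence $\tilde r(4M_{\tilde r}(p)^2+2)^{-n/2}\geq 2^{-K}r(4M_r(p)^2+2)^{-n/2}$. Taking the supremum over $r<\delta(p)$ on the right and over $\tilde r<2^{-K}\delta(p)$ on the left yields $\sup_{\tilde r<2^{-K}\delta(p)}\tilde r(4M_{\tilde r}(p)^2+2)^{-n/2}\geq 2^{-K}\rho_n(p)$, so $\delta(p)$ is admissible and $\hat\rho_{n,K}(p)\leq\delta(p)$. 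The main obstacle throughout is the neighbour inequality for $\rho_n$: since the sup defining $\rho_n$ is taken over a radius that itself depends on the base point, one cannot simply compare suprema; the whole argument pivots on the uniform comparison $M_{r'}(\tilde p)\leq M_{r^{*}}(p)$ obtained by geometric restriction of a single Lipschitz chart.
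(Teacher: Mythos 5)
Your proposal is correct and follows essentially the same route as the paper: right continuity and monotonicity of $r\mapsto M_r(p)$ by graph restriction, a near-maximiser $r^*$ combined with the inclusion $\Ball{r^*-|p-\tilde p|}{\tilde p}\subset\Ball{r^*}{p}$ (yielding $M_{r'}(\tilde p)\leq M_{r^*}(p)$) to establish $\rho_n$-regularity and invoke Lemma \ref{lem:eta-lipschitz}, and the scaling inequality $M_{2^{-K}r}\leq M_r$ for well-definedness of $\hat\rho_{n,K}$. The only cosmetic difference is that you derive the additive bound $\rho_n(\tilde p)\geq\rho_n(p)-|p-\tilde p|$ directly using $(4M^2+2)^{-n/2}\leq 1$, whereas the paper takes $r_\eta\in(\rho_n(p),\delta(p))$ and closes via a $(1-\eps)/(1+\eta)$ multiplicative bound before letting $\eta\to 0$; both yield the same conclusion.
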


\begin{rem}
Like in Remark \ref{rem:no-global-lipschitz} this does \textbf{\emph{not
}}imply global Lipschitz regularity of $\rho_{n}$ or $\hat{\rho}_{n}$.
\end{rem}

\begin{cor}
\label{cor:maximal-extension-order}Every Lipschitz domain $\bP$
has extension order $1$ and symmetric extension order $2$.
\end{cor}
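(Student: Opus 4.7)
The plan is to realize the two claims by pointwise application of the two local extension constructions already at our disposal: Lemma~\ref{lem:uniform-extension-lemma} for the reflection extension and Lemma~\ref{lem:uniform-Nitsche-extension-lemma} for the Nitsche extension. At $\P$-a.e.\ $p \in \partial\bP$ we have $\delta(p) > 0$ (by the local Lipschitz regularity assumption), so for every $r < \delta(p)$ we may, in a suitably rotated coordinate frame centered at $p$, write $\partial\bP \cap \Ball{r}{p}$ as the graph of a Lipschitz function whose constant is arbitrarily close to $M_r(p)$.

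For extension order $n=1$: given $p$, use the definition $\rho_1(p) = \sup_{r<\delta(p)} r\bigl(4M_r(p)^2+2\bigr)^{-1/2}$ to pick $r^\ast < \delta(p)$ with $r^\ast \bigl(4M_{r^\ast}(p)^2+2\bigr)^{-1/2} \geq \tfrac{1}{2}\rho_1(p)$. Apply Lemma~\ref{lem:uniform-extension-lemma} with lemma-parameters $\delta=r^\ast$ and $M = M_{r^\ast}(p)$ at the point $p$. This yields an extension operator defined on $\Ball{r^\ast/\sqrt{4M_{r^\ast}^2+2}}{p} \supset \Ball{\rho_1(p)/8}{p}$ with the reflection estimate $\|\nabla \cU u\|_{L^p} \leq 2 M_{r^\ast}(p)\, \|\nabla u\|_{L^p(\cA)}$ and $\cA \subset \Ball{r^\ast}{p} \subset \Ball{\delta(p)}{p}$. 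Monotonicity of $r \mapsto M_r(p)$ together with the freedom to restrict the extension to the smaller ball $\Ball{\rho_1(p)/8}{p}$ (which lets us equivalently take $r^\ast$ no larger than a fixed multiple of $\delta(p)/8$) allows us to replace $M_{r^\ast}(p)$ by $1+M_{\frac{1}{8}\delta(p)}(p)$ in the constant, at the cost of a dimensional factor. This is exactly \eqref{eq:ex-order-estimate-1}.

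For symmetric extension order $n=2$: repeat the same scheme with Lemma~\ref{lem:uniform-Nitsche-extension-lemma}. The Nitsche construction gives the symmetric gradient bound with constant $C(1+M)^2$ (matching the $(1+M)^2$ required by \eqref{eq:ex-order-estimate-2}) on a ball of radius $r^\ast/\sqrt{4M_{r^\ast}^2+2}$; however the auxiliary source domain $\cA(0,\bP,\rho)$ now extends vertically up to height $C_{\cN}(1+M^2)$. In order for $\cA$ to be contained in $\Ball{\delta(p)/8}{p}$, one must shrink the admissible horizontal scale by an additional factor of order $(1+M_{r^\ast}(p)^2)$. Optimising over $r$, the radius of the resulting extension ball is comparable to $\sup_{r<\delta(p)} r\,(4M_r(p)^2+2)^{-1} = \rho_2(p)$, and a further restriction to $\Ball{\rho_2(p)/8}{p}$ gives the required operator.

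The main obstacle, and the sole reason the symmetric order is $2$ rather than $1$, is the extra factor $(1+M^2)$ appearing in the vertical extent of $\cA$ in Lemma~\ref{lem:uniform-Nitsche-extension-lemma}: this forces one extra power of $\sqrt{4M_r^2+2}$ in the denominator of the maximal admissible $\rho$, precisely turning $\rho_1$ into $\rho_2$. Beyond that, the proof is essentially bookkeeping: matching lemma-parameters to the supremum defining $\rho_n(p)$, exploiting monotonicity of $M_r$ in $r$ to control $M_{r^\ast}$ by $M_{\delta(p)/8}$ up to a dimensional constant, and restricting the resulting local extension to the slightly smaller ball $\Ball{\rho_n(p)/8}{p}$ prescribed by Definition~\ref{def:extension-order}.
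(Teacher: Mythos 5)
Your proof mirrors the paper's: both apply Lemma~\ref{lem:uniform-extension-lemma} (for $n=1$) and Lemma~\ref{lem:uniform-Nitsche-extension-lemma} (for $n=2$) pointwise on $\partial\bP$, choose the local radius near where the supremum defining $\rho_n(p)$ is attained, and use monotonicity of $r\mapsto M_r(p)$ to control the Lipschitz constant by $M_{\frac{1}{8}\delta(p)}(p)$. The one step you gloss over is why the source domain $\cA$ actually lies inside $\Ball{\frac{1}{8}\delta(p)}{p}$: your initial choice $r^\ast<\delta(p)$ only gives $\cA\subset\Ball{\delta(p)}{p}$, and ``the freedom to restrict the extension to a smaller ball'' does not by itself force $r^\ast$ down to $\delta(p)/8$. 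The clean device, which the paper packages as $\hat{\rho}_{n,3}$ (last part of Lemma~\ref{lem:rho-p-lsc}), is the scaling observation $\sup_{r<\delta(p)/8}\, r\,\sqrt{4M_r(p)^2+2}^{\,-n}\geq\frac{1}{8}\rho_n(p)$, which follows from exactly the monotonicity you invoke and lets one choose $r^\ast<\delta(p)/8$ from the outset, so that $\cA\subset\Ball{r^\ast}{p}\subset\Ball{\frac{1}{8}\delta(p)}{p}$ while the target still contains $\Ball{\frac{1}{8}\rho_n(p)}{p}$. Your identification of the extra $(1+M^2)$ factor in the vertical extent of the Nitsche source domain as the sole reason the symmetric order jumps to $2$ is the correct observation, and the rest is the bookkeeping you describe.
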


\begin{proof}
This follows from $\hat{\rho}_{n,3}\leq\delta$ and Lemmas \ref{lem:uniform-extension-lemma}
and \ref{lem:uniform-Nitsche-extension-lemma} applied to $\Ball{\frac{1}{8}\hat{\rho}_{n,3}}{p_{0}}$
and $\Ball{\frac{1}{8}\rho_{n}}{p_{0}}$.
\end{proof}

\begin{proof}[Proof of Lemma \ref{lem:rho-p-lsc}]
Right continuity of $r\mapsto M_{r}(p)$ follows because for every
$0<r<R$ because $M$ being Lipschitz constant of $\partial\bP$ in
$\Ball Rp$ implies $M$ being Lipschitz constant of $\partial\bP$
in $\Ball rp$. 

Let $\left|p-\tilde{p}\right|<\eps\rho{\left(p\right)}<\eps\delta(p)$
implying $\delta{\left(\tilde{p}\right)}\geq\left(1-\eps\right)\delta{\left(p\right)}$
by Lemma \ref{lem:properties-delta-M-regular}. For every $\eta>0$
let $r_{\eta}\in\left(\rho(p),\delta\of p\right)$ such that $\rho{\left(p\right)}\leq\left(1+\eta\right)r_{\eta}\sqrt{4M_{r_{\eta}}{\left(p\right)}^{2}+2}^{-n}$.
Since $r_{\eta}>\rho{\left(p\right)}$ and $\left|p-\tilde{p}\right|<\eps\rho{\left(p\right)}$
we find $\Ball{r_{\eta}}p\supset\Ball{\left(1-\eps\right)r_{\eta}}{\tilde{p}}$
 and hence $M_{\left(1-\eps\right)r_{\eta}}{\left(\tilde{p}\right)}\leq M_{r_{\eta}}{\left(p\right)}$.
This implies at the same time that $\partial\bP$ is $\rho$-regular
and that 
\[
\rho{\left(\tilde{p}\right)}\geq\frac{\left(1-\eps\right)r_{\eta}}{\sqrt{4M_{\left(1-\eps\right)r_{\eta}}{\left(\tilde{p}\right)}^{2}+2}^{n}}\geq\frac{\left(1-\eps\right)r_{\eta}}{\sqrt{4M_{r_{\eta}}{\left(p\right)}^{2}+2}^{n}}\geq\frac{\left(1-\eps\right)}{\left(1+\eta\right)}\rho{\left(p\right)}\,.
\]
Since $\eta$ was arbitrary, we conclude $\rho{\left(\tilde{p}\right)}\geq\left(1-\eps\right)\rho{\left(p\right)}$.
Moreover, we find $\left|p-\tilde{p}\right|<\frac{\eps}{1-\eps}\rho{\left(\tilde{p}\right)}$.
And we conclude the first part with Lemma \ref{lem:eta-lipschitz}.

Second, it holds for every $r<\delta$ and $\eps\in(0,1)$ that 
\[
\eps r\sqrt{4M_{r}\of p^{2}+2}^{-n}\leq\eps r\sqrt{4M_{\eps r}\of p^{2}+2}^{-n}
\]
and choosing $\eps=2^{-K}$ and taking the supremum on both sides,
we infer $\hat{\rho}_{n,K}\leq\delta$.
\end{proof}

\begin{cor}
\label{cor:cover-boundary}Let $\fr>0$ and let $\bP\subset\Rd$ be
a locally $\left(\delta,M\right)$-regular open set, where we restrict
$\delta$ by $\delta\left(\cdot\right)\leq\frac{\fr}{4}$. Then there
exists a countable number of points $\left(p_{k}\right)_{k\in\N}\subset\partial\bP$
such that $\partial\bP$ is completely covered by balls $\Ball{\tilde{\rho}\left(p_{k}\right)}{p_{k}}$
where $\tilde{\rho}\left(p\right):=2^{-5}\rho_{n}\left(p\right)$
for some $n\in\N$. Writing 
\[
\tilde{\rho}_{k}:=\tilde{\rho}\of{p_{k}}\,,\qquad\delta_{k}:=\delta\of{p_{k}}\,.
\]
For two such balls with $\Ball{\tilde{\rho}_{k}}{p_{k}}\cap\Ball{\tilde{\rho}_{i}}{p_{i}}\neq\emptyset$
it holds 
\begin{equation}
\begin{aligned} & \frac{15}{16}\tilde{\rho}_{i}\leq\tilde{\rho}_{k}\leq\frac{16}{15}\tilde{\rho}_{i}\\
\text{and}\quad & \frac{31}{15}\min\left\{ \tilde{\rho}_{i},\tilde{\rho}_{k}\right\} \geq\left|p_{i}-p_{k}\right|\geq\frac{1}{2}\max\left\{ \tilde{\rho}_{i},\tilde{\rho}_{k}\right\} \,.
\end{aligned}
\label{eq:cor:cover-boundary-h1}
\end{equation}
Furthermore, there exists $\fr_{k}\geq\frac{\tilde{\rho}_{k}}{32\left(1+M_{\tilde{\rho}(p_{k})}\of{p_{k}}\right)}$
and $y_{k}$ such that $\Ball{\fr_{k}}{y_{k}}\subset\Ball{\tilde{\rho}_{k}/8}{p_{k}}\cap\bP$
and $\Ball{2\fr_{k}}{y_{k}}\cap\Ball{2\fr_{j}}{y_{j}}=\emptyset$
for $k\neq j$.
\end{cor}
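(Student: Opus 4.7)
The plan is to combine the abstract covering result Theorem \ref{thm:delta-M-rho-covering} with the cone--ball construction of Lemma \ref{lem:small-ball-in-cone}. First, Lemma \ref{lem:rho-p-lsc} tells us that $\rho_n$ satisfies exactly the regularity hypothesis (\ref{eq:thm:delta-M-rho-covering-a}) of Theorem \ref{thm:delta-M-rho-covering} on $\Gamma=\partial\bP$, with bound $\rho_n\le\delta_{\Delta}\le\fr/4$. I apply that theorem with $\eta=\rho_n$, exponent $K=5$ (so that $\tilde{\rho}:=2^{-5}\rho_n$ is precisely the function in the statement), and with separation constant $C=\tfrac12$. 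The theorem then delivers the countable cover $(p_k)_{k\in\N}$ together with the two-sided size comparison $\tfrac{2^{K-1}-1}{2^{K-1}}\tilde{\rho}_i\le\tilde{\rho}_k\le\tfrac{2^{K-1}}{2^{K-1}-1}\tilde{\rho}_i$ and the separation $\tfrac{2^K-1}{2^{K-1}-1}\min\{\tilde{\rho}_i,\tilde{\rho}_k\}\ge|p_i-p_k|\ge\tfrac12\max\{\tilde{\rho}_i,\tilde{\rho}_k\}$. Substituting $K=5$ gives exactly the ratios $15/16$, $16/15$, $31/15$, $1/2$ appearing in (\ref{eq:cor:cover-boundary-h1}).

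For the second assertion, I construct $(y_k,\fr_k)$ at each covering centre $p_k$ using Lemma \ref{lem:small-ball-in-cone}. Since $\tilde{\rho}_k\le\rho_n(p_k)\le\delta(p_k)$, I apply the lemma at $p_0=p_k$ with the parameter $\delta'=\tilde{\rho}_k/4$; this produces a point $y_k$ with $|p_k-y_k|=\tilde{\rho}_k/16$ and a radius $r(p_k)=\tilde{\rho}_k/\bigl(16(1+M_{\tilde{\rho}_k/4}(p_k))\bigr)$ such that $\Ball{r(p_k)}{y_k}\subset\Ball{\tilde{\rho}_k/8}{p_k}\cap\bP$. Setting $\fr_k:=r(p_k)$ and using the monotonicity of $r\mapsto M_r(p_k)$ established in Lemma \ref{lem:rho-p-lsc}, I have $M_{\tilde{\rho}_k/4}(p_k)\le M_{\tilde{\rho}_k}(p_k)$, so $\fr_k\ge\tilde{\rho}_k/\bigl(32(1+M_{\tilde{\rho}_k}(p_k))\bigr)$, as claimed.

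Finally, the pairwise disjointness of the enlarged balls $\Ball{2\fr_k}{y_k}$ is a simple triangle-inequality computation once the above constants are in place. For any two indices $j\ne k$, either $\Ball{\tilde{\rho}_j}{p_j}\cap\Ball{\tilde{\rho}_k}{p_k}=\emptyset$, in which case $|p_j-p_k|\ge\tilde{\rho}_j+\tilde{\rho}_k$ and disjointness is immediate since $\fr_k\le\tilde{\rho}_k/8$; or the two balls meet, in which case (\ref{eq:cor:cover-boundary-h1}) gives $|p_j-p_k|\ge\tfrac12\max\{\tilde{\rho}_j,\tilde{\rho}_k\}$. Combining this with $|y_k-p_k|=\tilde{\rho}_k/16$ and the comparison $\tilde{\rho}_j,\tilde{\rho}_k\le\tfrac{16}{15}\max\{\tilde{\rho}_j,\tilde{\rho}_k\}$, I obtain $|y_j-y_k|\ge\tfrac{11}{30}\max\{\tilde{\rho}_j,\tilde{\rho}_k\}$, whereas $2\fr_j+2\fr_k\le\tfrac{1}{4}\cdot\tfrac{16}{15}\max\{\tilde{\rho}_j,\tilde{\rho}_k\}=\tfrac{4}{15}\max\{\tilde{\rho}_j,\tilde{\rho}_k\}<\tfrac{11}{30}\max\{\tilde{\rho}_j,\tilde{\rho}_k\}$, so the balls are disjoint.

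The main obstacle is purely bookkeeping: the constant $2^{-5}$ has to be chosen so that both the separation estimates emerging from Theorem \ref{thm:delta-M-rho-covering} and the cone-ball fit from Lemma \ref{lem:small-ball-in-cone} leave enough room to close the triangle inequality in the last step. Everything else is a direct application of the two cited results, together with monotonicity of $M_r$ and the fact that $\rho_n\le\delta$.
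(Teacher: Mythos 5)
Your proof is correct and follows the same route as the paper's: cite Theorem \ref{thm:delta-M-rho-covering} (via Lemma \ref{lem:rho-p-lsc}) with $K=5$ for the covering and the comparison (\ref{eq:cor:cover-boundary-h1}), then invoke Lemma \ref{lem:small-ball-in-cone} for $(y_k,\fr_k)$. In fact your write-up is somewhat \emph{more} careful than the paper's own: you explicitly apply the cone--ball lemma at scale $\delta'=\tilde\rho_k/4$, track $|y_k-p_k|=\tilde\rho_k/16$ and $\fr_k\le\tilde\rho_k/16$, and establish the full $\Ball{2\fr_k}{y_k}\cap\Ball{2\fr_j}{y_j}=\emptyset$ claimed in the statement, whereas the paper's proof only asserts (tersely, and with radius $\fr_k$ rather than $2\fr_k$) that disjointness follows from the inclusion $\Ball{\fr_k}{y_k}\subset\Ball{\tilde\rho_k/8}{p_k}$.
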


\begin{proof}
The existence of the points and Balls satisfying (\ref{eq:cor:cover-boundary-h1})
follows from Theorem \ref{thm:delta-M-rho-covering}, in particular
(\ref{eq:thm:delta-M-rho-covering}). It holds for $\Ball{\tilde{\rho}_{k}}{p_{k}}\cap\Ball{\tilde{\rho}_{i}}{p_{i}}\neq\emptyset$
\[
\left|p_{i}-p_{k}\right|\leq\tilde{\rho_{i}}+\tilde{\rho}_{k}\leq\left(\frac{16}{15}+1\right)\tilde{\rho}_{i}\,.
\]
Lemma \ref{lem:small-ball-in-cone} yields existence of $y_{k}$ such
that $\Ball{\fr_{k}}{y_{k}}\subset\Ball{\tilde{\rho}_{k}/8}{p_{k}}\cap\bP$.
The latter implies $\Ball{\fr_{k}}{y_{k}}\cap\Ball{\fr_{j}}{y_{j}}=\emptyset$
for $k\neq j$.
\end{proof}

\begin{lem}
\label{lem:M-eta}Let $\fr>0$, $\bP\subset\Rd$ be a locally $\left(\delta,M\right)$-regular
open set and let $M_{0}\in(0,+\infty]$ such that for every $p\in\partial\bP$
there exists $\delta>0$, $M<M_{0}$ such that $\partial\bP$ is $(\delta,M)$-regular
in $p$. For $\alpha\in(0,1]$ let $\eta(p)=\alpha\delta_{\Delta}(p)$
from Lemma \ref{lem:properties-delta-M-regular} or $\eta(p)=\alpha\rho_{n}(p)$
from Lemma \ref{lem:rho-p-lsc} and define
\begin{align}
\rmM_{[\eta]}{\left(p\right)} & :=\inf_{\delta>\eta{\left(p\right)}}\inf_{M}\left\{ \,\bP\text{ is }\left(\delta,M\right)\text{-regular in }p\right\} \,.\label{eq:lem:M-eta-1}
\end{align}
Then, for fixed $\xi$, $\rmM_{[\eta]}\of{\cdot}:\;\partial\bP\to\R$
is upper semicontinuous and on each bounded measurable set $A\subset\Rd$
the quantity 
\begin{equation}
\rmM_{[\eta]}(A):=\sup_{p\in\overline{A}\cap\partial\bP}M_{[\eta]}\of p\label{eq:def-M-set-A}
\end{equation}
 \nomenclature[MA]{$M_{[\eta]},\,M_{[\eta],A}$ }{($A$ a set) Equation \eqref{eq:def-M-set-A}, a quantity on $\partial\bP$}\nomenclature[m]{$\fm_{[\eta]}{\left(p,\xi\right)}$}{Lemma \ref{lem:M-eta}}with
$\rmM_{[\eta]}(A)=0$ if $\overline{A}\cap\partial\bP=\emptyset$
is well defined. The functions 
\[
\rmM_{[\eta]}(A,\cdot):\;\Rd\to\R\,,\quad\rmM_{[\eta]}\of{A,x}:=\rmM_{[\eta]}(A+x)\quad\text{with }\rmM_{[\eta]}(A,0)=\rmM_{[\eta]}(A)
\]
are upper semicontinuous.
\end{lem}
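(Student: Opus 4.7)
My plan is to reduce everything to the following single observation: if $\bP$ is $(\delta_0,M_0)$-regular at $p_0$ with a particular Lipschitz chart $\phi_0$, then for any nearby $p$ the ball $\Ball{\delta_0-|p-p_0|}{p}$ is contained in $\Ball{\delta_0}{p_0}$, so $\partial\bP\cap\Ball{\delta_0-|p-p_0|}{p}$ is a subset of the graph of the \emph{same} $\phi_0$, and restricting a Lipschitz function to a subdomain can only decrease the Lipschitz constant. Hence $\bP$ is $(\delta_0-|p-p_0|,M_0)$-regular at $p$. The continuity of $\eta$ supplied by Lemmas \ref{lem:properties-delta-M-regular} and \ref{lem:rho-p-lsc} then guarantees $\eta(p)<\delta_0-|p-p_0|$ for $p$ close to $p_0$, which is exactly what is needed to test $p$ against the admissibility condition $\delta>\eta(p)$ in the definition of $\rmM_{[\eta]}(p)$.

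\textbf{Upper semicontinuity on $\partial\bP$.} Fix $p_0\in\partial\bP$ and $\eps>0$. From the definition \eqref{eq:lem:M-eta-1} pick $\delta_0>\eta(p_0)$ and a Lipschitz chart with constant $M_0\le\rmM_{[\eta]}(p_0)+\eps$. Combine the observation above with continuity of $\eta$: there exists $r>0$ such that for all $p\in\Ball r{p_0}\cap\partial\bP$ both $|p-p_0|<\tfrac12(\delta_0-\eta(p_0))$ and $\eta(p)<\delta_0-|p-p_0|$. For such $p$, set $\delta':=\delta_0-|p-p_0|>\eta(p)$; the same chart $\phi_0$ (in the same coordinates) exhibits $(\delta',M_0)$-regularity at $p$, so $\rmM_{[\eta]}(p)\le M_0\le\rmM_{[\eta]}(p_0)+\eps$. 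Letting $\eps\downarrow 0$ yields $\limsup_{p\to p_0}\rmM_{[\eta]}(p)\le\rmM_{[\eta]}(p_0)$.

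\textbf{Well-definedness of $\rmM_{[\eta]}(A)$.} For bounded $A$, the set $\overline A\cap\partial\bP$ is compact (closed and bounded). By assumption $\rmM_{[\eta]}<M_0\le\infty$, so the function is bounded above on this compact set; by the previous paragraph it is upper semicontinuous, hence attains its supremum. The empty-intersection case is handled by convention.

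\textbf{Upper semicontinuity of $x\mapsto\rmM_{[\eta]}(A+x)$.} Let $x_n\to x_0$ in $\Rd$. Arguing by contradiction, after passing to a subsequence assume $\rmM_{[\eta]}(A+x_n)\ge\rmM_{[\eta]}(A+x_0)+\alpha$ for some $\alpha>0$; in particular $\overline{A+x_n}\cap\partial\bP\ne\emptyset$. By the preceding step pick maximizers $p_n\in\overline{A+x_n}\cap\partial\bP$ with $\rmM_{[\eta]}(p_n)=\rmM_{[\eta]}(A+x_n)$. Writing $q_n:=p_n-x_n\in\overline A$ and using compactness of $\overline A$, pass to a further subsequence so that $q_n\to q_0\in\overline A$; then $p_n\to p_0:=q_0+x_0$. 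Since $\partial\bP$ is closed, $p_0\in\partial\bP$, and $p_0\in\overline{A+x_0}$. Upper semicontinuity of $\rmM_{[\eta]}$ on $\partial\bP$ gives
\[
\rmM_{[\eta]}(A+x_0)+\alpha\le\limsup_{n\to\infty}\rmM_{[\eta]}(p_n)\le\rmM_{[\eta]}(p_0)\le\rmM_{[\eta]}(A+x_0),
\]
a contradiction. The main obstacle is the chart transport in the first paragraph; once that is established, both remaining assertions reduce to a standard USC-plus-compactness argument, with the only care being the possibility that $\overline{A+x_n}\cap\partial\bP$ is empty for some $n$, which is ruled out precisely when we need the contradiction.
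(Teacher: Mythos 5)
Your proof is correct. For the upper semicontinuity on $\partial\bP$ you take a genuinely different, and in fact more elementary, route than the paper. The paper exploits the $\eta$-regularity (the two-sided inequality $\frac{1-\eps}{1-2\eps}\eta(p)>\eta(\tilde p)>(1-\eps)\eta(p)$) to set up ball inclusions $\Ball{\eta(\tilde p)}{\tilde p}\subset\Ball{r(p,\eps)}{p}$ and $\Ball{\eta(p)}{p}\subset\Ball{r(\tilde p,\tilde\eps)}{\tilde p}$, introduces the auxiliary quantity $M(p,\eps):=\inf_M\{\Ball{r(p,\eps)}p\cap\partial\bP\text{ is an }M\text{-Lipschitz graph}\}$, shows $\rmM_{[\eta]}(\tilde p)\le M(p,\eps)$, and then argues $M(p,\eps)\searrow\rmM_{[\eta]}(p)$ as $\eps\to0$. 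You skip all of that: you pick a near-optimal pair $(\delta_0,M_0)$ for $p_0$ and transport the chart to nearby $p$ by the trivial inclusion $\Ball{\delta_0-|p-p_0|}{p}\subset\Ball{\delta_0}{p_0}$, needing only plain continuity of $\eta$ to guarantee $\delta_0-|p-p_0|>\eta(p)$. This avoids introducing and controlling $M(p,\eps)$ and makes the $\limsup$ estimate a one-liner; it also uses slightly less about $\eta$ than the paper does. The well-definedness and the upper semicontinuity of $x\mapsto\rmM_{[\eta]}(A+x)$ you handle by the same compactness-plus-USC argument as the paper, only phrased as a contradiction and spelling out more carefully the extraction of a convergent subsequence of maximizers (the paper's ``Since $\overline A+x\to\overline A$ w.l.o.g.\ we find $y_x\to y$'' is the same step done more tersely). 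One very small remark: the observation that $\rmM_{[\eta]}$ is bounded above on $\overline A\cap\partial\bP$ follows already from USC on a compact set for a real-valued function, without invoking the hypothesis $M<M_0$; your appeal to $M_0$ is harmless but not needed.
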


\begin{rem}
\label{rem:difference-M-eta-M-eta}Note at this point that $M_{[\eta,r],\Rd}$
defined in (\ref{eq:lem:local-delta-M-construction-estimate-1b})
is a function on $\Rd$ and different from $M_{[\eta]}$. 
\end{rem}

\begin{notation}
The infimum in (\ref{eq:lem:M-eta-1}) is a $\liminf$ for $\delta\searrow\eta(p)$.
We sometimes use the special notation .......................???????????????????????.................
\[
M_{[\eta],\fr}(x):=M_{[\eta],\Ball{\fr}0}(x)\,.
\]
\end{notation}

\begin{proof}[Proof of Lemma \ref{lem:M-eta}]
Let $p,\tilde{p}\in\partial\bP$ with $\left|p-\tilde{p}\right|<\eps\eta(p)$.
Writing $\tilde{\eps}:=\frac{\eps}{1-\eps}$ and $r\left(p,\eps\right):=\left(\frac{1}{1-2\eps}+\eps\right)\eta\of p$
and 
\[
M\of{p,\eps}:=\inf_{M}\left\{ \Ball{r\left(p,\eps\right)}p\cap\partial\bP\text{ is }M\text{-Lipschitz graph}\right\} 
\]
as well as we observe from $\eta$-regularity that $\Ball{\eta\left(\tilde{p}\right)}{\tilde{p}}\subset\Ball{r\left(p,\eps\right)}p$
and $\Ball{\eta\left(p\right)}p\subset\Ball{r\left(\tilde{p},\tilde{\eps}\right)}{\tilde{p}}$.
Hence we find 
\[
\rmM_{[\eta]}\of{\tilde{p}}\leq M\of{p,\eps}\,.
\]
Observing that $M\of{p,\eps}\searrow\rmM_{[\eta]}\of p$ as $\eps\to0$
we find $\limsup_{\tilde{p}\to p}\rmM_{[\eta]}\of{\tilde{p}}\leq\rmM_{[\eta]}\of p$
and $\rmM$ is u.s.c.

Let $x\to0$. First observe that $\rmM_{[\eta]}(A)=\max_{y\in\overline{A}}\rmM_{[\eta]}\of y$.
The set $\overline{A}$ is compact and hence $\overline{A}+x\to\overline{A}$
in the Hausdorff metric as $x\to0$. Let $y_{x}\in\overline{A}+x$
such that $\rmM_{[\eta]}\of{y_{x}}=\rmM_{[\eta]}\left(A,x\right)$.
Since $\overline{A}+x\to\overline{A}$ w.l.o.g. we find $y_{x}\to y$
converges and $y\in\overline{A}$. Hence 
\[
\rmM_{[\eta]}\of y\geq\limsup_{x\to0}\rmM_{[\eta]}\of{y_{x}}=\limsup_{x\to0}\rmM_{[\eta]}\of{A,x}\,.
\]
In particular, $M_{[\eta],A}\of{\cdot}$ is u.s.c. The u.s.c of $\fm_{[\eta]}{\left(p,\xi\right)}$
can be proved similarly.
\end{proof}

\subsubsection*{Measurability and Integrability of Extended Variables}
\begin{lem}
\label{lem:delta-rho-M-measurable}Let $\fr>0$, let $\bP\subset\Rd$
be a Lipschitz domain and let $\eta,r:\,\partial\bP\to\R$ be continuous
such that $\eta\leq\fr$ and $\bP$ is $\eta$- and $r$-regular.
For $\eps\in(0,1]$ let $\eta(p)=\eps\delta(p)$ from Lemma \ref{lem:properties-delta-M-regular}
or $\eta(p)=\eps\rho_{n}(p)$, $n\in\N$, from Lemma \ref{lem:rho-p-lsc}.
Then $\eta_{[r],\Rd}$ from (\ref{eq:lem:local-delta-M-construction-estimate-1})
is measurable and $M_{[\eta,r],\Rd}$ from (\ref{eq:lem:local-delta-M-construction-estimate-1b})
is upper semicontinuous.
\end{lem}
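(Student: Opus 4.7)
The plan is to prove both claims by giving an explicit description of the sublevel sets of the two maps as set-theoretic combinations of open or closed balls centered at points of $\partial\bP$, and then to feed in continuity of $\eta$ and upper semicontinuity of the pointwise Lipschitz constant on $\partial\bP$.

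For the Borel measurability of $\eta_{[r],\Rd}$, the first step is to isolate the auxiliary open set $U:=\bigcup_{\tilde x\in\partial\bP}\Ball{r(\tilde x)}{\tilde x}$. By the convention $\inf\emptyset:=0$ one has $\eta_{[r],\Rd}\equiv 0$ on $\Rd\setminus U$, so for every $c>0$ the closed set $\Rd\setminus U$ sits inside $\{\eta_{[r],\Rd}<c\}$. On $U$ itself, a point $x$ satisfies $\eta_{[r],\Rd}(x)<c$ iff some $\tilde x\in\partial\bP$ has $\eta(\tilde x)<c$ and $x\in\Ball{r(\tilde x)}{\tilde x}$. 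Continuity of $\eta$ on $\partial\bP$, which follows from Lemma \ref{lem:properties-delta-M-regular} or Lemma \ref{lem:rho-p-lsc} according to the choice of $\eta$, makes $\{\tilde x\in\partial\bP:\,\eta(\tilde x)<c\}$ relatively open in $\partial\bP$, and hence the union $\bigcup_{\tilde x:\,\eta(\tilde x)<c}\Ball{r(\tilde x)}{\tilde x}$ is open in $\Rd$. Thus $\{\eta_{[r],\Rd}<c\}$ is the union of a closed and an open set, so Borel; sublevel sets for $c\leq 0$ are empty. This gives Borel measurability.

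For upper semicontinuity of $M_{[r,\eta],\Rd}$ I would use the sequential criterion. Take $x_n\to x$ with $M_{[r,\eta],\Rd}(x_n)\geq c$; the case $c\leq 0$ is automatic, so assume $c>0$. Then the defining set at each $x_n$ is non-empty and I can pick $\tilde x_n\in\partial\bP$ with $|x_n-\tilde x_n|\leq\eta(\tilde x_n)\leq\fr$ and $M_{r(\tilde x_n)}(\tilde x_n)\geq c-\tfrac1n$. Because $\eta$ is bounded by $\fr$ and $(x_n)$ is bounded, the sequence $(\tilde x_n)$ is bounded, so a subsequence converges to some $\tilde x^\ast$ which, since $\partial\bP$ is closed, lies in $\partial\bP$. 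Passing to the limit in the closed-ball condition using continuity of $\eta$ yields $|x-\tilde x^\ast|\leq\eta(\tilde x^\ast)$, so $\tilde x^\ast$ is admissible for the supremum defining $M_{[r,\eta],\Rd}(x)$.

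The remaining and main technical ingredient is upper semicontinuity of $p\mapsto M_{r(p)}(p)$ on $\partial\bP$. For the specific choices $\eta=\alpha\delta_\Delta$ or $\alpha\rho_n$ this is Lemma \ref{lem:M-eta} applied to the quantity $M_{[\eta]}$; in the present statement only continuity and $r$-regularity of $r$ are assumed, but the proof of Lemma \ref{lem:M-eta} uses nothing more than $r$-regularity. The key inclusion $\Ball{r(\tilde p)}{\tilde p}\subset\Ball{r(p,\varepsilon)}p$ valid for $\tilde p$ close to $p$ still yields $\limsup_{\tilde p\to p}M_{r(\tilde p)}(\tilde p)\leq M_{r(p)}(p)$. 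With this in hand, $M_{r(\tilde x^\ast)}(\tilde x^\ast)\geq\limsup_k M_{r(\tilde x_{n_k})}(\tilde x_{n_k})\geq c$, so $M_{[r,\eta],\Rd}(x)\geq c$, proving that the sublevel set $\{M_{[r,\eta],\Rd}<c\}$ is open. The main obstacle is precisely this transfer of Lemma \ref{lem:M-eta} to a generic $r$-regular weight, together with the care needed so that the conventions $\inf\emptyset=\sup\emptyset:=0$ at points without admissible $\tilde x$ do not spoil the sublevel set description.
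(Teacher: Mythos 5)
Your proof is correct and follows essentially the same strategy as the paper. For the measurability of $\eta_{[r],\Rd}$, you describe the sublevel sets $\{\eta_{[r],\Rd}<c\}$ directly as a union of the closed complement of the admissible region with an open union of balls; the paper instead introduces auxiliary upper semicontinuous functions $f_p$ and reduces the infimum over $\partial\bP$ to an infimum over a countable dense subset, obtaining $\eta_{[r],\Rd}=\chi_A F$ with $F$ measurable. Both routes hinge on the same observations — continuity of $\eta$ and $r$, openness of the admissible region — and yield Borel measurability. For the upper semicontinuity of $M_{[r,\eta],\Rd}$, your sequential argument (pick admissible $\tilde x_n$, pass to a convergent subsequence in the compact set $\partial\bP\cap\overline{\Ball{\fr}{x}}$, use closedness of $\overline{\Ball{\eta(\cdot)}{\cdot}}$ via continuity of $\eta$, then invoke upper semicontinuity of $p\mapsto M_{r(p)}(p)$) is exactly the paper's Step 2. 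Your final remark is also correct and well-placed: the paper cites Lemma \ref{lem:M-eta} for upper semicontinuity of the pointwise Lipschitz constant, and that proof only uses the $\eta$-regularity estimate (\ref{eq:thm:delta-M-rho-covering-a}); since the present lemma explicitly hypothesizes $r$-regularity of $r$, the argument transfers to the map $p\mapsto M_{r(p)}(p)$ without change. Your care in handling the convention $\inf\emptyset=\sup\emptyset=0$ in the sublevel sets is appropriate and matches the role of $\chi_A$ in the paper's formulation.
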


In what follows, we write $A_{\eta,\fr}:=F^{-1}\of{(0,\frac{3}{2}\fr)}$
for 
\[
F:=\inf_{p\in\partial\bP}f_{p}\,,\qquad f_{p}\of x:=\begin{cases}
\eta\of p & \text{if }x\in\Ball{r(p)}p\\
2\fr & \text{else}
\end{cases}\,.
\]

\begin{proof}
Step 1: We write $A=A_{\eta,\fr}$ for simplicity. Let $\left(p_{i}\right)_{i\in\N}\subset\partial\bP$
be a dense subset. If $x\in\Ball{r\of p}p$ for some $p\in\partial\bP$
then also $x\in\Ball{r\of{\tilde{p}}}{\tilde{p}}$ for $\left|p-\tilde{p}\right|$
sufficiently small, by continuity of $\eta$. Hence every $f_{p}$
is upper semicontinuous and it holds $F=\inf_{i\in\N}f_{p_{i}}$.
In particular, $F$ is measurable and so is the set $A$. This implies
$\eta_{[r],\Rd}=\chi_{A}F$ is measurable.

Step 2: We show that for every $a\in\R$ the preimage $M_{[\eta,r],\Rd}^{-1}\of{[a,+\infty)}$
is closed. Let $\left(x_{k}\right)_{k\in\N}$ be a sequence with $M_{[\eta,r],\Rd}\of{x_{k}}\in[a,+\infty)$.
Let $\left(p_{k}\right)\subset\partial\bP$ be a sequence with $\left|x_{k}-p_{k}\right|\leq r\of{p_{k}}$.
W.l.o.g. assume $p_{k}\to p\in\partial\bP$ and $x_{k}\to x\in\Rd$.
Since $r$ is continuous, it follows $\left|x-p\right|\leq r\of p$.
On the other hand $M_{[\eta]}(p)\geq\limsup_{k\to\infty}M_{[\eta]}(p_{k})$
and thus $M_{[\eta,r],\Rd}\of x\geq M_{[\eta,r]}(p)\geq a$.
\end{proof}
\begin{lem}
\label{lem:delta-tilde-construction-estimate}Under the assumptions
of Lemma \ref{lem:delta-rho-M-measurable} let $\tilde{\eta}:=\eta_{[\frac{\eta}{8}],\Rd}$.
Then there exists a constant $C>0$ only depending on the dimension
$d$ such that for every bounded open domain $\bQ$ and $k\in[0,4)$
it holds
\begin{align}
\int_{A_{\eta,\fr}\cap\bQ}\chi_{\tilde{\eta}>0}\tilde{\eta}^{-\alpha} & \leq C\int_{\Ball{\frac{\fr}{4}}{\bQ}\cap\partial\bP}\eta^{1-\alpha}M_{[\frac{\eta}{4}],\Rd}^{d-2}\,,\label{eq:lem:delta-tilde-construction-estimate-1}\\
\int_{A_{\eta,\fr}\cap\bQ}\tilde{\eta}^{-\alpha}M_{[k\frac{\eta}{8},\frac{\eta}{8}],\Rd}^{r} & \leq C\int_{\Ball{\frac{\fr}{4}}{\bQ}\cap\partial\bP}\eta^{1-\alpha}M_{[k\frac{\eta}{8},\frac{\eta}{4}],\Rd}^{r}M_{[\frac{\eta}{4}],\Rd}^{d-2}\,.\label{eq:lem:delta-tilde-construction-estimate-2}
\end{align}
Finally, it holds 
\begin{equation}
x\in\Ball{\frac{1}{8}\eta\of p}p\quad\Rightarrow\quad\eta\of p>\tilde{\eta}\of x>\frac{3}{4}\eta\of p\,.\label{eq:lem:local-delta-M-construction-estimate-2}
\end{equation}
\end{lem}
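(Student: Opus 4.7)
The plan is to prove the pointwise estimate \eqref{eq:lem:local-delta-M-construction-estimate-2} first and then use it to localize the two integral inequalities via a controlled covering of $\partial\bP$, converting volume integrals near $\partial\bP$ into surface integrals. For \eqref{eq:lem:local-delta-M-construction-estimate-2}: given $x\in\Ball{\eta(p)/8}{p}$, any competitor $\tilde p\in\partial\bP$ with $x\in\Ball{\eta(\tilde p)/8}{\tilde p}$ satisfies $|p-\tilde p|<\eta(p)/8+\eta(\tilde p)/8$ by the triangle inequality. Splitting into the cases $\eta(\tilde p)\le\eta(p)$ and $\eta(\tilde p)>\eta(p)$ and applying Lemma~\ref{lem:eta-lipschitz} with $\eps=1/4$ yields $\eta(\tilde p)>\tfrac{3}{4}\eta(p)$ in both cases; taking the infimum over $\tilde p$ gives $\eta(p)\ge\tilde\eta(x)\ge\tfrac{3}{4}\eta(p)$.

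For the two integral estimates, I would apply Theorem~\ref{thm:delta-M-rho-covering} to $\eta$ with $K=3$ to obtain a locally finite family of points $(p_k)\subset\partial\bP$ such that the balls $B_k:=\Ball{\eta(p_k)/8}{p_k}$ cover $\partial\bP$, have uniformly bounded overlap, and satisfy the comparability \eqref{eq:thm:delta-M-rho-covering}. A short argument combining this covering with the previous step shows that every $x\in A_{\eta,\fr}\cap\bQ$ with $\tilde\eta(x)>0$ lies in a fixed enlargement $\hat B_k:=\Ball{c\eta(p_k)}{p_k}$ of some $B_k$ with $p_k\in\Ball{\fr/4}{\bQ}$; moreover, on $\hat B_k$ one has $\tilde\eta\asymp\eta(p_k)$ and $M_{[k\eta/8,\eta/8],\Rd}(x)\le M_{[k\eta/8,\eta/4]}(p_k)$ by the definitions of these extended quantities together with the comparability of $\eta$ on overlaps.

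From these pointwise controls one reads off, for each $k$,
\[
\int_{\hat B_k}\tilde\eta^{-\alpha}\le C\eta(p_k)^{d-\alpha},\qquad \int_{\hat B_k}\tilde\eta^{-\alpha}M_{[k\eta/8,\eta/8],\Rd}^{r}\le C\eta(p_k)^{d-\alpha}M_{[k\eta/8,\eta/4]}(p_k)^{r}.
\]
The geometric input is a lower bound on the local surface measure: since $\partial\bP\cap B_k$ is the graph of a Lipschitz function of constant $M_k:=M_{\eta(p_k)}(p_k)$ passing through $p_k$, a projection/isoperimetric argument yields
\[
\cH^{d-1}(\partial\bP\cap B_k)\ge c\,\eta(p_k)^{d-1}(1+M_k)^{-(d-2)}.
\]
Hence $\eta(p_k)^{d-1}\le C(1+M_k)^{d-2}\cH^{d-1}(\partial\bP\cap B_k)$; multiplying this bound by $\eta(p_k)^{1-\alpha}$ respectively by $\eta(p_k)^{1-\alpha}M_{[k\eta/8,\eta/4]}(p_k)^{r}$, summing over $k$, and using the finite overlap of $(B_k)$ together with the local comparability of $\eta$ and monotonicity of $M_{[\,\cdot\,]}$ converts the sums into the claimed surface integrals over $\Ball{\fr/4}{\bQ}\cap\partial\bP$.

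The main obstacle is the surface lower bound $\cH^{d-1}(\partial\bP\cap B_k)\ge c\eta(p_k)^{d-1}(1+M_k)^{-(d-2)}$: the naive graph parametrisation only produces $(1+M_k)^{-(d-1)}$ (the projection onto the base hyperplane contains a $(d-1)$-ball of radius $\eta(p_k)/\sqrt{1+M_k^2}$), and the sharper exponent requires exploiting that $\sqrt{1+|\nabla\phi|^2}$ becomes large precisely where the base projection shrinks, so that the area element $\int_{U_\delta}\sqrt{1+|\nabla\phi|^2}$ picks up a compensating factor; equivalently, it follows from an isoperimetric estimate in a ball using that both $\bP\cap B_k$ and $B_k\setminus\bP$ have volume comparable to $\eta(p_k)^{d}(1+M_k)^{-(d-1)}$. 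Once this is established, the remaining work is careful bookkeeping of the constants from Theorem~\ref{thm:delta-M-rho-covering}.
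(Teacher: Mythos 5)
Your proposal follows essentially the paper's route: the pointwise bound \eqref{eq:lem:local-delta-M-construction-estimate-2}, the covering from Theorem~\ref{thm:delta-M-rho-covering}, and then the conversion of volume integrals near $\partial\bP$ into surface integrals through the lower bound $\cH^{d-1}(\partial\bP\cap B_k)\gtrsim\eta(p_k)^{d-1}(1+M_k)^{2-d}$, which the paper obtains by taking the extremal (smallest-area) case of a cone with half-angle $\tfrac{\pi}{2}-\arctan M_k$ and computing its area. You correctly isolate the exponent $2-d$ (rather than the naive $1-d$) as the crux, and your primary explanation --- that the area element $\sqrt{1+|\nabla\phi|^2}$ compensates the shrinkage of the base projection --- is precisely the computation that produces $(1+M_k)^{2-d}$ for the cone and is what the paper implicitly relies on.

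Your alternative isoperimetric justification, however, is not a valid replacement. First, its premise is false: for the extremal cone only the narrow side has volume $\sim\eta(p_k)^d(1+M_k)^{1-d}$, while the other side has volume $\sim\eta(p_k)^d$, so the two volumes are not both comparable to $\eta(p_k)^d(1+M_k)^{1-d}$. Second, even granting $\min\{|\bP\cap B_k|,\,|B_k\setminus\bP|\}\gtrsim\eta(p_k)^d(1+M_k)^{1-d}$, the relative isoperimetric inequality in a ball only yields $\cH^{d-1}(\partial\bP\cap B_k)\gtrsim\eta(p_k)^{d-1}(1+M_k)^{-(d-1)^2/d}$, and since $(d-1)^2/d=(d-2)+1/d>d-2$ for every $d\ge2$, this is a strictly weaker lower bound than the $\eta(p_k)^{d-1}(1+M_k)^{2-d}$ that the summation actually requires; plugging it in leaves an extra uncontrolled factor $(1+M_k)^{1/d}$. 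So the reduction to the cone and the area-element computation cannot be bypassed by isoperimetry: your first mechanism is the right one, and the second should be dropped.
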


\begin{rem}
Estimates (\ref{eq:lem:delta-tilde-construction-estimate-1})--(\ref{eq:lem:delta-tilde-construction-estimate-2})
are only rough estimates and better results could be obtained via
more sophisticated calculations that make use of particular features
of given geometries.
\end{rem}

\begin{proof}
We write $A=A_{\eta,\fr}$ for simplicity. Step 1: Given $x\in\Rd$
with $\tilde{\eta}(x)>0$ let 
\begin{equation}
p_{x}\in\argmin\left\{ \eta\of{\tilde{x}}\,:\;\tilde{x}\in\partial\bP\,\text{s.t. }x\in\overline{\Ball{\frac{1}{8}\eta\of{\tilde{x}}}{\tilde{x}}}\right\} \,.\label{eq:lem:delta-tilde-construction-estimate-h-1}
\end{equation}
Such $p_{x}$ exists because $\partial\bP$ is locally compact. We
observe with help of the definition of $p_{x}$, the triangle inequality
and (\ref{eq:thm:delta-M-rho-covering-a})
\[
x\in\Ball{\frac{1}{8}\eta\of p}p\quad\Rightarrow\quad\eta\of{p_{x}}\leq\eta\of p\quad\Rightarrow\quad\left|p-p_{x}\right|<\frac{\eta\of p}{4}\quad\Rightarrow\quad\eta\of{p_{x}}>\frac{3}{4}\eta\of p\,.
\]
The last line particularly implies (\ref{eq:lem:local-delta-M-construction-estimate-2})
and 
\[
\forall p\in\partial\bP\;\forall x\in\Ball{\frac{\eta\of p}{8}}p:\quad\tilde{\eta}\of x>\frac{3\eta\of p}{4}\,.
\]
Step 2: By Theorem \ref{thm:delta-M-rho-covering} we can chose a
countable number of points $\left(p_{k}\right)_{k\in\N}\subset\partial\bP$
such that $\Gamma=\partial\bP$ is completely covered by balls $B_{k}:=\Ball{\xi\of{p_{k}}}{p_{k}}$
where $\xi\of p:=2^{-4}\eta\of p$. For simplicity of notation we
write $\eta_{k}:=\eta\of{p_{k}}$ and $\xi_{k}:=\xi\of{p_{k}}$. Assume
$x\in A$ with $p_{x}\in\Gamma$ given by (\ref{eq:lem:delta-tilde-construction-estimate-h-1}).
Since the balls $B_{k}$ cover $\Gamma$, there exists $p_{k}$ with
$\left|p_{x}-p_{k}\right|<\xi_{k}=2^{-4}\eta_{k}$, implying $\eta\of{p_{x}}<\frac{2^{4}}{2^{4}-1}\eta_{k}$
and hence $\left|x-p_{k}\right|\leq\left(2^{-4}+\frac{2^{-3}2^{4}}{2^{4}-1}\right)\eta_{k}<\frac{3}{16}\eta_{k}$.
Hence we find
\[
\forall x\in A\,\;\exists p_{k}\;:\quad x\in\Ball{\frac{3}{16}\eta_{k}}{p_{k}}\,.
\]
Step 3: For $p\in\Gamma$ with $x\in\Ball{\frac{1}{4}\eta\of p}p\cap\Ball{\frac{1}{8}\eta\of{p_{x}}}{p_{x}}$
we can distinguish two cases:
\begin{enumerate}
\item $\eta\of p\geq\eta\of{p_{x}}$: Then $p_{x}\in\Ball{\frac{3}{8}\eta\of p}p$
and hence $\eta\of{p_{x}}\ge\frac{5}{8}\eta\of p$ by (\ref{eq:thm:delta-M-rho-covering-a}).
\item $\eta\of p<\eta\of{p_{x}}$: Then $p\in\Ball{\frac{3}{8}\eta\of{p_{x}}}{p_{x}}$
and hence$\eta\of{p_{x}}>\frac{1-\frac{3}{8}}{1-\frac{6}{8}}\eta\of p=\frac{5}{2}\eta\of p$
by (\ref{eq:thm:delta-M-rho-covering-a}).
\end{enumerate}
and hence 
\[
x\in\Ball{\frac{1}{4}\eta\of p}p\qquad\Rightarrow\qquad\tilde{\eta}\of x=\eta\of{p_{x}}>\frac{5}{8}\eta\of p\,.
\]
Step 4:  Let $k\in\N$ be fixed and define $B_{k}=\Ball{\frac{1}{4}\eta_{k}}{p_{k}}$,
$M_{k}:=M_{\frac{1}{4}\eta_{k}}(p_{k})$. By construction, every $B_{j}$
with $B_{j}\cap B_{k}\neq\emptyset$ satisfies $\eta_{j}\geq\frac{1}{2}\eta_{k}$
and hence if $B_{j}\cap B_{k}\neq\emptyset$ and $B_{i}\cap B_{j}\neq\emptyset$
we find $\left|p_{j}-p_{i}\right|\geq\frac{1}{4}\eta_{k}$ and $\left|p_{j}-p_{k}\right|\leq3\eta_{k}$.
This implies that 
\[
\exists C>0:\;\forall k\quad\#\left\{ j\,:\;B_{j}\cap B_{k}\neq\emptyset\right\} \leq C\,.
\]
We further observe that the minimal surface of $B_{k}\cap\partial\bP$
is given in case when $B_{k}\cap\partial\bP$ is a cone with opening
angle $\frac{\pi}{2}-\arctan M\of{p_{k}}$. The surface area of $B_{k}\cap\partial\bP$
in this case is bounded by $\frac{1}{d-1}\left|\S^{d-2}\right|\eta_{k}^{d-1}\left(M_{k}+1\right)^{2-d}$.
This particularly implies up to a constant independent from $k$:
\begin{align*}
\int_{A\cap\bQ\cap\bP}\tilde{\eta}^{-\alpha} & \lesssim\sum_{k:\,B_{k}\cap\bQ\neq\emptyset}\int_{A\cap B_{k}\cap\bP}\eta_{k}^{-\alpha}\\
 & \lesssim\sum_{k:\,B_{k}\cap\bQ\neq\emptyset}\int_{A\cap B_{k}\cap\partial\bP}\eta^{1-\alpha}M_{[\frac{\eta}{4}]}^{d-2}\\
 & \lesssim\int_{A\cap\Ball{\frac{\fr}{4}}{\bQ}\cap\partial\bP}\eta^{1-\alpha}M_{[\frac{\eta}{4}]}^{d-2}\,.
\end{align*}
The second integral formula follows in a similar way.
\end{proof}

\subsection{\label{subsec:Mesoscopic-Regularity}Mesoscopic Regularity and Isotropic
Cone Mixing}
\begin{lem}
\label{lem:Alway-mesoscopic-regular}Let $\bP(\omega)$ be a stationary
and ergodic random open set such that 
\[
\P{\left(\bP\cap\I=\emptyset\right)}<1\,.
\]
Then there exists $\fr>0$ and a positive, monotonically decreasing
function $\tilde{f}$ such that almost surely $\bP(\omega)$ is $(\fr,\tilde{f})$-mesoscopic
regular. 
\end{lem}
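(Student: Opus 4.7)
The plan is a short, qualitative $0$--$1$ argument leveraging the openness of $\bP$ together with ergodicity.

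First, I would choose $\fr$. Consider the events
\[
E_\fr := \{\omega : \exists x \in \I \text{ with } \Ball{4\sqrt{d}\fr}{x} \subset \bP(\omega)\}.
\]
These are monotone increasing as $\fr \searrow 0$, with union equal to $\{\bP \cap \I \neq \emptyset\}$: openness of $\bP$ guarantees that any point of $\bP \cap \I$ admits an open enclosing ball inside $\bP$. By continuity of measure and the hypothesis $\P(\bP \cap \I = \emptyset) < 1$, for $\fr > 0$ small enough one has $p := \P(E_\fr) > 0$. Fix such an $\fr$. (Measurability of $E_\fr$ is inherited from the measurability of the random closed set via the characterisations in Lemma \ref{lem:contin-geom-Ops}; this is essentially a condition on $\bP_{-4\sqrt{d}\fr}$.)

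Second, I would introduce the candidate events
\[
A_R := \{\omega : \exists x \in \Ball R0 \text{ with } \Ball{4\sqrt{d}\fr}{x} \subset \Ball R0 \cap \bP(\omega)\},
\]
which are manifestly monotone increasing in $R$. Their union is the $\tau$-invariant event
\[
E := \{\omega : \exists x \in \Rd \text{ with } \Ball{4\sqrt{d}\fr}{x} \subset \bP(\omega)\};
\]
invariance follows since stationarity gives $\bP(\tau_y\omega) = \bP(\omega) - y$, which merely shifts any admissible $x$ to $x - y$. Since $E_\fr \subseteq E$, ergodicity forces $\P(E) \in \{0,1\}$ to equal $1$, hence $\P(A_R) \nearrow 1$ by continuity from below.

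Third, I would simply set $\tilde f(R) := 1 - \P(A_R)$ (enlarged if necessary by some explicitly strictly positive vanishing quantity such as $e^{-R}$ to enforce the strict positivity in Definition \ref{def:meso-regularity}). By construction $\tilde f$ is monotone decreasing with $\tilde f(R) \to 0$, and the event $A_R$ is exactly what is required by criterion \eqref{eq:cri:stat-erg-ball}. The argument carries no real obstacle; the two points requiring a moment of care are the identification $\bigcup_\fr E_\fr = \{\bP \cap \I \neq \emptyset\}$ (where openness of $\bP$ is essential) and the verification that $E$ is genuinely $\tau$-invariant. The $\tilde f$ produced here is purely qualitative; the explicit polynomial/exponential decay rates referred to in Corollary \ref{cor:Existence-X-r} must instead come from the quantitative estimates in Lemmas \ref{lem:stat-erg-ball} and \ref{lem:Iso-cone-geo-estimate}.
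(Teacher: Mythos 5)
Your proof is correct, and it takes a noticeably more elementary route than the paper's. The two arguments agree on the choice of $\fr$ (both are small variants of a continuity/contradiction argument using openness of $\bP$), and both ultimately define $\tilde f(R)$ as the probability that $\Ball R0$ fails to contain an admissible ball. Where you diverge is in how $\tilde f(R)\to 0$ is established. The paper introduces the random measure $\tilde\mu_\omega(\cdot)=\lebesgueL(\cdot\cap\bP_{-4\sqrt d\fr}(\omega))$, observes its intensity is strictly positive, and then invokes the ergodic theorem to get a contradiction on the event $\Omega_\infty$ of positive measure where $\tilde\mu_\omega(\Ball R0)=0$ for all $R$. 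You instead observe that the union event $E=\bigcup_R A_R$ is $\tau$-invariant (modulo null sets, which is what stationarity delivers), apply the $0$--$1$ characterisation of ergodicity from Remark~\ref{rem:trivial-mixing}.c) to conclude $\P(E)=1$, and then finish by continuity from below. This bypasses Palm theory and the ergodic theorem for random measures entirely. The trade-off is purely stylistic: your version is shorter and more self-contained, while the paper's version foregrounds the random-measure machinery that the later, quantitative lemmas (\ref{lem:stat-erg-ball}, \ref{lem:Iso-cone-geo-estimate}) then build on. You correctly flag that the qualitative $\tilde f$ obtained here gives no decay rate and that the rates must be sourced elsewhere, which is exactly the division of labour in the paper.

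One small cosmetic note: the paper's Step~1 actually asks for balls of radius $5\sqrt d\fr$ inside $(0,1)^d\cap\bP$, giving a little slack that you do not need because you keep $x\in\I$ but only demand $\Ball{4\sqrt d\fr}x\subset\bP$ (not $\subset\I\cap\bP$), which is all Definition~\ref{def:meso-regularity} requires once $R$ is taken large. Your choice is tighter and still correct.
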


\begin{proof}
Step 1: For some $\fr>0$ and with positive probability $p_{\fr}>0$
the set $(0,1)^{d}\cap\bP$ contains a ball with radius $5\sqrt{d}\fr$.
Otherwise, for every $r>0$ the set $(0,1)^{d}\cap\bP$ almost surely
does not contain an open ball with radius $r$. In particular with
probability $1$ the set $(0,1)^{d}\cap\bP$ does not contain any
ball. Hence $(0,1)^{d}\cap\bP=\emptyset$ almost surely, contradicting
the assumptions.

Step 2: We define 
\[
\tilde{f}(R):=\P\of{\nexists x:\,\Ball{4\sqrt{d}\fr}x\subset\Ball R0\cap\bP(\omega)}\,.
\]
 The stationary ergodic random measure $\tilde{\mu}_{\omega}(\,\cdot\,):=\lebesgueL\of{\,\cdot\,\cap\bP_{-4\sqrt{d}\fr}\left(\omega\right)}$
has positive intensity $\tilde{\lambda}_{0}>p_{\fr}\left|\S^{d-1}\left(\sqrt{d}\fr\right)^{d}\right|$
and it holds $\tilde{\mu}_{\omega}(A)\neq0$ implies the existence
of $\Ball{4\sqrt{d}\fr}x\subset\bP\cap\Ball{4\sqrt{d}\fr}A$. Assuming
that $\liminf_{R\to\infty}\tilde{f}>0$ there exists for every $R>0$
a set $\Omega_{R}\subset\Omega$ with $\tilde{\mu}_{\omega}\of{\Ball R0}=0$
for every $\omega\in\Omega_{R}$ with $\Omega_{R+1}\subset\Omega_{R}$
and 
\[
\Omega_{\infty}:=\bigcap_{R>0}\Omega_{R}\quad\text{satisfies}\quad\P(\Omega_{\infty})=\liminf_{R\to\infty}\tilde{f}(R)>0\,.
\]
But for almost every $\omega\in\Omega_{\infty}$ it holds by the ergodic
theorem 
\[
\lim_{R\to\infty}\left|\Ball R0\right|^{-1}\tilde{\mu}_{\omega}\of{\Ball R0}\geq\lambda_{0}\,,
\]
which implies the existence of $\Ball{4\sqrt{d}\fr}x\subset\Ball R0\cap\bP(\omega)$,
a contradiction.
\end{proof}

\begin{defn}[Isotropic cone mixing]
\label{def:iso-cone-mix}\nomenclature[Isotropic cone mixing]{Isotropic cone mixing}{Definition \ref{def:iso-cone-mix}}A
random set $\bP(\omega)$ is isotropic cone mixing if there exists
a jointly stationary point process $\X$ in $\Rd$ or $2\fr\Zd$,
$\fr>0$, such that almost surely two points $x,y\in\X$ have mutual
minimal distance $2\fr$ and such that $\Ball{\frac{\fr}{2}}{\X(\omega)}\subset\bP(\omega)$.
Further there exists a function $f(R)$ with $f(R)\to0$ as $\R\to\infty$
and $\alpha\in\left(0,\frac{\pi}{2}\right)$ such that with $\bE:=\left\{ e_{1},\dots e_{d}\right\} \cup\left\{ -e_{1},\dots-e_{d}\right\} $
($\left\{ e_{1},\dots e_{d}\right\} $ being the canonical basis of
$\Rd$) 
\begin{equation}
\P\of{\forall e\in\bE:\;\X\cap\cone_{e,\alpha,R}(0)\neq\emptyset}\geq1-f\of R\,.\label{eq:def-iso-cone-mixing}
\end{equation}
\end{defn}

\begin{lem}[A simple sufficient criterion for (\ref{eq:def-iso-cone-mixing})]
\label{lem:stat-erg-ball}Let $\bP$ be stationary ergodic and $(\fr,\tilde{f})$-regular.
Then $\bP$ is isotropic cone mixing with $f(R)=2d\tilde{f}\of{\left(\left(\tan\alpha\right)^{-1}+1\right)^{-1}R}$
and with 
\begin{equation}
\X(\omega):=\X_{\fr}\of{\bP\of{\omega}}=2\fr\Zd\cap\bP_{-\fr}\of{\omega}=\left\{ x\in2\fr\Zd\,:\;\Ball{\frac{\fr}{2}}x\subset\bP\right\} \label{eq:lem:stat-erg-ball}
\end{equation}
 from Lemma \ref{lem:X-r-stationary}. Vice versa, if $\bP$ is isotropic
cone mixing for $f$ then $\bP$ satisfies (\ref{eq:cri:stat-erg-ball})
with $\tilde{f}=f$.
\end{lem}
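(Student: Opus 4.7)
The plan is to reduce the existence of a point of $\X$ in each cone $\cone_{e,\alpha,R}(0)$ to the mesoscopic regularity hypothesis (\ref{eq:cri:stat-erg-ball}) applied inside a ball that is inscribed in the cone, and then to union-bound over the $2d$ coordinate directions $e\in\bE$. Stationarity of $\bP$ is what lets us translate the criterion (\ref{eq:cri:stat-erg-ball}) from balls around the origin to balls centered anywhere.

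The first step is purely geometric: for a fixed $e\in\bE$, I compute the largest ball contained in the cone $\cone_{e,\alpha,R}(0)$. Writing the ball as $B_e=\Ball{\rho}{se}$ with $s>0$, the condition that it sits in the cone (via a worst-case point with $v\perp e$, $|v|=\rho$) yields $\rho\leq s\tan\alpha$, while containment in $\Ball R0$ requires $s+\rho\leq R$; optimising gives the maximal radius $R^\ast:=R/((\tan\alpha)^{-1}+1)$, attained at $s^\ast=R/(1+\tan\alpha)$. By stationarity of $\bP$, (\ref{eq:cri:stat-erg-ball}) applied to $\Ball{R^\ast}{s^\ast e}$ tells me that with probability at least $1-\tilde f(R^\ast)$ there exists $x\in B_e$ with $\Ball{4\sqrt d\fr}x\subset B_e\cap\bP$.

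Next I convert such an $x$ into a point $y\in\X$ sitting inside the cone. Since $2\fr\Zd$ has covering radius $\fr\sqrt d$, the ball $\Ball{4\sqrt d\fr}x$ contains at least one $y\in2\fr\Zd$, and for such $y$ I have $\Ball{\fr}y\subset\Ball{\fr+\fr\sqrt d}x\subset\Ball{4\sqrt d\fr}x\subset\bP$, so $y\in\X$ in the sense of (\ref{eq:lem:stat-erg-ball}). Provided $4\sqrt d\fr\ll R^\ast$ (automatic for $R$ large; for small $R$ the statement is made trivial by replacing $f$ by $\min\{1,f\}$), I can absorb the $\fr\sqrt d$ positional slack into the ball $B_e$ and still have $y\in\cone_{e,\alpha,R}(0)$. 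A union bound over the $2d$ elements of $\bE$ then gives
\[
\P\of{\forall e\in\bE:\;\X\cap\cone_{e,\alpha,R}(0)\neq\emptyset}\;\geq\;1-2d\,\tilde f\of{R/((\tan\alpha)^{-1}+1)}\,,
\]
which is the claim.

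\textbf{Converse and main obstacle.} Conversely, if $\bP$ is isotropic cone mixing with rate $f$ and associated point process $\X$, then on the event in (\ref{eq:def-iso-cone-mixing}) there is at least one $y\in\X\cap\cone_{e,\alpha,R}(0)\subset\Ball R0$, and by the assumption on $\X$ one has $\Ball{\fr/2}y\subset\bP$; rescaling $\fr$ by a dimensional constant (so that $4\sqrt d\fr'=\fr/2$) delivers (\ref{eq:cri:stat-erg-ball}) with the same rate function $f$. The only genuine bookkeeping point, which is the main place to be careful, is the margin argument in the forward direction: one must ensure that both the lattice-rounding shift $\fr\sqrt d$ and the requirement $\Ball{\fr}y\subset\bP$ still leave room for $y$ inside the original cone, not merely inside a slightly larger cone. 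This is handled by working with the open ball $B_e$ of radius strictly less than $R^\ast$ and letting the deficit be absorbed by monotonicity of $\tilde f$; no deeper idea is needed.
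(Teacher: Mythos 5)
Your proof follows essentially the same route as the paper's: union-bound over the $2d$ coordinate directions, use stationarity to relocate the mesoscopic ball criterion to a ball inscribed in each coordinate cone, observe that a ball of radius $4\sqrt d\,\fr$ must capture a lattice point of $2\fr\Zd$ (which then belongs to $\X_\fr$), and read off the rate function. The paper parametrizes the inscribed ball by an auxiliary $a>1$ with $\alpha=\arctan a^{-1}$, center $aRe$ and radius $R$ inside $\cone_{e,\alpha,(a+1)R}(0)$, whereas you directly optimize the inscribed ball radius inside $\cone_{e,\alpha,R}(0)$; these are the same computation up to the substitution $a=(\tan\alpha)^{-1}$, and both arrive at the factor $((\tan\alpha)^{-1}+1)^{-1}$ appearing in the statement. (Both you and the paper replace the exact inscribed-ball radius, which involves $\sin\alpha$, by the slightly larger $\tan\alpha$ cross-sectional radius; this only affects the constant inside $\tilde f$ and is inherited from the lemma statement, so it is not a defect of your argument relative to the paper's.) Your handling of the lattice-rounding margin and the converse are fine and match the paper's brief treatment.
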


\begin{proof}[Proof of Lemma \ref{lem:stat-erg-ball}]
Because of $\P\of{A\cup B}\leq\P\of A+\P\of B$ it holds for $a>1$
\[
\P\of{\exists e\in\bE\,:\;\nexists x\in\Ball R{aRe}:\,\Ball{4\sqrt{d}\fr}x\subset\Ball R{aRe}\cap\bP}\leq2d\tilde{f}(R)\,.
\]
The existence of $\Ball{4\sqrt{d}\fr}x\subset\Ball R{aRe}\cap\bP\of{\omega}$
implies that there exists at least one $x\in\X_{\fr}\left(\bP\left(\omega\right)\right)$
such that $\Ball{\frac{\fr}{2}}x\subset\Ball R{aRe}\cap\bP\of{\omega}$
and we find 
\[
\P\of{\exists e\in\bE\,:\;\nexists x\in\X_{\fr}\of{\bP}:\,\Ball{\frac{\fr}{2}}x\subset\Ball R{aRe}\cap\bP}\leq2d\tilde{f}(R)\,.
\]
In particular, for $\alpha=\arctan a^{-1}$ and $R$ large enough
we discover 
\[
\P\left(\exists e\in\bE\,:\;\X_{\fr}\of{\bP}\cap\cone_{e,\alpha,\left(a+1\right)R}\left(0\right)=\emptyset\right)\leq2d\tilde{f}(R)\,.
\]
The relation (\ref{eq:def-iso-cone-mixing}) holds with $f(R)=2d\tilde{f}\of{\left(a+1\right)^{-1}R}$.

The other direction is evident.
\end{proof}

\subsubsection*{Properties of $\protect\X$}

The formulation of Definition \ref{def:iso-cone-mix} is particularly
useful for the following statement.
\begin{lem}[Size distribution of cells]
\label{lem:Iso-cone-geo-estimate}Let $\bP(\omega)$ be a stationary
and ergodic random open set that is isotropic cone mixing for $\X(\omega)$,
$\fr>0$, $f:\,(0,\infty)\to\R$ and $\alpha\in\left(0,\frac{\pi}{2}\right)$.
Then $\X$ and its Voronoi tessellation have the following properties:
\begin{enumerate}
\item If $G(x)$ is the open Voronoi cell of $x\in\X(\omega)$  with diameter
$d\of x$ then $d$ is jointly stationary with $\X$ and for some
constant $C_{\alpha}>0$ depending only on $\alpha$
\begin{equation}
\P(d(x)>D)<f\of{C_{\alpha}^{-1}\frac{D}{2}}\,.\label{eq:estim-diam-Voronoi}
\end{equation}
\item For $x\in\X(\omega)$  let $\cI\of x:=\left\{ y\in\X\,:\;G\of y\cap\Ball{\fr}{G\of x}\not=\emptyset\right\} $.
Then 
\begin{equation}
\#\cI\of x\leq\left(\frac{4d\of x}{\fr}\right)^{d}\,.\label{eq:estim-diam-Voronoi-2}
\end{equation}
\end{enumerate}
\end{lem}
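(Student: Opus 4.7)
My plan is to handle the two parts separately, with part (2) being essentially a direct citation and part (1) requiring a geometric lemma.

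Part (2) follows immediately from Lemma \ref{lem:estim-diam-Voronoi-cells}: the definition of isotropic cone mixing already bakes in the minimum mutual distance $2\fr$ between points of $\X$, so that earlier lemma applies verbatim.

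Part (1), the joint stationarity of $d$ with $\X$, is formal: the Voronoi cell $G_\omega(x)$ is translation-equivariant in the pair $(\X(\omega),x)$, so $d(x)$ is a jointly stationary function. The real content is the tail estimate (\ref{eq:estim-diam-Voronoi}). The key ingredient is the purely geometric fact that for a suitable constant $C_\alpha>0$ depending only on $\alpha$ and $d$, the cone condition
\[
\forall e \in \bE:\quad \X(\omega)\cap \cone_{e,\alpha,R}(x)\neq\emptyset
\]
forces $G(x)\subset\Ball{C_\alpha R}{x}$, and therefore $d(x)\leq 2C_\alpha R$. Given this, the inclusion of events $\{d(x)>D\}\subset\{\text{cone condition fails at radius }D/(2C_\alpha)\}$ combined with joint stationarity of $\X$ with $\bP$ yields $\P(d(x)>D)\leq f(D/(2C_\alpha))$, which is exactly (\ref{eq:estim-diam-Voronoi}).

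To prove the geometric fact, I would fix $z$ with $|z-x|$ large and select $e\in\bE$ aligned with the largest-magnitude coordinate of $z-x$, so that $(z-x)\cdot e \geq |z-x|/\sqrt d$. Taking $y\in\X\cap\cone_{e,\alpha,R}(x)$ and writing $u=(y-x)/|y-x|$, the bound $u\cdot e\geq\cos\alpha$ together with $|u-(u\cdot e)e|\leq\sin\alpha$ gives the decomposition estimate
\[
(z-x)\cdot u \;\geq\; \left(\tfrac{\cos\alpha}{\sqrt d}-\sin\alpha\right)|z-x|.
\]
Provided $\tan\alpha<1/\sqrt d$, this coefficient is strictly positive, and once $|z-x|\geq R/[2(\cos\alpha/\sqrt d-\sin\alpha)]$ one has $(z-x)\cdot(y-x)\geq|y-x|^2/2$, i.e.\ $|z-y|\leq|z-x|$, so $z\notin G(x)$. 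Hence $C_\alpha:=[2(\cos\alpha/\sqrt d-\sin\alpha)]^{-1}$ works.

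The main obstacle is the geometric constraint $\tan\alpha<1/\sqrt d$: the $2d$ coordinate cones only control the Voronoi cell when $\alpha$ is small compared to the angular spread of coordinate axes. This is a mild issue in practice because Lemma \ref{lem:stat-erg-ball} shows $\alpha$ may be taken arbitrarily small by absorbing a dimension-dependent factor into $f$, so we may WLOG assume $\alpha$ satisfies the required smallness, and the (explicit) dependence of $C_\alpha$ on $\alpha$ is harmless.
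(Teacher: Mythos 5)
Your geometric argument for part (1) reproduces the paper's approach—the paper likewise locates $2d$ points of $\X$ in the cones $\cone_{\pm e_j,\alpha,R}(x)$ and observes that the bisecting planes $(z-\tfrac12 x_{\pm j})\cdot x_{\pm j}=0$ confine the Voronoi cell—and part (2) is the same direct citation of Lemma \ref{lem:estim-diam-Voronoi-cells}. Where you differ is in being explicit about \emph{why} the intersection of those $2d$ half-spaces is bounded, and in doing so you surface a real gap that the paper glosses over: the paper asserts that the planes ``define a bounded cell around $0$ with maximal diameter $D(\alpha,R)=2C_\alpha R$'' for every $\alpha\in(0,\pi/2)$, with $C_\alpha\to\infty$ as $\alpha\to\pi/2$, but this is false. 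Boundedness requires the $2d$ normals (each within angle $\alpha$ of some $\pm e_j$) to positively span $\R^d$, and once $\alpha\geq\arcsin(1/\sqrt d)$ the cone points could all lie near a single coordinate hyperplane, leaving the intersection unbounded. Your sufficient condition $\tan\alpha<1/\sqrt d$ is a bit stronger than the sharp threshold $\alpha<\arcsin(1/\sqrt d)$, but it is a correct restriction and the estimate $C_\alpha=[2(\cos\alpha/\sqrt d-\sin\alpha)]^{-1}$ follows cleanly.

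Your WLOG reduction to small $\alpha$, however, does not close this gap. Lemma \ref{lem:stat-erg-ball} produces the cone property with arbitrarily small $\alpha'$ only for the particular process $\X_\fr(\bP)$ defined in \eqref{eq:lem:stat-erg-ball}, whereas the present lemma is a statement about the Voronoi tessellation of whatever $\X$ appears in the hypothesis, which need not coincide with $\X_\fr(\bP)$. There is no way to pass from ``$\X$ meets every $\alpha$-cone'' to ``$\X$ meets every $\alpha'$-cone'' for $\alpha'<\alpha$ while keeping $\X$ fixed, since a point in the wide cone need not lie in the narrow one. So as written, your argument (like the paper's) only proves the lemma under the additional hypothesis that $\alpha$ is small enough, and the ``absorb into $f$'' step should be deleted. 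This is really a defect of the lemma's statement rather than of your proof strategy: the paper only ever invokes the lemma, via Corollary \ref{cor:Existence-X-r}, with $\X=\X_\fr(\bP)$ and $\alpha$ chosen small through Lemma \ref{lem:stat-erg-ball}, so nothing downstream breaks, but the smallness restriction on $\alpha$ should appear in the lemma itself.
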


\begin{proof}
1. W.l.o.g. let $x_{k}=0$. The first part follows from the definition
of isotropic cone mixing: We take arbitrary points $x_{\pm j}\in C_{\pm\e_{j},\alpha,R}(0)\cap\X$.
Then the planes given by the respective equations $\left(x-\frac{1}{2}x_{\pm j}\right)\cdot x_{\pm j}=0$
define a bounded cell around $0$, with a maximal diameter $D(\alpha,R)=2C_{\alpha}R$
which is proportional to $R$. The constant $C_{\alpha}$ depends
nonlinearly on $\alpha$ with $C_{\alpha}\to\infty$ as $\alpha\to\frac{\pi}{2}$.
Estimate (\ref{eq:estim-diam-Voronoi}) can now be concluded from
the relation between $R$ and $D(\alpha,R)$ and from (\ref{eq:def-iso-cone-mixing}).

2. This follows from Lemma \ref{eq:lem:estim-diam-Voronoi-cells}.
\end{proof}
\begin{lem}
\label{lem:estim-E-fa-fb}Let $\X_{\fr}$ be a stationary and ergodic
random point process with minimal mutual distance $2\fr$ for $\fr>0$
and let $f:\,(0,\infty)\to\R$ be such that the Voronoi tessellation
of $\X$ has the property 
\[
\forall x\in\fr\Zd\,:\quad\P(d(x)>D)=f\of D\,.
\]
Furthermore, let $n,s:\,\X_{\fr}\to[1,\infty)$ be measurable and
i.i.d. among $\X_{\fr}$ and let $n,s,d$ be independent from each
other. Let either
\[
G_{n(x)}\of x=\begin{cases}
x+n(x)\left(G\of x-x\right) & \text{or }\\
\Ball{n(x)d(x)}x
\end{cases}
\]
 be the cell $G\of x$ enlarged by the factor $n(x)$ or a ball of
radius $n(x)d(x)$ arround $x$, let $d(x)=\diam G(x)$ and let 
\begin{align*}
\fb_{n}\of y & :=\sum_{x\in\X_{\fr}}\chi_{G_{n}\of x}d(x)^{\eta}s(x)^{\xi}n(x)^{\zeta}\,,
\end{align*}
where $\eta,\xi,\zeta>0$ are fixed a constant. Then $\fb_{n}$ is
jointly stationary with $\X_{\fr}$ and for every $r>1$ there exists
$C\in(0,+\infty)$ such that 
\begin{align}
\E\of{\fb_{n}^{p}} & \leq C\left(\sum_{k,N,S=1}^{\infty}\left(k+1\right)^{d\left(p+1\right)+\eta p+r\left(p-1\right)}\left(S+1\right)^{\xi p+r\left(p-1\right)}\left(N+1\right)^{d\left(p+1\right)+\zeta p+r\left(p-1\right)}\P_{d,k}\P_{n,N}\P_{s,S}\right)\,.\label{eq:lem:estim-E-fa-fb-1}
\end{align}
where 
\begin{align*}
\P_{d,k} & :=\P\of{d(x)\in[k,k+1)}=f\of k-f\of{k+1}\,,\\
\P_{n,N} & :=\P\of{n(x)\in[N,N+1)}\,,\\
\P_{s,S} & :=\P\of{s(x)\in[S,S+1)}\,.
\end{align*}
\end{lem}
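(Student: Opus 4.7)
The plan is to combine joint stationarity of $\fb_{n}$ with a dyadic decomposition of the contributing points by the values of $d,n,s$, Hölder's inequality with summable weights, and the Palm/Campbell formula for $\X_{\fr}$. Joint stationarity of $\fb_{n}$ with $\X_{\fr}$ follows immediately from that of $\X_{\fr}$, the shift-covariant mark $d$ and the i.i.d.\ marks $n,s$, so $\E\of{\fb_{n}^{p}}=\E\of{\fb_{n}\of 0^{p}}$ and it suffices to control the latter. For $\left(k,N,S\right)\in\N^{3}$ I introduce the random set
\[
A_{k,N,S}:=\bigl\{ x\in\X_{\fr}\,:\;0\in G_{n(x)}\of x,\ d(x)\in[k,k+1),\ n(x)\in[N,N+1),\ s(x)\in[S,S+1)\bigr\}\,.
\]
In both definitions of $G_{n}\of x$, the inclusion $0\in G_{n(x)}\of x$ forces $\left|x\right|\leq n(x)\,d(x)$: by definition for the ball, and because any $x,y\in G\of x$ satisfy $\left|x-y\right|\leq d\of x$ in the enlarged Voronoi case. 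Hence every $x\in A_{k,N,S}$ lies in $\Ball{(k+1)(N+1)}0$, and the mutual minimal distance $2\fr$ of $\X_{\fr}$ gives the \emph{deterministic} bound $\left|A_{k,N,S}\right|\leq C\bigl((k+1)(N+1)\bigr)^{d}$. Overestimating each summand of $\fb_{n}\of 0$ by its largest value on its dyadic box yields
\[
\fb_{n}\of 0\leq\sum_{k,N,S\geq1}(k+1)^{\eta}(N+1)^{\zeta}(S+1)^{\xi}\,\left|A_{k,N,S}\right|\,.
\]

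For $r>1$ I apply Hölder's inequality to this triple sum with the weights $w_{k,N,S}:=\bigl((k+1)(N+1)(S+1)\bigr)^{r(p-1)}$: setting $q=p/(p-1)$ one gets
\[
\fb_{n}\of 0^{p}\leq\Bigl(\sum_{k,N,S}w_{k,N,S}^{-1/(p-1)}\Bigr)^{\!p-1}\,\sum_{k,N,S}w_{k,N,S}\,(k+1)^{\eta p}(N+1)^{\zeta p}(S+1)^{\xi p}\,\left|A_{k,N,S}\right|^{p}\,.
\]
Because $r>1$, the first bracket factorises into the cube of $\sum_{k\geq1}(k+1)^{-r}$ and reduces to a finite constant $C_{r}$.

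Taking expectations, the deterministic bound gives $\left|A_{k,N,S}\right|^{p-1}\leq C\bigl((k+1)(N+1)\bigr)^{d(p-1)}$, while the Palm/Campbell formula for $\X_{\fr}$ combined with the independence of $d,n,s$ produces
\[
\E\left|A_{k,N,S}\right|\leq C\bigl((k+1)(N+1)\bigr)^{d}\,\P_{d,k}\,\P_{n,N}\,\P_{s,S}\,.
\]
Multiplying the two estimates and substituting into the Hölder bound gives for $(k+1),(N+1),(S+1)$ the exponents $dp+\eta p+r(p-1)$, $dp+\zeta p+r(p-1)$ and $\xi p+r(p-1)$ respectively; since $dp\leq d(p+1)$, these are dominated by the (slightly weaker) exponents of~\eqref{eq:lem:estim-E-fa-fb-1}. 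The main obstacle is making the Palm step rigorous: one has to check that under the Palm measure at $x\in\X_{\fr}$ the variables $d(x),n(x),s(x)$ remain mutually independent with the marginals $\P_{d},\P_{n},\P_{s}$ assumed in the hypothesis. This is essentially built into the hypotheses, since $(n,s)$ are i.i.d.\ marks independent of the configuration and the Palm law of $\diam G\of x$ is $f$ by assumption, but it requires a short excursion into marked Palm theory to be stated cleanly.
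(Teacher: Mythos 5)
Your proof is correct and follows essentially the same strategy as the paper: dyadic decomposition of the contributing points by the values of $(d,n,s)$, H\"older with the summable weights $\bigl((k+1)(N+1)(S+1)\bigr)^{r(p-1)}$, the deterministic minimal-distance bound on the local cardinality, and a probabilistic estimate for the typical mark distribution. The one genuine variation is in the probabilistic step: you compute $\E\bigl(\fb_{n}(0)^{p}\bigr)$ directly via the Palm/Campbell formula for the marked process, whereas the paper obtains it as $\lim_{R\to\infty}|B_{R}|^{-1}\int_{B_{R}}\fb_{n}^{p}$ through the ergodic theorem; the two are equivalent, and the independence hypotheses on $d,n,s$ are exactly what make the Palm step rigorous, so your flagged concern is already discharged by the lemma's assumptions. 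Incidentally, your splitting $|A_{k,N,S}|^{p}=|A_{k,N,S}|^{p-1}\cdot|A_{k,N,S}|$ before taking expectations gives the marginally sharper exponent $dp$ where the paper's pointwise bound $\bigl(\sum\chi\bigr)^{p}\leq\chi_{A_{k,N,S}}\bigl((k+1)(N+1)\bigr)^{dp}$ followed by $\E\chi_{A_{k,N,S}}$ yields $d(p+1)$; as you correctly note, this is harmless since $dp\leq d(p+1)$ and your estimate is therefore dominated by the one stated in the lemma.
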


\begin{cor}
\label{cor:lem:estim-E-fa-fb}Under the assumptions of Lemma \ref{lem:estim-E-fa-fb}
let additionally $n=const$, $s=const$. Then 
\[
\E\of{\fb^{p}}\leq C\sum_{k,N=1}^{\infty}\left(k+1\right)^{d+(d+\eta+1)p}f(k)\,.
\]
\end{cor}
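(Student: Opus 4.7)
The plan is to specialize Lemma \ref{lem:estim-E-fa-fb} to the case where $n$ and $s$ are deterministic constants, in which case $\P_{n,N}$ and $\P_{s,S}$ are each supported on a single integer bin — say $N_{0} := \lfloor n_{0} \rfloor$ and $S_{0} := \lfloor s_{0} \rfloor$. Consequently, the triple sum appearing in \eqref{eq:lem:estim-E-fa-fb-1} collapses to a single sum over $k$ (the notation $\sum_{k,N=1}^{\infty}$ in the corollary statement appearing to be a vestigial typo, since the summand depends only on $k$), and the factors $(N_{0}+1)^{d(p+1)+\zeta p + r(p-1)}$ and $(S_{0}+1)^{\xi p + r(p-1)}$ reduce to a finite constant depending only on $n_{0}, s_{0}, d, p, \xi, \zeta, r$, which may be absorbed into $C$.

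Next, I would select the free parameter $r > 1$ from Lemma \ref{lem:estim-E-fa-fb} as $r := p/(p-1)$, which is admissible whenever $p > 1$ (the borderline case $p=1$ being covered by choosing $r$ arbitrarily close to $1$ from above and enlarging $C$ accordingly). A direct algebraic computation then yields
\[
d(p+1) + \eta p + r(p-1) = d + dp + \eta p + p = d + (d+\eta+1)p,
\]
matching exactly the exponent of $(k+1)$ claimed in the corollary.

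Finally, since $f(D) = \P(d(x) > D)$ is the tail distribution of a non-negative random variable and is therefore monotone decreasing, one has the pointwise bound $\P_{d,k} = f(k) - f(k+1) \leq f(k)$. Substituting this into the specialized form of \eqref{eq:lem:estim-E-fa-fb-1} delivers the claimed estimate.

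No substantive obstacle arises: the corollary is an essentially immediate consequence of Lemma \ref{lem:estim-E-fa-fb} once the Dirac-like structure of the constant laws of $n$ and $s$ is exploited to collapse two of the three sums and once the parameter $r$ is tuned to reproduce the exponent $d + (d+\eta+1)p$. The one small point to check carefully is that the constants absorbed into $C$ really do depend only on the parameters listed in the statement (which is the case, since $n_{0}, s_{0}$ are treated as data of the model).
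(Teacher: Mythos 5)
Your proof is correct, and it fills in exactly what the paper leaves implicit: the paper states this corollary without proof, evidently expecting the reader to see that specializing Lemma~\ref{lem:estim-E-fa-fb} to constant $n$ and $s$ collapses the $N$- and $S$-sums to a single bin, and that the specific choice $r=p/(p-1)$ (admissible since $p>1$ gives $r>1$, with the degenerate case $p=1$ handled trivially because $r(p-1)=0$ and $2d+\eta\leq 2d+\eta+1$) turns $d(p+1)+\eta p+r(p-1)$ into $d+(d+\eta+1)p$, after which $\P_{d,k}\leq f(k)$ completes the bound. You also correctly flag the vestigial $\sum_{k,N=1}^{\infty}$ as a typo for $\sum_{k=1}^{\infty}$; taken literally the double sum would diverge since the summand is independent of $N$.
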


\begin{proof}[Proof of Lemma \ref{lem:estim-E-fa-fb}]
 We write $\X_{\fr}=\left(x_{i}\right)_{i\in\N}$, $d_{i}=d(x_{i})$,
$n_{i}=n(x_{i})$, $s_{i}:=s(x_{i})$. Let 
\begin{align*}
X_{k,N,S}(\omega) & :=\left\{ x_{i}\in\X_{\fr}\,:\;d_{i}\in[k,k+1),\,n_{i}\in[N,N+1),\,s_{i}\in[S,S+1)\right\} \,,\\
A_{k,N,S} & :=\bigcup_{x\in X_{k,N,S}}G_{n(x)}\of x\,,\qquad A_{k,N}:=\bigcup_{S\in\N}A_{k,N,S}\,,\qquad X_{k,N}:=\bigcup_{S\in\N}X_{k,N,S}\,.
\end{align*}
We observe that the mutual minimal distance implies 
\begin{equation}
\forall x\in\Rd:\quad\#\left\{ x_{i}\in X_{k,N,S}:\,x\in G_{n(x_{i})}\of{x_{i}}\right\} \leq\S^{d-1}\left(N+1\right)^{d}\,\left(k+1\right)^{d}\fr^{-d}\,,\label{eq:lem:estim-E-fa-fb-help-1}
\end{equation}
which follows from the uniform boundedness of cells $G_{n(x)}\of x$,
$x\in X_{k,N}$ and the minimal distance of $\left|x_{i}-x_{j}\right|>2\fr$.
Then, writing $B_{R}:=\Ball R0$ for every $y\in\Rd$ it holds by
stationarity and the ergodic theorem
\begin{align*}
\P\of{y\in G_{n_{i}}\of{x_{i}}\,:\;x_{i}\in X_{k,N,S}} & =\lim_{R\to\infty}\left|B_{R}\right|^{-1}\left|A_{k,N}\cap B_{R}\right|\P_{s,S}\\
 & \leq\lim_{R\to\infty}\left|B_{R}\right|^{-1}\left|B_{R}\cap\bigcup_{x_{i}\in X_{k,N}}G_{n_{i}}\of{x_{i}}\right|\P_{s,S}\\
 & \leq\lim_{R\to\infty}\left|B_{R}\right|^{-1}\sum_{x_{i}\in X_{k,N}\cap B_{R}}\left|\S^{d-1}\right|\left(N+1\right)^{d}\left(k+1\right)^{d}\fr^{-d}\P_{s,S}\\
 & \to\P_{d,k}\P_{n,N}\P_{s,S}\left(N+1\right)^{d}\left|\S^{d-1}\right|\left(k+1\right)^{d}\fr^{-d}\,.
\end{align*}
In the last inequality we made use of the fact that every cell $G_{n(x)}(x)$,
$x\in X_{k,N}$, has volume smaller than $\S^{d-1}\left(N+1\right)^{d}\left(k+1\right)^{d}$.
We note that for $\frac{1}{p}+\frac{1}{q}=1$
\begin{align*}
 & \int_{\bQ}\left(\sum_{x\in\X_{\fr}}\chi_{G_{n}\of x}d(x)^{\eta}s(x)^{\xi}n(x)^{\zeta}\right)^{p}\\
 & \qquad\leq\int_{\bQ}\left(\sum_{k=1}^{\infty}\sum_{N=1}^{\infty}\sum_{S=1}^{\infty}\left(\sum_{x\in X_{k,N,S}}\chi_{G_{n(x)}\of x}\left(k+1\right)^{\eta}(N+1)^{\xi}(S+1)^{\zeta}\right)\right)^{p}\\
 & \qquad\leq\int_{\bQ}\left(\sum_{k,N,S=1}^{\infty}\alpha_{k,N,S}^{q}\right)^{\frac{p}{q}}\left(\sum_{k,N,S=1}^{\infty}\alpha_{k,N,S}^{-p}\left(\sum_{x\in X_{k,N,S}}\chi_{G_{n(x)}\of x}\left(k+1\right)^{\eta}(N+1)^{\xi}(S+1)^{\zeta}\right)^{p}\right)\,.
\end{align*}
Due to (\ref{eq:lem:estim-E-fa-fb-help-1}) we find 
\[
\sum_{x\in X_{k,N,S}}\chi_{G_{n(x)}\of x}\leq\chi_{A_{k,N,S}}\left(N+1\right)^{d}\left(k+1\right)^{d}\left|\S^{d-1}\right|
\]
and obtain for $q=\frac{p}{p-1}$ and $C_{q}:=\left(\sum_{k,N,S=1}^{\infty}\alpha_{k,N,S}^{q}\right)^{\frac{p}{q}}\left|\S^{d-1}\right|^{p}$:
\begin{align*}
\frac{1}{\left|B_{R}\right|}\int_{B_{R}} & \left(\sum_{x\in\X_{\fr}}\chi_{G_{n}\of x}d(x)^{\eta}s(x)^{\xi}n(x)^{\zeta}\right)^{p}\\
 & \leq C_{q}\frac{1}{\left|B_{R}\right|}\int_{B_{R}}\left(\sum_{k,N,S=1}^{\infty}\alpha_{k,N,S}^{-p}\chi_{A_{k,N,S}}\left(N+1\right)^{dp+\zeta p}\left(k+1\right)^{dp+\eta p}(S+1)^{\xi p}\right)\\
 & \to C_{q}\left(\sum_{k,N,S=1}^{\infty}\alpha_{k,N,S}^{-p}\left(k+1\right)^{d\left(p+1\right)+\eta p}\left(N+1\right)^{d\left(p+1\right)+\zeta p}(S+1)^{\xi p}\P_{s,S}\P_{d,k}\P_{n,N}\right)
\end{align*}
For the sum $\sum_{k,N,S=1}^{\infty}\alpha_{k,N,S}^{q}$ to converge,
it is sufficient that $\alpha_{k,N,S}^{q}=\left(k+1\right)^{-r}\left(N+1\right)^{-r}\left(S+1\right)^{-r}$
for some $r>1$. Hence, for such $r$ it holds $\alpha_{k,N,S}=\left(k+1\right)^{-r/q}\left(N+1\right)^{-r/q}\left(S+1\right)^{-r/q}$
and thus (\ref{eq:lem:estim-E-fa-fb-1}).
\end{proof}

\section{\label{sec:Extension-and-Trace-d-M}Extension and Trace Properties
from $\left(\delta,M\right)$-Regularity}

\begin{figure}
 \begin{minipage}[c]{0.5\textwidth} \includegraphics[width=6cm]{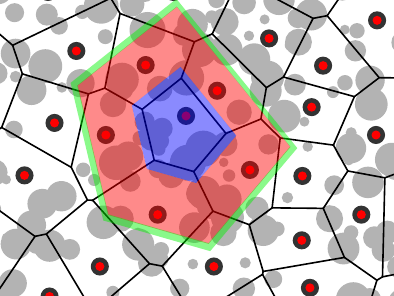}\end{minipage}\hfill   \begin{minipage}[c]{0.45\textwidth}\caption{\label{fig:sketch-extension}Gray: a Poisson ball process. Black balls:
balls of radius $\protect\fr>0$. Red Balls: radius $\frac{\protect\fr}{2}$.
The Voronoi tessellation is generated from the centers of the red
balls. The existence of such tessellations is discussed in Section
\ref{subsec:Mesoscopic-Regularity}. Blue region: $\protect\fA_{1,k}$. }
\end{minipage}
\end{figure}

\subsection{\label{subsec:5-Preliminaries}Preliminaries}

For this whole section, let $\bP$ be a Lipschitz domain which furthermore
satisfies the following assumption.
\begin{rem}
All calculations that follow in the present Section \ref{sec:Extension-and-Trace-d-M}
equally work for arbitrarily distributed radii $\fr_{a}$ associated
to $x_{a}$ and replacing the constant $\fr$, e.g. with 
\[
\cM_{a}u:=\fint_{\Ball{\frac{\fr_{a}}{16}}{x_{a}}}u\,,\qquad\overline{\nablabot_{\cM,a}}u:=\fint_{\Ball{\frac{\fr_{a}}{16}}{x_{a}}}\left(\nabla-\nablas\right)u\,.
\]
However, for simplicity of presentation, we chose to work with constant
$\fr$ from the start.
\end{rem}

\begin{assumption}
\label{assu:mesoscopic-voronoi}Let $\bP$ be an open (unbounded)
set and let $\X_{\fr}=\left(x_{a}\right)_{a\in\N}$ be a set of points
having mutual distance $\left|x_{a}-x_{b}\right|>2\fr$ if $a\neq b$
and with $\Ball{\frac{\fr}{2}}{x_{a}}\subset\bP$ for every $a\in\N$
(e.g. $\X_{\fr}(\bP)$, see (\ref{eq:def-X_r})). We construct from
$\X_{\fr}$ a Voronoi tessellation and denote by $G_{a}:=G(x_{a})$
the Voronoi cell corresponding to $x_{a}$ with diameter $d_{a}$
with $\fA_{1,a}:=\Ball{\frac{\fr}{2}}{G_{a}}$. Let $\tilde{\Phi}_{0}\in C^{\infty}(\R;[0,1])$
be monotone decreasing with $\tilde{\Phi}_{0}'>-\frac{4}{\fr}$, $\tilde{\Phi}_{0}(x)=1$
if $x\leq0$ and $\tilde{\Phi}_{0}(x)=0$ for $x\geq\frac{\fr}{2}$.
We define on $\Rd$ the Lipschitz functions 
\begin{equation}
\tilde{\Phi}_{a}(x):=\tilde{\Phi}_{0}\left(\dist\left(x,G_{a}\right)\right)\quad\text{and}\quad\Phi_{a}(x):=\tilde{\Phi}_{a}(x)\left(\sum_{b}\tilde{\Phi}_{b}(x)\right)^{-1}\,.\label{eq:def-Phi-i}
\end{equation}
\end{assumption}

Lemma \ref{lem:estim-diam-Voronoi-cells} implies 
\begin{equation}
\forall x\in\Ball{\frac{\fr}{2}}{G_{a}}\,:\quad\#\left\{ b\,:\;x\in\fA_{1,b}\right\} \leq\left(\frac{4d_{a}}{\fr}\right)^{d}\label{eq:assu:mesoscopic-regular-1}
\end{equation}
and thus (\ref{eq:def-Phi-i}) yields for some $C$ depending only
on $\tilde{\Phi}_{0}$ that
\begin{equation}
\left|\nabla\Phi_{a}\right|\leq Cd_{a}^{d}\quad\text{and}\quad\forall k:\,\left|\nabla\Phi_{k}\right|\chi_{\fA_{1,a}}\leq Cd_{a}^{d}\,.\label{eq:estiamte-nabla-Phi-i}
\end{equation}

\begin{defn}[Weak Neighbors]
\label{def:neighbors}\nomenclature[neighbors weak]{$x_i\sim\sim x_j$}{$x_i$ and $x_j$ are weak neighbors or weakly connected, see Definition \ref{def:neighbors}}Under
the Assumption \ref{assu:mesoscopic-voronoi}, two points $x_{a},x_{b}\in\X_{\fr}$
are called to be  weakly connected (or weak neighbors), written $a\sim\sim b$
or $x_{a}\sim\sim x_{b}$ if $\Ball{\frac{\fr}{2}}{G_{a}}\cap\Ball{\frac{\fr}{2}}{G_{b}}\neq\emptyset$.
For $\bQ\subset\Rd$ open we say $\fA_{1,a}\sim\sim\bQ$ if $\Ball{\frac{\fr}{2}}{\fA_{1,a}}\cap\bQ\neq\emptyset$.
We then define \nomenclature[XrQ]{$\X_\fr(\bQ)$}{\eqref{eq:Xr-Q-Q-simsim}}\nomenclature[Qsim]{$\bQ^{\sim\sim}$}{\eqref{eq:Xr-Q-Q-simsim}}
\begin{equation}
\X_{\fr}(\bQ):=\left\{ x_{a}\in\X_{\fr}:\;\fA_{1,a}\sim\sim\bQ\neq\emptyset\right\} \,,\quad\bQ^{\sim\sim}:=\bigcup_{\fA_{1,a}\sim\sim\bQ}\fA_{1,a}\,.\label{eq:Xr-Q-Q-simsim}
\end{equation}
\end{defn}

In view of Assumption \ref{assu:mesoscopic-voronoi} we bound $\delta_{\Delta}$
by $\fr>0$ and recall (\ref{eq:lem:properties-delta-M-regular-3}).
As announced in the introduction, we apply Corollary \ref{cor:cover-boundary}
for $n\in\N$ (we study mostly $n=1$ and $n=2$ in the following)
to obtain a complete covering of $\partial\bP$ by balls $\Ball{\tilde{\rho}_{n}\left(p_{i}^{n}\right)}{p_{i}^{n}}$,
$\left(p_{i}^{n}\right)_{k\in\N}$, where $\tilde{\rho}_{n}\of p:=2^{-5}\rho_{n}\of p$.
Recalling (\ref{eq:def-rho-of-p})--(\ref{eq:def-rhohat-of-p}) we
define with $\tilde{\rho}_{n,i}:=\tilde{\rho}_{n}\of{p_{i}^{n}}$,
$\hat{\rho}_{n,i}:=\hat{\rho}_{n,3}\of{p_{i}^{n}}$\nomenclature[A1]{$A_{1,i},A_{2,i},A_{3,i}$}{Equation \eqref{eq:A123-k}}
and 
\begin{equation}
A_{1,i}^{n}:=\Ball{\tilde{\rho}_{n,i}}{p_{i}^{n}}\,,\quad A_{2,i}^{n}:=\Ball{3\tilde{\rho}_{n,i}}{p_{i}^{n}}\,,\quad A_{3,i}^{n}:=\Ball{\hat{\rho}_{n,i}}{p_{i}^{n}}\,,\quad B_{n,i}:=\Ball{\frac{1}{8}\tilde{\rho}_{n,i}}{p_{i}^{n}}\,,\label{eq:A123-k}
\end{equation}
where we recall the construction of $\fr_{n,\alpha,i}$ and $y_{n,\alpha,i}$
in (\ref{eq:def-fr-n-alph})--(\ref{eq:condi-fr-n-i}) and note that
$\Ball{\tilde{\rho}_{n,i}}{p_{i}^{n}}\supset\Ball{\fr_{n,\alpha,i}}{y_{n,\alpha,i}}$
independent from $\alpha$.
\begin{lem}
\label{lem:properties-local-rho-convering}For $n\in\N$, $\alpha\in[0,1]$
and any two balls $A_{1,i}^{n}\cap A_{1,j}^{n}\neq\emptyset$ either
$A_{1,i}^{n}\subset A_{2,j}^{n}$ or $A_{1,j}^{n}\subset A_{2,i}^{n}$
and
\begin{equation}
A_{1,i}^{n}\cap A_{1,j}^{n}\neq\emptyset\quad\Rightarrow\quad\Ball{\frac{1}{2}\tilde{\rho}_{n,i}}{p_{i}}\subset A_{2,j}^{n}\text{ and }\Ball{\frac{1}{2}\tilde{\rho}_{n,j}}{p_{j}}\subset A_{2,i}^{n}\,.\label{eq:lem:properties-local-rho-convering-1}
\end{equation}
Furthermore, there exists a constant $C$ depending only on the dimension
$d$ and some $\hat{d}\in[0,d]$ such that 
\begin{align}
\forall k\quad &  & \#\left\{ j\,:\;A_{1,j}^{n}\cap A_{1,i}^{n}\neq\emptyset\right\} +\#\left\{ j\,:\;A_{2,j}^{n}\cap A_{2,i}^{n}\neq\emptyset\right\}  & \leq C\,,\label{eq:lem:properties-local-rho-convering-2}\\
\forall x\quad &  & \#\left\{ j\,:\;x\in A_{1,j}^{n}\right\} +\#\left\{ j\,:\;x\in A_{2,j}^{n}\right\}  & \leq C+1\,,\label{eq:lem:properties-local-rho-convering-3}\\
\forall x\quad &  & \#\left\{ j:\,x\in\overline{\Ball{\hat{\rho}_{n,j}}{p_{j}}}\right\}  & <C(1+M_{[\frac{3\delta}{8},\frac{\delta}{8}],\Rd}(x))^{n\hat{d}}\,.\label{eq:lem:properties-local-rho-convering-4}
\end{align}
Finally, there exist non-negative functions $\phi_{n,0}$ and $\left(\phi_{n,i}\right)_{k\in\N}$
independent from $\alpha$ such that for $k\geq1$: $\support\phi_{n,i}\subset A_{1,i}^{n}$,
$\phi_{n,i}|_{B_{n,j}}\equiv0$ for $k\neq j$. Further, $\phi_{n,0}\equiv0$
on all $B_{n,i}$ and on $\partial\bP$ and $\sum_{k=0}^{\infty}\phi_{n,i}\equiv1$
and there exists $C$ depending only on $d$ such that for all $k\in\N$
it holds 
\begin{equation}
x\in A_{1,i}^{n}\qquad\Rightarrow\qquad\forall j\in\N\cup\left\{ 0\right\} :\quad\left|\nabla\phi_{n,j}(x)\right|\leq C\tilde{\rho}_{n,i}^{-1}\,.\label{eq:bound-nabla-phi-j}
\end{equation}
\end{lem}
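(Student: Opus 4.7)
The plan is to establish the four groups of claims in sequence, relying on the local covering of Corollary \ref{cor:cover-boundary} together with the continuity of $\rho_n$ from Lemma \ref{lem:rho-p-lsc}. For the inclusions and (\ref{eq:lem:properties-local-rho-convering-1}), I start with two overlapping balls $A_{1,i}^{n}\cap A_{1,j}^{n}\neq\emptyset$ and, assuming $\tilde{\rho}_{n,i}\leq\tilde{\rho}_{n,j}$ without loss of generality, use the comparability of radii from (\ref{eq:cor:cover-boundary-h1}) and the triangle inequality: for every $x\in A_{1,i}^{n}$, $\left|x-p_{j}^{n}\right|\leq\tilde{\rho}_{n,i}+\left(\tilde{\rho}_{n,i}+\tilde{\rho}_{n,j}\right)\leq 3\tilde{\rho}_{n,j}$, which yields both $A_{1,i}^{n}\subset A_{2,j}^{n}$ and, with $\tilde{\rho}_{n,i}/2$ replacing $\tilde{\rho}_{n,i}$, the half-radius statement (\ref{eq:lem:properties-local-rho-convering-1}).

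The bounds (\ref{eq:lem:properties-local-rho-convering-2}) and (\ref{eq:lem:properties-local-rho-convering-3}) follow from a volume packing argument. The lower bound $\left|p_{i}^{n}-p_{k}^{n}\right|\geq\frac{1}{2}\max\{\tilde{\rho}_{n,i},\tilde{\rho}_{n,k}\}$ from (\ref{eq:cor:cover-boundary-h1}) implies that the balls $\Ball{\tilde{\rho}_{n,j}/4}{p_{j}^{n}}$ are pairwise disjoint. Every ball $A_{1,j}^{n}$ intersecting $A_{1,i}^{n}$ has its center inside $\overline{\Ball{2\tilde{\rho}_{n,i}}{p_{i}^{n}}}$ and radius comparable to $\tilde{\rho}_{n,i}$, so the number of such $j$ is controlled by the ratio of volumes by a constant depending only on $d$; the same argument gives the count for the enlarged balls $A_{2,j}^{n}$. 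Statement (\ref{eq:lem:properties-local-rho-convering-3}) is a consequence of (\ref{eq:lem:properties-local-rho-convering-2}), since two balls both containing $x$ intersect at $x$ and hence both appear in the overlap count at either one.

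The bound (\ref{eq:lem:properties-local-rho-convering-4}) is the main obstacle and is where the $M$-quantitative definition of $\hat{\rho}_{n,j}$ enters. From (\ref{eq:def-rhohat-of-p}) with $K=3$, the infimum is attained in the limit by some $r_{j}<2^{-3}\hat{\rho}_{n,j}$ with $r_{j}(4M_{r_{j}}(p_{j}^{n})^{2}+2)^{-n/2}\geq 2^{-3}\rho_{n}(p_{j}^{n})$, which after rearrangement gives $\hat{\rho}_{n,j}\leq C\,\tilde{\rho}_{n,j}\bigl(1+M_{r_{j}}(p_{j}^{n})\bigr)^{n}$. Therefore, if $x\in\overline{\Ball{\hat{\rho}_{n,j}}{p_{j}^{n}}}$, then $p_{j}^{n}$ lies in a ball around $x$ whose radius is bounded by $\tilde{\rho}_{n,j}$ times $(1+M_{r_{j}}(p_{j}^{n}))^{n}$. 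By (\ref{eq:lem:local-delta-M-construction-estimate-1b}) applied at $x$ with the appropriate window, $M_{r_{j}}(p_{j}^{n})\leq C\,M_{[\frac{3\delta}{8},\frac{\delta}{8}],\Rd}(x)$. Since the disjoint quarter-balls $\Ball{\tilde{\rho}_{n,j}/4}{p_{j}^{n}}$ must pack into a neighborhood of $x$ of radius proportional to $\hat{\rho}_{n,j}$, a volume ratio provides a crude bound with exponent $nd$; the refinement to $n\hat{d}$ with $\hat{d}\leq d$ is obtained by exploiting that all centers $p_{j}^{n}$ lie on $\partial\bP$, a $(d-1)$-dimensional surface whose area inside a ball of radius $\eta$ is controlled by the bound $\tfrac{1}{d-1}|\S^{d-2}|\eta^{d-1}(M+1)^{2-d}$ already used in the proof of Lemma \ref{lem:delta-tilde-construction-estimate}.

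Finally, the partition of unity is constructed in the standard way. Define $\tilde{\phi}_{n,i}$ as a smooth cutoff equal to $1$ on $B_{n,i}$, supported in $A_{1,i}^{n}$, with $\left|\nabla\tilde{\phi}_{n,i}\right|\leq C\tilde{\rho}_{n,i}^{-1}$; let $\tilde{\phi}_{n,0}$ be a complementary smooth function vanishing on $\partial\bP$ and on every $B_{n,i}$, chosen so that $\Sigma:=\tilde{\phi}_{n,0}+\sum_{i}\tilde{\phi}_{n,i}\geq c>0$ everywhere; then set $\phi_{n,i}:=\tilde{\phi}_{n,i}/\Sigma$. The Lipschitz bound (\ref{eq:bound-nabla-phi-j}) follows directly from (\ref{eq:lem:properties-local-rho-convering-3}): at any $x\in A_{1,i}^{n}$ only boundedly many $\tilde{\phi}_{n,j}$ are nonzero, each with $\tilde{\rho}_{n,j}$ comparable to $\tilde{\rho}_{n,i}$ thanks to (\ref{eq:cor:cover-boundary-h1}), so both $\Sigma$ and its gradient are controlled on the scale $\tilde{\rho}_{n,i}^{-1}$.
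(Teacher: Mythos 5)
Your treatment of the inclusions, the two counting bounds (\ref{eq:lem:properties-local-rho-convering-2})--(\ref{eq:lem:properties-local-rho-convering-3}), and the partition of unity matches the paper and is sound. The gap is in your argument for (\ref{eq:lem:properties-local-rho-convering-4}), in two places.

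First, the rearrangement you invoke goes in the wrong direction. From $r_{j}<2^{-3}\hat{\rho}_{n,j}$ and $r_{j}(4M_{r_{j}}(p_{j}^{n})^{2}+2)^{-n/2}\geq 2^{-3}\rho_{n}(p_{j}^{n})$ one obtains $2^{-3}\hat{\rho}_{n,j}>r_{j}\geq 2^{-3}\rho_{n}(p_{j}^{n})(4M_{r_{j}}^{2}+2)^{n/2}$, i.e. a \emph{lower} bound $\hat{\rho}_{n,j}\gtrsim\tilde{\rho}_{n,j}(1+M_{r_{j}})^{n}$, not the claimed upper bound. An upper bound of that shape can in fact be extracted (using the near-maximizer $r^{*}$ of $r\mapsto r(4M_{r}^{2}+2)^{-n/2}$ over $(0,\delta(p_{j}))$, not the $r_{j}$ you define), but the derivation as written does not support it. Second, and more substantially, your packing step glosses over the fact that both the enclosing radius $\hat{\rho}_{n,j}$ and the separation scale $\tilde{\rho}_{n,j}$ vary with $j$; a volume ratio requires a single enclosing ball and a single lower bound on the separations. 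The paper handles this by picking $\tilde{p}\in\X(x)$ with maximal $\delta_{m}:=\delta(\tilde{p})$, then using $\delta$-regularity (Lemma \ref{lem:properties-delta-M-regular}) to force every $\delta_{j}$ to be comparable to $\delta_{m}$, which in turn gives a uniform lower bound $\tilde{\rho}_{n,j}\gtrsim\delta_{m}(1+M)^{-n}$ and confines all centers to a ball of radius $\lesssim\delta_{m}$ around $\tilde{p}$. That comparability step is the crux of the argument, and your proposal omits it. Incidentally, the paper's proof only establishes the crude $\hat{d}=d$, which already suffices because the lemma merely asserts existence of some $\hat{d}\in[0,d]$; the refinement to $\hat{d}=d-1$ that you sketch via the $(d-1)$-dimensional surface area corresponds to Remark \ref{rem:lem:properties-local-rho-convering} and is not part of the proof of the lemma itself.
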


\begin{rem}
\label{rem:lem:properties-local-rho-convering}We usually can improve
$\hat{d}$ to at least $\hat{d}=d-1$. To see this assume $\partial\bP$
is flat on the scale of $\delta$. Then all points $p_{i}$ lie on
a $d-1$-dimensional plane and we can thus improve the argument in
the following proof to $\hat{d}=d-1$.
\end{rem}

\begin{proof}
(\ref{eq:lem:properties-local-rho-convering-1}) follows from (\ref{eq:cor:cover-boundary-h1})$_{2}$.
For improved readability we drop the indeces $n$ and $\alpha$.

Let $k\in\N$ be fixed. By construction in Corollary \ref{cor:cover-boundary},
every $A_{1,j}$ with $A_{1,j}\cap A_{1,k}\neq\emptyset$ satisfies
$\tilde{\rho}_{j}\geq\frac{1}{2}\tilde{\rho}_{k}$ and hence if $A_{1,j}^ {}\cap A_{1,k}\neq\emptyset$
and $A_{1,i}\cap A_{1,k}\neq\emptyset$ we find $\left|p_{j}-p_{i}\right|\geq\frac{1}{4}\tilde{\rho}_{k}$
and $\left|p_{j}-p_{k}\right|\leq3\tilde{\rho}_{k}$. This implies
(\ref{eq:lem:properties-local-rho-convering-2})--(\ref{eq:lem:properties-local-rho-convering-3})
for $A_{1,j}$ and the statement for $A_{2,j}$ follows analogously.

For two points $p_{i}$, $p_{j}$ such that $x\in A_{3,i}\cap A_{3,j}$
it holds due to the triangle inequality $\left|p_{i}-p_{j}\right|\leq\max\left\{ \frac{1}{4}\hat{\rho}_{i},\frac{1}{4}\hat{\rho}_{j}\right\} $.
Let $\X(x):=\left\{ p_{i}\in\X:\,x\in\overline{\Ball{\frac{1}{8}\hat{\rho}_{i}}{p_{i}}}\right\} $
and choose $\tilde{p}(x)=\tilde{p}\in\X(x)$ such that $\delta_{\text{m}}:=\delta\of{\tilde{p}}$
is maximal. Then $\X(x)\subset\Ball{\frac{1}{4}\delta_{\text{m}}}{\tilde{p}}$
and every $p_{i}\in\X(x)$ satisfies $\delta_{\text{m}}>\delta_{i}>\frac{1}{3}\delta_{\text{m}}$.
Correspondingly, $\tilde{\rho}_{i}>\frac{1}{3}\delta_{\text{m}}2^{-5}\tilde{M}_{\frac{\delta_{i}}{8}}^{-n}>\frac{1}{3}\delta_{\text{m}}2^{-5}\tilde{M}_{\frac{3\delta_{m}}{8}}^{-n}$
for all such $p_{i}$. In view of (\ref{eq:cor:cover-boundary-h1})
this lower local bound of $\tilde{\rho}_{i}$ implies a lower local
bound on the mutual distance of the $p_{i}$. Since this distance
is proportional to $\delta_{\text{m}}\tilde{M}_{\frac{3\delta_{m}}{8}}^{-n}$,
this implies (\ref{eq:lem:properties-local-rho-convering-4}) with
$\hat{d}=d$. This is by the same time the upper estimate on $\hat{d}$.

Let $\phi:\,\R\to\R$ be symmetric, smooth, monotone on $(0,\infty)$
with $\phi'\leq2$ and $\phi=0$ on $(1,\infty)$. For each $k$ we
consider a radially symmetric smooth function $\hat{\phi}_{k}\of x:=\phi\of{\frac{\left|x-p_{k}\right|^{2}}{\tilde{\rho}_{k}}}$
and an additional function $\tilde{\phi}_{0}\left(x\right)=\dist\of{\,x,\,\partial\bP\cup\bigcup_{k}B_{n,k}\,}$.
In a similar way we may modify $\tilde{\phi}_{k}:=\hat{\phi}_{k}\,\dist\of{x,\bigcup_{j\neq k}B_{n,j}}$
such that $\tilde{\phi}_{k}|_{B_{n,j}}\equiv0$ for $j\neq k$. Then
we define $\phi_{k}:=\tilde{\phi}/\left(\tilde{\phi}_{0}+\sum_{j}\tilde{\phi}_{j}\right)$.
Note that by construction of $\fr_{k}$ and $y_{k}$ we find $\phi_{k}|_{B_{k}}\equiv1$
and $\sum_{k\geq1}\phi_{k}\equiv1$ on $\partial\bP$.

Estimate (\ref{eq:bound-nabla-phi-j}) follows from (\ref{eq:lem:properties-local-rho-convering-2}).
\end{proof}

\subsection{\label{subsec:Microscopic-Extension-dm}Extensions preserving the
Gradient norm via $\left(\delta,M\right)$-Regularity of $\partial\protect\bP$}

By Lemma \ref{lem:uniform-extension-lemma} in case $n=1$ there exist
local extension operator 
\begin{equation}
\cU_{n,i}:\,\,W^{1,p}\of{\bP\cap A_{3,i}^{n}}\,\to\,W^{1,p}\of{\Ball{\frac{1}{8}\rho_{n,i}}{p_{i}^{n}}\backslash\bP}\,\hookrightarrow\,W^{1,p}\of{A_{2,i}^{n}\backslash\bP}\label{eq:very-local-extension}
\end{equation}
which is linear continuous with bounds 
\begin{align}
\left\Vert \nabla\cU_{n,i}u\right\Vert _{L^{p}(A_{2,i}^{n}\backslash\bP)} & \leq2M_{n,i}\left\Vert \nabla u\right\Vert _{L^{p}\left(A_{3,i}^{n}\cap\bP\right)}\,,\label{eq:lem:local-delta-M-extension-estimate-h2}\\
\left\Vert \cU_{n,i}u\right\Vert _{L^{p}(A_{2,i}^{1}\backslash\bP)} & \leq\left\Vert u\right\Vert _{L^{p}\left(A_{3,i}^{1}\cap\bP\right)}\,.\label{eq:lem:local-delta-M-extension-estimate-h2-b}
\end{align}
Of course, higher $n>1$ are always valid, but the result becomes
worse, as we will see. However, in case $\partial\bP$ is locally
always in the upper half plane, the case $n=0$ is also valid, improving
the estimates of the extension operators significantly. This phenomenon
is acknowledged through the Definition \ref{def:extension-order}
of the extension order.
\begin{defn}
\label{def:cU-Q}Using Notation \ref{nota:extension-1} for every
$\bQ\subset\Rd$ let 
\begin{align}
\cU_{n,\alpha,\bQ}:\,C^{1}\of{\overline{\bP\cap\Ball{\frac{\fr}{2}}{\bQ}}} & \to C^{1}\of{\overline{\bQ\backslash\bP}}\,,\nonumber \\
u & \mapsto\chi_{\bQ\backslash\bP}\sum_{i\neq0}\sum_{a}\Phi_{a}\left(\phi_{n,i}\left(\cU_{n,i}\of{u-\tau_{n,\alpha,i}u}+\tau_{n,\alpha,i}u-\cM_{a}u\right)+\cM_{a}u\right)\,.\label{eq:def-extension-op}
\end{align}
\end{defn}

Due to the defintions, we find 
\begin{equation}
\tau_{n,\alpha,i}\cM_{a}u=\cM_{a}u\,.\label{eq:permutation-tau-cM}
\end{equation}

\begin{lem}
\label{lem:local-delta-M-extension-estimate} Let $\bP\subset\Rd$
be a Lipschitz domain (i.e. locally $\left(\delta,M\right)$-regular)
with $\delta_{\Delta}$ bounded by $\fr>0$ and let Assumption \ref{assu:M-alpha-bound}
hold and let $\hat{d}$ be the constant from (\ref{eq:lem:properties-local-rho-convering-4}).
Then for every bounded open $\bQ\subset\Rd$ with $\Ball{10\fr}0\subset\bQ$
and $1\leq r<p$ the linear operator 
\begin{align*}
\cU_{n,\alpha,\bQ}:\,W^{1,p}\left(\bP\cap\Ball{\frac{\fr}{2}}{\bQ}\right) & \to W^{1,r}\left(\bQ\right)
\end{align*}
is continuous and writing 
\[
f_{\alpha,n,\hat{d}}(M,\cdot\,):=\left(\left(1+M_{[\frac{3\delta}{8},\frac{\delta}{8}],\Rd}\right)^{n\hat{d}}\left(1+M_{[\frac{1}{8}\delta],\Rd}\right)^{r}\left(1+M_{[\tilde{\rho}_{n}],\Rd}\right)^{\alpha(d-1)}\right)^{\frac{p}{p-r}}
\]
the operator $\cU_{n,\alpha,\bQ}$ satisfies for some $C$ not depending
on $\bP$
\begin{align}
\fint_{\bQ}\left|\nabla\left(\cU_{n,\alpha,\bQ}u\right)\right|^{r} & \leq C\left(\fint_{\Ball{\fr}{\bQ}}f_{\alpha,n,\hat{d}}(M)\right)^{r\frac{p-r}{p}}\left(\frac{1}{\left|\bQ\right|}\int_{\Ball{\fr}{\bQ}\cap\bP}\left|\nabla u\right|^{p}\right)^{\frac{r}{p}}\nonumber \\
 & \quad+C\frac{1}{\left|\bQ\right|}\int_{\bQ\backslash\bP}\sum_{a}\Phi_{a}\left|\sum_{i\neq0}\rho_{1,i}^{-1}\chi_{A_{1,i}}\left(\tau_{n,\alpha,i}u-\cM_{a}u\right)\right|^{r}\label{lem:local-delta-M-extension-estimate-estim-1}\\
 & \quad+\frac{1}{\left|\bQ\right|}\int_{\bQ}\left|\sum_{l=1}^{d}\sum_{a:\,\partial_{l}\Phi_{a}>0}\sum_{b:\,\partial_{l}\Phi_{b}<0}\frac{\partial_{l}\Phi_{a}\left|\partial_{l}\Phi_{b}\right|}{D_{l+}^{\Phi}}\left(\cM_{a}u-\cM_{b}u\right)\right|^{r}\\
\fint_{\bQ}\left|\cU_{n,\alpha,\bQ}u\right|^{r} & \leq C_{0}\left(\frac{1}{\left|\bQ\right|}\int_{\Ball{\frac{\fr}{2}}{\bQ}\cap\bP}(1+M_{[\frac{3\delta}{8},\frac{\delta}{8}],\Rd})^{\frac{p\hat{d}}{p-r}}\right)^{\frac{p-r}{p}}\left(\frac{1}{\left|\bQ\right|}\int_{\Ball{\fr}{\bQ}\cap\bP}\left|u\right|^{p}\right)^{\frac{r}{p}}\,,\label{lem:local-delta-M-extension-estimate-estim-1-b}
\end{align}
where
\begin{equation}
D_{l+}^{\Phi}:=\sum_{\substack{a\neq0:\,\partial_{l}\Phi_{a}<0}
}\left|\partial_{l}\Phi_{a}\right|\,.\label{lem:local-delta-M-extension-estimate-estim-def-D-l-phi}
\end{equation}
\end{lem}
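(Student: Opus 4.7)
My plan is to write $\cU_{n,\alpha,\bQ} u$ as a sum of three pieces and then differentiate using the product rule. Expanding the bracket in (\ref{eq:def-extension-op}),
\[
\cU u = \chi_{\bQ\setminus\bP}\!\left(\sum_{i\neq 0}\sum_a \Phi_a\phi_{n,i}\,\cU_{n,i}(u-\tau_{n,\alpha,i}u) + \sum_{i\neq 0}\sum_a \Phi_a\phi_{n,i}(\tau_{n,\alpha,i}u-\cM_a u) + \sum_a \Phi_a\cM_a u\right).
\]
Applying $\nabla$ produces six contributions, labeled (a)--(f): (a) gradient on $\cU_{n,i}(u-\tau_i u)$; (b) $\nabla\phi_{n,i}$ on $\cU_{n,i}(u-\tau_i u)$; (c) $\nabla\phi_{n,i}$ on $(\tau_i u - \cM_a u)$; (d) $\nabla\Phi_a$ on $\phi_{n,i}\cU_{n,i}(u-\tau_i u)$; (e) $\nabla\Phi_a$ on $\phi_{n,i}(\tau_i u-\cM_a u)$; (f) $\nabla\Phi_a$ on $\cM_a u$. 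The locality guaranteed by Lemma \ref{lem:properties-local-rho-convering} (in particular (\ref{eq:lem:properties-local-rho-convering-3})--(\ref{eq:lem:properties-local-rho-convering-4})) will let me sum each family without double counting.

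Next I would bound the ``surface'' contributions (a)--(c). For (a) the local bound (\ref{eq:lem:local-delta-M-extension-estimate-h2}) gives $\|\nabla\cU_{n,i}(u-\tau_i u)\|_{L^p(A_{2,i})}\leq C(1+M_{n,i})\|\nabla u\|_{L^p(A_{3,i})}$. For (b) I use $|\nabla\phi_{n,i}|\leq C\tilde\rho_{n,i}^{-1}$ from (\ref{eq:bound-nabla-phi-j}) together with (\ref{eq:lem:local-delta-M-extension-estimate-h2-b}) and the Poincaré inequality from Lemma \ref{lem:Poincare-ball} on the ball $\Ball{\fr_{n,\alpha,i}}{y_{n,\alpha,i}}\subset A_{3,i}\cap\bP$: this produces a factor $\tilde\rho_{n,i}/\fr_{n,\alpha,i} \sim (1+M_{[\tilde\rho_n]})^{\alpha}$ with a $(d-1)$-power arising from the $r^{1-d}$ weight in (\ref{eq:lem:Poincare-ball-2}); so (b) is again controlled by $\|\nabla u\|_{L^p(A_{3,i})}$ up to a harmless power of $1+M$. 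Summing (a)+(b) via Hölder's inequality (splitting $1=\big(f_{\alpha,n,\hat d}(M)\big)^{(p-r)/p}\cdot\big(f_{\alpha,n,\hat d}(M)\big)^{-(p-r)/p}$), using the multiplicity estimate (\ref{eq:lem:properties-local-rho-convering-4}) on $\hat d$ overlaps to absorb the counting of the $A_{3,i}$'s, yields precisely the first term on the right-hand side of (\ref{lem:local-delta-M-extension-estimate-estim-1}). Term (c) is left unreduced and produces the second (middle) term of the estimate, with $\tilde\rho_{n,i}^{-1}$ coming directly from $|\nabla\phi_{n,i}|$.

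For the ``volume'' contributions (d)--(f), the key observation is that $\sum_a\Phi_a\equiv 1$ on a neighborhood of $\bQ$, hence componentwise $\sum_a\partial_l\Phi_a=0$. This gives $\sum_{a:\partial_l\Phi_a>0}\partial_l\Phi_a=\sum_{b:\partial_l\Phi_b<0}|\partial_l\Phi_b|=D^{\Phi}_{l+}$, and therefore
\[
\sum_a\partial_l\Phi_a\,\cM_a u=\sum_{a:\partial_l\Phi_a>0}\sum_{b:\partial_l\Phi_b<0}\frac{\partial_l\Phi_a\,|\partial_l\Phi_b|}{D^{\Phi}_{l+}}(\cM_a u-\cM_b u),
\]
exactly matching the third term in (\ref{lem:local-delta-M-extension-estimate-estim-1}). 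Terms (d) and (e) are bounded similarly to (a)--(c) using $|\nabla\Phi_a|\leq Cd_a^d$ from (\ref{eq:estiamte-nabla-Phi-i}) and the fact that on $\mathrm{supp}\,\nabla\Phi_a$ there are only finitely many $\phi_{n,i}$ active; these are absorbed into the first two terms of the estimate (with mild additional factors that are integrated using the stationarity assumption on $M$). The $L^r$ estimate (\ref{lem:local-delta-M-extension-estimate-estim-1-b}) is simpler: pointwise $|\cU u|\leq \sum_{a,i}\Phi_a\phi_{n,i}|\cU_{n,i}u|+\sum_a\Phi_a|\cM_a u|$, and using (\ref{eq:lem:local-delta-M-extension-estimate-h2-b}), the finite overlap (\ref{eq:lem:properties-local-rho-convering-4}), and Hölder to pass from $L^p$ to $L^r$ gives the claimed bound.

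The main obstacle is bookkeeping: tracking the powers of the Lipschitz constants at the three different scales appearing in $f_{\alpha,n,\hat d}$. The $(1+M_{[\frac{1}{8}\delta],\Rd})^{r}$ factor emerges from the local extension bound (\ref{eq:lem:local-delta-M-extension-estimate-h2}) raised to the $r$-th power, the $(1+M_{[\tilde\rho_n],\Rd})^{\alpha(d-1)}$ factor from the Poincaré step on the radius-$\fr_{n,\alpha,i}$ ball from Notation \ref{nota:extension-1}, and the $(1+M_{[\frac{3\delta}{8},\frac{\delta}{8}],\Rd})^{n\hat d}$ factor from the multiplicity bound (\ref{eq:lem:properties-local-rho-convering-4}) which controls how many $A_{3,i}$ contain a given point. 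All three must be collected under a single Hölder step with conjugate exponents $p/r$ and $p/(p-r)$, which forces precisely the exponent $p/(p-r)$ that appears in the assumed moment bound on $f_{\alpha,n,\hat d}(M)$.
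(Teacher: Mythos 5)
Your six-term decomposition (a)--(f) corresponds to the paper's $I_1,I_2,I_3$ via (a)$=I_1$, (b)+(c)$=I_2$, (d)+(e)+(f)$=I_3$, and your handling of (a), (b), (c) and (f) is essentially the paper's: local extension bound, Poincar\'e on the reference ball $\Ball{\fr_{n,\alpha,i}}{y_{n,\alpha,i}}$, a single H\"older split with the $p/(p-r)$ exponent, and Lemma~\ref{lem:conv-sum-0} via $\sum_a\partial_l\Phi_a=0$. The paper inserts an extra telescoping step in $I_2$ (using $\sum_{i\geq 0}\nabla\phi_i=0$ to rewrite $u_{i,a}\nabla\phi_i$ as $(u_i-u_j)\nabla\phi_i$ plus a $\nabla\phi_0$ remainder), whereas you split $u_{i,a}$ into $\cU_{n,i}(u-\tau_i u)+(\tau_iu-\cM_au)$ directly; both routes invoke Poincar\'e on functions with zero mean on a controlled small ball and reach the same two contributions, so that part is fine.

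There is a genuine gap in your treatment of (d) and (e). You claim they are ``bounded similarly to (a)--(c) and absorbed into the first two terms,'' but the factor $|\nabla\Phi_a|\lesssim d_a^d$ is a \emph{mesoscopic} Voronoi quantity that is unbounded and not controlled by any moment of the microscopic Lipschitz constants appearing in $f_{\alpha,n,\hat d}(M)$, nor by the middle term of the estimate (which is governed by $\tilde\rho_{n,i}^{-1}$, not $d_a^d$). Terms (d) and (e) cannot be estimated separately. The same cancellation $\sum_a\nabla\Phi_a=0$ that you apply to (f) must be applied \emph{before any norm is taken} to the whole package: since $\phi_{n,i}\cU_{n,i}(u-\tau_iu)$ does not depend on $a$,
\[
\text{(d)}=\sum_{i\neq0}\phi_{n,i}\,\cU_{n,i}(u-\tau_iu)\,\Bigl(\sum_a\nabla\Phi_a\Bigr)=0\,,
\]
and since $\tau_iu$ does not depend on $a$,
\[
\text{(e)}=-\Bigl(\sum_{i\neq0}\phi_{n,i}\Bigr)\sum_a\cM_au\,\nabla\Phi_a\,,\qquad
\text{(e)}+\text{(f)}=\phi_{n,0}\sum_a\cM_au\,\nabla\Phi_a\,.
\]
Only this combined quantity is controllable; after Lemma~\ref{lem:conv-sum-0} it is exactly the third error term. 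Neither (e) nor (f) alone is bounded, which is why the paper's Step~3 treats the entire $I_3=\fint|\sum_{i\neq0}\sum_a u_{i,a}\phi_i\nabla\Phi_a|^r$ at once, observes that it collapses to $\phi_0\sum_a\cM_au\,\nabla\Phi_a$, and only then applies the rebalancing with $D_{l+}^\Phi$. Your proposal needs this reorganization to close.
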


\begin{rem*}
Since the covering $A_{1,i}$ is locally finite we find 
\[
\left|\sum_{i\neq0}\rho_{1,i}^{-1}\chi_{A_{1,i}}\left(\tau_{n,\alpha,i}u-\cM_{a}u\right)\right|^{r}\leq\sum_{i\neq0}\rho_{1,i}^{-r}\chi_{A_{1,i}}\left|\tau_{n,\alpha,i}u-\cM_{a}u\right|^{r}\,.
\]
\end{rem*}

\subsection{Extensions preserving the Symmetric Gradient norm via $\left(\delta,M\right)$-Regularity
of $\partial\protect\bP$}

By Lemmas \ref{lem:rho-p-lsc} and \ref{lem:uniform-Nitsche-extension-lemma}
in case $n=2$ the local extension operator 
\begin{equation}
\cU_{n,k}:\,\,W^{1,p}\of{\bP\cap A_{3,k}^{n}}\,\to\,W^{1,p}\of{\Ball{\frac{1}{8}\rho_{n,k}}{p_{k}^{n}}\backslash\bP}\,\hookrightarrow\,W^{1,p}\of{A_{2,k}^{n}\backslash\bP}\label{eq:very-local-extension-sym}
\end{equation}
is linear continuous with bounds 
\begin{align}
\left\Vert \nablas\cU_{n,k}u\right\Vert _{L^{p}(\Ball{\frac{1}{8}\rho_{n,k}}{p_{k}^{n}}\backslash\bP)} & \leq C\tilde{M}_{n,k}^{2}\left\Vert \nablas u\right\Vert _{L^{p}\left(A_{3,k}^{n}\cap\bP\right)}\,.\label{eq:lem:local-delta-M-extension-estimate-h2-sym}
\end{align}
Like in Section \ref{subsec:Microscopic-Extension-dm} lower values
of $n$ are possible, acknowledged by Definition \ref{def:extension-order}
of symmetric extension order.
\begin{defn}
Using the notation of Definition \ref{def:cU-Q-sym} let 
\begin{align}
\cU_{n,\alpha,\bQ}:\,C^{1}\of{\overline{\bP\cap\Ball{\frac{\fr}{2}}{\bQ}}} & \to C^{1}\of{\overline{\bQ\backslash\bP}}\,,\nonumber \\
u & \mapsto\chi_{\bQ\backslash\bP}\sum_{k}\sum_{a}\Phi_{a}\left(\phi_{n,k}\left(\cU_{n,k}\of{u-\tau_{n,\alpha,k}^{\fs}u}+\tau_{n,\alpha,k}^{\fs}u-\cM_{a}^{\fs}u\right)+\cM_{a}^{\fs}u\right)\label{eq:def-extension-op-sym}
\end{align}
where $\cU_{n,k}$ are the extension operators on $A_{3,k}^{n}$ given
by the symmetric extension order of $\bP$.
\end{defn}

By definition we verify $\nablas\left(u-\tau_{n,\alpha,i}^{\fs}u\right)=\nablas u$
as well as 
\[
\fint_{\Ball{\fr_{n,\alpha,i}}{y_{n,\alpha,i}}}\left(\nabla-\nablas\right)\left(u-\tau_{n,\alpha,i}^{\fs}u\right)=0\,,\qquad\fint_{\Ball{\fr_{n,\alpha,i}}{y_{n,\alpha,i}}}\left(u-\tau_{n,\alpha,i}^{\fs}u\right)=0
\]
and similarly for $\cM_{a}^{\fs}u$. Furthermore, it holds 
\begin{equation}
\tau_{n,\alpha,i}^{\fs}\cM_{a}^{\fs}u=\cM_{a}^{\fs}u\,.\label{eq:permutation-tau-cM-fs}
\end{equation}

\begin{lem}
\label{lem:local-delta-M-extension-estimate-sym} Let $\bP\subset\Rd$
be a locally $\left(\delta,M\right)$-regular open set with delta
bounded by $\fr>0$ and let Assumption \ref{assu:M-alpha-bound} hold
and let $\hat{d}$ be the constant from (\ref{eq:lem:properties-local-rho-convering-4}).
Then for every bounded open $\bQ\subset\Rd$, $1\leq r<p$ the operator
\begin{align*}
\cU_{n,\bQ}:\,W^{1,p}\left(\bP\cap\Ball{\frac{\fr}{2}}{\bQ}\right) & \to W^{1,r}\left(\bQ\right)
\end{align*}
is linear, well defined and with 
\[
f_{\alpha,n,\hat{d}}^{\fs}(M,\cdot\,):=\left(\left(1+M_{[\frac{3\delta}{8},\frac{\delta}{8}],\Rd}\right)^{\hat{d}}\left(1+M_{[\frac{1}{8}\delta],\Rd}\right)^{2r}\left(1+M_{[\tilde{\rho}_{n}],\Rd}\right)^{\alpha(d-1)}\right)^{\frac{p}{p-r}}
\]
satisfies 
\begin{align}
\fint_{\bQ}\left|\nablas\left(\cU_{2,\bQ}u\right)\right|^{r} & \leq C\left(\fint_{\Ball{\fr}{\bQ}}f_{\alpha,n,\hat{d}}^{\fs}(M)\right)^{r\frac{p-r}{p}}\left(\frac{1}{\left|\bQ\right|}\int_{\Ball{\fr}{\bQ}\cap\bP}\left|\nablas u\right|^{p}\right)^{\frac{r}{p}}\nonumber \\
 & \quad+C\frac{1}{\left|\bQ\right|}\int_{\bQ\backslash\bP}\sum_{a}\Phi_{a}\left|\sum_{i\neq0}\rho_{1,i}^{-1}\chi_{A_{1,i}}\left(\tau_{n,\alpha,i}^{\fs}u-\cM_{a}^{\fs}u\right)\right|^{r}\label{lem:local-delta-M-extension-estimate-estim-sym-1}\\
 & \quad+\frac{1}{\left|\bQ\right|}\int_{\bQ}\left|\sum_{l=1}^{d}\sum_{a:\,\partial_{l}\Phi_{a}>0}\sum_{b:\,\partial_{l}\Phi_{b}<0}\frac{\partial_{l}\Phi_{a}\left|\partial_{l}\Phi_{b}\right|}{D_{l+}^{\Phi}}\left(\cM_{a}^{\fs}u-\cM_{b}^{\fs}u\right)\right|^{r}\\
\fint_{\bQ}\left|\cU_{\bQ}u\right|^{r} & \leq C_{0}\left(\frac{1}{\left|\bQ\right|}\int_{\Ball{\frac{\fr}{2}}{\bQ}\cap\bP}(1+M_{[\frac{3\delta}{8},\frac{\delta}{8}],\Rd})^{\frac{2p\hat{d}}{p-r}}\right)^{\frac{p-r}{p}}\left(\frac{1}{\left|\bQ\right|}\int_{\Ball{\fr}{\bQ}\cap\bP}\left|u\right|^{p}\right)^{\frac{r}{p}}\,,\label{lem:local-delta-M-extension-estimate-estim-sym-1-b}
\end{align}
where $D_{l+}^{\Phi}$ is given by (\ref{lem:local-delta-M-extension-estimate-estim-def-D-l-phi})
\end{lem}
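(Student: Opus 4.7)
The plan is to follow the scheme of the proof of Lemma \ref{lem:local-delta-M-extension-estimate}, with two substitutions: (a) the scalar averages $\tau_{n,\alpha,i}u$ and $\cM_{a}u$ are replaced by the affine correctors $\tau_{n,\alpha,i}^{\fs}u$ and $\cM_{a}^{\fs}u$, which satisfy $\nablas\tau_{n,\alpha,i}^{\fs}u\equiv0$ and $\nablas\cM_{a}^{\fs}u\equiv0$ because $\onablabot_{n,\alpha,i}u$ and $\onablabot_{a}u$ are antisymmetric matrices; and (b) the reflection extension (\ref{eq:very-local-extension}) is replaced by the Nitsche extension (\ref{eq:very-local-extension-sym}), whose norm estimate (\ref{eq:Nitsche-Estimate}) is \emph{quadratic} in $M$. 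In particular $\nablas\bigl(u-\tau_{n,\alpha,i}^{\fs}u\bigr)=\nablas u$ and $u-\tau_{n,\alpha,i}^{\fs}u\in\bW^{1,p}_{(0),\nablabot(0),\fr_{n,\alpha,i}}(A_{3,i}^{n}\cap\bP)$ by the very definition of $\tau_{n,\alpha,i}^{\fs}$ on $\Ball{\fr_{n,\alpha,i}}{y_{n,\alpha,i}}$.

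First I would differentiate $\cU_{n,\bQ}u$ on $\bQ\setminus\bP$ via the product rule, obtaining three groups of terms: \emph{(i)} a bulk term $\sum_{i,a}\Phi_{a}\phi_{n,i}\nablas\cU_{n,i}(u-\tau_{n,\alpha,i}^{\fs}u)$; \emph{(ii)} a symmetrised $\nabla\phi_{n,i}$-correction weighting the bracket $\cU_{n,i}(u-\tau_{n,\alpha,i}^{\fs}u)+\tau_{n,\alpha,i}^{\fs}u-\cM_{a}^{\fs}u$; and \emph{(iii)} a symmetrised $\nabla\Phi_{a}$-correction weighting the same bracket. Since $\sum_{a}\partial_{l}\Phi_{a}=0$, term \emph{(iii)} collapses to $\sum_{a}\nabla\Phi_{a}\cdot\cM_{a}^{\fs}u$ after cancellation, and the telescoping identity used already in the proof of Lemma \ref{lem:local-delta-M-extension-estimate} rewrites it as $\sum_{l}\sum_{a,b}\frac{\partial_{l}\Phi_{a}|\partial_{l}\Phi_{b}|}{D_{l+}^{\Phi}}(\cM_{a}^{\fs}u-\cM_{b}^{\fs}u)$, yielding the third line of (\ref{lem:local-delta-M-extension-estimate-estim-sym-1}) without further work.

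For \emph{(i)} I apply (\ref{eq:Nitsche-Estimate}) to bound $\|\nablas\cU_{n,i}(u-\tau_{n,\alpha,i}^{\fs}u)\|_{L^{p}(A_{2,i}^{n})}\leq C(1+M_{n,i})^{2}\|\nablas u\|_{L^{p}(A_{3,i}^{n}\cap\bP)}$, sum over $i$ using the local finiteness (\ref{eq:lem:properties-local-rho-convering-4}) (which contributes the factor $(1+M_{[\frac{3\delta}{8},\frac{\delta}{8}],\Rd})^{n\hat{d}}$) and split the weight from $\|\nablas u\|^{r}$ by Hölder with conjugate exponents $p/r$ and $p/(p-r)$; this step is where the factor $(1+M_{[\frac{1}{8}\delta],\Rd})^{2r}$ in $f^{\fs}_{\alpha,n,\hat{d}}$ originates. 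For \emph{(ii)} the Korn--Poincar\'e inequality (\ref{eq:lem:Poincare-ball-2-Korn}), applied on $A_{3,i}^{n}$ to the admissible function $u-\tau_{n,\alpha,i}^{\fs}u$ with anchor ball $\Ball{\fr_{n,\alpha,i}}{y_{n,\alpha,i}}$, gives $\|u-\tau_{n,\alpha,i}^{\fs}u\|_{L^{p}(A_{3,i}^{n})}\lesssim\tilde{\rho}_{n,i}\bigl(\tilde{\rho}_{n,i}/\fr_{n,\alpha,i}\bigr)^{(2d-1)/p}\|\nablas u\|_{L^{p}(A_{3,i}^{n}\cap\bP)}$; by (\ref{eq:def-fr-n-alph}) the radius ratio is $\lesssim(1+M_{\tilde{\rho}_{n,i}}(p_{n,i}))^{\alpha}$, producing the remaining weight $(1+M_{[\tilde{\rho}_{n}],\Rd})^{\alpha(d-1)}$ of $f^{\fs}_{\alpha,n,\hat{d}}$ after combining with the finite-overlap count. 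The extra factor $\tilde{\rho}_{n,i}$ cancels the $\tilde{\rho}_{n,i}^{-1}$ from (\ref{eq:bound-nabla-phi-j}), while the Nitsche estimate (\ref{eq:Nitsche-Estimate}) propagates the same bound to $\cU_{n,i}(u-\tau_{n,\alpha,i}^{\fs}u)$ with an additional power of $(1+M_{n,i})$; the unabsorbed piece $\tau_{n,\alpha,i}^{\fs}u-\cM_{a}^{\fs}u$ is left as the second line of (\ref{lem:local-delta-M-extension-estimate-estim-sym-1}). The $L^{r}$-bound (\ref{lem:local-delta-M-extension-estimate-estim-sym-1-b}) follows analogously by pointwise estimates $|\cM_{a}^{\fs}u(x)|\leq|\onablabot_{a}u|\,|x-x_{a}|+\fint_{\Ball{\fr_{a}/16}{x_{a}}}|u|$, controlling the affine parts via Korn--Poincar\'e on $\Ball{\fr_{a}/16}{x_{a}}$ and $\Ball{\fr_{n,\alpha,i}}{y_{n,\alpha,i}}$ together with (\ref{eq:Nitsche-Estimate}) and the Hölder splitting.

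The main technical obstacle is step \emph{(ii)}: whereas the gradient case used a scalar Poincar\'e inequality whose constant depends only on the outer radius, the Korn--Poincar\'e constant degrades polynomially with the ratio of the outer radius $\tilde{\rho}_{n,i}$ to the anchor radius $\fr_{n,\alpha,i}$ on which both the function average and the antisymmetric-gradient average of $u-\tau_{n,\alpha,i}^{\fs}u$ vanish. Tracking this dependence precisely is what forces both the quadratic-in-$M$ factor in \emph{(i)} (in place of the linear factor in (\ref{eq:lem:local-delta-M-extension-estimate-h2})) and the additional $(1+M_{[\tilde{\rho}_{n}],\Rd})^{\alpha(d-1)}$ multiplier in $f^{\fs}_{\alpha,n,\hat{d}}$ which is absent from $f_{\alpha,n,\hat{d}}$.
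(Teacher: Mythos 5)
Your proposal follows essentially the same route as the paper. The paper's proof of Lemma~\ref{lem:local-delta-M-extension-estimate-sym} is contained in Step~4 of the proof of Lemma~\ref{lem:local-delta-M-extension-estimate}, which is literally a list of substitutions: replace the reflection extensions by the Nitsche extensions (\ref{eq:ex-order-estimate-1})$\to$(\ref{eq:ex-order-estimate-2}), replace the scalar local averages $\tau_{i}u,\ \cM_{a}u$ by the affine correctors $\tau_{i}^{\fs}u,\ \cM_{a}^{\fs}u$ and use $\nablas(\tau_{i}^{\fs}u-\cM_{a}^{\fs}u)=0$, replace Lemma~\ref{lem:simple-extension} by Lemma~\ref{lem:uniform-korn-extension}, and replace the Poincar\'e inequality (\ref{eq:lem:Poincare-ball-2}) by the Korn--Poincar\'e inequality (\ref{eq:lem:Poincare-ball-2-Korn}). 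Your three groups of terms in the product rule are exactly the $I_{1},I_{2},I_{3}$ decomposition of the scalar proof, your telescoping for the $\nabla\Phi_{a}$ term is Lemma~\ref{lem:conv-sum-0} exactly as in the scalar case, and your attribution of the quadratic-in-$M$ factor to the Nitsche estimate is correct. Your identification of $u-\tau^{\fs}_{n,\alpha,i}u$ as lying in the annihilated space $\bW^{1,p}_{(0),\nablabot(0),\fr_{n,\alpha,i}}$ anchored on $B_{n,\alpha,i}$ is exactly the device that makes the Korn--Poincar\'e inequality applicable with the anchor ball $\Ball{\fr_{n,\alpha,i}}{y_{n,\alpha,i}}$.

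Two minor remarks. First, when handling the $\nabla\phi$-term the paper does not apply Korn--Poincar\'e directly to $u-\tau^{\fs}_{i}u$ on $A^{n}_{3,i}$, but rather (as in the scalar proof) re-extends $u_{i}$ and $u_{j}$ to larger balls by Lemma~\ref{lem:uniform-korn-extension}, uses that they coincide on a small ball, and applies Korn--Poincar\'e to the \emph{difference} $\tilde{u}_{i}-\tilde{u}_{j}$; your plan compresses this into a single Korn--Poincar\'e application, which is fine as a plan but requires the extension lemma you tacitly rely on. Second, your bookkeeping that the Korn--Poincar\'e ratio is $(1+M_{\tilde{\rho}_{n,i}}(p_{n,i}))^{\alpha}$ (via (\ref{eq:def-fr-n-alph})) producing the $(1+M_{[\tilde{\rho}_{n}],\Rd})^{\alpha(d-1)}$ factor is consistent with the scalar case; note though that (\ref{eq:lem:Poincare-ball-2-Korn}) actually yields a worse power $(R/r)^{2d-1}$ than the scalar $(R/r)^{d-1}$, and the stated $f^{\fs}_{\alpha,n,\hat{d}}$ keeps only the $\alpha(d-1)$ exponent, so if one tracks constants literally there is a small discrepancy with the paper's formula — but that is a feature of the paper's statement, not a gap in your argument.
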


\subsection{Support}
\begin{thm}
\label{thm:support}For both operators given in (\ref{eq:def-extension-op})
and (\ref{eq:def-extension-op-sym}) the following holds: For every
bounded open set $\bQ$ with $0\in\bQ$ and $n_{0},n_{1}\in\N$ let
\[
\forall M>1:\qquad\tilde{\bQ}_{M}:=\bigcup_{x_{a}\in\X_{\fr}\cap M\bQ}\Ball{\fr}{G_{a}}\,.
\]
If the mesoscopic regularity function $\tilde{f}$ of $\bP$ satisfies
$\tilde{f}(D)\leq CD^{-\frac{d-1}{\alpha}+\beta}$ for some $C>0$,
$\alpha\in(0,1)$ and $\beta>1$ then there exists almost surely $M_{0}>1$
such that for every $M>M_{0}$ it holds $\tilde{\bQ}_{M}\subset\Ball{M^{\alpha}}{M\bQ}$.
\end{thm}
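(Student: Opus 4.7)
The plan is a Borel--Cantelli argument: the decay hypothesis on $\tilde{f}$ is tailored precisely so that the probability of seeing an anomalously large Voronoi cell with center in $M\bQ$ is summable in $M$ (along a dyadic subsequence if needed). The whole argument is a statement about the geometry of $\X_{\fr}$ and the Voronoi cells $G_{a}$, and therefore the same proof works for both operators \eqref{eq:def-extension-op} and \eqref{eq:def-extension-op-sym}: the support of each, when the input vanishes on $\bP \setminus M\bQ$, is contained in $\bigcup_{a : x_a \in \X_\fr \cap M\bQ} \fA_{1,a} = \tilde{\bQ}_M$.

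First I would convert the mesoscopic regularity into a tail bound on individual cell diameters. By Lemma~\ref{lem:stat-erg-ball}, $\bP$ is isotropic cone mixing with $f(R) \leq 2d\,\tilde{f}(C_\alpha^{-1} R)$, and then Lemma~\ref{lem:Iso-cone-geo-estimate}(1) yields, jointly stationarily in $x_a \in \X_\fr$,
\[
\P\bigl(d(x_a) > D\bigr) \leq f\bigl(C_\alpha^{-1} D/2\bigr) \leq C\,\tilde{f}(c D),
\]
with constants $C,c > 0$ depending only on $d$ and the cone aperture. Combining this with the packing estimate $\#(\X_\fr \cap M\bQ) \leq c_{\bQ,\fr}\, M^d$, which follows from the minimum distance $|x_a-x_b| > 2\fr$ of Assumption~\ref{assu:mesoscopic-voronoi} by comparing the volume of $M\bQ$ with the volumes of the disjoint balls $\Ball{\fr}{x_a}$, a union bound yields
\[
\P(A_M) \leq c_{\bQ,\fr}\, C\, M^d\, \tilde{f}\bigl(c (M^\alpha - \fr)\bigr), \qquad A_M := \bigl\{\exists\, x_a \in \X_\fr \cap M\bQ :\, d(x_a) > M^\alpha - \fr\bigr\}.
\]

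Plugging in the stated decay $\tilde{f}(D) \leq C D^{-(d-1)/\alpha + \beta}$ (with $\beta > 1$ providing the summability margin), the resulting bound on $\P(A_M)$ is summable over the dyadic sequence $M_n = 2^n$; a Borel--Cantelli step then gives $\P(\limsup_n A_{M_n}) = 0$, so almost surely there is $n_0(\omega)$ with $A_{M_n}^c$ holding for all $n \geq n_0$. On $A_M^c$, every $x_a \in \X_\fr \cap M\bQ$ satisfies $d(x_a) + \fr \leq M^\alpha$, whence
\[
\Ball{\fr}{G_a} \subset \Ball{d(x_a)+\fr}{x_a} \subset \Ball{M^\alpha}{x_a} \subset \Ball{M^\alpha}{M\bQ},
\]
and taking the union over $a$ gives $\tilde{\bQ}_M \subset \Ball{M^\alpha}{M\bQ}$. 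To bridge from dyadic to arbitrary $M \in [M_n, M_{n+1}]$, I would replace $\alpha$ by some $\alpha' \in (0,\alpha)$ in the dyadic step and use monotonicity $\tilde{\bQ}_M \subset \tilde{\bQ}_{M_{n+1}}$ together with $M_{n+1}^{\alpha'} \leq 2^{\alpha'} M^{\alpha'} \ll M^\alpha$ for $M$ large.

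The main obstacle is the exponent bookkeeping in the union bound: matching the volume factor $M^d$ against the polynomial decay of $\tilde{f}(cM^\alpha)$ so that $\sum_M \P(A_M) < \infty$, and propagating this cleanly through the dyadic-to-general interpolation without losing the prescribed power $M^\alpha$. Once this counting is in place, the probabilistic content (stationarity plus Borel--Cantelli) is standard.
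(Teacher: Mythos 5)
Your high-level structure is right (convert mesoscopic regularity into a diameter-tail bound via Lemmas~\ref{lem:stat-erg-ball} and~\ref{lem:Iso-cone-geo-estimate}, then Borel--Cantelli along dyadics), and the observation that the statement is purely geometric about $\tilde{\bQ}_M$, so the same argument covers both operators, is also correct. However, the union bound in the middle is too crude and does not close under the stated hypothesis. You bound $\P(A_M)$ by the number of points in $M\bQ$, namely $\lesssim M^d$, times the tail probability $\P(d_a>M^\alpha-\fr)\lesssim (M^\alpha)^{-\frac{d-1}{\alpha}-\beta}$ (the exponent in the theorem must be read with the sign $-\frac{d-1}{\alpha}-\beta$ for $\tilde f$ to decay). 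This gives
\[
\P(A_M)\ \lesssim\ M^{d}\cdot M^{-(d-1)-\alpha\beta}\ =\ M^{\,1-\alpha\beta},
\]
which is summable along dyadics only if $\alpha\beta>1$. The hypothesis is $\beta>1$ with $\alpha\in(0,1)$, so the range $1<\beta\le 1/\alpha$ is not covered by your estimate; the gap is genuine, not bookkeeping.

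The paper closes this gap by stratifying over the distance from $\partial(M\bQ)$: a point $x_a$ at depth $k$ inside $M\bQ$ can have $\Ball{\fr}{G_a}$ reach outside $\Ball{M^\alpha}{M\bQ}$ only if $d_a\gtrsim k+M^\alpha$, not merely $d_a>M^\alpha-\fr$. Each shell of depth $\approx k$ carries only $\lesssim M^{d-1}$ points, so
\[
\P(\text{bad in shell }k)\ \lesssim\ M^{d-1}\bigl(\tfrac{k}{2}+M^\alpha\bigr)^{-\frac{d-1}{\alpha}-\beta_1-\beta_2},
\]
where $\beta=\beta_1+\beta_2$ with $\beta_2>1$. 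The factor $(\tfrac{k}{2}+M^\alpha)^{-\frac{d-1}{\alpha}}\le M^{-(d-1)}$ exactly cancels the shell count, $(\tfrac{k}{2}+M^\alpha)^{-\beta_2}\le (\tfrac{k}{2})^{-\beta_2}$ makes the sum over $k$ converge, and the remaining factor $(\tfrac{k}{2}+M^\alpha)^{-\beta_1}\le M^{-\alpha\beta_1}$ supplies a strictly positive decay in $M$. This is where the $\frac{d-1}{\alpha}$ in the hypothesis earns its keep: it is tuned so that the cancellation works at shell count $M^{d-1}$ rather than $M^d$. Your version replaces the threshold $k+M^\alpha$ by the uniform and much weaker $M^\alpha$, throws away the shell structure, and pays for it with the lost factor $M$. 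Unless you restrict to $\alpha\beta>1$, the proof does not go through; with the shell decomposition it goes through under the paper's hypothesis $\beta>1$.
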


\begin{proof}
We consider two balls $\Ball r0\subset\bQ\subset\Ball R0$ with $r>0$. 

We write $\bQ_{M}:=M\bQ$ and $\B_{M,\alpha,\bQ}:=\Ball{M^{\alpha}}{\bQ_{M}}$
for $\alpha\in(0,1)$ with $\B_{M,\alpha,\bQ}^{\complement}:=\Rd\backslash\B_{M,\alpha,\bQ}$.
For $k\in\N$ we introduce 
\[
\bQ_{M,k}:=\left\{ x\in\bQ_{M}:\,\dist(x,\partial\bQ_{M})\in[k,k)\right\} 
\]
and find 
\[
\P\of{\tilde{\bQ}_{M}\subset\B_{M,\alpha,\bQ}}=1-\sum_{k}\P\of{\exists x_{a}\in\bQ_{M,k}\cap\X_{r}:\,\Ball{\fr}{G_{a}}\cap\B_{M,\alpha,\bQ}^{\complement}\neq\emptyset}.
\]
On the other hand, 
\begin{align*}
 & \P\of{\exists x_{a}\in\bQ_{M,k}\cap\X_{r}:\,\Ball{\fr}{G_{a}}\cap\B_{M,\alpha,\bQ}^{\complement}\neq\emptyset}\\
 & \qquad\qquad\leq\P\of{\exists x_{a}\in\bQ_{M,k}\cap\X_{r}:\,\Ball{2d_{a}}{x_{a}}\cap\B_{M,\alpha,\bQ}^{\complement}\neq\emptyset}\\
 & \qquad\qquad\leq C\partial\bQ_{M}\P\of{d_{a}>\frac{k}{2}+M^{\alpha}}\\
 & \qquad\qquad\leq CM^{d-1}\left(\frac{k}{2}+M^{\alpha}\right)^{-\left(\frac{d-1}{\alpha}+\beta_{1}+\beta_{2}\right)}\leq CM^{-\beta_{1}}\left(\frac{k}{2}\right)^{-\beta_{2}}
\end{align*}
where $C$ depends only on the minimal mutual distance of the points,
i.e. $\fr$, and the shape of $\bQ$. Now, since $\beta>1$ we can
choose $\beta_{2}>1$ and find
\[
\P\of{\tilde{\bQ}_{M}\subset\B_{M,\alpha,\bQ}}\geq1-CM^{-\beta_{1}}\,.
\]
Since the right hand side converges to $1$ as $M\to\infty$, we can
conclude.
\end{proof}

\subsection{Proof of Lemmas \ref{lem:local-delta-M-extension-estimate} and \ref{lem:local-delta-M-extension-estimate-sym}}
\begin{lem}
\label{lem:conv-sum-0}Let $\alpha_{i}$, $u_{i}$, $i=1\dots n$,
be a family of real numbers such that $\sum_{i}\alpha_{i}=0$ and
let $\alpha_{+}:=\sum_{i:\,\alpha_{i}>0}\alpha_{i}$. Then 
\[
\sum_{i}\alpha_{i}u_{i}=\sum_{i:\,\alpha_{i}>0}\sum_{j:\,\alpha_{j}<0}\frac{\alpha_{i}\left|\alpha_{j}\right|}{\alpha_{+}}\left(u_{i}-u_{j}\right)\,.
\]
\end{lem}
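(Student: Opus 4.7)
The plan is to prove this purely algebraically by splitting the index set into positive, negative, and zero parts and exploiting the constraint $\sum_i \alpha_i = 0$. Let $I_+ := \{i : \alpha_i > 0\}$ and $I_- := \{j : \alpha_j < 0\}$. The vanishing-sum hypothesis gives
\[
\alpha_+ = \sum_{i \in I_+} \alpha_i = \sum_{j \in I_-} |\alpha_j|,
\]
since indices with $\alpha_i = 0$ contribute nothing on either side of the identity to be proved.

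The key step is then to expand the right-hand side by distributing the difference $(u_i - u_j)$ and separating the resulting double sum into two single sums. For the term carrying $u_i$, the inner sum over $j \in I_-$ yields the factor $\sum_{j \in I_-}|\alpha_j|/\alpha_+ = 1$, leaving $\sum_{i \in I_+} \alpha_i u_i$. Symmetrically, the term carrying $u_j$ produces $\sum_{i \in I_+} \alpha_i/\alpha_+ = 1$, leaving $-\sum_{j \in I_-} |\alpha_j| u_j = \sum_{j \in I_-} \alpha_j u_j$. Adding these and noting that indices with $\alpha_i = 0$ contribute nothing to $\sum_i \alpha_i u_i$ either completes the identity.

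There is no genuine obstacle here; this is a one-line reindexing argument whose only content is the normalization $\alpha_+ = \sum_{j \in I_-}|\alpha_j|$ enforced by $\sum_i \alpha_i = 0$. The lemma is stated because it will be applied in the estimates of Lemmas \ref{lem:local-delta-M-extension-estimate} and \ref{lem:local-delta-M-extension-estimate-sym}, where sums of the form $\sum_a \partial_l \Phi_a (\cM_a u)$ (with $\sum_a \Phi_a \equiv 1$, hence $\sum_a \partial_l \Phi_a \equiv 0$) need to be rewritten as sums of differences $\cM_a u - \cM_b u$ that can subsequently be controlled by path integrals of $\nabla u$. The only care required is to make the convention that $\alpha_+ = 0$ (which occurs iff all $\alpha_i = 0$) causes both sides to vanish trivially, so the identity holds in that degenerate case as well.
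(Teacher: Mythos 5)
Your proof is correct and essentially identical to the paper's, which runs the same one-line reindexing argument in the opposite direction (from $\sum_i\alpha_i u_i$ inserting the factor $\sum_{j:\alpha_j<0}(-\alpha_j)/\alpha_+=1$, rather than expanding the double sum), relying on the same normalization $\alpha_+=\sum_{j:\alpha_j<0}|\alpha_j|$. Your explicit note on the degenerate case $\alpha_+=0$ is a small addition the paper leaves tacit.
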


\begin{proof}
\begin{align*}
\sum_{i}\alpha_{i}u_{i} & =\sum_{i:\,\alpha_{i}>0}\alpha_{i}u_{i}+\sum_{j:\,\alpha_{j}<0}\alpha_{j}u_{j}\\
 & =\sum_{i:\,\alpha_{i}>0}\alpha_{i}\sum_{j:\,\alpha_{j}<0}\frac{-\alpha_{j}}{\alpha_{+}}u_{i}+\sum_{j:\,\alpha_{j}<0}\alpha_{j}\sum_{i:\,\alpha_{i}>0}\frac{\alpha_{i}}{\alpha_{+}}u_{j}\\
 & =\sum_{i:\,\alpha_{i}>0}\sum_{j:\,\alpha_{j}<0}\frac{\alpha_{i}\left|\alpha_{j}\right|}{\alpha_{+}}\left(u_{i}-u_{j}\right)\,.
\end{align*}
\end{proof}

\begin{proof}[Proof of Lemma \ref{lem:local-delta-M-extension-estimate}]
 For improved readability, we drop the indeces $n$ and $\alpha$
in the following.

We prove Lemma \ref{lem:local-delta-M-extension-estimate}, i.e. (\ref{lem:local-delta-M-extension-estimate-estim-1})
as (\ref{lem:local-delta-M-extension-estimate-estim-1-b}) can be
derived in a similar but shorter way. Lemma \ref{lem:local-delta-M-extension-estimate-sym}
can be proved in a similar way with some inequalities used below being
replaced by the ``symmetrized'' counterparts. We will make some
comments towards this direction in Step 4 of this proof.

For shortness of notation (and by abuse of notation) we write
\[
\fint_{\bP\cap\bQ}g:=\frac{1}{\left|\bQ\right|}\int_{\bP\cap\bQ}g\,,\qquad\fint_{\bQ\backslash\bP}g:=\frac{1}{\left|\bQ\right|}\int_{\bQ\backslash\bP}g
\]
and similar for integrals over $\Ball{\frac{\fr}{2}}{\bQ}\cap\bP$
and $\Ball{\frac{\fr}{2}}{\bQ}\backslash\bP$. For simplicity of notation,
we further drop the index $1$ in the subsequent calculations.

We introduce the quantities 
\[
\tilde{M}_{\tilde{\rho},i}:=M_{\tilde{\rho}(p_{i})}(p_{i})\,,\qquad\tilde{M}_{\delta,1,i}:=M_{\frac{1}{8}\delta(p_{i})}(p_{i})\,,\qquad\tilde{M}_{\delta,2,i}:=M_{\frac{3}{8}\delta(p_{i})}(p_{i})
\]
note that $\tilde{\rho}_{i}\leq\frac{1}{8}\delta_{i}$ as well as
$\sqrt{4M_{i}^{2}+2}\leq2\tilde{M}_{i}$. Writing 
\begin{align*}
u_{i} & :=\cU_{i}\left(u-\tau_{i}u\right)+\tau_{i}u &  & \text{on }A_{2,i}\\
u_{i,a} & :=\cU_{i}\left(u-\tau_{i}u\right)+\tau_{i}u-\cM_{a}u &  & \text{on }A_{2,i}\cap\fA_{1,a}
\end{align*}
 on $A_{2,i}$, The integral over $\nabla\left(\cU_{\bQ}u\right)$
can be estimated via
\begin{gather}
\fint_{\bQ\backslash\bP}\left|\nabla\left(\cU_{\bQ}u\right)\right|^{r}\leq C_{r}\left(I_{1}+I_{2}+I_{3}\right)\label{eq:lem:dm-general-estimate-1}\\
I_{1}:=\fint_{\bQ\backslash\bP}\left|\sum_{i\neq0}\sum_{a}\Phi_{a}\phi_{i}\nabla u_{i,a}\right|^{r}\,,\qquad I_{2}:=\fint_{\bQ\backslash\bP}\left|\sum_{i\neq0}\sum_{a}u_{i,a}\Phi_{a}\nabla\phi_{i}\right|^{r}\,,\nonumber \\
I_{3}:=\fint_{\bQ\backslash\bP}\left|\sum_{i\neq0}\sum_{a}u_{i,a}\phi_{i}\nabla\Phi_{a}\right|^{r}\,.
\end{gather}
\emph{Step 1:} Using (\ref{eq:ex-order-estimate-1}) and $\nabla u_{i,a}=\nabla u_{i}$
as well as $\sum_{a}\Phi_{a}=1$ we conclude 
\begin{align*}
I_{1}=\fint_{\bQ\backslash\bP}\left|\sum_{i\neq0}\phi_{i}\nabla u_{i}\right|^{r} & \leq\fint_{\bQ\backslash\bP}\sum_{i\neq0}\phi_{i}\left|\nabla u_{i}\right|^{r}\leq\fint_{\bQ\backslash\bP}\sum_{i\neq0}\chi_{A_{1,i}}\left|\nabla u_{i}\right|^{r}\\
 & \leq C\,\sum_{i\neq0}\fint_{\bQ}\chi_{A_{2,i}}\left|\nabla u_{i}\right|^{r}\leq C\,\sum_{i\neq0}\tilde{M}_{\delta,2,i}^{r}\fint_{\Ball{\frac{\fr}{2}}{\bQ}\cap\bP}\chi_{A_{3,i}}\left|\nabla u\right|^{r}\,.
\end{align*}
It only remains to estimate $\sum_{i}\chi_{A_{3,i}}(x)$. After a
Hölder estimate and using $\tilde{M}_{\delta,2,i}\leq1+M_{[\frac{1}{8}\delta],\Rd}$
on $A_{3,i}$, we obtain 
\begin{align}
\sum_{i\neq0}\tilde{M}_{\delta,2,i}^{r} & \fint_{\bQ\cap\bP}\chi_{A_{3,i}}\left|\nabla u\right|^{r}\leq\fint_{\Ball{\frac{\fr}{2}}{\bQ}\cap\bP}\sum_{i\neq0}\chi_{A_{3,i}}\left(1+M_{[\frac{1}{8}\delta],\Rd}\right)^{r}\left|\nabla u\right|^{r}\nonumber \\
 & \leq\left(\fint_{\Ball{\frac{\fr}{2}}{\bQ}\cap\bP}\left(\sum_{i\neq0}\chi_{A_{3,i}}\right)^{\frac{p}{p-r}}\left(1+M_{[\frac{1}{8}\delta],\Rd}\right)^{\frac{rp}{p-r}}\right)^{\frac{p-r}{p}}\left(\fint_{\Ball{\frac{\fr}{2}}{\bQ}\cap\bP}\left|\nabla u\right|^{p}\right)^{\frac{r}{p}}\,.\label{eq:thm:dm-Ext-sto-connected-help-5-0}
\end{align}
\emph{Step 2}: Concerning $I_{2}$, we first observe that for each
$j\neq0$ it holds 
\begin{equation}
\chi_{A_{1,j}\backslash\bP}u_{j,a}\nabla\phi_{0}+\chi_{A_{1,j}\backslash\bP}\sum_{i\neq0}u_{j,a}\nabla\phi_{i}=0\,.\label{eq:proof-main-sum-zero}
\end{equation}
We use $\sum_{j\in\N}\chi_{A_{1,j}}\geq\chi_{A_{1,i}}$ for every
$i\in\N$ together with (\ref{eq:proof-main-sum-zero}) and (\ref{eq:lem:properties-local-rho-convering-2})
to obtain 
\begin{align*}
\fint_{\bQ\backslash\bP}\Phi_{a}\left|\sum_{i\neq0}u_{i,a}\nabla\phi_{i}\right|^{r} & \leq C\fint_{\bQ\backslash\bP}\Phi_{a}\left|\sum_{j\neq0}\chi_{A_{1,j}}\sum_{i\neq0}\left(u_{i,a}-u_{j,a}\right)\nabla\phi_{i}+u_{j,a}\nabla\phi_{0}\right|^{r}\\
 & \stackrel{\eqref{eq:lem:properties-local-rho-convering-2}}{\leq}C\fint_{\bQ\backslash\bP}\Phi_{a}\left(\sum_{j\neq0}\chi_{A_{1,j}}\sum_{i:\,A_{1,i}\cap A_{1,j}\neq\emptyset}\left|u_{i,a}-u_{j,a}\right|^{r}\left|\nabla\phi_{i}\right|^{r}+\left|\sum_{j\neq0}\chi_{A_{1,j}}u_{j,a}\nabla\phi_{0}\right|^{r}\right)\,.
\end{align*}
Note that 
\begin{equation}
\forall a,b,i,j:\qquad u_{i,a}-u_{j,a}=u_{i,b}-u_{j,b}=u_{i}-u_{j}\,.\label{eq:diff-ui-uj}
\end{equation}
Furthermore $u_{i}$ and $u_{j}$ are defined on $A_{2,i}$ and $A_{2,j}$
respectively and $u_{i}=u_{j}$ on $\Ball{\fr_{j}}{p_{j}}$ and $\Ball{\fr_{i}}{p_{i}}$
because of (\ref{eq:lem:properties-local-rho-convering-1}). Furthermore,
both functions can be extended from $A_{2,i}$ and $A_{2,j}$ to $\tilde{u}_{i}$
and $\tilde{u}_{j}$ on $\Ball{4\tilde{\rho}_{i}}{p_{i}}$ and $\Ball{4\tilde{\rho}_{j}}{p_{j}}$
respectively using Lemma \ref{lem:simple-extension} such that for
some $C$ independent from $i,j$ 
\[
k=i,j:\quad\norm{\nabla\tilde{u}_{k}}_{L^{r}\of{\Ball{4\tilde{\rho}_{k}}{p_{k}}}}\leq C\norm{\nabla\tilde{u}_{k}}_{L^{r}\of{A_{2,k}}}\,.
\]
Since now $\tilde{u}_{i}=\tilde{u}_{j}$ on $\Ball{\fr_{j}}{p_{j}}$
and $\Ball{\fr_{i}}{p_{i}}$ we chose $k(i,j)$ such that for $\tilde{M}_{k(i,j)}=1+\min\left\{ M_{\tilde{\rho},i},M_{\tilde{\rho},j}\right\} $
and it holds by the Poincaré inequality (\ref{eq:lem:Poincare-ball-2}),
the microscopic regularity $\alpha$ and the estimate (\ref{eq:rho-neighbor-estim})
\[
\int_{A_{1,i}\cap A_{1,j}}\left|u_{i,a}-u_{j,a}\right|^{r}\left|\nabla\phi_{i}\right|^{r}\leq C\rho_{i}^{-r}\int_{A_{1,k(i,j)}}\left|\tilde{u}_{i}-\tilde{u}_{j}\right|^{r}\leq C\tilde{M}_{k(i,j)}^{\alpha(d-1)}\int_{A_{2,k(i,j)}}\left|\nabla\left(\tilde{u}_{i}-\tilde{u}_{j}\right)\right|^{r}\,.
\]
We obtain with microscopic regularity $\alpha$, the finite covering
(\ref{eq:lem:properties-local-rho-convering-3}) and the proportionality
(\ref{eq:cor:cover-boundary-h1}) that
\begin{align*}
\fint_{\bQ\backslash\bP}\sum_{a}\Phi_{a}\chi_{A_{1,j}} & \sum_{i:\,A_{1,i}\cap A_{1,j}\neq\emptyset}\left|u_{i,a}-u_{j,a}\right|^{r}\left|\nabla\phi_{i}\right|^{r}=\fint_{\bQ\backslash\bP}\chi_{A_{1,j}}\sum_{i:\,A_{1,i}\cap A_{1,j}\neq\emptyset}\left|\tilde{u}_{i}-\tilde{u}_{j}\right|^{r}\left|\nabla\phi_{i}\right|^{r}\\
 & \leq\frac{C}{\left|\bQ\right|}\sum_{i:\,A_{1,i}\cap A_{1,j}\neq\emptyset}\tilde{M}_{k(i,j)}^{\alpha(d-1)}\int_{A_{2,j}}\left|\nabla\left(\tilde{u}_{i}-\tilde{u}_{j}\right)\right|^{r}\\
 & \leq\frac{C}{\left|\bQ\right|}\sum_{i:\,A_{1,i}\cap A_{1,j}\neq\emptyset}\tilde{M}_{k(i,j)}^{\alpha(d-1)}\left(\int_{A_{2,i}}\left|\nabla\tilde{u}_{i}\right|^{r}+\int_{A_{2,j}}\left|\nabla\tilde{u}_{j}\right|^{r}\right)\\
 & \leq\frac{C}{\left|\bQ\right|}\sum_{i:\,A_{1,i}\cap A_{1,j}\neq\emptyset}\left(\int_{A_{3,i}\cup A_{3,j}}\tilde{M}_{[\frac{1}{8}\delta],\Rd}^{r}\left(1+M_{[\tilde{\rho}],\Rd}\right)^{\alpha(d-1)}\left|\nabla u\right|^{r}\right)\,.
\end{align*}
Next we estimate from (\ref{eq:bound-nabla-phi-j})
\[
\fint_{\bQ\backslash\bP}\sum_{a}\Phi_{a}\left|\sum_{j\neq0}\chi_{A_{1,j}}u_{j,a}\nabla\phi_{0}\right|^{r}\leq C\fint_{\bQ\backslash\bP}\sum_{a}\Phi_{a}\left(\sum_{j\neq0}\rho_{j}^{-r}\chi_{A_{1,j}}\left|\cU_{j}\left(u-\tau_{j}u\right)\right|^{r}+\left|\sum_{j\neq0}\rho_{j}^{-1}\chi_{A_{1,j}}\left(\tau_{j}u-\cM_{a}u\right)\right|^{r}\right)\,.
\]
Using once more Assumption \ref{assu:M-alpha-bound} and 
\begin{equation}
\nabla\cU_{j}\left(u-\tau_{j}u\right)=\nabla\left(\cU_{j}\left(u-\tau_{j}u\right)+\tau_{j}u\right)=\nabla u_{j}\label{eq:nabla-U-j-u-j-equality}
\end{equation}
 and $\sum_{a}\Phi_{a}=1$ we infer from (\ref{eq:lem:Poincare-ball-2})
\[
C\fint_{\bQ\backslash\bP}\sum_{a}\Phi_{a}\sum_{j\neq0}\rho_{j}^{-r}\chi_{A_{1,j}}\left|\cU_{j}\left(u-\tau_{j}u\right)\right|^{r}\leq\frac{C}{\left|\bQ\right|}\sum_{j\neq0}\left(1+M_{\tilde{\rho},j}\right)^{\alpha(d-1)}\int_{A_{2,j}}\left|\nabla u_{j}\right|^{r}\,.
\]
Now we make use of the extension estimate (\ref{eq:ex-order-estimate-1})
to find
\[
\int_{A_{2,j}}\left|\nabla u_{j}\right|^{r}\leq CM_{\delta,1,j}^{r}\int_{A_{3,j}\cap\bP}\left|\nabla u\right|^{r}
\]
which in total implies for $f_{12}(M)=\left(1+M_{[\frac{1}{8}\delta],\Rd}\right)^{\frac{rp}{p-r}}\left(1+M_{[\tilde{\rho}],\Rd}\right)^{\frac{p\alpha(d-1)}{p-r}}$
\begin{align*}
I_{1}+I_{2} & \leq C\left(\fint_{\Ball{\frac{\fr}{2}}{\bQ}\cap\bP}\left(\sum_{i\neq0}\chi_{A_{3,i}}\right)^{\frac{p}{p-r}}f_{12}(M)\right)^{\frac{p-r}{p}}\left(\fint_{\Ball{\frac{\fr}{2}}{\bQ}\cap\bP}\left|\nabla u\right|^{p}\right)^{\frac{r}{p}}\\
 & \quad+C\fint_{\bQ\backslash\bP}\sum_{a}\Phi_{a}\left|\sum_{j\neq0}\rho_{j}^{-1}\chi_{A_{1,j}}\left(\tau_{j}u-\cM_{a}u\right)\right|^{r}\,.
\end{align*}
Making use of (\ref{eq:lem:properties-local-rho-convering-4}) we
find 
\[
\left|\sum_{i\neq0}\chi_{A_{3,i}}\right|\leq\left(1+M_{[\frac{3\delta}{8},\frac{\delta}{8}],\Rd}\right)^{\hat{d}}\,,
\]
and it only remains to estimate $I_{3}$.

Step 3: We observe with help of $\sum_{a}\nabla\Phi_{a}=0$ and $\sum_{i\neq0}\phi_{i}=\phi_{0}$
that 
\[
\sum_{i\neq0}\sum_{a}u_{i,a}\phi_{i}\nabla\Phi_{a}=\sum_{i\neq0}u_{i}\phi_{i}\sum_{a}\nabla\Phi_{a}+\sum_{a}\cM_{a}u\nabla\Phi_{a}=\sum_{a}\cM_{a}u\nabla\Phi_{a}\,.
\]
and Lemma \ref{lem:conv-sum-0} yields 
\begin{align*}
I_{3} & =\fint_{\bQ\backslash\bP}\left|\phi_{0}\sum_{a}\cM_{a}u\nabla\Phi_{a}\right|^{r}\\
 & \leq\fint_{\bQ\backslash\bP}\left|\sum_{l=1}^{d}\sum_{a:\,\partial_{l}\Phi_{a}>0}\sum_{b:\,\partial_{l}\Phi_{b}<0}\frac{\partial_{l}\Phi_{a}\left|\partial_{l}\Phi_{b}\right|}{D_{l+}^{\Phi}}\left(\cM_{a}u-\cM_{b}u\right)\right|^{r}\,.
\end{align*}
Step 4: Concerning the proof of Lemma \ref{lem:local-delta-M-extension-estimate-sym}
we follow the above lines with the following modifications. 

We use the Nitsche extension operators. Hence, instead of (\ref{eq:ex-order-estimate-1})
we use (\ref{eq:ex-order-estimate-2}). The local extended functions
are called 
\begin{align*}
u_{i} & :=\cU_{i}\left(u-\tau_{i}^{\fs}u\right)+\tau_{i}^{\fs}u &  & \text{on }A_{2,i}\\
u_{i,a} & :=\cU_{i}\left(u-\tau_{i}^{\fs}u\right)+\tau_{i}^{\fs}u-\cM_{a}^{\fs}u &  & \text{on }A_{2,i}\cap\fA_{1,a}
\end{align*}
and (\ref{eq:diff-ui-uj}) remains valid. We find it worth mentioning
that $\nablas\left(\tau_{i}^{\fs}u-\cM_{a}^{\fs}u\right)=0$ and hence
\[
\nablas\left(\phi_{i}\Phi_{a}u_{i,a}\right)=\frac{1}{2}\left(\nabla(\phi_{i}\Phi_{a})\otimes u_{i,a}+u_{i,a}\otimes\nabla(\phi_{i}\Phi_{a})\right)+\phi_{i}\Phi_{a}\nablas\cU_{2,i}\left(u-\tau_{i}^{\fs}u\right)\,.
\]
We furthermore replace Lemma \ref{lem:simple-extension} by Lemma
\ref{lem:uniform-korn-extension} and the Poincaré inequality (\ref{eq:lem:Poincare-ball-2})
by (\ref{eq:lem:Poincare-ball-2-Korn}). Finally we observe that (\ref{eq:nabla-U-j-u-j-equality})
is replaced by 
\[
\nablas\cU_{j}\left(u-\tau_{j}u\right)=\nablas\left(\cU_{j}\left(u-\tau_{j}u\right)+\tau_{j}u\right)=\nablas u_{j}
\]
\end{proof}

\subsection{\label{subsec:Proof-traces}Traces on $\left(\delta,M\right)$-Regular
Sets, Proof of Theorem \ref{thm:uniform-trace-estimate-1}}
\begin{proof}
We use the covering of $\partial\bP$ by $B_{i}:=A_{1,i}^{1}$ and
set $\tilde{\rho}_{i}:=\tilde{\rho}_{1,i}$, $\hat{\rho}_{i}:=\hat{\rho}_{i,5}(p_{k}^{1})$
and write $M_{i}=M_{\hat{\rho}_{i}}(p_{k}^{1})$, $\hat{B}_{i}:=\Ball{\hat{\rho}_{i}}{p_{k}^{1}}$.
Due to Lemma \ref{lem:basic-trace} we find locally
\begin{equation}
\norm{\cT u}_{L^{p_{0}}(\partial\bP\cap B_{k})}\leq C_{p_{0},p_{0}}\tilde{\rho}_{k}^{-\frac{1}{p_{0}}}\sqrt{4M_{k}^{2}+2}^{\frac{1}{p_{0}}+1}\norm u_{W^{1,p_{0}}\left(\hat{B}_{k}\right)}\,.\label{eq:lem:uniform-trace-estimate-2-2}
\end{equation}
We thus obtain 
\begin{multline*}
\frac{1}{\left|\bQ\right|}\int_{\bQ\cap\partial\bP}\left|\sum_{k}\phi_{k}\cT_{k}u\right|^{r}\\
\leq\left(\frac{1}{\left|\bQ\right|}\int_{\Ball{\frac{1}{4}}{\bQ}\cap\partial\bP}\sum_{k}\chi_{B_{k}}\tilde{\rho}_{k}^{-\frac{1}{p_{0}-r}}\right)^{\frac{p_{0}-r}{p_{0}}}\left(\frac{1}{\left|\bQ\right|}\sum_{k}\int_{\Ball{\frac{1}{4}}{\bQ}\cap\partial\bP}\chi_{B_{k}}\tilde{\rho}_{k}\left|\cT_{k}u\right|^{p_{0}}\right)^{\frac{r}{p_{0}}}
\end{multline*}
which yields by the uniform local bound of the covering, $\tilde{\eta}$
defined in Lemma \ref{lem:delta-tilde-construction-estimate}, twice
the application of (\ref{eq:lem:local-delta-M-construction-estimate-2})
and (\ref{eq:lem:uniform-trace-estimate-2-2})
\begin{align*}
\frac{1}{\left|\bQ\right|}\int_{\bQ\cap\partial\bP}\left|\sum_{k}\phi_{k}\cT_{k}u\right|^{r} & \leq C\left(\frac{1}{\left|\bQ\right|}\int_{\bQ\cap\partial\bP}\rho_{5,\Rd}^{-\frac{1}{p_{0}-r}}\right)^{\frac{p_{0}-r}{p_{0}}}\cdot\\
 & \qquad\cdot\left(\frac{1}{\left|\bQ\right|}\int_{\bQ\cap\bP}\sum_{k}\chi_{\hat{B}_{k}}\sqrt{4M_{k}^{2}+2}^{\frac{1}{p_{0}}+1}\left(\left|\nabla u\right|^{p_{0}}+\left|u\right|^{p_{0}}\right)\right)^{\frac{r}{p_{0}}}\,.
\end{align*}
With Hölders inequality and replacing $M_{k}$ by $M_{[\frac{1}{32}\delta],\Rd}$,
the last estimate leads to (\ref{eq:lem:uniform-trace-estimate-1-1}).
The second estimate goes analogue since the local covering by $A_{2,k}$
is finite.
\end{proof}

\section{\label{subsec:The-Issue-of-Connectedness}The Issue of Connectedness}
\begin{rem}
The following Lemmas \ref{lem:6-4} and \ref{lem:6-5} also hold with
$\tau_{i}$ and $\cM_{a}$ replaced by $\tau_{i}^{\fs}$ and $\cM_{a}^{\fs}$
respectively.
\end{rem}

\begin{lem}
\label{lem:6-4}Under Assumptions \ref{assu:M-alpha-bound}, \ref{assu:mesoscopic-voronoi}
let $\left(f_{j}\right)_{j\in\N}$ be non-negative and have support
$\support f_{j}\supset\Ball{\frac{\fr}{2}}{x_{j}}$ and let $\sum_{j\in\N}f_{j}\equiv1$.
Writing $\X(\bQ):=\left\{ x_{j}:\;\support f_{j}\cap\bQ\neq\emptyset\right\} $,
and 
\begin{align*}
F_{s,\iota}^{1}(\bQ) & :=\left(\frac{1}{\left|\bQ\right|}\int_{\bP\cap\bQ_{\fr}\cap\Rd_{3}}\left|\tilde{\rho}_{\Rd}\right|^{-\frac{sr}{s-r}}\tilde{M}^{2-\iota}\right)^{\frac{s-r}{s}}\\
F_{s,\tilde{s},\iota}^{2}(\bQ) & :=\left(\frac{1}{\left|\bQ\right|}\int_{\Ball{\frac{\fr}{2}}{\bQ}\backslash\bP}\tilde{M}^{\frac{\left(\iota-2\right)(\tilde{s}-r)}{r(s-\tilde{s})}}\right)^{r\frac{s-\tilde{s}}{\tilde{s}s}}\\
F_{s}^{3}(\bQ,u) & :=\left(\frac{1}{\left|\bQ\right|}\int_{\bP\cap\bQ_{\fr}}\sum_{x_{a}\in\X(\bQ)}\Phi_{a}\left|\sum_{\substack{i\neq0:\,\partial_{l}\phi_{i}\partial_{l}\phi_{0}<0}
}\chi_{A_{1,i}}\left(\tau_{i}u-\cM_{a}u\right)\right|^{s}\right)^{\frac{r}{s}}\\
F_{s}^{3,\fs}(\bQ,u) & :=\left(\frac{1}{\left|\bQ\right|}\int_{\bP\cap\bQ_{\fr}}\sum_{x_{a}\in\X(\bQ)}\Phi_{a}\left|\sum_{\substack{i\neq0:\,\partial_{l}\phi_{i}\partial_{l}\phi_{0}<0}
}\chi_{A_{1,i}}\left(\tau_{i}^{\fs}u-\cM_{a}^{\fs}u\right)\right|^{s}\right)^{\frac{r}{s}}
\end{align*}
for every $l=1,\dots d$ and $r<\tilde{s}<s$ it holds 
\begin{align*}
\frac{1}{\left|\bQ\right|}\int_{\bQ\backslash\bP}\sum_{a}\Phi_{a}\left|\sum_{i\neq0}\rho_{1,i}^{-1}\chi_{A_{1,i}}\left(\tau_{n,\alpha,i}u-\cM_{a}u\right)\right|^{r} & \leq\begin{cases}
F_{s,2}^{1}(\bQ)\,F_{s}^{3}(\bQ)\\
F_{s,d}^{1}(\bQ)\,F_{s,\tilde{s},d}^{2}(\bQ)\,F_{s}^{3}(\bQ,u)
\end{cases}\,,
\end{align*}
and
\begin{align*}
\frac{1}{\left|\bQ\right|}\int_{\bQ\backslash\bP}\sum_{a}\Phi_{a}\left|\sum_{i\neq0}\rho_{1,i}^{-r}\chi_{A_{1,i}}\left(\tau_{n,\alpha,i}^{\fs}u-\cM_{a}^{\fs}u\right)\right|^{r} & \leq\begin{cases}
F_{s,2}^{1}(\bQ)\,F_{s}^{3,\fs}(\bQ)\\
F_{s,d}^{1}(\bQ)\,F_{s,\tilde{s},d}^{2}(\bQ)\,F_{s}^{3,\fs}(\bQ,u)
\end{cases}\,.
\end{align*}
\end{lem}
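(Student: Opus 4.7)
The proof uses H\"older's inequality applied twice, together with the local finiteness of the covering $\{A_{1,i}\}$ from \eqref{eq:lem:properties-local-rho-convering-3}, the partition-of-unity identity $\sum_{a}\Phi_{a}\equiv 1$, and the volume-to-boundary conversion provided by Lemma \ref{lem:delta-tilde-construction-estimate}. A natural first move is to exploit that at each $x\in\bQ\backslash\bP$ the sum $\sum_{i}\rho_{1,i}^{-1}\chi_{A_{1,i}}(x)$ has only boundedly many nonzero terms whose radii $\rho_{1,i}$ are mutually comparable to $\tilde{\rho}_{\Rd}(x)$ thanks to \eqref{eq:cor:cover-boundary-h1} and \eqref{eq:lem:properties-local-rho-convering-2}. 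This permits the pointwise replacement
$$\Big|\sum_{i\neq 0}\rho_{1,i}^{-1}\chi_{A_{1,i}}(\tau_{n,\alpha,i}u-\cM_{a}u)\Big|^{r} \leq C\,\tilde{\rho}_{\Rd}^{-r}\Big|\sum_{i\neq 0}\chi_{A_{1,i}}(\tau_{n,\alpha,i}u-\cM_{a}u)\Big|^{r},$$
separating the singular weight from the difference of averages.

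For the first bound, a single H\"older inequality with conjugate exponents $s/(s-r)$ and $s/r$ isolates the factor $\tilde{\rho}_{\Rd}^{-sr/(s-r)}$ on one side and the $s$-th power of the sum on the other. Using $\sum_{a}\Phi_{a}=1$ and Jensen's inequality, which promotes the inner exponent from $r$ to $s$ against the probability weights $\Phi_{a}$, the second factor becomes exactly the integrand of $F^{3}_{s}(\bQ,u)$. The first factor is transformed from a volume integral over $\bQ\backslash\bP$ into one over $\bP\cap\bQ_{\fr}\cap\Rd_{3}$ via Lemma \ref{lem:delta-tilde-construction-estimate}, yielding $F^{1}_{s,2}(\bQ)$ with trivial $\tilde{M}$-weight (case $\iota=2$).

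For the second bound, the auxiliary parameter $\tilde{s}\in(r,s)$ is what permits an additional splitting of the $\tilde{M}$-weight. Insert $\tilde{M}^{\beta}\cdot\tilde{M}^{-\beta}\equiv 1$ into the integrand and apply a three-factor H\"older with outer powers $(s-r)/s$, $r(s-\tilde{s})/(\tilde{s}s)$, and $r/s$; the parameter $\beta$ is fixed by requiring that the first factor absorbs $\tilde{M}^{2-d}$ (so that Lemma \ref{lem:delta-tilde-construction-estimate} reproduces $F^{1}_{s,d}$), whereas the complementary weight $\tilde{M}^{(d-2)(\tilde{s}-r)/(r(s-\tilde{s}))}$ on $\bQ\backslash\bP$ produces $F^{2}_{s,\tilde{s},d}$. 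The third factor again becomes $F^{3}_{s}(\bQ,u)$ via Jensen as in the first bound. The symmetric-gradient statements are obtained by the same manipulations, since the only properties of $\tau_{n,\alpha,i},\cM_{a}$ used are linearity and the constant-average identities $\sum_{a}\Phi_{a}=1$ and \eqref{eq:permutation-tau-cM}; these are shared verbatim by $\tau_{n,\alpha,i}^{\fs},\cM_{a}^{\fs}$ through \eqref{eq:permutation-tau-cM-fs}, so the replacement is mechanical.

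The main obstacle is the algebraic bookkeeping in the second bound: the exponent $(d-2)(\tilde{s}-r)/(r(s-\tilde{s}))$ of $\tilde{M}$ in $F^{2}_{s,\tilde{s},d}$ must be the exact complement of the $\tilde{M}^{2-d}$ inside $F^{1}_{s,d}$ once the three-factor H\"older is unwound, and one has to track carefully how Lemma \ref{lem:delta-tilde-construction-estimate} trades a boundary $M^{d-2}$-weight for the volume integral on $\bP\cap\bQ_{\fr}\cap\Rd_{3}$. Matching this balance is exactly what dictates the stated form of $F^{1}, F^{2}, F^{3}$ and the role of the interpolating exponent $\tilde{s}$.
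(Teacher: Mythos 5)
Your overall strategy is the same as the paper's (which is stated essentially in one line, ``H\"older's and Jensen's inequality''): pull the singular weight $\rho_{1,i}^{-1}\chi_{A_{1,i}}$ out as $\tilde{\rho}_{\Rd}^{-1}$ using the comparability of radii on overlapping balls, then H\"older to isolate the $\tilde{\rho}$-weight, then Jensen against the probability weights $\Phi_a$ to promote the exponent from $r$ to $s$, producing $F^{3}_{s}$. For the first bound, the two-factor split with conjugate exponents $s/(s-r)$ and $s/r$ is clean, and your reading is correct.

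However, your description of the second bound contains a concrete arithmetic gap. You say to apply a ``three-factor H\"older with outer powers $(s-r)/s$, $r(s-\tilde{s})/(\tilde{s}s)$, and $r/s$.'' But these are \emph{not} H\"older-conjugate: their sum is
\[
\frac{s-r}{s} + \frac{r(s-\tilde{s})}{\tilde{s}s} + \frac{r}{s}
= \frac{\tilde{s}(s-r) + r(s-\tilde{s}) + \tilde{s}r}{\tilde{s}s}
= 1 + \frac{r(s-\tilde{s})}{\tilde{s}s} > 1,
\]
so a direct three-way H\"older with these exponents is not valid as stated. Note by contrast that with the first exponent replaced by $(\tilde{s}-r)/\tilde{s}$ the three reciprocals do sum to $1$, which suggests either that the intended decomposition is a \emph{nested} two-stage H\"older at level $\tilde{s}$ followed by a Jensen upgrade $\tilde{s}\to s$, or that the excess $\frac{r(s-\tilde{s})}{\tilde{s}s}$ is absorbed by the fact that $\frac{1}{|\bQ|}\sum_a\Phi_a\chi_{\bQ\backslash\bP}\,dx$ is a sub-probability measure, for which H\"older continues to hold when the exponent reciprocals sum to more than $1$ (via a further Jensen step). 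Your sketch should make one of these mechanisms explicit rather than presenting the split as an ordinary three-factor H\"older, because as written the inequality you invoke does not hold. You also pass silently over the change of integration region (the left side integrates over $\bQ\backslash\bP$ while $F^3_s$ integrates over $\bP\cap\bQ_\fr$), which deserves at least a sentence even if it is only a reflection-across-$\partial\bP$ remark in the spirit of Lemma \ref{lem:delta-tilde-construction-estimate}.
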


\begin{proof}
We find from Hölder's and Jensen's inequality
\begin{align*}
 & \frac{1}{\left|\bQ\right|}\int_{\bP\cap\bQ}\sum_{\substack{i\neq0:\,\partial_{l}\phi_{i}\partial_{l}\phi_{0}<0}
}\sum_{a}\rho_{1,i}^{-r}\frac{\left|\partial_{l}\phi_{i}\right|}{D_{l+}}\chi_{\fA_{1,a}}\left|\tau_{i}u-\cM_{a}u\right|^{r}\\
 & \qquad\leq\begin{cases}
F_{s,2}^{1}(\bQ)\,F_{s}^{3}(\bQ)\\
F_{s,d}^{1}(\bQ)\,F_{s,\tilde{s},d}^{2}(\bQ)\,F_{s}^{3}(\bQ)
\end{cases}\,.
\end{align*}
The second part follows accordingly.
\end{proof}
\begin{lem}
\label{lem:6-5}Under Assumptions \ref{assu:M-alpha-bound}, \ref{assu:mesoscopic-voronoi}
for every $l=1,\dots d$ and $\tilde{\alpha}>0$ it holds
\begin{align*}
\frac{1}{\left|\bQ\right|}\int_{\bP\cap\bQ} & \left|\sum_{k:\,\partial_{l}\Phi_{k}>0}\sum_{j:\,\partial_{l}\Phi_{j}<0}\frac{\partial_{l}\Phi_{k}\left|\partial_{l}\Phi_{j}\right|}{D_{l+}^{\Phi}}\left(\cM_{k}u-\cM_{j}u\right)\right|^{r}\\
 & \leq\left(\frac{1}{\left|\bQ\right|}\int_{\bP\cap\bQ}\left(\sum_{j:\,\partial_{l}\Phi_{j}<0}d_{j}^{\frac{\tilde{\alpha}s+drs}{s-r}}\chi_{\nabla\Phi_{j}\neq0}\right)^{\frac{s}{s-r}}\right)^{\frac{s-r}{s}}\cdot\dots\\
 & \quad\dots\cdot\left(\frac{1}{\left|\bQ\right|}\int_{\bP\cap\bQ}\sum_{k:\,\partial_{l}\Phi_{k}>0}\sum_{j:\,\partial_{l}\Phi_{j}<0}\chi_{\nabla\Phi_{j}\neq0}\frac{d_{j}^{-\tilde{\alpha}\frac{s}{r}}\left|\partial_{l}\Phi_{k}\right|}{D_{l+}^{\Phi}}\left|\cM_{k}u-\cM_{j}u\right|^{s}\right)^{\frac{r}{s}}\,,
\end{align*}
with the similar formula holding for $\cM_{\bullet}$ replaced by
$\cM_{\bullet}^{\fs}$.
\end{lem}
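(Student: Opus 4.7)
The proof is pure Hölder; the only structural input from the partition of unity is the identity $\sum_k\partial_l\Phi_k\equiv0$ (differentiate $\sum_k\Phi_k\equiv1$), which gives
$$\sum_{k:\,\partial_l\Phi_k>0}\partial_l\Phi_k\;=\;D_{l+}^\Phi\;=\sum_{j:\,\partial_l\Phi_j<0}|\partial_l\Phi_j|\,,$$
so that both $\partial_l\Phi_k/D_{l+}^\Phi$ and $|\partial_l\Phi_j|/D_{l+}^\Phi$ act as probability weights on their respective index sets. The only other input will be the pointwise bound $|\partial_l\Phi_j|\leq C\,d_j^{\,d}$ from (\ref{eq:estiamte-nabla-Phi-i}).

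First I would pass to absolute values and estimate
$$\Big|\sum_{k,j}\frac{\partial_l\Phi_k\,|\partial_l\Phi_j|}{D_{l+}^\Phi}(\cM_ku-\cM_ju)\Big|\;\le\;\sum_{k,j}\chi_{\nabla\Phi_j\neq0}\,\frac{|\partial_l\Phi_k||\partial_l\Phi_j|}{D_{l+}^\Phi}\,|\cM_ku-\cM_ju|\,.$$
Then, after raising to the power $r$, I would apply Hölder pointwise on the double sum with conjugate exponents $s/(s-r)$ and $s/r$, writing the summand as $a_{kj}\,b_{kj}$ with $b_{kj}$ chosen so that
$$b_{kj}^{\,s/r}\;=\;\chi_{\nabla\Phi_j\neq0}\,\frac{d_j^{-\tilde\alpha s/r}|\partial_l\Phi_k|}{D_{l+}^\Phi}\,|\cM_ku-\cM_ju|^s\,,$$
i.e.\ $b_{kj}$ is tuned to reproduce exactly the weight inside the second factor of the claim. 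Then $a_{kj}$ collects the leftover powers of $|\partial_l\Phi_k|$, $|\partial_l\Phi_j|$ and a compensating $d_j^{+\tilde\alpha/r}$. Summing $a_{kj}^{s/(s-r)}$ over $(k,j)$, the $k$-summation collapses using $\sum_{k:\partial_l\Phi_k>0}|\partial_l\Phi_k|/D_{l+}^\Phi=1$, and the residual power of $|\partial_l\Phi_j|$ is absorbed into the $d_j$-exponent via (\ref{eq:estiamte-nabla-Phi-i}); the resulting pointwise bound is precisely of the form
$$\Big|\textstyle\sum_{k,j}w_{kj}(\cM_ku-\cM_ju)\Big|^r\;\le\;C\Big(\sum_{j:\partial_l\Phi_j<0}d_j^{\frac{\tilde\alpha s+drs}{s-r}}\chi_{\nabla\Phi_j\neq0}\Big)^{\!\frac{s-r}{s}\,\cdot\,\frac{s}{s-r}}\,\Big(\sum_{k,j}\chi_{\nabla\Phi_j\neq0}\,\tfrac{d_j^{-\tilde\alpha s/r}|\partial_l\Phi_k|}{D_{l+}^\Phi}|\cM_ku-\cM_ju|^s\Big)^{\!r/s}.$$
Finally I would integrate over $\bP\cap\bQ$, normalise by $|\bQ|$, and apply Hölder on the integral with the same conjugate exponents $s/(s-r)$ and $s/r$ to separate the two factors into the form claimed. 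The symmetric version, with $\cM_\bullet$ replaced by $\cM^\fs_\bullet$, is literally the same argument since only the algebraic identities on $\Phi_a$ and the estimate (\ref{eq:estiamte-nabla-Phi-i}) are used, and neither $\nablas$ nor any symmetry enters.

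The delicate step is step~2: one must choose the split $a_{kj}b_{kj}$ so that the two subsequent Hölders (on the sum and then on the integral) compose cleanly through the exponent chain $\frac{s}{s-r}\circ\frac{s-r}{s}$, landing exactly on the exponent $(\tilde\alpha s+drs)/(s-r)$. This is routine but error-prone bookkeeping; once the split is written down, no further analytic input is required.
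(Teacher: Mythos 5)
Your proposal follows the same route as the paper: a pointwise H\"older on the double sum with exponents $\tfrac{s}{s-r}$, $\tfrac{s}{r}$, absorption of $\partial_l\Phi$-factors into powers of $d_j$ via~(\ref{eq:estiamte-nabla-Phi-i}), collapse of the $k$-sum using $\sum_k|\partial_l\Phi_k|=D_{l+}^\Phi$, and a second H\"older on the integral. The structural identities you single out are indeed the ones the paper uses, and the observation that no symmetry enters (so the $\cM^\fs_\bullet$-version is identical) is correct.

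The one place your sketch, read literally, does not yet work is the passage from ``pass to absolute values'' and ``raise to the power $r$'' to ``apply H\"older on the double sum''. After taking absolute values you have a bound by $\sum_{k,j}\tfrac{|\partial_l\Phi_k||\partial_l\Phi_j|}{D_{l+}^\Phi}|\cM_ku-\cM_ju|$; raising this to the power $r$ gives the $r$-th power of a sum, to which H\"older on the sum does not apply. Moreover, your $b_{kj}$ is chosen so that $b_{kj}^{s/r}\propto|\cM_ku-\cM_ju|^s$, i.e.\ $b_{kj}\propto|\cM_ku-\cM_ju|^r$; for $a_{kj}$ to be free of $u$ as you claim, the summand being split must therefore already carry $|\cM_ku-\cM_ju|^r$, not $|\cM_ku-\cM_ju|^1$. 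The missing step is exactly the first inequality of the paper's proof,
\[
\Bigl|\textstyle\sum_{k,j}\frac{\partial_l\Phi_k|\partial_l\Phi_j|}{D_{l+}^\Phi}(\cM_ku-\cM_ju)\Bigr|^{r}\;\le\;C\sum_{k,j}\frac{|\partial_l\Phi_k|^{r}|\partial_l\Phi_j|}{D_{l+}^\Phi}\,|\cM_ku-\cM_ju|^{r}\,,
\]
obtained by Jensen with the probability weights $|\partial_l\Phi_j|/D_{l+}^\Phi$ on the $j$-index and a finite-overlap convexity on the $k$-index (using~(\ref{eq:assu:mesoscopic-regular-1}) to control the number of $k$ active at a given $x$). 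This is a distinct analytic ingredient — it is what brings in the extra power $|\partial_l\Phi_k|^{r}$ that you later have to absorb into $d_j$ — and should be spelled out rather than folded into ``raising to the power $r$''. Once that step is made explicit, your H\"older split and the double use of~(\ref{eq:estiamte-nabla-Phi-i}) (note it is the second estimate there, $|\nabla\Phi_k|\chi_{\fA_{1,j}}\le Cd_j^{d}$, that lets you replace $|\partial_l\Phi_k|$-powers by $d_j$-powers) do compose to the claimed exponent $\tfrac{\tilde\alpha s+drs}{s-r}$.
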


\begin{proof}
We observe with help of (\ref{eq:estiamte-nabla-Phi-i}) and with
Lemma \ref{lem:Iso-cone-geo-estimate}.2) 
\begin{align}
\forall x:\quad\sup_{k}\left|\partial_{l}\Phi_{k}\right|(x) & \leq\sup\left\{ \left|\nabla\Phi_{k}(x)\right|\,:\;x\in\Ball{\frac{\fr}{2}}{G_{k}}\right\} \nonumber \\
 & \leq C\sup\left\{ d_{k}^{d}\,:\;x\in G_{k}\right\} \,,\label{eq:thm:all-in-help-1}\\
\sup_{x\in\Ball{\frac{\fr}{2}}{G_{j}}}\left|\partial_{l}\Phi_{j}\right|(x) & \leq Cd_{j}^{d}\,.\label{eq:thm:all-in-help-2}
\end{align}
We write 
\[
I:=\frac{1}{\left|\bQ\right|}\int_{\bP\cap\bQ}\left|\sum_{k:\,\partial_{l}\Phi_{k}>0}\sum_{j:\,\partial_{l}\Phi_{j}<0}\frac{\partial_{l}\Phi_{k}\left|\partial_{l}\Phi_{j}\right|}{D_{l+}^{\Phi}}\left(2-\phi_{0}\right)\left(\cM_{k}u-\cM_{j}u\right)\right|^{r}
\]
and find 
\begin{align*}
I & \leq C\frac{1}{\left|\bQ\right|}\int_{\bP\cap\bQ}\sum_{k:\,\partial_{l}\Phi_{k}>0}\sum_{j:\,\partial_{l}\Phi_{j}<0}\frac{\left|\partial_{l}\Phi_{k}\right|^{r}\left|\partial_{l}\Phi_{j}\right|}{D_{l+}^{\Phi}}\left|\cM_{k}u-\cM_{j}u\right|^{r}\\
 & \leq C\frac{1}{\left|\bQ\right|}\int_{\bP\cap\bQ}\left(\sum_{k:\,\partial_{l}\Phi_{k}>0}\sum_{j:\,\partial_{l}\Phi_{j}<0}\frac{d_{j}^{\alpha\frac{s}{s-r}}\left|\partial_{l}\Phi_{k}\right|^{\frac{sr}{s-r}}\left|\partial_{l}\Phi_{j}\right|}{D_{l+}^{\Phi}}\right)^{\frac{s-r}{s}}\cdot\dots\\
 & \qquad\dots\cdot\left(\sum_{k:\,\partial_{l}\Phi_{k}>0}\sum_{j:\,\partial_{l}\Phi_{j}<0}\chi_{\nabla\Phi_{j}\neq0}\frac{d_{j}^{-\alpha\frac{s}{r}}\left|\partial_{l}\Phi_{k}\right|}{D_{l+}^{\Phi}}\left|\cM_{k}u-\cM_{j}u\right|^{s}\right)^{\frac{r}{s}}\,.
\end{align*}
Now we make use of (\ref{eq:thm:all-in-help-1}) and once more of
Lemma \ref{lem:Iso-cone-geo-estimate}.2) to obtain for the first
bracket on the right hand side an estimate of the form 
\[
\left|\partial_{l}\Phi_{k}\right|^{\frac{sr}{s-r}}\left|\partial_{l}\Phi_{j}\right|\leq\left|\partial_{l}\Phi_{k}\right|\left|\partial_{l}\Phi_{k}\right|^{\frac{sr}{s-r}-1}\left|\partial_{l}\Phi_{j}\right|\leq C\left|\partial_{l}\Phi_{k}\right|d_{j}^{d\frac{sr-s+r}{s-r}}d_{j}^{d}\leq C\left|\partial_{l}\Phi_{k}\right|d_{j}^{d\frac{sr}{s-r}}\,,
\]
which implies 
\begin{align*}
\sum_{k:\,\partial_{l}\Phi_{k}>0}\sum_{j:\,\partial_{l}\Phi_{j}<0}\frac{d_{j}^{\alpha\frac{s}{s-r}}\left|\partial_{l}\Phi_{k}\right|^{\frac{sr}{s-r}}\left|\partial_{l}\Phi_{j}\right|}{D_{l+}^{\Phi}} & \leq C\sum_{k:\,\partial_{l}\Phi_{k}>0}\sum_{j:\,\partial_{l}\Phi_{j}<0}\frac{d_{j}^{\alpha\frac{s}{s-r}}d_{j}^{\frac{dsr}{s-r}}\left|\partial_{l}\Phi_{k}\right|}{D_{l+}^{\Phi}}\\
 & \leq C\sum_{j:\,\partial_{l}\Phi_{j}<0}d_{j}^{\alpha\frac{s}{s-r}}d_{j}^{\frac{dsr}{s-r}}\chi_{\nabla\Phi_{j}\neq0}\,,
\end{align*}
where we used $\sum\left|\partial_{l}\Phi_{k}\right|=D_{l+}^{\Phi}$.
From Hölder's inequality the Lemma follows.
\end{proof}

\section{\label{sec:Sample-Geometries}Sample Geometries}

\subsection{\label{subsec:Matern-Process}Delaunay Pipes for a Matern Process}

For two points $x,y\in\Rd$, we denote 
\[
P_{r}(x,y):=\left\{ y+z\in\Rd:\,0\leq z\cdot(x-y)\leq\left|x-y\right|^{2},\,\left|z-z\cdot(x-y)\frac{x-y}{\left|x-y\right|}\right|<r\right\} \,,
\]
the cylinder (or pipe) around the straight line segment connecting
$x$ and $y$ with radius $r>0$.

Recalling Example \ref{exa:poisson-point-proc} we consider a Poisson
point process $\X_{\pois}(\omega)=\left(x_{i}(\omega)\right)_{i\in\N}$
with intensity $\lambda$ (recall Example \ref{exa:poisson-point-proc})
and construct a hard core Matern process $\X_{\mat}$ by deleting
all points with a mutual distance smaller than $d\fr$ for some $\fr>0$
(refer to Example \ref{exa:Matern}). From the remaining point process
$\X_{\mat}$ we construct the Delaunay triangulation $\D(\omega):=\D(X_{\mat}(\omega))$
and assign to each $(x,y)\in\D$ a random number $\delta(x,y)$ in
$(0,\fr)$ in an i.i.d. manner from some probability distribution
$\delta(\omega)$. We finally define 
\[
\bP(\omega):=\bigcup_{(x,y)\in\D(\omega)}P_{\delta(x,y)}(x,y)\bigcup_{x\in\X_{\mat}}\Ball{\frac{\fr}{2}}x
\]
 the family of all pipes generated by the Delaunay grid ``smoothed''
by balls with the fix radius $\fr$ around each point of the generating
Matern process.

Since the Matern process is mixing and $\delta$ is mixing, Lemma
\ref{lem:erg-and-mix-is-erg} yields that the whole process is still
ergodic. We start with a trivial observation.
\begin{cor}
\label{cor:pipes-micro-regularity}The microscopic regularity of $\bP$
is $\alpha=0$ (Def. \ref{assu:M-alpha-bound}) and it holds $\hat{d}=d-1$
in Lemma \ref{lem:properties-local-rho-convering}. Furthermore both
the extension order and the symmetric extension order are $n=0$.
\end{cor}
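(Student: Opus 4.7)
The plan is to reduce all three assertions to a single geometric fact: there exists a constant $M_0 = M_0(d)$ such that $M_{\delta_\Delta(p)}(p) \leq M_0$ for every $p \in \partial\bP$. Once I have this uniform local Lipschitz control, the three claims fall out of the definitions and out of Lemmas \ref{lem:uniform-extension-lemma}, \ref{lem:uniform-Nitsche-extension-lemma}, \ref{lem:small-ball-in-cone} and the improved packing bound in Remark \ref{rem:lem:properties-local-rho-convering}.

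First I would establish the uniform Lipschitz bound by classifying $p \in \partial\bP$ according to its position in the union of balls and pipes. Away from the junction circles, $p$ lies either on a spherical cap (a piece of a sphere of radius $\fr/2$) or on the cylindrical side of a pipe of some radius $\delta_c \in (0,\fr)$. In both cases, viewing $\partial\bP$ as a graph over its tangent plane at a scale bounded by a fixed fraction of the local radius of curvature gives a universal Lipschitz constant $\leq 1$. The delicate part — and the main obstacle — is the analysis at a junction, where a pipe of radius $\delta_c$ meets the sphere of radius $\fr/2$ (or where the flat end cap gets exposed when $\delta_c > \fr/2$, or where two pipes sharing a common Matérn endpoint interact). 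For the sphere--cylinder junction with $\delta_c < \fr/2$, an explicit 2D computation in the axial plane gives that the interior angle of $\bP$ equals $\pi - \arccos(2\delta_c/\fr) \in [\pi/2,\pi]$, so the Lipschitz constant of any graph representation is at most $\cot(\pi/4) = 1$. For $\delta_c > \fr/2$ the cap is exposed and its two boundary circles meet the sphere and the cylinder side orthogonally, again yielding Lipschitz constant $\leq 1$. The higher-dimensional case reduces to the planar analysis by rotational symmetry of each junction circle, and the pipe--pipe interaction at a common Matérn point is inherited from the sphere--cylinder case since everything happens inside the covering ball $\Ball{\fr/2}{x_a}$. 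This gives $M_{\delta_\Delta(p)}(p) \leq M_0$ and $\delta_\Delta(p) \gtrsim \min(\delta_c(p), \fr)$ throughout.

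Given this, the three conclusions are immediate. For the extension orders, $\rho_n(p) = \sup_{r<\delta(p)} r(4M_r(p)^2+2)^{-n/2} \geq \delta_\Delta(p)(4M_0^2+2)^{-n/2}$, so $\rho_n$ and $\delta$ are comparable with a constant depending only on $d$; applying Lemma \ref{lem:uniform-extension-lemma} on $\Ball{\delta_\Delta(p)/8}{p}$ produces an extension with constant $\leq 2M_0$, which is the requirement for extension order $n=0$, and applying Lemma \ref{lem:uniform-Nitsche-extension-lemma} likewise gives symmetric extension order $n=0$. For $\alpha = 0$, Lemma \ref{lem:small-ball-in-cone} applied at $p$ produces a point $y \in \bP$ such that $\Ball{\tilde\rho(p)/(16(1+M_0))}{y} \subset \Ball{\tilde\rho(p)/8}{p} \cap \bP$, and since $M_0$ is universal the denominator is independent of the local Lipschitz constant, so the infimum in Definition \ref{assu:M-alpha-bound} is achieved at $\tilde\alpha = 0$. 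Finally, $\hat d = d-1$ follows from Remark \ref{rem:lem:properties-local-rho-convering}: at any scale $\delta$ the boundary is a Lipschitz graph with constant $\leq M_0$, so the centers $p_i$ of the covering by Theorem \ref{thm:delta-M-rho-covering} lie inside a $(d-1)$-dimensional Lipschitz hypersurface with controlled distortion, and the packing argument in the flat case goes through with the exponent reduced from $d$ to $d-1$.
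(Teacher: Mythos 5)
Your proposed proof is built on a claim that is false for this model: there is no dimensional constant $M_0$ such that $M_{\delta_\Delta(p)}(p)\le M_0$ for every $p\in\partial\bP$. The paper shows this explicitly in Lemma \ref{lem:matern-delaunay-distribution}, where the local Lipschitz constant at a junction is bounded below by quantities of the form $1/\delta(x,y)$ and $\cot(\alpha/2)$ with $\alpha$ the angle between two pipes. The pipe radii $\delta(x,y)$ are i.i.d.\ on $(0,\fr)$ and can be arbitrarily small (Theorem \ref{thm:Pipes-Model} imposes tail conditions $\P(\delta<\delta_0)\le C_\delta\delta_0^\beta$ precisely because $1/\delta$ is unbounded), and by (\ref{eq:lem:matern-delaunay-distribution-1}) the angle $\alpha$ can be arbitrarily small when the Voronoi diameter $d_a$ is large, which happens with positive (exponentially decaying) probability. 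Your step asserting that the pipe--pipe interaction ``happens inside the covering ball $\Ball{\fr/2}{x_a}$'' is also wrong: the intersection curve $\partial P_{\delta_1}(x_a,x_b)\cap\partial P_{\delta_2}(x_a,x_c)$ reaches a distance of order $\max(\delta_1,\delta_2)/\sin(\alpha/2)$ from $x_a$, which exceeds $\fr/2$ whenever $\sin(\alpha/2)<2\max(\delta_i)/\fr$, so the exposed cusp with Lipschitz constant $\sim\cot(\alpha/2)$ lies on $\partial\bP$. (There is also a sign slip in your sphere--cylinder computation: the interior angle of $\bP=B\cup C$ at the junction circle is re-entrant, $\pi+\arccos(2\delta_c/\fr)$, not $\pi-\arccos(2\delta_c/\fr)$; this does not affect your numerical bound for that particular case, but it is symptomatic of the misconception.)

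The paper's actual argument does not use any upper bound on $M$. It is purely topological and rests on $\bP$ being locally a union of convex sets: at every $p\in\partial\bP$ one can choose coordinates so that $\partial\bP\cap\Ball{\delta}{p}$ is a Lipschitz graph $x_d=\phi(\tilde x)$ with $\phi\ge 0$ (exactly at cusps, where $\phi$ is a maximum of affine functions vanishing at $\tilde x=0$; and up to a harmless lower-order curvature correction on the smooth cylinder and sphere faces). This is what ``$\partial\bP$ lies in the upper half space with $\bP$ filling the lower half space'' means, and it is precisely the hypothesis of Remarks \ref{rem:ext-order} and \ref{rem:ext-order-sym}, which then yield $\cA(0,\bP,\rho)\subset\Ball{\rho}{0}$ and hence extension order and symmetric extension order $n=0$, with $M$ appearing only in the operator norm $C(1+M_{\delta/8}(p))$ but not in the size of the target ball. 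The same $\phi\ge 0$ property immediately puts the half-ball $\{x_d<0\}\cap\Ball{\tilde\rho/8}{p}\subset\bP$, which already contains a ball of radius $\tilde\rho/16>\tilde\rho/64$, giving $\alpha=0$ without any reference to $M$; and $\hat d=d-1$ follows as in Remark \ref{rem:lem:properties-local-rho-convering} from the fact that the covering centers lie on a Lipschitz $(d-1)$-graph. So the conclusions you want do hold, but they must be derived from the convexity/upper-half-space structure, not from a uniform $M$ bound.
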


\begin{proof}
This follows from the fact that $\partial\bP$ can be locally represented
as a graph in the upper half space with $\bP$ filling the lower half
space. 
\end{proof}
\begin{lem}
For the Voronoi tessellation $\left(G_{a}\right)_{a\in\N}$ corresponding
to $\X_{\mat}$ holds 
\[
\P(d_{a}\geq D)\leq\exp\of{-\lambda\left|\S^{d-1}\right|\left(4D\right)^{d}\left(1-e^{-\lambda\left|\S^{d-1}\right|\left(d\fr\right)^{d}}\right)}\,.
\]
\end{lem}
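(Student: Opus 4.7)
My plan is to convert the event $\{d_a\ge D\}$ into a void event for the Matern point process $\X_{\mat}$ on a region of volume comparable to $|\S^{d-1}|(4D)^d$, and then bound the resulting void probability via the underlying Poisson structure.

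First I would carry out the geometric reduction following almost verbatim the argument in the proof of Lemma~\ref{lem:Iso-cone-geo-estimate}. If in each of the $2d$ axis cones $\cone_{\pm e_j,\alpha,R}(x_a)$ there were a point of $\X_{\mat}$, then the bisecting hyperplanes of the segments joining $x_a$ to those points would enclose $G_a$ in a convex set of diameter at most $2C_\alpha R$. Thus $d_a\ge D$ forces at least one of the $2d$ axis cones with $R$ of order $D$ to be empty of Matern points; a union bound over these finitely many cones reduces matters to estimating $\P(\X_{\mat}\cap K=\emptyset)$ for a fixed set $K$ of volume $\gtrsim|\S^{d-1}|(4D)^d$, with the combinatorial constants $2d$, $C_\alpha$ and the cone's angular deficit all absorbable into the geometry.

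For the void probability I would use the Matern-I survival rule: $y\in\X_{\pois}$ belongs to $\X_{\mat}$ iff $\X_{\pois}\cap\Ball{d\fr}{y}=\{y\}$. Partitioning $K$ into disjoint cells of volume $|\S^{d-1}|(d\fr)^d$ (balls of radius $d\fr$ centred on a suitably spaced lattice so that Poisson counts on disjoint balls are independent), I would bound, cell by cell, the probability that the cell contains no Matern point. The Poisson count per cell is non-trivial with probability $1-e^{-\lambda|\S^{d-1}|(d\fr)^d}$, and a careful buffered choice ensures that the survival condition for a representative Poisson point in each cell decouples across cells. Multiplying these per-cell failure probabilities over the $\sim|K|/(|\S^{d-1}|(d\fr)^d)$ disjoint cells and using $1-x\le e^{-x}$ then produces exactly the factor $\lambda|\S^{d-1}|(4D)^d(1-e^{-\lambda|\S^{d-1}|(d\fr)^d})$ in the exponent.

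The main obstacle is Step~2: the Matern survival rule is not local, so the partition must include a buffer so that the events ``cell $i$ contains a Matern point'' are genuinely independent. The delicate bookkeeping is to choose this buffer just thick enough to decouple the cells while still accounting for a volume fraction of $K$ close enough to $|\S^{d-1}|(4D)^d$ that no additional constant leaks into the exponent. Step~1 by contrast is essentially a repetition of the cone argument already established in Lemma~\ref{lem:Iso-cone-geo-estimate}.
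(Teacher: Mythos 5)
Your route is genuinely different from the paper's, and the cell decomposition in your Step 2 does not close as written; the gap is not just bookkeeping. The paper does not cut the void region into buffered cells at all. It computes $\P(\X_{\mat}(\Ball{R}{x})=0)$ in a single stroke by conditioning on the Poisson count $n$ in $A=\Ball{R}{x}$ and (heuristically, since these events are in fact correlated) treating each of the $n$ Poisson points as removed i.i.d.\ with probability $q=1-\P_{d\fr,0}$, $\P_{d\fr,0}:=e^{-\lambda|\S^{d-1}|(d\fr)^d}$; summing the geometric-in-$q$ Poisson series $\sum_n e^{-\mu}\tfrac{\mu^n}{n!}q^n = e^{-\mu(1-q)}$ gives a closed-form void probability, and the lemma follows by taking $R$ of order $D$. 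Your worry about the independence of the removals is legitimate, but it is shared by the paper, which simply posits it; the paper moreover slips in the algebra, writing $e^{-\mu(1-\P_{d\fr,0})}$ although its own displayed computation gives $e^{-\mu(1-q)}=e^{-\mu\P_{d\fr,0}}$.

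The concrete gap in your Step 2 is that you equate the per-cell probability of containing a Matern point with the per-cell probability of containing a Poisson point, namely $1-e^{-\lambda|\S^{d-1}|(d\fr)^d}$. These are not the same event. A cell with one or more Poisson points may contain no Matern point at all: the Matern rule deletes every Poisson point that has a neighbour within $d\fr$, including a neighbour inside the same cell or inside its buffer, so all Poisson points in the cell can be removed. Your buffering does handle independence across cells, but it says nothing about this within-cell survival, which is exactly the quantity you need to lower-bound. A genuine lower bound would look like $\P\of{\X_{\pois}(C_i)=1,\ \X_{\pois}(\Ball{d\fr}{C_i}\setminus C_i)=0}=\lambda|C_i|\,e^{-\lambda|\Ball{d\fr}{C_i}|}$, whose form is not $1-\P_{d\fr,0}$. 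Even setting that aside, the multiplicative count does not deliver the claimed exponent: with $N\approx|K|/(|\S^{d-1}|(d\fr)^d)$ cells (fewer once buffers are removed), the exponent $N(1-\P_{d\fr,0})$ carries no factor $\lambda$, while the statement's exponent is $\lambda|\S^{d-1}|(4D)^d(1-\P_{d\fr,0})$; the two agree only in the degenerate case $\lambda|\S^{d-1}|(d\fr)^d=1$. So the cell decomposition does not "produce exactly the factor" you claim, and the $2d$ union bound and $C_\alpha$ constants from your Step 1 only widen that mismatch.
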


\begin{proof}
For the underlying Poisson point process $\X_{\pois}$ it holds for
the void probability inside a ball $\Ball Rx$
\[
\P\of{\X_{\pois}(\Ball Rx)=0}=\P_{R,0}:=e^{-\lambda\left|\S^{d-1}\right|R^{d}}\,.
\]
The probability for a point $x\in\X_{\pois}$ to be removed is thus
$1-\P_{d\fr,0}$ and is i.i.d distributed among points of $\X_{\pois}$.
The total probability to not find any point of $\X_{\mat}$ is thus
given by not finding a point of $\X_{\pois}$ plus the probability
that all points of $\X_{\pois}$ are removed, i.e. 
\begin{align*}
\P\of{\X_{\mat}(\Ball Rx)=0} & =\sum_{n=0}^{\infty}e^{-\lambda\left|A\right|}\frac{\lambda^{n}\left|A\right|^{n}}{n!}\left(1-\P_{d\fr,0}\right)^{n}\\
 & =\exp\of{-\lambda\left|A\right|+\lambda\left|A\right|\left(1-\P_{d\fr,0}\right)}=e^{-\lambda\left|A\right|\left(1-\P_{d\fr,0}\right)}\,.
\end{align*}
From here one concludes.
\end{proof}
\begin{rem}
The family of balls $\Ball{\fr}x$ can also be dropped from the model.
However, this would imply we had to remove some of the points from
$\X_{\mat}$ for the generation of the Voronoi cells. This would cause
technical difficulties which would not change much in the result,
as the probability for the size of Voronoi cells would still decrease
exponentially.
\end{rem}

\begin{lem}
\label{lem:matern-delaunay-distribution}$\X_{\mat}$ is a point process
for $\bP(\omega)$ that satisfies Assumption \ref{assu:mesoscopic-voronoi}
and $\bP$ is isotropic cone mixing for $\X_{\mat}$ with exponentially
decreasing $f(R)\leq Ce^{-R^{d}}$ and it holds $n=0$ and $\alpha=0$.
Furthermore, assume there exists $C_{\delta},a_{\delta}>0$ such that
$\P(\delta(x,y)<\delta_{0})\leq C_{\delta}e^{-a_{\delta}\frac{1}{\delta_{0}}}$,
then $\P(\tilde{M}>M_{0})\leq Ce^{-aM_{0}}$ for some $C,a>0$. If
$\P(\delta(x,y)<\delta_{0})\leq C_{\delta}\delta_{0}^{\beta}$ then
for every $R\in(0,\infty)$ it holds 
\begin{equation}
\E\of{M_{[\frac{\delta}{2}],\Rd}^{R}}+\E\of{\tilde{\delta}_{\Rd}^{-(\beta+d-1)}}<C\E\of{\left|x-y\right|},\label{eq:lem:matern-delaunay-distribution}
\end{equation}
where $\E\of{\left|x-y\right|}$ is the expectation of the length
of pipes.
\end{lem}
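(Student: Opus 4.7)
The plan is to verify each of the four claims of the lemma in turn, using the explicit structure of the Matern--Delaunay pipe process together with the general results from Sections \ref{sec:Preliminaries}--\ref{sec:Nonlocal-regularity}.

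First, Assumption \ref{assu:mesoscopic-voronoi} is immediate from the construction: the hard-core Matern thinning with parameter $d\fr$ forces $|x_a - x_b| > d\fr \geq 2\fr$ for all distinct $x_a, x_b \in \X_{\mat}$, and $\Ball{\fr/2}{x_a} \subset \bP$ holds by the definition of $\bP$. For isotropic cone mixing I would invoke Lemma \ref{lem:stat-erg-ball}, reducing the task to verifying the mesoscopic regularity condition (\ref{eq:cri:stat-erg-ball}). The Poisson void probability in a set of volume $V$ is $e^{-\lambda V}$, and each coordinate cone $\cone_{e,\alpha,R}(0)$ has volume $\sim c_{\alpha,d} R^d$; since each Poisson point survives the Matern thinning with strictly positive probability $p_0 \geq e^{-\lambda|\S^{d-1}|(d\fr)^d}$, the probability that $\X_{\mat}$ misses such a cone still decays as $e^{-c R^d}$. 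A union bound over the $2d$ coordinate directions then yields $f(R) \leq C e^{-R^d}$. The statements $\alpha = 0$ and $n = 0$ for both the extension and symmetric extension order are just Corollary \ref{cor:pipes-micro-regularity}, since near every $p \in \partial\bP$ the boundary is a Lipschitz graph lying in the upper half plane of a coordinate frame aligned with the pipe axis or the normal to the cap sphere.

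For the distribution of $M_{[\frac{\delta}{2}],\Rd}$ I would perform a pointwise analysis of $\delta(p)$ and $M_r(p)$ on $\partial\bP$, decomposing it into three regimes: (i) on the smooth cylindrical side of a pipe of radius $\delta$, parametrizing the cylinder as a height graph over its tangent plane yields $\delta(p) \sim \delta$ and $M_{\delta/2}(p) \leq (\delta/2)/\sqrt{\delta^2 - (\delta/2)^2} \leq C$; (ii) at a cylinder--sphere kink the interior dihedral angle is $\arccos(2\delta/\fr) \approx \pi/2$ for small $\delta$, so coordinates bisecting the two tangent directions realise $\partial\bP$ as a Lipschitz graph with $M \sim 1$ on a ball of radius proportional to $\delta$; (iii) on the cap sphere away from kinks, $\delta(p) \sim \fr$ and $M \sim 1$. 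Consequently $M_{[\frac{\delta}{2}],\Rd}$ is bounded by a constant depending only on $d$ and $\fr$, so $\E\of{M_{[\frac{\delta}{2}],\Rd}^{R}} < \infty$ for every $R > 0$, and the exponential tail bound $\P(\tilde M > M_0) \leq Ce^{-aM_0}$ under the exponential hypothesis on $\delta$ is automatic because the remaining $\delta$-dependence appears only through the constant in the wedge regions, where $M$ grows at worst like $\delta^{-1}$.

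Finally, for the integral bound on $\tilde\delta_{\Rd}^{-(\beta+d-1)}$ I apply the dimension-reduction estimate (\ref{eq:lem:delta-tilde-construction-estimate-1}) of Lemma \ref{lem:delta-tilde-construction-estimate}, converting the volume integral into the boundary integral
\[
\int_{\partial\bP \cap \Ball{\fr/4}{\bQ}} \delta^{-\beta-d+2}\, M_{[\delta/4],\Rd}^{d-2}.
\]
On the cylindrical portion of a pipe with endpoints $x,y$ the surface area is $\sim |x-y|\,\delta(x,y)^{d-2}$ with $\delta(p) \sim \delta(x,y)$ and $M \leq C$, giving a per-pipe contribution $\sim |x-y|\,\delta(x,y)^{-\beta}$. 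Using the ergodic theorem (Theorem \ref{thm:Ergodic-Theorem-ran-meas-2}) together with the joint stationarity of $\bP$ and $\X_{\mat}$ (Lemma \ref{lem:X-r-stationary}) and the independence of the i.i.d. pipe radii $\delta(x,y)$ from the Poisson--Delaunay skeleton, the spatial average factors as $\lambda_{\mathrm{edge}}\,\E(|x-y|)\,\E(\delta^{-\beta})$, from which (\ref{eq:lem:matern-delaunay-distribution}) follows.

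The main obstacle is the quantitative analysis in the narrow wedge regions where two pipes emerge from a common ball at a small Delaunay angle: there $\partial\bP$ is no longer locally a single cylinder, and one must check that both the pointwise estimates on $M_r(p)$ and $\delta(p)$ and the surface--area accounting above survive when the contribution of the opposing pipe boundary is taken into account. This requires exploiting the independence of the marks $\delta(x,y)$ from the combinatorics of the Delaunay graph and controlling the tail of the angle distribution at Poisson vertices. A related subtlety is that under the pure polynomial assumption $\P(\delta<\delta_0) \leq C_\delta \delta_0^\beta$ the expectation $\E(\delta^{-\beta})$ is only marginally finite, so the constant $C$ in (\ref{eq:lem:matern-delaunay-distribution}) must absorb $C_\delta$ together with any logarithmic correction arising from the boundary case $\alpha = \beta + d - 1$.
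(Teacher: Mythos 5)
Your proof correctly disposes of the cone-mixing statement and $\alpha=n=0$, and the reduction of the integral bound to a per-pipe surface computation via Lemma~\ref{lem:delta-tilde-construction-estimate} is a legitimate alternative to the paper's direct estimate $\int_{P_{\delta(x,y)}(x,y)}\tilde\delta_{\Rd}^{-\alpha-d+1}\leq C|x-y|\delta(x,y)^{-\alpha}$. Your observation that $\E(\delta^{-\beta})$ is only marginal under the hypothesis $\P(\delta<\delta_0)\leq C_\delta\delta_0^\beta$ is a genuine and accurate worry.

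However, the central geometric step is missing. In regime (ii) you claim that coordinate choice bisecting the two tangent directions gives $M\sim 1$, and you later conclude that $M_{[\frac{\delta}{2}],\Rd}$ is ``bounded by a constant depending only on $d$ and $\fr$''; a few lines later you say ``$M$ grows at worst like $\delta^{-1}$''. These two statements contradict one another, and neither is what the paper proves. The actual local Lipschitz constant near a vertex $x$ with two emerging pipes toward neighbours $y,z$ is controlled by
\[
\max\Bigl\{\tfrac{1}{\sphericalangle((x,y),(x,z))},\ \tfrac{1}{\delta(x,y)},\ \tfrac{1}{\delta(x,z)}\Bigr\}
\]
(up to constants), so $M$ blows up not only when a pipe is thin but also when the Delaunay angle at $x$ is small. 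The paper's whole proof of the tail bound on $\tilde M$ rests on equation~(\ref{eq:lem:matern-delaunay-distribution-1}): a small angle $\alpha_0$ at $x$ forces the Voronoi diameter $d_x\gtrsim 1/\sin\alpha_0$, so the subexponential tail of $d_x$ transfers to the angle and hence to $M$. Proving~(\ref{eq:lem:matern-delaunay-distribution-1}) is the bulk of the argument in the paper (the explicit intersection-point computation with $i_{xyz}$ and the minimal-distance constraint $|y-z|\geq d\fr$), and you have explicitly deferred it as ``the main obstacle'' without supplying the argument. Without this, the tail estimate $\P(\tilde M>M_0)\leq Ce^{-aM_0}$ and the finiteness of $\E(M_{[\frac{\delta}{2}],\Rd}^R)$ are unproven, since nothing you wrote rules out an unboundedly dense accumulation of small-angle junctions. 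In addition, your per-pipe surface-area accounting ignores the contribution of these narrow-wedge neighbourhoods to the boundary integral $\int_{\partial\bP}\eta^{1-\alpha}M_{[\frac{\eta}{4}],\Rd}^{d-2}$, which is exactly where $M$ is large; you flag this but again do not close it. So the proposal is incomplete at the point that carries the lemma's real content.
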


\begin{proof}
\emph{Isotropic cone mixing:} For $x,y\in2d\fr\Zd$ the events $\left(x+[0,1]^{d}\right)\cap\X_{\mat}$
and $\left(y+[0,1]^{d}\right)\cap\X_{\mat}$ are mutually independent.
Hence
\[
\P\of{\left(k2dr\,[-1,1]^{d}\right)\cap\X_{\mat}=\emptyset}\leq\P\of{[-1,1]^{d}\cap\X_{\mat}=\emptyset}^{k^{d}}\,.
\]
Hence the open set $\bP$ is isotropic cone mixing for $\X=\X_{\mat}$
with exponentially decaying $f(R)\leq Ce^{-R^{d}}$.

\emph{Estimate on the distribution of $M$:} By definition of the
Delaunay triangulation, two pipes intersect only if they share one
common point $x\in\X_{\mat}$.

Given three points $x,y,z\in\X_{\mat}$ with $x\sim y$ and $x\sim z$,
the highest local Lipschitz constant on $\partial\left(P_{\delta(x,y)}(x,y)\cup P_{\delta(x,z)}(x,z)\right)$
is attained in 
\[
\tilde{x}=\arg\max\left\{ \left|x-\tilde{x}\right|:\,\tilde{x}\in\partial P_{\delta(x,y)}(x,y)\cap\partial P_{\delta(x,z)}(x,z)\right\} \,.
\]
It is bounded by 
\[
\max\left\{ \arctan\left(\frac{1}{2}\sphericalangle\left((x,y),(x,z)\right)\right),\,\frac{1}{\delta(x,y)},\,\frac{1}{\delta(x,z)}\right\} \,,
\]
where $\alpha:=\sphericalangle\left((x,y),(x,z)\right)$ in the following
denotes the angle between $(x,y)$ and $(x,z)$, see Figure \ref{fig:Voronoi-delaunay-proof}.
If $d_{x}$ is the diameter of the Voronoi cell of $x$, we show that
a necessary (but not sufficient) condition that the angle $\alpha$
can be smaller than some $\alpha_{0}$ is given by 
\begin{equation}
d_{x}\geq C\frac{1}{\sin\alpha_{0}}\,,\label{eq:lem:matern-delaunay-distribution-1}
\end{equation}
where $C>0$ is a constant depending only on the dimension $d$. Since
for small $\alpha$ we find $M\approx\frac{1}{\sin\alpha}$, and since
the distribution for $d_{x}$ decays subexponentially, also the distribution
for $M$ at the junctions of two pipes decays subexponentially. However,
inside the pipes, we find $\Delta(p)=2\delta(x,y)$ and hence $\delta_{\Delta}(p)=\delta(x,y)$.
Due to the cylindric structure, we furthermore find essential boundedness
of $M$. This also implies $\alpha=n=0$ inside the pipes. At the
junction of Balls and pipes we find $\partial\bP$ to be in the upper
half of the local plane approximation and hence also here $\alpha=n=0$
can be chosen (see also Remarks \ref{rem:ext-order} and \ref{rem:ext-order-sym}).

Concerning the expectation of $M_{[\frac{\delta}{2}],\Rd}$ and $\delta_{\Rd}$,
we only have to accound for the pipes by the above argumentation since
the other contribution to $M$ is exponentially distributed. In particular,
we find for one single pipe $P_{\delta(x,y)}(x,y)$ that
\[
\int_{P_{\delta(x,y)}(x,y)}\delta_{\Rd}^{-\alpha-d+1}\leq C\left|x-y\right|\delta(x,y)^{-\alpha}\,,
\]
and hence (\ref{eq:lem:matern-delaunay-distribution}) due to the
independence of length and diameter. It thus remains to proof (\ref{eq:lem:matern-delaunay-distribution-1}).
\begin{figure}
 \begin{minipage}[c]{0.6\textwidth} \includegraphics[width=6.5cm]{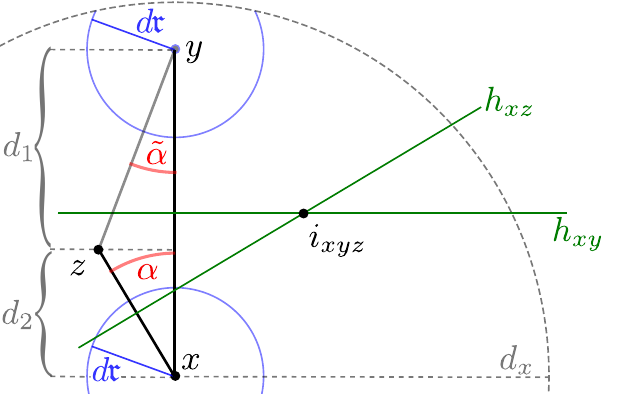}\end{minipage}\hfill   \begin{minipage}[c]{0.35\textwidth}\caption{\label{fig:Voronoi-delaunay-proof}Sketch of the proof of Lemma \ref{lem:matern-delaunay-distribution}
and estimate (\ref{eq:lem:matern-delaunay-distribution-1})\emph{.}}
\end{minipage}
\end{figure}

\emph{Proof of (\ref{eq:lem:matern-delaunay-distribution-1}):} Given
an angle $\alpha>0$ and $x\in\X_{\mat}$ we derive a lower bound
for the diameter of $G(x)$ such that for two neighbors $y,z$ of
$x$ it can hold $\sphericalangle\left((x,y),(x,z)\right)\leq\alpha$.
With regard to Figure \ref{fig:Voronoi-delaunay-proof}, we assume
$\left|x-y\right|\geq\left|x-z\right|$.

Writing $d_{x}:=d(x)$ the diameter of $G(x)$ and $\tilde{\alpha}=\sphericalangle\left((x,z),(z,y)\right)$,
w.l.o.g let $y=(d_{1}+d_{2},0,\dots,0)$, where $d_{1}+d_{2}<d_{x}$
and $d_{1}=\left|y-z\right|\cos\tilde{\alpha}$. Hence we can assume
$z=(d_{2},-\left|y-z\right|\sin\tilde{\alpha},0\dots0)$ and in what
follows, we focus on the first two coordinates only. The boundaries
between the cells $x$ and $z$ and $x$ and $y$ lie on the planes
\[
h_{xz}(t)=\frac{1}{2}z+t\left(\begin{array}{c}
\left|y-z\right|\sin\tilde{\alpha}\\
d_{2}
\end{array}\right)\,,\quad h_{xy}(s)=\frac{1}{2}y+s\left(\begin{array}{c}
0\\
1
\end{array}\right)
\]
respectively. The intersection of these planes has the first two coordinates
\[
i_{xyz}:=\left(\frac{d_{1}+d_{2}}{2},-\frac{1}{2}\left|y-z\right|\sin\tilde{\alpha}+\frac{1}{2}\frac{d_{1}d_{2}}{\left|y-z\right|\sin\tilde{\alpha}}\right)\,.
\]
Using the explicit form of $d_{2}$, the latter point has the distance
\[
\left|i_{xyz}\right|^{2}=\frac{1}{4}\left|y-z\right|^{2}+\frac{1}{4}d_{2}^{2}+\frac{1}{4}\frac{d_{2}^{2}\cos^{2}\tilde{\alpha}}{\sin^{2}\tilde{\alpha}}
\]
to the origin $x=0$. Using $\left|y-z\right|\sin\tilde{\alpha}=\left|z\right|\sin\alpha$
and $d_{2}=\left|y\right|-\left|z\right|\cos\alpha$ we obtain 
\[
\left|i_{xyz}\right|^{2}=\frac{1}{4}\left(\left|y-z\right|^{2}\left(1+\frac{\left(\left|y\right|-\left|z\right|\cos\alpha\right)^{2}\cos^{2}\tilde{\alpha}}{\left|z\right|^{2}\sin^{2}\alpha}\right)+\left(\left|y\right|-\left|z\right|\cos\alpha\right)^{2}\right)\,.
\]
Given $y$, the latter expression becomes small for $\left|y-z\right|$
small, with the smallest value being $\left|y-z\right|=d\fr$. But
then 
\[
\cos^{2}\tilde{\alpha}=1-\sin^{2}\tilde{\alpha}=1-\frac{\left(\left|z\right|\sin\alpha\right)^{2}}{\left|y-z\right|^{2}}
\]
and hence the distance becomes 
\[
\left|i_{xyz}\right|^{2}=\frac{1}{4}\left(\left(d\fr\right)^{2}\left(1+\frac{\left(\left|y\right|-\left|z\right|\cos\alpha\right)^{2}\left(\left(d\fr\right)^{2}+\left|z\right|^{2}\sin^{2}\alpha\right)}{\left(d\fr\right)^{2}\left|z\right|^{2}\sin^{2}\alpha}\right)+\left(\left|y\right|-\left|z\right|\cos\alpha\right)^{2}\right)\,.
\]
We finally use $\left|y\right|=\left|z\right|\cos\alpha-\sqrt{\left(d\fr\right)^{2}-\left|z\right|^{2}\sin^{2}\alpha}$
and obtain 
\[
\left|i_{xyz}\right|^{2}=\frac{1}{4}\left(\left(d\fr\right)^{2}\left(1+\frac{\left(\left(d\fr\right)^{4}-\left|z\right|^{4}\sin^{4}\alpha\right)}{\left(d\fr\right)^{2}\left|z\right|^{2}\sin^{2}\alpha}\right)+\left(\left(d\fr\right)^{2}-\left|z\right|^{2}\sin^{2}\alpha\right)\right)\,.
\]
The latter expression now needs to be smaller than $d_{x}$. We observe
that the expression on the right hand side decreases for fixed $\alpha$
if $\left|z\right|$ increases.

On the other hand, we can resolve $\left|z\right|(y)=\left|y\right|\cos\alpha-\sqrt{\left|y\right|^{2}\sin^{2}\alpha+\left(d\fr\right)^{2}}$.
From the conditions $\left|y\right|\leq d_{x}$ and $\left|i_{xyz}\right|\leq d_{x}$,
we then infer (\ref{eq:lem:matern-delaunay-distribution-1}).
\end{proof}
\begin{thm}
\label{thm:Delaunay-final}Assuming $\E\of{\delta^{-s-d}+\delta^{1+s-2d}}^{\frac{p}{p-s}}<\infty$
and using the notation of Lemma \ref{lem:6-4} the above constructed
$\bP$ has the property that for $1\leq r<s<p$ there almost surely
exists $C>0$ such that for every $n\in\N$ and every $u\in W_{0,\partial(n\bQ)}^{1,p}(\bP\cap n\bQ)$
\begin{align*}
\left(\frac{1}{\left|n\bQ\right|}\int_{\bP\cap n\bQ}\sum_{k:\,\partial_{l}\Phi_{k}>0}\sum_{j:\,\partial_{l}\Phi_{j}<0}\frac{d_{j}^{-\tilde{\alpha}\frac{s}{r}}\left|\partial_{l}\Phi_{j}\right|}{D_{l+}^{\Phi}}\left|\cM_{k}u-\cM_{j}u\right|^{s}\right)^{\frac{r}{s}}+F_{s}^{3}(n\bQ,u) & \leq C\left(\frac{1}{\left|n\bQ\right|}\int_{\bP\cap n\bQ}\left|\nabla u\right|^{p}\right)^{\frac{r}{p}}\,,
\end{align*}
and for every $u\in\bW_{0,\partial(n\bQ)}^{1,p}(\bP\cap n\bQ)$
\begin{align*}
\left(\frac{1}{\left|n\bQ\right|}\int_{\bP\cap n\bQ}\sum_{k:\,\partial_{l}\Phi_{k}>0}\sum_{j:\,\partial_{l}\Phi_{j}<0}\frac{d_{j}^{-\tilde{\alpha}\frac{s}{r}}\left|\partial_{l}\Phi_{j}\right|}{D_{l+}^{\Phi}}\left|\cM_{k}^{\fs}u-\cM_{j}^{\fs}u\right|^{s}\right)^{\frac{r}{s}}+F_{s}^{3,\fs}(n\bQ,u) & \leq C\left(\frac{1}{\left|n\bQ\right|}\int_{\bP\cap n\bQ}\left|\nablas u\right|^{p}\right)^{\frac{r}{p}}\,.
\end{align*}
\end{thm}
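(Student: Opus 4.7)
The proof has three ingredients: a reduction of every average--difference to a local inequality along a single pipe, a careful H\"older splitting that separates the geometric weights from the function $u$, and the ergodic theorem applied to the resulting sum of weights. By Corollary \ref{cor:pipes-micro-regularity} we already have $\alpha=n=0$, and Lemma \ref{lem:matern-delaunay-distribution} guarantees jointly stationarity of all random quantities involved. Moreover the Voronoi construction for the Matern--Delaunay model forces that $x_k\sim\sim x_j$ implies $(x_k,x_j)\in\D$, or at worst that $x_k,x_j$ are connected by a chain of at most $C$ Delaunay neighbors of total length bounded by $C(d_k+d_j+\fr)$; this can be telescoped so that it suffices to estimate $|\cM_{x} u-\cM_{y} u|^{s}$ across a single pipe $P=P_{\delta(x,y)}(x,y)$. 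In the same way, whenever $\chi_{A_{1,i}}\,\chi_{\fA_{1,a}}\neq 0$ and $\partial_{l}\phi_{i}\partial_{l}\phi_{0}<0$, the point $p_{i}\in\partial\bP$ lies on the lateral wall of a pipe whose centerline is incident to $x_{a}$ (or a Delaunay neighbor of $x_{a}$), so that $\tau_{i}u-\cM_{a}u$ reduces to a bounded number of cross-pipe average differences as well.

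\emph{Step 1: single-pipe estimates.} On each pipe $P_{\delta(x,y)}(x,y)$ of length $L=|x-y|$ and radius $\delta=\delta(x,y)$, Corollary \ref{cor:Poincare-zylinder} yields
\[
|\cM_{x} u-\cM_{y} u|^{s}\leq C\,L^{s-1}\delta^{1-d}\int_{P}|\nabla u|^{s},
\]
and Lemma \ref{lem:Mixed-Korn-Zylinder}, via \eqref{eq:lem:Mixed-Korn-zylinder-4}--\eqref{eq:lem:Mixed-Korn-zylinder-5}, gives the analogous bound for the affine-modified averages,
\[
|\cM_{x}^{\fs} u(z)-\cM_{y}^{\fs} u(z)|^{s}\leq C\,|z-x|^{s}L^{2s-1}\delta^{-d-s}\int_{P}|\nabla^{s}u|^{s}.
\]
The same estimates (with $\tau_{i}$ in place of $\cM_{x}$) handle the boundary averages entering $F^{3}_{s}$, because $y_{n,\alpha,i}\in P$ with distance comparable to $\delta$ from $p_{i}$.

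\emph{Step 2: summation and H\"older.} Multiply each pipe inequality by the corresponding weight $\partial_{l}\Phi_{k}|\partial_{l}\Phi_{j}|/D_{l+}^{\Phi}$ (respectively $\rho_{1,i}^{-r}\chi_{A_{1,i}}\Phi_{a}$) and sum over all pipes meeting $n\bQ$; by Lemma \ref{lem:properties-local-rho-convering} and Lemma \ref{lem:Iso-cone-geo-estimate}, each pipe is hit only $O(1)$ times. A H\"older split with exponents $p/s$ and $p/(p-s)$ then upgrades the inner $L^{s}$-integrals of $|\nabla u|$ (resp.\ $|\nabla^{s}u|$) to $L^{p}$ and pushes all powers of $L$, $\delta$ and the cell diameters $d_{j}$ onto a purely geometric factor. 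Choosing $\tilde\alpha>0$ small enough to absorb the polynomial factor $L^{s-1}$ (resp.\ $L^{2s-1}$) into the weight $d_{j}^{-\tilde\alpha s/r}\chi_{\nabla\Phi_{j}\neq 0}$, one finds that the geometric factor is an integral, over $n\bQ$, of an expression of the form
\[
\sum_{x_{a}\in\X_{\mat}(n\bQ)}\chi_{\bigcup_{y\sim x_{a}}P_{\delta(x_{a},y)}}\,\bigl(\delta(x_{a},y)^{-s-d}+\delta(x_{a},y)^{1+s-2d}\bigr)^{p/(p-s)},
\]
multiplied by polynomial powers of $d_{a}$ and of the number of Delaunay neighbors.

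\emph{Step 3: ergodic theorem.} By Lemmas \ref{lem:Iso-cone-geo-estimate}, \ref{lem:estim-E-fa-fb} and \ref{lem:matern-delaunay-distribution}, the geometric factor from Step 2 is a stationary spatial average whose expectation is controlled by the hypothesis
\[
\E\bigl(\delta^{-s-d}+\delta^{1+s-2d}\bigr)^{p/(p-s)}<\infty
\]
together with the exponentially decaying tails of $d_{a}$ and of the vertex degree. The ergodic theorem (Theorem \ref{thm:general-lebesgue-ergodic-thm}) therefore guarantees that the normalized geometric factor converges almost surely to a finite constant $C$ independent of $n$ and $u$, which, combined with Steps 1--2, yields the two stated inequalities. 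In the symmetric case the argument is identical with Corollary \ref{cor:Poincare-zylinder} replaced by Lemma \ref{lem:Mixed-Korn-Zylinder}, using the boundary condition $u|_{\partial(n\bQ)}=0$ only to ensure that the partition of unity $\Phi_{a}$ may be extended trivially beyond $\partial(n\bQ)$ without creating new cross-boundary pipes.

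The main obstacle is the power-counting in Step 2 for the symmetric case: Lemma \ref{lem:Mixed-Korn-Zylinder} brings an extra $\delta^{-s}$ compared to the scalar Poincar\'e inequality, and this is precisely the reason the second moment term $\delta^{1+s-2d}$ appears in the hypothesis in addition to $\delta^{-s-d}$; one must then verify that the H\"older split aligns this worse exponent with $p/(p-s)$, which in turn dictates the admissible choice of $\tilde\alpha$.
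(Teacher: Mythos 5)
Your overall route matches the paper's: reduce each average--difference to a chain of single-pipe Poincar\'e/Korn estimates (Corollary \ref{cor:Poincare-zylinder} in the scalar case, Lemma \ref{lem:Mixed-Korn-Zylinder} with the change-of-radius estimates \eqref{eq:lem:Mixed-Korn-zylinder-4}--\eqref{eq:lem:Mixed-Korn-zylinder-5} in the symmetric case), H\"older-split to separate the geometric weights from $|\nabla u|$ resp.\ $|\nablas u|$, then conclude with the ergodic theorem applied to a stationary geometric integrand. This is the paper's Steps 1--4.

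However, the claim that ``$x_k\sim\sim x_j$ implies $(x_k,x_j)\in\D$, or at worst that $x_k,x_j$ are connected by a chain of at most $C$ Delaunay neighbors of total length bounded by $C(d_k+d_j+\fr)$'' contains a genuine gap. The number of Delaunay edges in the connecting chain is \emph{not} $O(1)$: with minimal mutual distance $2\fr$ and chain length $\sim d_a$, the chain has on the order of $d_a/\fr$ edges, and the telescoping via Jensen then costs an extra factor $n(a,b)^{s}\lesssim d_a^s$. More importantly, the length bound $C(d_k+d_j+\fr)$ itself is not automatic: it requires the Delaunay stretch-factor result (Xia, \cite{xia2013stretch}), which the paper invokes explicitly to obtain a chain of length $\leq 2|x_a-x_b|$ confined to $\Ball{4.5\,d_a}{x_a}$. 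Without the stretch bound you have neither the length estimate nor the localization of the chain, and hence no control on which pipes the geometric factor sums over. You should also drop the claim that $\tilde\alpha$ ``absorbs the polynomial factor $L^{s-1}$''---a small $\tilde\alpha$ pushes $d_j^{-\tilde\alpha s/r}$ toward $1$ and absorbs nothing; the point of the paper is rather that every polynomial power of $d_a$ is harmless because $d_a$ is exponentially distributed (combine with Lemma \ref{lem:estim-E-fa-fb}). Finally, passing from $n\bQ$ to a dilation $Rn\bQ$ to apply the ergodic theorem on the full set of participating cells uses Lemma \ref{lem:matern-delaunay-bounded}, which your argument leaves implicit.
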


\begin{lem}
\label{lem:matern-delaunay-bounded}For every bounded open set $\bQ$
with $0\in\bQ$ and $n_{0},n_{1}\in\N$ let 
\[
\forall M>1:\qquad\tilde{\bQ}_{M,n_{0},n_{1}}:=\bigcup_{\substack{x_{a}\in\X_{\mat}\\
\Ball{n_{0}d_{a}}{x_{a}}\cap M\bQ\neq\emptyset
}
}\Ball{n_{1}d_{a}}{x_{a}}\,.
\]
Then for fixed $n_{0}$ and $n_{1}$ there almost surely exists $r>0$
such that for every $M>1$ it holds $\tilde{\bQ}_{M,n_{0},n_{1}}\subset Mr\bQ$
\end{lem}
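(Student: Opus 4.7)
The proof will rely crucially on the exponential tail bound on Voronoi diameters from Lemma \ref{lem:matern-delaunay-distribution}, namely $\P(d(x) \geq D) \leq \exp(-cD^d)$, combined with a Borel--Cantelli argument applied along integer values of $M$ and a monotonicity argument for the remaining range. The overall strategy is to show that relevant diameters grow at most like $(\log M)^{1/d}$, which is negligible compared to the scale $M$ on which $\bQ$ is dilated. Fix radii $0 < R' \leq R$ with $\Ball{R'}{0} \subset \bQ \subset \Ball{R}{0}$; since $Mr\bQ \supset \Ball{MrR'}{0}$, it suffices to control the Euclidean diameter of $\tilde{\bQ}_{M,n_0,n_1}$.

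The central estimate I would establish is the following: there exists $K > 0$ and an almost surely finite $M_1(\omega) \geq 1$ such that for every integer $M \geq M_1$, every $x_a \in S_M := \{x_b \in \X_{\mat} : \Ball{n_0 d_b}{x_b} \cap M\bQ \neq \emptyset\}$ satisfies $d_a \leq K(\log M)^{1/d}$. The proof proceeds by a dyadic decomposition of $\Rd \setminus M\bQ$ into annular shells $A_k^M := \{x : \dist(x,M\bQ) \in [2^k,2^{k+1})\}$ for $k \geq 0$, together with the core region $M\bQ$ itself. The number of Matern points in $M\bQ \cup \bigcup_{j \leq k} A_j^M$ is bounded above (up to a constant) by $(M+2^{k+1})^d$. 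A point $x_a \in A_k^M$ can lie in $S_M$ only if $d_a \geq 2^k/n_0$, which by the exponential tail has probability $\leq C \exp(-c(2^k/n_0)^d)$. Summing the expected number of violators with $d_a > K(\log M)^{1/d}$ over core region and all shells, the contribution is bounded by
\[
C M^d \log M \cdot M^{-cK^d} + C\sum_{k \geq 0} (M + 2^k)^d \exp\!\bigl(-c \max(K(\log M)^{1/d}, 2^k/n_0)^d\bigr),
\]
where the shell sum is dominated geometrically by its first term because $u \mapsto u^d e^{-cu^d}$ is decreasing for large $u$. For $K$ sufficiently large (depending only on $c, d, n_0$), this bound is summable over integer $M$, and Borel--Cantelli yields the claim.

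Once this estimate is in hand, the geometric conclusion is immediate. For $M \geq M_1(\omega)$ integer, any $x_a \in S_M$ satisfies $|x_a| \leq MR + n_0 d_a \leq MR + n_0 K(\log M)^{1/d}$, and therefore $\Ball{n_1 d_a}{x_a} \subset \Ball{MR + (n_0+n_1)K(\log M)^{1/d}}{0}$. For $M$ large enough we have $(n_0+n_1)K(\log M)^{1/d} \leq MR$, so $\tilde{\bQ}_{M,n_0,n_1} \subset \Ball{2MR}{0} \subset M \cdot (2R/R')\bQ$. The case of non-integer $M \in (1,\infty)$ follows from the trivial monotonicity $\tilde{\bQ}_{M,n_0,n_1} \subset \tilde{\bQ}_{\lceil M\rceil,n_0,n_1}$, possibly enlarging the constant by a bounded factor. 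Finally, for $M$ in the compact range $[1, M_1]$, one uses monotonicity $\tilde{\bQ}_{M,n_0,n_1} \subset \tilde{\bQ}_{M_1,n_0,n_1}$ together with the fact that $S_{M_1}$ is a.s. finite (by the same tail argument applied once to $M_1$) and therefore $\tilde{\bQ}_{M_1,n_0,n_1}$ is a.s. bounded, yielding a random constant $R_1(\omega) < \infty$ with $\tilde{\bQ}_{M,n_0,n_1} \subset \Ball{R_1}{0} \subset M \cdot (R_1/R')\bQ$ for all $M \geq 1$. Setting $r(\omega) := \max(2R/R', R_1(\omega)/R')$ completes the proof.

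The main obstacle is purely bookkeeping: making sure that the dyadic sum over shells $A_k^M$ really is finite, and that the self-consistency issue (points far from the origin might have large $d_a$ enabling them to enter $S_M$ despite being far away) is resolved. This is handled by the observation that the required $d_a$ grows linearly in the shell index $2^k$ while the number of candidates grows only polynomially, so the exponential tail wins comfortably. No deeper probabilistic machinery (percolation, concentration of measure, etc.) is needed beyond Borel--Cantelli and the a priori polynomial bound on point counts of the Matern process in bounded regions.
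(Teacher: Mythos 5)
Your proof is correct and follows the same core approach as the paper: an annular decomposition around $M\bQ$, the exponential tail $\P(d_a>D)\leq Ce^{-\alpha D^d}$ from Lemma~\ref{lem:matern-delaunay-distribution}, and a Poisson/void-probability count of Matern points in each shell. The paper uses unit-width annuli at distance $\geq (r-1)M$ and shows $\P\of{\exists x\in\X_{\mat}\backslash\bQ_{rM}:\,\Ball{n_0 d(x)}x\cap\bQ_M\neq\emptyset}\to 0$ uniformly in $M$ as $r\to\infty$, whereas you use dyadic shells and prove an a.s. logarithmic bound $d_a\leq K(\log M)^{1/d}$ on all diameters contributing to $S_M$. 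Your version fills in several steps the paper leaves implicit or omits: (i) the Borel--Cantelli union bound over integer $M$, which is genuinely needed to pass from a per-$M$ probability estimate to the almost sure $\exists r\,\forall M$ statement (the paper's ``tends uniformly to $0$'' only gives $\sup_M\P(E_{r,M}^c)\to 0$, not $\P(\bigcup_M E_{r,M}^c)\to 0$); (ii) the reduction of non-integer $M$ by the ceiling and the compact range $M\in[1,M_1]$; and (iii) the control of the $n_1 d_a$-balls themselves, not just the locations of the generating points $x_a$ --- the paper's displayed estimate only shows $S_M\subset\bQ_{rM}$, which by itself does not bound $\diam\Ball{n_1 d_a}{x_a}$ for $x_a$ near or inside $\bQ_M$ with accidentally large $d_a$, while your diameter bound handles this directly. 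So the two proofs have the same probabilistic engine, but yours is the more careful implementation; the paper's version would need the same bookkeeping added to be fully rigorous.
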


\begin{proof}
There exists $r_{0}<R$ such that $\Ball{r_{0}}0\subset\bQ\subset\Ball R0$
we assume w.l.o.g $\bQ=\Ball R0$. We denote $\bQ_{M}:=M\bQ$ and
observe that $\frac{\left|\partial\bQ_{M}\right|}{\left|\bQ_{M}\right|}\leq C\,M^{-1}$
where $\left|\partial\bQ_{M}\right|:=\hausdorffH^{d-1}(\partial\bQ_{M})$.
For 
\[
\bQ_{M,a,b}:=\left\{ x\in\Rd\backslash\bQ_{M}:\,a<\dist(x,\bQ_{M})<b\right\} \,,
\]
we observe that $\#\of{\bQ_{M,a,b}\cap\X_{\mat}}\leq CM^{d-1}(b-a)$
due to the minimal mutual distance. The probability that at least
one $x\in\bQ_{M,a,b}\cap\X_{\mat}$ satisfies $\Ball{n_{0}d(x)}x\cap\bQ_{M}\neq\emptyset$
is given by 
\begin{align*}
\P\of{\bQ_{M},a,b}: & =\P\of{\exists x\in\bQ_{M,a,b}\cap\X_{\mat}:\;\Ball{n_{0}d(x)}x\cap\bQ_{M}\neq\emptyset}\\
 & =\sum_{k=1}^{\infty}k\,\P\of{k=\#\bQ_{M,a,b}\cap\X_{\mat}}\,\P\of{d>\frac{a}{n_{0}}}\\
 & \leq\P\of{d>\frac{a}{n_{0}}}e^{-\lambda\left|\bQ_{M,a,b}\right|}\sum_{k=1}^{\infty}\frac{\lambda^{k}\left|\bQ_{M,a,b}\right|^{k}}{\left(k-1\right)!}=\P\of{d>\frac{a}{n_{0}}}\lambda\left|\bQ_{M,a,b}\right|\,.
\end{align*}
Now let $r>0$ and observe $\left|\bQ_{M,a,b}\right|\leq C\,(b-a)\,\left(b+MR\right)^{d-1}$
while $\P\of{d>\frac{a}{n_{0}}}\leq Ce^{-\alpha\,a^{d}}$. Then the
probability that there exists $x\in\X_{\mat}\backslash\bQ_{rM}$ such
that $\Ball{n_{0}d(x)}x\cap\bQ_{M}$ is smaller than 
\begin{align*}
\sum_{k=0}^{\infty}\P & \of{\bQ_{M},(r-1)M+k,(r-1)M+k+1}\\
 & =\sum_{k=0}^{\infty}\P\of{d>\frac{(r-1)M+k}{n_{0}}}\lambda\left((rM+k+1)^{d}-(rM+k)^{d}\right)\\
 & \leq e^{-\alpha((r-1)M)^{d}}(rM)^{d}\sum_{k=0}^{\infty}e^{-\alpha k^{d}}\lambda(k+2)^{d}\,,
\end{align*}
and the right hand side tends uniformly to $0$ as $r\to\infty$. 
\end{proof}

\begin{proof}[Proof of Theorem \ref{thm:Delaunay-final}]
In what follows, we will mostly perform the calculations for $\tau_{i}^{\fs}$
and $\cM_{a}^{\fs}$ since these calculations are more involved and
drop $n$ except for the last Step 4. 

We first estimate the difference $\left|\cM_{a}^{\fs}u-\cM_{b}^{\fs}u\right|$
for two directly neighbored points $x_{a}\sim x_{b}$ of the Delaunay
grid. These are connected through a cylindric pipe 
\[
P_{\delta,a,b}=P(x_{a},x_{b},\delta(a,b)):=\conv\of{\Ball{\delta(a,b)}{x_{a}}\cup\Ball{\delta(a,b)}{x_{b}}}
\]
with round ends and of thickness $\delta(a,b)$ and total length $\left|x_{a}-x_{b}\right|+2\delta(a,b)<2\left|x_{a}-x_{b}\right|$
and we first introduce the new averages in the spirit of (\ref{eq:lem:Mixed-Korn-zylinder-cM})
\[
\cM_{a}^{\delta}u:=\fint_{\Ball{\delta}{x_{a}}}u\,,\qquad\cM_{a}^{\fs,\delta}u(x):=\overline{\nabla_{a,\delta}^{\bot}}u\of{x-a}+\fint_{\Ball{\delta}{x_{a}}}u\,.
\]
As for (\ref{eq:permutation-tau-cM}) and (\ref{eq:permutation-tau-cM-fs})
we obtain 
\[
\cM_{a_{1}}^{\delta_{1}}\cM_{a_{2}}^{\delta_{2}}u=\cM_{a_{2}}^{\delta_{2}}u\,,\qquad\cM_{a_{1}}^{\fs,\delta_{1}}\cM_{a_{2}}^{\fs,\delta_{2}}u=\cM_{a_{2}}^{\fs,\delta_{2}}u\,.
\]
For every $i,a\in\N$ with $p_{i}\in\Ball{\fr}{G_{a}}$ there exists
almost surely $a_{i}\in\N$ such that $p_{i}$ and $x_{a_{i}}$ are
connected in $\bP$ through a straight line segment  (i.e. $p_{i}$
lies on the boundary of one of the pipes emerging at $x_{a_{i}}$
or in $\Ball{\fr}{x_{a_{i}}}$) and 
\begin{align*}
\left|\tau_{i}u-\cM_{a}u\right|^{s} & \leq2^{s}\left(\left|\tau_{i}u-\cM_{a_{i}}u\right|^{s}+\left|\cM_{a_{i}}u-\cM_{a}u\right|^{s}\right)\,,\\
\left|\tau_{i}^{\fs}u(x)-\cM_{a}^{\fs}u(x)\right|^{s} & \leq2^{s}\left(\left|\tau_{i}^{\fs}u(x)-\cM_{a_{i}}^{\fs}u(x)\right|^{s}+\left|\cM_{a_{i}}^{\fs}u(x)-\cM_{a}^{\fs}u(x)\right|^{s}\right)\,.
\end{align*}
The second term is of ``mesoscopic type'', while the first term
is of local type. We will study both types of terms separately.

\emph{Step 1: }Using (\ref{eq:lem:Mixed-Korn-zylinder-4})--(\ref{eq:lem:Mixed-Korn-zylinder-5}),
we observe for neighbors $a\sim b$
\begin{align}
\left|\cM_{a}^{\fs}u-\cM_{b}^{\fs}u\right|^{s} & \leq\sum_{k=a,b}\left|\cM_{k}^{\fs}u-\cM_{k}^{\fs,\delta(a,b)}u\right|^{s}+\left|\cM_{a}^{\fs,\delta(a,b)}u-\cM_{b}^{\fs,\delta(a,b)}u\right|^{s}\nonumber \\
 & \leq CF_{s}^{\fs,1}\of{x,\delta(a,b)}\left(\left|x-x_{a}\right|^{s}+\left|x-x_{b}\right|^{s}\right)\nonumber \\
 & \qquad\left(\left\Vert \nablas u\right\Vert _{L^{s}(\Ball{\frac{\fr}{16}}{\left\{ x_{a},x_{b}\right\} })}^{s}+\left|x_{a}-x_{b}\right|^{2s}\left\Vert \nablas u\right\Vert _{L^{s}(P_{\delta,a,b})}^{s}\right)\,.\label{eq:thm:Delaunay-final-help-1}
\end{align}
where 
\begin{equation}
F_{s}^{\fs,q}(x,\delta):=\left(\delta^{-d}+\delta^{-s-d}+\delta^{1+s-2d}\right)^{q}\,.\label{eq:def-F-sq-s}
\end{equation}

\emph{Step 2: }For reasons that we will encounter below, we define
\[
\mathrm{I}_{\alpha}:=\frac{1}{\left|\bQ\right|}\int_{\bP\cap\bQ}\sum_{a:\,\Ball{4d_{a}}{x_{a}}\cap\bQ\neq\emptyset}\chi_{\Ball{\fr}{G_{a}}}\sum_{\substack{b:\,\Ball{4d_{b}}{x_{b}}\cap\bQ\neq\emptyset\\
d_{b}\leq d_{a},\,\left|x_{a}-x_{b}\right|\leq3d_{a}
}
}d_{a}^{-\alpha\frac{s}{r}}\left|\cM_{a}^{\fs}u-\cM_{b}^{\fs}u\right|^{s}\,.
\]
 Assume $\chi_{\Ball{\fr}{G_{a}}}\chi_{A_{1,i}}\not\equiv0$. Then
it holds $p_{i}\in\Ball{2d_{a_{i}}}{x_{a_{i}}}$ which implies 
\begin{align}
\frac{1}{\left|\bQ\right|}\int_{\bP\cap\bQ_{\fr}}\sum_{\substack{i\neq0}
}\sum_{x_{a}\in\X(\bQ)}f_{a} & \frac{\left|\partial_{l}\phi_{i}\right|}{D_{l+}}\left|\cM_{a}^{\fs}u-\cM_{a_{i}}^{\fs}u\right|^{s}\nonumber \\
 & \leq\frac{1}{\left|\bQ\right|}\int_{\bP\cap\bQ_{\fr}}\sum_{x_{a}\in\X(\bQ)}\sum_{\substack{x_{b}\in\X_{\mat}\\
\Ball{2d_{b}}{x_{b}}\cap\Ball{\fr}{G_{a}}\neq\emptyset
}
}\sum_{i:\,x_{a_{i}}=x_{b}}f_{a}\chi_{A_{1,i}}\left|\cM_{a}^{\fs}u-\cM_{b}^{\fs}u\right|^{s}\nonumber \\
 & \leq\frac{1}{\left|\bQ\right|}C\int_{\bP\cap\bQ_{\fr}}\sum_{\substack{x_{b}\in\X_{\mat}}
}\sum_{\substack{x_{a}\in\X(\bQ)\\
\Ball{2d_{b}}{x_{b}}\cap\Ball{\fr}{G_{a}}\neq\emptyset
}
}\chi_{\Ball{\fr}{G_{a}}}\left|\cM_{a}^{\fs}u-\cM_{b}^{\fs}u\right|^{s}\,.\label{eq:thm:Delaunay-final-help-2}
\end{align}
Hence, we encounter the conditions $\Ball{\fr}{G_{a}}\cap\bQ\neq\emptyset$
and $\Ball{2d_{b}}{x_{b}}\cap\Ball{\fr}{G_{a}}\neq\emptyset$ as well
as 
\[
\left|x_{a}-x_{b}\right|\leq3\max\left\{ d_{a},d_{b}\right\} \,.
\]
In particular, we conclude the symmetric condition
\[
\Ball{4d_{a}}{x_{a}}\cap\bQ\neq\emptyset\,,\quad\Ball{4d_{a}}{x_{b}}\cap\bQ\neq\emptyset\,,\qquad\Ball{2d_{a}}{x_{a}}\cap\Ball{2d_{b}}{x_{b}}\neq\emptyset
\]
and 
\begin{equation}
\text{R.H.S of }\eqref{eq:thm:Delaunay-final-help-2}\leq\mathrm{I}_{0}\,.\label{eq:lem:Delaunay-final-help-3}
\end{equation}
Similarly 
\begin{align}
\frac{1}{\left|\bQ\right|}\int_{\bP\cap\bQ}\sum_{a:\,\partial_{l}\Phi_{a}>0}\sum_{b:\,\partial_{l}\Phi_{b}<0} & \frac{d_{b}^{-\alpha\frac{s}{r}}\left|\partial_{l}\Phi_{b}\right|}{D_{l+}^{\Phi}}\left|\cM_{a}u-\cM_{b}u\right|^{s}\leq I_{\alpha}\,.\label{eq:lem:Delaunay-final-help-4}
\end{align}
\emph{Step 3:} We now derive an estimate for $I_{\alpha}$. For pairs
$(a,b)$ with $d_{b}\leq d_{a},\,\left|x_{a}-x_{b}\right|\leq3d_{a}$
let $y_{a,b}:=\left(y_{1},\dots,y_{n(a,b)}\right)$ be a discrete
path on the Delaunay grid of $\X_{\mat}$ with length smaller than
$2\left|x_{a}-x_{b}\right|$ (this exists due to \cite{xia2013stretch})
that connects $x_{a}$ and $x_{b}$. By the minimal mutual distance
of points, this particularly implies that $n(a,b)\leq6d_{a}/2\fr$
and the path lies completely within $\Ball{4.5\,d_{a}}{x_{a}}$. Because
\begin{align*}
\left|\cM_{a}^{\fs}u-\cM_{b}^{\fs}u\right|^{s} & \leq n(a,b)^{s}\sum_{k=1}^{n(a,b)-1}\left|\cM_{y_{k}}^{\fs}u-\cM_{y_{k+1}}^{\fs}u\right|^{s}\\
 & \leq6d_{a}^{s}/2\fr\sum_{k=1}^{n(a,b)-1}\left|\cM_{y_{k}}^{\fs}u-\cM_{y_{k+1}}^{\fs}u\right|^{s}
\end{align*}
it holds with (\ref{eq:thm:Delaunay-final-help-1})
\begin{align*}
\left|\left(\cM_{a}^{\fs}u-\cM_{b}^{\fs}u\right)(x)\right|^{s} & \leq Cd_{a}^{s}\int_{\Ball{6d_{a}}{x_{a}}}\left(\sum_{e\sim f}F_{s}^{\fs,1}\of{\delta(e,f)}\left(\left|x-x_{e}\right|^{s}+\left|x-x_{f}\right|^{s}\right)\right.\cdot\\
 & \qquad\qquad\cdot\left.\left|x_{e}-x_{f}\right|^{2s}\left(\chi_{\Ball{\frac{\fr}{16}}{x_{e}}}+\chi_{\Ball{\frac{\fr}{16}}{x_{f}}}+d_{a}^{s-1}\chi_{P_{\delta,e,f}}\right)\right)\left|\nablas u\right|^{s}
\end{align*}
We make use of $\left|x-x_{e}\right|^{s}\leq2^{s}\left(\left|x-x_{a}\right|^{s}+\left|x_{a}-x_{e}\right|^{s}\right)\leq2^{s}\left(\left|x-x_{a}\right|^{s}+d_{a}^{s}\right)$
and $\left|x_{e}-x_{f}\right|^{2s}\leq Cd_{a}^{2s}$ and $B_{e,f}:=\Ball{\frac{\fr}{16}}{\{x_{e},x_{f}\}}\cup P_{\delta,e,f}$
to find 
\[
\left|\left(\cM_{a}^{\fs}u-\cM_{b}^{\fs}u\right)(x)\right|^{s}\leq Cd_{a}^{4s}\int_{\Ball{6d_{a}}{x_{a}}}\left(\sum_{e\sim f}F_{s}^{\fs,1}\of{\delta(e,f)}\left(\left|x-x_{a}\right|^{s}+d_{a}^{s}\right)\,\chi_{B_{e,f}}\right)\left|\nablas u\right|^{s}\,.
\]
In the integrals $I_{\alpha}$, any of the integrals $\int\chi_{\Ball{\fr}{G_{a}}}\left|\cM_{a}^{\fs}u-\cM_{b}^{\fs}u\right|^{s}$
has $\left|x-x_{a}\right|<2d_{a}$ and we can use an estimate of the
form 
\[
\left|\left(\cM_{a}^{\fs}u-\cM_{b}^{\fs}u\right)(x)\right|^{s}\leq Cd_{a}^{5s}\int_{\Ball{6d_{a}}{x_{a}}}\left(\sum_{e\sim f}F_{s}^{\fs,1}\of{\delta(e,f)}\chi_{B_{e,f}}\right)\left|\nablas u\right|^{s}\,.
\]
With this estimate, and using 
\[
\#\left\{ b:\,\Ball{4d_{b}}{x_{b}}\cap\bQ\neq\emptyset,\,d_{b}\leq d_{a},\,\left|x_{a}-x_{b}\right|\leq3d_{a}\right\} \leq Cd_{a}^{d}
\]
the integral $I_{\alpha}$ can be controlled through 
\[
I_{\alpha}\leq\frac{1}{\left|\bQ\right|}\int_{\bP}\sum_{a:\,\Ball{4d_{a}}{x_{a}}\cap\bQ\neq\emptyset}d_{a}^{2d+5s-\alpha\frac{s}{r}}\chi_{\Ball{6d_{a}}{x_{a}}}\left(\sum_{e\sim f}F_{s}^{\fs,1}\of{\delta(e,f)}\chi_{B_{e,f}}\right)\left|\nablas u\right|^{s}\,.
\]
Denoting 
\begin{align*}
f(\omega) & :=\sum_{a}d_{a}^{2d+5s-\alpha\frac{s}{r}}\chi_{\Ball{6d_{a}}{x_{a}}}\,,\\
f(\omega,\bQ) & :=\sum_{a:\,\Ball{4d_{a}}{x_{a}}\cap\bQ\neq\emptyset}d_{a}^{2d+5s-\alpha\frac{s}{r}}\chi_{\Ball{6d_{a}}{x_{a}}}\,,\\
g(\omega) & :=\sum_{e\sim f}F_{s}^{\fs,1}\of{\delta(e,f)}\chi_{B_{e,f}}\,,
\end{align*}
and using $u\equiv0$ outside $\bQ$, we observe 
\[
I_{\alpha}\leq\left(\frac{1}{\left|\bQ\right|}\int_{\bP}f(\omega,\bQ)^{\frac{p}{p-s}}\,g(\omega)^{\frac{p}{p-s}}\right)^{\frac{p-s}{p}}\left(\frac{1}{\left|\bQ\right|}\int_{\bP\cap\bQ}\left|\nablas u\right|^{p}\right)^{\frac{s}{p}}\,.
\]
 \emph{Step 4: }Since every quantity related to the distribution
of $d_{a}$ is distributed exponentially, we can be very generous
with this variable. We observe 
\[
\frac{1}{\left|\bQ\right|}\int_{\bP\cap\bQ_{\fr}}\sum_{a}f_{a}\sum_{\substack{i\neq0}
}\frac{\left|\partial_{l}\phi_{i}\right|}{D_{l+}}\left|\tau_{i}^{\fs}u-\cM_{a_{i}}^{\fs}u\right|^{s}\leq\frac{1}{\left|\bQ\right|}\int_{\bP\cap\bQ_{\fr}}\sum_{\substack{i\neq0}
}\chi_{A_{1,i}}\left|\tau_{i}^{\fs}u-\cM_{a_{i}}^{\fs}u\right|^{s}
\]
but for every fixed $x$ (and using that $x\in\Ball{2d_{a_{i}}}{x_{a_{i}}}$)
using again Jensens inequality 
\[
\int_{A_{1,i}}\left|\tau_{i}^{\fs}u(x)-\cM_{a_{i}}^{\fs}u(x)\right|^{s}\leq C\int_{\Ball{\fr_{i}}{y_{i}}}\left(\left|\nabla\left(u-\cM_{a_{i}}^{\fs}u\right)\right|^{s}d_{a_{i}}^{s}+\left|u-\cM_{a_{i}}^{\fs}u\right|^{s}\right)\,.
\]
Having this in mind, we may sum over all $y_{i}$ to find 
\begin{align*}
\frac{1}{\left|\bQ\right|} & \int_{\bP\cap\bQ_{\fr}}\sum_{a}f_{a}\sum_{\substack{i\neq0}
}\frac{\left|\partial_{l}\phi_{i}\right|}{D_{l+}}\left|\tau_{i}^{\fs}u-\cM_{a_{i}}^{\fs}u\right|^{s}\\
 & \leq\frac{1}{\left|\bQ\right|}\int_{\bQ\cap\bP}C\sum_{a}\chi_{2d_{a}}\sum_{b\sim a}d_{a}^{s}\,\chi_{P(x_{a},x_{b},\delta(a,b))}\left(\left|\nabla\left(u-\cM_{a}^{\fs}u\right)\right|^{s}+\left|u-\cM_{a}^{\fs}u\right|^{s}\right)\,.
\end{align*}
With the splitting $u-\cM_{a}^{\fs}u=u-\cM_{a}^{\fs,\delta(a,b)}u+\cM_{a}^{\fs,\delta(a,b)}u-\cM_{a}^{\fs}u$
and Lemmas \ref{lem:Mixed-Korn-Zylinder} and \ref{lem:general-Korn-Poincar=0000E9}
it follows with $F_{s}^{\fs,1}$ from (\ref{eq:def-F-sq-s})
\begin{align*}
\frac{1}{\left|\bQ\right|} & \int_{\bP\cap\bQ_{\fr}}\sum_{a}f_{a}\sum_{\substack{i\neq0}
}\frac{\left|\partial_{l}\phi_{i}\right|}{D_{l+}}\left|\tau_{i}^{\fs}u-\cM_{a_{i}}^{\fs}u\right|^{s}\\
 & \leq\frac{1}{\left|\bQ\right|}C\sum_{a}\sum_{b\sim a}F_{s}^{\fs,1}\of{\delta(a,b)}\left(\left\Vert \nabla u\right\Vert _{L^{s}(\Ball{\frac{\fr}{16}}{x_{a}}\cup\Ball{\frac{\fr}{16}}{x_{b}})}^{s}+\left(2d_{a}\right)^{s}\left\Vert \nabla u\right\Vert _{L^{s}(P(x_{a},x_{b},\delta(a,b)))}^{s}\right)d_{a}^{d+s}
\end{align*}
by a restructuration, the right hand side is bounded by 
\begin{align*}
\frac{1}{\left|\bQ\right|}\int_{\bQ\cap\bP}C\left(\sum_{a}\chi_{2d_{a}}d_{a}^{3s+d}\right) & g(\omega)\left|\nabla^{s}u\right|^{s}\\
 & \leq C\left(\frac{1}{\left|\bQ\right|}\int_{\bQ\cap\bP}f_{1}(\omega)^{\frac{p}{p-s}}Bg(\omega)^{\frac{p}{p-s}}\right)^{\frac{p-s}{p}}\left(\frac{1}{\left|\bQ\right|}\int_{\bQ\cap\bP}\left|\nabla^{s}u\right|^{p}\right)^{\frac{s}{p}}\,,
\end{align*}
where
\begin{align*}
f_{1}(\omega) & :=\sum_{a}\chi_{2d_{a}}\left(2d_{a}\right)^{s+d}
\end{align*}
Step 4: We can replace in the above calculations $\bQ$ by $n\bQ$.
By Lemma \ref{lem:matern-delaunay-bounded} we can extend $f(\omega,n\bQ)$
to $f(\omega)|_{Rn\bQ}$ for some fixed $R>1$ and on $R\bQ$ we can
use standard ergodic theory. Hence, the expressions in $\delta$ and
$d_{a}$ converge to a constant as $n\to\infty$ provided 
\begin{equation}
\E\of{\left(f_{1}g\right)^{\frac{p}{p-s}}+\left(fg\right)^{\frac{p}{p-s}}}<\infty\,.\label{eq:thm:Delaunay-final-help-10}
\end{equation}
However, $f$, $f_{1}$ and $g$ are stationary by definition and
$f$ and $g$ or $f_{1}$ and $g$ are independent. Since $f$ and
$f_{1}$ clearly have finite expectation by the exponential distribution
of $d_{a}$ and Lemma \ref{lem:estim-E-fa-fb}, we only mention that
due to the strong mixing of $\delta$ and its independence from the
distribution of connections 
\[
\E(g^{\frac{p}{p-s}})\leq\E\of{\sum_{e\sim f}\chi_{B_{e,f}}}\,\E\of{\of{\delta^{-s-d}+\delta^{1+s-2d}}^{\frac{p}{p-s}}}
\]
and thus (\ref{eq:thm:Delaunay-final-help-10}) holds.
\end{proof}
The work \cite{xia2013stretch} which we used in the last proof also
opens the door to demonstrate the following result which will be used
in part III of this series to prove regularity properties of the homogenized
equation.
\begin{thm}
\label{thm:overlay-count-1}For fixed $y_{0}\in\X_{\mat}$ and every
$\tilde{y}\in\X_{\mat}$ let $P(y_{0},\tilde{y})=\left(y_{0},y_{1}(\tilde{y}),\dots,y_{N}(\tilde{y})\right)_{N\in\N}$
with $y_{N}(\tilde{y})=\tilde{y}$ be the shortest path of points
in $\X_{\mat}$ connecting $y_{0}$ and $\tilde{y}$ in $\bP$ and
having length $L(y_{0},\tilde{y})$. Then there exists 
\begin{align*}
\gamma_{y_{0,}\tilde{y}}:\,[0,L(y_{0},\tilde{y})]\times\Ball{\frac{\fr}{16}}0 & \to\bP\\
(t,z) & \mapsto\gamma_{y_{0,}\tilde{y}}(t,z)
\end{align*}
such that $\gamma_{y_{0,}\tilde{y}}(t,\cdot)$ is invertible for every
$t$ and $\norm{\partial_{t}\gamma_{y_{0,}\tilde{y}}}_{\infty}\leq2$.
For $R>1$ let 
\[
N_{y_{0},R}(x):=\#\left\{ \tilde{y}\in\Ball R{y_{0}}\cap\X_{\mat}:\,\exists t:\,x\in\gamma_{y_{0},\tilde{y}}\of{t,\Ball{\frac{\fr}{16}}0}\right\} \,.
\]
Then there exists $C>0$ such that for every $y_{0}$ it holds
\[
N_{y_{0},R}(x)\leq C\left(R^{d}-\left(\frac{x}{2}\right)^{d}\right)\quad\text{for }\left|x-y_{0}\right|<2R\,,\qquad N_{y_{0},R}(x)=0\quad\text{else.}
\]
\end{thm}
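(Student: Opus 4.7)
The plan is to build the map $\gamma_{y_{0},\tilde y}$ along the Delaunay shortest path and then turn the Delaunay stretch-factor theorem of \cite{xia2013stretch} into the claimed count. Recall that \cite{xia2013stretch} supplies a constant $C_{\mathrm{st}}\le 2$ such that for any $y_{0},\tilde y\in\X_{\mat}$ the total length of $P(y_{0},\tilde y)$ satisfies $L(y_{0},\tilde y)\le C_{\mathrm{st}}|y_{0}-\tilde y|$. Everything else is a consequence of combining this inequality with the hard-core property of the Matern process.

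For the construction of $\gamma$, I proceed segment by segment. I parametrize $[0,L(y_{0},\tilde y)]$ so that the $i$-th subinterval $I_{i}$ of length $|y_{i}-y_{i+1}|$ traces, by arclength, the straight segment $\zeta_{i}(t)$ from $y_{i}$ to $y_{i+1}$, and set
\[
\gamma(t,z)=\zeta_{i}(t)+R_{i}\,\Sigma_{i}(t)\,z\,,\qquad t\in I_{i},\ z\in\Ball{\fr/16}{0},
\]
where $R_{i}$ is a rotation sending $\be_{1}$ to $\zeta_{i}'(t)$ and $\Sigma_{i}(t)$ is a diagonal matrix that equals the identity while $\zeta_{i}(t)$ lies in the end balls $\Ball{\fr/2}{y_{i}}\cup\Ball{\fr/2}{y_{i+1}}$ and, in the pipe region, contracts transverse directions by the factor $\min\{1,\delta(y_{i},y_{i+1})/(\fr/16)\}$, smoothly interpolated between these regions on a zone of axial length $\fr/16$. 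At every $t$ the map $\gamma(t,\cdot)$ is affine with strictly positive singular values, hence invertible onto its image, and that image lies in $\bP$ by the local shape of the pipes and the end balls. Since $|\zeta_{i}'|=1$ and the transverse contribution is bounded by $(\fr/16)^{-1}$ times the interpolation speed, which is itself set by the transition length $\fr/16$, one gets $|\partial_{t}\gamma|\le 2$. Matching of two consecutive segments is done inside the common ball $\Ball{\fr/2}{y_{i+1}}$, where both parametrizations reduce to identity translations of the $\fr/16$-disc.

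The count then follows directly from the stretch inequality. Suppose $x=\gamma_{y_{0},\tilde y}(t^{*},z^{*})$ for some $\tilde y\in\Ball{R}{y_{0}}\cap\X_{\mat}$. Using $|\partial_{t}\gamma|\le 2$ and $|z^{*}|\le\fr/16$, I obtain
\[
|y_{0}-x|\le 2t^{*}+\tfrac{\fr}{16}\le 2L(y_{0},\tilde y)+\tfrac{\fr}{16}\le 2C_{\mathrm{st}}|y_{0}-\tilde y|+\tfrac{\fr}{16}\,.
\]
In particular, no such $\tilde y\in\Ball{R}{y_{0}}$ exists when $|y_{0}-x|\ge 2R$, proving the second assertion. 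Otherwise $\tilde y$ must lie in the annulus $\Ball{R}{y_{0}}\setminus\Ball{|y_{0}-x|/(2C_{\mathrm{st}})-c\fr}{y_{0}}$, whose Lebesgue measure is bounded by $C\bigl(R^{d}-(|y_{0}-x|/2)^{d}\bigr)$ after absorbing $C_{\mathrm{st}}\le 2$ and the harmless $\fr$-shift into $C$. The Matern hard-core condition $|y_{i}-y_{j}|\ge d\fr$ ensures that every bounded set $A\subset\Rd$ contains at most $C|A|$ points of $\X_{\mat}$, which yields the asserted bound on $N_{y_{0},R}(x)$.

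The delicate step is the construction of $\gamma$ across pipes whose half-width $\delta(y_{i},y_{i+1})$ is much smaller than $\fr/16$: the transverse contraction $\Sigma_{i}$ must collapse the $\fr/16$-disc to an ellipsoid of transverse radius $\delta$, and both the time derivative of $\Sigma_{i}$ and the matching with the end-ball parametrization must fit inside a transition zone of axial length $\fr/16$ without violating $|\partial_{t}\gamma|\le 2$. Once $\gamma$ with these properties has been produced, the geometric count outlined above is essentially combinatorial.
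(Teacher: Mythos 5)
Your overall strategy matches the paper's: build $\gamma$ segment by segment along the Delaunay shortest path (your construction is in fact more explicit than the paper's "conically tapering pipes"), use the Delaunay stretch factor of \cite{xia2013stretch} to localize the path, and count Matern points via the hard-core volume bound. The paper sums over unit annuli, you bound by a single annulus volume; these are morally the same.

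However, there is a genuine factor-of-2 error in the distance estimate that undermines both conclusions. You estimate $|y_{0}-x|\le 2t^{*}+\fr/16$ from the full bound $|\partial_{t}\gamma|\le 2$. But the $2$ there is the sum of the unit longitudinal speed $|\zeta_{i}'|=1$ and a transverse contraction contribution of size $\le 1$; only the longitudinal part displaces the core of the tube. Since $\gamma(t,0)=\zeta_{i}(t)$ is parametrized by arclength, the correct estimate is
\[
|y_{0}-x|\le|y_{0}-\gamma(t^{*},0)|+|z^{*}|\le t^{*}+\tfrac{\fr}{16}\le L(y_{0},\tilde y)+\tfrac{\fr}{16}\le C_{\mathrm{st}}|y_{0}-\tilde y|+\tfrac{\fr}{16}<2|y_{0}-\tilde y|+\tfrac{\fr}{16}\,.
\]
Your version gives $|y_{0}-x|\le 2C_{\mathrm{st}}|y_{0}-\tilde y|+\fr/16\le 4|y_{0}-\tilde y|+\fr/16$, so the vanishing of $N_{y_{0},R}(x)$ only follows for $|x-y_{0}|\gtrsim 4R$, not $2R$, and the annulus you obtain has inner radius $\approx|x-y_{0}|/4$ rather than $|x-y_{0}|/2$. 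The subsequent claim that $R^{d}-\left(|x-y_{0}|/4\right)^{d}$ can be bounded by $C\left(R^{d}-\left(|x-y_{0}|/2\right)^{d}\right)$ "after absorbing the constant" is false: as $|x-y_{0}|\to 2R$ the right side tends to $0$ while the left stays bounded away from $0$, so no uniform $C$ exists. Both problems disappear once you replace $2t^{*}$ by $t^{*}$ in the first inequality; with that single fix your argument is correct and in line with the paper's.
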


\begin{proof}
The function $\gamma_{y_{0,}\tilde{y}}$ consists basically of pipes
connecting $y_{i}(\tilde{y})$ with $y_{i+1}(\tilde{y})$ that conically
become smaller within the ball $\Ball{\frac{\fr}{2}}{y_{i}(\tilde{y})}$
before entering the pipe and vice versa in $\Ball{\frac{\fr}{2}}{y_{i+1}(\tilde{y})}$.
Defining 
\[
N_{y_{0},r,R}(x):=\#\left\{ \tilde{y}\in\of{\Ball R{y_{0}}\backslash\Ball r{y_{0}}}\cap\X_{\mat}:\,\exists t:\,x\in\gamma_{y_{0},\tilde{y}}\of{t,\Ball{\frac{\fr}{16}}0}\right\} 
\]
\cite{xia2013stretch} implies $N_{y_{0},r,R}(x)=0$ for all $\left|x-y_{0}\right|>2R$
but also due to the minimal mutual distance $N_{y_{0},r,R}(x)\leq CR^{d-1}(R-r)$,
where $C$ depends only on $\fr$ and $d$. Hence writing $\left\lfloor x\right\rfloor :=\min\left\{ n\in\N:\,n+1>x\right\} $
we can estimate for every $K\in\N$ 
\[
N_{y_{0},K}(x)\leq\sum_{k=0}^{K-1}N_{y_{0},k,k+1}(x)\leq C\sum_{k=\left\lfloor \frac{x}{2}\right\rfloor }^{K-1}\left(k+1\right)^{d-1}\leq C\left(K^{d}-\left\lfloor \frac{x}{2}\right\rfloor ^{d}\right)\,.
\]
\end{proof}
We close this section by proving Theorem \ref{thm:Pipes-Model}.
\begin{proof}[\textbf{Proof of Theorem \ref{thm:Pipes-Model}}]
The statement on the support is provided by Theorem \ref{thm:support}
and the fact that we restrict to functions with support in $m\bQ$.
Hence in the following we can apply all cited results to $\Ball{m^{1-\beta}}{m\bQ}$
instead of $m\bQ$. According to Lemmas \ref{lem:local-delta-M-extension-estimate}
and \ref{lem:6-4}--\ref{lem:6-5} and to Theorem \ref{thm:Delaunay-final}
we need only need to ensure
\[
\E\of{\delta^{-s-d}+\delta^{1+s-2d}}^{\frac{p}{p-s}}+\E\left(1+M_{[\frac{1}{8}\delta],\Rd}\right)^{r}+\E\left|\tilde{\rho}_{\Rd}\right|^{-\frac{sr}{s-r}}<\infty\,,
\]
since $d_{a}$ is distributed exponentially and the corresponding
terms are bounded as long as $r\neq s\neq p$. We note that the exponential
distribution of $M$ allows us to restrict to the study of $\delta$
and $\tilde{\rho}$. 

According to Lemma \ref{lem:matern-delaunay-distribution} it is sufficient
that $\max\left\{ \frac{p\left(s+d\right)}{p-s},\frac{p(2d-s-1)}{p-s}\right\} \leq\beta$
and $\frac{sr}{s-r}\leq\beta+d-1$. 
\end{proof}

\subsection{\label{subsec:Boolean-Model-for}Boolean Model for the Poisson Ball
Process}

The following argumentation will be strongly based on the so called
void probability. This is the probability $\P_{0}(A)$ to not find
any point of the point process in a given open set $A$ and is given
by (\ref{eq:PoisonPointPoc-Prob}) i.e. $\P_{0}(A):=e^{-\lambda\left|A\right|}$.
The void probability for the ball process is given accordingly by
\[
\P_{0}(A):=e^{-\lambda\left|\overline{\Ball 1A}\right|}\,,\qquad\overline{\Ball 1A}:=\left\{ x\in\Rd\,:\;\dist\of{x,A}\leq1\right\} \,,
\]
which is the probability that no ball intersects with $A\subset\Rd$.
\begin{thm}
\label{thm:boolean-delta-M-distrib}Let $\bP\left(\omega\right):=\bigcup_{i}B_{i}(\omega)$
(or $\bP\left(\omega\right):=\Rd\backslash\bigcup_{i}B_{i}(\omega)$)
and define 
\begin{align*}
\tilde{\delta}\of x & :=\min\left\{ \delta\of{\tilde{x}}\,:\;\tilde{x}\in\partial\bP\,\text{s.t. }x\in\Ball{\frac{1}{8}\delta\of{\tilde{x}}}{\tilde{x}}\right\} \,,
\end{align*}
where $\min\emptyset:=0$ for convenience. Then $\partial\bP$ is
almost surely locally $\left(\delta,M\right)$ regular and for every
$\gamma<1$, $\beta<d+2$ it holds
\[
\E\of{\delta^{-\gamma}}+\E\of{\tilde{\delta}^{-\gamma-1}}+\E\of{\tilde{M}_{[0]}^{\beta}}<\infty\,.
\]
Furthermore, it holds $\hat{d}\le d-1$ and $\alpha=0$ in inequalities
(\ref{eq:lem:properties-local-rho-convering-4}) and (\ref{lem:properties-local-rho-convering}).
Furthermore the extension order and symmetric extension order are
both $n=0$. If $\bP\left(\omega\right):=\Rd\backslash\overline{\bigcup_{i}B_{i}}(\omega)$
the above holds with $\alpha$ replaced by $1$ and with extension
order $n=1$ and symmetric extension order $n=2$.
\end{thm}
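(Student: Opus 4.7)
The plan is to combine explicit local geometry of pairwise ball intersections with the Palm calculus for the underlying Poisson point process. First I would establish almost sure local $(\delta,M)$-regularity of $\partial\bP$ from two generic properties of Poisson realizations: (i) the ball process is a.s.\ locally finite, and (ii) a.s.\ no three sphere boundaries meet at a single point and no two are tangent. Together these imply that in a sufficiently small neighborhood of any $p\in\partial\bP$, the boundary decomposes into finitely many smooth spherical pieces meeting transversally, hence is a Lipschitz graph. I would then quantify how $\delta$ and $M$ depend on neighboring ball positions: for two balls $\Ball{1}{x_i},\Ball{1}{x_j}$ with $|x_i-x_j|=2-\eps$, an elementary computation shows that the tangent planes to the two spheres at the intersection $(d-2)$-sphere make an angle $\alpha$ with the median hyperplane satisfying $\sin\alpha=\sqrt{\eps(1-\eps/4)}$, so the local Lipschitz constant of $\partial(\Ball{1}{x_i}\cup\Ball{1}{x_j})$ near this intersection behaves as $M\sim 1/\sqrt{\eps}$, while $\delta(p)$ is bounded below by the distance from $p$ to the nearest intruding ball.

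The bulk of the work is then the integrability bounds, obtained via Slivnyak--Mecke. For the boundary quantity $\E(\delta^{-\gamma})$ I would show $\P(\delta(p)<\delta_0)\lesssim\delta_0$ from the first-moment estimate of finding a second center inside a $\delta_0$-thin shell around $\partial\Ball{1}{x_i}$, giving $\E(\delta^{-\gamma})<\infty$ for $\gamma<1$. For $\E(\tilde M_{[0]}^{\beta})$ I would integrate over $x\in\Rd$: the set $\{\tilde M>M_0\}$ is concentrated in accordion-like neighborhoods of the singular $(d-2)$-spheres of those pairwise intersections with gap $\eps\sim M_0^{-2}$. A volume count yields bad-region Lebesgue measure $\sim\eps^{(d-1)/2}$ per pair, while the two-point Campbell formula contributes pair density $\sim\lambda^2\eps^{d-1}\,d\eps$ for $|x_i-x_j|\in(2-\eps,2)$. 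Combining these scalings gives $\P(\tilde M_{[0]}>M_0)\lesssim M_0^{-(d+2)}$, hence $\E(\tilde M_{[0]}^{\beta})<\infty$ precisely when $\beta<d+2$. The estimate for $\E(\tilde\delta^{-\gamma-1})$ then follows by the same shell--volume transfer together with Lemma~\ref{lem:delta-tilde-construction-estimate}.

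It remains to dispatch the qualitative claims and identify the difference between $\bP$ and its complement. The bound $\hat d\leq d-1$ is inherited from the fact that $\partial\bP$ lies on the $(d-1)$-dimensional union of spheres, so the multiplicity count in \eqref{eq:lem:properties-local-rho-convering-4} improves exactly as in Remark~\ref{rem:lem:properties-local-rho-convering}. For $\bP=\bigcup_i \Ball{1}{x_i}$, at every boundary point $p\in\partial\Ball{1}{x_i}$ the ball $\Ball{1}{x_i}\subset\bP$ itself contains a ball of radius comparable to $\tilde\rho(p)$ bounded below independently of $M$, so $\alpha=0$; moreover the local graph representation has the $\phi\geq 0$ sign structure of Remarks~\ref{rem:ext-order} and \ref{rem:ext-order-sym}, which allows extension order $n=0$ and symmetric extension order $n=0$. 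For the complement $\bP=\Rd\setminus\overline{\bigcup_i \Ball{1}{x_i}}$ fitting a ball inside $\bP$ near a sharp corner requires retraction by a factor $M$, so $\alpha=1$, and no improved sign structure is available, so Corollary~\ref{cor:maximal-extension-order} gives the generic maximal extension orders $n=1$ and $n=2$. The hard part will be the sharpness of the exponent $\beta<d+2$ in the $\tilde M$-estimate, which demands a careful balancing of the pair-correlation density against the geometric volume of the accordion neighborhoods and verification that $k$-fold ball configurations for $k\geq 3$ contribute only lower-order singular mass.
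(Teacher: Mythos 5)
Your overall strategy matches the paper: exploit the explicit local geometry of pairwise ball intersections (scaling $M\sim\xi^{-1/2}$ where $\xi=1-x$ and $2x$ is the center distance), then bound expectations by summing over pairs via Palm/void-probability arguments. The qualitative claims ($\hat d\le d-1$, the sign structure giving $\alpha=0$, $n=0$ for the union of balls, and the loss of this structure for the complement) and the reduction of $\E(\tilde\delta^{-\gamma-1})$ via Lemma~\ref{lem:delta-tilde-construction-estimate} are all in line with the paper's argument. Your additional observation that a.s.\ no three sphere boundaries meet at a single point is a sensible strengthening that the paper handles only implicitly.

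However, the quantitative scalings in your $\tilde M$-estimate are incorrect as stated, and the inconsistency is not cosmetic. You claim pair density $\sim\lambda^2\eps^{d-1}\,d\eps$ for $|x_i-x_j|\in(2-\eps,2)$; but the centers are at mutual distance near $2$, not near $0$, so the Palm (Slivnyak--Mecke) contribution is $\lambda\,|\S^{d-1}|(2-\eps)^{d-1}\,d\eps\sim d\eps$, with no factor $\eps^{d-1}$. You also claim bad-region Lebesgue measure $\sim\eps^{(d-1)/2}$ per pair; the intersection $(d-2)$-sphere has radius $\sim\sqrt{\eps}$, hence $(d-2)$-dimensional measure $\sim\eps^{(d-2)/2}$, and the elevated region is a tubular neighborhood of thickness $\sim\sqrt{\eps}$ in \emph{two} normal directions, giving volume $\sim\eps^{d/2}$, which is the paper's $\delta_0^d$ with $\delta_0\sim\sqrt\xi$. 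If you take your two stated scalings at face value and compute $\E\of{\tilde M^\beta}\sim\int_0\eps^{(d-1)/2}\eps^{-\beta/2}\eps^{d-1}\,d\eps$, you obtain the threshold $\beta<3d+1$, not $\beta<d+2$; similarly your claimed tail $\P(\tilde M>M_0)\lesssim M_0^{-(d+2)}$ does not follow from them (they yield $M_0^{-(3d-1)}$). With the corrected density $\sim d\eps$ and volume $\sim\eps^{d/2}$, one gets $\int_0\eps^{(d-\beta)/2}\,d\eps$, finite iff $\beta<d+2$, matching the paper's dyadic-shell computation with $\delta_0^d(1+M)^\beta\lesssim\xi^{(d-\beta)/2}$.
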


\begin{rem}
We observe that the union of balls has better properties than the
complement.
\end{rem}

\begin{proof}
We study only $\bP\left(\omega\right):=\bigcup_{i}B_{i}(\omega)$
since $\Rd\backslash\overline{\bigcup_{i}B_{i}}(\omega)$ is the complement
sharing the same boundary. Hence, in case $\bP(\omega)=\Rd\backslash\overline{\bigcup_{i}B_{i}}(\omega)$,
all calculations remain basically the same. However, in the first
case, it is evident that $\alpha=0$ and $n=0$ because the geometry
has only cusps and no dendrites and we refer to Remarks \ref{rem:ext-order}
and \ref{rem:ext-order-sym}. 

In what follows, we use that the distribution of balls is mutually
independent. That means, given a ball around $x_{i}\in\X_{\pois}$,
the set $\X_{\pois}\backslash\left\{ x_{i}\right\} $ is also a Poisson
process. W.l.o.g. , we assume $x_{i}=x_{0}=0$ with $B_{0}:=\Ball 10$.
First we note that $p\in\partial B_{0}\cap\partial\bP$ if and only
if $p\in\partial B_{0}\backslash\bP$, which holds with probability
$\P_{0}\of{\Ball 1p}=\P_{0}\of{B_{0}}$. This is a fixed quantity,
independent from $p$.

Now assuming $p\in\partial B_{0}\backslash\bP$, the distance to the
closest ball besides $B_{0}$ is denoted 
\[
r(p)=\dist(p,\partial\bP\backslash\partial B_{0})
\]
with a probability distribution 
\[
\P_{\dist}(r):=\P_{0}\of{\Ball{1+r}p}/\P_{0}\of{\Ball 1p}\,.
\]
It is important to observe that $\partial B_{0}$ is $r$-regular
in the sense of Lemma \ref{lem:eta-lipschitz}. Another important
feature in view of Lemma \ref{lem:properties-delta-M-regular} is
$r(p)<\Delta(p)$. In particular, $\delta(p)>\frac{1}{2}r(p)$ and
$\partial B_{0}$ is $(\delta,1)$-regular in case $\delta<\sqrt{\frac{1}{2}}$.
Hence, in what follows, we will derive estimates on $r^{-\gamma}$,
which immediately imply estimates on $\delta^{-\gamma}$.

\textbf{Estimate on $\gamma$:} A lower estimate for the distribution
of $r(p)$ is given by 
\begin{equation}
\P_{\dist}(r):=\P_{0}\of{\Ball{1+r}p}/\P_{0}\of{\Ball 1p}\approx1-\lambda\left|\S^{d-1}\right|r\,.\label{eq:PP-lower-estim-delta}
\end{equation}
This implies that almost surely for $\gamma<1$
\[
\limsup_{n\to\infty}\frac{1}{(2n)^{d}}\int_{(-n,n)^{d}\cap\partial\bP}r(p)^{-\gamma}\,\d\cH^{d-1}(p)<\infty\,,
\]
i.e. $\E\of{\delta^{-\gamma}}<\infty$.

\textbf{Intersecting balls:} Now assume there exists $x_{i}$, $i\neq0$
such that $p\in\partial B_{i}\cap\partial B_{0}$. W.l.o.g. assume
$x_{i}=x_{1}:=(2x,0,\dots,0)$ and $p=\left(\sqrt{1-x^{2}},0,\dots,0\right)$.
Then 
\[
\delta(p)\leq\delta_{0}(p):=2\sqrt{1-x^{2}}
\]
 and $p$ is at least $M(p)=\frac{x}{\sqrt{1-x^{2}}}$-regular. Again,
a lower estimate for the probability of $r$ is given by (\ref{eq:PP-lower-estim-delta})
on the interval $(0,\delta_{0})$. Above this value, the probability
is approximately given by $\lambda\left|\S^{d-1}\right|\delta_{0}$
(for small $\delta_{0}$i.e.~$x\approx1$). We introduce as a new
variable $\xi=1-x$ and obtain from $1-x^{2}=\xi(1+x)$ that 
\begin{equation}
\delta_{0}\leq C\xi^{\frac{1}{2}}\quad\text{and}\quad M(p)\leq C\xi^{-\frac{1}{2}}\,.\label{eq:delta-0-xi}
\end{equation}

\textbf{No touching:} At this point, we observe that $M$ is almost
surely locally finite. Otherwise, we would have $x=1$ and for every
$\eps>0$ we had $x_{1}\in\Ball{2+\eps}{x_{0}}\backslash\Ball{2-\eps}{x_{0}}$.
But 
\[
\P_{0}\of{\Ball{2+\eps}{x_{0}}\backslash\Ball{2-\eps}{x_{0}}}\approx1-\lambda2\left|\S^{d-1}\right|\eps\;\to\;1\qquad\text{as }\eps\to0\,.
\]
Therefore, the probability that two balls ``touch'' (i.e. that $x=1$)
is zero. The almost sure local boundedness of $M$ now follows from
the countable number of balls.

\textbf{Extension to $\tilde{\delta}$:} We again study each ball
separately. Let $p\in\partial B_{0}\backslash\overline{\bP}$ with
tangent space $T_{p}$ and normal space $N_{p}$. Let $x\in N_{p}$
and $\tilde{p}\in\partial B_{0}$ such that $x\in\Ball{\frac{1}{8}\delta(\tilde{p})}{\tilde{p}}$,
then also $p\in\Ball{\frac{1}{8}\delta(\tilde{p})}{\tilde{p}}$ and
$\delta(p)\in(\frac{7}{8},\frac{7}{6})\delta(\tilde{p})$ and $\delta(\tilde{p})\in(\frac{7}{8},\frac{7}{6})\delta(p)$
by Lemma \ref{lem:eta-lipschitz}. Defining 
\[
\tilde{\delta}_{i}\of x:=\min\left\{ \delta\of{\tilde{x}}\,:\;\tilde{x}\in\partial B_{i}\backslash\bP\,\text{s.t. }x\in\Ball{\frac{1}{8}\delta\of{\tilde{x}}}{\tilde{x}}\right\} \,,
\]
we find 
\[
\tilde{\delta}^{-\gamma}\leq\sum_{i}\chi_{\tilde{\delta}_{i}>0}\tilde{\delta}_{i}^{-\gamma}\,.
\]
Studying $\delta_{0}$ on $\partial B_{0}$ we can assume $M\leq M_{0}$
in (\ref{eq:lem:delta-tilde-construction-estimate-1}) and we find
\[
\int_{\bP}\chi_{\tilde{\delta}_{0}>0}\tilde{\delta}_{0}^{-\gamma-1}\leq C\int_{\partial B_{0}\backslash\bP}\delta^{-\gamma}\,.
\]
Hence we find
\[
\int_{\bP}\tilde{\delta}^{-\gamma-1}\leq\sum_{i}\int_{\bP}\chi_{\tilde{\delta}_{i}>0}\tilde{\delta}_{i}^{-\gamma-1}\leq\sum_{i}C\int_{\partial B_{i}\backslash\bP}\delta^{-\gamma}\,.
\]

\textbf{Estimate on $\beta$:} For two points $x_{i},x_{j}\in\X_{\pois}$
let $\mathrm{Circ}_{ij}:=\partial B_{i}\cap\partial B_{j}$ and $\Ball{\frac{1}{8}\tilde{\delta}}{\mathrm{Circ}_{ij}}:=\bigcup_{p\in\mathrm{Circ}_{ij}}\Ball{\frac{1}{8}\tilde{\delta}(p)}p$.
For the fixed ball $B_{i}=B_{0}$ we write $\mathrm{Circ}_{0j}$ and
obtain $\left|\mathrm{Circ}_{0j}\right|\leq C\delta_{0}^{d}$ with
$\delta_{0}$ from (\ref{eq:delta-0-xi}). Therefore, we find 
\[
\int_{\mathrm{Circ}_{0j}}(1+M(p))^{\beta}\leq\delta_{0}^{d}(1+M(p))^{\beta}\leq C\xi^{-\frac{1}{2}(\beta-d)}\,.
\]

We now derive an estimate for $\E\of{\int_{\Ball{1+\fr}0}\tilde{M}^{\beta}}.$
To this aim, let $q\in(0,1)$. Then $x\in\Ball{2-q^{k+1}}0\backslash\Ball{2-q^{k}}0$
implies $\xi\geq q^{k+1}$ and 
\begin{align*}
\int_{\Ball{1+\fr}0}\tilde{M}^{\beta} & \leq C+\sum_{k=1}^{\infty}\sum_{x_{j}\in\Ball{2-q^{k+1}}0\backslash\Ball{2-q^{k}}0}\int_{\mathrm{Circ}_{0j}}(1+M(p))^{\beta}\\
 & \leq C+\sum_{k=1}^{\infty}\sum_{x_{j}\in\Ball{2-q^{k+1}}0\backslash\Ball{2-q^{k}}0}C\left(q^{k+1}\right)^{-\frac{1}{2}(\beta-d)}
\end{align*}
The only random quantity in the latter expression is $\#\left\{ x_{j}\in\Ball{2-q^{k+1}}0\backslash\Ball{2-q^{k}}0\right\} $.
Therefore, we obtain with $\E\of{\X(A)}=\lambda\left|A\right|$ that
\begin{align*}
\E\of{\int_{\Ball{1+\fr}0}\tilde{M}^{\beta}} & \leq C\left(1+\sum_{k=1}^{\infty}\left(q^{k}-q^{k+1}\right)\left(q^{k+1}\right)^{-\frac{1}{2}(\beta-d)}\right)\\
 & \leq C\left(1+\sum_{k=1}^{\infty}\left(q^{k}\right)^{-\frac{1}{2}(\beta-d-2)}\right)\,.
\end{align*}
Since the point process has finite intensity, this property carries
over to the whole ball process and we obtain the condition $\beta<d+2$
in order for the right hand side to remain bounded.

\textbf{Estimate on $\hat{d}$:} We have to estimate the local maximum
number of $A_{3,k}$ overlapping in a single point in terms of $\tilde{M}$.
We first recall that $\hat{\rho}(p)\approx8\tilde{M}(p)\tilde{\rho}(p)$.
Thus large discrepancy between $\hat{\rho}$ and $\tilde{\rho}$ occurs
in points where $\tilde{M}$ is large. This is at the intersection
of at least two balls. Despite these ``cusps'', the set $\partial\bP$
consists locally on the order of $\hat{\rho}$ of almost flat parts.
Arguing like in Lemma \ref{lem:properties-local-rho-convering} resp.
Remark \ref{rem:lem:properties-local-rho-convering} this yields $\hat{d}\leq d-1$.
\end{proof}
It remains to verify bounded average connectivity of the Boolean set
$\bP_{\infty}$ or its complement. Associated with the connected component
$\X_{\pois,\infty}$ there is a graph distance 
\[
\forall x,y\in\X_{\pois,\infty}\quad d(x,y):=\inf\left\{ l(\gamma):\,\gamma\text{ path in }\X_{\pois,\infty}\text{ from }x\text{ to }y\right\} \,.
\]
Using this distance, we shall rely on the following concept. 
\begin{defn}[Statistical Strech Factor]
For $x\in\X_{\pois,\infty}$ and $R>\fr$ we denote 
\[
S(x,R):=\max_{\substack{y\in\X_{\pois,\infty}\cap\Ball Rx}
}\frac{d(x,y)}{R}\,,\qquad S(x):=\sup_{R>\fr}S(x,R)\,,
\]
the statistical local strech factor $S(x,R)$ and statistical (global)
strech factor $S(x)$.
\end{defn}

\begin{lem}
\label{lem:strech-factor-boolean}There exists $S_{0}>1$ depending
only on $d$ and $\lambda$ such that for $x\in\X_{\pois,\infty}$
it holds 
\[
\forall S>S_{0}:\qquad\P\of{S(x)>S}\leq\frac{2\mu}{\nu}e^{-\frac{\nu}{2\mu}S}\,.
\]
\end{lem}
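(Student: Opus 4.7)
The plan is to reduce the claim to a standard Antal--Pisztora style large deviation estimate for the chemical distance inside the infinite cluster of the supercritical Poisson--Boolean model, and then to dispose of the supremum over $R$ by a dyadic union bound.

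The main input I will invoke (and which identifies the constants $\mu$ and $\nu$ in the statement) reads: there exist $\mu,\nu>0$ depending only on $d$ and $\lambda$ such that for any $x,y$ that jointly belong to $\X_{\pois,\infty}$,
\[
\P\bigl(d(x,y)>\mu|x-y|\bigm|x,y\in\X_{\pois,\infty}\bigr)\leq e^{-\nu|x-y|}\,,
\]
valid for $|x-y|\geq r_0(\lambda,d)$. This is the continuum counterpart of the classical chemical-distance estimate for supercritical Bernoulli percolation and is established by a renormalization argument: partition $\Rd$ into cubes of a mesoscopic side $L$, declare a cube \emph{good} if its intersection with $\X_{\pois,\infty}$ contains a subcluster that crosses the cube in every axis direction and that links up with the analogous subclusters in each of the $2d$ neighbouring good cubes; verify that for $L$ large enough (using $\lambda>\lambda_c$) the process of good cubes stochastically dominates a supercritical Bernoulli site percolation on $\Zd$; and transfer the exponential chemical-distance bound from the renormalized lattice to the Boolean model, paying only a multiplicative factor proportional to $L$.

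Given this input I proceed in two steps. For a fixed scale $R>\fr$, a union bound over the points $y\in\X_{\pois,\infty}\cap\Ball{R}{x}$, combined with the Mecke/Campbell formula to handle the conditioning on $x\in\X_{\pois,\infty}$, yields for $SR\geq r_0$
\[
\P\bigl(S(x,R)>S\bigm|x\in\X_{\pois,\infty}\bigr)\leq\E\,\#\bigl(\X_{\pois}\cap\Ball{R}{x}\bigr)\,e^{-\nu SR}\leq c_{d,\lambda}\,R^{d}\,e^{-\nu SR}\,.
\]
Next, I decompose $[\fr,\infty)$ into dyadic scales $R_{k}=2^{k}\fr$, $k\geq0$, and observe the trivial monotonicity $S(x,R)\leq 2\,S(x,R_{k+1})$ for $R\in[R_{k},R_{k+1}]$, which yields
\[
\P\bigl(S(x)>2S\bigm|x\in\X_{\pois,\infty}\bigr)\leq\sum_{k\geq0}c_{d,\lambda}\,R_{k}^{d}\,e^{-\nu S R_{k}}\,.
\]
For $S$ larger than a threshold $S_{0}=S_{0}(d,\lambda,\fr)$ the exponential factor $e^{-\nu S\fr\,2^{k}}$ dominates the polynomial $R_{k}^{d}$ and the series is bounded by a geometric sum whose leading term is of order $e^{-\nu S\fr}$; after adjusting constants this produces the stated bound $\tfrac{2\mu}{\nu}e^{-\nu S/(2\mu)}$.

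The main obstacle is securing the Antal--Pisztora input in the continuum Boolean setting: while the renormalization scheme just sketched is by now standard, carrying it out rigorously requires care in defining the good blocks and in ensuring the almost-sure matching between the microscopic infinite cluster $\X_{\pois,\infty}$ and the renormalized one, and this is where all the dependence on $\lambda-\lambda_{c}$ is hidden. Once this is settled, the union bound over points in $\Ball{R}{x}$ and the dyadic sum over $R$ are both routine.
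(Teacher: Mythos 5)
The high-level strategy you adopt (a large-deviation estimate for the chemical distance plus a sum over scales to handle the supremum in $R$) is the same as the paper's, which invokes the shape theorem of Yao, Chen and Guo \cite{yao2011large} (Theorem~\ref{thm:large-dev-pois}) and then sums over integer scales $k\in\N$. Your dyadic decomposition and the monotonicity observation $S(x,R)\leq 2S(x,R_{k+1})$ for $R\in[R_k,R_{k+1}]$ are fine.

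However, there is a real gap in your union-bound step. You state the Antal--Pisztora input in the \emph{weak} two-point form
\[
\P\bigl(d(x,y)>\mu|x-y|\bigm|x,y\in\X_{\pois,\infty}\bigr)\leq e^{-\nu|x-y|}\,,
\]
but then assert that a union bound over $y\in\X_{\pois,\infty}\cap\Ball{R}{x}$ produces $c_{d,\lambda}R^d e^{-\nu SR}$. This does not follow. The event $\{S(x,R)>S\}$ requires $d(x,y)>SR$ for some $y$ with $|x-y|\leq R$. For $S>\mu$ this does imply $d(x,y)>\mu|x-y|$, but the bound you obtain per point is $e^{-\nu|x-y|}$, which is close to $1$ for $y$ near $x$ and does \emph{not} decay in $S$; summed over the ball it is of order $\lambda\int_{\Ball{R}{x}}e^{-\nu|x-y|}\,\d y\asymp\lambda\nu^{-d}$, a constant. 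To make the union bound work you need the \emph{strong} form $\P(d(x,y)>t\mid x,y\in\X_{\pois,\infty})\leq e^{-\nu' t}$ for $t\geq\mu|x-y|$ and $t$ large, i.e.\ decay in the chemical distance itself rather than in the Euclidean distance. That strong form does hold in the supercritical Boolean model (it is what the renormalization argument actually delivers), but it is strictly more than what you wrote down. The paper sidesteps the issue entirely: Theorem~\ref{thm:large-dev-pois} is already stated as a bound on $\P(S(0,k)>\mu)$ --- that is, on the \emph{maximum} stretch over the whole ball of radius $k$ --- so no union bound over points is needed, and the exponential decay in $S$ is then extracted by the purely deterministic scaling inclusion $\{S(0,k)>\alpha\mu\}\subset\{S(0,\alpha k)>\mu\}$ with $\alpha=S/(2\mu)$. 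You should either upgrade your input to the strong two-point form and justify the union bound from that, or (more efficiently) cite the shape-theorem form directly as the paper does.
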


In order to prove this, we will need the following large deviation
result.
\begin{thm}[{Shape Theorem \cite[Thm 2.2]{yao2011large}}]
\label{thm:large-dev-pois}Let $\lambda>\lambda_{c}$. Then there
exist positive contants $\mu$, $\nu$ and $k_{0}$ such that the
following holds: For every $k>k_{0}$
\[
\P\of{S(0,k)>\mu}\leq e^{-\nu k}\,.
\]
\end{thm}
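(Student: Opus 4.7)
The plan is to employ a renormalization (block) argument to reduce the statement to a known large-deviation bound for the chemical distance in supercritical Bernoulli percolation on $\Zd$. First I would tile $\Rd$ by cubes $Q_{z} := Lz + [0,L)^{d}$ indexed by $z \in \Zd$, where the side length $L$ will be chosen large. I would declare a cube $Q_{z}$ to be \emph{good} if: (i) $\bP_{\infty}(\omega) \cap Q_{z}$ contains a unique crossing cluster that connects in $\bP_{\infty}$ to the analogous crossing clusters in each of the $2d$ neighboring cubes $Q_{z \pm \be_{i}}$; and (ii) any two points of $\X_{\pois,\infty}$ lying in this crossing cluster of $Q_{z}$ can be joined by a path in $\X_{\pois,\infty}$ of graph length at most $C_{1} L$, for a fixed constant $C_{1}$. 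Standard supercritical Boolean/continuum percolation estimates (Penrose--Pisztora type, continuum analogues of Grimmett--Marstrand) guarantee that for $\lambda > \lambda_{c}$ the probability that $Q_{z}$ is good tends to $1$ as $L \to \infty$, uniformly in $z$.

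Next I would promote $\{Q_{z} \text{ good}\}_{z \in \Zd}$ to a site process on $\Zd$. These events are not independent, but they depend only on the Poisson configuration inside the enlarged cubes $\Ball{1}{Q_{z}}$ and are therefore $k$-dependent with range $k = k(L)$ that is bounded in $L$. By the Liggett--Schonmann--Stacey domination theorem, this $k$-dependent field stochastically dominates an i.i.d.\ Bernoulli site process on $\Zd$ whose parameter $p(L)$ tends to $1$ as $L \to \infty$. Choosing $L$ large enough makes $p(L)$ exceed the threshold required for the chemical-distance shape theorem in supercritical Bernoulli site percolation.

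Third I would invoke the Antal--Pisztora large-deviation bound: for $p$ sufficiently close to $1$ there exist constants $\mu_{0}, \nu_{0}$ such that the chemical distance $D(0,z)$ in the infinite cluster of good boxes satisfies $\P\of{D(0,z) > \mu_{0} \left|z\right|} \leq e^{-\nu_{0} \left|z\right|}$ for all $\left|z\right|$ large. Combining this with the fact that every traversal of a single good box contributes at most $C_{1} L$ to the graph distance in $\X_{\pois,\infty}$, and that $y \in \Ball{k}{0}$ corresponds to a target box at $\Zd$-distance $\lesssim k/L$ from the origin box, yields $d(0,y) \leq (C_{1} \mu_{0}) k$ off an event of probability at most $e^{-(\nu_{0}/L) k}$ for all $k$ beyond some $k_{0}$. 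Setting $\mu := C_{1}\mu_{0}$ and $\nu := \nu_{0}/L$, a union bound over the $\lesssim k^{d}$ possible target boxes inside $\Ball{k}{0}$ is absorbed into the exponential by further enlarging $k_{0}$, giving the claimed $\P\of{S(0,k) > \mu} \leq e^{-\nu k}$.

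The main obstacle is the first step: establishing that the probability of a cube being good tends to $1$ as $L \to \infty$. The connectivity clause (i) is handled by uniqueness of the infinite cluster (Burton--Keane in the continuum) together with sharpness of the phase transition, which force finite clusters to have exponentially decaying diameter in the supercritical regime. The intrinsic-diameter clause (ii), which is what actually couples the Bernoulli chemical distance to the continuum graph distance $d(\cdot,\cdot)$, is the delicate point: one must rule out large ``detours'' inside $Q_{z}$, i.e.\ pairs of points close in Euclidean distance but far in $\X_{\pois,\infty}$-graph distance. This is done by iterating the renormalization at a second, finer scale and using the exponential decay of finite-cluster radii to bound the probability of such detours, which is precisely the content of Theorem 2.2 of \cite{yao2011large}.
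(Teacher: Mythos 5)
The paper does not prove this statement at all: it is imported verbatim as Theorem 2.2 of \cite{yao2011large}, and the ``proof'' in the paper is the citation itself. So the relevant question is whether your sketch would stand as an independent proof, and it does not, because it is circular. Your first three steps lay out the standard renormalization route (good boxes, $k$-dependence, Liggett--Schonmann--Stacey domination, Antal--Pisztora chemical-distance bounds), which is indeed the strategy used in the continuum literature. But in your final paragraph you concede that the decisive ingredient --- clause (ii) of goodness, i.e.\ that Euclidean proximity inside a box forces bounded graph distance in $\X_{\pois,\infty}$, which is exactly what couples the renormalized chemical distance to $d(\cdot,\cdot)$ --- ``is precisely the content of Theorem 2.2 of \cite{yao2011large}''. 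That is the theorem you are proving. Everything before that point only reduces the claim to this coupling step; deferring the coupling step to the target theorem leaves the argument with no content at the one place where content is needed.

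If you wanted to make the sketch self-contained, you would have to actually carry out the second-scale renormalization you allude to: show that in the supercritical Boolean model the probability that two points of the crossing cluster of $Q_{z}$ are at intrinsic distance $> C_{1}L$ decays in $L$, using exponential decay of the radii of finite clusters and a Peierls-type bound on detours, and only then feed the resulting good-box density into the domination and Antal--Pisztora machinery. Two smaller points would also need attention: the event $\left\{ S(0,k)>\mu\right\} $ is a supremum over the (Poisson-many) points $y\in\X_{\pois,\infty}\cap\Ball k0$, so the union bound must also absorb the fluctuations of that cardinality, and the whole statement is implicitly under the Palm conditioning $0\in\X_{\pois,\infty}$, which must be tracked through the block construction. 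None of these is fatal, but as written the proposal is a roadmap that terminates in the statement it set out to prove.
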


\begin{proof}[Proof of Lemma \ref{lem:strech-factor-boolean}]
We have 
\begin{align*}
S(0,k) & >\alpha\mu & \qquad & \Leftrightarrow\qquad & \exists x,y\in\Ball k0:\quad d(x,y) & \geq\alpha\mu k\,,\\
S(0,\alpha k) & >\mu & \qquad & \Leftrightarrow\qquad & \exists x,y\in\Ball{\alpha k}0:\quad d(x,y) & \geq\alpha\mu k\,,
\end{align*}
i.e. 
\[
\P\of{S(0,k)>\alpha\mu}\leq\P\of{S(0,\alpha k)>\mu}\leq e^{-\frac{\nu}{\mu}(\alpha\mu)k}\,.
\]
One quickly verifies for $k\in\N$ that $S(0,k)\leq S$ and $S(0,k+1)\leq S$
implies $S(0,k+r)\leq2S$ for all $r\in(0,1)$. Hence we find 
\[
\P\of{S(x)>S}\leq\sum_{k\in\N}\P\of{S(0,k)>\frac{S}{2}}\leq\sum_{k\in\N}e^{-\frac{\nu}{2\mu}Sk}\leq\frac{2\mu}{\nu}e^{-\frac{\nu}{2\mu}S}\,.
\]
\end{proof}
While the choice of the points $\left(p_{i}\right)_{i\in\N}\subset\partial\bP$
is clearly specified in Section \ref{subsec:5-Preliminaries}, there
is lots of room in the choice and construction of $\X_{\fr}$. In
what follows, we choose $\X_{\fr}$ in the form (\ref{eq:def-X_r}).
Then we find the following:
\begin{thm}
\label{thm:boolean-final}Under the above assumptions on the construction
of $\bP_{\infty}$, as well as $p>d$ and using the notation of Lemma
\ref{lem:6-4}, for every $1\leq r<s<p$ there almost surely exists
$C>0$ such that for every $n\in\N$ and every $u\in W_{0,\partial(n\bQ)}^{1,p}(\bP_{\infty}\cap n\bQ)$
\begin{multline*}
\left(\frac{1}{\left|n\bQ\right|}\int_{\bP_{\infty}\cap n\bQ}\sum_{k:\,\partial_{l}\Phi_{k}>0}\sum_{j:\,\partial_{l}\Phi_{j}<0}\frac{d_{j}^{-\tilde{\alpha}\frac{s}{r}}\left|\partial_{l}\Phi_{j}\right|}{D_{l+}^{\Phi}}\left|\cM_{k}u-\cM_{j}u\right|^{s}\right)^{\frac{r}{s}}+F_{s}^{3}(n\bQ,u)\\
\leq C\left(\frac{1}{\left|n\bQ\right|}\int_{\bP_{\infty}\cap n\bQ}\left|\nabla u\right|^{p}\right)^{\frac{r}{p}}\,,
\end{multline*}
and for every $u\in\bW_{0,\partial(n\bQ)}^{1,p}(\tilde{\bP}\cap n\bQ)$
\begin{multline*}
\left(\frac{1}{\left|n\bQ\right|}\int_{\bP_{\infty}\cap n\bQ}\sum_{k:\,\partial_{l}\Phi_{k}>0}\sum_{j:\,\partial_{l}\Phi_{j}<0}\frac{d_{j}^{-\tilde{\alpha}\frac{s}{r}}\left|\partial_{l}\Phi_{j}\right|}{D_{l+}^{\Phi}}\left|\cM_{k}^{\fs}u-\cM_{j}^{\fs}u\right|^{s}\right)^{\frac{r}{s}}+F_{s}^{3,\fs}(n\bQ,u)\\
\leq C\left(\frac{1}{\left|n\bQ\right|}\int_{\bP_{\infty}\cap n\bQ}\left|\nablas u\right|^{p}\right)^{\frac{r}{p}}\,.
\end{multline*}
\end{thm}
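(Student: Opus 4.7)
The plan is to follow the same four-step scheme as in the proof of Theorem~\ref{thm:Delaunay-final}, the only new ingredient being that the explicit pipe-connectivity of the Delaunay model is replaced by a chain of overlapping Poisson balls whose length is controlled by the statistical stretch factor of Lemma~\ref{lem:strech-factor-boolean}. First I would split each difference $|\cM_a u - \cM_b u|$ (resp.\ $|\cM_a^{\fs} u - \cM_b^{\fs} u|$) for a weak neighboring pair $x_a \sim\sim x_b$ in $\X_{\fr}$ into a \emph{local} piece $|\tau_i u - \cM_{a_i} u|$ comparing the microscopic average at a boundary point $p_i \in A_{1,i}$ with the average at the center $x_{a_i} \in \X_{\fr}$ sitting in the same Poisson ball, and a \emph{mesoscopic} piece $|\cM_{a_i} u - \cM_a u|$. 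The local piece lives inside a single Poisson ball of radius one and is handled by the Poincaré inequality \eqref{eq:lem:Poincare-ball-1}, resp.\ its Korn--Poincaré analogue \eqref{eq:Korn-all-average-zero}, exactly as in Step~4 of the proof of Theorem~\ref{thm:Delaunay-final}; it produces an integrand whose stationary expectation is finite by Lemma~\ref{lem:estim-E-fa-fb} together with the exponential decay of the Voronoi diameter $d_a$ from Lemma~\ref{lem:Iso-cone-geo-estimate}.

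The new ingredient is the mesoscopic piece. Since both $x_a$ and $x_b$ lie in $\bP_{\infty}$, each of them sits in some Poisson ball $B_{i(a)},\, B_{i(b)}$ with $x_{i(a)}, x_{i(b)} \in \X_{\pois,\infty}$. By Lemma~\ref{lem:strech-factor-boolean} there exists a graph path $(x_{i(a)} = y_0, y_1, \dots, y_N = x_{i(b)})$ in $\X_{\pois,\infty}$ with consecutive points at distance at most $2$, so that $B_{y_k} \cap B_{y_{k+1}}$ contains a ball of uniform radius $c_d > 0$, and with $N \leq S(x_{i(a)})\, |x_{i(a)} - x_{i(b)}| \leq C\, S(x_a)\, d_a$. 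Applying the Korn--Poincaré estimate \eqref{eq:lem:Mixed-Korn-zylinder-4} on $\conv(B_{y_k} \cup B_{y_{k+1}})$ for each consecutive pair and telescoping yields
\[
|\cM_a^{\fs} u - \cM_b^{\fs} u|^{s}(x) \leq C\, N^{s-1}\, |x - x_a|^{s} \sum_{k=0}^{N-1} \int_{\conv(B_{y_k} \cup B_{y_{k+1}})} |\nabla^{s} u|^{s},
\]
together with the analogous but simpler estimate coming from \eqref{eq:cor:Poincare-zylinder-2} in the non-symmetric case, where the affine correction $|x - x_a|^{s}$ is absent.

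Next I would substitute these pointwise bounds into the weighted double sums on the left-hand side of the theorem and, following Steps~2--3 of the proof of Theorem~\ref{thm:Delaunay-final}, apply Hölder's inequality in $L^{p/s}$ against $L^{p/(p-s)}$ to separate $|\nabla u|^{p}$ (resp.\ $|\nabla^{s} u|^{p}$) from a purely geometric jointly stationary weight of the schematic form
\[
g(\omega) \;=\; \sum_{a} \chi_{\fA_{1,a}}\, d_{a}^{\,A}\, S(x_a)^{B},
\]
and then invoke Theorem~\ref{thm:general-lebesgue-ergodic-thm} to pass to the almost-sure limit on $n \bQ$. The enlargement from $n\bQ$ to $\Ball{\fr}{n\bQ}$ is handled as in the Delaunay case by Theorem~\ref{thm:support} combined with the exponential mesoscopic regularity of the Boolean model.

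The main obstacle is verifying $\E\bigl(g^{p/(p-s)}\bigr) < \infty$, which is precisely what dictates the exponent thresholds of Theorem~\ref{thm:main-Boolean}. Since $S$ has exponential tails by Lemma~\ref{lem:strech-factor-boolean}, $d_a$ has exponential tails by Lemma~\ref{lem:Iso-cone-geo-estimate}, and the two are essentially independent of the local Lipschitz quantities, the connectivity factors themselves yield convergent sums via Lemma~\ref{lem:estim-E-fa-fb}. The binding constraint will instead be the matching of the accumulated symmetric-case factor $|x_a - x_b|^{2s}/\delta^{s+d}$ from \eqref{eq:lem:Mixed-Korn-zylinder-4} against the polynomial integrability $\E(\tilde M_{[0]}^{\beta}) < \infty$ for $\beta < d+2$ and $\E(\tilde\delta^{-\gamma-1}) < \infty$ for $\gamma < 1$ provided by Theorem~\ref{thm:boolean-delta-M-distrib}; this is what forces $r < (d+2)/2$ in the symmetric statement and $r < d+2$ together with $\tfrac{pr}{p-r} < 2$ in the non-symmetric one. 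All remaining book-keeping is identical to that of Theorem~\ref{thm:Delaunay-final} and introduces no further conceptual difficulty.
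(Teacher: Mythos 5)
Your overall scheme (split each difference into a local piece inside a single Poisson ball and a mesoscopic piece chained along a path in $\X_{\pois,\infty}$, control the path length via the stretch factor of Lemma \ref{lem:strech-factor-boolean}, then separate the geometric weight from $|\nabla u|^p$ by H\"older and pass to the limit via the ergodic theorem) is the same as the paper's, which splits $|\tau_i^{\fs} u-\cM_a^{\fs} u|$ into $I_1$, $I_2$, $I_3$ via auxiliary averages $\cM^{\fs}_{y}$ centered at Poisson points.

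However, there is a genuine gap in the chain step. You assert that consecutive balls satisfy that $B_{y_k}\cap B_{y_{k+1}}$ contains a ball of uniform radius $c_d>0$, and you propose applying \eqref{eq:lem:Mixed-Korn-zylinder-4} on $\conv(B_{y_k}\cup B_{y_{k+1}})$. Both of these fail for the Boolean model. When $|y_k-y_{k+1}|$ is close to $2$ the lens $B_{y_k}\cap B_{y_{k+1}}$ becomes arbitrarily thin, so no uniform inscribed ball exists; and $\conv(B_{y_k}\cup B_{y_{k+1}})$ is \emph{not} a subset of $\bP_\infty$, so you cannot integrate $|\nablas u|$ over it --- the admissible domain is the non-convex hourglass $B_{y_k}\cup B_{y_{k+1}}$. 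This non-uniformity of the neck is precisely what makes the Boolean model escape the classical minimally-smooth framework and is the crux of the section. The paper resolves it with Lemma \ref{lem:thm:poisson-final-help}: a dyadic chain of averages $\cM^{\fs,\fr 2^{-(n-|j|)}}_{\alpha_j\be_1}$ with ball radii shrinking geometrically towards the neck of half-width $\delta$, each consecutive step controlled by \eqref{eq:lem:Mixed-Korn-zylinder-4} and \eqref{eq:lem:Mixed-Korn-zylinder-5} on domains that actually fit inside $\bP_\infty$. This produces the weight $f$ with $\int|f|^{sp/(p-s)}\lesssim\delta^{s(p-d)/(p-s)-1}$, and it is exactly this exponent (not the matching against $\E(\tilde M_{[0]}^{\beta})$ you propose) that forces the hypothesis $p>d$ and, after the final H\"older steps, the constraints $\tfrac{pr}{p-r}<2$ and $r<d+2$ (resp.\ $r<(d+2)/2$). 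Without a multi-scale construction of this sort, your telescoping bound breaks at every neck with a non-integrable singularity, so the proof as written does not go through.

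A secondary but related inaccuracy: in the non-symmetric case you invoke \eqref{eq:cor:Poincare-zylinder-2} and in the symmetric case \eqref{eq:lem:Mixed-Korn-zylinder-4} directly, but the exponents you would obtain this way do not reproduce the conditions of Theorem \ref{thm:main-Boolean}. The bookkeeping is not \textquotedblleft identical to that of Theorem \ref{thm:Delaunay-final}\textquotedblright{} precisely because the neck geometry requires the extra dyadic layer; making that explicit is the content that cannot be skipped.
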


\begin{lem}
\label{lem:X-pois-covery-finite}Let $\X_{\pois}$ be a Poisson point
process with finite intensity. Generate a Voronoi tessellation from
$\X_{\pois}$ and for each $x_{a}\in\X_{\pois}$ let $d_{a}$ be the
diameter of the corresponding Voronoi cell. Then for each $n\in\N$
the following function has finite expectation
\[
f_{n}:=\sum_{a}\chi_{\Ball{nd_{a}}{x_{a}}}\,.
\]
 
\end{lem}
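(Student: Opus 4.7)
The plan is to prove pointwise finiteness of $\E(f_n(x))$ by combining stationarity, Palm theory (Slivnyak's theorem) and an exponential tail estimate on the diameter of a typical Voronoi cell of $\X_{\pois}$.

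First I would observe that $f_n$ is stationary jointly with $\X_{\pois}$, so $\E(f_n(x))$ is a constant $c_n$ and it suffices to compute $c_n=\E(f_n(0))$. By the Campbell--Mecke formula together with Slivnyak's theorem (the reduced Palm distribution of a Poisson process coincides with its law), I can rewrite
\[
c_n \;=\; \E \sum_{x_a\in\X_{\pois}} \chi_{\{|x_a|<n d_a\}}
\;=\; \lambda \int_{\Rd} \P\bigl(|y|<n\, D(y)\bigr)\,\d y,
\]
where $D(y)$ is the diameter of the Voronoi cell of the point $y$ inside the augmented process $\X_{\pois}\cup\{y\}$. By stationarity, $D(y)$ has the same law as the diameter $D_0$ of the Voronoi cell of $0$ inside $\X_{\pois}\cup\{0\}$, hence
\[
c_n \;=\; \lambda \int_{\Rd} \P(D_0>|y|/n)\,\d y.
\]

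Next I would establish the tail estimate $\P(D_0>R)\leq C e^{-cR^d}$ by a standard cone-covering argument. Fix a finite family of cones $\{\cone_i\}_{i=1}^N$ with apex $0$ and small opening angle $\alpha$ covering $\Rd$, and pick $c_\alpha\in(0,1)$ small enough that whenever $p\in\cone_i\cap\Ball{c_\alpha R}{0}$ and $q\in\cone_i$ with $|q|\geq R/2$, one has $|p-q|<|q|$. If each cone $\cone_i\cap\Ball{c_\alpha R}{0}$ contains at least one point of $\X_{\pois}$, then no point of the Voronoi cell of $0$ in $\X_{\pois}\cup\{0\}$ can lie at distance more than $R/2$ from the origin, so $D_0\leq R$. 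The void probability for a Poisson process then yields
\[
\P(D_0>R) \;\leq\; \sum_{i=1}^N \P\bigl(\X_{\pois}\cap \cone_i\cap\Ball{c_\alpha R}{0}=\emptyset\bigr)
\;\leq\; N e^{-\lambda |\cone_1\cap\Ball{c_\alpha R}{0}|}
\;=\; N e^{-c R^d}.
\]
Substituting this bound in the integral for $c_n$ gives $c_n\leq C\lambda\int_{\Rd} e^{-c(|y|/n)^d}\d y<\infty$, proving $\E(f_n)<\infty$.

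The only real obstacle is the geometric accounting in the cone covering and the correct invocation of the Palm formula; once those are pinned down, integrability of the Gaussian-like radial tail is immediate, and the choice of $N$ and $c_\alpha$ depends only on $d$, so the constant hidden in $C$ depends only on $d$, $\lambda$ and $n$.
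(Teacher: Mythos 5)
Your proof is correct and arrives at the result by a slightly different but equally valid route. The paper conditions on $0\in\X_{\pois}$, bounds $\E(\chi_{\Ball{nd_0}0})(x)$ by an exponentially decaying function of $|x|$ using the tail of $d_0$, and then decomposes $\E(f_n)(0)$ by summing contributions from points in unit cubes $k+[0,1)^d$, $k\in\Zd$, appealing to the exponential concentration of the Poisson count in each cube and summing the resulting geometric series over $k$. You instead invoke the Mecke--Slivnyak identity to write $\E f_n(0)=\lambda\int_{\Rd}\P(D_0>|y|/n)\,\d y$ directly, where $D_0$ is the diameter of the cell of the origin in $\X_{\pois}\cup\{0\}$, and then reduce the tail estimate $\P(D_0>R)\leq Ce^{-cR^d}$ to void probabilities via a cone-covering argument. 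The two approaches rely on the same analytic engine (exponential decay of the cell-diameter tail, which you prove from scratch while the paper cites it), but your Palm-theoretic reduction is more streamlined and avoids the cube-by-cube bookkeeping; the paper's version is more elementary in that it does not explicitly invoke Slivnyak's theorem, though implicitly the ``given the condition $0\in\X_{\pois}$'' step does the same thing. One small point: in the cone-covering step you should state that the opening angle $\alpha$ and the constant $c_\alpha$ are chosen so that $c_\alpha<\cos(2\alpha)$, which guarantees $|p-q|<|q|$ whenever $p,q$ lie in the same cone, $|p|\leq c_\alpha R$ and $|q|\geq R/2$; this is routine and does not affect correctness.
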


Note that this statement is not covered by Lemma \ref{lem:estim-E-fa-fb}
due to the lack of a minimal distance between the points.
\begin{proof}
Given the condition $0\in\X_{\pois}$ we observe 
\[
\E\of{\chi_{\Ball{nd_{0}}0}}(x)\leq\sum_{k=0}^{\infty}\P\of{d_{0}\in[k,k+1)}\chi_{\Ball{k+1}0}(x)\,.
\]
Since $\P\of{d_{0}\in[k,k+1)}\leq e^{-\alpha k}$ for some $\alpha>0$,
we infer 
\[
\E\of{\chi_{\Ball{nd_{0}}0}}\,(x)\leq Ce^{-\alpha\left|x\right|}\,.
\]
From here, we conclude with the exponentially in $N$ decreasing probability
to find more than $N$ points within $[0,1]^{d}$:
\[
\E\of{\sum_{x_{a}\in\X_{\pois}\cap[0,1]^{d}}\chi_{\Ball{nd_{a}}{x_{a}}}}(x)\leq Ce^{-\beta\left|x\right|}\,,
\]
for some $\beta>0$. Summing up over all cubes we infer 
\[
\E(f_{n})(0)\leq C\sum_{k\in\Zd}e^{-\beta\left|x-k\right|}\leq C\sum_{N\in\N}N^{d-1}e^{-\beta N}<\infty\,.
\]
\end{proof}
Similar to the proof of Theorem \ref{thm:Delaunay-final} it will
be necessary to introduce the following quantity for $y\in\X_{\pois,\infty}$
based on (\ref{eq:lem:Mixed-Korn-zylinder-cM}):
\[
\cM_{y}^{\fs}u(x):=\overline{\nabla_{y,\fr}^{\bot}}u\of{x-y}+\fint_{\Ball{\fr}y}u\,.
\]

An important property of $\cM_{y}^{\fs}$ is the following.
\begin{lem}
\label{lem:thm:poisson-final-help}Let $y_{1},y_{2}\in\X_{\pois,\infty}$
with $\left|y_{1}-y_{2}\right|<2$ and $\delta:=\frac{1}{2}\sup\left\{ r:\,\Ball r{\frac{1}{2}\left(y_{1}+y_{2}\right)}\subset\tilde{\bP}\right\} $.
Then there exists $f:\,\Ball 1{\left\{ y_{1},y_{2}\right\} }\to\R$
such that
\begin{align*}
\left|\cM_{y_{1}}^{\fs}u(x)-\cM_{y_{2}}^{\fs}u(x)\right|^{s} & \leq C\norm{f\nablas u}_{L_{\Ball 1{\left\{ y_{1},y_{2}\right\} }}^{s}}^{s}\,,\\
\left|\cM_{y_{1}}u(x)-\cM_{y_{2}}u(x)\right|^{s} & \leq C\norm{f\nablas u}_{L_{\Ball 1{\left\{ y_{1},y_{2}\right\} }}^{s}}^{s}\,,
\end{align*}
and 
\begin{equation}
\int_{\Ball 1{\left\{ y_{1},y_{2}\right\} }}\left|f\right|^{\frac{sp}{p-s}}\leq C\delta^{\frac{s(p-d)}{p-s}-1}\,.\label{eq:lem:thm:poisson-final-help-1}
\end{equation}
Furthermore for some fixed $C>0$ and for every $y\in\X_{\pois,\infty}$
\begin{align}
\int_{\Ball 1y}\sum_{i}\chi_{\Ball{\tilde{\rho}_{i}}{p_{i}}}\left|\tau_{i}^{\fs}u-\cM_{y}^{\fs}u\right|^{s}+\sum_{x_{a}\in\X_{\fr}}\chi_{\Ball{\frac{\fr}{16}}{x_{a}}}\left|\cM_{a}^{\fs}u-\cM_{y}^{\fs}\right|^{s} & \leq C\norm{\nablas u}_{L^{s}(\Ball 1y)}^{s}\,.\label{eq:lem:thm:poisson-final-help-2}\\
\int_{\Ball 1y}\sum_{i}\chi_{\Ball{\tilde{\rho}_{i}}{p_{i}}}\left|\tau_{i}u-\cM_{y}u\right|^{s}+\sum_{x_{a}\in\X_{\fr}}\chi_{\Ball{\frac{\fr}{16}}{x_{a}}}\left|\cM_{a}u-\cM_{y}\right|^{s} & \leq C\norm{\nablas u}_{L^{s}(\Ball 1y)}^{s}\,.
\end{align}
\end{lem}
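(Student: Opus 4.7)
The plan is to isolate the narrow channel connecting $y_1$ and $y_2$ inside $\bP_\infty$ and transfer the difference of averages across it by a three-scale decomposition, in direct analogy with Step~1 of the proof of Theorem~\ref{thm:Delaunay-final}. Because $\Ball{2\delta}{(y_1+y_2)/2}\subset\tilde{\bP}$ and $B_1(y_1)\cap B_1(y_2)\neq\emptyset$, the segment $\overline{y_1y_2}$ admits a cylindrical neighborhood $P_\delta$ of radius $\delta$ contained in $\bP_\infty$, with $|P_\delta|\lesssim\delta^{d-1}$. The symmetric and scalar assertions share this geometric picture; for the symmetric case we use Lemma~\ref{lem:Mixed-Korn-Zylinder} and for the scalar one we use Corollary~\ref{cor:Poincare-zylinder} with essentially identical bookkeeping.

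For the symmetric inequality I would write
\[
\cM_{y_1}^{\fs}u-\cM_{y_2}^{\fs}u=\bigl(\cM_{y_1}^{\fs,\fr}u-\cM_{y_1}^{\fs,\delta}u\bigr)+\bigl(\cM_{y_1}^{\fs,\delta}u-\cM_{y_2}^{\fs,\delta}u\bigr)+\bigl(\cM_{y_2}^{\fs,\delta}u-\cM_{y_2}^{\fs,\fr}u\bigr),
\]
in the notation of the proof of Theorem~\ref{thm:Delaunay-final}. The two outer terms are controlled by the scale-change estimate \eqref{eq:lem:Mixed-Korn-zylinder-5} applied on $\Ball{\fr}{y_i}\subset B_1(y_i)\subset\bP_\infty$, which produces a contribution to the weight $f$ supported on $\Ball{\fr}{\{y_1,y_2\}}$ of local size $\sim\delta^{-d/s}$ (dominant term). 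The middle term is controlled by \eqref{eq:lem:Mixed-Korn-zylinder-4} applied on the thin cylinder $P_\delta$, producing a contribution to $f$ supported on $P_\delta$ of local size $\sim\delta^{-(s+d)/s}$ after absorbing the bounded prefactors $|x-y_i|$ and $|y_1-y_2|$. Taking $f$ as the sum of these three pointwise weights and integrating $f^{sp/(p-s)}$ region-by-region gives \eqref{eq:lem:thm:poisson-final-help-1}, the dominant contribution coming from $P_\delta$ via $|P_\delta|\lesssim\delta^{d-1}$. The scalar inequality follows the same three-step decomposition, replacing \eqref{eq:lem:Mixed-Korn-zylinder-4}–\eqref{eq:lem:Mixed-Korn-zylinder-5} by \eqref{eq:cor:Poincare-zylinder-2} and \eqref{eq:cor:Poincare-zylinder} from Corollary~\ref{cor:Poincare-zylinder}.

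For \eqref{eq:lem:thm:poisson-final-help-2} the key observation is that $y\in\X_{\pois,\infty}$ implies $\Ball{1}{y}\subset B_1(y)\subset\bP_\infty$, so on this unit ball $\dist(y,\partial\bP)=1$ is fixed and the local covering data $(\tilde{\rho}_i,M_i)$ for those $A_{1,i}$ that meet $\Ball{1}{y}$ are deterministically bounded. Combined with the minimal-distance property of $\X_\fr$, this forces the multiplicities of both $\{A_{1,i}\}$ and $\{\Ball{\fr/16}{x_a}\}_{x_a\in\X_\fr}$ inside $\Ball{1}{y}$ to be finite and deterministic. Hence \eqref{eq:lem:thm:poisson-final-help-2} reduces to summing a finite number of standard Korn–Poincar\'e (Lemma~\ref{lem:general-Korn-Poincar=0000E9}) or Poincar\'e (Lemma~\ref{lem:Poincare-ball}) estimates against $\cM_y^{\fs}u$ resp.\ $\cM_yu$, with a constant independent of $y$. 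The hard part is \eqref{eq:lem:thm:poisson-final-help-1}: a naive assembly of the three weights produces several competing powers of $\delta$ and extra $|x-y_i|$ factors; achieving the sharp exponent $s(p-d)/(p-s)-1$ requires choosing $f$ so as to absorb the affine prefactors from \eqref{eq:lem:Mixed-Korn-zylinder-4}–\eqref{eq:lem:Mixed-Korn-zylinder-5} into the weight on $P_\delta$ rather than bounding them crudely, and carefully balancing the three contributions so that the tube integral $\delta^{-(s+d)p/(p-s)}|P_\delta|$ produces exactly the stated power.
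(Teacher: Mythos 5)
Your three-scale decomposition
\[
\cM_{y_1}^{\fs}u-\cM_{y_2}^{\fs}u=\bigl(\cM_{y_1}^{\fs,\fr}u-\cM_{y_1}^{\fs,\delta}u\bigr)+\bigl(\cM_{y_1}^{\fs,\delta}u-\cM_{y_2}^{\fs,\delta}u\bigr)+\bigl(\cM_{y_2}^{\fs,\delta}u-\cM_{y_2}^{\fs,\fr}u\bigr)
\]
cannot produce the exponent in \eqref{eq:lem:thm:poisson-final-help-1}, and the closing caveat you attach (``absorb the affine prefactors into the weight on $P_\delta$'') does not repair it. Concretely: for the outer terms, \eqref{eq:lem:Mixed-Korn-zylinder-5} with $r=\fr$ (order $1$) and the small radius $\delta$ produces a bound in terms of $\|\nablas u\|_{L^s(\Ball{\fr}{y_i})}^s$ with a prefactor $\sim\delta^{-d}$ (the affine factors $|x-y_i|$ are only $O(1)$ and cannot be traded away), so the corresponding weight is $\sim\delta^{-d/s}$ on a region of \emph{unit} volume; this contributes $\sim\delta^{-dp/(p-s)}$ to $\int |f|^{sp/(p-s)}$. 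For the middle term, \eqref{eq:lem:Mixed-Korn-zylinder-4} applied on the constant-radius tube $P_\delta$ (which has length $\sim 1$, not $\sim\delta$) gives a weight $\sim\delta^{-(s+d)/s}$ on a region of volume $\sim\delta^{d-1}$, contributing $\sim\delta^{-(s+d)p/(p-s)+d-1}$. In both cases the exponent falls short of $\tfrac{s(p-d)}{p-s}-1$ by a strictly positive amount; e.g.\ the middle term misses by exactly $2sp/(p-s)$, and the outer terms by $\bigl(p(s-1)+d(p-s)+s\bigr)/(p-s)>0$. So the naive assembly is not merely ``competing powers'' — it is uniformly too large, and no choice of pointwise weight rescues a single jump from scale $\fr$ to scale $\delta$ or a constant-radius tube of macroscopic length.

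The paper instead telescopes along a dyadic chain: it places centers $\alpha_j\be_1$ along the axis with radii $r_j=\fr2^{-(n-|j|)}$, shrinking from $\fr$ at the endpoints to $\sim\delta$ at the waist and back, and at each step splits into a small center shift (via \eqref{eq:lem:Mixed-Korn-zylinder-4}, with $|a-b|\sim r_j$, not $\sim 1$) and a single octave radius change (via \eqref{eq:lem:Mixed-Korn-zylinder-5}, with $\delta/r=\tfrac12$, a bounded ratio). This confines the large weight $r_j^{(s-d)/s}$ to the small region $C_j$ of volume $\sim r_j^d$; summing $r_j^{(s-d)p/(p-s)}\,r_j^d=r_j^{s(p-d)/(p-s)}$ over the dyadic scales $r_j\in[\delta,\fr]$ gives a geometric sum that is controlled by the claimed power of $\delta$. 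The dyadic localization is the essential idea you are missing; without it the weight from the scale change lives on a set of fixed positive measure and the $L^{sp/(p-s)}$ norm overshoots. Your treatment of \eqref{eq:lem:thm:poisson-final-help-2} — a finite Korn--Poincar\'e argument at fixed scale on $\Ball 1y$ — is in line with the paper's one-line reduction to Lemma~\ref{lem:general-Korn-Poincar=0000E9}.
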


\begin{proof}
We only treat the vector valued case, the other is proved similarly
using results from Section \ref{subsec:Poincar=0000E9-Inequalities}.
W.l.o.g let $y_{1}=y\be_{1}$ and $y_{2}=-y\be_{1}$. Let $n=\min\left\{ n\in\N:\,\Ball{2^{-n}\fr}0\subset\tilde{\bP}\right\} $,
i.e. $\delta\in(2^{-n-1}\fr,2^{-n}\fr)$. Furthermore, let $\alpha_{k}:=2\fr\sum_{j=1}^{k}2^{-(n-j)}$
for $k=1,\dots,n$ and $\alpha_{-k}=-\alpha_{k}$ with $\alpha_{0}=0$.
Using (\ref{eq:lem:Mixed-Korn-zylinder-cM}), for every number $j=-n,\dots,n$
let further 
\[
\cM_{j}^{\fs}u:=\cM_{\alpha_{j}\be_{1}}^{\fs,\fr2^{-\left(n-\left|j\right|\right)}}\,.
\]
Then for $j\geq0$ we find from Lemma \ref{lem:Mixed-Korn-Zylinder}
\begin{align*}
\left|\cM_{j}^{\fs}u(x)-\cM_{j+1}^{\fs}u(x)\right|^{s} & \leq C\left(\left|\cM_{\alpha_{j}\be_{1}}^{\fs,\fr2^{-\left(n-j\right)}}u(x)-\cM_{\alpha_{j+1}\be_{1}}^{\fs,\fr2^{-\left(n-j\right)}}u(x)\right|^{s}\right.+\\
 & \qquad\left.+\left|\cM_{\alpha_{j+1}\be_{1}}^{\fs,\fr2^{-\left(n-j\right)}}u(x)-\cM_{\alpha_{j+1}\be_{1}}^{\fs,\fr2^{-\left(n-j-1\right)}}u(x)\right|^{s}\right)\\
 & \leq(\fr2^{-n})^{s-d}2^{j(s-d)}\norm{\nablas u}_{L^{s}\of{\conv\left(\Ball{\fr2^{-\left(n-j\right)}}{\left\{ \alpha_{j}\be_{1},\alpha_{j+1}\be_{1}\right\} }\right)\cup\Ball{\fr2^{-\left(n-j-1\right)}}{\alpha_{j+1}\be_{1}}}}^{s}
\end{align*}
Defining 
\[
\tilde{f}^{s}:=\sum_{j}(\fr2^{-n})^{s-d}2^{j(s-d)}\chi_{\conv\left(\Ball{\fr2^{-\left(n-j\right)}}{\left\{ \alpha_{j}\be_{1},\alpha_{j+1}\be_{1}\right\} }\right)\cup\Ball{\fr2^{-\left(n-j-1\right)}}{\alpha_{j+1}\be_{1}}}
\]
and using local finiteness of the covering as well as 
\[
\left|\conv\left(\Ball{\fr2^{-\left(n-j\right)}}{\left\{ \alpha_{j}\be_{1},\alpha_{j+1}\be_{1}\right\} }\right)\cup\Ball{\fr2^{-\left(n-j-1\right)}}{\alpha_{j+1}\be_{1}}\right|\leq C\left(\fr^{d}2^{-d(n-j)}\right)\,,
\]
we find with $\frac{\left(s-d\right)p}{p-s}+d=\frac{s(p-d)}{p-s}-1=\frac{s(1-d)-p+s}{p-s}=\frac{2s-d-p}{p-s}$
$\delta$
\begin{align*}
\int_{\Ball 1{\left\{ y_{1},y_{2}\right\} }}\left|\tilde{f}\right|^{\frac{sp}{p-s}} & \leq C\sum_{j}(\fr2^{-n})^{\frac{\left(s-d\right)p}{p-s}}2^{j\frac{\left(s-d\right)p}{p-s}}\fr^{d}2^{-d(n-j)}\\
 & \leq C\delta^{\frac{s(p-d)}{p-s}}\sum_{j=1}^{\ln_{2}\frac{\fr}{\delta}}2^{j\frac{s(1-d)}{p-s}}\leq C\delta^{\frac{s(p-d)}{p-s}}\delta^{-1}
\end{align*}
From here we conclude the first part. Inequality (\ref{eq:lem:thm:poisson-final-help-2})
follows from the fact that $\tilde{\rho}_{i}\propto\fr_{i}$ and the
disjointness of the balls $\Ball{\frac{\fr}{16}}{x_{a}}$ with $\Ball{\fr_{i}}{p_{i}}$
and Lemma \ref{lem:general-Korn-Poincar=0000E9} with $\fr=const$.
\end{proof}

\begin{proof}[Proof of Theorem \ref{thm:boolean-final}]
We work with the enumeration $\left(p_{i}\right)_{i\in\N}$ and $\X_{\fr}=\left(x_{a}\right)_{a\in\N}$
and make use of the underlying point process $\X_{\pois}$: For every
$a\in\N$ there exists $y_{x_{a}}\in\X_{\pois}$ such that $x_{a}\in\Ball 1{y_{x_{a}}}$
for every $p_{i}$ there almost surely exists a unique $y_{p_{i}}\in\X_{\pois}$
such that $p_{i}\in\Ball 1{y_{p_{i}}}$. Due to the minimal mutual
distance of points in $\X_{\fr}$, we can conclude the following:
Since $p_{i}\in\Ball{\fr}{G_{a}}$, $\Ball{\fr}{x_{a}}\subset\tilde{P}\cap G_{a}$
there exists a constant $C$ depending only on $\fr$ and $d$ such
that always 
\begin{equation}
\left|y_{p_{i}}-y_{x_{a}}\right|\leq C\,d_{a}\,.\label{eq:thm:boolean-final-help-1}
\end{equation}
Since 
\[
\left|\tau_{i}^{\fs}u-\cM_{a}^{\fs}u\right|^{s}\leq3\left(\left|\tau_{i}^{\fs}u-\cM_{y_{p_{i}}}^{\fs}u\right|^{s}+\left|\cM_{y_{x_{a}}}^{\fs}u-\cM_{a}^{\fs}u\right|^{s}+\left|\cM_{y_{x_{a}}}^{\fs}u-\cM_{y_{p_{i}}}^{\fs}u\right|^{s}\right)
\]
we find
\[
\frac{1}{\left|\bQ\right|}\int_{\bP\cap\bQ_{\fr}}\sum_{\substack{i\neq0}
}\sum_{x_{a}\in\X(\bQ)}f_{a}\frac{\left|\partial_{l}\phi_{i}\right|}{D_{l+}}\left|\tau_{i}^{\fs}u-\cM_{a}^{\fs}u\right|^{s}\leq I_{1}+I_{2}+I_{3}
\]
 where we provide and estimate $I_{1}$, $I_{2}$ and $I_{3}$ as
follows: First, we observe there exists $n_{0}$ such that $n_{0}\fr>1$.
Then with help of (\ref{eq:lem:thm:poisson-final-help-2})
\begin{align*}
I_{1} & :=\frac{1}{\left|\bQ\right|}\int_{\bP\cap\bQ_{\fr}}\sum_{\substack{i\neq0}
}\sum_{x_{a}\in\X(\bQ)}f_{a}\frac{\left|\partial_{l}\phi_{i}\right|}{D_{l+}}\left|\tau_{i}^{\fs}u-\cM_{y_{p_{i}}}^{\fs}u\right|^{s}\\
 & \leq\frac{1}{\left|\bQ\right|}\int_{\bP\cap\bQ_{\fr}}\sum_{x_{a}\in\X(\bQ)}f_{a}\sum_{y_{b}\in\X_{\pois,\infty}}\sum_{p_{i}\in\partial\Ball 1{y_{b}}}\frac{\left|\partial_{l}\phi_{i}\right|}{D_{l+}}\left|\tau_{i}^{\fs}u-\cM_{y_{b}}^{\fs}u\right|^{s}\\
 & \leq\frac{1}{\left|\bQ\right|}\int_{\bP\cap\bQ_{\fr}}\sum_{x_{a}\in\X(\bQ)}\chi_{\Ball{2d_{a}}{x_{a}}}\sum_{y_{b}\in\X_{\pois,\infty}}\sum_{p_{i}\in\partial\Ball 1{y_{b}}}\chi_{\Ball{\tilde{\rho}_{i}}{p_{i}}}\left|\tau_{i}^{\fs}u-\cM_{y_{b}}^{\fs}u\right|^{s}\\
 & \leq C\left(\frac{1}{\left|\bQ\right|}\int_{\bP\cap\bQ_{\fr}}\left(\sum_{x_{a}\in\X(\bQ)}\chi_{\Ball{2d_{a}}{x_{a}}}\sum_{y_{b}\in\X_{\pois,\infty}}\chi_{\Ball 1{y_{b}}}\right)^{\frac{p}{p-s}}\right)^{\frac{p-s}{p}}\left(\frac{1}{\left|\bQ\right|}\int_{\bP\cap\bQ_{\fr}}\left|\nabla u\right|^{p}\right)^{\frac{s}{p}}\,.
\end{align*}
Because of Lemmas \ref{lem:estim-E-fa-fb} and \ref{lem:X-pois-covery-finite}
and the exponential decay of probabilities of $d_{a}$ the first integral
on the right hand side is always bounded. Note that (\ref{eq:lem:thm:poisson-final-help-2})
also implies 
\begin{align*}
I_{2} & :=\frac{1}{\left|\bQ\right|}\int_{\bP\cap\bQ_{\fr}}\sum_{\substack{i\neq0}
}\sum_{x_{a}\in\X(\bQ)}f_{a}\frac{\left|\partial_{l}\phi_{i}\right|}{D_{l+}}\left|\cM_{y_{x_{a}}}^{\fs}u-\cM_{a}^{\fs}u\right|^{s}\\
 & \leq\frac{1}{\left|\bQ\right|}\int_{\bP\cap\bQ_{\fr}}\sum_{x_{a}\in\X(\bQ)}f_{a}\,d_{a}^{d}\,\left|\cM_{y_{x_{a}}}^{\fs}u-\cM_{a}^{\fs}u\right|^{s}\\
 & \leq\left(\frac{1}{\left|\bQ\right|}\int_{\bP\cap\bQ_{\fr}}\left(\sum_{y_{b}\in\X_{\pois,\infty}}\sum_{x_{a}\in\X(\bQ)\cap\Ball 1{y_{b}}}d_{a}^{2d}\right)^{\frac{p}{p-s}}\right)^{\frac{p-s}{p}}\left(\frac{1}{\left|\bQ\right|}\int_{\bP\cap\bQ_{1}}\left|\nabla u\right|^{p}\right)^{\frac{s}{p}}\,.
\end{align*}
Again, the first integral on the right hand side is bounded. 

Last, the term 
\[
I_{3}:=\int_{\bP\cap\bQ_{\fr}}\sum_{\substack{i\neq0}
}\sum_{x_{a}\in\X(\bQ)}f_{a}\frac{\left|\partial_{l}\phi_{i}\right|}{D_{l+}}\left|\cM_{y_{x_{a}}}^{\fs}u-\cM_{y_{p_{i}}}^{\fs}u\right|^{s}
\]
is the most tricky part. 

We find a path $Y(y_{x_{a}},y_{p_{i}})=(y_{1},\dots,y_{n(x_{a},p_{i})})$
such that $y_{1}=y_{x_{a}}$, $y_{n(x_{a},p_{i})}=y_{p_{i}}$ such
that $y_{j}$, $y_{j+1}$ are neighbors. By our assumptions, for every
two points $y,\tilde{y}\in\X_{\pois,\infty}$ with $y-\tilde{y}<2\fr$,
the convex hull of $\Ball{\fr}{\left\{ y,\tilde{y}\right\} }$ lies
in $\bP_{\infty}$. Hence we iteratively replace sequences $(\dots y_{i},y_{i+1},y_{i+2},\dots)$
in the path $Y$ by $(\dots y_{i},y_{i+2},\dots)$ if $\left|y_{i+2}-y_{i}\right|<2\fr$.
Hence, w.l.o.g we obtain from (\ref{eq:thm:boolean-final-help-1})
and the definition of the statistical strech factor 
\[
n(x_{a},p_{i})\leq2\frac{\Length Y}{\fr}\leq2\fr^{-1}Cd_{a}S(y_{x_{a}})\,.
\]
Therefore, for $y\in\X_{\pois,\infty}$ with $\chi_{\Ball 1y}\chi_{G_{a}}\neq0$
we observe and the shortest path $Y(x_{a},y_{p_{i}})$ and with Lemma
\ref{lem:thm:poisson-final-help}
\begin{align*}
\left|\cM_{y_{x_{a}}}^{\fs}u-\cM_{y_{p_{i}}}^{\fs}u\right|^{s} & \leq\left(2\fr^{-1}Cd_{a}S(y_{x_{a}})\right)^{s}\sum_{k=1}^{n(x_{a},y_{p_{i}})-1}\left|\cM_{y_{k}}^{\fs}u-\cM_{y_{k+1}}^{\fs}u\right|^{s}\\
 & \leq\left(2\fr^{-1}Cd_{a}S(y_{x_{a}})\right)^{s}\sum_{k=1}^{n(x_{a},y_{p_{i}})-1}\norm{f\nablas u}_{L_{\Ball 1{\left\{ y_{k},y_{k+1}\right\} }}^{s}}^{s}\,.
\end{align*}
Now all points $y_{i}\in Y(x_{a},y_{p_{i}})$ lie within a radius
of $2Cd_{a}S(y_{x_{a}})$ around $x_{a}$, wich implies 
\begin{align*}
I_{3} & \leq\int_{\bP\cap\bQ_{\fr}}\sum_{x_{a}\in\X(\bQ)}\chi_{\Ball{2Cd_{a}S(y_{x_{a}})}{x_{a}}}d_{a}^{d}f^{s}\left|\nablas u\right|^{s}\\
 & \leq\left(\frac{1}{\left|\bQ\right|}\int_{\bP\cap\bQ_{\fr}}\left(\sum_{x_{a}\in\X(\bQ)}\chi_{\Ball{2Cd_{a}S(y_{x_{a}})}{x_{a}}}d_{a}^{d}f^{s}\right)^{\frac{p}{p-s}}\right)^{\frac{p-s}{p}}\left(\frac{1}{\left|\bQ\right|}\int_{\bP\cap\bQ_{1}}\left|\nabla u\right|^{p}\right)^{\frac{s}{p}}\,.
\end{align*}
Now, by independence of the respective variables, the constant in
front converges to 
\[
\left(\E\left(\sum_{x_{a}\in\X(\bQ)}\chi_{\Ball{2Cd_{a}S(y_{x_{a}})}{x_{a}}}d_{a}^{d}\right)^{\frac{p}{p-s}}\,\E f^{\frac{ps}{p-s}}\right)^{\frac{p-s}{p}}\,.
\]
The first term in the product can be estimated with help of Lemma
\ref{lem:estim-E-fa-fb} and is bounded for every $p$ and $s$ by
the exponential distribution of $d_{a}$ and $S$. The second term
can be estimated similarly.
\end{proof}
A further important property which we will not use in this work,
but which is central for part III of this series is the following
result.
\begin{thm}
\label{thm:overlay-count-2} Let $\X_{\pois,\infty,\fr}:=\left\{ x\in\X_{\pois,\infty}:\,\,\forall y\in\X_{\pois,\infty}\backslash\{x\}\,\left|x-y\right|>\frac{\fr}{8}\right\} $
be a Matern reduction of the infinite component. For fixed $y_{0}\in\X_{\pois,\infty,\fr}$
and every $\tilde{y}\in\X_{\pois,\infty,\fr}$ let $P(y_{0},\tilde{y})=\left(y_{0},y_{1}(\tilde{y}),\dots,y_{N}(\tilde{y})\right)_{N\in\N}$
with $y_{N}(\tilde{y})=\tilde{y}$ be the shortest path of points
in $\X_{\pois,\infty,\fr}$ connecting $y_{0}$ and $\tilde{y}$ in
$\bP$ and having length $L(y_{0},\tilde{y})$. Then there exists
\begin{align*}
\gamma_{y_{0,}\tilde{y}}:\,[0,L(y_{0},\tilde{y})]\times\Ball{\frac{\fr}{16}}0 & \to\bP\\
(t,z) & \mapsto\gamma_{y_{0,}\tilde{y}}(t,z)
\end{align*}
such that $\gamma_{y_{0,}\tilde{y}}(t,\cdot)$ is invertible for every
$t$ and $\norm{\partial_{t}\gamma_{y_{0,}\tilde{y}}}_{\infty}\leq2$.
For $R>1$ let 
\[
N_{y_{0},R}(x):=\#\left\{ \tilde{y}\in\Ball R{y_{0}}\cap\X_{\mat}:\,\exists t:\,x\in\gamma_{y_{0},\tilde{y}}\of{t,\Ball{\frac{\fr}{16}}0}\right\} \,.
\]
Then for every $y_{0}$ there exists almost surely $C>0$, $S>0$
such that it holds
\[
N_{y_{0},R}(x)\leq C\left(R^{d}-\left(\frac{x}{2}\right)^{d}\right)\quad\text{for }\left|x-y_{0}\right|<SR\,,\qquad N_{y_{0},R}(x)=0\quad\text{else.}
\]
\end{thm}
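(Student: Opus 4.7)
The plan is to mirror the structure of the proof of Theorem \ref{thm:overlay-count-1}, replacing the deterministic stretch bound from the Delaunay setting (the citation to \cite{xia2013stretch}) by the statistical stretch factor of Lemma \ref{lem:strech-factor-boolean}. I fix $y_0 \in \X_{\pois,\infty,\fr}$ and work on the almost-sure event $\{S(y_0) < \infty\}$ provided by Lemma \ref{lem:strech-factor-boolean}, and set $S := 2S(y_0) + 1$, so both $S$ and the constant $C$ below will be random.

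First I would make the construction of $\gamma_{y_0,\tilde y}$ explicit. The shortest path $(y_0, y_1, \dots, y_N)$ in $\X_{\pois,\infty,\fr}$ is realized through a chain of ambient points in $\X_{\pois,\infty}$ whose consecutive members lie at distance at most $2$, so their radius-$1$ balls overlap. Along each elementary edge I place the straight cylinder whose axis connects consecutive centers and whose cross-section is $\Ball{\fr/16}{0}$; since $\fr/16 < 1$, each such cylinder is contained in the union of the two balls, and hence in $\bP_\infty$. Concatenating these cylinders along the fine chain and parametrizing at constant speed yields the desired map $\gamma_{y_0,\tilde y} \colon [0,L(y_0,\tilde y)] \times \Ball{\fr/16}{0} \to \bP$, injective in the second variable by construction and with $\|\partial_t \gamma_{y_0,\tilde y}\|_\infty \leq 2$. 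The stretch factor bound gives $L(y_0, \tilde y) \leq S(y_0) |y_0 - \tilde y|$.

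The counting estimate then proceeds shellwise. Define $A_k := \X_{\pois,\infty,\fr} \cap (\Ball{k+1}{y_0} \setminus \Ball{k}{y_0})$; the minimal mutual distance $\fr/8$ of $\X_{\pois,\infty,\fr}$ yields $\#A_k \leq C(k+1)^{d-1}$ with $C$ depending only on $d$ and $\fr$. The key geometric observation is that if $x \in \gamma_{y_0,\tilde y}([0,L], \Ball{\fr/16}{0})$, then the sub-path length from $y_0$ to the axis point closest to $x$ is at least $|x - y_0| - \fr/16$, so $S(y_0)|y_0 - \tilde y| \geq L(y_0,\tilde y) \geq |x - y_0| - \fr/16$, and only shells with index $k \geq (|x - y_0| - \fr/16)/S(y_0)$ can contribute endpoints whose path passes through $x$. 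Summing,
\[
N_{y_0,R}(x) \;\leq\; \sum_{k = \lfloor (|x-y_0| - \fr/16)/S(y_0) \rfloor}^{R-1} C(k+1)^{d-1} \;\leq\; C\bigl(R^d - (|x-y_0|/S)^d\bigr),
\]
after absorbing $S(y_0)$ and the additive $\fr/16$ into the random constants. The range condition $N_{y_0,R}(x) = 0$ for $|x - y_0| > SR$ is immediate, since the entire path is contained in $\Ball{L(y_0,\tilde y) + \fr/16}{y_0} \subset \Ball{S(y_0)R + \fr/16}{y_0}$.

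The main obstacle I expect is verifying that $\X_{\pois,\infty,\fr}$ is dense and well-behaved enough to play the role that $\X_{\mat}$ plays in the Delaunay argument. Concretely, one must show that its intensity is positive (this is a consequence of the positive void probability of $\X_{\pois,\infty}$ over balls of radius $\fr/8$) and that shortest paths in $\X_{\pois,\infty,\fr}$ inherit a stretch bound comparable to the ambient stretch factor $S(y_0)$ of $\X_{\pois,\infty}$; this comparison costs only a factor controlled by $\fr/8$ because an $\X_{\pois,\infty}$-path can be coarsified to an $\X_{\pois,\infty,\fr}$-path by replacing each maximal sub-chain within any $\fr/8$-cluster by a single representative, inflating lengths only by a multiplicative constant. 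Once these two structural points are in place, the rest of the argument is bookkeeping entirely parallel to the proof of Theorem \ref{thm:overlay-count-1}.
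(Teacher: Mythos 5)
Your overall strategy is the same as the paper's: build $\gamma_{y_0,\tilde y}$ as a concatenation of tubes along the chain of centers, invoke the stretch-factor bound of Lemma \ref{lem:strech-factor-boolean} in place of the Delaunay stretch constant of \cite{xia2013stretch}, and then perform the shell-by-shell count using the $\fr/8$ minimal mutual distance in $\X_{\pois,\infty,\fr}$. That skeleton matches the paper's proof, which reduces to the proof of Theorem \ref{thm:overlay-count-1}.

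There is, however, a concrete error in your construction of $\gamma_{y_0,\tilde y}$. You claim that the straight cylinder of cross-section $\Ball{\fr/16}{0}$ whose axis joins two consecutive centers of $\X_{\pois,\infty}$ is contained in the union of the two corresponding unit balls ``since $\fr/16 < 1$.'' This is false: if the two centers are at distance $d<2$, the constriction between the balls has radius $\sqrt{1-(d/2)^2}$, which is arbitrarily small as $d\to 2$; a point of the cylinder at the midpoint of the axis and at distance $\fr/16$ from the axis lies at distance $\sqrt{(d/2)^2+(\fr/16)^2}$ from both centers, and this exceeds $1$ whenever $d>2\sqrt{1-(\fr/16)^2}$. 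In a Poisson ball process this configuration occurs with positive probability along any long path. The paper avoids this by having the pipes ``conically become smaller'' near the interface of the two balls, so that the cross-section at the constriction is a (possibly tiny but nondegenerate) scaled disk; the map $\gamma(t,\cdot)$ remains a bijection from $\Ball{\fr/16}{0}$ onto its image, only the image shrinks. Your downstream argument (stretch bound, shell count, summation) is unaffected by this fix, but the straight-cylinder justification as stated is a gap.

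Your remark at the end, that one must check that shortest paths in $\X_{\pois,\infty,\fr}$ inherit a stretch bound from $\X_{\pois,\infty}$, is a legitimate point which the paper's one-line reference to Lemma \ref{lem:strech-factor-boolean} glosses over; note that $\X_{\pois,\infty,\fr}$ is not the usual hard-core Matern thinning (it discards \emph{every} point that has a neighbor within $\fr/8$, rather than keeping a representative), so the coarsification argument you sketch needs care to ensure the remaining point set is still path-connected via $\bP$ and that the transfer of the stretch factor only costs a bounded multiplicative constant.
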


\begin{proof}
The function $\gamma_{y_{0,}\tilde{y}}$ consists basically of pipes
connecting $y_{i}(\tilde{y})$ with $y_{i+1}(\tilde{y})$ that conically
become smaller within the ball $\Ball{\frac{1}{2}}{y_{i}(\tilde{y})}$
to fit through the connection between two neighboring balls. Defining
\[
N_{y_{0},r,R}(x):=\#\left\{ \tilde{y}\in\of{\Ball R{y_{0}}\backslash\Ball r{y_{0}}}\cap\X_{\pois,\infty,\fr}:\,\exists t:\,x\in\gamma_{y_{0},\tilde{y}}\of{t,\Ball{\frac{\fr}{16}}0}\right\} 
\]
we apply Lemma \ref{lem:strech-factor-boolean} instead of \cite{xia2013stretch}
implies $N_{y_{0},r,R}(x)=0$ for all $\left|x-y_{0}\right|>SR$ but
also due to the minimal mutual distance $N_{y_{0},r,R}(x)\leq CR^{d-1}(SR-r)$,
where $C$ depends only on $\fr$ and $d$. From here we follow the
proof of Theorem \ref{thm:overlay-count-1}.
\end{proof}
We close this section and this work by proving Theorem \ref{thm:main-Boolean}.
\begin{proof}[\textbf{Proof of Theorem \ref{thm:main-Boolean}}]
The statement on the support is provided by Theorem \ref{thm:support}
and the fact that we restrict to functions with support in $m\bQ$.
Hence in the following we can apply all cited results to $\Ball{m^{1-\beta}}{m\bQ}$
instead of $m\bQ$. According to Lemmas \ref{lem:local-delta-M-extension-estimate}
and \ref{lem:6-4}--\ref{lem:6-5} and to Theorem \ref{thm:boolean-final}
we need only need to ensure $p>d$ as well as 
\[
\E\left(1+M_{[\frac{1}{8}\delta],\Rd}\right)^{kr}+\E\left|\tilde{\rho}_{\Rd}\right|^{-\frac{sr}{s-r}}<\infty\,,
\]
where $k=1$ for the simple extension case and $k=2$ for the symmetric
extension case. Since $d_{a}$ is distributed exponentially and the
corresponding terms are bounded as long as $r\neq s\neq p$, we observe
that we do not have to care about the involved polynomial terms. 

According to Theorem \ref{thm:boolean-delta-M-distrib} it is sufficient
that $\frac{sr}{s-r}<2$ (i.e. $\frac{pr}{p-r}<2$) and $kr<d+2$. 
\end{proof}

\addcontentsline{toc}{section}{Nomenclature}\settowidth{\nomlabelwidth}{$\closedsets_{V}$, $\closedsets^{K}$, $(\closedsets(\Rd),\ttopology F)$}
\printnomenclature{}


\begin{thebibliography}{10}

\bibitem{adams2003sobolev}
Robert~A Adams and John~JF Fournier.
\newblock {\em Sobolev spaces}, volume 140.
\newblock Elsevier, 2003.

\bibitem{Berberian1965}
K.~Berberian.
\newblock {\em Measure and Integration}.
\newblock Macmillan Company, 1970.

\bibitem{cioranescu1979homogenization}
Doina Cioranescu and Jeannine Saint~Jean Paulin.
\newblock Homogenization in open sets with holes.
\newblock {\em Journal of mathematical analysis and applications},
  71(2):590--607, 1979.

\bibitem{Daley1988}
D.J. Daley and D.~Vere-Jones.
\newblock {\em An Introduction to the Theory of Point Processes}.
\newblock Springer-Verlag New York, 1988.

\bibitem{duran2004korn}
Ricardo~G Dur{\'a}n and Maria~Amelia Muschietti.
\newblock The korn inequality for jones domains.
\newblock {\em Electronic Journal of Differential Equations (EJDE)[electronic
  only]}, 2004:Paper--No, 2004.

\bibitem{Evans2010}
L.~C. Evans.
\newblock {\em Partial differential equations}, volume~19 of {\em Graduate
  Studies in Mathematics}.
\newblock American Mathematical Society, Providence, RI, second edition, 2010.

\bibitem{gahn2016homogenization}
Markus Gahn, Maria Neuss-Radu, and Peter Knabner.
\newblock Homogenization of reaction--diffusion processes in a two-component
  porous medium with nonlinear flux conditions at the interface.
\newblock {\em SIAM Journal on Applied Mathematics}, 76(5):1819--1843, 2016.

\bibitem{giunti2021derivation}
Arianna Giunti.
\newblock Derivation of darcy's law in randomly punctured domains.
\newblock {\em arXiv preprint arXiv:2101.01046}, 2021.

\bibitem{guillen2015quasistatic}
Nestor Guillen and Inwon Kim.
\newblock Quasistatic droplets in randomly perforated domains.
\newblock {\em Archive for Rational Mechanics and Analysis}, 215(1):211--281,
  2015.

\bibitem{heida2011extension}
M.~Heida.
\newblock An extension of the stochastic two-scale convergence method and
  application.
\newblock {\em Asymptotic Analysis}, 72(1):1--30, 2011.

\bibitem{hopker2014diss}
Martin H{\"o}pker.
\newblock {\em Extension Operators for Sobolev Spaces on Periodic Domains,
  Their Applications, and Homogenization of a Phase Field Model for Phase
  Transitions in Porous Media}.
\newblock University Bremen, 2016.

\bibitem{hopker2014note}
Martin H{\"o}pker and Michael B{\"o}hm.
\newblock A note on the existence of extension operators for sobolev spaces on
  periodic domains.
\newblock {\em Comptes Rendus Mathematique}, 352(10):807--810, 2014.

\bibitem{jones1981quasiconformal}
Peter~W Jones.
\newblock Quasiconformal mappings and extendability of functions in sobolev
  spaces.
\newblock {\em Acta Mathematica}, 147(1):71--88, 1981.

\bibitem{Kelley1955}
J.L. Kelley.
\newblock {\em General Topology}.
\newblock D. Van Nostrand Company, 1955.

\bibitem{kozlov1979averaging}
Sergei~Mikhailovich Kozlov.
\newblock Averaging of random operators.
\newblock {\em Matematicheskii Sbornik}, 151(2):188--202, 1979.

\bibitem{krengel1985ergodic}
Ulrich Krengel.
\newblock {\em Ergodic theorems}, volume~6.
\newblock Walter de Gruyter, 1985.

\bibitem{marcellini1978homogenization}
Paolo Marcellini and Carlo Sbordone.
\newblock Homogenization of non-uniformly elliptic operators.
\newblock {\em Applicable analysis}, 8(2):101--113, 1978.

\bibitem{Matheron1975}
G.~Matheron.
\newblock Random sets and integral geometry.
\newblock 1975.

\bibitem{Mecke1967}
J.~Mecke.
\newblock {Station{\"a}re zuf{\"a}llige Ma{\ss}e auf lokalkompakten abelschen
  Gruppen}.
\newblock {\em Probability Theory and Related Fields}, 9(1):36--58, 1967.

\bibitem{nitsche1981korn}
Joachim~A Nitsche.
\newblock On korn's second inequality.
\newblock {\em RAIRO. Analyse num{\'e}rique}, 15(3):237--248, 1981.

\bibitem{oleinik2009mathematical}
Olga~Arsenievna Ole{\"\i}nik, AS~Shamaev, and GA~Yosifian.
\newblock {\em Mathematical problems in elasticity and homogenization}.
\newblock Elsevier, 2009.

\bibitem{papanicolaou1979boundary}
G.~C. Papanicolaou and S.~R.~S. Varadhan.
\newblock Boundary value problems with rapidly oscillating random coefficients.
\newblock In {\em Random fields, {V}ol. {I}, {II} ({E}sztergom, 1979)},
  volume~27 of {\em Colloq. Math. Soc. J\'anos Bolyai}, pages 835--873.
  North-Holland, Amsterdam-New York, 1981.

\bibitem{papanicolau1978asymptotic}
G~Papanicolau, A~Bensoussan, and J-L Lions.
\newblock {\em Asymptotic analysis for periodic structures}, volume~5.
\newblock Elsevier, 1978.

\bibitem{piatnitski2020homogenization}
Andrey Piatnitski and Mariya Ptashnyk.
\newblock Homogenization of biomechanical models of plant tissues with randomly
  distributed cells.
\newblock {\em Nonlinearity}, 33(10):5510, 2020.

\bibitem{schweizer2013partielle}
Ben Schweizer.
\newblock {\em Partielle differentialgleichungen}.
\newblock Springer, 2013.

\bibitem{stein2016singular}
Elias~M Stein.
\newblock {\em Singular integrals and differentiability properties of functions
  (PMS-30)}, volume~30.
\newblock Princeton university press, 2016.

\bibitem{tempel1972ergodic}
A.A. Tempel'man.
\newblock Ergodic theorems for general dynamical systems.
\newblock {\em Trudy Moskovskogo Matematicheskogo Obshchestva}, 26:95--132,
  1972.

\bibitem{xia2013stretch}
Ge~Xia.
\newblock The stretch factor of the delaunay triangulation is less than 1.998.
\newblock {\em SIAM Journal on Computing}, 42(4):1620--1659, 2013.

\bibitem{yao2011large}
Chang-Long Yao, Ge~Chen, and Tian-De Guo.
\newblock Large deviations for the graph distance in supercritical continuum
  percolation.
\newblock {\em Journal of applied probability}, 48(1):154--172, 2011.

\bibitem{yosifian2001homogenization}
GA~Yosifian.
\newblock Homogenization of some contact problems for the system of elasticity
  in perforated domains.
\newblock {\em Rendiconti del Seminario Matematico della Universit{\`a} di
  Padova}, 105:37--64, 2001.

\bibitem{yosifian2002some}
Grigori~A Yosifian.
\newblock Some unilateral boundary-value problems for elastic bodies with
  rugged boundaries.
\newblock {\em Journal of Mathematical Sciences}, 108(4):574--607, 2002.

\bibitem{Zaehle1982}
M.~Zaehle.
\newblock Random processes of hausdorff rectifiable closed sets.
\newblock {\em Math. Nachr.}, 108:49--72, 1982.

\end{thebibliography}
\end{document}